\documentclass[10pt]{amsart}

\setlength{\textwidth}{\paperwidth} 
\addtolength{\textwidth}{-2in}
\calclayout

\usepackage{amsmath, amssymb}
\usepackage{amsfonts}
\usepackage{mathrsfs}
\usepackage[color, arrow,matrix,curve,cmtip,ps]{xy}
\usepackage{paralist}
\usepackage{quiver}
\usepackage{amsthm}
\usepackage{tikz-cd}
\usetikzlibrary{arrows,calc,matrix}
\tikzset{
curvarr/.style={
  to path={ -- ([xshift=2ex]\tikztostart.east)
    |- (#1) [near end]\tikztonodes
    -| ([xshift=-2ex]\tikztotarget.west)
    -- (\tikztotarget)}
  }
}
\tikzset{%
    symbol/.style={%
        draw=none,
        every to/.append style={%
            edge node={node [sloped, allow upside down, auto=false]{$#1$}}}
    }
}

\usepackage{caption}
\usepackage[final]{pdfpages}
\usepackage{dsfont,txfonts}
\usepackage{tikz}
\usepackage{rotating}
\usepackage{mathrsfs}
\PassOptionsToPackage{hyphens}{url}\usepackage{hyperref}
\usepackage{enumitem}
\usepackage{graphicx}
\usepackage{mathtools}
\usetikzlibrary{arrows,chains,matrix,positioning,scopes}

\usepackage[T1]{fontenc}
\usepackage[variablett]{lmodern}
\usepackage{xcolor}
\usepackage[citestyle=alphabetic,bibstyle=alphabetic]{biblatex}
\usepackage{lipsum}
\usepackage[most]{tcolorbox}

\newtheorem{theorem}{Theorem}[section]
\theoremstyle{definition}
\newtheorem{lemma}[theorem]{Lemma}

\newtheorem{proposition}[theorem]{Proposition}
\newtheorem{corollary}[theorem]{Corollary}
\newtheorem{definition}[theorem]{Definition}

\newtheorem{remark}[theorem]{Remark}

\newtheorem{example}[theorem]{Example}

\everymath{\displaystyle}
\allowdisplaybreaks

\setcounter{section}{0}

\addbibresource{mybib.bib}

%

\begin{document}
\title{Measure theory via Locales}
\author{Georg Lehner}
\subjclass[2020]{Primary: 28A60; Secondary: 28C15, 18F10, 18F70}
\keywords{Measure theory, frames and locales, point-free topology, Grothendieck topologies, Radon measures}
\begin{abstract} We present an approach to measure theory using the theory of locales. This includes concrete constructions of measure algebras associated to Radon measures, such as the Lebesgue measure on $\mathbb{R}^n$, via Grothendieck topologies constructed from valuations, that circumvent the classical approach via $\sigma$-algebras. As an application we obtain a functorial construction of the induced measure $\mu_*$ on the locale of sublocales $\mathfrak{Sl}(X)$ of a Hausdorff space $X$ equipped with a Radon measure $\mu$, which in particular shows that $\mu_*$ is invariant under measure-preserving homeomorphisms. We furthermore give a construction of the measurable locale associated to a smooth manifold, functorial in submersions, as well as comparison results to classical measure theory.
\end{abstract}

\maketitle

\begin{figure}[h!]
    \centering
    \includegraphics[width=0.45\textwidth]{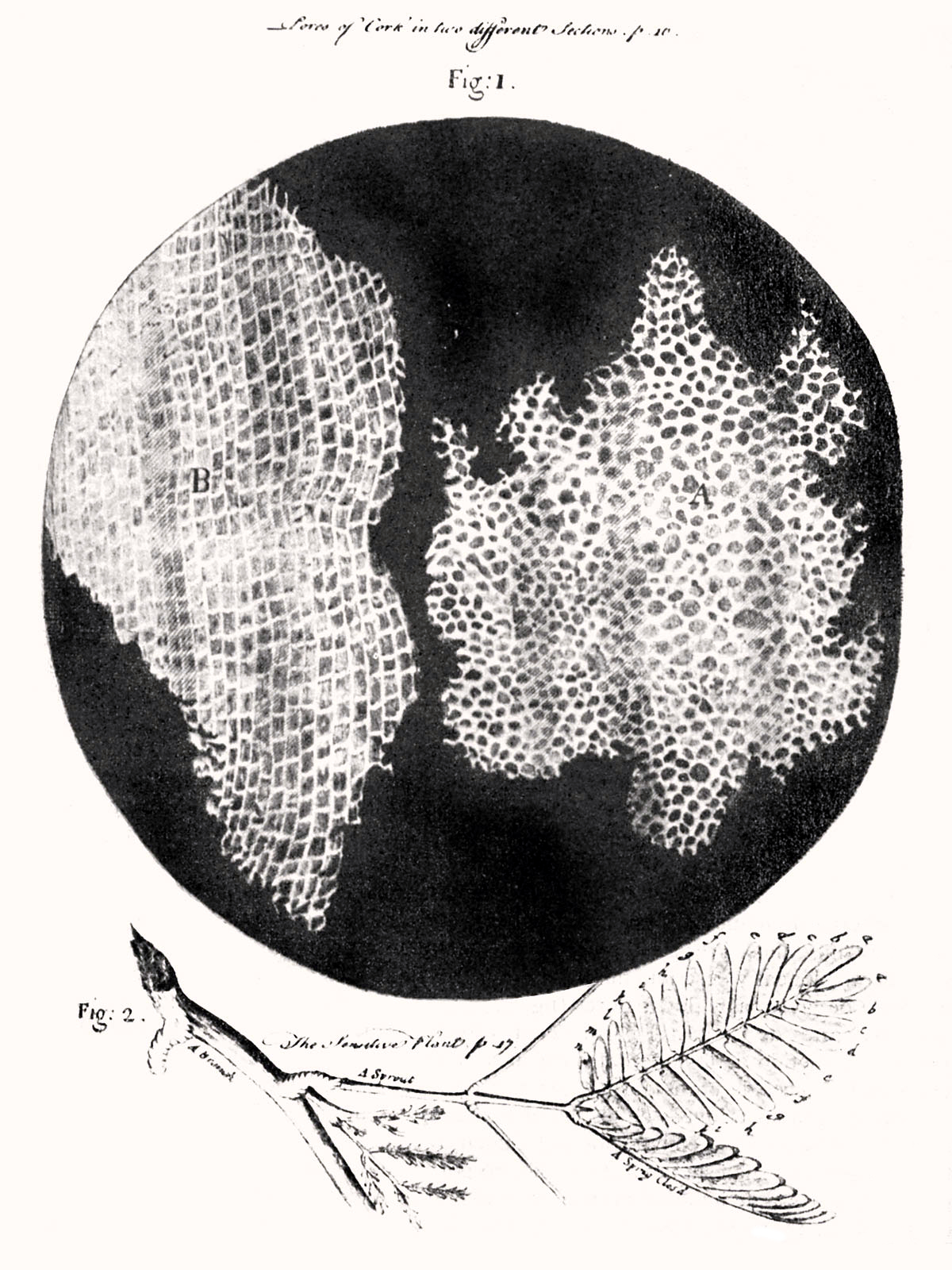}
    \caption*{\tiny Robert Hooke, \textit{Micrographia}, 1665, Public domain, via Wikimedia Commons}
\end{figure}

\tableofcontents

\newpage

\section{Introduction}

A common mode of thought in mathematical circles is that measure and probability theory belong to the domain of analysis, and that categorical or structuralist approaches to the subject miss the point. Consider, for example, the MathOverflow question titled “Is there an introduction to probability theory from a structuralist/categorical perspective?” \cite{20740}, to which the second most upvoted answer, with 133 votes at the time of this writing, is:

\begin{tcolorbox}[colback=gray!10, colframe=gray!30, boxrule=0pt, sharp corners, enhanced]
In the spirit of this answer to a different question, I'll offer a contrarian answer. 
How to understand probability theory from a structuralist perspective:

\medskip
\colorbox{gray!20}{\strut \textbf{Don't}}

\medskip
\noindent To put it less provocatively, what I really mean is that probabilists don't think about 
probability theory that way, which is why they don't write their introductory books 
that way. The reason probabilists don't think that way is that probability theory is 
not about probability spaces. Probability theory is about families of random variables. (Mark Meckes \cite{20828}) 
\end{tcolorbox}

While we think that the sentiment expressed in this answer is well-intentioned - separating formalism from what truly matters is indeed crucial -  we also think that the general willingness to accept the traditional $\sigma$-algebra-based approach, with all its paradoxes and exceptions, as a necessary evil has hindered the development of potential connections between measure theory and areas not closely related to analysis. It has also led to the quiet acceptance of paradoxes such as those due to Vitali and Banach–Tarski. Let us be more specific about what a ``good'' theory of measure should be capable of:
\begin{enumerate}
\item The central formal objects representing the many examples used in mathematical practice should be directly constructible from geometric or combinatorial data, with constructions explicit enough to permit direct arguments.
\item The notion of almost-everywhere equivalence should be built into the formalism so that handling it becomes automatic. Objects that agree classically up to almost-everywhere equivalence should be intrinsically identical.
\item The central formal objects should organise into a category with good formal properties.
\item Results in the classical literature on measure theory should admit a mostly direct translation into this new framework.
\end{enumerate}

Let us remark why we would require points (2) and (3). These are an attempt to address the sentence ``Probability theory is about families of random variables.'' given in the above quote by Meckes. First of all, we want to regard random variables as the same if they are the same up to almost everywhere equivalence. Second, the choice of domain of a random variable should not matter in practice: any precomposition by an isomorphism should be regarded as equivalent. A categorical approach can handle this issue, since it is an intrinsic feature of category theory that using categorical concepts allows one to treat ``isomorphic situations as equivalent''.

It should be said that the attitude towards a structural approach to measure and probability theory has shifted in recent years. We would like to highlight work of authors such as Simpson \cite{SIMPSON20121642}, Vickers \cite{DBLP:journals/mlq/Vickers08}, \cite{vickers_monad}, Henry \cite{HENRY_2017}, Jackson \cite{jackson_sheaves}, Pavlov \cite{PAVLOV2022106884}, Tao \& Jamneshan \cite{jamneshan2022foundationalaspectsuncountablemeasure} and Jamneshan \cite{jamneshan2014conditional}, among others, which for the most part emphasize sheaf-theoretic and point-free perspectives to measure theory, by using locale and topos-theory as a foundation. 

In this paper we will address points (1) to (4) by adopting a locale theoretic approach to measure theory, building on the techniques presented by the aforementioned authors. Locales serve as a replacement for topological spaces. The standard references for locale theory are \cite{johnstone1982stone}, \cite{picado_pultr} and  \cite{vickers_topology}. A \emph{locale} $L$ is specified by its \emph{frame} of abstract open sets $ \mathcal{O}(L) = F$, a poset $(F, \leq)$ that generalizes the lattice of open subsets of a topological space. This poset is required to satisfy:
\begin{itemize}
\item Closure under arbitrary suprema, written as $\bigvee_{i \in i} U_i$ for a collection $U_i \in F, i \in I.$ This includes the existence of a minimal element $0$ (the empty supremum).
\item Closure under finite infima, written as $U_1 \wedge \hdots \wedge U_n$. This includes the existence of a maximal element $1$ (the empty infimum).
\item The distributivity relation
$$V \wedge \bigvee_{i \in I} U_i = \bigvee_{i \in I} V \wedge U_i $$
needs to hold.
\end{itemize}
A frame homomorphism $f^* : F \rightarrow F'$ is a function preserving the order, arbitrary suprema and finite infima. A \emph{sublocale} $S \hookrightarrow L$ of a frame is given by the image of a surjective frame homomorphism $\mathcal{O}(L) \rightarrow \mathcal{O}(S)$.

For comparison, if $X$ is a topological space, then $\mathcal{O}(X)$ forms a frame, and every continuous map $f : X \rightarrow Y$ induces a frame homomorphism $f^{-1} : \mathcal{O}(Y) \rightarrow \mathcal{O}(X)$. However not every frame arises this way. Similarly, every subspace $S \subset X$ of a topological space determines a sublocale of $\mathcal{O}(X)$, but not every sublocale of $\mathcal{O}(X)$ corresponds to an actual subspace.

A \emph{valuation} is a function $\mu : D \rightarrow [0,\infty]$ on a lower bounded distributive lattive $D$, such that
\begin{itemize}
\item $\mu(U) \leq \mu(V)$ whenever $U \leq V \in D$.
\item $\mu(0) = 0$.
\item $\mu(U) + \mu(V) = \mu(U \vee V) + \mu( U \wedge V)$ for all $U, V \in D$.
\end{itemize}
If $D = \mathcal{O}(L)$ for a locale $L$, then we call a valuation $\mu : \mathcal{O}(L) \rightarrow [0,\infty]$ a \emph{measure} on $L$ if it furthermore satisfies
$$ \mu( \bigvee_{i \in I} U_i ) = \sup_{i \in I} \mu(U_i). $$
for every directed system of opens $U_i, i \in I$. Let us give two examples of measure in this locale theoretic sense:
\begin{itemize}
\item  In the case of a Radon measure $\mu$ on a Hausdorff space $X$, such as for example the Lebesgue measure on $\mathbb{R}^d$, the restriction of $\mu$ to the set of opens of $X$ will give a measure in the locale-theoretic sense. 
\item  If $(X, \mathcal{L}, \mu)$ is a \emph{localizable} measure space (See Section \ref{localizablemeasurespace}), then the quotient $\mathcal{L}/\mathcal{N}$, where $\mathcal{N}$ is the ideal of null sets, is an example of a frame. Then $\mu$ descends to a (locale-theoretic) measure on this frame. The corresponding locale $L(\mathcal{L}/\mathcal{N})$ is practically never obtained via an actual topological space.
\end{itemize}

The most striking result in the setting of localic measure theory may be the construction of a translation and rotation invariant measure $\lambda_d$ on the coframe of all sublocales of $\mathbb{R}^d$ proved independently by Leroy \cite{leroy2013theorielamesuredans} and Simpson \cite{SIMPSON20121642}, which ``resolves'' the well-known Banach-Tarski paradox. Any subset $A \subset\mathbb{R}^d$ induces a corresponding sublocale and therefore has an associated value $\lambda_d(A)$ which can rightfully be called the \emph{Lebesgue measure} of $A$. The reason there is no contradiction to the Vitali or Banach-Tarski paradox is that the intersections of the individual sets appearing in a paradoxical decomposition may have no points, but are nonetheless non-trivial sublocales. Among other things, we will generalize this result to the context of arbitrary Radon measures over Hausdorff spaces $X$ in this article.

The salient feature that separates the theory of locales from that of topological spaces is that one is able to use Grothendieck topologies for the construction and description of locales. This works much like the use of presentations of groups via generators and relations. The theory of Grothendieck topologies is well-established in the broader context of sheaf theory. For the sake of having a coherent account in the setting of locales, we collect the main techniques in Section \ref{sitesandgrothendiecktopologies}. A Grothendieck topology consists of the datum of a poset $(P, \leq)$, to be thought of as ``local pieces'', which is analogous to the set of generators of a group, together with a collection $\tau$ of coverings $\{p_i \leq p ~|~ i \in I \}$, which contain the information of when one local piece $p$ is covered by smaller local pieces $p_i$, which are analogous to the set of relations for a group presentation. There is a locale $L = L(P,\tau)$ generated from this datum, for which $P$ functions much like a basis in traditional point-set topology. Abstract open sets of $L$, continuous functions into $L$, as well as measures on $L$ are all determined from constructions purely on $P$, much like how a presentation of a group $G$ means that arbitrary group elements can be representated by words, and group homomorphisms out of $G$ are determined by how they act on generators.\footnote{See Definition \ref{definitionpropositionalsheaf}, Theorem \ref{flatfunctor} and Theorem \ref{valuationbasis}.} There is a recognition principle that allows one to determine when a given locale $L$ arises from a particular Grothendieck topology, called the Basis Theorem (Theorem \ref{basistheorem}). All this means that even if one only cares about an ordinary topological space $X$, it can still be very useful to find a suitable presentation via a Grothendieck topology.

The central example of a Grothendieck topology will be the $\mu$-\emph{inner topology} for the case of a finite valuation $\mu$ on a lower bounded distributive lattice $(D, \leq)$. We call this datum a \emph{valuation site}. The coverings in this case consist of two different cases. (See Definition \ref{muinnertopology}.)
\begin{itemize}
\item Finite unions cover: $\{p_i \leq p ~|~ i \in I\}$ is a cover whenever $$\bigvee_{i \in I} p_i = p$$ for $I$ finite.
\item $\mu$-approximations cover: $\{p_i \leq p ~|~ i \in I\}$  is a cover whenever $p_i, i \in I,$ is directed and $$ \mu(p) = \sup_{i \in I} \mu(p_i).$$
\end{itemize}
The corresponding locale $L(D,\mu) = (D,\mu)^{inn}$ comes equipped with a \emph{locally finite} and \emph{faithful} measure $\mu_*$ (See Definition \ref{definitionmeasure}), which extends $\mu$, and can be thought of as a $\mu$-generic point. The main technical result about this construction is summarized in the following theorem.

\begin{theorem}[See Theorem \ref{innervaluationadjunction}]
There exists an adjunction
\[\begin{tikzcd}
	{\mathrm{ValSite}} & {\mathrm{MeasFrm}}.
	\arrow[""{name=0, anchor=center, inner sep=0}, "{(-)^{inn}}", curve={height=-12pt}, from=1-1, to=1-2]
	\arrow[""{name=1, anchor=center, inner sep=0}, "{(-)^{fin}}", curve={height=-12pt}, from=1-2, to=1-1]
	\arrow["\dashv"{anchor=center, rotate=-90}, draw=none, from=0, to=1]
\end{tikzcd}\]
This adjunction is idempotent, inducing an equivalence of categories
\[\begin{tikzcd}
	{\mathrm{ValSite}_{\sigma , \mathrm{faithf}}} & {\mathrm{MeasFrm}_{\mathrm{l.f.~faithf}}}
	\arrow[""{name=0, anchor=center, inner sep=0}, "{(-)^{inn}}", curve={height=-12pt}, from=1-1, to=1-2]
	\arrow[""{name=1, anchor=center, inner sep=0}, "{(-)^{fin}}", curve={height=-12pt}, from=1-2, to=1-1]
	\arrow["\dashv"{anchor=center, rotate=-90}, draw=none, from=0, to=1]
\end{tikzcd}\]
where
\begin{itemize}
\item ${\mathrm{ValSite}_{\sigma , \mathrm{faithf}}}$ is the full subcategory of the category $\mathrm{ValSite}$ of valuation sites, given by bounded $\sigma$-complete and faithful valuation sites, and
\item ${\mathrm{MeasFrm}_{\mathrm{l.f.~faithf}}}$ is the full subcategory of the category ${\mathrm{MeasFrm}}$ of frames equipped with measures, given by faithful and locally finite measures.
\end{itemize}
\end{theorem}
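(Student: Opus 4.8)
The plan is to build both functors from the site machinery of Section~\ref{sitesandgrothendiecktopologies}, read the adjunction off the universal property of a locale presented by a Grothendieck topology, and then deduce idempotence and the equivalence by identifying the objects on which the unit and counit are invertible.

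\emph{The functors and the adjunction.} On objects, $(-)^{inn}$ is the assignment $(D,\mu)\mapsto\big((D,\mu)^{inn},\mu_*\big)$ of Definition~\ref{muinnertopology}, while $(-)^{fin}$ sends a measure frame $(F,\nu)$ to the sub-poset $D_\nu\defeq\{U\in F:\nu(U)<\infty\}$ with the restricted valuation $\nu|_{D_\nu}$; modularity of $\nu$ makes $D_\nu$ a lower-bounded distributive sublattice on which $\nu$ is finite, so $(F,\nu)^{fin}$ is a valuation site, and functoriality is routine in both directions (a measure-frame morphism restricts to finite parts, and a valuation-site morphism extends to a frame homomorphism via the flat-functor description of Theorem~\ref{flatfunctor}). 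For the adjunction, fix $(D,\mu)\in\mathrm{ValSite}$ and $(F,\nu)\in\mathrm{MeasFrm}$. By Theorem~\ref{flatfunctor} a frame homomorphism $\mathcal{O}\big((D,\mu)^{inn}\big)\to F$ is the same datum as a finite-meet-preserving map $D\to F$ carrying $\mu$-inner covers to joins; by Theorem~\ref{valuationbasis} it underlies a morphism of $\mathrm{MeasFrm}$ precisely when it is compatible with the valuations, and then the finite-valued generators of $D$ are sent into $D_\nu$. What remains is exactly the datum of a morphism $(D,\mu)\to(F,\nu)^{fin}$ of valuation sites, which yields a bijection $\mathrm{Hom}_{\mathrm{MeasFrm}}\big((D,\mu)^{inn},(F,\nu)\big)\cong\mathrm{Hom}_{\mathrm{ValSite}}\big((D,\mu),(F,\nu)^{fin}\big)$, natural in both variables; hence $(-)^{inn}\dashv(-)^{fin}$.

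\emph{Idempotence and the equivalence.} Since every $(D,\mu)^{inn}$ carries the locally finite faithful measure $\mu_*$, it suffices to show the counit $\epsilon_{(F,\nu)}\colon\big((F,\nu)^{fin}\big)^{inn}\to(F,\nu)$ is an isomorphism whenever $\nu$ is locally finite and faithful: then $\epsilon_{(D,\mu)^{inn}}$ is invertible for all $(D,\mu)$, so the adjunction is idempotent, and the general theory of idempotent adjunctions produces an equivalence between the full subcategory of $\mathrm{ValSite}$ on which the unit $\eta$ is invertible and the full subcategory of $\mathrm{MeasFrm}$ on which the counit $\epsilon$ is invertible. To prove $\epsilon_{(F,\nu)}$ invertible I would apply the Basis Theorem~\ref{basistheorem} to the sub-meet-semilattice $D_\nu\subseteq F$: local finiteness makes $D_\nu$ generate $F$ under directed joins, so $\epsilon^*_{(F,\nu)}$ is onto; every join of elements of $D_\nu$ factors through its directed family of finite subjoins, so the covers that $F$ induces on $D_\nu$ are generated by the $\mu$-inner ones; and the defining properties of a locally finite faithful measure — faithfulness excluding nonzero null elements, local finiteness forcing a directed family of finite elements that exhausts the value $\nu(p)$ to join to $p$ — show conversely that every $\mu$-inner cover is already a cover in $F$. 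Hence the two Grothendieck topologies on $D_\nu$ coincide, the Basis Theorem identifies $F$ with $\big((F,\nu)^{fin}\big)^{inn}$, and $\epsilon^*_{(F,\nu)}$ is injective; the measures match because both restrict to $\nu|_{D_\nu}$ and are determined by that restriction (Theorem~\ref{valuationbasis}). Conversely, if $\epsilon_X$ is invertible then $X\cong(X^{fin})^{inn}$ is locally finite and faithful, so the counit-invertible objects are exactly $\mathrm{MeasFrm}_{\mathrm{l.f.~faithf}}$. The dual analysis of the unit $\eta_{(D,\mu)}\colon(D,\mu)\to\big((D,\mu)^{inn}\big)^{fin}$ runs the same way (unwinding Definition~\ref{definitionmeasure}): faithfulness of $\mu$ is needed so that $D$ embeds into $\mathcal{O}\big((D,\mu)^{inn}\big)$, and boundedness together with $\sigma$-completeness are precisely the conditions making $D$ already contain the bounded countable suprema that the $\mu$-inner topology would otherwise adjoin and be otherwise saturated; since these three conditions are stable under $(-)^{fin}\circ(-)^{inn}$, the unit-invertible objects are exactly $\mathrm{ValSite}_{\sigma,\mathrm{faithf}}$, and restricting the adjunction to these subcategories gives the asserted equivalence.

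\emph{Main obstacle.} The crux is this reconstruction step — proving via the Basis Theorem that a locally finite faithful measure frame carries no coverings among its finite-measure elements beyond the $\mu$-inner ones, so that it is recovered from its finite part, and dually that a bounded $\sigma$-complete faithful valuation site is already saturated. Setting up the adjunction and the final identification of the fixed subcategories are comparatively routine, given Theorems~\ref{flatfunctor} and~\ref{valuationbasis} and the general machinery of idempotent adjunctions.
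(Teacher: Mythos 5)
Your proposal is correct and follows essentially the same route as the paper: the hom-set bijection via the universal property of sheaves on the $\mu$-inner site (Theorem \ref{universalpropertysheaves}/Corollary \ref{flatfunctor}), the counit isomorphism for locally finite faithful measure frames via the Basis Theorem and the identification of the induced topology on the finite part with the $\mu$-inner topology (this is exactly Proposition \ref{valuationframedeterminedbyfinite}), and the characterization of unit-fixed objects as the faithful, bounded $\sigma$-complete sites (Corollary \ref{sigmacompleteness}). The only point you pass over lightly is why \emph{countable} suprema suffice for saturation of the unit — this rests on the exhaustion argument of Proposition \ref{filteredapproximation}, which the paper proves separately — but the overall structure is identical.
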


One of the uses of the $\mu$-inner topology will be a concrete description of measure algebras. (Adressing point (1).) Suppose $(X, \mathcal{B}, \mu)$ is the structure of a Radon measure $\mu$ on a Hausdorff space $X$. The restriction of $\mu$ to the set $\mathcal{K}(X)$ of compact subsets is a finite valuation. 

\begin{theorem}[See Theorem \ref{radonmeasures}]
Let $(X, \mathcal{B}, \mu)$ be a Radon measure on a Hausdorff space $X$. Then we have a measure-preserving isomorphism
$$L(\mathcal{K}(X), \mu) \cong L(\mathcal{B}/\mathcal{N}).$$
where $L(\mathcal{B}/\mathcal{N})$ is the locale associated to the \emph{measure algebra} $\mathcal{B}/\mathcal{N}$ of $\mu$, obtained as the complete Boolean algebra of measurable sets modulo null sets.
\end{theorem}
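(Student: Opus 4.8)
The strategy is to exhibit both $L(\mathcal{K}(X),\mu)$ and $L(\mathcal{B}/\mathcal{N})$ as fixpoints of the idempotent adjunction from Theorem \ref{innervaluationadjunction}, so that the isomorphism follows by identifying the two valuation sites they come from. Concretely, I would proceed as follows. First, observe that $(\mathcal{K}(X),\mu)$ is a valuation site: $\mathcal{K}(X)$ is a lower-bounded distributive lattice under $\cup$ and $\cap$ (with $0=\emptyset$), and the restriction of the Radon measure $\mu$ to compact sets is a finite valuation by modularity of $\mu$ on the Borel $\sigma$-algebra together with finiteness of $\mu$ on compacta. Second, I would verify that the measure algebra $\mathcal{B}/\mathcal{N}$, with $\bar\mu$ the induced measure, gives an object of $\mathrm{MeasFrm}_{\mathrm{l.f.~faithf}}$: it is a complete Boolean algebra (since $(X,\mathcal{B},\mu)$ is localizable — finite, even — so the quotient is order-complete), hence in particular a frame; $\bar\mu$ is faithful by construction (only the zero class has measure zero) and locally finite since $\mu$ is finite.

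The heart of the argument is to compute $(\mathcal{B}/\mathcal{N},\bar\mu)^{fin}$ — equivalently, to show that $(\mathcal{K}(X),\mu)^{inn} \cong L(\mathcal{B}/\mathcal{N})$ directly — and for this I would use the Basis Theorem (Theorem \ref{basistheorem}). The point is that $\mathcal{K}(X)$ maps into $\mathcal{B}/\mathcal{N}$ by $K \mapsto [K]$, and I claim this realizes $\mathcal{K}(X)$ as a basis for the locale $L(\mathcal{B}/\mathcal{N})$ with respect to the $\mu$-inner topology. The two things to check against the Basis Theorem are: (i) every element of $\mathcal{B}/\mathcal{N}$ is a (directed) join of classes $[K]$ of compact sets — this is precisely inner regularity of the Radon measure $\mu$, which gives $\mu(B) = \sup\{\mu(K) : K \subseteq B \text{ compact}\}$ for every $B \in \mathcal{B}$, so $[B] = \bigvee_K [K]$ in the measure algebra because the join is detected by $\bar\mu$ (faithfulness); and (ii) the covering relations that hold in $\mathcal{B}/\mathcal{N}$ among compact classes are exactly those generated by the $\mu$-inner topology, i.e. finite unions $[K] = [K_1] \cup \dots \cup [K_n]$ and $\mu$-approximations $\mu(K) = \sup_i \mu(K_i)$ for directed families — here one direction is immediate from the definition of the measure algebra, and the other uses that a relation $[K] \leq \bigvee_i [K_i]$ in $\mathcal{B}/\mathcal{N}$ forces, by faithfulness and modularity, $\mu(K) = \sup_F \mu(K \wedge \bigvee_{i \in F} K_i)$ over finite subfamilies, which is a composite of the two allowed covering types.

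Once the Basis Theorem applies, it yields a measure-preserving isomorphism $L(\mathcal{K}(X), \mu_\tau) \cong L(\mathcal{B}/\mathcal{N})$ where $\mu_*$ on the left extends $\mu$; since $\mu_*$ is locally finite and faithful, this also identifies $(\mathcal{K}(X),\mu)$ as a bounded $\sigma$-complete faithful valuation site (the $\sigma$-completeness being inherited from the countable inner-regularity structure of the Radon measure) and places the isomorphism inside the equivalence of categories of Theorem \ref{innervaluationadjunction}. I expect the main obstacle to be step (ii): controlling which covering relations become true after passing to $\mathcal{B}/\mathcal{N}$, i.e. showing no "extra" identifications beyond those forced by finite unions and $\mu$-approximations occur among compact sets — this is where inner regularity must be used carefully, possibly together with the fact that for a Radon measure every Borel set is approximated from below by compacta and from above (within a compact superset) by open sets, so that differences of compacta are controlled in measure. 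A secondary technical point is checking that $\mathcal{K}(X)$ is genuinely closed under finite intersections in the relevant sense and that $\mu$ restricted to it satisfies the modular law — routine, but it uses Hausdorffness of $X$ (so that compact sets are closed, hence Borel, and finite intersections of compacta are compact).
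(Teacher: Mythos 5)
Your proposal is correct and follows essentially the same route as the paper: inner regularity shows the compact classes form a basis of $\mathcal{B}/\mathcal{N}$ (the paper's Lemma \ref{radonessentialsup}), and one then matches the induced covering relations with the $\mu$-inner topology via essential suprema (Lemma \ref{essentialsupmeasure}) before invoking the Basis Theorem. One small correction: a Radon measure need not be finite (e.g.\ Lebesgue measure), so completeness of $\mathcal{B}/\mathcal{N}$ should rest on localizability (Theorem \ref{radonlocalizable}) alone, and local finiteness of $\bar\mu$ on the quotient follows from the fact that compacta have finite measure and generate, not from finiteness of $\mu$.
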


The notion of Radon measure includes practically all relevant examples of measure spaces for applications in analysis, geometry and probability theory. For these applications, the measure algebra together with its associated measure contains arguably all of the important information needed for measure theory. (Most notably $L^1$ and $L^\infty$, and more generally $L^p$, can be constructed directly from it.) The associated sheaf topos has also been suggested as a natural place for a type of conditional set theory, within which all constructions and arguments are automatically measurable and invariant under almost everywhere equivalence \cite{jamneshan2014conditional}. The relevance of Theorem \ref{radonmeasures} is that for these applications, the entire information of the Boolean algebra $\mathcal{B}$ is superfluous. What is relevant is the datum of $\mu$ defined on the set of compact sets $\mathcal{K}(X)$. Constructions, such as sheaves, geometric morphisms or continuous functions, are entirely determined from geometric constructions on $\mathcal{K}(X)$ that agree with the $\mu$-inner topology (in other words, are insensitive to approximating from below via compacts). While the entire $\sigma$-algebra $\mathcal{B}$ is often a fairly wild and inexplicit mathematical object, when dealing with practical applications this wildness is irrelevant. Philosophically speaking, this should not be too surprising, as for the construction of a Radon measure one typically starts with giving a \emph{content} $\lambda : \mathcal{K}(X) \rightarrow [0,\infty)$ and then uses an extension principle, such as variants of Carathéodory's extension theorem.

The locale $L(\mathcal{K}(X), \mu) \cong L(\mathcal{B}/\mathcal{N})$ has a Boolean algebra of open sets and comes equipped with a locally finite and faithful measure. We call locales with these two properties  \emph{measurable locales}. Given a measurable locale $L$, define $L^\infty(L)$ to be the set of bounded, continuous, complex-valued functions on $L$. There exists a Gelfand-type duality due to Pavlov.

\begin{theorem}[\cite{PAVLOV2022106884}]
The functor $L^\infty$ induces an equivalence
$$L^\infty : \mathrm{MblLoc}^{op} \simeq \mathrm{CVNA},$$
where $\mathrm{MblLoc}$ is the category of measurable locales, and $\mathrm{CVNA}$ is the category of commutative von Neumann algebras and normal $*$-morphisms. The inverse functor sends a commutative von Neumann algebra to its locale of projections.
\end{theorem}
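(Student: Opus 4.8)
The plan is to establish this as a Gelfand-type duality in the standard way: exhibit the two functors, check each is well-defined, and then produce natural isomorphisms $\mathrm{Proj} \circ L^\infty \cong \mathrm{id}$ and $L^\infty \circ \mathrm{Proj} \cong \mathrm{id}$. On objects the functors are already named: $L^\infty$ sends a measurable locale $L$ to its algebra of bounded continuous complex-valued functions, and the inverse sends a commutative von Neumann algebra $A$ to the locale $L_A$ whose frame of opens is the projection lattice $\mathrm{Proj}(A)$. On morphisms, a continuous map $f : L \to M$ of measurable locales induces a frame homomorphism $f^* : \mathcal{O}(M) \to \mathcal{O}(L)$ between complete Boolean algebras; since bounded continuous functions are uniform limits of simple functions (finite complex-linear combinations of characteristic functions of opens), $f^*$ extends uniquely to a unital $*$-homomorphism $L^\infty(f) : L^\infty(M) \to L^\infty(L)$, and preservation of arbitrary suprema by $f^*$ is exactly normality. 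Conversely a normal $*$-homomorphism restricts to a map of projection lattices preserving all suprema and complements, i.e.\ a frame homomorphism.

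The first substantial point is that $L^\infty(L)$ really is a commutative von Neumann algebra. That it is a commutative unital $C^*$-algebra under pointwise operations and the supremum norm is formal. To see it is a von Neumann algebra I would verify Kadison's intrinsic characterization: $L^\infty(L)$ is monotone complete and admits a separating family of normal states. Monotone completeness comes from $\mathcal{O}(L)$ being a complete Boolean algebra: a bounded increasing net of self-adjoint elements has a supremum computed through the spectral decomposition $f = \int \lambda \, dE_f(\lambda)$ with $E_f$ valued in $\mathcal{O}(L)$, using that the relevant suprema of opens exist. The separating family of normal states is produced from the measure: the locally finite faithful measure $\mu_*$ integrates bounded functions supported on opens of finite measure, and the states $f \mapsto \mu_*(U)^{-1}\int_U f \, d\mu_*$ for $U$ of finite positive measure are normal (continuity of $\mu_*$ along directed suprema is precisely the defining condition of a locale-theoretic measure) and jointly separating (faithfulness of $\mu_*$). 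Equivalently one assembles these into a single faithful normal semifinite weight and invokes Haagerup's characterization; the localic data is tailored to make exactly this argument go through.

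For the inverse functor I would check that $\mathrm{Proj}(A)$ is a complete Boolean algebra — commutativity makes the projections a Boolean algebra, and monotone completeness of $A$ supplies arbitrary suprema — hence is the frame of a locale $L_A$, and that a faithful normal semifinite weight $\omega$ on $A$ restricts on projections to a faithful and locally finite locale-theoretic measure (the supremum condition along directed families is normality of $\omega$; local finiteness is semifiniteness). Thus $L_A$ is a measurable locale. The two round-trips are then identified by spectral theory. Projections in $L^\infty(L)$ are the idempotent functions, which are the characteristic functions of the clopen sublocales; since $\mathcal{O}(L)$ is Boolean every open is clopen, so $\mathrm{Proj}(L^\infty(L)) \cong \mathcal{O}(L)$ as Boolean algebras, naturally in $L$. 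In the other direction, $A$ is the norm-closure of the complex span of $\mathrm{Proj}(A)$ (spectral theorem plus functional calculus), and the same is true of $L^\infty(L_A)$ over its characteristic functions, so matching simple functions with finite linear combinations of projections and passing to uniform limits gives $L^\infty(L_A) \cong A$; compatibility with the weight on one side and the measure on the other is immediate from the constructions.

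The main obstacle, in my estimation, is not the algebra-to-locale reconstruction of the Boolean structure — which is essentially the spectral theorem — but rather showing carefully that $L^\infty(L)$ carries the correct $\sigma$-weak structure and, dually, that the weak-$*$ topology of $A$ is recovered from purely localic data. In the $\sigma$-finite case one has a single faithful normal state and Sakai's predual argument is clean; in general one only has a locally finite measure, so one must organize the finite-measure opens into a directed system — this is precisely the $\mu$-inner topology perspective of the present paper — and show that the resulting net of finite corners assembles into a genuine von Neumann algebra with the right normal functionals, with no loss from the absence of a global finite measure. Verifying that $L^\infty$ and $\mathrm{Proj}$ are mutually inverse on morphisms likewise hinges on this control of normality, but once the object-level statements are in hand it reduces to a routine diagram chase.
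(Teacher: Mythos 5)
This statement is not proved in the paper at all: it is Theorem \ref{gelfanddualityvonneumann}, imported verbatim from Pavlov \cite{PAVLOV2022106884} and used as a black box, so there is no internal proof to compare your proposal against. Judged on its own terms, your outline is the natural ``two functors plus spectral theorem'' strategy for a Gelfand-type duality, and you correctly locate the genuine difficulty in establishing that $L^\infty(L)$ carries the right $\sigma$-weak structure when only a locally finite (rather than finite or $\sigma$-finite) faithful measure is available. One point in your favour that you did not make explicit: normality of $L^\infty(f)$ for an arbitrary continuous map $f$ of measurable locales needs $f^*$ to preserve arbitrary \emph{infima} as well as suprema, and this is supplied by Lemma \ref{allmapsopen} of the paper (every map between Boolean locales is open).

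The substantive gaps in your sketch are these. First, your separating family of normal states is built from integrals $\mu_*(U)^{-1}\int_U f\,d\mu_*$, but an integration theory for measurable locales is exactly what is not available at this stage (the paper explicitly defers it), so either you must construct these functionals directly from the valuation on spectral projections --- which is a real argument, essentially building the integral --- or the appeal to Kadison's or Haagerup's characterization is circular in the sense that its hypotheses are the hard content. Second, monotone completeness of $L^\infty(L)$ from completeness of the Boolean algebra $\mathcal{O}(L)$ requires showing that the family of suprema of spectral projections of a bounded increasing net actually assembles into a frame homomorphism $\mathcal{O}(\mathbb{C})\to\mathcal{O}(L)$, i.e.\ into a genuine continuous function; this is where the localic definition of $C_b(L;\mathbb{C})$ as $\mathrm{colim}_r \mathrm{Map}(L,B_r(0))$ has to be unwound, and you pass over it. For reference, Pavlov's actual proof does not run this direct verification but instead factors the equivalence through a chain of intermediate categories (hyperstonean spaces and locales, compact strictly localizable enhanced measurable spaces), which is how the predual and weak-$*$ issues you flag are ultimately controlled.
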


This theorem means that all statements about measurable locales have corresponding functional-analytic counterparts, and vice versa. In particular, a theory of integration for measurable locales is already worked out. Therefore it is worthwhile to give a concrete condition for when a valuation site $(D,\mu)$ produces a measurable locale $L(D,\mu)$. We call this condition \emph{almost Boolean}, see Definition \ref{almostboolean} for details.

\begin{theorem}[See Theorem \ref{almostbooleangivesboolean}]
If $(D,\mu)$ is almost Boolean, then $L(D,\mu)$ is a measurable locale.
\end{theorem}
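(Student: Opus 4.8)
The statement bundles two assertions: that the frame $\mathcal{O}(L(D,\mu))$ is a (necessarily complete) Boolean algebra, and that it carries a locally finite, faithful measure. Only the first needs work, since by construction $L(D,\mu)=(D,\mu)^{inn}$ comes equipped with the measure $\mu_*$, which extends the finite valuation $\mu$ and is faithful and locally finite (this is part of Theorem~\ref{innervaluationadjunction}). So the entire content is to show that \emph{every open of $L(D,\mu)$ has a complement}; recall that a frame in which every element is complemented is automatically a complete Boolean algebra.

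I would work from the basis description supplied by the Basis Theorem~\ref{basistheorem}. Write $\bar p := y(p)$ for the image of $p\in D$ under the canonical map $y\colon D\to\mathcal{O}(L(D,\mu))$. Then: the $\bar p$ form a basis; $y$ preserves finite meets (being the composite of the down-set embedding with the nucleus of the topology) and finite joins (since $\{p_1,p_2\}$ is a finite-union covering of $p_1\vee p_2$ in the $\mu$-inner topology of Definition~\ref{muinnertopology}); $\mu_*(\bar p)=\mu(p)$; $1=\bigvee_{p\in D}\bar p$; and faithfulness of $\mu_*$ gives $U=0\iff\mu_*(U)=0$. The first step is that \emph{every basic open is complemented}. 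Unwinding Definition~\ref{almostboolean}, the almost-Boolean hypothesis yields, for $q\le p$ in $D$, an element $r\le p$ with $\mu(q\wedge r)=0$ and $\mu(q\vee r)=\mu(p)$. Then $\bar q\wedge\bar r=\overline{q\wedge r}=0$ by faithfulness, and $\bar q\vee\bar r=\overline{q\vee r}=\bar p$ --- the last equality holding because $\mu(q\vee r)=\mu(p)$ makes $\{q\vee r\le p\}$ a $\mu$-approximation covering, whence $\bar p\le\overline{q\vee r}\le\bar p$. In particular $\bar r$ is a complement of $\bar q$ inside the interval $[0,\bar p]$.

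To pass from basic opens to arbitrary opens, the key point is that for any open $U$ and any $p\in D$ the meet $U\wedge\bar p$ is \emph{again} a basic open. Indeed, with $T_p:=\{t\le p : \bar t\le U\}$ --- directed, since $D$ is closed under joins and $y$ preserves them --- one has $U\wedge\bar p=\bigvee_{t\in T_p}\bar t$, a directed join of measure $\sup_{t\in T_p}\mu(t)$; putting $t^*:=\bigvee^{D}T_p$ (a supremum in $D$, which exists and has the expected measure $\mu(t^*)=\sup_{t\in T_p}\mu(t)$ once $D$ is $\sigma$-complete and $\mu$ a measure), the family $\{t\le t^* : t\in T_p\}$ is a $\mu$-approximation covering of $t^*$, hence $\overline{t^*}=\bigvee_{t\in T_p}\bar t=U\wedge\bar p$. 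Now take the relative almost-complement $s\le p$ of $t^*$ in $p$ given by Definition~\ref{almostboolean}: then $\bar s\le\bar p$, and $\bar s\wedge U=\bar s\wedge(U\wedge\bar p)=\bar s\wedge\overline{t^*}=\overline{s\wedge t^*}=0$, so $\bar s\le\neg U$, while $\bar p=\overline{t^*}\vee\bar s\le U\vee\bar s\le U\vee\neg U$. Since $1=\bigvee_{p\in D}\bar p$, we conclude $U\vee\neg U=1$. As $U$ was arbitrary, $\mathcal{O}(L(D,\mu))$ is complemented, hence a complete Boolean algebra, and $L(D,\mu)$ is a measurable locale.

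The one genuinely non-formal point --- and the place I expect the work to concentrate --- is the identity $U\wedge\bar p=\overline{t^*}$, i.e.\ that meeting an open with a basic open stays basic. This uses that $D$ is $\sigma$-complete and that $\mu$ is a measure (not merely a valuation), so that the $D$-supremum of $T_p$ is approximated in measure from below within $T_p$. If, as I expect, Definition~\ref{almostboolean} already incorporates $\sigma$-completeness and faithfulness into the notion of an almost-Boolean valuation site, there is nothing more to do; if it is phrased for a general valuation site, one first replaces $(D,\mu)$ by the metric completion of its quotient by $\mu$-null differences --- a bounded $\sigma$-complete faithful valuation site on which the induced valuation is automatically a measure (using that a finite measure forces the countable chain condition, so directed suprema reduce to countable ones) --- and checks that this replacement changes neither $L(D,\mu)$ nor $\mu_*$, which is precisely the idempotency recorded in Theorem~\ref{innervaluationadjunction}. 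The remaining verifications (preservation of finite meets and joins by $y$, faithfulness turning measure zero into triviality, $1=\bigvee_p\bar p$) are routine from the site description.
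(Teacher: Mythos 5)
Your overall strategy is the right one and is close in spirit to the paper's proof of Theorem \ref{almostbooleangivesboolean}: reduce to showing every open $I$ satisfies $[c]\le I\vee\neg I$ for basic $[c]$, intersect $I$ with $[c]$, and produce an approximate complement from the almost Boolean hypothesis, using faithfulness of $\mu_*$ to turn ``measure zero'' into ``equals $0$''. However, there are two concrete problems with the write-up. First, a recurring slip: the almost Boolean (and almost disconnected) condition only produces, for each $\epsilon>0$, an element $r$ with $\mu(q\wedge r)=0$ and $\mu(p)-\mu(q\vee r)<\epsilon$; it does \emph{not} produce a single $r$ with $\mu(q\vee r)=\mu(p)$ exactly. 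So identities like $\bar q\vee\bar r=\bar p$ and $\bar p=\overline{t^*}\vee\bar s$ are not literally available; you must instead take the directed family of all such $\epsilon$-complements $r_\epsilon$ (closed under finite joins, which still kill $q$ in measure by modularity) and observe that $\{q\vee r_\epsilon\le p\}_\epsilon$ is a $\mu$-approximation, so that the join over $\epsilon$ recovers $[p]$. This is repairable but needs to be said.

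The more serious gap is the pivot $U\wedge\bar p=\overline{t^*}$ with $t^*=\bigvee^D T_p$: this supremum need not exist in $D$, since a valuation site carries no completeness hypothesis, and your proposed remedy --- replace $(D,\mu)$ by its bounded $\sigma$-complete completion and invoke the idempotency of Theorem \ref{innervaluationadjunction} --- does not close the gap. Idempotency tells you the locale and the measure are unchanged; it does \emph{not} tell you that the completed site is almost Boolean (or even almost disconnected), and establishing that transfer is precisely where the sequential form of Definition \ref{almostboolean} must be used. The paper's proof sidesteps this entirely: by Proposition \ref{filteredapproximation}, $[c]\wedge I$ (having finite measure) admits a countable exhaustion $[c]\wedge I=\bigvee_{n}[c_n]$ with $c_n\le c$ an ascending sequence \emph{in $D$}, and the almost Boolean condition is then applied directly to that sequence to produce $d\le c$ with $\mu(d\wedge c_n)=0$ for all $n$ and $\mu(c)-\mu(d\vee c_N)<\epsilon$; one never needs the join of the $c_n$ to exist in $D$, only the computation $[d]\wedge I=[d]\wedge[c]\wedge I=\bigvee_n[d\wedge c_n]=N$. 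Your final step can be salvaged by exactly this move --- apply the hypothesis to the exhausting sequence of $U\wedge\bar p$ rather than to a putative element $t^*$ of $D$ --- at which point your argument becomes the paper's.
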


\noindent The main example is given by a \emph{regular content} $\lambda : \mathcal{K}(X) \rightarrow [0,\infty)$ on a Hausdorff space $X$ (See Section \ref{regularcontents}).

\begin{theorem}[See Theorem \ref{regularcontentalmostboolean}]
Let $X$ be a Hausdorff topological space and $\lambda : \mathcal{K}(X) \rightarrow [0,+\infty)$ a regular content. Then $\lambda$ is an almost Boolean valuation on $\mathcal{K}(X)$.
\end{theorem}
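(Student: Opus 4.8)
The plan is to unwind the definition of ``almost Boolean'' (Definition~\ref{almostboolean}) and verify it by hand for the valuation site $(\mathcal{K}(X),\lambda)$. That $\lambda$ is a finite valuation on the lower bounded distributive lattice $\mathcal{K}(X)$ (where meets and joins are intersections and unions of compacts, which are again compact because $X$ is Hausdorff, and $0=\emptyset$) holds by, or follows readily from, the content axioms. The substance is the approximate-complementation clause, which for a valuation site asks, roughly, that for every $K\leq L$ in $\mathcal{K}(X)$ and every $\varepsilon>0$ there be some $R\leq L$ with $K\wedge R=0$ and $\lambda(K\vee R)>\lambda(L)-\varepsilon$, the $R$ moreover chosen so as to increase as $\varepsilon\downarrow 0$ (for the directedness part of the definition). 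So the whole proof reduces to one topological construction plus one modular estimate.

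First I would do the construction. Fix $K\subseteq L$ in $\mathcal{K}(X)$ and $\varepsilon>0$. Regularity of $\lambda$ provides a compact $M$ with $K\subseteq\mathrm{int}(M)$ and $\lambda(M)<\lambda(K)+\varepsilon$; since $\mathrm{int}$ commutes with finite intersections we may moreover shrink each $M$ inside the one chosen for a larger $\varepsilon$, so that the $M$'s decrease as $\varepsilon\downarrow 0$. Set $R:=L\setminus\mathrm{int}(M)$. As $X\setminus\mathrm{int}(M)$ is closed, $R$ is a closed subset of the compact set $L$, hence $R\in\mathcal{K}(X)$ and $R\subseteq L$; because $K\subseteq\mathrm{int}(M)$ we get $K\cap R=\emptyset$, i.e.\ $K\wedge R=0$ exactly (not merely up to small measure); and as $M$ decreases $R$ increases, giving the required directedness.

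Then I would prove $\lambda(K\cup R)>\lambda(L)-\varepsilon$ by modularity rather than subadditivity. One checks $L=(K\cup R)\cup(L\cap M)$ (a point of $L$ outside $K\cup R$ lies in $\mathrm{int}(M)\subseteq M$, and $K\cup R\subseteq L$), and $(K\cup R)\cap(L\cap M)=K\cup(R\cap M)$ with $K\cap(R\cap M)=\emptyset$, so that $\lambda\bigl((K\cup R)\cap(L\cap M)\bigr)=\lambda(K)+\lambda(R\cap M)$. Modularity of $\lambda$ applied to $K\cup R$ and $L\cap M$ then gives
\[
\lambda(L)=\lambda(K\cup R)+\lambda(L\cap M)-\lambda(K)-\lambda(R\cap M),
\]
whence $\lambda(L)-\lambda(K\cup R)=\bigl(\lambda(L\cap M)-\lambda(K)\bigr)-\lambda(R\cap M)\leq\lambda(M)-\lambda(K)<\varepsilon$, using $K\subseteq L\cap M\subseteq M$ and monotonicity. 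This is precisely the approximate relative complement, so $(\mathcal{K}(X),\lambda)$ is almost Boolean.

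The hard part is exactly this last estimate: the naive move is to bound $\lambda(L)-\lambda(K\cup R)$ via subadditivity by something like $\lambda(M\setminus K)$, which does not close, since $\lambda$ is defined only on compacts and $M\setminus K$ need not be compact. The fix is to set up the modular identity for the specific decomposition $L=(K\cup R)\cup(L\cap M)$, whose overlap $K\cup(R\cap M)$ is exactly what makes the bookkeeping cancel against $\lambda(M)-\lambda(K)$. Everything else — closed-in-compact is compact, $\mathrm{int}$ of a finite intersection, and the translation into whatever precise form Definition~\ref{almostboolean} takes, including its directedness clauses, which the monotone choice of the $M$'s (hence of the $R$'s) supplies — is routine.
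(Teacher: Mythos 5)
There is a genuine gap: you have proved the wrong condition. Definition~\ref{almostboolean} of \emph{almost Boolean} quantifies over an ascending \emph{sequence} $c_0\leq c_1\leq\cdots\leq c$ and asks for a single $d\leq c$ with $\mu(d\wedge c_n)=0$ for \emph{all} $n$ together with $\mu(c)-\mu(d\vee c_N)<\epsilon$ for \emph{some} $N$. What you verify is the single-element clause ($K\leq L$, one approximate complement $R$), which is the strictly weaker condition the paper calls \emph{almost disconnected}; your remark that the $R$'s can be ``chosen so as to increase as $\varepsilon\downarrow 0$'' is not the directedness the definition requires and does not recover the sequence case. The distinction is not cosmetic: the paper's Fat Cantor set example (Example~\ref{Randomsequences2}) exhibits a valuation site that is almost disconnected but not almost Boolean, so no formal manipulation can upgrade your argument. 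To close the gap you would need, given $C_n\nearrow$ inside $C$, to choose nested regular envelopes $C_n\subseteq U_n\subseteq K_n$ with $\lambda(K_n)-\lambda(C_n)<\epsilon/2$, take $d=C\setminus\bigcup_n U_n$ (compact, disjoint from every $C_n$), and then run a compactness argument: cover $C$ by the $U_n$ together with an open $W\supseteq d$ coming from a regular envelope $d\subseteq W\subseteq L$ with $\lambda(L)-\lambda(d)<\epsilon/2$, extract a finite subcover to find $N$ with $C\subseteq U_N\cup W$, and conclude $\lambda(C)\leq\lambda(K_N)+\lambda(L)\leq\lambda(C_N)+\lambda(d)+\epsilon$. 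That finite-subcover step is the heart of the theorem and is entirely absent from your proposal.

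A secondary issue: you assert that $\lambda$ being a (finite) valuation ``holds by, or follows readily from, the content axioms,'' and you then \emph{use} modularity in your key estimate. A content is only assumed monotone, subadditive, and additive on disjoint pairs; modularity $\lambda(C_1)+\lambda(C_2)=\lambda(C_1\cup C_2)+\lambda(C_1\cap C_2)$ genuinely requires regularity (approximate $C_1\cap C_2$ from above by $V\subseteq K$ open-in-compact and estimate via the disjoint pieces $C_i\setminus V$), and the paper spends half its proof on exactly this. That said, your modular computation for the single-element case is correct and is a clean alternative to a subadditivity bound; it just answers a different, weaker question.
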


This motivates the development of a theory of measure based purely on the notion of an almost Boolean valuation on the set $\mathcal{K}(X)$ of compact subsets of a Hausdorff space.

\begin{definition}[See \ref{definitionradonvaluation}]
Let $X$ be a Hausdorff space. A \emph{Radon valuation} on $X$ is a finite and almost Boolean valuation $\mu : \mathcal{K}(X) \rightarrow [0, \infty)$ on the set of compact subsets of $X$. Denote by $X^\mu$ the associated inner locale together with its locally finite and faithful measure $\mu_* $. It comes with a natural continuous map $X^\mu \rightarrow X$.
\end{definition}

As discussed before, classical Radon measures give examples of Radon valuations.  The existence of a Radon valuation has many pleasant consequences, which are discussed in Section \ref{radonvaluations}. First all, it equips $X$ with a measure $\mu_*$ via pushforward along $X^\mu \rightarrow X$. Furthermore, recall the notion of a sublocale of a locale $L$. The set of sublocales $\mathrm{Sl}(L)$ naturally forms a co-frame, or equivalently $\mathrm{Sl}(L)^{op}$ is a frame, whose corresponding locale we denote by $\mathfrak{Sl}(L)$, called the \emph{locale of sublocales} of $L$, sometimes also referred to as the \emph{dissolution locale} of $L$. This locale has a natural continuous map $\mathfrak{Sl}(L) \rightarrow L$, whose inverse image part sends an open $U$ to the closed complement of $U$ viewed as a sublocale, and which comes with a universal property regarding a specific class of maps $M \rightarrow L$. (See Theorem \ref{lifttosublocales2}.) This universal property holds in particular for the map $X^\mu \rightarrow X$ and produces a unique lift,
\[\begin{tikzcd}
	& {\mathfrak{Sl}(X)} \\
	{X^\mu}  & X.
	\arrow["{\mathrm{can}}", from=1-2, to=2-2]
	\arrow["{\exists !}", dashed, from=2-1, to=1-2]
	\arrow[from=2-1, to=2-2]
\end{tikzcd}\]
The map $X^\mu \rightarrow \mathfrak{Sl}(X)$ equips the locale of sublocales with a measure, also denoted as $\mu_*$, via pushforward, or equivalently the co-frame $(\mathrm{Sl}(X),\leq)$ with a \emph{co-measure}. Due to the universality of this construction it is straightforward to verify functoriality of this assignment.

\begin{theorem}[See Theorem \ref{functorialitymeasuresublocales}]
There exists a lift of the functor $\mathfrak{Sl} : \mathrm{RadHausSpc}_{glob} \rightarrow \mathrm{Loc}$ to a functor
$$\begin{array}{rcl}
\mathfrak{Sl} : \mathrm{RadHausSpc}_{glob} & \rightarrow & \mathrm{MeasLoc}_{glob} \\
 (X, \mu) & \mapsto & (\mathfrak{Sl}(X), \mu_*)
\end{array}$$
where $\mathrm{MeasLoc}_{glob}$ is the category of locales equipped with measures and globally defined measure-preserving maps between them.
\end{theorem}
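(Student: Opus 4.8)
The plan is to define the lifted functor on objects by $(X,\mu)\mapsto(\mathfrak{Sl}(X),\mu_*)$, where $\mu_*$ is the pushforward, along the canonical lift $j_X\colon X^\mu\to\mathfrak{Sl}(X)$, of the locally finite and faithful measure carried by the inner locale $X^\mu$; here $j_X$ is the unique lift of the natural map $p_X\colon X^\mu\to X$ through $\mathrm{can}_X\colon\mathfrak{Sl}(X)\to X$ furnished by the universal property of Theorem \ref{lifttosublocales2}. On objects there is then nothing more to check. Since $\mathfrak{Sl}\colon\mathrm{RadHausSpc}_{glob}\to\mathrm{Loc}$ is already a functor and the maps $\mathrm{can}_X$ are natural in $X$, it remains only to show that for each morphism $f\colon(X,\mu)\to(Y,\nu)$ the localic map $\mathfrak{Sl}(f)\colon\mathfrak{Sl}(X)\to\mathfrak{Sl}(Y)$ is measure-preserving, i.e. $\mathfrak{Sl}(f)_*\big((j_X)_*\mu_*\big)=(j_Y)_*\nu_*$; compatibility with composites and identities is then inherited from $\mathrm{Loc}$ along the faithful forgetful functor $\mathrm{MeasLoc}_{glob}\to\mathrm{Loc}$.

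I would deduce measure-preservation by reducing it to a single naturality square. Because pushforward of measures is functorial, $\mathfrak{Sl}(f)_*\big((j_X)_*\mu_*\big)=\big(\mathfrak{Sl}(f)\circ j_X\big)_*\mu_*$, so it suffices to produce a measure-preserving map $f^\mu\colon X^\mu\to Y^\nu$ with $p_Y\circ f^\mu=f\circ p_X$ and to verify that the square
\[\begin{tikzcd}
	{X^\mu} & {Y^\nu} \\
	{\mathfrak{Sl}(X)} & {\mathfrak{Sl}(Y)}
	\arrow["{f^\mu}", from=1-1, to=1-2]
	\arrow["{j_X}"', from=1-1, to=2-1]
	\arrow["{j_Y}", from=1-2, to=2-2]
	\arrow["{\mathfrak{Sl}(f)}"', from=2-1, to=2-2]
\end{tikzcd}\]
commutes: granting this, $\big(\mathfrak{Sl}(f)\circ j_X\big)_*\mu_*=(j_Y)_*\big((f^\mu)_*\mu_*\big)=(j_Y)_*\nu_*$, as wanted. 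The map $f^\mu$ and the fact that it preserves measures I would extract from the idempotent adjunction of Theorem \ref{innervaluationadjunction}, applied to the morphism of valuation sites that a globally defined measure-preserving $f$ induces between the lattices of compact subsets; here one uses that $(-)^{inn}$ sends such a morphism to a measure-preserving localic map compatible with the structure maps $p_X,p_Y$. Commutativity of the square is then forced by the uniqueness clause of Theorem \ref{lifttosublocales2} applied to the target $Y$: both $\mathfrak{Sl}(f)\circ j_X$ and $j_Y\circ f^\mu$ are lifts of the composite $X^\mu\xrightarrow{p_X}X\xrightarrow{f}Y$ through $\mathrm{can}_Y$ — for the first by naturality of $\mathrm{can}$ together with $\mathrm{can}_X\circ j_X=p_X$, for the second because $f^\mu$ covers $f$ — and hence they agree.

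The step I expect to be the main obstacle is making precise, and checking, that the two maps into $\mathfrak{Sl}(Y)$ above genuinely belong to the class of maps $M\to Y$ to which Theorem \ref{lifttosublocales2} grants a \emph{unique} lift; this is exactly where the passage to \emph{globally defined} measure-preserving morphisms — the subscript $glob$ in both $\mathrm{RadHausSpc}_{glob}$ and $\mathrm{MeasLoc}_{glob}$ — is doing the real work, so most of the effort should go into verifying that $f^\mu$, and therefore $X^\mu\to Y$, satisfies those hypotheses. A second, more bookkeeping-type difficulty is the construction of $f^\mu$ itself, since a continuous map need not pull compact sets back to compact sets: one must set up the morphism of valuation sites by approximating preimages of compacts from below by compacts and using that $f$ is measure-preserving, so that the $\mu$-inner and $\nu$-inner coverings are respected — routine once phrased correctly, but the point where care is required. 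Once the square is in place the displayed computation completes the proof of Theorem \ref{functorialitymeasuresublocales}.
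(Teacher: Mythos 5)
Your proof is correct and follows essentially the same route as the paper: the paper's own argument also defines $\mu_*$ by pushforward along the lift $X^\mu \to \mathfrak{Sl}(X)$ furnished by Theorem \ref{lifttosublocales2} and deduces naturality of that lift from the naturality of $p : U\,\mathrm{Rad} \to L$ established in Theorem \ref{functorialityradonvaluationglobal}, which is exactly the uniqueness-of-lifts square you write down. Two small corrections to your commentary: the map $f^\mu = \mathrm{Rad}(f)$ for a \emph{globally defined} measure-preserving $f$ is supplied by Theorem \ref{functorialityradonvaluationglobal} (which handles the failure of $f^{-1}$ to preserve compactness precisely by the flatness-via-faithfulness argument you anticipate) rather than by the valuation-site adjunction of Theorem \ref{innervaluationadjunction}, which would require an actual lattice homomorphism $\mathcal{K}(Y) \to \mathcal{K}(X)$; and the hypothesis of Theorem \ref{lifttosublocales2} that you flag as the main obstacle is automatic here, since the domain $X^\mu$ is Boolean and hence every open is complemented.
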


\begin{corollary}[See Corollary \ref{invariancemeasuresublocale}]
Let $X$ be a Hausdorff space equipped with a Radon valuation $\mu$. Then there exists a measure $\mu_*$ on the locale of sublocales $\mathfrak{Sl}(X)$ of $X$, which is invariant under measure-preserving homeomorphisms of $X$, and the natural continuous map $\mathfrak{Sl}(X) \rightarrow X$ is measure-preserving.
\end{corollary}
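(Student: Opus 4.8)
The plan is to deduce the corollary as an essentially formal consequence of the functoriality statement (Theorem \ref{functorialitymeasuresublocales}) together with the construction of the lift $X^\mu \to \mathfrak{Sl}(X)$ and the pushforward measure $\mu_*$ described just before it. First I would recall that a measure-preserving homeomorphism $f : X \to X$ is in particular a morphism $(X,\mu) \to (X,\mu)$ in $\mathrm{RadHausSpc}_{glob}$: it is globally defined, continuous, a homeomorphism, and it preserves the Radon valuation $\mu$ on compact sets (since homeomorphisms carry compacts to compacts and $f_*\mu = \mu$ on $\mathcal{K}(X)$ follows from measure-preservation). Applying the functor $\mathfrak{Sl} : \mathrm{RadHausSpc}_{glob} \to \mathrm{MeasLoc}_{glob}$ yields a measure-preserving map $\mathfrak{Sl}(f) : (\mathfrak{Sl}(X),\mu_*) \to (\mathfrak{Sl}(X),\mu_*)$; functoriality applied to $f \circ f^{-1} = \mathrm{id}$ and $f^{-1}\circ f = \mathrm{id}$ shows $\mathfrak{Sl}(f)$ is an isomorphism of measured locales. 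Hence $\mu_*$ on $\mathfrak{Sl}(X)$ is invariant under $f$ in the strong sense that $\mathfrak{Sl}(f)$ is a measure-preserving automorphism, which is exactly the assertion.

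For the second clause I would observe that the natural continuous map $\mathrm{can} : \mathfrak{Sl}(X) \to X$ is measure-preserving essentially by construction: the measure $\mu_*$ on $\mathfrak{Sl}(X)$ is \emph{defined} as the pushforward of the measure $\mu_*$ on $X^\mu$ along $X^\mu \to \mathfrak{Sl}(X)$, and the triangle in the excerpt commutes, so the composite $X^\mu \to \mathfrak{Sl}(X) \xrightarrow{\mathrm{can}} X$ equals the canonical map $X^\mu \to X$. Since pushing forward $\mu_*$ along $X^\mu \to X$ is by definition the measure $\mu_*$ on $X$ (from the Radon valuation definition), and pushforward is functorial along composition of continuous maps, we get $\mathrm{can}_*(\mu_* \text{ on } \mathfrak{Sl}(X)) = \mathrm{can}_* ((X^\mu \to \mathfrak{Sl}(X))_* \mu_*) = (X^\mu \to X)_* \mu_* = \mu_*$ on $X$; that is, $\mathrm{can}$ is measure-preserving. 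Alternatively this is the content of the functoriality theorem applied to the map $\mathfrak{Sl}(X) \to X$ itself lying over the identity, but the direct pushforward argument is cleaner.

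I expect the only genuine point requiring care — the ``main obstacle'', though it is mild — is checking that a measure-preserving homeomorphism of the classical Radon measure space $X$ really does constitute a morphism in $\mathrm{RadHausSpc}_{glob}$ in the precise sense used by Theorem \ref{functorialitymeasuresublocales}, i.e.\ that the notion of morphism there is exactly ``globally defined continuous measure-preserving map'' and that a homeomorphism preserving $\mu$ as a classical measure also preserves the associated Radon valuation on $\mathcal{K}(X)$ and is compatible with whatever completeness or regularity conditions are baked into that category. Once the dictionary between the classical datum $(X,\mathcal{B},\mu)$ and the Radon valuation $(X,\mu|_{\mathcal{K}(X)})$ is invoked (it is set up in Section \ref{radonvaluations}), this is immediate, and the rest of the corollary is pure functoriality. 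I would therefore keep the proof to a few lines: cite Theorem \ref{functorialitymeasuresublocales}, note that homeomorphisms are isomorphisms in the source category and functors preserve isomorphisms, and read off both the invariance of $\mu_*$ and the measure-preservation of $\mathrm{can}$ from the construction.
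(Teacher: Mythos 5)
Your proposal is correct and follows essentially the same route as the paper: Corollary \ref{invariancemeasuresublocale} is stated there as an immediate consequence of Theorem \ref{functorialitymeasuresublocales} (see also the remark following that theorem), with invariance coming from the fact that functors send isomorphisms to isomorphisms and measure-preservation of $\mathrm{can}$ coming from the fact that every measure in sight is defined by pushforward along the lift $X^\mu \to \mathfrak{Sl}(X)$. Your final worry is also resolved exactly as you suspect: in the paper a measure-preserving homeomorphism means one preserving the Radon valuation on $\mathcal{K}(X)$, which for a homeomorphism is precisely the "compactly measure-preserving" condition defining morphisms of $\mathrm{RadHausSpc}_{glob}$, so there is nothing further to check.
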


What is fascinating is that this in particular assigns to each subset of $X$ a consistent measure, which is invariant under measure-reserving homeomorphisms, via pushforward along the natural map $X^{disc} \rightarrow  \mathfrak{Sl}(X)$, which in case the Radon valuation $\mu$ is a regular content agrees with the classical induced Radon measure on measurable sets, see Corollary \ref{measureofsubset}. The formula of the measure one obtains for an arbitrary subset $S \subset X$ is given by
$$\mu_*(S) = \sup \{ \mu(K) ~|~ K \subset S \text{ compact} \}.$$ The pushforward however does not need to send a partition of $X$ into disjoint subsets to a partition of $\mathfrak{Sl}(X)$, therefore no contradiction to Vitali's Paradox or the Banach-Tarski Paradox is obtained. Rather, the locale of sublocales ``knows'' when one has chosen a bad decomposition, since there are \emph{more} sublocales than subsets to detect these failures. This result is similar to the results obtained by Leroy \cite{leroy2013theorielamesuredans} and Simpson \cite{SIMPSON20121642}, although the main conceptual difference is that we construct a measure on $\mathrm{Sl}(X)^{op}$ rather than $\mathrm{Sl}(X)$, which results in much better formal properties.

\begin{remark}
This result is maybe the strongest indicator that from the point-of-view of topology the notion of a subspace of a topological space $X$ described by a \emph{set} of points is a concept that should be treated with caution and in general replaced by the notion of a sublocale. While it is fine to define a single particular subspace of a topological space via its subset of points, it is not a good idea to consider the totality of all such subspaces in a geometric or topological context. The notion of subspace really only works well when one restricts to finite unions and intersections of closed and open sets (and occasionally slightly more if one adds additional assumptions on the space $X$). The collection of all subspaces of a space should of course also be viewed \emph{topologically} - In this sense the locale $\mathfrak{Sl}(X)$ is the natural answer to how one should organize all ``topological sub-objects of $X$'' into one space. However, $\mathfrak{Sl}(X)$ is often not spatial, therefore one is forced to accept locale theory as the ambient setting in which to do topology. (Even if one is interested only in concrete topological spaces.)
\end{remark}

\begin{remark}
Compared with classical measure theory, the approach may seem backwards. Classically, for a measure space $(X,\mathcal{L},\mu)$ one first produces the $\sigma$-algebra $\mathcal{L} \subset \mathcal{P}(X)$, and then obtains the measure algebra $\mathcal{L}/\mathcal{N}$ after the fact. In our approach, we construct the measure algebra \emph{first}, and obtain the fact that we can measure all sublocales as a consequence.
\end{remark}

\begin{remark} The locale of sublocales $\mathfrak{Sl}(L)$ has a basis given by locally closed subsets of $X$, i.e.\ intersections of open and closed subsets. It therefore plays a role analogous to the constructible topology on a scheme. Continuous functions on the constructible topology have been used as a replacement for the notion of measurable function in the context of \emph{motivic integration theory} as developed originally by Kontsevich, see e.g.\ \cite{blickle2005shortcoursegeometricmotivic}. Similarly, in the context of classical measure theory, we can think of continuous functions defined on $\mathfrak{Sl}(L)$ as a useful and better behaved replacement of the notion of measurable function on $X$. A working theory of Lebesgue integration of functions defined on the locale of sublocales $\mathfrak{Sl}(L)$ is for example developed by Bernardes in \cite{bernardes_measure_and_integration}.
\end{remark}

Let us present some more results under the assumption that we have a regular content $\lambda$ on a Hausdorff space $X$.

\begin{theorem}[See Theorem \ref{measurablelocaleembedded}]
Let $\lambda$ be a regular content on a Hausdorff space $X$. Then the induced map $\tilde{p} : X^\lambda \rightarrow \mathfrak{Sl}(X)$ identifies $X^\lambda$ with a sublocale of $\mathfrak{Sl}(X)$.
\end{theorem}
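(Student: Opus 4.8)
The plan is to show directly that the inverse image homomorphism $\tilde{p}^{*}\colon\mathcal{O}(\mathfrak{Sl}(X))\to\mathcal{O}(X^{\lambda})$ is surjective, which is by definition what it means for $\tilde{p}$ to identify $X^{\lambda}$ with a sublocale of $\mathfrak{Sl}(X)$. The first step is to compute $\tilde{p}^{*}$ on the open and closed sublocales of $X$. Since $\tilde{p}$ is the lift of the natural map $p\colon X^{\lambda}\to X$ along $\mathrm{can}\colon\mathfrak{Sl}(X)\to X$ furnished by Theorem~\ref{lifttosublocales2}, we have $\mathrm{can}\circ\tilde{p}=p$, hence $\tilde{p}^{*}\circ\mathrm{can}^{*}=p^{*}$, and since $\mathrm{can}^{*}(U)=\mathfrak{c}(U)$ this gives $\tilde{p}^{*}(\mathfrak{c}(U))=p^{*}(U)$ for all $U\in\mathcal{O}(X)$. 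Because $\lambda$ is a regular content, $X^{\lambda}$ is a measurable locale (Theorems~\ref{regularcontentalmostboolean} and~\ref{almostbooleangivesboolean}), so $\mathcal{O}(X^{\lambda})$ is a Boolean frame; as $\mathfrak{o}(U)$ is the complement of $\mathfrak{c}(U)$ in $\mathcal{O}(\mathfrak{Sl}(X))=\mathrm{Sl}(X)^{\mathrm{op}}$ and frame homomorphisms preserve complements, it follows that $\tilde{p}^{*}(\mathfrak{o}(U))=\neg\,p^{*}(U)$, the Boolean complement of $p^{*}(U)$ in $\mathcal{O}(X^{\lambda})$.

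The image of $\tilde{p}^{*}$ is a subframe of $\mathcal{O}(X^{\lambda})$, and by the Basis Theorem (\ref{basistheorem}, \ref{valuationbasis}) the compacts $\mathcal{K}(X)$ form a basis of $X^{\lambda}$, so every open of $X^{\lambda}$ is a join of the corresponding basic opens $j(K)$, $K\in\mathcal{K}(X)$. It therefore suffices to prove $j(K)\in\operatorname{im}(\tilde{p}^{*})$ for every compact $K$. This is where the Hausdorff hypothesis enters: $K$ is then closed, $X\setminus K$ is open, and I claim that $j(K)$ and $p^{*}(X\setminus K)$ are complementary in the Boolean frame $\mathcal{O}(X^{\lambda})$. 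Granting the claim, $j(K)=\neg\,p^{*}(X\setminus K)=\tilde{p}^{*}(\mathfrak{o}(X\setminus K))$ lies in the image; since $\operatorname{im}(\tilde{p}^{*})$ is a subframe containing all the $j(K)$, which generate under joins, we conclude $\operatorname{im}(\tilde{p}^{*})=\mathcal{O}(X^{\lambda})$ and the theorem follows.

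To prove the claim I would work with the explicit $\lambda$-inner description of the relevant sheaves on $(\mathcal{K}(X),\lambda)$. For the meet, $j(K)\wedge p^{*}(X\setminus K)$ is the intersection of the two subsheaves, and a compact $L$ lying in both satisfies $\lambda(L)=\lambda(L\cap K)$ and $\lambda(L)=\sup\{\lambda(M):M\subseteq L\setminus K\text{ compact}\}$; applying modularity of $\lambda$ to the disjoint compacts $L\cap K$ and $M$ gives $\lambda(L\cap K)+\sup_{M}\lambda(M)\le\lambda(L)$, which forces $\lambda(L)=0$, so $L$ lies in the bottom element and $j(K)\wedge p^{*}(X\setminus K)=0$. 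For the join, I would use that $\lambda$, being a regular content, is almost Boolean (Theorem~\ref{regularcontentalmostboolean}), which provides the splitting identity $\lambda(L)=\lambda(L\cap K)+\sup\{\lambda(M):M\subseteq L\setminus K\text{ compact}\}$ for every compact $L$; hence $\{(L\cap K)\cup M : M\subseteq L\setminus K\text{ compact}\}$ is a $\lambda$-approximation cover of $L$, each $(L\cap K)\cup M$ is in turn finitely covered by $L\cap K\in j(K)$ and $M\in p^{*}(X\setminus K)$, so $j(L)\le j(K)\vee p^{*}(X\setminus K)$, and joining over all $L$ yields $j(K)\vee p^{*}(X\setminus K)=1$. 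Uniqueness of complements in a frame then gives $j(K)=\neg\,p^{*}(X\setminus K)$. Alternatively, one may simply invoke Theorem~\ref{radonmeasures} to identify $\mathcal{O}(X^{\lambda})\cong\mathcal{B}/\mathcal{N}$ with $j(K)\leftrightarrow[K]$ and $p^{*}(U)\leftrightarrow[U]$, after which the complementarity reduces to the triviality $[K]\wedge[X\setminus K]=[\emptyset]=0$ and $[K]\vee[X\setminus K]=[X]=1$, and the required generation is the inner regularity of the associated Radon measure.

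The step I expect to be the main obstacle is the join identity $j(K)\vee p^{*}(X\setminus K)=1$, equivalently the splitting property $\lambda(L)=\lambda(L\cap K)+\sup\{\lambda(M):M\subseteq L\setminus K\text{ compact}\}$: for a general content the inner approximation of $L\setminus K$ by compacts need not combine additively with $\lambda(L\cap K)$, so regularity of $\lambda$ is genuinely needed here, and it must be established as an intrinsic statement about $\lambda$ on $\mathcal{K}(X)$ and the $\lambda$-inner topology rather than via a pre-existing Borel measure — precisely what the notion ``almost Boolean'' is engineered to capture. The remaining ingredients — the determination of $\tilde{p}^{*}$ on $\mathfrak{o}(U)$ and $\mathfrak{c}(U)$ through the universal property of $\mathrm{can}$, the fact that $\mathcal{K}(X)$ is a basis of $X^{\lambda}$, and that frame homomorphisms preserve complements — are routine.
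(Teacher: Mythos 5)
Your proposal is correct and follows essentially the same route as the paper: surjectivity of $\tilde{p}^{*}$ is reduced to showing that each basic open $[K]$ is the complement of $p^{*}(K^{c})$ in the Boolean frame $\mathcal{O}(X^{\lambda})$, after which the fact that the $[K]$ generate under joins finishes the argument. This complementarity is exactly Corollary \ref{compactcomplement}; the paper obtains it by computing the negation $\neg p^{*}(K^{c})$ via Lemma \ref{implicationformulas} and identifying it with $\{L \subseteq K\}^{sh}$ using regularity (Proposition \ref{closedsublocalemeasure}), whereas you verify the two complement axioms directly — the same use of regularity, packaged differently. One attribution should be corrected: the splitting identity $\lambda(L)=\lambda(L\cap K)+\sup\{\lambda(M)\,:\,M\subseteq L\setminus K \text{ compact}\}$ is \emph{not} a consequence of almost-Booleanness. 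That notion only produces approximate complements $d$ with $\lambda(d\wedge c_{n})=0$, not elements set-theoretically disjoint from $K$, while membership in $p^{*}(X\setminus K)$ requires $\lambda$-approximation by compacts genuinely contained in $X\setminus K$; closing that gap is precisely what regularity does. The identity itself is a two-line consequence of regularity: choose $L\cap K\subseteq V\subseteq K'$ with $V$ open, $K'$ compact and $\lambda(K')<\lambda(L\cap K)+\epsilon$; then $M=L\setminus V$ is a compact subset of $L\setminus K$ with $\lambda(M)\geq\lambda(L)-\lambda(K')$, and the reverse inequality follows from additivity on the disjoint compacts $M$ and $L\cap K$. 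With that substitution your argument is complete and matches the paper's.
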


A variant of this theorem has already been proven by Leroy \cite[Théoremè 2]{leroy2013theorielamesuredans}. The results are not entirely equivalent, but agree when $X$ is also assumed locally compact.

We give a universal property for the map $X^\lambda \rightarrow X$ as the terminal measurable locale equipped with a locally finite and faithful measure and a measure-preserving map to $X$. This characterization was originally suggested by Simon Henry in the mathoverflow post \cite{486494}.

\begin{theorem}[See Theorem \ref{universalpropertyregularcontent}]
Let $\lambda$ be a regular content on a Hausdorff space $X$ and assume that the induced measure $\lambda_*$ on $X$ is locally finite.\footnote{This condition is automatically satisfied if $X$ is locally compact, see Lemma \ref{localfinitenesslocallycompact}.} Suppose $ f : (Y,\mu) \rightarrow (X, \lambda_*)$ is a measure-preserving continuous map, with $Y$ being Boolean, and $\mu$ a locally finite and faithful measure on $Y$. Then there exists a unique measure-preserving map $ \tilde{f} : (Y,\mu) \rightarrow (X^\lambda, \lambda_*)$ making the triangle
\[\begin{tikzcd}
	Y \\
	{X^\lambda} & X
	\arrow["{\tilde{f}}"', dashed, from=1-1, to=2-1]
	\arrow["f", from=1-1, to=2-2]
	\arrow["p"', from=2-1, to=2-2]
\end{tikzcd}\]
commute.
\end{theorem}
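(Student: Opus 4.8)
The plan is to construct $\tilde f$ directly on frames, using the presentation of $X^\lambda$ as the inner locale $L(\mathcal{K}(X),\lambda)$ given by the $\lambda$-inner topology, and the universal property of sites (propositional sheaves) recorded in Theorem \ref{flatfunctor}. Concretely, a continuous map $(Y,\mu)\to (X^\lambda,\lambda_*)$ is the same datum as a frame homomorphism $\mathcal{O}(X^\lambda)\to \mathcal{O}(Y)$, and since $X^\lambda$ is generated by the valuation site $(\mathcal{K}(X),\lambda)$, such a homomorphism is in turn determined by a morphism of sites, i.e.\ a monotone, finite-meet-preserving map $\mathcal{K}(X)\to \mathcal{O}(Y)$ sending $\lambda$-inner covers to covers of $Y$. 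Starting from $f:(Y,\mu)\to(X,\lambda_*)$ we already have the frame map $f^*:\mathcal{O}(X)\to\mathcal{O}(Y)$; the candidate for $\tilde f$ is obtained by precomposing with the canonical $\mathcal{K}(X)\to\mathcal{O}(X)$ that sends a compact $K$ to (an appropriate open neighbourhood filtered description of) $K$ — more precisely, using that the canonical map $X^\lambda\to X$ is itself built from $\mathcal{K}(X)\to\mathcal{O}(X)$, so the triangle commutes by construction once $\tilde f$ is shown to be well-defined.

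The key steps, in order: (i) Recall from the construction of $X^\lambda=(\mathcal{K}(X),\lambda)^{inn}$ that opens of $X^\lambda$ are generated by images $j(K)$ of compacts, with the two families of relations (finite unions cover; $\lambda$-directed approximations cover). (ii) Given the hypotheses on $f$, produce the monotone map $\phi:\mathcal{K}(X)\to\mathcal{O}(Y)$, $K\mapsto \bigwedge\{f^*(U): U\supset K\text{ open}\}$, i.e.\ the "closure-like" image, or alternatively work with the complementary open and show directly that $\phi$ preserves finite meets and bottom. (iii) Verify $\phi$ sends $\lambda$-inner covers to covers of $\mathcal{O}(Y)$: finite-union covers are automatic from finite-meet/join preservation; the content of this step is that a $\lambda$-approximation $\sup_i\lambda(K_i)=\lambda(K)$ with $(K_i)$ directed maps to a directed family in $\mathcal{O}(Y)$ whose supremum is $\phi(K)$ — and this is exactly where measure-preservation of $f$ and local finiteness of $\mu$ and $\lambda_*$ are used: $\mu(\bigvee_i\phi(K_i))=\sup_i\mu(\phi(K_i))=\sup_i\lambda_*$-values $=\lambda_*$-value$=\mu(\phi(K))$, and faithfulness of $\mu$ upgrades equality of measures to equality of opens. (iv) Conclude by Theorem \ref{flatfunctor} that $\phi$ extends uniquely to a frame homomorphism $\mathcal{O}(X^\lambda)\to\mathcal{O}(Y)$, hence a continuous $\tilde f:Y\to X^\lambda$; check $p\circ\tilde f=f$ on the generating opens $j(K)$, where it is immediate; check $\tilde f$ is measure-preserving using that both measures are pushed forward from $\mu$ and $\lambda_*=\mu\circ\tilde f^*$ is forced; and check uniqueness, which follows since any lift must agree on the generators $j(K)$ as $p^*$ is determined there and $p\circ\tilde f=f$ pins down $\tilde f^*(j(K))$.

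The main obstacle I expect is step (iii), specifically showing that the approximation covers are sent to covers — equivalently, that $\phi(K)=\bigvee_i\phi(K_i)$ in $\mathcal{O}(Y)$ whenever $(K_i)$ is $\lambda$-directed-cofinal in $K$. The inequality $\bigvee_i\phi(K_i)\le\phi(K)$ is formal, but the reverse requires genuinely using faithfulness: one knows $\mu\bigl(\phi(K)\bigr)=\lambda_*$-mass of the relevant set (via measure-preservation of $f$, which must first be transported from the statement "$f$ is measure-preserving for $\lambda_*$" to a statement about $\phi$ applied to compacts — this uses the formula $\lambda_*(S)=\sup\{\lambda(K):K\subset S\}$ from Corollary \ref{measureofsubset} and regularity of the content), and likewise $\mu(\bigvee_i\phi(K_i))=\sup_i\lambda(K_i)=\lambda(K)$; since $\bigvee_i\phi(K_i)\le\phi(K)$ and they have the same finite $\mu$-mass with $\mu$ faithful and locally finite, they are equal. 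One must be careful that $\phi(K)$ indeed has finite mass (local finiteness of $\lambda_*$, hence the footnoted hypothesis) so that "equal mass $\Rightarrow$ equal open" is legitimate. Handling the passage between the "closed sublocale" picture of the canonical map $X^\lambda\to X$ and the concrete monotone map on $\mathcal{K}(X)$ cleanly — so that commutativity of the triangle is truly automatic rather than an extra computation — is the other place where care is needed, and I would isolate it as a preliminary lemma identifying $p^*$ restricted to the image of $\mathcal{K}(X)$.
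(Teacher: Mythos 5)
Your construction is correct in outline, but it takes a genuinely different route from the paper. The paper first lifts $f$ through the locale of sublocales: since $Y$ is Boolean, Theorem \ref{lifttosublocales2} gives a unique $\bar{f} : Y \rightarrow \mathfrak{Sl}(X)$ over $X$, and the whole content of the proof is then to show that $\bar{f}$ factors through the embedding $X^\lambda \hookrightarrow \mathfrak{Sl}(X)$ of Theorem \ref{measurablelocaleembedded}; this is done by checking that $\bar{f}^*$ identifies any two sublocales identified by $\tilde{p}^*$, via the chain ``equal pushforward measure $\Rightarrow$ equal $\mu$-measure $\Rightarrow$ equal open'' using faithfulness and local finiteness. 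You instead bypass $\mathfrak{Sl}(X)$ entirely and build $\tilde{f}$ from the site presentation $(\mathcal{K}(X),\lambda)$ by exhibiting a morphism of sites $\phi : \mathcal{K}(X) \rightarrow \mathcal{O}(Y)$ and invoking Corollary \ref{flatfunctor}. Both proofs use faithfulness plus local finiteness at the same critical moment (upgrading an inequality of opens with equal finite measure to an equality); what the paper's route buys is that existence and uniqueness come almost for free from the universal property of $\mathrm{can}$ and the fact that $X^\lambda \rightarrow \mathfrak{Sl}(X)$ is an embedding, while your route buys a completely explicit formula for $\tilde{f}^*$ on generators at the cost of re-verifying flatness by hand. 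Two of your verifications deserve care: the identity $\mu(\phi(K))=\lambda(K)$ requires Lemma \ref{infformulaboolean} (co-continuity on Boolean locales, which needs a finite-measure open above $K$, i.e.\ local finiteness of $\lambda_*$) together with regularity of the content, and is essentially Proposition \ref{closedsublocalemeasure} in disguise; and preservation of binary meets by $K \mapsto \bigwedge\{f^*(U) : U \supset K\}$ needs the compact separation Lemma \ref{compactseparationlemma} — your alternative $\phi(K) = \neg f^*(X \setminus K)$ makes meets and finite joins immediate by de Morgan in the Boolean frame $\mathcal{O}(Y)$ and is the cleaner choice.

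The one place where your justification is genuinely too thin is uniqueness. The generators $[K]$ are \emph{not} in the image of $p^*$ (that image consists of the ideals $\{K \subset U\}^{sh}$), so the equation $p \circ \tilde{f} = f$ does not by itself ``pin down $\tilde{f}^*([K])$'' as you assert. The fix is Corollary \ref{compactcomplement}: $[K]$ is the complement of $p^*(K^c)$ in $\mathcal{O}(X^\lambda)$, and frame homomorphisms preserve complements, so any lift must satisfy $\tilde{f}^*([K]) = \neg f^*(X \setminus K)$; since the $[K]$ generate $\mathcal{O}(X^\lambda)$ under suprema, this forces uniqueness. With that lemma inserted, your argument closes.
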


The following is a characterization of the set of points of $X^\lambda$. As a consequence one sees that $X^\lambda$ is rarely ever spatial.

\begin{proposition}[See Propositon \ref{pointsregularcontent}]
Let $X$ be a Hausdorff space, $\lambda$ a regular content and $X^\lambda$ the associated Boolean inner measure locale. Then the natural map $p_\lambda : X^\lambda \rightarrow X$ induces an injection
$$ \mathrm{pts}( X^\lambda )  \hookrightarrow \mathrm{pts}( X )$$
which identifies the left-hand side with the set of point $x$ of $X$ such that $\lambda(\{x\}) > 0$.
\end{proposition}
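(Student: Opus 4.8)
The plan is to use the site-theoretic description of points. By Theorem \ref{flatfunctor}, a point of $X^\lambda = L(\mathcal{K}(X),\lambda)$ is the same datum as a \emph{$\lambda$-inner point of the site}: a filter $\mathcal{F} \subseteq \mathcal{K}(X)$ (nonempty, upward closed, closed under binary intersections) such that for every covering family $\{K_i \leq K\}$ of the $\lambda$-inner topology (Definition \ref{muinnertopology}) with $K \in \mathcal{F}$, some $K_i$ lies in $\mathcal{F}$; applied to the empty cover of $\emptyset$ this gives $\emptyset \notin \mathcal{F}$. Unwinding the definition of $p_\lambda : X^\lambda \to X$, the point of $X$ underlying $\mathcal{F}$ is the one whose neighbourhood filter is $\{U \text{ open} : K \subseteq U \text{ for some } K \in \mathcal{F}\}$; since $X$ is Hausdorff it is sober, so $\mathrm{pts}(X) = X$. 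My goal is to show every such $\mathcal{F}$ has the form $\mathcal{F}_x := \{K \in \mathcal{K}(X) : x \in K\}$ for a unique $x$, and that $\mathcal{F}_x$ is a $\lambda$-inner point exactly when $\lambda(\{x\}) > 0$; the proposition then follows.

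First I would produce $x$. The members of $\mathcal{F}$ are nonempty compacta and $\mathcal{F}$ is a filter, so $\bigcap_{K \in \mathcal{F}} K \neq \emptyset$ by the finite intersection property; and it cannot contain two distinct points $x \neq y$, since after choosing disjoint opens $U \ni x$, $V \ni y$, any $K \in \mathcal{F}$ is covered by the two compacta $K \setminus U$ and $K \setminus V$, forcing one of them — which misses $x$ or misses $y$ — into $\mathcal{F}$. So $\bigcap_{K \in \mathcal{F}} K = \{x\}$, whence $x = p_\lambda(\mathcal{F})$ and $\mathcal{F} \subseteq \mathcal{F}_x$. Next I would show $\mathcal{F} = \mathcal{F}_x$. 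For $K \in \mathcal{F}$ and open $U \ni x$, the finite-union cover $\{K \cap \overline{U},\, K \setminus U \leq K\}$ together with $K \setminus U \notin \mathcal{F}$ gives $K \cap \overline{U} \in \mathcal{F}$. Then, fixing any $A \in \mathcal{F}$ and writing $a = \lambda(\{x\})$, I would use regularity of $\lambda$ (Section \ref{regularcontents}) — in the form $a = \inf\{\lambda(L) : x \in \mathrm{int}(L)\}$ — together with the modularity identity $\lambda(A) = \lambda(A \setminus V) + \lambda(A \cap \overline{V}) - \lambda(A \cap \partial V)$ to check that $\inf_{V \ni x} \lambda(A \cap \overline{V}) = a$ and hence that the directed family $\{\{x\} \cup (A \cap C) : C \in \mathcal{K}(X),\ x \notin C\}$ is a $\lambda$-approximation cover of $A$ (its supremum of values being $a + (\lambda(A) - a) = \lambda(A)$). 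Continuity of $\mathcal{F}$ then puts some $\{x\} \cup (A \cap C)$ into $\mathcal{F}$, and splitting it via the finite-union cover $\{\{x\},\, A \cap C\}$ with $A \cap C \notin \mathcal{F}$ forces $\{x\} \in \mathcal{F}$, so $\mathcal{F} \supseteq \{K : x \in K\} = \mathcal{F}_x$. This yields injectivity of $\mathrm{pts}(X^\lambda) \to X$.

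For the image I would check directly when $\mathcal{F}_x$ is a $\lambda$-inner point. It is always a filter containing $\{x\}$ and automatically meets the finite-union covers. If $\lambda(\{x\}) > 0$: whenever $\{K_i \leq K\}$ is a $\lambda$-approximation cover with $x \in K$, some $K_i$ must contain $x$, for otherwise $\{x\}$ and $K_i$ are disjoint subsets of $K$, so $\lambda(\{x\}) + \lambda(K_i) \leq \lambda(K)$ and $\sup_i \lambda(K_i) \leq \lambda(K) - \lambda(\{x\}) < \lambda(K)$; so $\mathcal{F}_x$ is continuous, hence a point, and it maps to $x$. If $\lambda(\{x\}) = 0$: the one-element family $\{\emptyset \leq \{x\}\}$ is a $\lambda$-approximation cover of $\{x\}$ (since $\sup \lambda = 0 = \lambda(\{x\})$), so any $\lambda$-inner point containing $\{x\}$ would have to contain $\emptyset$, which is impossible; by the previous paragraph no point of $X^\lambda$ maps to such an $x$. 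Hence $p_\lambda$ carries $\mathrm{pts}(X^\lambda)$ bijectively onto $\{x \in X : \lambda(\{x\}) > 0\}$.

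The formal bookkeeping with the two kinds of covers is routine; the only real content is the measure-theoretic input in the middle step, namely the regularity facts $\inf_{V \ni x} \lambda(A \cap \overline{V}) = \lambda(\{x\})$ and $\sup\{\lambda(A \cap C) : x \notin C\} = \lambda(A) - \lambda(\{x\})$, which is precisely where the hypothesis ``regular content'' (as opposed to an arbitrary almost Boolean valuation) enters, and which I expect to be the main obstacle. As a consistency check one could instead argue abstractly: by Theorems \ref{regularcontentalmostboolean} and \ref{almostbooleangivesboolean} the frame $\mathcal{O}(X^\lambda)$ is a complete Boolean algebra, whose completely prime filters are exactly the principal ultrafilters at its atoms, and under the measure-algebra isomorphism of Theorem \ref{radonmeasures} these atoms correspond to the atoms of the measure, i.e.\ to the points $x$ with $\lambda(\{x\}) > 0$.
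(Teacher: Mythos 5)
Your proof is correct, but it takes a genuinely different route from the paper's. The paper exploits Booleanness: by Lemma \ref{pointsareminimalelements} points of $X^\lambda$ are atoms of $\mathcal{O}(X^\lambda)$, any atom is forced to be of the form $[K]$ with $\lambda(K)>0$, and regularity (applied twice, to produce the chain $U_x \subset K_x \subset V_x \subset C_x$) is used to split such a $K$ into strictly smaller non-null pieces whenever every singleton in $K$ is null, contradicting minimality. You instead work directly with the site-theoretic description of points from Example \ref{point}/Corollary \ref{flatfunctor}, characterise every inner point as the principal filter $\mathcal{F}_x=\{K : x\in K\}$ via the finite-intersection-property argument and the $\lambda$-approximation cover $\{\{x\}\cup(A\cap C)\}_{x\notin C}$, and then check by hand which $\mathcal{F}_x$ respect the two generating cover types. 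The key measure-theoretic inputs ($\inf_{V\ni x}\lambda(A\cap\overline V)=\lambda(\{x\})$ and $\sup\{\lambda(A\cap C): x\notin C\}=\lambda(A)-\lambda(\{x\})$) are correct consequences of regularity, and the one-element cover $\{\emptyset\leq\{x\}\}$ cleanly rules out null singletons. What your approach buys: it never invokes Theorem \ref{regularcontentalmostboolean} or Theorem \ref{almostbooleangivesboolean}, so it identifies $\mathrm{pts}(L(\mathcal{K}(X),\lambda))$ for any regular content without passing through the Boolean structure, and it exhibits the points concretely as principal filters of compacta rather than abstractly as atoms. What it costs is length and some bookkeeping: you should make explicit that checking the two generating cover types suffices for the generated pretopology (the combined $\epsilon$-criterion of Proposition \ref{mutopology} reduces to the directed case applied to finite joins), and that the identification of $p_\lambda(\mathcal{F})$ with $\bigcap_{K\in\mathcal{F}}K$ is only fully justified once $\mathcal{F}=\mathcal{F}_x$ is established (or by a short $T_1$ specialisation argument). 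Your closing ``consistency check'' via atoms of the measure algebra is essentially the paper's actual proof.
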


We call a continuous map $f : X^\mu \rightarrow Y$ into an arbitrary locale $Y$ a $Y$-valued \emph{Random variable}. This terminology fits with the classical definition of a Random variable up to almost everywhere equivalence.

\begin{proposition}[See Proposition \ref{injectivity} and Corollary \ref{densityinjectivity}]
Let $X$ be a Hausdorff space and $\mu$ a Radon valuation. Assume that $\mu_*(U) > 0$ for all non-empty open sets $U \subset X$, or equivalently that for all non-empty open sets $U$ there exists $K \subset U$ compact with $\mu(K) > 0$. Then the map
$$p_\mu : X^\mu \rightarrow X$$
is dense. In particular, for any Hausdorff locale $Y$ it induces an injection
$$ \mathrm{Map}( X, Y ) \rightarrow \mathrm{Map}( X^\mu, Y )$$
of continuous maps into random variables.
\end{proposition}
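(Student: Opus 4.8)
The plan is to deduce density of $p_\mu$ directly from the positivity hypothesis, and then to obtain the injectivity statement from the general principle that a dense map of locales cannot be equalized by two distinct maps into a Hausdorff locale.

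First I would prove that $p_\mu : X^\mu \to X$ is dense, using the criterion that a map of locales $f : L \to M$ is dense exactly when $f^* : \mathcal{O}(M) \to \mathcal{O}(L)$ reflects the bottom element (that is, $f^*(U) = 0$ implies $U = 0$). By the construction of $X^\mu$, the measure $\mu_*$ on $X$ is the pushforward of $\mu_*$ on $X^\mu$ along $p_\mu$, so $p_\mu$ is tautologically measure-preserving: $\mu_*(U) = \mu_*\big(p_\mu^*(U)\big)$ for every open $U \subseteq X$ (with $\mu_*$ on the right denoting the measure on $X^\mu$); unwinding the $\mu$-inner topology on $\mathcal{K}(X)$ identifies the right-hand side with $\sup\{\mu(K) : K \subseteq U \text{ compact}\}$, which is why the two forms of the hypothesis coincide. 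Then if $p_\mu^*(U) = 0$ we get $\mu_*(U) = \mu_*(0) = 0$, and the positivity hypothesis forces $U = 0$; hence $p_\mu$ is dense (equivalently, its image is a dense sublocale of $X$).

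Next I would run the standard equalizer argument for the ``in particular'' clause. Given a Hausdorff locale $Y$ --- so that the diagonal $\Delta : Y \to Y \times Y$ is a closed sublocale inclusion --- and maps $g_1, g_2 : X \to Y$ with $g_1 \circ p_\mu = g_2 \circ p_\mu$, I would form the equalizer $E \hookrightarrow X$ of $g_1$ and $g_2$ as the pullback of $\Delta$ along $(g_1, g_2)$; since closed sublocale inclusions are pullback-stable, $E = \mathfrak{c}(V) \hookrightarrow X$ is closed for some open $V$. The hypothesis $g_1 p_\mu = g_2 p_\mu$ makes $(g_1, g_2) \circ p_\mu$ factor through $\Delta$, so $p_\mu$ factors through $E$ by the universal property of the pullback. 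But $V$ and $0$ have the same image $V = V \vee V = 0 \vee V$ under the frame surjection $\mathcal{O}(X) \twoheadrightarrow \mathcal{O}(E)$, so any map through $E$ identifies $p_\mu^*(V)$ with $p_\mu^*(0) = 0$; by density $V = 0$, hence $E = X$ and $g_1 = g_2$. This is exactly injectivity of precomposition $\mathrm{Map}(X, Y) \to \mathrm{Map}(X^\mu, Y)$.

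I expect the only non-formal ingredient to be the identification of $\mu_*$ on $X$ with the pushforward along $p_\mu$, together with the compact-approximation formula for $\mu_*$ on opens; since both are built into the definition of $X^\mu$ via the $\mu$-inner topology, I do not anticipate a genuine obstacle. The two points to keep straight are bookkeeping: stating density in the form ``$p_\mu^*$ reflects $0$'' so it matches the positivity hypothesis on the nose, and using the closed-diagonal characterization of Hausdorffness so that the equalizer $E$ really is a closed sublocale.
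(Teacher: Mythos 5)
Your proof is correct and follows essentially the same route as the paper: the density argument is the same computation of $p_{\mu,*}(0)=0$ viewed through the adjunction (the paper computes $\theta_*(N)=\bigvee_{\mu_*(U)=0}U$ directly, you equivalently check that $p_\mu^*$ reflects $0$ using measure-preservation), and the ``in particular'' clause is the same deduction. The only difference is that the paper cites Picado--Pultr for the fact that dense maps are epimorphic toward Hausdorff locales, whereas you correctly reprove it via the closed-diagonal/equalizer argument.
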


The special case of $Y = \mathbb{C}$ produces an inclusion $C_b(X;\mathbb{C}) \hookrightarrow L^\infty(X,\mu)$ of bounded continuous complex-valued functions on $X$ into $L^\infty(X,\mu)$.

We also prove a representation theorem for measurable locales in Section \ref{sectionrepresentationtheorem}.

\begin{theorem}[See Theorem \ref{representationtheorem}]
Let $L$ be a measurable locale. Then there exists a locally compact Hausdorff space $X$ together with a regular content $\lambda$ such that $L \cong X^\lambda$.
\end{theorem}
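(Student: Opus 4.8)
The plan is to reduce the theorem to the classical structure theory of localizable measure algebras. Since $L$ is a measurable locale, its frame $B \defeq \mathcal{O}(L)$ is a complete Boolean algebra carrying a faithful and locally finite measure $\mu_L$, and $L$ is tautologically the locale $L(B)$ equipped with this measure; a faithful locally finite measure on a complete Boolean algebra is strictly positive and semifinite, so $(B,\mu_L)$ is a localizable measure algebra (cf.\ Section~\ref{localizablemeasurespace}). It therefore suffices to produce a locally compact Hausdorff space $X$ together with a Radon measure $\tilde{\lambda}$ on $X$ whose measure algebra $\mathcal{B}_{\tilde{\lambda}}/\mathcal{N}$ is measure-preservingly isomorphic to $(B,\mu_L)$. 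Indeed, the restriction $\lambda \defeq \tilde{\lambda}|_{\mathcal{K}(X)}$ is then a regular content, hence by Theorem~\ref{regularcontentalmostboolean} a Radon valuation, so $X^\lambda$ is defined, and Theorem~\ref{radonmeasures} supplies a measure-preserving isomorphism $X^\lambda = L(\mathcal{K}(X),\lambda) \cong L(\mathcal{B}_{\tilde{\lambda}}/\mathcal{N}) \cong L(B) = L$.

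To realise $(B,\mu_L)$ I would invoke Maharam's theorem: every localizable measure algebra decomposes as a direct sum $(B,\mu_L) \cong \bigoplus_{j \in J} (B_j,\mu_j)$ in which each summand is a positive scalar multiple $c_j$ of the measure algebra of the product probability measure $\nu_{\kappa_j}$ on the Cantor cube $\{0,1\}^{\kappa_j}$, where $\kappa_j$ is either $0$ or an infinite cardinal; the summands with $\kappa_j = 0$ are the atoms of $B$ (here $\{0,1\}^0$ is a one-point space). Each summand is already of geometric origin: $\{0,1\}^{\kappa_j}$ is a compact Hausdorff topological group, and $c_j\nu_{\kappa_j}$ is a finite Radon (Haar) measure on it whose measure algebra is precisely $(B_j,\mu_j)$.

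Now set $X \defeq \coprod_{j \in J} \{0,1\}^{\kappa_j}$ with the disjoint union topology. Each piece is compact Hausdorff and clopen in $X$, so $X$ is locally compact and Hausdorff, and I define $\tilde{\lambda} \defeq \sum_{j \in J} c_j\,(\iota_j)_* \nu_{\kappa_j}$, where $\iota_j$ denotes the inclusion of the $j$-th piece. One checks that $\tilde{\lambda}$ is a Radon measure: any compact subset of $X$ meets only finitely many of the clopen pieces, so $\tilde{\lambda}$ is finite on compacta, and both inner regularity by compacta and outer regularity on sets of finite measure follow by passing to a finite subfamily of pieces and distributing the error across them, using that each $c_j\nu_{\kappa_j}$ is Radon on $\{0,1\}^{\kappa_j}$. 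The Borel $\sigma$-algebra of $X$ is the product of the Borel $\sigma$-algebras of the pieces, the null sets are the coordinatewise null sets, and the induced measure on the quotient is the sum of the quotient measures; hence $\mathcal{B}_{\tilde{\lambda}}/\mathcal{N} \cong \bigoplus_j (B_j,\mu_j) \cong (B,\mu_L)$ as measure algebras. Combined with the reduction of the first paragraph, this yields $X^\lambda \cong L$, measure-preservingly.

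The genuine content sits in two places. First, one must import Maharam's classification in exactly this form, including the modelling of its homogeneous components by Haar measure on Cantor cubes and the correct bookkeeping of the scalar multiples and of the purely atomic part; this is classical (Maharam; see Fremlin's treatise on measure theory) but has to be stated with care. Second, one must verify that $\tilde{\lambda}$ is honestly a Radon measure on the possibly very large disjoint union $X$ and that its measure algebra is the expected direct sum; I expect this to be the main obstacle, although it is not deep — it reduces, via the fact that compacta see only finitely many summands, to the routine case of finite disjoint unions. Once these two points are in hand, the identification $X^\lambda \cong L$ is immediate from Theorems~\ref{regularcontentalmostboolean} and~\ref{radonmeasures}.
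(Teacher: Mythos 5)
Your argument is correct in outline, but it takes a genuinely different route from the paper. The paper's proof is a Stone-duality argument: it takes the Boolean ring $D=\mathcal{O}(L)^{\mu\text{-}\mathrm{fin}}$ of finite-measure chunks, lets $X$ be the locally compact (extremally disconnected) Hausdorff space with frame $\mathrm{Ind}(D)$, pushes $\mu$ forward along the embedding $L\hookrightarrow X$ to get a regular content $\lambda$, and then checks directly --- using that every compact subset of $X$ is a directed intersection of compact opens and that the interior of such an intersection has the same measure --- that the compact opens form a basis for $\mathrm{Sh}(\mathcal{K}(X),\lambda\text{-}inn;\mathbf{2})$ inducing the $\mu$-inner topology. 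This is short, self-contained, and uses nothing beyond the machinery already developed in the paper. You instead reduce to realising $(B,\mu_L)$ as the measure algebra of a Radon measure and invoke Maharam's classification to build $X$ as a disjoint union of Cantor cubes carrying Haar measure, then finish with Theorems \ref{regularcontentalmostboolean} and \ref{radonmeasures}. That works (your reduction via Theorem \ref{radonmeasures} is sound, and Theorem \ref{radonmeasures} does not depend on the representation theorem, so there is no circularity), but it imports a substantially deeper external theorem, which sits somewhat awkwardly with the paper's stated aim of \emph{independently} reproving the classical $L^\infty(X,\mu)$ representation of commutative von Neumann algebras. Two points in your sketch deserve explicit care if you pursue it: (i) for uncountable $\kappa$ the product measure on $\{0,1\}^{\kappa}$ lives on the Baire $\sigma$-algebra, and you need completion regularity of its Radon (Haar) extension to identify the two measure algebras; (ii) the verification that the direct-sum measure on $\coprod_j\{0,1\}^{\kappa_j}$ satisfies all five axioms of the paper's Definition of a Radon measure (in particular the local-determinedness axiom) and that its measure algebra is the full simple product $\prod_j B_j$ rather than a proper subalgebra. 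Both are standard but not free. What your approach buys in exchange is extra structural information (the Maharam-type decomposition of $L$ and a very concrete model for $X$); what the paper's approach buys is brevity, independence from deep classical results, and a canonical $X$ whose compact opens are exactly the finite-measure chunks of $L$.
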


Following this, we give a short discussion on how one would construct the classical Lebesgue measure on $\mathbb{R}^d$ as well as more generally the Haar measure on a locally compact Hausdorff group directly in Section \ref{examples}. This is meant to illustrate how results in the literature on classical measure theory can be imported into the localic setup in an efficient manner.

As a last application, we provide a construction of the \emph{Lebesgue locale} $M_{Leb}$ of a smooth manifold $M$, which is the localic counterpart to the $C^*$-algebra $L^{\infty}(M)$ (which can be constructed without reference to one particular measure). This construction has some pleasant functoriality and provides a basic object on which to set up integration theory on smooth manifolds.

\begin{theorem}[See Theorem \ref{lebesguelocale}]
There exists a functor
$$\begin{array}{rcl}
(-)_{Leb} : \mathrm{Man}_{\mathrm{subm}} & \rightarrow &  \mathrm{MblLoc} \\
 M & \mapsto & M_{Leb}
\end{array}$$
where $\mathrm{Man}_{\mathrm{subm}}$ is the category of large smooth manifolds and smooth submersions as maps between them.
\end{theorem}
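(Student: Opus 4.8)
The plan is to build $M_{Leb}$ by gluing the Lebesgue measure locales of coordinate charts, constructed from the machinery of regular contents developed above, and to obtain functoriality from a local analysis of submersions using Fubini's theorem together with the countable chain condition of the Lebesgue measure algebra.

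\emph{Local model.} For an open set $U\subseteq\mathbb{R}^{d}$ the restriction $\lambda_{U}$ of Lebesgue measure to the compact subsets $\mathcal{K}(U)$ is a regular content, so by Theorem~\ref{regularcontentalmostboolean} and Theorem~\ref{almostbooleangivesboolean} the inner locale $U^{\lambda_{U}}$ is a measurable locale, and by Theorem~\ref{radonmeasures} its underlying locale is $L(\mathcal{B}_{U}/\mathcal{N}_{U})$, which depends only on the Lebesgue null ideal $\mathcal{N}_{U}$. For a chart $\phi\colon W\xrightarrow{\ \cong\ }U$ of $M$ put $W_{Leb}\defeq U^{\lambda_{U}}$; since $C^{1}$-diffeomorphisms carry the Lebesgue null ideal to the Lebesgue null ideal this is well defined up to canonical isomorphism, and for open $W'\subseteq W$ the locale $W'_{Leb}$ is canonically the open sublocale of $W_{Leb}$ determined by $W'$.

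\emph{Key Lemma.} A smooth submersion $f\colon U\to V$ between open subsets of Euclidean spaces induces a morphism $f_{Leb}\colon U_{Leb}\to V_{Leb}$ of measurable locales, functorially in $f$. Under Pavlov's duality \cite{PAVLOV2022106884} this amounts to producing a normal $*$-homomorphism $f^{*}\colon L^{\infty}(V)\to L^{\infty}(U)$, which we take to be pullback of classes of functions. By the submersion normal form $f$ is, in suitable coordinates around each point, the projection $\mathbb{R}^{n}\times\mathbb{R}^{k}\to\mathbb{R}^{n}$, and Fubini's theorem shows that preimages of Lebesgue-null sets under a projection are Lebesgue-null; since nullity is local this gives $f^{-1}(\mathcal{N}_{V})\subseteq\mathcal{N}_{U}$, so $f^{*}$ descends modulo null sets. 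On measure algebras $f^{*}$ preserves finite meets, joins and complements — the last because $f^{-1}$ commutes with complementation exactly — hence is a Boolean homomorphism; to see that it preserves arbitrary suprema, and is therefore a normal complete homomorphism, one reduces once more to a projection, uses that the Lebesgue measure algebra has the countable chain condition to replace an arbitrary family of sets by a cofinal countable subfamily, and then applies Fubini. Functoriality is immediate since $f^{*}$ is literally function pullback; in particular diffeomorphisms induce isomorphisms.

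\emph{Globalization and functoriality.} Fix an atlas $\{(W_{i},\phi_{i})\}_{i\in I}$ of $M$. The transition diffeomorphisms together with the Key Lemma give isomorphisms $(W_{i}\cap W_{j})_{Leb}\xrightarrow{\ \cong\ }(W_{j}\cap W_{i})_{Leb}$ of open sublocales satisfying the cocycle condition, and we define $M_{Leb}$ as the resulting gluing in $\mathrm{Loc}$, whose frame is the limit of the chart frames along the restriction maps. This frame is a complete Boolean algebra: each restriction $\mathcal{O}((W_{i})_{Leb})\to\mathcal{O}((W_{i}\cap W_{j})_{Leb})$ has the form $a\mapsto a\wedge e_{ij}$ and so preserves complements, whence complements may be taken componentwise. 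It admits a locally finite faithful measure: by Zorn choose a maximal family $\{V_{\alpha}\}$ of pairwise disjoint nonzero open sublocales, each contained in a chart and of finite Lebesgue measure; since any positive-measure Lebesgue set meets some ball in positive measure, maximality forces $\bigvee_{\alpha}V_{\alpha}$ to be dense, hence — the frame being Boolean — equal to $1$, and then $\mu(a)\defeq\sum_{\alpha}\mu_{\alpha}(a\wedge V_{\alpha})$ is locally finite and faithful. Thus $M_{Leb}\in\mathrm{MblLoc}$, independent of the atlas up to canonical isomorphism because refinements give cofinal diagrams (and for paracompact $M$ one checks $M_{Leb}\cong M^{\lambda_{g}}$ for the Radon content of any Riemannian volume). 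Given a smooth submersion $f\colon M\to N$, the submersion normal form lets us choose compatible atlases and an index map along which $f$ is chartwise a submersion of Euclidean opens; the locale maps supplied by the Key Lemma are compatible with restrictions and transition isomorphisms, so they assemble to $f_{Leb}\colon M_{Leb}\to N_{Leb}$, with functoriality inherited from the Key Lemma. I expect the main obstacle to be precisely the normality clause of the Key Lemma: infinite suprema in a measure algebra are not set-theoretic unions, so one genuinely needs the countable chain condition to pass to countable families before Fubini applies, and one must simultaneously ensure — this is the purpose of the Zorn/density argument — that the possible non-paracompactness of $M$ never lets the uncountable gluing fall outside $\mathrm{MblLoc}$.
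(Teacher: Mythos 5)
Your proposal is correct in substance, but it takes a genuinely different route from the paper. The paper never glues: it defines the ideal $N\subset\mathcal{K}(M)$ of compact null sets of $M$ intrinsically, forms $M_{Leb}=b(N)=(N^c)_{\neg\neg}\hookrightarrow M^{\mathcal{K}}$ using the marked-locale machinery of Corollary \ref{booleanoutofcompact}, and then obtains the whole theorem from the general functor $b:\mathrm{MMHS}\rightarrow\mathrm{MblLoc}$ of Theorem \ref{functormeasurablymarked}; the only thing left to check for a submersion $f$ is the two-sided compatibility of markings, namely that $f$ pushes compact null sets to null sets (Lemma \ref{imageofnullundersmooth}) and that compact subsets of $f^{-1}(C)$ are null for $C$ null (normal form plus Lemma \ref{projectioninverseimage}). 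Your version instead builds $M_{Leb}$ as a \v{C}ech gluing of chart locales and establishes functoriality chartwise through Pavlov's duality (Theorem \ref{gelfanddualityvonneumann}) as a normal $*$-homomorphism. The analytic core is identical in both proofs (smooth images and submersion preimages of null sets are null, via the normal form and Fubini/the projection lemma), but the bookkeeping differs: the paper's global construction costs nothing in descent, atlas-independence, or countability — which matters since it must handle large, possibly non-paracompact manifolds — whereas your route pays for those verifications and in exchange makes the $L^\infty$/von Neumann picture explicit. Note also that your notion of morphism (preimage mod null) only needs the backward null-preservation, while the paper's functor $b$ on marked locales requires the exact equality $(f^{\mathcal{K}})^*(M)=N$ and hence both directions; for smooth maps the forward direction is free, so this is not a real saving. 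Two small points in your sketch deserve tightening: Fubini is needed only for well-definedness of $f^*$ on measure algebras, not for normality — normality follows from the countable chain condition together with the facts that $f^{-1}$ commutes with countable unions and that countable unions of null sets are null — and your Zorn argument for the existence of a locally finite faithful measure on the glued locale is essentially a re-proof of the implication $(2)\Rightarrow(1)$ of Theorem \ref{allequivalent}, which you could cite instead.
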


\begin{remark}Given the framework provided in this article, one can give a simple logical definition of a probability space, which covers virtually all practical applications:
\begin{definition}
A \emph{probability system} consists of the datum of a complete Boolean algebra $\mathcal{E}$  of \emph{events} together with a \emph{faithful measure} $\mathbb{P}$ such that $\mathbb{P}(1) =1$, in other words an assignment $\mathbb{P} : \mathcal{E} \rightarrow [0,1]$, such that
$$\begin{array}{l}
\mathbb{P}(E) = 0 \text{ iff } E = 0 \\
\mathbb{P}(1) = 1 \\
\mathbb{P}(E_1) + \mathbb{P}(E_2) = \mathbb{P}(E_1 \vee E_2 ) + \mathbb{P}(E_1 \wedge E_2 ) \\
\text{Whenever } \{E_i \leq E ~|~ i \in I\} \text{ is a directed system: } \mathbb{P}( E ) = \sup_{i \in I} \mathbb{P}(E_i) \text{ iff } E = \bigvee_{i \in I} E_i \\ 
\end{array}$$
\end{definition}
The practical meaning of this definition is that once a class of ``basic events'' that generates $\mathcal{E}$ under suprema and whose probabilities are known has been identified, working within the algebra of events boils down entirely to the estimation of probabilities. This set of basic events can in praxis always be modelled via an almost Boolean valuation site. Viewing $\mathcal{E}$ as presenting a locale, a random variable with values in some locale $L$ is just a continuous function $\mathcal{E} \rightarrow L$. It equips $L$ with a probability measure via pushforward (In the case of $\mathbb{R}$-valued random variables: The \emph{distribution} of the random variable). The definition of an ergodic action of a group $G$ is also straightforward. An action of a group $G$ on $\mathcal{E}$ is \emph{ergodic}, if the quotient $\mathcal{E}_G$ (taken in the category of locales) is a point. The corresponding topos of sheaves on $\mathcal{E}$ is the topos of conditional set theory over $\mathcal{E}$.
\end{remark}

\begin{remark} There exist many close analogies between the study of algebraic $K$-theory and measure theory, as already elaborated upon to some extent in the article \cite{lehner2025algebraicktheorycoherentspaces}. One of the conceptual advantages of the localic definition of a measure is that it highlights these similarities much more closely than the classical definition of a measure on a $\sigma$-algebra does. To demonstrate, compare the following three definitions.
\begin{itemize}
\item A measure on a locale $L$ is a functor $\mu : \mathcal{O}(L) \rightarrow [0, \infty]$ such that
\begin{itemize}
\item $\mu(0) = 0$,
\item $\mu(U) + \mu(V) = \mu(U \vee V) + \mu( U \cap V)$, and
\item $\sup_{i \in I} \mu(U_i) = \mu( \bigvee_{i \in I} U_i )$ for every directed set $U_i, i \in I$.
\end{itemize}
\item Let $\mathcal{C}$ be a cocomplete 1- or $\infty$-category. A $\mathcal{C}$-valued \emph{cosheaf} on a locale $L$ is a functor $\mathcal{F} : \mathcal{O}(L) \rightarrow \mathcal{C}$ such that
\begin{itemize}
\item $\mathcal{F}(0) \cong 0$,
\item for every $U,V$ the square
\[\begin{tikzcd}
	{\mathcal{F}(U\wedge V)} & {\mathcal{F}(V)} \\
	{\mathcal{F}(U)} & {\mathcal{F}(U\vee V)}
	\arrow[from=1-1, to=1-2]
	\arrow[from=1-1, to=2-1]
	\arrow[from=1-2, to=2-2]
	\arrow[from=2-1, to=2-2]
\end{tikzcd}\]
is a pushout, and
\item $\mathrm{colim}_{i \in I} \mathcal{F}(U_i) =  \mathcal{F}( \bigvee_{i \in I} U_i )$ for every directed set $U_i, i \in I$.
\end{itemize}
\item A \emph{finitary localizing invariant} with values in a cocomplete stable $\infty$-category $\mathcal{E}$ is a functor $F : \mathrm{Cat}^{\mathrm{perf}} \rightarrow \mathcal{E}$ such that\footnote{See e.g.\ \cite{hebestreit2023localisationtheoremmathrmktheorystable}.}
\begin{itemize}
\item $F(0) \cong 0$,
\item for every Verdier square
\[\begin{tikzcd}
	{A} & {B} \\
	{C} & {D}
	\arrow[from=1-1, to=1-2]
	\arrow[from=1-1, to=2-1]
	\arrow[from=1-2, to=2-2]
	\arrow[from=2-1, to=2-2]
\end{tikzcd}\]
the square
\[\begin{tikzcd}
	{F(A)} & {F(B)} \\
	{F(C)} & {F(D)}
	\arrow[from=1-1, to=1-2]
	\arrow[from=1-1, to=2-1]
	\arrow[from=1-2, to=2-2]
	\arrow[from=2-1, to=2-2]
\end{tikzcd}\]
is a pushout, and
\item $\mathrm{colim}_{i \in I} F(A_i) \cong  F( \mathrm{colim}_{i \in I} A_i )$ for every filtered diagram $A_\bullet : I \rightarrow \mathrm{Cat}^{\mathrm{perf}}$. 
\end{itemize}
We remark that algebraic $K$-theory and topological Hochschild Homology are examples of finitary localizing invariants.
\end{itemize} 
The formal similarity should be clear: All three definitions are instances of combining the inclusion-exclusion principle with the principle of exhaustion from below. Other examples can be found as well throughout mathematics, e.g.\ the notion of a homology theory to name another one.
\end{remark}

%
%
%
%

\begin{remark}
The use of Hausdorff topological spaces may seem odd from the point of view of developing ``point-free'' versions of measure theory. Ultimately, this was a concession to keep in line with classical literature, as the theory of regular contents and Radon measures on Hausdorff spaces is already quite well developed, and regular contents are used to construct basically all relevant examples of measure spaces in mathematical practice. Using our present approach, all of these examples can be imported into the locale theoretic context in a fairly pain-free manner.

An approach that is less compromising with respect to classical literature, but more in line with the locale theoretic spirit is taken by Leroy \cite{leroy2013theorielamesuredans}, who instead studies locally finite measures on \emph{regular} locales. Regularity is a separation condition that is related to Hausdorffness, and works as a reasonable replacement. It seems likely to us that the theory presented in this article could be phrased in this language as well, by reinterpreting the poset of compact subsets $\mathcal{K}(X)$ of a Hausdorff space $X$ as (the opposite of) the poset of Scott open filters of a regular locale $L$. We have ommitted this approach, as it might alienate readers that are not as familiar with the workings of locale theory, and the conceptual shift present in this article is quite substantial as it is already.

In the case of locally compact Hausdorff spaces both approaches essentially become equivalent. The notion of a locally compact Hausdorff space can be developed entirely within locale theory using locally compact and completely regular locales. Any locally compact locale is automatically spatial (in the presence of the axiom of choice) and the two categories are equivalent, see e.g.\ \cite[Section 5.3 and 5.4]{picado_pultr}. Locally finite measures on a locally compact Hausdorff space $X$ are in bijection with regular contents on $X$ (See Theorem \ref{regularcontentslocallycompact}), therefore our theory simply adds a different angle to the theory presented in Leroy's work when working with locally compact Hausdorff spaces.
\end{remark}

\begin{remark} The present article is not complete. For example, the theory of integration, one of the main motivations of measure theory, is absent. Along with this there is no discussion of product measures, or of measures on function spaces. There is also no discussion of group actions on measure locales and ergodic theory. At least in principle, one should be able to readily import these topics from classical literature into the locale theoretic setup. This is something to be addressed in future work.
\end{remark}

\begin{remark}
Since there is a community of people interested in constructive approaches to measure theory (this interest is visible for example in the work of Vickers \cite{DBLP:journals/mlq/Vickers08}, Coquand-Spitters \cite{Coquand_Spitters_2009}, Spitters \cite{Spitters2005ConstructiveAI} among others) we should mention that this paper has been written assuming classical foundations. The use of choice as well as proof by contradiction is present, most notably in the comparison of the three different definitions of a measurable locale (Theorem \ref{allequivalent}), as well as (unsurprisingly) when comparing the classical measure theory via $\sigma$-algebras to the approach via the $\mu$-inner Grothendieck topology in Section \ref{classicalcomparison}. Nonetheless, we believe the usage of Grothendieck topologies to describe Radon measures can be a useful starting tool for constructive measure theory; something to be worked out by someone more experienced in the matter of constructive mathematics than the author of this article.
\end{remark}


\subsection{Acknowledgements}

The author thanks Dmitri Pavlov and Simon Henry for helpful feedback on mathoverflow to the question \cite{486494} that inspired this article, as well as David Kern, Christian Espíndola and Asgar Jamneshan for pointers and explanations. I also want to thank Holger Reich and Chris Huggle for enduring my many ramblings on the topic of this paper during our lunch breaks.

\section{Locale theory}

We will use the language of frames and locales throughout this paper. For general resources, see \cite{johnstone1982stone}, \cite{picado_pultr} and \cite{vickers_topology}. Suppose $(P, \leq)$ is a poset and $p, q \in P$. We call an element $p \wedge q$ a \emph{meet} if the condition
$$r \leq p \wedge q \text{ iff } r \leq p \text{ and } r \leq q$$
holds for all $r \in P$. Meets, if they exist, are unique. Of course, one can think of the poset $(P, \leq)$ as a special kind of category, a viewpoint that we will often abuse in this paper. Using this perspective, meets are just products.

The \emph{join} $p \vee q$ is defined dually as the meet of $p$ and $q$ in the opposite poset $P^{op}$. A \emph{distributive lattice} $(D,\leq)$ is a poset that admits joins, written as $U \vee V$ and meets, written $U \wedge V$, and the relation
$$U \wedge (V \vee W) = (U \wedge V) \vee (U \wedge W)$$
holds, for any $U,V,W \in D$. A distributive lattice is called \emph{lower bounded} if it has a bottom element $0$, and furthermore \emph{bounded} if it also has a top element $1$.

We refer to monotone maps $f : P \rightarrow P'$ as \emph{functors}. A functor $f : D \rightarrow D'$ between distributive lattices is called a \emph{lattice homomorphism} if $f$ preserves meets and joins. We denote the categories of lower bounded, respectively bounded, distributive lattices by
$$ \mathrm{DLatt}_{lb} ~\text{ and }~ \mathrm{DLatt}_{bd},$$
where morphisms are lattice homomorphisms that preserve the bottom element, respectively the bottom and top element.

More generally, for any poset $P$ one can define \emph{infima}, respectively \emph{suprema}, of families of elements $p_i \in P, i \in I$ analogously to how one defines meets and joins. We will denote these as $\bigwedge_{ i \in I} p_i$ for the infimum, and $\bigvee_{i \in I} p_i$ for the supremum. Note that top elements and meets are special cases of infima, whereas bottom elements and joins are special cases of suprema. Thinking in terms of categories, infima are just limits, and suprema are just colimits. We call a poset $(P,\leq)$ a \emph{complete lattice} if it is closed under arbitrary suprema, which is the case iff it is closed under arbitrary infima.

A \emph{frame} is a poset $(F, \leq )$ which admits arbitrary suprema and finite infima and satisfies the infinite distributivity law
$$ U \wedge \bigvee_{i \in I} V_i = \bigvee_{i \in I} U \wedge V_i$$
for all $U \in F$ and families $V_i \in F, i \in I$. We note that in particular any frame is a complete distributive lattice. An important example of a frame is given by the poset $( \mathcal{O}(X), \subset )$ of open sets of a topological space $X$. We highlight the following, slightly nonstandard, definition.

\begin{definition}
A functor $f^* : F \rightarrow F'$ between frames is called a \emph{partial frame homomorphism}, if $f^*$ preserves arbitrary suprema and (binary) meets. The element $f^*(1) \in F'$ is called the \emph{domain} of $f^*$. A partial frame homomorphism $f^*$ is called \emph{globally defined}, or simply a \emph{frame homomorphism}, if $f^*(1) = 1$.
\end{definition}

This defines the category $\mathrm{Frm}_{\mathrm{part}}$ of frames and partial frame homomorphisms, as well as the category $\mathrm{Frm}$ of frames and frame homomorphisms. As an example, a partially defined continuous map $f : X \rightarrow Y$ with open support $U$ gives a partial frame homomorphism $f^{-1} : \mathcal{O}(Y) \rightarrow \mathcal{O}(X)$, which is globally defined iff $U = X$. This reversal of direction motivates one to define the category of locales as the opposite category $\mathrm{Loc} = \mathrm{Frm}^{op}$. This way assigning to a topological space $X$ its frame $\mathcal{O}(X)$ produces a \emph{covariant} functor $\mathrm{Top} \rightarrow \mathrm{Loc}$, which is an equivalence when restricted to the full subcategories of sober spaces and spatial locales \cite[II.4.6]{picado_pultr}. By abuse of notation, for a general locale $L$ we use the notation $\mathcal{O}(L)$ for the corresponding frame of $L$ and refer to elements $U \in \mathcal{O}(L)$ as opens of $L$. Conversely, for a given frame $F$ we write $L(F)$ for the corresponding locale. We refer to arrows $f : L \rightarrow L'$ of locales as \emph{continuous maps} or also \emph{geometric morphisms}, which are of course just dually defined as frame homomorphisms $f^* : \mathcal{O}(L') \rightarrow \mathcal{O}(L)$. Similarly, we define the category $\mathrm{Loc}_{\mathrm{part}} = \mathrm{Frm}_{\mathrm{part}}^{op}$ of locales and partial continuous maps.

Soberness of a topological space is a rather mild separation condition. It is implied by Hausdorffness, and implies $T_0$, but is independent of $T_1$. We will not have any use for topological spaces which are not sober in this paper. On the other hand, the use of non-spatial locales is absolutely crucial for our purposes. 

We note that any partial frame homomorphism $f^* : F \rightarrow F'$ has an associated right adjoint $f_* : F' \rightarrow F$, defined via the characterization of adjoints, i.e.\ for all $U \in F, V \in F'$ we have
$$ f^*(U) \leq V \text{ iff } U \leq f_*(V). $$
The existence of $f_*$ follows either abstractly from the adjoint functor theorem, or can be seen concretely via the formula
$$f_*( V ) = \bigvee_{ U \text{ s.t. } f^*(U) \leq V } U.$$

\begin{remark}
A perhaps unusual feature to readers versed in classical point-set topology or also locale theory is that the category $\mathrm{Loc}_{\mathrm{part}}$ is a \emph{pointed category}, with the role of the zero object being taken by the locale of the empty set $\emptyset$. If $L, L'$ are locales, then the zero map $ 0 : L \rightarrow L'$ is given by the functor $0^* : \mathcal{O}(L') \rightarrow  \mathcal{O}(L), U \mapsto 0$.
\end{remark}

We call a globally defined map $f : L \rightarrow M$ a \emph{quotient map} if $f^*$ is injective. Quotient maps are epimorphisms in the category of locales \cite[\nopp IV.1]{picado_pultr}. Similarly, we call $f : L \rightarrow M$ an \emph{embedding} if $f_*$ is injective. Embeddings are equivalently \emph{regular} monomorphisms in the category of locales. Subsets $S \subset \mathcal{O}(M)$ that give rise to embeddings are called \emph{sublocales}. More on this in the upcoming Section \ref{sectionsublocales}.

Despite the fact that locale theory is often called point-free topology, every locale $L$ comes with a set of points, see e.g.\ \cite[\nopp II.3]{picado_pultr}. Let $\mathrm{pt}$ be the locale corresponding to the topological space given by a single point. We define $\mathrm{pts}(L) = \mathrm{Map}_{\mathrm{Loc}}(\mathrm{pt}, L)$. It can be checked that a point of a locale is equivalently described by a \emph{completely prime filter} $\mathcal{F} \subset \mathcal{O}(L)$, that is a subset $\mathcal{F} \subset \mathcal{O}(L)$ such that
\begin{itemize}
\item $\mathcal{F}$ is proper: $0 \not\in \mathcal{F}$.
\item $\mathcal{F}$ is upward closed: If $U \in \mathcal{F}, U \leq V$ then also $V \in \mathcal{F}$.
\item $\mathcal{F}$ is closed under binary meets: If $U, V \in \mathcal{F}$ then also $U \wedge V \in \mathcal{F}$.
\item $\mathcal{F}$ is completely prime: If $\bigvee_{i \in I} U_i \in \mathcal{F}$ then there exists $i \in I$ such that $U_i \in \mathcal{F}$.
\end{itemize}

\subsection{Heyting implication} \label{Heytingimplication}

Now let $L$ be a locale, with associated frame $F$. A somewhat under-appreciated feature in classical point-set topology is that in any frame, there is a notion of \emph{Heyting implication} $U \rightarrow V$ between opens $U,V$, defined by the adjunction\footnote{This is of course just a special case of the definition of the internal hom in a cartesian closed category.}
\[\begin{tikzcd}
	F & F
	\arrow[""{name=0, anchor=center, inner sep=0}, "{U \wedge -}", curve={height=-6pt}, from=1-1, to=1-2]
	\arrow[""{name=1, anchor=center, inner sep=0}, "{U \rightarrow - }", curve={height=-6pt}, from=1-2, to=1-1]
	\arrow["\dashv"{anchor=center, rotate=-90}, draw=none, from=0, to=1]
\end{tikzcd}\]
Concretely $U \rightarrow V$ can be computed as
	$$ U \rightarrow V = \bigvee_{ W \text{ s.t. } W \wedge U \leq V } W $$
The following are standard properties of Heyting implication.
\begin{proposition} \label{superheytingprop}
Let $F$ be a frame and $U, V, W \in F$. The following statements hold.
\begin{enumerate}
\item $(U \rightarrow V) \wedge U \leq V$
\item $1 \rightarrow U = U$ 
\item $U \rightarrow V = 1$ iff $U \leq V$.
\item $U \rightarrow ( V \rightarrow W ) = ( U \wedge V ) \rightarrow W$.
\item $V \leq ( U \rightarrow V )$.
\item $U \leq ( U \rightarrow V ) \rightarrow V$.
\item $(((U \rightarrow V) \rightarrow V ) \rightarrow V ) = U \rightarrow V$.
\item The assignment $- \rightarrow V : F^{op} \rightarrow F$ sends suprema to infima.
\end{enumerate}
\end{proposition}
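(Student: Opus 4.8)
The plan is to derive all eight items purely formally from the defining adjunction $W \wedge U \leq V \iff W \leq (U \rightarrow V)$, together with the poset Yoneda principle: an element $x$ of a poset is pinned down by the set $\{w : w \leq x\}$, so to prove $x = y$ it suffices to show $w \leq x \iff w \leq y$ for all $w$. No geometric content is needed beyond this and, for item (8), the infinite distributive law of the frame.

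First I would dispatch (1)--(5) as one-line unwindings. For (1), instantiate the adjunction at $W = U \rightarrow V$, using reflexivity $U \rightarrow V \leq U \rightarrow V$. For (2), note $W \leq (1 \rightarrow U) \iff W \wedge 1 \leq U \iff W \leq U$ for every $W$, so $1 \rightarrow U = U$ by antisymmetry. Item (3) is the special case $W = 1$ of the adjunction, since $U \rightarrow V = 1 \iff 1 \leq U \rightarrow V$. For (4), chase $X \leq U \rightarrow (V \rightarrow W) \iff X \wedge U \leq V \rightarrow W \iff X \wedge U \wedge V \leq W \iff X \leq (U \wedge V) \rightarrow W$ and conclude by Yoneda. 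For (5), $V \wedge U \leq V$ trivially, hence $V \leq (U \rightarrow V)$. Item (6) is just (1) read through the adjunction: $U \leq ((U \rightarrow V) \rightarrow V) \iff (U \rightarrow V) \wedge U \leq V$, which is (1) up to commutativity of $\wedge$.

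Before (7) I would record the auxiliary fact that $(-) \rightarrow V$ is order-reversing: if $X \leq Y$ then $(Y \rightarrow V) \wedge X \leq (Y \rightarrow V) \wedge Y \leq V$ by (1), so $(Y \rightarrow V) \leq (X \rightarrow V)$. Writing $\neg W := W \rightarrow V$, item (6) says $W \leq \neg\neg W$ for all $W$. Applying (6) to $\neg W$ gives $\neg W \leq \neg\neg\neg W$; applying the order-reversing map $\neg$ to the inequality $W \leq \neg\neg W$ gives $\neg\neg\neg W \leq \neg W$. Hence $\neg\neg\neg W = \neg W$, and taking $W = U$ yields (7). Finally, for (8) I would show, for every $W$, that $W \leq \big( (\bigvee_i U_i) \rightarrow V \big) \iff W \wedge \bigvee_i U_i \leq V \iff \bigvee_i (W \wedge U_i) \leq V \iff \forall i,\; W \wedge U_i \leq V \iff \forall i,\; W \leq (U_i \rightarrow V) \iff W \leq \bigwedge_i (U_i \rightarrow V)$, the second step being exactly the frame distributivity law; antisymmetry then gives $(\bigvee_i U_i) \rightarrow V = \bigwedge_i (U_i \rightarrow V)$. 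Equivalently, one can observe that $(-) \rightarrow V : F^{op} \rightarrow F$ is right adjoint to $(-) \rightarrow V : F \rightarrow F^{op}$ (again by commutativity of $\wedge$), hence preserves limits, i.e.\ sends suprema in $F$ to infima in $F$.

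I do not expect a genuine obstacle; the argument is entirely adjunction bookkeeping. The one spot to stay alert is (7): $\neg\neg$ is not the identity on a general frame, so the two inequalities $\neg W \leq \neg\neg\neg W$ and $\neg\neg\neg W \leq \neg W$ must be obtained separately — the first from (6) applied to $\neg W$, the second from (6) applied to $W$ followed by order-reversal. The only non-formal ingredient anywhere in the proposition is the infinite distributive law, used exactly once, in (8), which is precisely the property that distinguishes frames from arbitrary bounded distributive lattices.
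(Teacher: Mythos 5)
Your proof is correct and follows essentially the same route as the paper: every item is derived by adjunction bookkeeping and antisymmetry/Yoneda, with (7) obtained from (6) plus the order-reversal of $(-)\rightarrow V$ and (8) from the infinite distributive law (equivalently, self-adjointness of $(-)\rightarrow V$). The only cosmetic difference is that in (1) the paper unwinds the explicit supremum formula for $U \rightarrow V$ while you invoke the counit directly via reflexivity, and you spell out the monotonicity and distributivity steps that the paper leaves implicit.
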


\begin{proof}
\begin{enumerate}
\item We note that this is simply the counit of the adjunction, in this context also called evaluation. We compute it from the formula
$$(U \rightarrow V) \wedge U = \left( \bigvee_{ W \text{ s.t. } W \wedge U \leq V } W \right)\wedge U = \bigvee_{ W \text{ s.t. } W \wedge U \leq V } W  \wedge U  \leq V. $$
\item We have $U \leq (1 \rightarrow U)$ by using the adjunction on the equality $ 1 \wedge U = U$. The converse corresponds to the counit of the adjunction $(1 \rightarrow V ) \wedge 1 \leq V$.
\item Note that $(U \rightarrow V) \leq 1$ is always true. The converse is by the adjunction
$$1 \leq (U \rightarrow V) \text{ iff } U = 1 \wedge U \leq V. $$
\item Let $Z \in F$ be open. Then
$$ Z \leq U \rightarrow ( V \rightarrow W ) \text{ iff } Z \wedge U \leq (V \rightarrow W)  \text{ iff } Z \wedge U \wedge V \leq W \text{ iff } Z \leq ( U \wedge V ) \rightarrow W.$$
The statement follows by the Yoneda lemma.
\item This follows by applying the adjunction to $ U \wedge V \leq V$.
\item This follows by applying the adjunction to the counit
$$ U \wedge (U \rightarrow V) \leq V.$$
\item Applying (5) we immediately get 
$$U \rightarrow V \leq (((U \rightarrow V) \rightarrow V ) \rightarrow V )$$
as a special case. However, we can also apply the contravariant functor $(-) \rightarrow V$ to (5) to obtain  
$(((U \rightarrow V) \rightarrow V ) \rightarrow V ) \leq U \rightarrow V$.
\item One verifies that $(-) \rightarrow V$ is always right adjoint to itself (viewed as a functor $F \rightarrow F^{op}$).
\end{enumerate}
\end{proof}

\begin{proposition} \label{heytingpropmaps}
Let $f : L \rightarrow M$ be a partial continuous map. Let $U$ be an open of $L$ and $V, Z$ opens of $M$. Then:
\begin{enumerate}
\item $f_*( f^*(V)  \rightarrow U ) = V  \rightarrow  f_*(U)$, and
\item $f^*( V \rightarrow Z ) \leq f^*(V) \rightarrow f^*(Z)$.
\end{enumerate}
\end{proposition}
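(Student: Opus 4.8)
The plan is to prove both parts by routine adjunction chases, using only that $f^*$ preserves binary meets, that $f^* \dashv f_*$, and the defining adjunction $U \wedge (-) \dashv (U \rightarrow -)$ of Heyting implication, together with Proposition \ref{superheytingprop}(1).

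For part (1) I would argue by the Yoneda lemma. Fix an arbitrary open $W \in \mathcal{O}(M)$ and chase
\[
W \leq f_*\bigl(f^*(V) \rightarrow U\bigr) \iff f^*(W) \leq f^*(V) \rightarrow U \iff f^*(W) \wedge f^*(V) \leq U,
\]
where the first equivalence is the adjunction $f^* \dashv f_*$ and the second is the definition of Heyting implication in $\mathcal{O}(L)$. Since $f^*$ preserves binary meets, $f^*(W) \wedge f^*(V) = f^*(W \wedge V)$, so this continues
\[
\iff f^*(W \wedge V) \leq U \iff W \wedge V \leq f_*(U) \iff W \leq V \rightarrow f_*(U),
\]
again by $f^* \dashv f_*$ and then the definition of Heyting implication in $\mathcal{O}(M)$. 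As $W$ was arbitrary, $f_*(f^*(V) \rightarrow U)$ and $V \rightarrow f_*(U)$ represent the same subfunctor of $\mathcal{O}(M)^{op}$, hence are equal.

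For part (2), by the definition of Heyting implication it suffices to check $f^*(V \rightarrow Z) \wedge f^*(V) \leq f^*(Z)$. Since $f^*$ preserves binary meets, the left-hand side equals $f^*\bigl((V \rightarrow Z) \wedge V\bigr)$; by Proposition \ref{superheytingprop}(1) we have $(V \rightarrow Z) \wedge V \leq Z$, so applying the monotone map $f^*$ gives $f^*\bigl((V \rightarrow Z) \wedge V\bigr) \leq f^*(Z)$, as required.

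I do not anticipate any genuine obstacle here: everything reduces to preservation of binary meets and the two adjunctions. The only subtlety worth a remark is that $f^*$ is merely a \emph{partial} frame homomorphism, so it need not preserve the top element; but neither computation uses $f^*(1) = 1$, so the statements indeed hold for partial continuous maps. (This also explains why (2) is only an inequality and not an equality: $f^*$ preserves meets but not, in general, Heyting implications.)
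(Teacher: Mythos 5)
Your proof is correct and is essentially the argument the paper gives: the paper packages both parts as a commuting square of left adjoints (commuting precisely because $f^*$ preserves binary meets), obtaining (1) by passing to right adjoints and (2) as the mate, and your adjunction chase for (1) is exactly the elementwise unwinding of that, while your argument for (2) is the "alternative direct verification" the paper itself mentions in parentheses. Your closing remark that neither step needs $f^*(1)=1$ is a nice observation, correctly justifying the statement for partial maps.
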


\begin{proof}
Consider the square of commuting left adjoints:
\[\begin{tikzcd}
	\mathcal{O}(L) & \mathcal{O}(L) \\
	\mathcal{O}(M) & \mathcal{O}(M)
	\arrow["{- \wedge f^*(V)}", from=1-1, to=1-2]
	\arrow["{f^*}", from=2-1, to=1-1]
	\arrow["{- \wedge V}"', from=2-1, to=2-2]
	\arrow["{f^*}"', from=2-2, to=1-2]
\end{tikzcd}\]
The first equation is obtained by passing to right adjoints, whereas the second is the mate of this square. (Alternatively verified directly by applying $f^*$ to (1) in Proposition \ref{superheytingprop} and adjoining over). 
\end{proof}

\subsection{Sublocales} \label{sectionsublocales}

Given a locale $L$ with corresponding frame $F$ and a subset $F' \subset F$, one may wonder when $F'$ itself defines a locale. If this happens, and the inclusion becomes the right adjoint of a frame homomorphism, we call the associated locale $L'$ a sublocale of $L$.

\begin{definition}
Let $i : L' \rightarrow L$ be a continuous map of locales. We call $i$ \emph{an embedding}, if $i_* : \mathcal{O}(L') \rightarrow \mathcal{O}(L)$ is injective.
\end{definition}

\begin{lemma}[\cite{picado_pultr}, III.1 \& 2, \cite{johnstone1982stone} II.2]
Let $i : L' \rightarrow L$ be a continuous map of locales. The following are equivalent:
\begin{itemize}
\item $i$ is an embedding.
\item The left adjoint $i^* : \mathcal{O}(L) \rightarrow \mathcal{O}(L')$ is surjective.
\item $i$ is an extremal monomorphism in the category of locales.
\item $i$ is a regular monomorphism in the category of locales.
\item $i^* i_* = \mathrm{id}_{\mathcal{O}(L')}$.
\end{itemize}
\end{lemma}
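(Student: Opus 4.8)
The plan is to prove the equivalence of the five conditions characterizing embeddings of locales by establishing a cycle of implications together with one direct biconditional, relying only on general facts about adjoint pairs between posets. Throughout, $i^* \dashv i_*$ is the adjunction coming from the continuous map $i$, so we always have the unit $U \leq i_* i^*(U)$ for $U \in \mathcal{O}(L)$ and the counit $i^* i_*(V) \leq V$ for $V \in \mathcal{O}(L')$, and both $i^*$ and $i_*$ are monotone.

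First I would prove the equivalence of the first two bullets together with the last one, since these are the purely order-theoretic core. The implication ``$i^* i_* = \mathrm{id}$'' $\Rightarrow$ ``$i_*$ injective'' is immediate: if $i_*(V) = i_*(V')$ then applying $i^*$ gives $V = V'$. Conversely, if $i_*$ is injective, then since $i^*$ is a left adjoint it is automatically surjective onto its essential image, and one checks $i_* i^* i_* = i_*$ holds for any adjunction (apply $i_*$ to the counit and the unit to get inequalities in both directions, using monotonicity); injectivity of $i_*$ then forces $i^* i_* = \mathrm{id}$. The same identity $i_* i^* i_* = i_*$ shows that $i^* i_* = \mathrm{id}$ implies $i^*$ is surjective (every $V$ equals $i^*(i_*(V))$), and conversely if $i^*$ is surjective, write any $V$ as $i^*(U)$; then $i^* i_* i^*(U) = i^*(U)$ again by the triangle identity, i.e. $i^* i_*(V) = V$ for all $V$. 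This closes the loop among ``$i_*$ injective'', ``$i^*$ surjective'', and ``$i^* i_* = \mathrm{id}$''.

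Next I would connect these to the categorical characterizations. For ``$i$ is a regular monomorphism $\Rightarrow$ $i$ is an extremal monomorphism'' one uses the standard category-theoretic fact (regular monos are extremal in any category) — or cite \cite{picado_pultr} III.1. For ``extremal mono $\Rightarrow$ embedding'': given the epi--mono-type factorization, one factors $i$ through the sublocale generated by its image; extremality forces the comparison map to be an isomorphism, hence $i_*$ is injective. Finally, for ``$i^* i_* = \mathrm{id}$ $\Rightarrow$ $i$ is a regular monomorphism'', I would exhibit $i$ explicitly as an equalizer: the frame homomorphism $i^*$, being surjective, presents $\mathcal{O}(L')$ as a quotient of $\mathcal{O}(L)$, and dually $i$ is the equalizer of the two composites $L \rightrightarrows L''$ where $\mathcal{O}(L'')$ is built from the kernel pair of $i^*$; concretely, $\mathcal{O}(L')$ is identified with the set of fixed points of the nucleus $j = i_* i^*$, and the inclusion of a nucleus's fixed-point frame is always a regular mono.

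The main obstacle I anticipate is the last implication — realizing an embedding (equivalently, a frame surjection admitting a right adjoint section) as a genuine regular monomorphism of locales, i.e. as an equalizer. This requires the theory of nuclei: one must verify that $j = i_* i^*$ is a nucleus (inflationary, idempotent, and preserving binary meets — the meet-preservation being the only nontrivial check, and it follows from $i^*$ preserving meets together with the triangle identities), and that the frame $\mathcal{O}(L')$ is isomorphic to the frame $F_j$ of $j$-fixed points with $i^*$ corresponding to $U \mapsto j(U)$. Granting the nucleus machinery (which is exactly the content cited from \cite{picado_pultr} III.1--2 and \cite{johnstone1982stone} II.2), this last step is formal; without it, it is the crux of the whole lemma. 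I would therefore either invoke the cited references for the nucleus correspondence or, if a self-contained argument is wanted, spell out that $\{U : j(U) = U\}$ with its inherited suprema (given by $j(\bigvee \cdot)$) and meets forms a frame, that $U \mapsto j(U)$ is a frame surjection onto it with right adjoint the inclusion, and that any two frame surjections out of $\mathcal{O}(L)$ with the same associated nucleus are canonically isomorphic — which pins down $\mathcal{O}(L')$ and completes the cycle.
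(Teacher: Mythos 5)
The paper does not prove this lemma; it imports it verbatim from the cited references (\cite{picado_pultr} III.1--2, \cite{johnstone1982stone} II.2), so there is no in-paper argument to compare against. Your reconstruction is correct and is essentially the standard proof those references give. The order-theoretic core --- the cycle among ``$i_*$ injective'', ``$i^*$ surjective'', and ``$i^* i_* = \mathrm{id}$'' via the triangle identities $i_* i^* i_* = i_*$ and $i^* i_* i^* = i^*$ --- is complete and airtight, and your cycle $(5)\Rightarrow(4)\Rightarrow(3)\Rightarrow(1)$ closes the equivalence.

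Two points deserve to be made explicit if you write this out in full. First, in the step ``extremal mono $\Rightarrow$ embedding'' you need the (epi, embedding) factorization to exist: take the nucleus $j = i_* i^*$, let $S$ be its frame of fixed points, and check that $\mathcal{O}(S) \to \mathcal{O}(L')$, $U \mapsto i^*(U)$, is injective (if $i^*U = i^*U'$ for $j$-fixed $U, U'$ then $U = jU = jU' = U'$), so the first factor is an epi and extremality applies. Second, your final step --- that a surjective frame homomorphism is the coequalizer of its kernel pair in $\mathrm{Frm}$, hence dually an equalizer in $\mathrm{Loc}$ --- silently uses that $\mathrm{Frm}$ is an (infinitary) variety of algebras, so that surjections are regular epimorphisms; this is true but is exactly the nontrivial input you correctly flag as the crux. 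With those two points spelled out, the argument is a faithful self-contained version of the cited proofs.
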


\begin{definition}
Let $L$ be a locale with frame $F$. A \emph{sublocale} of $L$ is defined as a subset $S \subset F$ such that $S$ is a frame in its induced order and the inclusion $i_* : S \hookrightarrow F$ is the right adjoint to a frame homomorphism $i^* : F \rightarrow S$.
\end{definition}

We note that any sublocale of $L$ gives an embedding $i : L(S) \rightarrow L$ and any embedding $f : L' \rightarrow L$ identifies $\mathcal{O}(L')$ with a sublocale of $L$.

\begin{example}[\cite{picado_pultr}, III.6]
Let $L$ be a locale and $U$ an open of $L$. We get the induced \emph{open sublocale} associated to $U$ by analyzing the adjunction
\[\begin{tikzcd}
	{\mathcal{O}(L)_{/U}} & \mathcal{O}(L)
	\arrow[""{name=0, anchor=center, inner sep=0}, "{i_!}", curve={height=-12pt}, from=1-1, to=1-2]
	\arrow[""{name=1, anchor=center, inner sep=0}, "{i_*}"', curve={height=12pt}, from=1-1, to=1-2]
	\arrow[""{name=2, anchor=center, inner sep=0}, "{i^*}"{description}, from=1-2, to=1-1]
	\arrow["\dashv"{anchor=center, rotate=-89}, draw=none, from=0, to=2]
	\arrow["\dashv"{anchor=center, rotate=-91}, draw=none, from=2, to=1]
\end{tikzcd}\]
The three functors involved are given as:
\begin{itemize}
\item $i_!(V) = V$ for any $V \leq U$. This functor is a partial frame homomorphism, with $\mathrm{dom}(i_!) = U$.
\item $i^*(W) = W \wedge U$. This functor is a frame homomorphism.
\item $i_*(V) = U \rightarrow V$.
\end{itemize}
It may be a bit unusual to a reader new to locale theory that the natural identification of the open sublocale associated to an open $U$ is via the functor $i_* = U \rightarrow -$, and not via the more easily understood forget functor $i_!$. We will often identify the open $U$ with the sublocale ${\mathcal{O}(L)_{/U}}$ in notation.
\end{example}

\begin{remark}
The above example justifies our naming of the notion of partial continuous maps. Let $f^* : F \rightarrow F'$ be a partial frame homomorphism. Then $f^*$ factors as
$$f^* : F \xrightarrow{f^\#} F'_{/f^*(1)} \subset F'.$$
Writing $\mathrm{dom}(f) = f^*(1)$, this corresponds to a geometric factorization
$$L(F') \supset \mathrm{dom}(f) \xrightarrow{f} L(F).$$
One could therefore identify the category $\mathrm{Loc}_{\mathrm{part}}$ with a certain subcategory of the category $\mathrm{Span}(\mathrm{Loc})$ of locales and morphisms given by spans, as is common in the setting of $6$-functor formalisms. We won't investigate this perspective any further in this paper.
\end{remark}

\begin{example} \label{closedsublocale}
Let $L$ be a locale and $U$ an open of $L$. We get the induced \emph{closed sublocale} associated to $U^c$ from the adjunction
\[\begin{tikzcd}
	{\mathcal{O}(L)_{U/}} & \mathcal{O}(L)
	\arrow[""{name=0, anchor=center, inner sep=0}, "{i_*}"', curve={height=6pt}, hook, from=1-1, to=1-2]
	\arrow[""{name=1, anchor=center, inner sep=0}, "{i^*}"', curve={height=6pt}, from=1-2, to=1-1]
	\arrow["\dashv"{anchor=center, rotate=-90}, draw=none, from=1, to=0]
\end{tikzcd}\]
The two functors involved are given as:
\begin{itemize}
\item $i^*(W) = W \vee U$. This functor is a frame homomorphism.
\item $i_*(V) = V$ for $V$ such that $U \leq V$.
\end{itemize}
\end{example}

\begin{remark}
Viewing $i_! : \mathcal{O}(U) \rightarrow \mathcal{O}(L)$ as a partial frame homomorphism, or dually as a partial map of locales $L \rightarrow U$ (the partial map that is the identity map on the open domain $U$), we see that the composite
$$ U^c \hookrightarrow L \rightarrow U$$
is the zero map. This sequence, which is in fact a fiber sequence, corresponds to the classical open-closed decomposition of $L$.
\end{remark}

\begin{example} \label{examplesubspace}
Let $X$ be a topological space, and $i : S \subset X$ a subset, equipped with the subspace topology. Then the frame homomorphism $ i^{-1} = S \cap - : \Omega(X) \rightarrow \Omega(S)$ gives an embedding between  associated locales. However, not every sublocale of the associated locale to $X$ arises this way, even if $X$ is sober. In other words, sublocales of spatial locales need not be spatial.
\end{example}

Sublocales can in fact also be characterized via the Heyting implication.

\begin{lemma}[\cite{picado_pultr}, III.2.2] \label{sublocales}
Let $S \subset F$ be a subset of a frame $F$. Then $S$ determines a sublocale iff
\begin{enumerate}
\item $S \subset F$ is closed under infima.
\item $S \subset F$ is closed under Heyting implication, i.e.\ if $U \in S$ and $V \in F$ then $V \rightarrow U \in S$.
\end{enumerate}
\end{lemma}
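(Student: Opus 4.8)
The statement to prove: a subset $S \subset F$ of a frame is a sublocale if and only if $S$ is closed under arbitrary infima and closed under Heyting implication $V \rightarrow (-)$ for $V \in F$. I would prove the two directions separately, and I expect the ``only if'' direction (sublocale $\Rightarrow$ the two closure conditions) to be the easy half, with the ``if'' direction (closure conditions $\Rightarrow$ sublocale) being the one requiring actual work, though neither is deep.

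For the ``only if'' direction, suppose $S \subset F$ is a sublocale, so that the inclusion $i_* : S \hookrightarrow F$ is the right adjoint of a frame homomorphism $i^* : F \to S$. Right adjoints preserve limits, and infima in a poset are limits, so $i_*$ preserves arbitrary infima; since $i_*$ is just the inclusion, this says exactly that infima computed in $F$ of families in $S$ land in $S$, i.e.\ condition (1). For condition (2), let $U \in S$ and $V \in F$. Using the identity $i_*(i^*(V') \rightarrow U) = V' \rightarrow i_*(U)$ from Proposition \ref{heytingpropmaps}(1) applied to $i$ — after first arranging that $V = i^*(V')$ for some $V' \in F$, which we may do since $i^*$ is surjective (indeed we can take $V' = V$ and use that $i^* i_* = \mathrm{id}$, or more simply just note $i^*$ is surjective onto $S$ and pick a preimage) — hmm, I need to be a little careful: the cleanest route is to take $V \in F$ arbitrary, set $W = i^*(V) \in S$; then Proposition \ref{heytingpropmaps}(1) with $f = i$ gives $i_*(i^*(V) \rightarrow_S U) = V \rightarrow_F i_*(U) = V \rightarrow_F U$, so $V \rightarrow_F U \in \mathrm{im}(i_*) = S$. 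One should also check that Heyting implication in the subframe $S$ agrees with its ambient computation where relevant, but since $S$ is itself a frame, $i^*(V) \rightarrow_S U$ makes sense internally to $S$ and the displayed identity is exactly what is needed. Thus (2) holds.

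For the ``if'' direction, assume (1) and (2). First, (1) gives that $S$ is closed under arbitrary infima in $F$; in particular $1 \in F$ is the empty infimum, so $1 \in S$, and $S$ has all infima, hence (being a poset with all infima) is a complete lattice — so it has joins as well, computed by $\bigvee^S_i U_i = \bigwedge \{ W \in S : W \geq U_i \ \forall i\}$, though these need not agree with joins in $F$. I then define $i^* : F \to S$ by $i^*(U) = \bigwedge \{ W \in S : U \leq W \}$, the closure of $U$ into $S$; by (1) this lies in $S$, and it is a standard nucleus-type construction. The inclusion $i_* : S \hookrightarrow F$ is right adjoint to $i^*$: indeed $i^*(U) \leq W \iff U \leq W$ for $W \in S$, directly from the definition. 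It remains to check $i^*$ is a frame homomorphism: it automatically preserves all suprema (being a left adjoint — joins in $S$ are computed as just described), so the only real content is that $i^*$ preserves finite meets, equivalently binary meets and the top element. Preservation of the top is immediate since $1 \in S$ forces $i^*(1) = 1$. For binary meets, one inequality $i^*(U \wedge V) \leq i^*(U) \wedge i^*(V)$ is monotonicity; the reverse inequality $i^*(U) \wedge i^*(V) \leq i^*(U \wedge V)$ is precisely where condition (2) is used. Here is the key computation: for any $U \in F$ and $W \in S$ one shows $i^*(U) \wedge W \leq i^*(U \wedge W)$ using that $W \rightarrow i^*(U\wedge W) \in S$ (by (2), since $i^*(U \wedge W) \in S$) together with $U \leq W \rightarrow (U \wedge W) \leq W \rightarrow i^*(U \wedge W)$, which forces $i^*(U) \leq W \rightarrow i^*(U\wedge W)$ by the adjunction, and then $i^*(U) \wedge W \leq i^*(U \wedge W)$ by Proposition \ref{superheytingprop}(1). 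Applying this twice — first with $W = i^*(V)$ to get $i^*(U) \wedge i^*(V) \leq i^*(U \wedge i^*(V))$, then using $i^*(U\wedge i^*(V)) \leq i^*(i^*(U) \wedge i^*(V))$ hmm, more directly: $i^*(U) \wedge i^*(V) \leq i^*(U \wedge i^*(V))$, and now run the same lemma inside with the roles swapped, $U \wedge i^*(V)$: we get $i^*(V) \wedge$ (something)... the clean statement is $i^*(U \wedge i^*(V)) = i^*(U \wedge V)$, proven by noting $U \wedge V \leq U \wedge i^*(V)$ so $\leq$ holds one way, and $i^*(U) \wedge i^*(V) \leq i^*(U\wedge V)$ via two applications of the $W$-lemma — completes the argument that $i^*$ preserves binary meets.

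The main obstacle is organizing the binary-meet computation cleanly: getting from the ``one open, one element of $S$'' lemma to the full multiplicativity $i^*(U \wedge V) = i^*(U) \wedge i^*(V)$ by iterating, without circularity. This is entirely standard nucleus theory (the closure $i^*$ is the nucleus associated to the sublocale), and I would either cite the nucleus formalism or carry out the two-step computation above explicitly; since the paper is keeping locale theory self-contained, I would write out the short computation, leaning on Proposition \ref{superheytingprop}(1) and the adjunction as the only inputs.
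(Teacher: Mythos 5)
The paper does not prove this lemma at all: it is quoted verbatim from Picado--Pultr III.2.2 and used as a black box, so there is no in-paper argument to compare against. Your proof is the standard one (the nucleus/closure-operator argument) and is correct. In the ``only if'' direction, routing condition (2) through Proposition \ref{heytingpropmaps}(1) applied to the embedding $i$ is exactly right, and your worry about which frame the implication $i^*(V) \rightarrow U$ lives in is resolved the way you say: the identity $i_*(i^*(V) \rightarrow U) = V \rightarrow i_*(U)$ already exhibits $V \rightarrow U$ as an element of $\mathrm{im}(i_*) = S$. In the ``if'' direction, your key computation is sound, and in fact the auxiliary inequality $i^*(U) \wedge W \leq i^*(U \wedge W)$ holds for \emph{all} $U, W \in F$ (nowhere in your argument do you need $W \in S$), which lets you finish cleanly: apply it once with $W = i^*(V)$ to get $i^*(U) \wedge i^*(V) \leq i^*(U \wedge i^*(V))$, apply it again to get $U \wedge i^*(V) \leq i^*(U \wedge V)$, and conclude $i^*(U \wedge i^*(V)) \leq i^*(i^*(U\wedge V)) = i^*(U \wedge V)$ by monotonicity and idempotency. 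The only point you leave implicit is that $S$ is actually a frame in its induced order (the paper's definition of sublocale requires this): completeness is your condition (1), and the infinite distributive law in $S$ follows at once from $i^*$ being a surjection preserving suprema and binary meets with $i^*|_S = \mathrm{id}$. Worth one sentence, but not a gap in substance.
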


We finish this section by discussing the universal property of a sublocale, which is a very special case of the notion of a Bousfield localization.

\begin{proposition} \label{sublocaleuniversal}
Let $L$ be a locale with associated frame $F$, and $i_* : S \hookrightarrow F$ be a sublocale. Let $P$ be a poset and $f : F \rightarrow P$ be a functor. Then there exists a functor $g : S \rightarrow P$ such that
\[\begin{tikzcd}
	F & P \\
	S
	\arrow["f", from=1-1, to=1-2]
	\arrow["{i^*}"', from=1-1, to=2-1]
	\arrow["g"', from=2-1, to=1-2]
\end{tikzcd}\]
commutes iff $f = f i_* i^*$, in which case $g = f i_*$. Furthermore, given this situation:
\begin{itemize}
\item If $P$ has arbitrary suprema, and $f$ preserves them, then so does $g$.
\item If $P$ has a top element, and $f(1) = 1$, then so does $g$.
\item If $P$ has binary meets, and $f$ preserves them, then the same holds for $g$.
\end{itemize}
\end{proposition}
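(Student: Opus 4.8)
The plan is to prove the biconditional by chasing the triangle identities of the sublocale adjunction, and then to read off the three preservation clauses from the explicit description $g = f \circ i_*$.

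First I would dispose of the ``only if'' direction and uniqueness simultaneously. Suppose $g : S \to P$ is a monotone map with $g \circ i^* = f$. Precomposing with $i_*$ and invoking the sublocale identity $i^* i_* = \mathrm{id}_S$ gives $g = g \circ i^* \circ i_* = f \circ i_*$; substituting back, $f i_* i^* = g i^* = f$. So the condition $f = f i_* i^*$ is necessary, and when it holds $g$ can only be $f i_*$. Conversely, assuming $f = f i_* i^*$, I would \emph{define} $g := f \circ i_*$; it is monotone as a composite of monotone maps, and $g \circ i^* = f i_* i^* = f$, so the triangle commutes.

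For the preservation statements the crucial remark is that $i_* : S \hookrightarrow F$ preserves arbitrary infima: by Lemma \ref{sublocales}, $S$ is closed under infima formed in $F$, so infima computed in $S$ are literally those of $F$. Hence if $f$ preserves binary meets, or the top element, the same holds for $g = f \circ i_*$; for the top element one additionally uses that $1 \in S$ (the empty infimum), so $g(1) = f(1) = 1$. The supremum clause is the only one where the hypothesis $f = f i_* i^*$ does real work, since $i_*$ does \emph{not} preserve joins. Here I would recall that $i^*$ is a frame homomorphism with $i^* i_* = \mathrm{id}_S$, so the join in $S$ of a family $(s_j)_{j \in J}$ is $\bigvee\nolimits^S_j s_j = i^*\bigl(\bigvee\nolimits^F_j i_* s_j\bigr)$, and then compute
\[ g\Bigl(\bigvee\nolimits^S_j s_j\Bigr) \;=\; f\,i_* i^*\Bigl(\bigvee\nolimits^F_j i_* s_j\Bigr) \;=\; f\Bigl(\bigvee\nolimits^F_j i_* s_j\Bigr) \;=\; \bigvee\nolimits_j f(i_* s_j) \;=\; \bigvee\nolimits_j g(s_j), \]
using $f i_* i^* = f$ at the second step and supremum-preservation of $f$ at the third.

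I do not expect a genuine obstacle: every step is adjunction bookkeeping. The one point deserving attention is precisely the join computation above --- one must keep joins in the sublocale $S$ distinct from joins in $F$, the relation $\bigvee^S = i^* \circ \bigvee^F \circ i_*$ being exactly what allows the extra $i_* i^*$ to be absorbed by $f$.
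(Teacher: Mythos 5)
Your proof is correct and follows essentially the same route as the paper: derive $g = f i_*$ and the necessity of $f = f i_* i^*$ from $i^* i_* = \mathrm{id}_S$, then verify the supremum clause via $\bigvee^S = i^* \circ \bigvee^F \circ i_*$ and absorb the extra $i_* i^*$ using the hypothesis. The only cosmetic difference is that you spell out why $i_*$ preserves meets and the top element (closure of $S$ under infima), which the paper leaves implicit.
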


\begin{remark}
In particular, a (partial) continuous map $f : M \rightarrow L$ factors through an embedding $i : L' \rightarrow L$ iff $f = f i_* i^*$.
\end{remark}

\begin{proof}
Let $g: S \rightarrow P$ be such that $g i^* = f$. Then $g = g i^* i_* = f i_*$, and $f i_* i^* = g i^* = f$. Conversely, assume $f = f i_* i^*$. Setting $g = f i_*$ we simply verify that $g i^* = f i_* i^* = f$.

Now assume that $P$ has arbitrary suprema and $f$ preserves them. Then
$$g( \bigvee_{i \in I} U_i ) = g( i^*( \bigvee_{i \in I} i_* U_i )) = f( \bigvee_{i \in I} i_* U_i ) = \bigvee_{i \in I} f i_*( U_i ) = \bigvee_{i \in I} g( U_i ).$$

Lastly, the statements about binary meets and top elements follow immediately since $g = f i_*$ is then a composite of functors that preserves them.
\end{proof}

We obtain a simple universal property for closed sublocales.

\begin{corollary} \label{closedsublocaleuniversal}
Let $L, M$ be locales, $V$ an open of $M$, and $f : L \rightarrow M$ a partial continuous map. The following are equivalent:
\begin{enumerate}
\item $f$ factors through the closed embedding $V^c \hookrightarrow M$.
\item $f^*(V) = 0$.
\item $V \leq f_*(0)$.
\end{enumerate}
More generally, assume $U$ is an open of $L$. Then the following are equivalent:
\begin{enumerate}[label=(\arabic*\ensuremath{^{\prime}})]
\item $f$ induces a (unique) map $ f| : U^c \hookrightarrow V^c$.
\item $f^*(V) \leq  U$.
\item $V \leq f_*(U)$.
\end{enumerate}
If this is the case, then $f|^* = f^*(-) \vee U : \mathcal{O}(M)_{V / } \rightarrow \mathcal{O}(L)_{U / }$.
\end{corollary}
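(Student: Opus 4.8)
The plan is to deduce everything from the universal property of closed sublocales established in Proposition \ref{sublocaleuniversal}, applied to the closed sublocale $\mathcal{O}(M)_{V/} \hookrightarrow \mathcal{O}(M)$ of Example \ref{closedsublocale}, together with the defining adjunction $f^* \dashv f_*$. For the first list of equivalences: by the Remark following Proposition \ref{sublocaleuniversal}, $f$ factors through $V^c \hookrightarrow M$ iff $f^* = f^* i_* i^*$, where $i^*(W) = W \vee V$ and $i_*$ is the inclusion. Since $f^*$ preserves finite joins, $f^* i_* i^*(W) = f^*(W \vee V) = f^*(W) \vee f^*(V)$, so the factorization condition becomes $f^*(W) = f^*(W) \vee f^*(V)$ for all $W$, i.e.\ $f^*(V) \leq f^*(W)$ for all $W$; taking $W = 0$ this says $f^*(V) = 0$, and conversely $f^*(V) = 0$ clearly implies it. This gives (1) $\Leftrightarrow$ (2). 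Then (2) $\Leftrightarrow$ (3) is immediate from the adjunction $f^* \dashv f_*$: $f^*(V) = 0$ iff $f^*(V) \leq 0$ iff $V \leq f_*(0)$.

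For the relative version, I would argue analogously but factor through the composite. A map $f|: U^c \to V^c$ lifting $f$ is, after applying $\mathcal{O}(-)$, a frame homomorphism $\mathcal{O}(M)_{V/} \to \mathcal{O}(L)_{U/}$ compatible with the inclusions; equivalently, by Proposition \ref{sublocaleuniversal} applied with $S = \mathcal{O}(L)_{U/}$ and the functor $\mathcal{O}(M) \xrightarrow{f^*} \mathcal{O}(L) \xrightarrow{-\vee U} \mathcal{O}(L)_{U/}$, such a lift exists iff this composite factors through $i^*_M: \mathcal{O}(M) \to \mathcal{O}(M)_{V/}$, $W \mapsto W \vee V$. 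The factorization condition $f^*(W)\vee U = (f^*(W \vee V)) \vee U = f^*(W) \vee f^*(V) \vee U$ for all $W$ reduces, by taking $W = 0$, to $f^*(V) \vee U = U$, i.e.\ $f^*(V) \leq U$, and conversely this condition clearly suffices. This proves (1$'$) $\Leftrightarrow$ (2$'$), and (2$'$) $\Leftrightarrow$ (3$'$) is again the adjunction $f^* \dashv f_*$. The final formula $f|^* = f^*(-) \vee U$ then falls out of the second clause of Proposition \ref{sublocaleuniversal}, which tells us $f|^*$ is computed as $f i_*$ for the appropriate $i_*$ — here precisely postcomposition of $-\vee U \circ f^*$ with the inclusion $i_{M*}$, which on elements of $\mathcal{O}(M)_{V/}$ is the identity, giving $f|^*(W) = f^*(W) \vee U$.

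I expect the only mildly delicate point to be bookkeeping the direction of arrows: "$f$ factors through the \emph{embedding} $V^c \hookrightarrow M$" at the level of locales corresponds to a factorization of \emph{frame} maps $\mathcal{O}(M) \to \mathcal{O}(L)$ through the \emph{surjection} $i^*_M: \mathcal{O}(M) \twoheadrightarrow \mathcal{O}(M)_{V/}$, and one must check that the lift one obtains from Proposition \ref{sublocaleuniversal} is genuinely a frame homomorphism (preserves top, binary meets, arbitrary joins) and not merely a monotone map — but this is exactly what the three bulleted clauses of that proposition guarantee, since $-\vee U \circ f^*$ preserves all of these. There is no real obstacle; the corollary is a formal consequence of the two preceding results, and the argument is a short diagram chase once the adjunctions are lined up.
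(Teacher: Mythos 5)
Your proposal is correct and follows essentially the same route as the paper: both arguments reduce the factorization question to the descent criterion of Proposition \ref{sublocaleuniversal} applied to the composite $-\vee U \circ f^*$, evaluate the resulting identity at $0$ to extract $f^*(V) \leq U$, and dispatch the equivalence with (3) via the adjunction $f^* \dashv f_*$. The only cosmetic difference is that the paper obtains the absolute case by specializing the relative one to $U = 0$, whereas you prove it separately first; nothing of substance changes.
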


\begin{proof} Clearly, the first statement is a special case of the second, by setting $U=0$. (Note that $0 \leq f^*(V)$ is always satisfied.) The equivalence of (2) and (3) follows immediately from the adjunction:
$$V \leq f_*(U) \text{ iff } f^*(V) \leq U.$$
Now consider the composite $\mathcal{O}(M) \xrightarrow{f^*} \mathcal{O}(L) \xrightarrow{- \vee U} \mathcal{O}(L)_{U / }$. This functor descends along the functor $\mathcal{O}(M) \xrightarrow{ - \vee V} \mathcal{O}(M)_{V / }$ iff
$f^*( - \vee V ) \vee U = f^*(-) \vee f^*(V) \vee U = f^*( - ) \vee U.$
By evaluating at $0$ and using that $f^*(0)=0$, we see that this is the case iff $f^*(V) \vee U = U$, which is again equivalent to the statement $f^*(V) \leq  U$.
\end{proof}

Let $f : L \rightarrow M$ be a continuous map of locales. Since in any category, regular monomorphisms are closed under pullback along another map, we can define for any sublocale $S \hookrightarrow M$ a pre-image sublocale $f_{-1}[S] \hookrightarrow L$ via the pullback
\[\begin{tikzcd}
	{f_{-1}[S]} & L \\
	S & M.
	\arrow[hook, from=1-1, to=1-2]
	\arrow[from=1-1, to=2-1]
	\arrow["\lrcorner"{anchor=center, pos=0.125}, draw=none, from=1-1, to=2-2]
	\arrow["f", from=1-2, to=2-2]
	\arrow[hook, from=2-1, to=2-2]
\end{tikzcd}\]
in the category of locales. This pullback always exists, as the category of locales is closed under limits  \cite[Chapter IV.4]{picado_pultr}, and is given as the largest sublocale of $L$ contained in $f^{-1}(S) \subset \mathcal{O}(L)$ \cite[III 4.2]{picado_pultr}. Taking pre-images behaves well with respect to open and closed inclusions.
	
\begin{lemma}[\cite{johnstone1982stone}, II.2.8] \label{preimageofopen}
Let $f : L \rightarrow M$ be a continuous map of locales and $U$ an open of $M$. Then $f_{-1}[U]$ is the open sublocale given by $f^*(U)$, and $f_{-1}[U^c]$ is the closed sublocale given by $f^*(U)^c$.
\end{lemma}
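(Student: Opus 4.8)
The plan is to exploit the defining pullback characterisation of $f_{-1}[S]$ together with the explicit descriptions of open and closed sublocales, checking by hand that the evident restriction of $f$ sits in a pullback square. In each of the two cases two things must be verified: that $f$ admits a restriction to the relevant sublocale of $L$, and that the resulting square is cartesian. For the closed case both follow directly from Corollary~\ref{closedsublocaleuniversal}; for the open case I will first record the analogous factorisation criterion for open sublocales.

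\emph{Closed sublocales.} Applying the ``more generally'' half of Corollary~\ref{closedsublocaleuniversal} to $f : L \to M$ with the open $f^*(U)$ of $L$ and the open $U$ of $M$ --- using the trivial inequality $f^*(U) \leq f^*(U)$ --- yields a unique restriction $\bar f : f^*(U)^c \to U^c$ with $f \circ (f^*(U)^c \hookrightarrow L) = (U^c \hookrightarrow M)\circ \bar f$. To see this square is a pullback, let $h : N \to L$ and $k : N \to U^c$ be a cone, so $f\circ h = (U^c \hookrightarrow M)\circ k$ factors through the closed embedding $U^c \hookrightarrow M$; by Corollary~\ref{closedsublocaleuniversal} this gives $h^*(f^*(U)) = (f h)^*(U) = 0$, and a second application of the corollary (to $h$ and the open $f^*(U)$ of $L$) produces the unique $\ell : N \to f^*(U)^c$ with $(f^*(U)^c \hookrightarrow L)\circ\ell = h$. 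Cancelling the monomorphism $U^c \hookrightarrow M$ shows $\bar f \circ \ell = k$, and $\ell$ is the unique mediating map because $f^*(U)^c \hookrightarrow L$ is monic. Since this embedding realises the pullback as a sublocale of $L$, and isomorphic embeddings into $L$ have the same image, $f_{-1}[U^c] = f^*(U)^c$ on the nose.

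\emph{Open sublocales.} I first claim: for a continuous $g : N \to L$ and an open $V$ of $L$, the map $g$ factors through the open sublocale $V \hookrightarrow L$ iff $g^*(V) = 1$. By Proposition~\ref{sublocaleuniversal} this factorisation holds iff $g^* = g^*\circ (i_* i^*)$, and for the open sublocale $i_* i^*$ is the nucleus $W \mapsto V \to W$ (using $i^*(W) = W\wedge V$, $i_*(W)= V\to W$ and $V \to (W\wedge V) = V\to W$); evaluating at $W=V$ forces $g^*(V) = g^*(1) = 1$, and conversely if $g^*(V)=1$ then $g^*(W) \leq g^*(V\to W) \leq g^*(V)\to g^*(W) = g^*(W)$ by Propositions~\ref{superheytingprop} and~\ref{heytingpropmaps}, so $g^*(V\to W)=g^*(W)$ for all $W$. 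Granting this, $f$ restricts to $\bar f$ from the open sublocale $f^*(U)$ of $L$ to the open sublocale $U$ of $M$, because the composite $g : f^*(U)\hookrightarrow L \xrightarrow{f} M$ has $g^*(U) = f^*(U)\wedge f^*(U) = f^*(U)$, which is precisely the top element of $\mathcal{O}(L)_{/f^*(U)}$. The pullback verification is now word-for-word the closed case with this criterion in place of Corollary~\ref{closedsublocaleuniversal}: a cone $(h,k)$ forces $h^*(f^*(U)) = 1$, hence $h$ factors uniquely through the open sublocale $f^*(U)$, and that factorisation is the mediating map. Thus $f_{-1}[U] = f^*(U)$.

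\emph{On the difficulty.} Nothing here is deep; the points requiring care are the bookkeeping in the pullback universal property, the identification of the top element of the slice frame $\mathcal{O}(L)_{/f^*(U)}$ with $f^*(U)$ (so that ``$g^*(U)=1$'' becomes the tautology $f^*(U)\wedge f^*(U)=f^*(U)$), and the observation that a pullback of a sublocale, once presented by an embedding, equals $f_{-1}[\,\cdot\,]$ literally as a subset of $\mathcal{O}(L)$ rather than merely up to isomorphism. Alternatively, one may simply cite \cite[II.2.8]{johnstone1982stone}.
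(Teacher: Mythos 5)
Your argument is correct and complete. Note that the paper itself offers no proof of this lemma---it is cited from Johnstone II.2.8---so there is no in-text argument to compare against; your write-up supplies a genuine, self-contained verification. The two halves are handled exactly as they should be: the closed case leans on Corollary~\ref{closedsublocaleuniversal} both to produce the restriction $\bar f$ and to show any cone factors uniquely, and your auxiliary criterion ``$g$ factors through the open sublocale $V$ iff $g^*(V)=1$'' (derived correctly from Proposition~\ref{sublocaleuniversal} together with Propositions~\ref{superheytingprop} and~\ref{heytingpropmaps}) plays the precisely analogous role in the open case. The two bookkeeping points you flag yourself---that the top of $\mathcal{O}(L)_{/f^*(U)}$ is $f^*(U)$, and that a pullback exhibited by an embedding determines the sublocale on the nose because isomorphic embeddings have the same image in $\mathcal{O}(L)$---are indeed the only places where care is needed, and you handle both. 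For comparison, a slightly shorter route is available from the description the paper records from Picado--Pultr III~4.2, namely that $f_{-1}[S]$ is the largest sublocale of $L$ contained in the set-theoretic preimage $f^{-1}(S)\subset\mathcal{O}(L)$: one checks directly that the open (resp.\ closed) sublocale determined by $f^*(U)$ is contained in that preimage and is maximal among sublocales with this property. Your universal-property argument buys independence from that auxiliary characterization and makes the uniqueness of the mediating map completely transparent, at the cost of the factorization criteria; either route is legitimate.
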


\subsection{The locale of sublocales}

In classical point-set topology, for a topological space $X$, the natural continuous map $X^{disc} \rightarrow X$ corresponds to the inclusion $\mathcal{O}(X) \subset \mathcal{P}(X)$. This allows one to talk about subspaces of a space, determined by subsets. The locale-theoretic analogue of this is given by replacing the powerset $\mathcal{P}(X)$ for a space $X$ by the set $\mathrm{Sl}(L)$ of sublocales of a locale $L$.

\begin{theorem}[\cite{picado_pultr}, Chapter III, 3.2.1 and 6.4.1.] 
Let $L$ be a locale. The poset $\mathrm{Sl}(L)$ of sublocales, ordered by inclusion, is a co-frame, i.e.\ $\mathrm{Sl}(L)^{op}$ is a frame. Furthermore, there is a natural injective frame homomorphism
$$\begin{array}{rcl}
\nabla : \mathcal{O}(L) & \rightarrow & \mathrm{Sl}(L)^{op} \\
U &\mapsto & U^c
\end{array}$$
\end{theorem}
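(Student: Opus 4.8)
The plan is to handle the order-theoretic structure of $\mathrm{Sl}(L)$ first, then verify co-frame distributivity by a direct Heyting-implication computation, and finally dispatch the properties of $\nabla$.

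First I would note that $\mathrm{Sl}(L)$ is a complete lattice whose meets are set-theoretic intersections. By Lemma~\ref{sublocales} a subset of $F = \mathcal{O}(L)$ is a sublocale precisely when it is closed under arbitrary infima and under the operations $V \to (-)$, $V \in F$; both closure conditions are obviously inherited by an arbitrary intersection of sublocales, so $\bigcap_\alpha S_\alpha$ is again a sublocale, and it is the greatest sublocale contained in each $S_\alpha$. Hence $\mathrm{Sl}(L)$ has all infima (with top $F$) and therefore all suprema, which is exactly the statement that $\mathrm{Sl}(L)^{op}$ is a complete lattice; only distributivity remains. Alongside this I would record the explicit formula for binary joins: $S \vee T = \{\, s \wedge t \mid s \in S,\ t \in T \,\}$. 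This set contains $S$ and $T$ (insert the top element $1$, which lies in every sublocale as the empty infimum), is closed under infima because meets distribute over infima coordinatewise, and is closed under $V \to (-)$ because the latter, being right adjoint to $V \wedge (-)$, preserves binary meets; so it is a sublocale, and being contained in every sublocale that contains $S \cup T$, it is the join.

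The heart of the proof is the co-frame law, i.e.\ the identity $S \vee \bigcap_\alpha T_\alpha = \bigcap_\alpha (S \vee T_\alpha)$ (here $\bigcap$ denotes the meet in $\mathrm{Sl}(L)$, equivalently the join in $\mathrm{Sl}(L)^{op}$). The inclusion $\subseteq$ is formal from monotonicity of $\vee$ and the universal property of the infimum. For $\supseteq$, I would take $U \in \bigcap_\alpha (S \vee T_\alpha)$ and, using the join formula, write $U = s_\alpha \wedge t_\alpha$ with $s_\alpha \in S$, $t_\alpha \in T_\alpha$ for each $\alpha$. Setting $s := \bigwedge_\alpha s_\alpha \in S$, one has $U \leq s$ and $s \wedge t_\alpha = U$ for every $\alpha$. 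The key move is to consider $a := s \to U$: since $s \to (-)$ preserves binary meets and $s \to s = 1$ (Proposition~\ref{superheytingprop}(3)), $a = s \to (s \wedge t_\alpha) = s \to t_\alpha \in T_\alpha$ by closure of $T_\alpha$ under Heyting implication, so $a \in \bigcap_\alpha T_\alpha$; and $s \wedge a \leq U$ by Proposition~\ref{superheytingprop}(1) while $U \leq s \wedge a$ by $U \leq s$ together with Proposition~\ref{superheytingprop}(5), whence $U = s \wedge a \in S \vee \bigcap_\alpha T_\alpha$. I expect this step — spotting the uniformizing choice $s = \bigwedge_\alpha s_\alpha$ and the witness $a = s \to U$ — to be the only real obstacle; the rest is routine.

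Finally, for $\nabla : U \mapsto U^c$, I would use that by Example~\ref{closedsublocale} the closed sublocale $U^c$ is, as a subset of $F$, the up-set $\{\, V \mid U \leq V \,\}$. Injectivity is then immediate, since $U$ is the least element of $U^c$. Arbitrary joins are sent to intersections: $(\bigvee_\alpha U_\alpha)^c = \{\, V \mid U_\alpha \leq V \text{ for all }\alpha \,\} = \bigcap_\alpha U_\alpha^c$, which is the join in $\mathrm{Sl}(L)^{op}$ (and $0^c = F$ is its bottom). Finite meets go to joins: the binary-join formula gives $U^c \vee V^c = \{\, a \wedge b \mid U \leq a,\ V \leq b \,\}$, and this equals $\{\, W \mid U \wedge V \leq W \,\} = (U \wedge V)^c$, the non-obvious inclusion being that any $W \geq U \wedge V$ equals $(W \vee U) \wedge (W \vee V)$ by frame distributivity; also $1^c = \{1\}$ is the top of $\mathrm{Sl}(L)^{op}$. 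So $\nabla$ is a frame homomorphism, and its naturality in $L$ is exactly the statement $f_{-1}[U^c] = (f^*U)^c$ of Lemma~\ref{preimageofopen}, which identifies $\nabla$ as compatible with preimage of sublocales along continuous maps $f : L \to M$.
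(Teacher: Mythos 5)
Your proof is correct. The paper does not prove this statement itself — it cites it to Picado--Pultr (III.3.2.1 and 6.4.1) — and your argument is essentially the standard one from that reference: intersections as meets, the binary join formula $S\vee T=\{s\wedge t\}$, the co-frame law via the uniformizing infimum $s=\bigwedge_\alpha s_\alpha$ and the witness $a=s\rightarrow U$ (which is exactly the right move, and all the Heyting computations you invoke check out), and the direct verification that $U\mapsto U^c=\mathcal{O}(L)_{U/}$ is an injective frame homomorphism with naturality supplied by Lemma~\ref{preimageofopen}. No gaps.
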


\begin{remark}
The reader may wonder why the natural morphism $\nabla$ sends an open $U$ to its closed complement, and not to $U$ itself viewed as a sublocale. In general for any frame $F$, the co-frame $F^{op}$ can be thought of as representing closed subsets. Since $\mathrm{Sl}(L)$ is a co-frame, we may think of it as representing \emph{closed} subsets of a (not-necessarily discrete!) locale analogous to $X^{disc}$. This reversal of direction is of course a formality, but can be confusing in practice.
\end{remark}

Infima and suprema in $\mathrm{Sl}(L)$ have concrete descriptions, see \cite[Ch. III 3.2]{picado_pultr}. Let $S_i \hookrightarrow L, i \in I,$ be a collection of sublocales of $L$.
\begin{itemize}
\item The infimum $\bigwedge_{i \in I} S_i$ is given by the intersection $\bigcap_{i \in I} S_i$.
\item For the supremum we have the description
$$ \bigvee_{ i \in I} S_i = \{ \bigwedge A | A \subset \bigcup_{ i \in I } S_i \}.$$
\end{itemize}
Moreover, the closed and open sublocales play a special role. Let $i : S \hookrightarrow L$ be a sublocale. Then 
$$S = \bigwedge \{ U^c \vee V ~|~ i_* i^*(U) = i_* i^*(V) \}.$$
In other words, joins of open and closed sublocales generate the entire co-frame of sublocales under infima.

\begin{definition}
Write $\mathfrak{Sl}(L)$ for the locale corresponding to the frame $\mathrm{Sl}(L)^{op}$ and $\mathrm{can} : \mathfrak{Sl}(L) \rightarrow L$ for the natural quotient map given by $\nabla$. We call $\mathfrak{Sl}(L)$ the \emph{locale of sublocales} of $L$, or sometimes also the \emph{dissolution locale}. (In reference to \cite{ISBELL199163}, where it was originally defined.)
\end{definition}

\begin{theorem}[\cite{picado_pultr} Ch. IV, Corollary 6.3.2.] \label{lifttosublocales1}
The assignment $L \mapsto \mathfrak{Sl}(L)$ is a functor
$$\mathfrak{Sl} : \mathrm{Loc} \rightarrow \mathrm{Loc}$$
and $\mathrm{can} : \mathfrak{Sl} \rightarrow \mathrm{id}$ is a natural transformation, that is valuewise a quotient map. A map of locales $f : L \rightarrow M$ is sent to the frame homomorphism $f_{-1}[-] :\mathrm{Sl}(M)^{op} \rightarrow \mathrm{Sl}(L)^{op}$.
\end{theorem}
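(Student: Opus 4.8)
The plan is to build $\mathfrak{Sl}$ on morphisms, show each resulting map is a genuine frame homomorphism, and then read off both functoriality and the naturality of $\mathrm{can}$. On objects $\mathcal{O}(\mathfrak{Sl}(L))=\mathrm{Sl}(L)^{op}$ is already given; for $f:L\to M$ I set $\mathfrak{Sl}(f)^{*}:=f_{-1}[-]:\mathrm{Sl}(M)^{op}\to\mathrm{Sl}(L)^{op}$. Strict functoriality, $(\mathrm{id}_{L})_{-1}[-]=\mathrm{id}$ and $(g\circ f)_{-1}[-]=f_{-1}[-]\circ g_{-1}[-]$, is immediate from the definition of $f_{-1}[S]$ as a pullback in $\mathrm{Loc}$ together with the pasting lemma for pullbacks. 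The naturality square for $\mathrm{can}$, transported to frames, asks exactly that $(f^{*}U)^{c}=f_{-1}[U^{c}]$ for every open $U$ of $M$, which is Lemma \ref{preimageofopen}; and ``$\mathrm{can}$ is valuewise a quotient map'' is just the injectivity of $\nabla$ recorded before the theorem. So the entire content is that $f_{-1}[-]$ is a frame homomorphism $\mathrm{Sl}(M)^{op}\to\mathrm{Sl}(L)^{op}$.

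That map is monotone (a pullback of $S\hookrightarrow M$ along $f$ factors through the pullback of any larger sublocale); it carries the top of $\mathrm{Sl}(M)^{op}$, i.e.\ the empty sublocale $1_{M}^{c}$, to $1_{L}^{c}$ by Lemma \ref{preimageofopen}; and it preserves arbitrary suprema of $\mathrm{Sl}(M)^{op}$, i.e.\ arbitrary intersections of sublocales, because $f_{-1}[-]$ is the right adjoint of the direct-image operation on sublocales and hence preserves meets. In particular it preserves \emph{binary} intersections of sublocales, which I use below. This is the one step I would cite from \cite{picado_pultr} rather than reprove.

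It remains to show $f_{-1}[-]$ preserves finite joins of sublocales, the nullary case being the point just made. I would first isolate the base case $f_{-1}[\,U^{c}\vee\mathfrak{o}(V)\,]=(f^{*}U)^{c}\vee\mathfrak{o}(f^{*}V)$ for opens $U,V$ of $M$ ($\mathfrak{o}(-)$ denoting the open sublocale). Here ``$\supseteq$'' is monotonicity plus Lemma \ref{preimageofopen}. For ``$\subseteq$'' one uses that open and closed sublocales are complemented in the coframe $\mathrm{Sl}$ with $U^{c}$ and $\mathfrak{o}(U)$ mutually complementary; since complemented elements form a Boolean sublattice, $(f^{*}U)^{c}\vee\mathfrak{o}(f^{*}V)$ is complemented with complement $\mathfrak{o}(f^{*}U)\cap(f^{*}V)^{c}$, and in a coframe a sublocale $X$ lies in a complemented sublocale $Y$ iff $X\cap Y^{\prime}$ is the empty sublocale. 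Now, by Lemma \ref{preimageofopen} and preservation of binary meets,
$$f_{-1}[\,U^{c}\vee\mathfrak{o}(V)\,]\cap\bigl(\mathfrak{o}(f^{*}U)\cap(f^{*}V)^{c}\bigr)=f_{-1}\bigl[(U^{c}\vee\mathfrak{o}(V))\cap(\mathfrak{o}(U)\cap V^{c})\bigr]=f_{-1}[\,1_{M}^{c}\,]=1_{L}^{c},$$
since $\mathfrak{o}(U)\cap V^{c}$ is precisely the complement of $U^{c}\vee\mathfrak{o}(V)$ in $\mathrm{Sl}(M)$ --- so no nucleus computation is needed.

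Finally the general case reduces to this. By the formula recorded before the theorem, every sublocale is an intersection $\bigcap_{\alpha}(U_{\alpha}^{c}\vee\mathfrak{o}(V_{\alpha}))$ of joins of a closed and an open sublocale. Writing $S$ and $T$ in this form and applying the coframe distributive law in $\mathrm{Sl}(M)$ together with $U^{c}\vee W^{c}=(U\wedge W)^{c}$ and $\mathfrak{o}(V)\vee\mathfrak{o}(Z)=\mathfrak{o}(V\vee Z)$, one rewrites $S\vee T$ as an intersection of sublocales of the form $U^{c}\vee\mathfrak{o}(V)$; then preservation of intersections, the base case applied termwise, the fact that $f^{*}$ preserves finite meets and joins of opens, and coframe distributivity plus preservation of intersections in $\mathrm{Sl}(L)$ collapse $f_{-1}[S\vee T]$ to $f_{-1}[S]\vee f_{-1}[T]$. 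With $\mathfrak{Sl}(f)^{*}$ now a frame homomorphism, $\mathfrak{Sl}$ is a functor $\mathrm{Loc}\to\mathrm{Loc}$ and the displayed identity for open $U$ is the commutativity of the naturality square for $\mathrm{can}$; this is the route of \cite{picado_pultr}, Ch.\ IV, 6.3. The only step I would not reprove from scratch is the preservation of arbitrary intersections by $f_{-1}[-]$, equivalently the image--preimage adjunction on sublocales; everything else is bookkeeping with complemented sublocales, Lemma \ref{preimageofopen}, and coframe distributivity.
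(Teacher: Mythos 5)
Your argument is correct, and it is worth noting that the paper itself supplies no proof of this statement: it is quoted from Picado--Pultr, where (as the numbering the paper cites indicates) the functoriality is Corollary 6.3.2 to Proposition 6.3.1, i.e.\ it is deduced from the universal property of $\mathrm{can}:\mathfrak{Sl}(L)\to L$ --- the paper's Theorem \ref{lifttosublocales2} --- applied to the composite $\mathfrak{Sl}(L)\to L\xrightarrow{f}M$, whose inverse image lands in the complemented elements $(f^{*}U)^{c}$ of $\mathrm{Sl}(L)^{op}$; uniqueness of the lift then yields functoriality and naturality for free, and a separate small computation identifies the lift with $f_{-1}[-]$. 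You instead verify directly that $f_{-1}[-]$ is a frame homomorphism, which is a genuinely different and self-contained route: it builds the identification with the preimage (which is part of the statement) in from the start, at the cost of proving preservation of finite joins by hand. Your two ingredients for that step --- the complementation trick giving the base case $f_{-1}[U^{c}\vee\mathfrak{o}(V)]=(f^{*}U)^{c}\vee\mathfrak{o}(f^{*}V)$, and the reduction of arbitrary sublocales to intersections of such joins followed by coframe distributivity --- are exactly right, and each supporting fact checks out: the adjunction $f[-]\dashv f_{-1}[-]$ gives preservation of all intersections, Lemma \ref{preimageofopen} handles closed and open sublocales, complemented elements form a Boolean sublattice of the distributive lattice $\mathrm{Sl}(M)$, and the criterion that $X\le Y$ iff $X\cap Y'=0$ for complemented $Y$ uses only finite distributivity. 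Two points worth making explicit in a write-up, neither a gap: preservation of the empty join (the empty sublocale $1^{c}$) uses $f^{*}(1)=1$, i.e.\ that $f$ is globally defined, which is why the theorem lives in $\mathrm{Loc}$ rather than $\mathrm{Loc}_{\mathrm{part}}$; and the final collapse $\bigcap_{\alpha,\beta}(X_{\alpha}\vee Y_{\beta})=(\bigcap_{\alpha}X_{\alpha})\vee(\bigcap_{\beta}Y_{\beta})$ is two successive applications of the coframe law and deserves a sentence.
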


\begin{remark} \label{remarksubspaces}
The map $\mathrm{can} : \mathfrak{Sl}(L) \rightarrow L$ comes with a universal property which will be expanded upon later in Theorem \ref{lifttosublocales2}. We just mention here that this universal property in particular implies that for $X$ a topological space, we get a unique lift against $\mathrm{can}$ as such
\[\begin{tikzcd}
	& {\mathfrak{Sl}(X)} \\
	{X^{disc}} & X.
	\arrow["{\mathrm{can}}", from=1-2, to=2-2]
	\arrow["{\exists! \varphi}", from=2-1, to=1-2]
	\arrow[from=2-1, to=2-2]
\end{tikzcd}\]
This can be interpreted either as a supremum-preserving functor $\varphi_\# : \mathcal{P}(X) \rightarrow \mathrm{Sl}(X)$, which associates to each subspace of $X$ (viewed as a closed subspace of $X^{disc}$) its corresponding sublocale, as defined in Example \ref{examplesubspace}, or dually as an infimum-preserving functor $\varphi_* : \mathcal{P}(X) \rightarrow \mathrm{Sl}(X)^{op}$, sending each subspace of $X$ (viewed as an open subspace of $X^{disc}$) to the open complement of its corresponding sublocale. We note that $\varphi_\#$ (respectively $\varphi_*$) will not preserve infima (respectively suprema) in general. This should be viewed as a feature and not a bug: As an example, the subspaces $\mathbb{Q}$ and $\mathbb{R} \setminus \mathbb{Q}$ of $\mathbb{R}$ have non-trivial (even dense) intersection when viewed as sublocales of $\mathbb{R}$. This is the reason why Banach-Tarski-like paradoxical decompositions fail to be decompositions in the locale theoretic setting.
\end{remark}

\subsection{Open maps}

The notion of an open map of locales generalizes that of an open map of topological spaces and will be useful later on.

\begin{definition}
A map of locales $f : L \rightarrow M$ is called \emph{open}, if $f^* : \mathcal{O}(M) \rightarrow \mathcal{O}(L)$ has a further left adjoint $f_!$ (called \emph{direct image} of $f$) and the adjunction $f_! \dashv f^*$ satisfies the \emph{Frobenius identity}
$$f_!( U \wedge f^*(V) ) = f_!(U) \wedge V $$
for all $U$ open in $L$ and $V$ open in $M$. 
\end{definition}

Open maps $f : L \rightarrow M$ can be equivalently characterized by the property that the direct image of an open sublocale of $L$ under $f$ is again an open sublocale, see \cite[\nopp III 7.2]{picado_pultr}. Important will be the following equivalent characterization.

\begin{proposition}
A map of locales $f : L \rightarrow M$ is open iff $f^*$ preserves arbitrary infima and implications.
\end{proposition}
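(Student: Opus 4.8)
The plan is to decompose the definition of openness into its two clauses and match each against the right-hand condition. Recall that $f$ is open exactly when (i) $f^* \colon \mathcal{O}(M) \to \mathcal{O}(L)$ has a left adjoint $f_!$, and (ii) the Frobenius identity $f_!(U \wedge f^*(V)) = f_!(U) \wedge V$ holds for all $U \in \mathcal{O}(L)$, $V \in \mathcal{O}(M)$. First I would observe that clause (i) is equivalent to $f^*$ preserving arbitrary infima: since $\mathcal{O}(M)$ and $\mathcal{O}(L)$ are complete lattices, a monotone map between them has a left adjoint iff it preserves all infima, and when it does one may take $f_!(U) = \bigwedge\{ V \in \mathcal{O}(M) \mid U \le f^*(V) \}$. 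So everything reduces to showing that, once $f_!$ is assumed to exist, the Frobenius identity is equivalent to $f^*$ preserving Heyting implication, i.e.\ $f^*(V \to Z) = f^*(V) \to f^*(Z)$ for all $V, Z \in \mathcal{O}(M)$.

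Two inequalities come for free and I would isolate them at the start. The inequality $f_!(U \wedge f^*(V)) \le f_!(U) \wedge V$ always holds (from monotonicity and the counit $f_! f^* \le \mathrm{id}$), so Frobenius is equivalent to $f_!(U) \wedge V \le f_!(U \wedge f^*(V))$; and by Proposition~\ref{heytingpropmaps}(2) we always have $f^*(V \to Z) \le f^*(V) \to f^*(Z)$, so implication-preservation is equivalent to $f^*(V) \to f^*(Z) \le f^*(V \to Z)$.

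For ``Frobenius $\Rightarrow$ preserves implication'' I would transpose the target inequality $f^*(V) \to f^*(Z) \le f^*(V \to Z)$ across $f_! \dashv f^*$ and then across the Heyting adjunction $V \wedge - \dashv (V \to -)$, turning it into $f_!\bigl((f^*(V) \to f^*(Z)) \wedge f^*(V)\bigr) \le Z$ after rewriting the left-hand side with Frobenius; this now follows from the Heyting counit $(f^*(V) \to f^*(Z)) \wedge f^*(V) \le f^*(Z)$ (Proposition~\ref{superheytingprop}(1)) together with $f_! f^* \le \mathrm{id}$. For the converse, assuming $f^*$ preserves implication, I would prove $f_!(U) \wedge V \le W$ with $W \defeq f_!(U \wedge f^*(V))$ by adjoining up to $U \le f^*(V \to W) = f^*(V) \to f^*(W)$, equivalently $U \wedge f^*(V) \le f^*(W)$ --- and this last inequality is just the unit $U \wedge f^*(V) \le f^* f_!(U \wedge f^*(V))$ of $f_! \dashv f^*$. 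Combining the two implications with the preliminary reduction yields the theorem.

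I do not anticipate a genuine obstacle, since the argument is entirely formal manipulation of the two adjunctions $f_! \dashv f^*$ and $U \wedge - \dashv (U \to -)$ inside a frame; the only point demanding care is the bookkeeping of which half of each equivalence is automatic and in which direction to transpose, so I would state explicitly at every step which adjunction (or which item of Proposition~\ref{superheytingprop} / Proposition~\ref{heytingpropmaps}) is being invoked.
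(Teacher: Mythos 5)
Your proof is correct and follows essentially the same route as the paper: reduce the existence of $f_!$ to preservation of infima via the adjoint functor theorem for complete lattices, then show that (given $f_!$) the Frobenius identity is equivalent to preservation of Heyting implication by transposing back and forth across $f_! \dashv f^*$ and $V \wedge - \dashv (V \to -)$. The paper packages this as a single chain of equivalences closed off by the Yoneda lemma (applied to $U$ for one direction, to $W$ for the other), whereas you unpack the same chain into its two nontrivial inequalities explicitly; the content is identical.
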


\begin{proof}
By the adjoint functor theorem, $f_!$ exists iff $f^*$ preserves infima. Let us prove that $f^*$ preserving implications is equivalent to the Frobenius identity. Assume that the Frobenius identity holds. Let $U$ open in $L$, and $V,W$ open in $M$. We have the chain of equivalent statements:
$$\begin{array}{rl}
U \leq f^*( V \rightarrow W ) & \text{iff} \\
f_!(U) \leq V \rightarrow W  & \text{iff} \\
f_!(U) \wedge V \leq W  & \text{iff, by Frobenius} \\
f_!(U \wedge f^*(V) ) \leq W  & \text{iff} \\
U \wedge f^*(V)  \leq f^*(W)  & \text{iff} \\
U \leq f^*(V) \rightarrow f^*(W).
\end{array}$$
Applying the Yoneda Lemma to the variable $U$, we obtain that $f^*$ preserves implications. The reader is invited to verify that applying the Yoneda Lemma to $W$ instead gives the converse.
\end{proof}

\begin{lemma} \label{closedsublocaleuniversalopen}
Let $L, M$ be locales, $U$ an open of $L$, $V$ an open of $M$, and $f : L \rightarrow M$ an open map. Then $f$ induces an open map $f| : U^c \hookrightarrow V^c$ iff $U = f^*(V)$; or equivalently both $f_!(U) \leq V$ and $f^*(V) \leq U$ hold.
\end{lemma}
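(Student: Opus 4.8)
The statement bundles together two claims: that the restriction $f|$ exists at all, and that it is open. For the first, Corollary~\ref{closedsublocaleuniversal} tells us $f|$ exists iff $f^*(V)\le U$, and in that case $(f|)^* = f^*(-)\vee U\colon \mathcal{O}(M)_{V/}\to\mathcal{O}(L)_{U/}$, where $\mathcal{O}(M)_{V/}=\{W\in\mathcal{O}(M):V\le W\}$ and likewise $\mathcal{O}(L)_{U/}$, with meets and Heyting implications inside these up-sets agreeing with those of $\mathcal{O}(M)$ and $\mathcal{O}(L)$. The second form of the condition is only a restatement: since $f_!\dashv f^*$ we have $f_!(U)\le V \iff U\le f^*(V)$, so ``$f_!(U)\le V$ and $f^*(V)\le U$'' is exactly ``$U=f^*(V)$''. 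Thus it suffices to prove: assuming $f^*(V)\le U$, the map $f|$ is open iff $U=f^*(V)$.

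Suppose first $U=f^*(V)$. Then for $W\ge V$ we get $(f|)^*(W)=f^*(W)\vee f^*(V)=f^*(W)$, and $f^*(W)\ge f^*(V)=U$, so $(f|)^*$ is simply $f^*$ restricted to a map $\mathcal{O}(M)_{V/}\to\mathcal{O}(L)_{U/}$. I will exhibit its direct image by hand: set $(f|)_!(W'):=f_!(W')\vee V$ for $W'\in\mathcal{O}(L)_{U/}$. For $W'\ge U$ and $W\ge V$, using $V\le W$, we have $(f|)_!(W')\le W \iff f_!(W')\le W \iff W'\le f^*(W)=(f|)^*(W)$, so $(f|)_!\dashv(f|)^*$; and, since meets in $\mathcal{O}(L)_{U/}$ are ambient meets, the Frobenius identity for $f|$ drops out of the one for $f$ by distributing $(-)\vee V$: $(f|)_!\bigl(W'\wedge(f|)^*(W)\bigr)=f_!\bigl(W'\wedge f^*(W)\bigr)\vee V=\bigl(f_!(W')\wedge W\bigr)\vee V=\bigl(f_!(W')\vee V\bigr)\wedge W=(f|)_!(W')\wedge W$. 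Hence $f|$ is open. (One could equally argue without writing down $(f|)_!$: the up-sets $\mathcal{O}(M)_{V/}$, $\mathcal{O}(L)_{U/}$ inherit arbitrary infima and Heyting implications from $\mathcal{O}(M),\mathcal{O}(L)$, and $f^*$ preserves both since $f$ is open, so $(f|)^*=f^*|$ does too, whence $f|$ is open by the characterisation of open maps.)

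For the converse, assume $f|$ is open; we already have $f^*(V)\le U$, so what must be shown is $U\le f^*(V)$, equivalently $f_!(U)\le V$. Openness of $f|$ gives that $(f|)^*$ preserves all infima and all Heyting implications, and that $(f|)_!$ exists and satisfies Frobenius; note also $(f|)_!(U)=V$, since a left adjoint preserves the bottom element. The plan is to feed these facts the right test elements: the ``Frobenius at the top'' identity $(f|)_!\bigl((f|)^*(W)\bigr)=(f|)_!(1)\wedge W$, combined with $(f|)^*\bigl(f_*(U)\bigr)=f^*(f_*(U))\vee U=U$ (note $f_*(U)\ge V$ since $U\ge f^*(V)$), gives $(f|)_!(1)\wedge f_*(U)=V$, and then applying $f^*$ and using $f^*(V)\le U$ should pin $U$ down. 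I expect this last squeeze to be the main obstacle. The difficulty is that $f^*(V)$ is the bottom of the relevant copy of $\mathcal{O}(L)_{U/}$ and the obvious infimum/Heyting test elements collapse to tautologies (frame homomorphisms preserve bottoms, and $a\le a\to b$ holds automatically), so one genuinely has to exploit the Frobenius identity for $f|$, its compatibility with $f_!$ across the two closed-inclusion squares $\mathcal{O}(M)\to\mathcal{O}(M)_{V/}$ and $\mathcal{O}(L)\to\mathcal{O}(L)_{U/}$, and the hypothesis that $f$ itself is open — rather than mere preservation of infima by $(f|)^*$ — in order to force $U=f^*(V)$.
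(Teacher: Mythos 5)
Your reduction of the statement and your forward direction are both fine: exhibiting $(f|)_!=f_!(-)\vee V$, checking the adjunction (using $U=f^*(V)\le f^*(W)$ so that $(f|)^*(W)=f^*(W)$), and deriving the restricted Frobenius identity from the ambient one is essentially the computation in the paper. The problem is the converse. You never prove that openness of $f|$ forces $U\le f^*(V)$: you sketch a plan and then explicitly flag that "this last squeeze" is the main obstacle. That obstacle is precisely the content of the missing direction, so the proof is incomplete as written. Your diagnosis of why the obvious moves fail is accurate, and worth making explicit: the Frobenius identity for the \emph{true} direct image $(f|)_!$, evaluated at the bottom elements $U$ of $\mathcal{O}(L)_{U/}$ and $V$ of $\mathcal{O}(M)_{V/}$, is automatically satisfied because a left adjoint preserves bottom elements, so it yields nothing; and the infimum/implication test elements you consider collapse to tautologies for the same reason.

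What the paper does instead is commit to the concrete candidate for the direct image. It identifies the left adjoint of $(f|)^*=f^*(-)\vee U$ with $f_!(-)\vee V$ — i.e.\ it reads "$f$ induces an open map $f|$" as openness witnessed by the direct image inherited from $f$ — and then writes out the Frobenius identity for this pair in terms of the ambient $f_!$ and $f^*$. For $W\ge U$ and $Z\ge V$, using $W\wedge(f^*(Z)\vee U)=(W\wedge f^*(Z))\vee U$ and the ambient Frobenius identity, the left-hand side $f_!\bigl(W\wedge(f|)^*(Z)\bigr)\vee V$ expands to $(f_!(W)\wedge Z)\vee f_!(U)\vee V$, while the right-hand side $\bigl(f_!(W)\vee V\bigr)\wedge Z$ equals $(f_!(W)\wedge Z)\vee V$. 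The extra term $f_!(U)$ that appears on the left is exactly the leverage you were missing: setting $W=U$ and $Z=V$ forces $f_!(U)\vee V=V$, i.e.\ $f_!(U)\le V$, i.e.\ $U\le f^*(V)$, which together with the existence condition $f^*(V)\le U$ gives $U=f^*(V)$. So the converse is obtained not from further preservation properties of $(f|)^*$ but from expanding the restricted Frobenius identity through the ambient one and isolating the term $f_!(U)$; if you want to repair your argument, that is the step to supply.
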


\begin{proof}
Assume $f$ is open, and $f$ induces a map $f| : U^c \hookrightarrow V^c$, which by Lemma \ref{closedsublocaleuniversal} happens iff $f^*(V) \leq U$. Since infima in over-posets are computed underlying, we see that
$$f^*(-) \vee U : \mathcal{O}(M)_{V / } \rightarrow \mathcal{O}(L)_{U / }$$
has a left adjoint. It can be identified directly with
$$f_!(-) \vee V : \mathcal{O}(L)_{U / } \rightarrow \mathcal{O}(M)_{V / }.$$
For the Frobenius identity, assume $U \leq W$, and $V \leq Z$. We need the two terms
$$\begin{array}{rcl}
f_!( W \wedge (f^*(Z) \vee U) ) \vee V &=&  f_!(( W \wedge f^*(Z)) \vee U ) \vee V \\
= f_!( W \wedge f^*(Z) ) \vee f_!(U) \vee V  &=& (f_!(W) \wedge Z) \vee f_!(U) \vee V 
\end{array}$$
and
$$\begin{array}{rcl}
(f_!( W ) \vee V ) \wedge Z &=&  (f_!( W ) \wedge Z) \vee V 
\end{array}$$
to agree. But, when evaluating at $W = 0$ we see that this is the case iff $f_!(U) \vee V = V$, i.e.\ $f_!(U) \leq V$.
Furthermore, using the adjunction this is equivalent to $U \leq f^*(V)$.
\end{proof}

Let us package the statement from Lemma \ref{closedsublocaleuniversalopen} slightly differently.

\begin{definition} \label{markedlocales}
We call a pair $(L,U)$, where $L$ is a locale and $U$ is an open of $L$, a \emph{marked} locale. If $(L, U), (M,V)$ are two marked locales, then a continuous map $f : L \rightarrow M$ will be called \emph{compatible}, if $f^*(V) = U$. We denote by $\mathrm{MarkLoc}_{\mathrm{open}}$ the category of marked locales and \emph{open} compatible maps.
\end{definition}

\begin{theorem} \label{markingadjunction}
There exists an adjunction
\[\begin{tikzcd}
	{\mathrm{MarkLoc}_{\mathrm{open}}} & {\mathrm{Loc}_{\mathrm{open}}}
	\arrow[""{name=0, anchor=center, inner sep=0}, "(-)^c"', curve={height=12pt}, from=1-1, to=1-2]
	\arrow[""{name=1, anchor=center, inner sep=0}, "{(-,0)}"', curve={height=12pt}, hook', from=1-2, to=1-1]
	\arrow["\dashv"{anchor=center, rotate=-90}, draw=none, from=1, to=0]
\end{tikzcd}\]
with the left adjoint $L \mapsto (L,0)$ obtained by equipping a locale with the marking given by the bottom element $0$, and the right adjoint being given by $(L,U) \mapsto U^c \hookrightarrow L$. Furthermore, the left adjoint is fully faithful.
\end{theorem}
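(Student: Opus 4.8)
The plan is to realise the adjunction as a natural isomorphism of hom-sets built from the factorization property of closed sublocales, reading the unit and counit off Corollary \ref{closedsublocaleuniversal} and Lemma \ref{closedsublocaleuniversalopen}; full faithfulness of the left adjoint will then be visible because its unit is literally an identity.

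First I would record that the composite $(-)^c\circ(-,0)$ is the identity functor. The bottom open $0\in\mathcal{O}(L)$ has closed complement $0^c$ with $\mathcal{O}(0^c)=\mathcal{O}(L)_{0/}=\mathcal{O}(L)$ and structure maps $i^*=(-)\vee 0=\mathrm{id}$, $i_*=\mathrm{id}$ (Example \ref{closedsublocale}); hence $0^c$ is canonically $L$ itself, so $(-)^c\circ(-,0)=\mathrm{id}$ and we may take the unit $\eta$ to be the identity natural transformation. Granting the adjunction, $\eta$ being invertible gives full faithfulness of $(-,0)$; this is in any case immediate since the compatibility condition $f^*(0)=0$ is vacuous, so $\mathrm{Map}_{\mathrm{MarkLoc}_{\mathrm{open}}}((L,0),(M,0))=\mathrm{Map}_{\mathrm{Loc}_{\mathrm{open}}}(L,M)$.

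For the adjunction itself I would produce a natural bijection $\mathrm{Map}_{\mathrm{MarkLoc}_{\mathrm{open}}}((M,0),(L,U))\cong\mathrm{Map}_{\mathrm{Loc}_{\mathrm{open}}}(M,U^c)$. On underlying continuous maps this is Corollary \ref{closedsublocaleuniversal}: a map $g\colon M\to L$ with $g^*(U)=0$ factors uniquely through the closed embedding $i\colon U^c\hookrightarrow L$ as $g=i\circ\bar g$, and $g\mapsto\bar g$ is a bijection onto all continuous maps $M\to U^c$. The counit is then $\varepsilon_{(L,U)}:=i\colon(U^c,0)\to(L,U)$: it is a morphism of marked locales because $i^*(U)=U\vee U=U$ is the bottom element of $\mathcal{O}(U^c)=\mathcal{O}(L)_{U/}$, and the map it induces on closed complements of markings is $\mathrm{id}_{U^c}$. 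I would then check the two triangle identities — $\varepsilon_{(M,0)}=\mathrm{id}_{(M,0)}$, because the closed embedding $0^c\hookrightarrow M$ is $\mathrm{id}_M$, and $(\varepsilon_{(L,U)})|=\mathrm{id}_{U^c}$ by the computation just made — together with naturality of $\varepsilon$, which is the commutation $i_{M,V}\circ f|=f\circ i_{L,U}$ defining the restricted map $f|$ of a compatible map $f$.

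Everything above is formal scaffolding; the step that carries the actual content is the matching of the ``open'' decorations on the two sides of the hom-bijection, and this is precisely Lemma \ref{closedsublocaleuniversalopen}. Applied to $g\colon M\to L$ with markings $0$ on $M$ and $U$ on $L$, it says that when $g$ is an open compatible map its factorization $\bar g$ is again open, and the reverse passage, post-composing an open $\bar g\colon M\to U^c$ with the closed embedding, returns a map whose restriction to closed complements is $\bar g$; together these identify the two morphism classes. The point that needs genuine care here — and the main obstacle — is making sure the counit $i\colon(U^c,0)\to(L,U)$ really belongs to $\mathrm{MarkLoc}_{\mathrm{open}}$, i.e.\ that the morphism class of $\mathrm{MarkLoc}_{\mathrm{open}}$ is exactly the one for which $(-)^c$ and $(-,0)$ become adjoint. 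Once that is settled, the remaining verifications — functoriality of $(-)^c$ via $f\mapsto f|$ (Corollary \ref{closedsublocaleuniversal} and Lemma \ref{closedsublocaleuniversalopen}) and naturality of the bijection in both variables — are routine.
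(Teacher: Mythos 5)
Your reduction to Lemma \ref{closedsublocaleuniversalopen} is exactly the route the paper's own proof takes, and the parts you call scaffolding are fine: the unit is the identity, full faithfulness of $(-,0)$ is the vacuity of the condition $f^*(0)=0$, and the comparison $f\mapsto f|$ is injective because $f^*(W)=f^*(W\vee U)$ whenever $f^*(U)=0$. But the obstacle you flag at the end is not a verification you may safely postpone; it is the actual content of the adjunction, and the step fails. Lemma \ref{closedsublocaleuniversalopen} supplies only the \emph{restriction} direction: an open $f\colon M\to L$ with $f^*(U)=0$ induces an open $f|\colon M\to U^c$. The adjunction additionally needs $f\mapsto f|$ to be surjective onto $\mathrm{Map}_{\mathrm{Loc}_{\mathrm{open}}}(M,U^c)$. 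Since any preimage of $g$ is forced to have inverse image $g^*((-)\vee U)$, surjectivity amounts to the composite $i\circ g$ with the closed embedding $i\colon U^c\hookrightarrow L$ being open; taking $g=\mathrm{id}$ this is precisely the assertion that your candidate counit $i\colon(U^c,0)\to(L,U)$ is a morphism of $\mathrm{MarkLoc}_{\mathrm{open}}$, i.e.\ that $i$ is an open map.

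This is false in general: $i^*=(-)\vee U$ has a left adjoint only if it preserves arbitrary infima, i.e.\ only if $\bigwedge_j(W_j\vee U)=(\bigwedge_j W_j)\vee U$, which is not a frame identity. Concretely take $L=\mathbb{R}$ and $U=\mathbb{R}\setminus\{0\}$, so $U^c\cong\mathrm{pt}$; with $W_n=(-1/n,1/n)$ one has $\bigwedge_n W_n=0$, hence $(\bigwedge_n W_n)\vee U=U$, while $\bigwedge_n(W_n\vee U)=1$. So $i$ is not open, and indeed the two hom-sets disagree: $\mathrm{Map}_{\mathrm{Loc}_{\mathrm{open}}}(\mathrm{pt},U^c)$ contains the identity, whereas $\mathrm{Map}_{\mathrm{MarkLoc}_{\mathrm{open}}}((\mathrm{pt},0),(L,U))$ is empty, because the only continuous map $\mathrm{pt}\to\mathbb{R}$ with $f^*(U)=0$ is the point $0$, whose inverse image does not preserve the infimum of the $W_n$ and so is not open. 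No choice of counit can repair a non-bijective hom-comparison, so the gap cannot be closed as the statement stands; one would have to restrict the marked locales $(L,U)$ to those for which $U^c\hookrightarrow L$ is an open map, or enlarge the morphism class. Note that the paper's one-line proof cites only Lemma \ref{closedsublocaleuniversalopen} and is silent on surjectivity as well; what survives unconditionally, and what is actually used downstream (e.g.\ in Corollary \ref{booleanoutofcompact}), is the functoriality of $(-)^c$ on open compatible maps, which your argument does establish.
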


\begin{proof}
Fully faithfulness of the left adjoint $(-,0)$ is automatic, since any frame homomorphism preserves $0$. The statement that the given functors are adjoint is just a reformulation of Lemma \ref{closedsublocaleuniversalopen}. 
\end{proof}

\section{Sites and Grothendieck topologies} \label{sitesandgrothendiecktopologies}

\subsection{Sites on posets}

Locales can be thought of as $0$-topoi, and just like topoi arise naturally from Grothendieck topologies. In this section we will discuss the notion of Grothendieck pretopologies and sites, and generalities around them, in the context of posets. This material is fairly standard and will be familiar to people coming from topos theory, see e.g.\ \cite[Chapter 2]{johnstone1982stone}; however, we have included many details for potential readers from other areas, who might be unfamiliar with it. This section will also set up consistent notation for what is to follow. We do not claim any originality in this section. The basic gist is that in order to define a locale, it often suffices to give a description of only some basic building blocks, maybe also thought of as ``local pieces'', together with a description of when one such piece is covered from smaller sub-pieces. This is similar to the notion of a basis in topology, but much more flexible since we do not require the existence of a set of points to start with.

\begin{definition}
Let $(P,\leq)$ be a poset with binary meets. A \emph{Grothendieck pretopology} $\tau$ on $P$ is for each $p \in P$ a collection of subsets of $P_{/p}$ called \emph{coverings}. We use the notation $\{p_i \leq p ~|~i \in I\}$ for such a subset of $P_{/p}$. Moreover, the collection of all coverings needs to satisfy the conditions:
\begin{itemize}
\item \emph{Identities:} For all $p \in P$, the set $\{ p \leq p \}$ is a covering.
\item \emph{Stability under base change:} If $\{p_i \leq p ~|~i \in I\}$ is a covering and $q \leq p$, then $\{p_i \wedge q \leq q ~|~i \in I\}$ is a covering.
\item \emph{Locality:} If $\{p_i \leq p ~|~i \in I\}$ is a covering and $\{p_{ij} \leq p_i ~|~j \in J_i\}$ is a covering for each $i \in I$, then $\{p_{ij} \leq p ~|~i \in I,j \in J_i\}$ is a covering.
\end{itemize}
We call $(P,\leq)$ together with a Grothendieck pretopology $\tau$ a \emph{locally cartesian 0-site.}
\end{definition}

From now on we will write $(P, \tau)$ for a poset $P$ together with its Grothendieck topology $\tau$. Let us understand what the information of $\tau$ actually is. First of all, we think of the elements $p \in P$ as \emph{elementary propositions}. We have a notion of implication given by the order $p \leq q$. The requirement that $P$ is closed under meets means that we are able to form logical ``and'' of elementary propositions. The Grothendieck topology controls what we mean by ``or''. To make sense of this, let us define what a general proposition is. Denote by $\mathbf{2} = (\{0,1\},\leq)$ the poset of Boolean truth values. We think of $0$ as false and $1$ as true.

\begin{definition} \label{definitionpropositionalsheaf} Let $(P, \tau)$ be a locally cartesian $0$-site. A \emph{proposition}, also called \emph{propositional sheaf}, is a functor $F : P^{op} \rightarrow \mathbf{2}$ such that for any cover $\{U_i\}$ of $U$ we have\footnote{Note that the ``only if'' direction in the definition of a propositional sheaf is automatic since $F$ is a functor, so $F(U) \leq F(U_i)$ for all $i \in I$.}
$$ F(U) = 1 \text{ iff } F(U_i) = 1 \text{ for all }i \in I.$$
The set of propositional sheaves forms a sub-poset of $\mathrm{Fun}(P^{op},\mathbf{2})$, which we denote by $\mathrm{Sh}(P, \tau; \mathbf{2})$.
\end{definition}

Any functor $F : P^{op} \rightarrow \mathbf{2}$ is determined by the set of objects that are mapped to $1$. In this sense we can give an equivalent definition.

\begin{definition} A $\tau$-ideal is a subset $U \subset P$ such that:
\begin{itemize}
\item $U$ is downward closed.
\item Whenever $\{p_i\}$ is a cover of $p$, and for all $i \in I$ we have $p_i \in U$, then also $p \in U$.
\end{itemize}
Write $\tau\text-\mathrm{Idl}(P)$ for the set of $\tau$-ideals.
\end{definition}

\begin{proposition}
Let $(P, \tau)$ be a locally cartesian $0$-site. There is a bijection
$$\begin{array}{rcl}
\mathrm{Sh}(P, \tau; \mathbf{2}) & \cong & \tau\text-\mathrm{Idl}(P) \\
F & \mapsto & F^{-1}(1).
\end{array}$$
The inverse sends a $\tau$-ideal $U$ to the functor $F_U : P^{op} \rightarrow \mathbf{2}$ uniquely determined by the condition $F_U(p) = 1$ iff $p \in U$.
\end{proposition}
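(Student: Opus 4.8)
The statement asserts a bijection between propositional sheaves $\mathrm{Sh}(P,\tau;\mathbf{2})$ and $\tau$-ideals $\tau\text-\mathrm{Idl}(P)$, via $F \mapsto F^{-1}(1)$. The plan is to show that the two assignments described are well-defined and mutually inverse, which is essentially a bookkeeping exercise translating the sheaf condition into the closure condition for $\tau$-ideals.

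First I would recall that any functor $F : P^{op} \rightarrow \mathbf{2}$ is completely determined by the set $U_F \defeq F^{-1}(1)$, because $\mathbf{2}$ has only two elements; and conversely, any subset $U \subset P$ determines a function $F_U : P \to \{0,1\}$ with $F_U(p) = 1 \iff p \in U$. The key first observation is that $F_U$ is a functor $P^{op} \rightarrow \mathbf{2}$ (i.e.\ order-reversing, which here means: $p \leq q$ and $q \in U$ implies $p \in U$) if and only if $U$ is downward closed. So the poset of functors $\mathrm{Fun}(P^{op},\mathbf{2})$ is in order-isomorphism with the poset of downward-closed subsets of $P$ ordered by inclusion, via $F \mapsto F^{-1}(1)$ with inverse $U \mapsto F_U$.

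Next I would check that under this correspondence, the sheaf condition on $F$ matches exactly the $\tau$-ideal condition on $U_F$. Given a cover $\{p_i \leq p \mid i \in I\}$, the sheaf condition says $F(p) = 1 \iff F(p_i) = 1$ for all $i$; since $F$ is a functor, the ``$\Rightarrow$'' direction ($F(p)=1$ implies $F(p_i)=1$ for all $i$) is automatic, as noted in the footnote to Definition~\ref{definitionpropositionalsheaf}. So the content of the sheaf condition is precisely the ``$\Leftarrow$'' direction: if $F(p_i)=1$ for all $i\in I$ then $F(p)=1$. Rewriting in terms of $U_F = F^{-1}(1)$, this reads: if $p_i \in U_F$ for all $i \in I$ then $p \in U_F$ — which is exactly the second clause in the definition of a $\tau$-ideal. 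Combined with the previous paragraph (downward closure $\leftrightarrow$ functoriality), we conclude $F \in \mathrm{Sh}(P,\tau;\mathbf{2})$ if and only if $U_F \in \tau\text-\mathrm{Idl}(P)$, and symmetrically $F_U$ is a propositional sheaf whenever $U$ is a $\tau$-ideal.

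Finally I would verify that the two maps are mutually inverse: $F_{(F^{-1}(1))} = F$ holds because both functors take the value $1$ on exactly the same set of objects, and $(F_U)^{-1}(1) = U$ holds by the defining property of $F_U$. This completes the bijection. I do not anticipate a serious obstacle here; the only point requiring a moment's care is making sure one spells out that functoriality of $F_U$ is equivalent to downward closure and that the automatic ``only if'' direction of the sheaf axiom is exactly what lets the remaining ``if'' direction line up with the ideal-closure axiom. Everything else is unwinding definitions.
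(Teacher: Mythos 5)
Your proof is correct and is exactly the unwinding of definitions that the paper intends; the paper in fact states this proposition without proof, treating it as an immediate consequence of the observation that a functor $P^{op} \rightarrow \mathbf{2}$ is determined by the preimage of $1$. Your two checks — functoriality of $F_U$ equals downward closure of $U$, and the nontrivial direction of the sheaf axiom equals the ideal-closure axiom — are precisely the content being suppressed.
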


The following is a standard fact in locale theory.

\begin{proposition} \label{generatedframe}
Let $(P, \tau)$ be a locally cartesian $0$-site. The poset of propositional sheaves $\mathrm{Sh}(P, \tau; \mathbf{2})$ is a frame, and the inclusion 
$$ \mathrm{Sh}(P, \tau; \mathbf{2}) \hookrightarrow \mathrm{Fun}(P^{op}, \mathbf{2})$$
has a left adjoint \emph{sheafification} functor, which is a frame homomorphism; in other words $\mathrm{Sh}(P, \tau; \mathbf{2})$ defines a sublocale of $L(\mathrm{Fun}(P^{op}, \mathbf{2}))$. Identifying the right-hand side with the set of downward closed sets of $P$, the sheafification of a downward closed set $V \subset P$ can be computed as the $\tau$-ideal
$$V^{sh} = \{p \in P ~|~ \exists \text{ covering } \{p_i \leq p ~|~ i \in I \} \text{ s.t. } p_i \in V \text{ for all } i \in I \}.$$
\end{proposition}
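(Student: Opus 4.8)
The plan is to exhibit sheafification as a closure operator on the frame $\mathrm{Fun}(P^{op},\mathbf{2})$ that preserves finite meets — that is, a \emph{nucleus} — and then to read off all three assertions from the standard theory of nuclei and sublocales (see \cite{picado_pultr}, \cite{johnstone1982stone}). First I would set up the ambient frame: identifying a functor $F:P^{op}\to\mathbf{2}$ with the downward-closed set $F^{-1}(1)$, the poset $\mathrm{Fun}(P^{op},\mathbf{2})$ becomes the poset of downsets of $P$ ordered by inclusion, which is a frame since it is closed in $\mathcal{P}(P)$ under arbitrary unions and intersections, its distributivity being inherited from $\mathcal{P}(P)$. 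Next I would observe that the $\tau$-ideals form a closure system inside this frame: an arbitrary intersection $\bigcap_\alpha U_\alpha$ of $\tau$-ideals is downward closed, and if a covering $\{p_i\le p\}$ has all $p_i\in\bigcap_\alpha U_\alpha$ then $p\in U_\alpha$ for every $\alpha$; moreover $P$ itself is a $\tau$-ideal. Consequently the inclusion $\mathrm{Sh}(P,\tau;\mathbf{2})\hookrightarrow\mathrm{Fun}(P^{op},\mathbf{2})$ automatically has a left adjoint, namely the associated closure operator $V\mapsto V^{sh}$ sending a downset to the least $\tau$-ideal containing it, and meets in $\mathrm{Sh}(P,\tau;\mathbf{2})$ are computed as in the ambient frame.

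Second I would verify the explicit formula, writing $\widehat{V}$ for the proposed right-hand side
\[ \widehat{V}=\{\,p\in P \mid \exists \text{ covering } \{p_i\le p~|~i\in I\} \text{ with } p_i\in V \text{ for all } i\,\}. \]
There are three points: (i) $V\subseteq\widehat{V}$, using the identity covering $\{p\le p\}$; (ii) $\widehat{V}$ is a $\tau$-ideal — downward closure follows from stability under base change (pull a witnessing covering of $p$ back along $q\le p$ and use that $V$ is a downset), and closure under coverings follows from locality (refine a covering of $p$ by elements of $\widehat{V}$ through witnessing coverings of the individual pieces); (iii) $\widehat{V}$ is least, since any $\tau$-ideal $W\supseteq V$ contains $\widehat{V}$ directly by the $\tau$-ideal axiom. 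Points (ii) and (iii) together also yield idempotency, so $(-)^{sh}=\widehat{(-)}$ is a genuine closure operator whose fixed points are exactly the $\tau$-ideals.

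Finally I would check preservation of finite meets and conclude. The top downset $P$ is already a $\tau$-ideal, so $P^{sh}=P$. For binary meets the inclusion $(V\cap W)^{sh}\subseteq V^{sh}\cap W^{sh}$ is monotonicity; for the reverse, given $p\in V^{sh}\cap W^{sh}$ choose coverings $\{p_i\le p\}$ with $p_i\in V$ and $\{q_j\le p\}$ with $q_j\in W$, note that $\{p_i\wedge q_j\le p\}_{i,j}$ is a covering (apply base change along each $q_j\le p$, then locality), and observe that each $p_i\wedge q_j$ lies in $V\cap W$ because $V$ and $W$ are downsets; hence $p\in(V\cap W)^{sh}$. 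Thus $(-)^{sh}$ is a closure operator preserving finite meets, i.e.\ a nucleus, and being a left adjoint it preserves arbitrary joins; by the standard theory its fixed-point poset $\mathrm{Sh}(P,\tau;\mathbf{2})$ is a frame, $(-)^{sh}$ is a surjective frame homomorphism onto it with the inclusion as right adjoint, and therefore $\mathrm{Sh}(P,\tau;\mathbf{2})$ is a sublocale of $L(\mathrm{Fun}(P^{op},\mathbf{2}))$ in the sense of the earlier definition.

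The only place where the pretopology axioms do real work is the compatibility of sheafification with binary meets — and, en route, the verification that $\widehat{V}$ is a $\tau$-ideal — where stability under base change and locality must be combined; this is the $0$-dimensional reflection of the left-exactness of sheafification, and I expect it to be the main (and essentially the only) obstacle, everything else being routine bookkeeping about closure systems inside a powerset.
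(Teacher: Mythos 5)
Your proof is correct and follows the standard nucleus/closure-system route; the paper itself only asserts the adjunction as straightforward and defers the finite-meet preservation to a citation, and your argument supplies exactly the details that reference contains (base change plus locality to show $\{p_i\wedge q_j\le p\}$ covers). Nothing to object to.
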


It is straightforward to verify that $V \rightarrow V^{sh}$ is left adjoint to the inclusion of propositional sheaves into presheaves. For the argument that it also preserves finite meets, see \cite[Lemma 2.4]{lehner2025algebraicktheorycoherentspaces}. We will denote the corresponding locale by $L(P,\tau)$.

\begin{example} \label{finitarytopology}
Let $D$ be a lower bounded distributive lattice. A central and canonical example is given by the \emph{finite join} pretopology $fin$ on $D$, which is defined by taking the coverings
$\{d_i \leq d ~|~i \in I\}$
whenever $I$ is finite and $\bigvee_{i \in I} d_i = d$. In this case, a $fin$-ideal is a set $U \subset D$ such that
\begin{itemize}
\item $0 \in U$,
\item $U$ is downward closed, and
\item whenever $d_1, d_2 \in U$, then also $d_1 \vee d_2 \in U$.
\end{itemize}
In other words, what is normally called an ideal of a lattice. We have canonical isomorphisms
$$ \mathcal{O}(L(D,fin)) = \mathrm{Idl}(D) \cong \mathrm{Fun}^{lex}( D^{op}, \mathbf{2} ) \cong  \mathrm{Ind}(D),$$
where $\mathrm{Ind}(D)$ is the filtered colimit completion of $D$, and $\mathrm{Fun}^{lex}( D^{op}, \mathbf{2} ) \subset \mathrm{Fun}( D^{op}, \mathbf{2} )$ refers to the set of finite limit (i.e.\ finite meet) preserving functors. The locale $L(D,fin)$ is a \emph{locally coherent} locale, automatically spatial and locally compact, with its set of compact open subsets given by $D$. For more information, see \cite[Section 3]{lehner2025algebraicktheorycoherentspaces}.
\end{example}

\begin{example} \label{canonicaltopology}
Let $F$ be a frame. We can equip it with the \emph{canonical pretopology} given by $\{U_i \leq U ~|~i \in I\}$ whenever $\bigvee_{i \in I} U_i = U$.
\end{example}

For any poset $P$, there is the Yoneda embedding
$$\begin{array}{rcl}
y : P &\rightarrow& \mathrm{Fun}(P^{op}, \mathbf{2}) \\
p &\mapsto& \mathrm{Hom}_P( - , p )
\end{array}$$
Interpreted as downward closed sets, this corresponds to the assignment $p \mapsto p \hspace{-0.5ex} \downarrow ~= \{ q \in P ~|~ q \leq p \}$. We can compose this with sheafification to get a functor
$$\begin{array}{rcl}
[-] : P & \rightarrow & \mathrm{Sh}(P, \tau; \mathbf{2}) \\
p & \mapsto & [p] = (y_p)^{sh}
\end{array}$$
This functor preserves any finite meets that exist in $P$. We call propositional sheaves of the form $[p]$ \emph{elementary propositions}. We remark that for a general Grothendieck pretopology, the representable functor $y_p$ need not be a sheaf, and sheafification is usually necessary. Moreover, whilst the Yoneda functor $y : P \rightarrow \mathrm{Fun}(P^{op}, \mathbf{2})$ is always \emph{fully faithful} (by which we mean injective), the same does not need to hold anymore for $[-]$.

\begin{definition}
Let $(P, \tau)$ be a locally cartesian $0$-site. The Grothendieck pretopology $\tau$ is called \emph{subcanonical} if the representable functors $y_p$ are propositional sheaves for all $p \in P$.
\end{definition}

In case of a subcanonical Grothendieck pretopology, the functor $[-] : P \rightarrow \mathrm{Sh}(P, \tau; \mathbf{2})$ is fully faithful. As an example, the Grothendieck topology $(D,fin)$ discussed in Example \ref{finitarytopology} is subcanonical. 

\begin{lemma} \label{subcanonicalfaithful}
Let $(P, \tau)$ be a locally cartesian $0$-site. Then $\tau$ is subcanonical iff $[-] : P \rightarrow  \mathrm{Sh}(P, \tau; \mathbf{2})$ is fully faithful.
\end{lemma}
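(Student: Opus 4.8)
The plan is to run everything through the explicit description of sheafification recorded in Proposition \ref{generatedframe}: identifying $\mathrm{Sh}(P,\tau;\mathbf 2)$ with $\tau\text-\mathrm{Idl}(P)$, the object $[p] = (y_p)^{sh}$ is simply the smallest $\tau$-ideal containing the principal downset $y_p = \{q\in P \mid q\le p\}$, and every $\tau$-ideal is by definition downward closed. Since $[-]$ and $y$ are functors between posets viewed as categories, ``fully faithful'' unwinds to: for all $p,q\in P$ one has $[p]\le[q]$ \emph{iff} $p\le q$; the implication $p\le q\Rightarrow[p]\le[q]$ is automatic from functoriality, so the only content is the reverse implication $[p]\le[q]\Rightarrow p\le q$. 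I would state this reduction first.

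For the forward direction ($\tau$ subcanonical $\Rightarrow$ $[-]$ fully faithful) I would simply observe that subcanonicity says each $y_p$ is already a propositional sheaf, so sheafification is the identity on it and $[p]=y_p$. Then the statement reduces to the (trivial) full faithfulness of the Yoneda embedding $y:P\to\mathrm{Fun}(P^{op},\mathbf 2)$: if $y_p\le y_q$ as downsets then $p\in y_p\subseteq y_q$, i.e. $p\le q$. (This is the direction already noted in the text preceding the lemma.)

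For the converse, assume $[-]$ is fully faithful. Fix $p\in P$; it suffices to prove $y_p$ is a propositional sheaf, and since $y_p\subseteq(y_p)^{sh}=[p]$ always holds, we only need the inclusion $[p]\subseteq y_p$, i.e. $q\in[p]\Rightarrow q\le p$. So let $q\in[p]$. Because $[p]$ is a $\tau$-ideal it is downward closed, hence $y_q\subseteq[p]$; and since $[p]$ is a $\tau$-ideal while $[q]=(y_q)^{sh}$ is the \emph{smallest} $\tau$-ideal containing $y_q$, we get $[q]\subseteq[p]$, i.e. $[q]\le[p]$ in $\mathrm{Sh}(P,\tau;\mathbf 2)$. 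Full faithfulness of $[-]$ now yields $q\le p$, i.e. $q\in y_p$, as desired. Thus $[p]=y_p$ is a sheaf and $\tau$ is subcanonical.

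I do not expect a genuine obstacle here: the argument is essentially a formal manipulation with the closure operator $(-)^{sh}$ and downward closure. The one point to be careful about is that the proof leans on Proposition \ref{generatedframe} for the facts that a single application of the plus-construction already produces a $\tau$-ideal and that $(-)^{sh}$ is the associated reflector (so that $[q]$ really is the least $\tau$-ideal above $y_q$); both hold in this setting because we are dealing with $\mathbf 2$-valued presheaves on a poset, and the locality axiom of a Grothendieck pretopology is exactly what makes $(y_q)^+$ closed under covers. Once that is in hand, the equivalence is immediate.
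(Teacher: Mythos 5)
Your proof is correct and is essentially the paper's argument: the paper establishes the converse via the chain $\mathrm{Hom}_P(p,q) = \mathrm{Hom}_{\mathrm{Sh}}([p],[q]) = \mathrm{Hom}_{\mathrm{PSh}}(y_p,[q]) = [q](p)$, and your element-wise reasoning ($q\in[p]\Rightarrow y_q\subseteq[p]\Rightarrow[q]\subseteq[p]\Rightarrow q\le p$) is the same use of the sheafification adjunction plus full faithfulness, just unwound in terms of $\tau$-ideals. No gaps.
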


\begin{proof}
The Yoneda embedding is fully faithful, so if $\tau$ is subcanonical, then $[-] = y$ is fully faithful. Conversely, assume $[-]$ is fully faithful. We then have for $p, q \in P$ a natural equality
$$ \mathrm{Hom}_P(p,q) = \mathrm{Hom}_\mathrm{Sh}([p],[q]) = \mathrm{Hom}_\mathrm{PSh}(y_p,[q]) = [q](p).$$
In other words, the functors $y_p$ and $[p]$ agree.
\end{proof}

It is tautological that for any downward closed set $V \subset P$ we have $V = \bigcup_{p \in V} p \hspace{-0.5ex} \downarrow$. Applying the colimit preserving sheafification functor, we obtain the following.

\begin{lemma}[Co-Yoneda Lemma for propositional sheaves] \label{coyonedalemma}
Let $(P, \tau)$ be a locally cartesian $0$-site and let $U$ be a propositional sheaf. Then
$$ U = \bigvee_{p \in U} [p] $$
in $\mathrm{Sh}(P, \tau; \mathbf{2})$.
\end{lemma}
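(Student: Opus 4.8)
The statement to prove is that for a propositional sheaf $U$ on a locally cartesian $0$-site $(P,\tau)$, we have $U = \bigvee_{p \in U} [p]$ in $\mathrm{Sh}(P,\tau;\mathbf{2})$. Here the notation $p \in U$ means $U(p) = 1$, i.e. $p$ lies in the $\tau$-ideal $F^{-1}(1)$ corresponding to $U$ under the identification of Definition/Proposition above. The plan is to reduce everything to the tautology about downward closed sets mentioned just before the statement, namely that $V = \bigcup_{p \in V} p\hspace{-0.5ex}\downarrow$ for any downward closed $V \subset P$, and then push this through the sheafification functor, which by Proposition~\ref{generatedframe} is a frame homomorphism and in particular preserves arbitrary suprema.

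Concretely, I would proceed as follows. First, identify $U$ with its associated $\tau$-ideal $V = \{p \in P \mid U(p) = 1\}$; since a $\tau$-ideal is in particular downward closed, we have the set-theoretic equality $V = \bigcup_{p \in V} p\hspace{-0.5ex}\downarrow$, where each $p\hspace{-0.5ex}\downarrow = \{q \in P \mid q \leq p\}$ is the image of $p$ under the Yoneda embedding $y$ viewed inside $\mathrm{Fun}(P^{op},\mathbf{2}) \cong \{\text{downward closed subsets of }P\}$. This union is the supremum of the family $(y_p)_{p \in V}$ computed in the frame $\mathrm{Fun}(P^{op},\mathbf{2})$. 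Second, apply the sheafification functor $(-)^{sh} : \mathrm{Fun}(P^{op},\mathbf{2}) \to \mathrm{Sh}(P,\tau;\mathbf{2})$. Since $U$ is already a sheaf, $U^{sh} = U$ (more precisely, $V^{sh} = V$ because $V$ is a $\tau$-ideal, as is immediate from the formula for $V^{sh}$ in Proposition~\ref{generatedframe} together with the fact that $V$ is closed under the covering condition). Since sheafification is a left adjoint, hence preserves suprema, we get
$$ U = V^{sh} = \Big(\bigvee_{p \in V} y_p\Big)^{sh} = \bigvee_{p \in V}(y_p)^{sh} = \bigvee_{p \in V}[p] = \bigvee_{p \in U}[p],$$
where the suprema on the left of the last two equalities are taken in $\mathrm{Sh}(P,\tau;\mathbf{2})$, using the definition $[p] = (y_p)^{sh}$. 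This completes the argument.

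There is no serious obstacle here; the only point requiring a moment's care is the bookkeeping between the three avatars of a propositional sheaf (a functor $P^{op} \to \mathbf{2}$, a $\tau$-ideal, and an element of the frame $\mathrm{Sh}(P,\tau;\mathbf{2})$) and making sure that "supremum in $\mathrm{Sh}$'' is computed as "sheafification of the supremum in presheaves'' rather than naively as a union of sets — the latter need not be a sheaf. One should also note explicitly that the index set $U$ (equivalently $V$) is genuinely a set, so the supremum in question exists in the frame. Everything else is the tautology $V = \bigcup_{p\in V} p\hspace{-0.5ex}\downarrow$ transported along a supremum-preserving map, exactly as indicated in the sentence preceding the statement.
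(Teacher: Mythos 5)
Your proof is correct and follows exactly the route the paper takes: the tautology $V = \bigcup_{p \in V} p\hspace{-0.5ex}\downarrow$ for downward closed sets, transported along the supremum-preserving sheafification functor, using that $U$ is already a sheaf. You have merely spelled out the bookkeeping that the paper leaves implicit.
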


This means that the frame $\mathrm{Sh}(P, \tau; \mathbf{2})$ is generated under suprema (= disjunctions) by elementary propositions. This lemma should not be surprising to readers familiar with topos theory, as it is the analog to the statement that any sheaf is given as a colimit of (sheafifications of) representables.

We have seen that (locally cartesian) $0$-sites present locales. We are now asking the converse question: If we're already given a locale $L$, how can we determine a (hopefully simpler) $0$-site which determines $L$?

\begin{definition}
Let $F$ be a frame. We call a subset $P \subset F$ a \emph{basis} for $F$ if $P$ is closed under binary meets and for any open $U \in F$ there exist $p_i, i \in I$ such that $U = \bigvee_{i \in I} p_i$.
\end{definition}

It is not hard to check that if $P$ is a basis, the canonical pretopology of $F$ restricts to a pretopology $\tau$ on $P$. In the special case when $F$ is given as $\mathcal{O}(X)$ for a topological space $X$, the definition of a basis reduces to the classical definition of a basis. Given a basis $P \subset F$ and $U \in F$ denote by $h_U : P^{op} \rightarrow \mathbf{2}$ the propositional sheaf corresponding to the $\tau$-ideal $\{p \in P ~|~ p \leq U \}$. The following theorem is saying that knowledge of the basis $P$ together with its topology determines the locale.

\begin{theorem}[Basis theorem]  \label{basistheorem}
Let $F$ be a frame and suppose $P \subset F$ is a basis. Then there is an equivalence
$$\begin{array}{rcl} F &\cong& \mathrm{Sh}(P, \tau; \mathbf{2}) \\ U &\mapsto& h_U \end{array}.$$
\end{theorem}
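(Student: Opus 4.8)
The plan is to show that $U \mapsto h_U$ is a well-defined map into $\mathrm{Sh}(P,\tau;\mathbf 2)$, that it is an isomorphism of posets, and that both directions preserve the frame structure. I would organize this around constructing an explicit inverse, the ``evaluation'' map sending a propositional sheaf $G : P^{op} \to \mathbf 2$ (equivalently a $\tau$-ideal $I_G \subset P$) to its supremum $\bigvee_{p \in I_G} p \in F$.

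\textbf{Step 1: $h_U$ is a sheaf.} For $U \in F$, the subset $\{p \in P \mid p \le U\}$ is clearly downward closed. If $\{p_i \le p \mid i \in I\}$ is a $\tau$-covering (i.e.\ a restriction of the canonical pretopology of $F$, so $\bigvee_i p_i = p$) and $p_i \le U$ for all $i$, then $p = \bigvee_i p_i \le U$; hence $\{p \mid p \le U\}$ is a $\tau$-ideal and $h_U$ is a well-defined propositional sheaf.

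\textbf{Step 2: Construct the inverse.} Define $\Phi : \mathrm{Sh}(P,\tau;\mathbf 2) \to F$ by $\Phi(G) = \bigvee\{p \in P \mid G(p) = 1\}$. First I would check $\Phi(h_U) = U$: clearly $\Phi(h_U) = \bigvee\{p \le U\} \le U$, and since $P$ is a basis $U$ is itself a supremum of elements of $P$ lying below $U$, so equality holds. Conversely I need $h_{\Phi(G)} = G$, i.e.\ for $q \in P$: $q \le \bigvee\{p \mid G(p)=1\}$ iff $G(q) = 1$. The ``if'' direction is trivial. For ``only if'', suppose $q \le \bigvee_{p : G(p)=1} p$. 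By infinite distributivity in $F$, $q = q \wedge \bigvee_{p : G(p)=1} p = \bigvee_{p : G(p)=1}(q \wedge p)$, and each $q \wedge p$ lies in $P$ (basis closed under binary meets) with $G(q \wedge p) = 1$ (downward closure of the $\tau$-ideal, since $q \wedge p \le p$). Thus $\{q \wedge p \le q \mid G(p) = 1\}$ exhibits $q$ as covered by elements on which $G$ is $1$; since $G$ is a sheaf for the canonical-restricted topology $\tau$, this forces $G(q) = 1$. This is the step I expect to be the crux: it is exactly where the sheaf condition (not just being a $\tau$-ideal) is used, and where one must be careful that the covering $\{q \wedge p \le q\}$ is genuinely a $\tau$-covering of $q$ in $P$ — which it is, because $\bigvee_{p : G(p)=1}(q\wedge p) = q$ is a covering in the canonical pretopology of $F$ and all terms lie in $P$.

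\textbf{Step 3: Frame structure.} Both $U \mapsto h_U$ and $\Phi$ are monotone ($U \le V$ clearly gives $h_U \le h_V$, and $G \le G'$ gives $\Phi(G) \le \Phi(G')$), so being mutually inverse monotone maps they constitute an isomorphism of posets, hence automatically an isomorphism of frames (order-isomorphisms of complete lattices preserve all suprema and infima). Alternatively, if one wants it more explicitly, one can invoke Lemma \ref{coyonedalemma}: every sheaf is $\bigvee_{p \in G}[p]$, and under the basis hypothesis $[p] = y_p = h_p$, so the map is determined on the generating elementary propositions and visibly matches $F$'s own presentation as generated under suprema by $P$. Either way, no separate verification of meet- or join-preservation is needed beyond the order-isomorphism, which completes the proof.
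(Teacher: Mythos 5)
Your proof is correct. The paper itself does not prove Theorem \ref{basistheorem} but defers to an external reference (Proposition 2.5 of the cited coherent-spaces paper), so there is no in-text argument to compare against; your argument is the standard one for this comparison lemma, constructing the explicit inverse $G \mapsto \bigvee\{p \mid G(p)=1\}$ and using distributivity plus closure of the basis under binary meets to verify $h_{\Phi(G)} = G$, which is indeed the crux. The only point worth making explicit is the edge case where $\{p \mid G(p)=1\}$ is empty, but as you implicitly note, the empty family is then a $\tau$-covering of $q=0$ and the sheaf condition still forces $G(0)=1$, so nothing breaks; and your closing observation that a mutually inverse pair of monotone maps is automatically a frame isomorphism correctly dispenses with any separate verification of the frame structure.
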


Variations of this theorem in the context of topoi are well known, where it is also referred to as the Covering Lemma or ``Lemme de comparaison'' \cite[III Théorème 4.1]{SGA4}. For a proof in the context of posets as is stated here, see \cite[Proposition 2.5.]{lehner2025algebraicktheorycoherentspaces}.

\begin{example} \label{lowerreals} The locale of lower reals $\overrightarrow{[0,+\infty)}$ is defined via the following site.
\begin{itemize}
\item Elementary propositions are given by the poset of positive rationals, $(\mathbb{Q}_{\geq 0}, \leq)$, to be thought of as representing the set $[0, a)$ for $a \in \mathbb{Q}_{\geq 0}$. The symbol $[0,0)$ is interpreted as the empty set, or equivalently, the initial object.
\item A rational number $q$ is covered by $q_i$ iff $\sup_{i \in I} q_i = q$.
\end{itemize}
The opens of the locale $\overrightarrow{[0,+\infty)}$ can be identified with the set $[0, +\infty]$, as for a real number $a$, we have the corresponding downward-closed set $[0, a] \subset \mathbb{Q}_{\geq 0}$ given as
$$[0,a] = \{ q \in \mathbb{Q}_{\geq 0} ~|~ q \leq a \}.$$

The locale $\overrightarrow{[0,+\infty)}$ plays a central role in analysis, as maps into it correspond to lower semicontinuous functions. It is locally compact, but not Hausdorff. Sheaves on this locale with value in an $\infty$-category appear naturally as well, e.g.\ in \cite{efimov2025ktheorylocalizinginvariantslarge} the $\infty$-category of such sheaves with values in an $\infty$-category $\mathcal{C}$ is referred to as $\mathrm{Sh}_{\geq 0}( \mathbb{R}; \mathcal{C})$, and it is shown that $\mathrm{Sh}_{\geq 0}( \mathbb{R}; \mathrm{Sp})$ is an $\omega_1$-compact generator of the $\infty$-category of dualizable stable $\infty$-categories. (See \cite[ Theorem D.1.]{efimov2025ktheorylocalizinginvariantslarge})
\end{example}

\begin{example}
Consider the topological space of real numbers $\mathbb{R}$. It is an easy fact to check that the set of rational, open intervals $(a,b)$ for $a < b \in \mathbb{Q}$ together with the empty set forms a basis for the topology of $\mathbb{R}$. Applying Theorem \ref{basistheorem} we thus get the presentation of the locale of reals as:
\begin{itemize}
\item The subposet $\mathrm{RatInt} \subset \mathbb{Q}^{op} \times \mathbb{Q} \cup \{\emptyset \}$, spanned by $(a,b)$ such that $a < b$ together with $\emptyset$, where $\emptyset$ is a formally added bottom element.
\item Coverings are given by three cases:
\begin{itemize}
\item $(a,b)$ is covered by $(a,d), (c,b)$ whenever $a < c < d < b$.
\item $(a,b)$ is covered by $\{(c,d) ~|~ a < c < d < b \}.$
\item The empty set $\emptyset$ is covered by the empty covering.
\end{itemize}
\end{itemize}
This is for example found in \cite{picado_pultr}, Chapter XIV, or Johnstone \cite{johnstone1982stone}. Restricting to dyadic rationals or other dense subsets of $\mathbb{R}$ also works. We leave it to the reader to give the analogous description of a site for $\mathbb{R}^n$.
\end{example}

\begin{example}
Let $\mathcal{F}$ be a $\sigma$-algebra on a set $X$, that is, a sub-Boolean algebra $\mathcal{F} \subset \mathcal{P}(X)$ closed under countable unions. Then the poset $(\mathcal{F},\subset)$ can be equipped with the \emph{countable cover} pretopology, by declaring
$$\{ U_i \subset U ~|~ i \in I \}$$
to be a covering if $I$ is a \emph{countable} set and $\bigcup_{ i\in I} U_i = U$. Denote the resulting locale by $\mathcal{L}$. The locale $\mathcal{L}$ plays a central role in the thesis by Jackson \cite{jackson_sheaves}, for example Jackson shows that measures (in the classical sense) on $\mathcal{F}$ correspond uniquely to (localic) measures on the locale $\mathcal{L}$ (see \cite[Section 2.1, Theorem 2]{jackson_sheaves}), a result that we obtain later as a special case of Theorem \ref{valuationbasis}.
\end{example}

\subsection{Flat functors and morphisms of sites}

Suppose we are given a locally cartesian $0$-site $(P,\tau)$ and a locale $M$. A continuous map $f : M \rightarrow L(P,\tau)$ 
is by definition defined via its inverse image functor
$$f^* : \mathrm{Sh}(P, \tau; \mathbf{2}) \rightarrow \Omega(M),$$
which is a frame homomorphism. Since $f^*$ is colimit preserving, it is in turn completely determined by its composition with the Yoneda functor $\mathbf{f} = f^* \circ [-] : P \rightarrow \mathcal{O}(M),$ since any propositional sheaf is a supremum of elementary propositions, as seen in Lemma \ref{coyonedalemma}. In the following we are concerned with the converse question: When does a functor $\mathbf{f} : P \rightarrow \mathcal{O}(M)$ define a continuous map $f : M \rightarrow L(P,\tau)$?

\begin{definition}
Let $(P,\tau)$ be a locally cartesian $0$-site and $M$ a locale. A functor $\mathbf{f} : P \rightarrow \mathcal{O}(M)$ is called (partial) $\tau$-flat, if it is obtained as $\mathbf{f} = f^* \circ [-]$ for a (partial) continuous map 
$f : M \rightarrow L(P,\tau).$
\end{definition}

We first analyse the situation of extending to the functor $\mathbf{f}$ to the category of propositional presheaves $\mathrm{Fun}(P^{op}, \mathbf{2})$. Suppose $\mathcal{E}$ is a poset with arbitrary suprema and $\mathbf{f} : P \rightarrow \mathcal{E}$ any functor. We can construct the left Kan extension $\mathbf{f}_! : \mathrm{Fun}(P^{op}, \mathbf{2}) \rightarrow \mathcal{E}$ as the left adjoint to the functor
$$\begin{array}{rcl}
\mathbf{f}^* : \mathcal{E} & \rightarrow & \mathrm{Fun}(P^{op}, \mathbf{2}) \\
e & \mapsto & \mathrm{Hom}_{\mathcal{E}}( \mathbf{f}(-), e)
\end{array}$$
or equivalently via the formula
$$\mathbf{f}_!(U) = \bigvee_{p \in U} \mathbf{f}(p).$$

\begin{theorem}[Universal property of pre-sheaves] \label{universalpropertypresheaves}
Let $P$ be a poset, and $\mathcal{E}$ a complete lattice. Then the Yoneda functor $y : P \rightarrow \mathrm{Fun}(P^{op}, \mathbf{2})$ induces a bijection
$$\mathrm{Fun}^{L}(\mathrm{Fun}(P^{op}, \mathbf{2}), \mathcal{E}) \cong \mathrm{Fun}(P, \mathcal{E}),$$
with inverse given by $\mathbf{f} \mapsto \mathbf{f}_!$. Here, $\mathrm{Fun}^{L}$ refers to functors preserving arbitrary suprema. Moreover, suppose $\mathbf{f} : P \rightarrow \mathcal{E}$ is given. Then:
\begin{itemize}
\item $\mathbf{f}_!(1) = 1$ iff $\bigvee_{p \in P} \mathbf{f}(p) = 1$ in $\mathcal{E}$.
\item Suppose $P$ has (binary) meets, $\mathcal{E}$ is a frame and  $\mathbf{f}$ preserves meets. Then $\mathbf{f}_!$ also preserves meets.
\end{itemize} 
\end{theorem}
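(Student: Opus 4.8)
The plan is to exhibit $\mathbf{f} \mapsto \mathbf{f}_!$ and $G \mapsto G \circ y$ as mutually inverse bijections between $\mathrm{Fun}^{L}(\mathrm{Fun}(P^{op},\mathbf{2}),\mathcal{E})$ and $\mathrm{Fun}(P,\mathcal{E})$. First I would recall the explicit description already given: for a functor $\mathbf{f} : P \to \mathcal{E}$, the left Kan extension is $\mathbf{f}_!(U) = \bigvee_{p \in U} \mathbf{f}(p)$, where $U \subset P$ is viewed as a downward-closed subset. One checks directly that $\mathbf{f}_!$ is order-preserving and suprema-preserving: for a family $U_j$ of downward-closed sets, $\bigcup_j U_j$ is again downward closed and $\bigvee_{p \in \bigcup_j U_j}\mathbf{f}(p) = \bigvee_j \bigvee_{p \in U_j}\mathbf{f}(p)$, using associativity of suprema in the complete lattice $\mathcal{E}$. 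So $\mathbf{f}_! \in \mathrm{Fun}^L$. Conversely, if $G : \mathrm{Fun}(P^{op},\mathbf{2}) \to \mathcal{E}$ preserves arbitrary suprema, then composing with $y$ gives a functor $P \to \mathcal{E}$.

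Next I would verify the two round-trips. In one direction, $(\mathbf{f}_!) \circ y$ sends $p$ to $\mathbf{f}_!(y_p) = \bigvee_{q \leq p}\mathbf{f}(q) = \mathbf{f}(p)$, the last equality because $\mathbf{f}$ is monotone so $\mathbf{f}(p)$ is the largest term. In the other direction, given $G \in \mathrm{Fun}^L$, I must show $(G \circ y)_! = G$. This is where the one genuine point of the argument lies: every downward-closed set $U \subset P$ satisfies $U = \bigcup_{p \in U} (p\hspace{-0.5ex}\downarrow) = \bigvee_{p\in U} y_p$ in $\mathrm{Fun}(P^{op},\mathbf{2})$ (the poset of downward-closed sets, where suprema are unions), so applying the suprema-preserving $G$ gives $G(U) = \bigvee_{p \in U} G(y_p) = (G\circ y)_!(U)$. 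That both composites are the identity establishes the bijection; the adjointness claim ($\mathbf{f}_! \dashv \mathbf{f}^*$) is immediate from the defining formula since $\mathbf{f}_!(U) \leq e$ iff $\mathbf{f}(p) \leq e$ for all $p \in U$ iff $U \subseteq \mathbf{f}^*(e)$.

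For the two supplementary bullets: the statement $\mathbf{f}_!(1) = 1$ iff $\bigvee_{p\in P}\mathbf{f}(p) = 1$ is immediate, since the top element of $\mathrm{Fun}(P^{op},\mathbf{2})$ is $P$ itself, so $\mathbf{f}_!(1) = \bigvee_{p \in P}\mathbf{f}(p)$. For the preservation of binary meets, assume $P$ has binary meets, $\mathcal{E}$ is a frame, and $\mathbf{f}$ preserves meets. Given downward-closed $U, V \subset P$, I would compute, using the frame distributive law in $\mathcal{E}$ twice,
\[
\mathbf{f}_!(U) \wedge \mathbf{f}_!(V) = \Bigl(\bigvee_{p \in U}\mathbf{f}(p)\Bigr) \wedge \Bigl(\bigvee_{q \in V}\mathbf{f}(q)\Bigr) = \bigvee_{p \in U,\, q \in V} \mathbf{f}(p) \wedge \mathbf{f}(q) = \bigvee_{p \in U,\, q \in V} \mathbf{f}(p \wedge q).
\]
Since $U, V$ are downward closed, $p \wedge q \in U \cap V$ whenever $p \in U$, $q \in V$; conversely every $r \in U \cap V$ arises as $r \wedge r$. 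Hence the last supremum equals $\bigvee_{r \in U \cap V}\mathbf{f}(r) = \mathbf{f}_!(U \cap V) = \mathbf{f}_!(U \wedge V)$, because meets in the frame of downward-closed subsets are intersections. The main (very mild) obstacle is simply being careful that the indexing manipulations of suprema and the identification of meets with intersections in $\mathrm{Fun}(P^{op},\mathbf{2})$ are done correctly; there is no deep difficulty.
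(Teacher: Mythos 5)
Your proof is correct and follows essentially the same route as the paper's: the bijection via the co-Yoneda decomposition $U=\bigvee_{p\in U}y_p$, the top-element claim from $1=\bigvee_{p\in P}y_p$, and the meet-preservation via the identity $U\wedge V=\bigvee_{p\in U,\,q\in V}y_{p\wedge q}$ combined with the frame distributive law in $\mathcal{E}$ (you run this computation from $\mathbf{f}_!(U)\wedge\mathbf{f}_!(V)$ toward $\mathbf{f}_!(U\wedge V)$ rather than the other way, but the content is identical). The only difference is that you spell out details the paper declares ``clear,'' which is harmless.
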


\begin{proof}
It is clear that restriction along Yoneda and left Kan extension are inverse to each other, thus giving the claimed bijection.

The claim that $\mathbf{f}_!(1) = 1$ iff $\bigvee_{p \in P} f(p) = 1$ in $\mathcal{E}$ is also clear as in $\mathrm{Fun}(P^{op}, \mathbf{2})$ we have $1 = \bigvee_{p \in P} p$.

Finally, assume $P$ has meets, $\mathcal{E}$ is a frame and  $\mathbf{f}$ preserves meets. Let $U, V \in \mathrm{Fun}(P^{op}, \mathbf{2})$. Using that $\mathrm{Fun}(P^{op}, \mathbf{2})$ is a frame and that $y$ preserves any existing meets, we have
$$ U \wedge V = \bigvee_{p \in U, q \in V} y_{p \wedge q}.$$
Applying the supremum preserving functor $\mathbf{f}_!$ we obtain
$$ \mathbf{f}_!( U \wedge V ) = \bigvee_{p \in U, q \in V} \mathbf{f}(p \wedge q) = \bigvee_{p \in U, q \in V} \mathbf{f}(p) \wedge \mathbf{f}(q) = \bigvee_{p \in U} \mathbf{f}(p) \wedge \bigvee_{q \in V} \mathbf{f}(q) = \mathbf{f}_!(U) \wedge \mathbf{f}_!(V)$$
where we used that meets distribute over suprema in $\mathcal{E}$ and that $\mathbf{f}$ preserves finite meets.
\end{proof}

We now discuss how to incorporate Grothendieck pretopologies. Let us introduce with some terminology.

\begin{definition}
Let $(P,\tau)$ and $(P',\tau')$ be two locally cartesian $0$-sites. Let $\mathbf{f} : P \rightarrow P'$ be a functor. We say that:
\begin{itemize}
\item $\mathbf{f}$ \emph{preserves coverings} if whenever $\{p_i \leq p ~|~ i \in I\}$ is a covering in $P$, then $\{\mathbf{f}(p_i) \leq \mathbf{f}(p) ~|~ i \in I\}$ is a covering in $P'$.
\item $\mathbf{f}$ is a \emph{partial morphism of sites} if it preserves coverings and binary meets.
\item $\mathbf{f}$ has \emph{global support} if every element of $P'$ can be covered by elements of the form $\mathbf{f}(p)$ for $p \in P$. 
\item $\mathbf{f}$ is a \emph{morphism of sites} if it is a partial morphism of sites with global support.
\end{itemize}
\end{definition}

It is easy to verify that (partial) morphisms of sites compose, hence we obtain the categories $\mathrm{Site}_{\mathrm{part}}$ and $\mathrm{Site}$ of locally cartesian $0$-sites with (partial) morphisms of sites.

For the following, note that the inclusion $ i_* : \mathrm{Sh}(P, \tau; \mathbf{2}) \subset \mathrm{Fun}(P^{op}, \mathbf{2})$ determines a sublocale.

\begin{theorem}[Universal property of sheaves] \label{universalpropertysheaves}
Let $(P,\tau)$ be a locally cartesian $0$-site and $\mathcal{E}$ a complete lattice. Then the elementary proposition functor $[-] : P \rightarrow \mathrm{Sh}(P, \tau; \mathbf{2})$ induces a bijection
$$\mathrm{Fun}^{L}(\mathrm{Sh}(P, \tau; \mathbf{2}), \mathcal{E}) \cong \mathrm{Fun}^{cov}(P, \tau; \mathcal{E}),$$
with inverse $\mathbf{f} \mapsto \mathbf{f}_\# = \mathbf{f}_! i_*$, where 
$\mathrm{Fun}^{cov}(P, \tau; \mathcal{E})$ is the sub-poset of \emph{covering-preserving} functors, that is those $\mathbf{f} : P \rightarrow \mathrm{E}$ such that for any $\tau$-covering $\{ p_i \leq p ~|~ i \in I\}$ we have
$$\mathbf{f}(p) = \bigvee_{i \in I} \mathbf{f}(p_i) \text{ in } \mathcal{E}.$$
Moreover, suppose such a covering-preserving functor $\mathbf{f} : P \rightarrow \mathcal{E}$ is given. Then:
\begin{itemize}
\item The right adjoint to $\mathbf{f}_\#$ is given by
$$\begin{array}{rcl}
\mathbf{f}^\# : \mathcal{E} & \rightarrow & \mathrm{Sh}(P, \tau; \mathbf{2}) \\
e & \mapsto & \mathrm{Hom}_\mathcal{E}( \mathbf{f}(-) , e).
\end{array}$$
\item $\mathbf{f}_\#(1) = 1$ iff $\bigvee_{p \in P} \mathbf{f}(p) = 1$ in $\mathcal{E}$.
\item Suppose $\mathcal{E}$ is a frame and  $\mathbf{f}$ preserves meets. Then $\mathbf{f}_\#$ also preserves meets.
\end{itemize}
\end{theorem}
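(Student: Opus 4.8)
The plan is to reduce everything to the universal property of presheaves (Theorem~\ref{universalpropertypresheaves}) and the universal property of a sublocale (Proposition~\ref{sublocaleuniversal}), using that $\mathrm{Sh}(P,\tau;\mathbf{2})$ is a sublocale of $\mathrm{Fun}(P^{op},\mathbf{2})$ whose reflector $i^{*}$ is sheafification $(-)^{sh}$ (Proposition~\ref{generatedframe}), that $[-] = i^{*}\circ y$ for the Yoneda embedding $y$, and that $i^{*}i_{*} = \mathrm{id}$ while $i_{*}i^{*}(V) = V^{sh}$ for a downward-closed $V\subseteq P$.

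First I would establish the one genuinely site-theoretic ingredient: for a functor $\mathbf{f}\colon P\to\mathcal{E}$ into a complete lattice, the left Kan extension $\mathbf{f}_{!}\colon\mathrm{Fun}(P^{op},\mathbf{2})\to\mathcal{E}$ satisfies $\mathbf{f}_{!} = \mathbf{f}_{!}\, i_{*}i^{*}$ if and only if $\mathbf{f}$ is covering-preserving. For the ``if'' direction, $V\subseteq V^{sh}$ gives $\mathbf{f}_{!}(V)\le\mathbf{f}_{!}(V^{sh})$, while conversely every $p\in V^{sh}$ admits a covering $\{p_{i}\le p\}$ with all $p_{i}\in V$, so covering-preservation gives $\mathbf{f}(p) = \bigvee_{i}\mathbf{f}(p_{i})\le\mathbf{f}_{!}(V)$, hence $\mathbf{f}_{!}(V^{sh})\le\mathbf{f}_{!}(V)$. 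For the ``only if'' direction, given a covering $\{p_{i}\le p\mid i\in I\}$, I would set $V = \bigcup_{i} y_{p_{i}}$; then $p\in V^{sh}$, so $y_{p}\le i_{*}i^{*}(V)$, and applying the monotone map $\mathbf{f}_{!}$, the hypothesis, and $\mathbf{f}_{!}(V) = \bigvee_{i}\mathbf{f}(p_{i})$ yields $\mathbf{f}(p) = \mathbf{f}_{!}(y_{p})\le\bigvee_{i}\mathbf{f}(p_{i})$; the reverse inequality is monotonicity.

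Granting this, the bijection assembles formally. By Theorem~\ref{universalpropertypresheaves}, $h\mapsto h\circ y$ identifies $\mathrm{Fun}^{L}(\mathrm{Fun}(P^{op},\mathbf{2}),\mathcal{E})$ with $\mathrm{Fun}(P,\mathcal{E})$, with inverse $\mathbf{f}\mapsto\mathbf{f}_{!}$; by Proposition~\ref{sublocaleuniversal} applied to the sublocale $i_{*}\colon\mathrm{Sh}(P,\tau;\mathbf{2})\hookrightarrow\mathrm{Fun}(P^{op},\mathbf{2})$ with codomain $\mathcal{E}$, a supremum-preserving $h$ factors as $h = g\circ i^{*}$ with $g$ again supremum-preserving exactly when $h = h\, i_{*}i^{*}$, and then $g = h\, i_{*}$. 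Taking $h = \mathbf{f}_{!}$, the ingredient above says this condition holds precisely when $\mathbf{f}$ is covering-preserving, and then $g = \mathbf{f}_{!}\, i_{*} = \mathbf{f}_{\#}$ while $g\circ[-] = g\circ i^{*}\circ y = \mathbf{f}_{!}\circ y = \mathbf{f}$. Hence precomposition with $[-]$ carries $\mathrm{Fun}^{L}(\mathrm{Sh}(P,\tau;\mathbf{2}),\mathcal{E})$ bijectively onto $\mathrm{Fun}^{cov}(P,\tau;\mathcal{E})$, with inverse $\mathbf{f}\mapsto\mathbf{f}_{\#}$: the round trips are $\mathbf{f}_{\#}\circ[-] = \mathbf{f}_{!}\, i_{*}i^{*}\, y = \mathbf{f}_{!}\, y = \mathbf{f}$ by the ingredient, and $(g\circ[-])_{\#} = g$ because $g\circ i^{*}$ is supremum-preserving and restricts along $y$ to $g\circ[-]$, so $g\circ i^{*} = (g\circ[-])_{!}$ and $(g\circ[-])_{\#} = g\circ i^{*}\circ i_{*} = g$.

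For the remaining clauses: identifying $U\in\mathrm{Sh}(P,\tau;\mathbf{2})$ with the corresponding $\tau$-ideal, $\mathbf{f}_{\#}(U) = \bigvee_{p\in U}\mathbf{f}(p)$, so $\mathbf{f}_{\#}(U)\le e$ iff $U\subseteq\{p\mid\mathbf{f}(p)\le e\}$; the latter set is downward-closed, and it is a $\tau$-ideal precisely because $\mathbf{f}$ is covering-preserving, so it is the element $\mathbf{f}^{\#}(e) = \mathrm{Hom}_{\mathcal{E}}(\mathbf{f}(-),e)$ of $\mathrm{Sh}(P,\tau;\mathbf{2})$ and the displayed equivalence is the adjunction $\mathbf{f}_{\#}\dashv\mathbf{f}^{\#}$. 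Since the top element of $\mathrm{Sh}(P,\tau;\mathbf{2})$ is the $\tau$-ideal $P$ itself, $\mathbf{f}_{\#}(1) = \bigvee_{p\in P}\mathbf{f}(p)$, which equals $1$ iff that join does. Finally, if $\mathcal{E}$ is a frame and $\mathbf{f}$ preserves meets, then $\mathbf{f}_{!}$ preserves meets by Theorem~\ref{universalpropertypresheaves}, and $i_{*}$ preserves meets since a sublocale is closed under infima (Lemma~\ref{sublocales}), so $\mathbf{f}_{\#} = \mathbf{f}_{!}\, i_{*}$ preserves binary meets. The main obstacle will be the ``only if'' half of the ingredient in the second paragraph — the one point where the Grothendieck pretopology axioms actually enter, through the explicit formula for $V^{sh}$; everything else is formal manipulation of adjoint functors between complete lattices.
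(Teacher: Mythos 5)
Your proposal is correct and follows essentially the same route as the paper: both reduce the statement to the universal property of presheaves (Theorem~\ref{universalpropertypresheaves}) combined with the universal property of the sublocale $i_*:\mathrm{Sh}(P,\tau;\mathbf{2})\hookrightarrow\mathrm{Fun}(P^{op},\mathbf{2})$ (Proposition~\ref{sublocaleuniversal}), with the only site-theoretic input being the equivalence of covering-preservation with $\mathbf{f}_!=\mathbf{f}_!\,i_*i^*$ via the explicit formula for $V^{sh}$. The extra details you supply (the round-trip checks and the explicit verification that $\mathbf{f}^\#(e)=\{p\mid\mathbf{f}(p)\le e\}$ is a $\tau$-ideal adjoint to $\mathbf{f}_\#$) are correct and merely make explicit what the paper leaves implicit.
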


In particular, if $F$ is a frame, then $\mathbf{f} : P \rightarrow F$ is (partial) $\tau$-flat iff $\mathbf{f}$ is a (partial) morphism of sites, where we equip $F$ with the canonical topology.

\begin{proof} Let $\mathbf{f} : P \rightarrow \mathcal{E}$ be a functor. By Theorem \ref{universalpropertypresheaves} we have a unique supremum-preserving extension $\mathbf{f}_! : \mathrm{Fun}(P^{op}, \mathbf{2}) \rightarrow \mathcal{E}$. Let $\{p_i \leq p ~|~ i \in I\}$ be a $\tau$-covering. Then $ [p] = \bigvee_{i \in I} [p_i]$ in $\mathrm{Sh}(P, \tau; \mathbf{2})$, so if $\mathbf{f}_!$ descends to propositional sheaves, we necessarily have that $f$ is covering preserving. For the converse, since $ i_* : \mathrm{Sh}(P, \tau; \mathbf{2}) \subset \mathrm{Fun}(P^{op}, \mathbf{2})$ is a sublocale, by Proposition \ref{sublocaleuniversal} the functor $\mathbf{f}_!$ descends to $\mathrm{Sh}(P, \tau; \mathbf{2})$ iff for all downward closed sets $V$ we have $\mathbf{f}_!( V ) = \mathbf{f}_! (V^{sh})$. We have $V^{sh} = \{p \in P ~|~ \exists \text{ covering } \{p_i \leq p ~|~ i \in I \} \text{ s.t. } p_i \in V \text{ for all } i \in I \}$
hence $\mathbf{f}_!( V ) = \mathbf{f}_! (V^{sh})$ iff
$$ \bigvee_{ p \in V } \mathbf{f} (p) = \bigvee_{ q : ~ \exists \text{ covering with } p_i \in V } \mathbf{f}(q).$$
Note that we always have $\leq$ for the above line. Now, assume that $\mathbf{f}$ preserves coverings, then any $f(q)$ appearing in the right hand side we have $f(q) \leq \bigvee f(p_i)$ with $p_i$ in $V$. Therefore the reverse inequality also holds.

Again by Proposition \ref{sublocaleuniversal} in the case $\mathbf{f}_!$ descends we necessarily have $\mathbf{f}_\# = \mathbf{f}_! i_*$, so the claims about preservation of the top element or meets reduce to the analogous statements made in Theorem \ref{universalpropertypresheaves}.
\end{proof}

\begin{corollary} \label{flatfunctor}
Let $(P,\tau)$ be a locally cartesian $0$-site and $M$ a locale. There is a bijection
$$\begin{array}{rcl}
\mathrm{Map}_{\mathrm{part}}(M, L(P, \tau) ) &\cong& \mathrm{Mor}_{\mathrm{Site}}^{\mathrm{part}}((P,\tau), (\mathcal{O}(M),can) ) \\
f &\mapsto & f^* \circ [-]
\end{array}$$
where the left-hand side refers to partial continuous maps between locales. Under this bijection, the domain of a partial map $f : M \rightarrow L(P, \tau)$ is computed as $ \mathrm{dom}(f) = \bigvee_{p \in P} f^*([p])$. This bijection restricts furthermore to a bijection
$$\mathrm{Map}(M, L(P, \tau) ) \cong \mathrm{Mor}_{\mathrm{Site}}((P,\tau), (\mathcal{O}(M),can) ).$$
\end{corollary}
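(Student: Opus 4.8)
The plan is to deduce this directly from the Universal property of sheaves (Theorem \ref{universalpropertysheaves}), applied with the complete lattice $\mathcal{E} = \mathcal{O}(M)$ equipped with its canonical topology. Unwinding definitions, a partial continuous map $f : M \rightarrow L(P,\tau)$ is by definition nothing but a partial frame homomorphism $f^* : \mathrm{Sh}(P,\tau;\mathbf{2}) \rightarrow \mathcal{O}(M)$, i.e.\ a functor preserving arbitrary suprema and binary meets; and a partial morphism of sites $\mathbf{f} : (P,\tau) \rightarrow (\mathcal{O}(M),can)$ is a functor preserving binary meets that sends $\tau$-coverings to coverings in the canonical topology. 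Since in the canonical topology $\{U_i \leq U\}$ is a covering precisely when $\bigvee_i U_i = U$, the latter condition coincides on the nose with being a covering-preserving functor in the sense of Theorem \ref{universalpropertysheaves}. So the corollary amounts to the assertion that the bijection $\mathrm{Fun}^{L}(\mathrm{Sh}(P,\tau;\mathbf{2}),\mathcal{O}(M)) \cong \mathrm{Fun}^{cov}(P,\tau;\mathcal{O}(M))$ of Theorem \ref{universalpropertysheaves} restricts to a bijection between those supremum-preserving functors that moreover preserve binary meets and those covering-preserving functors that moreover preserve binary meets.

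First I would verify the forward map $f \mapsto \mathbf{f} := f^* \circ [-]$ lands in partial morphisms of sites. Since $[-]$ preserves any finite meets that exist in $P$ and $f^*$ preserves binary meets, $\mathbf{f}$ preserves binary meets; and for a $\tau$-covering $\{p_i \leq p\}$ one has $[p] = \bigvee_i [p_i]$ in $\mathrm{Sh}(P,\tau;\mathbf{2})$, so applying $f^*$ gives $\mathbf{f}(p) = \bigvee_i \mathbf{f}(p_i)$, which says $\mathbf{f}$ preserves coverings. Conversely, given a partial morphism of sites $\mathbf{f}$, Theorem \ref{universalpropertysheaves} produces the supremum-preserving extension $\mathbf{f}_\# = \mathbf{f}_! i_*$, and its last clause states that $\mathbf{f}_\#$ preserves binary meets whenever $\mathbf{f}$ does; hence $\mathbf{f}_\#$ is a partial frame homomorphism and defines a partial continuous map $M \rightarrow L(P,\tau)$. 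That these two assignments are mutually inverse is exactly the bijection of Theorem \ref{universalpropertysheaves}, whose forward direction is precomposition with $[-]$ and whose inverse is $\mathbf{f} \mapsto \mathbf{f}_\#$.

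For the domain formula, I would apply the Co-Yoneda Lemma \ref{coyonedalemma} to the top sheaf (whose associated $\tau$-ideal is all of $P$) to get $1 = \bigvee_{p \in P} [p]$ in $\mathrm{Sh}(P,\tau;\mathbf{2})$; since $f^*$ preserves suprema this yields $\mathrm{dom}(f) = f^*(1) = \bigvee_{p \in P} f^*([p]) = \bigvee_{p \in P} \mathbf{f}(p)$. The restricted bijection is then read off from this: $f$ is globally defined, i.e.\ an honest frame homomorphism, iff $f^*(1) = 1$, iff $\bigvee_{p \in P} \mathbf{f}(p) = 1$, which is precisely the condition that $\mathbf{f}$ has global support, hence is a morphism of sites in the sense of the earlier definition.

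I do not expect a serious obstacle: all the real content is carried by Theorem \ref{universalpropertysheaves}, and this corollary is a matter of matching the two flavours of partiality — a partial frame homomorphism fails to preserve $1$ exactly when the corresponding site morphism lacks global support — and transporting the meet-preservation clause across the bijection. The only points demanding a little care are confirming that ``preserves $\tau$-coverings, valued in a frame with the canonical topology'' is literally the same as ``covering-preserving'' as used in Theorem \ref{universalpropertysheaves}, and that $\bigvee_{p \in P} \mathbf{f}(p)$ is genuinely $f^*(1)$ and not merely a lower bound for it; both reduce to the identity $1 = \bigvee_{p \in P} [p]$ together with preservation of suprema.
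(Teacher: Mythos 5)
Your proposal is correct and follows exactly the route the paper intends: the corollary is an unwinding of Theorem \ref{universalpropertysheaves} with $\mathcal{E} = \mathcal{O}(M)$ carrying the canonical topology, transporting the meet-preservation clause across the bijection and using $1 = \bigvee_{p\in P}[p]$ (Co-Yoneda) for the domain formula and the global case. The only point worth a remark is that matching ``$\bigvee_{p}\mathbf{f}(p)=1$'' with ``global support'' requires reading the latter in the standard sense that every open is covered by opens each lying \emph{below} some $\mathbf{f}(p)$ (rather than literally equal to some $\mathbf{f}(p)$), which is clearly the intended meaning of the paper's definition.
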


If we think of our site as given by elementary propositions, this just says that a continuous map is determined by an assignment $f^*$ which preserves (external) implication, ``and'' and ``or''.

\begin{example} \label{point}
In particular, if $L = \mathrm{pt}$, we obtain a characterization of a point $x$ of $L(P,\tau)$. It is a collection $x \subset P$, for which we use the formal symbol
``$x \in p$'' whenever $p \in x$, such that:
\begin{itemize}
\item There exists at least one $p \in P$ such that ``$x \in p$''.
\item If $p \leq q$, we have ``$x \in p$'' implies ``$x \in q$''.
\item If ``$x \in p$'' and ``$x \in q$'', then ``$x \in p \wedge q$''.
\item If $\{p_i\}_{i \in I}$ cover $p$, and ``$x \in p$'', then there exists an $i \in I$ such that ``$x \in p_i$''.
\end{itemize}
\end{example}

\begin{example}
It is clear that the Yoneda functor
$$[-] : P \rightarrow \mathrm{Sh}(P, \tau; \mathbf{2})$$
is $\tau$-flat, corresponding to the identity on $L(P,\tau)$.
\end{example}

We now come to the most flexible situation. Suppose $(P,\tau)$ and $(P',\tau')$ are two locally cartesian $0$-sites. When does a functor $\mathbf{f} : P \rightarrow P'$ induce a frame homomorphism on propositional sheaves?

\begin{theorem} \label{functorialitymorphismsites}
Let $(P,\tau)$ and $(P',\tau')$ be two locally cartesian $0$-sites. Then the assignments
$$\begin{array}{rcl}
\mathrm{Mor}_{\mathrm{Site}}^{\mathrm{part}}((P,\tau),(P',\tau')) & \rightarrow & \mathrm{Hom}^{\mathrm{part}}_{\mathrm{Frm}}( \mathrm{Sh}(P, \tau; \mathbf{2}), \mathrm{Sh}(P', \tau'; \mathbf{2})) \\
\mathbf{f} & \mapsto & ([\mathbf{f}])_\#
\end{array}$$
and
$$\begin{array}{rcl}
\mathrm{Mor}_{\mathrm{Site}}((P,\tau),(P',\tau')) & \rightarrow & \mathrm{Hom}_{\mathrm{Frm}}( \mathrm{Sh}(P, \tau; \mathbf{2}), \mathrm{Sh}(P', \tau'; \mathbf{2})) \\
\mathbf{f} & \mapsto & ([\mathbf{f}])_\#
\end{array}$$
are well-defined. Moreover, for a given (partial) morphism of sites $\mathbf{f} : P \rightarrow P'$, the right adjoint to
\mbox{$([\mathbf{f}])_\# : \mathrm{Sh}(P, \tau; \mathbf{2}) \rightarrow  \mathrm{Sh}(P', \tau'; \mathbf{2})$} is given by
$$\begin{array}{rcl}
\mathbf{f}^\# : \mathrm{Sh}(P', \tau'; \mathbf{2}) & \rightarrow & \mathrm{Sh}(P, \tau; \mathbf{2}) \\
F & \mapsto & F \circ \mathbf{f}.
\end{array}$$
\end{theorem}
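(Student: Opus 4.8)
The plan is to reduce everything to the universal property of propositional sheaves (Theorem \ref{universalpropertysheaves}), applied to the composite functor $[\mathbf{f}] \defeq [-]_{P'} \circ \mathbf{f} : P \rightarrow \mathrm{Sh}(P',\tau';\mathbf{2})$, where $[-]_{P'}$ denotes the elementary proposition functor of $(P',\tau')$. Once we know $[\mathbf{f}]$ is covering-preserving, that theorem produces the unique supremum-preserving extension $([\mathbf{f}])_\# = ([\mathbf{f}])_! \, i_*$, tells us when it preserves binary meets and the top element, and identifies its right adjoint. So the real work is to verify two properties of $[\mathbf{f}]$: that $[\mathbf{f}](p) = \bigvee_{i \in I} [\mathbf{f}](p_i)$ for every $\tau$-covering $\{p_i \leq p ~|~ i \in I\}$, and that $[\mathbf{f}]$ preserves binary meets.

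The first I would deduce from the identity $[q] = \bigvee_{i \in I} [q_i]$ in $\mathrm{Sh}(P',\tau';\mathbf{2})$, valid for any $\tau'$-covering $\{q_i \leq q ~|~ i \in I\}$. This identity follows from the explicit sheafification formula in Proposition \ref{generatedframe}: writing $V = \bigcup_{i \in I} q_i \hspace{-0.5ex} \downarrow$ for the supremum of the representables $y_{q_i}$ among downward-closed subsets of $P'$, the covering forces $q \in V^{sh}$, so $q \hspace{-0.5ex} \downarrow \subseteq V^{sh}$, while $V \subseteq q \hspace{-0.5ex} \downarrow$ trivially; sheafifying this sandwich gives $V^{sh} = (q \hspace{-0.5ex} \downarrow)^{sh} = [q]$, and since sheafification preserves suprema, $\bigvee_i [q_i] = V^{sh} = [q]$. (This is exactly the identity already invoked in the proof of Theorem \ref{universalpropertysheaves}.) Applying it with $\mathbf{f}$ preserving coverings, a $\tau$-covering $\{p_i \leq p ~|~ i \in I\}$ maps to the $\tau'$-covering $\{\mathbf{f}(p_i) \leq \mathbf{f}(p) ~|~ i \in I\}$, whence $[\mathbf{f}(p)] = \bigvee_i [\mathbf{f}(p_i)]$. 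Preservation of binary meets is immediate, since $\mathbf{f}$ preserves binary meets (being a partial morphism of sites) and $[-]_{P'}$ preserves all finite meets existing in $P'$. By Theorem \ref{universalpropertysheaves}, $([\mathbf{f}])_\#$ therefore preserves arbitrary suprema and binary meets, i.e.\ it is a partial frame homomorphism; this settles the first assignment.

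For the second assignment, assume in addition that $\mathbf{f}$ has global support. Then each $q \in P'$ has a covering by elements $\mathbf{f}(p_i)$, so the identity above gives $[q] = \bigvee_i [\mathbf{f}(p_i)] \leq \bigvee_{p \in P} [\mathbf{f}(p)]$; combined with the co-Yoneda Lemma \ref{coyonedalemma} applied to the top sheaf $1$ (whose $\tau'$-ideal is all of $P'$), which gives $1 = \bigvee_{q \in P'} [q]$, we conclude $\bigvee_{p \in P} [\mathbf{f}(p)] = 1$. Theorem \ref{universalpropertysheaves} then yields $([\mathbf{f}])_\#(1) = 1$, so $([\mathbf{f}])_\#$ is a genuine frame homomorphism.

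Finally, Theorem \ref{universalpropertysheaves} identifies the right adjoint of $([\mathbf{f}])_\#$ as $e \mapsto \mathrm{Hom}_{\mathrm{Sh}(P',\tau';\mathbf{2})}([\mathbf{f}](-), e)$, and it remains only to recognise this as precomposition with $\mathbf{f}$. For $F \in \mathrm{Sh}(P',\tau';\mathbf{2})$ and $p \in P$, using $[\mathbf{f}(p)] = (y_{\mathbf{f}(p)})^{sh}$, the sheafification--inclusion adjunction, and the Yoneda lemma in $\mathrm{Fun}((P')^{op},\mathbf{2})$,
$$\mathrm{Hom}_{\mathrm{Sh}(P',\tau';\mathbf{2})}([\mathbf{f}(p)], F) = \mathrm{Hom}_{\mathrm{Fun}((P')^{op},\mathbf{2})}(y_{\mathbf{f}(p)}, F) = F(\mathbf{f}(p)),$$
so the right adjoint is $F \mapsto F \circ \mathbf{f}$. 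I anticipate no serious obstacle here: the argument is bookkeeping on top of Theorem \ref{universalpropertysheaves}, and the only step requiring a little care is the covering identity $[q] = \bigvee_i [q_i]$, which drives both the covering-preservation of $[\mathbf{f}]$ and the global-support computation and is a direct consequence of the sheafification formula.
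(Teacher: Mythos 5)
Your proposal is correct and follows essentially the same route as the paper: the paper's proof likewise observes that $[\mathbf{f}] = [-]\circ\mathbf{f}$ is a (partial) morphism of sites into $\mathrm{Sh}(P',\tau';\mathbf{2})$ with the canonical topology and then invokes Corollary \ref{flatfunctor} (itself a direct consequence of Theorem \ref{universalpropertysheaves}) for well-definedness, and Theorem \ref{universalpropertysheaves} plus the Yoneda lemma for the right adjoint. You have merely expanded the covering identity $[q]=\bigvee_i[q_i]$ and the global-support computation, which the paper leaves implicit in the cited results; all of these details check out.
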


In particular, the above proposition states that we have a functor
$$\mathrm{Sh}(-; \mathbf{2}) : \mathrm{Site}_{\mathrm{part}} \rightarrow \mathrm{Frm}_{\mathrm{part}},$$
and similarly
$$\mathrm{Sh}(-; \mathbf{2}) : \mathrm{Site} \rightarrow \mathrm{Frm}.$$
\begin{proof}
The claims about well-definedness follow directly from Corollary \ref{flatfunctor}, since for a (partial) morphism of sites $\mathbf{f}$, the composite $[\mathbf{f}] = [-] \circ \mathbf{f} : P \rightarrow \mathrm{Sh}(P', \tau'; \mathbf{2})$ is again a morphism of sites, where we equip $\mathrm{Sh}(P', \tau'; \mathbf{2})$ with the canonical topology. The statement about the right adjoint of $([\mathbf{f}])_\#$ follows directly from the description of the right adjoint provided by Theorem \ref{universalpropertysheaves} together with the Yoneda lemma.
\end{proof}

Consider the assignment $F \mapsto (F, can)$ where $F$ is a frame and $can$ is the canonical topology. Since a (partial) frame homomorphism $f^* : F \rightarrow F'$ is the same as a (partial) morphism of sites with respect to their canonical topologies we obtain a fully faithful functor
$$ can : \mathrm{Frm}_{\mathrm{part}} \rightarrow \mathrm{Site}_{\mathrm{part}}$$
which restricts to a fully faithful functor
$$can : \mathrm{Frm} \rightarrow \mathrm{Site}.$$
The following is an immediate consequence of Corollary \ref{flatfunctor}.

\begin{corollary}
There are adjunctions
\[\begin{tikzcd}
	{\mathrm{Site}_\mathrm{part}} & {\mathrm{Frm}_\mathrm{part}}
	\arrow[""{name=0, anchor=center, inner sep=0}, "{\mathrm{Sh}(-;\mathbf{2})}", curve={height=-6pt}, from=1-1, to=1-2]
	\arrow[""{name=1, anchor=center, inner sep=0}, "can", curve={height=-6pt}, hook', from=1-2, to=1-1]
	\arrow["\dashv"{anchor=center, rotate=-90}, draw=none, from=0, to=1]
\end{tikzcd}\]
and
\[\begin{tikzcd}
	{\mathrm{Site}} & {\mathrm{Frm}}
	\arrow[""{name=0, anchor=center, inner sep=0}, "{\mathrm{Sh}(-;\mathbf{2})}", curve={height=-6pt}, from=1-1, to=1-2]
	\arrow[""{name=1, anchor=center, inner sep=0}, "can", curve={height=-6pt}, hook', from=1-2, to=1-1]
	\arrow["\dashv"{anchor=center, rotate=-90}, draw=none, from=0, to=1]
\end{tikzcd}\]
with fully faithful right adjoints.
\end{corollary}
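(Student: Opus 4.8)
The plan is to extract both adjunctions directly from Corollary \ref{flatfunctor}, which already contains all the mathematical content; the remaining work is purely organisational — rephrasing that bijection as a hom-set bijection natural in both variables, and checking that the counit is an isomorphism.

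First I would observe that every frame $F$ is of the form $\mathcal{O}(M)$ for a locale $M$ (take $M = L(F)$), and that $\mathrm{Sh}(P,\tau;\mathbf{2}) = \mathcal{O}(L(P,\tau))$ by definition. Hence a (partial) frame homomorphism $\mathrm{Sh}(P,\tau;\mathbf{2}) \to F$ is exactly a (partial) continuous map $M \to L(P,\tau)$, and Corollary \ref{flatfunctor} supplies a bijection
$$\mathrm{Hom}^{\mathrm{part}}_{\mathrm{Frm}}\!\big(\mathrm{Sh}(P,\tau;\mathbf{2}),\,F\big)\;\cong\;\mathrm{Mor}_{\mathrm{Site}}^{\mathrm{part}}\!\big((P,\tau),\,(F,can)\big)\;=\;\mathrm{Hom}_{\mathrm{Site}_{\mathrm{part}}}\!\big((P,\tau),\,can(F)\big),$$
given explicitly by $f^* \mapsto f^* \circ [-]$, and it restricts to the same bijection without the ``$\mathrm{part}$'' decorations on both sides. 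This is precisely the hom-set bijection defining an adjunction $\mathrm{Sh}(-;\mathbf{2}) \dashv can$ in each of the two cases; what remains is naturality and the identification of the counit.

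Second, I would verify naturality in both arguments. Naturality in $F$ is immediate: post-composing with a (partial) frame homomorphism $g^* : F \to F'$ matches, on the site side, post-composing with $g^*$ regarded as a (partial) morphism of sites $can(F) \to can(F')$, since $(g^* \circ f^*) \circ [-] = g^* \circ (f^* \circ [-])$. Naturality in $(P,\tau)$ uses Theorem \ref{functorialitymorphismsites}: a (partial) morphism of sites $\mathbf{h} : (P,\tau) \to (P',\tau')$ is sent to $([\mathbf{h}])_\# : \mathrm{Sh}(P,\tau;\mathbf{2}) \to \mathrm{Sh}(P',\tau';\mathbf{2})$, and because $[\mathbf{h}] = [-]_{P'} \circ \mathbf{h}$ and $\mathbf{g}_\# \circ [-] = \mathbf{g}$ for covering-preserving $\mathbf{g}$ (Theorem \ref{universalpropertysheaves}), one gets $([\mathbf{h}])_\# \circ [-]_P = [-]_{P'} \circ \mathbf{h}$, so precomposition with $\mathbf{h}$ corresponds to precomposition with $([\mathbf{h}])_\#$ under the bijection. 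Both checks are short diagram chases using the explicit formulas from Theorems \ref{universalpropertypresheaves}, \ref{universalpropertysheaves} and \ref{functorialitymorphismsites}; this bookkeeping is the only place any care is needed, and it is the closest thing to an obstacle here — the argument involves no genuinely new idea beyond Corollary \ref{flatfunctor}.

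Finally, I would identify the counit $\varepsilon_F : \mathrm{Sh}(can(F);\mathbf{2}) = \mathrm{Sh}(F,can;\mathbf{2}) \to F$ as the partial frame homomorphism corresponding to $\mathrm{id}_{(F,can)}$, namely $[-]_\#$, which sends a propositional sheaf — i.e.\ a $can$-ideal $U \subset F$ — to $\bigvee U$. For the canonical topology this is an isomorphism: the family $\{u \leq \bigvee U \mid u \in U\}$ is a cover of $\bigvee U$ all of whose members lie in $U$, so $\bigvee U \in U$ and hence $U = (\bigvee U){\downarrow}$; thus $\varepsilon_F$ is an order-isomorphism with inverse $p \mapsto p{\downarrow}$. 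Equivalently, the invertibility of $\varepsilon_F$ is just a restatement of the full faithfulness of $can$ already recorded above. Assembling the natural hom-bijection with the invertible counit gives the two adjunctions with fully faithful right adjoint, the non-partial version following verbatim by restricting to globally defined morphisms throughout.
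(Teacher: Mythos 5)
Your proposal is correct and follows the same route as the paper, which simply declares the corollary an immediate consequence of Corollary \ref{flatfunctor} (together with the already-established full faithfulness of $can$); you have merely made explicit the naturality checks and the identification of the counit $U \mapsto \bigvee U$ with inverse $p \mapsto p{\downarrow}$ that the paper leaves implicit. All of these details check out, in particular the observation that every $can$-ideal is principal because $\{u \leq \bigvee U \mid u \in U\}$ is itself a canonical cover.
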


Philosophically speaking, this means that the notion of a (propositional) sheaf is in a suitable sense completely determined by the notions of frame and coverage.

\section{Valuations on lattices and frames}

Whereas measure theory and classical point-set topology may have many similarities, their theories are more or less parallel to each other - both $\sigma$-algebras as well as topologies are defined as different types of (infinitary) algebraic substructures of $\mathcal{P}(X)$ for a set $X$. When changing from point-set topology to locale theory however, measure theory integrates extremely well. We begin with the general definition of a valuation.

\subsection{Valuations}

\begin{definition}
Let $D$ be a lower bounded distributive lattice with bottom element $0$. We define the following terms:
\begin{itemize}
\item A \emph{valuation} on $D$ is a functor $\mu : D \rightarrow ([0,+\infty], \leq )$ such that $\mu(0) = 0$ and
$$ \mu(p) + \mu(q) = \mu(p \vee q) + \mu(p \wedge q) $$
for all $p,q$ in $D$.
\item A valuation is called \emph{finite} if $\mu(p) < +\infty$ for all $p$ in $D$.
\end{itemize}
Let $\mathbf{f} : D' \rightarrow D$ be a homomorphism of lower bounded distributive lattices, and suppose $\mu$ is a valuation on $D$. 
\begin{itemize}
\item The \emph{pullback} $\mathbf{f}^\# \mu$ of $\mu$ along $\mathbf{f}$ is defined as $\mathbf{f}^\# \mu(d) = \mu(\mathbf{f}(d))$, and gives a valuation on $D'$.
\item Suppose $\mu'$ is a valuation on $D'$. Then $\mathbf{f}$ is called \emph{valuation-preserving} if $\mathbf{f}^\# \mu = \mu'$.
\end{itemize}
Denote the set of finite valuations on $D$ by $\mathrm{Val}(D)$. A pair $(D,\mu)$ where $D$ is a lower bounded distributive lattice and $\mu$ a \emph{finite} valuation on $D$ is called a \emph{valuation site}. Valuation sites together with valuation-preserving homomorphisms form a category $\mathrm{ValSite}$.
\end{definition}

Now fix a lower bounded distributive lattice $D$ with a finite valuation $\mu$. We will use the following two simple lemmata.

\begin{lemma} \label{valuationlemma1}
Let $D$ be a lower bounded distributive lattice and $\mu$ a valuation on $D$. Suppose $p, q, r \in D$ such that $q \leq p$. Then
$$ \mu(p \wedge r) - \mu( q \wedge r ) \leq \mu(p) - \mu(q). $$
\end{lemma}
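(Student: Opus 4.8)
The plan is to use the modularity (inclusion--exclusion) law of the valuation twice, combined with monotonicity, to reduce the desired inequality to a single application of monotonicity of $\mu$.

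First I would apply the valuation identity to the pair $p\wedge r$ and $q$. Since $q\leq p$, we have $(p\wedge r)\vee q \leq p$ and $(p\wedge r)\wedge q = q\wedge r$ (using $q\leq p$, distributivity gives $q\wedge(p\wedge r) = (q\wedge p)\wedge r = q\wedge r$). Therefore
\[
\mu(p\wedge r) + \mu(q) = \mu\big((p\wedge r)\vee q\big) + \mu(q\wedge r).
\]
Rearranging yields
\[
\mu(p\wedge r) - \mu(q\wedge r) = \mu\big((p\wedge r)\vee q\big) - \mu(q).
\]
Now the element $(p\wedge r)\vee q$ satisfies $q \leq (p\wedge r)\vee q \leq p$, so by monotonicity of $\mu$ we get $\mu\big((p\wedge r)\vee q\big) \leq \mu(p)$, and hence
\[
\mu(p\wedge r) - \mu(q\wedge r) = \mu\big((p\wedge r)\vee q\big) - \mu(q) \leq \mu(p) - \mu(q),
\]
which is exactly the claimed inequality. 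Here finiteness of $\mu$ is what makes the subtractions meaningful, so all differences appearing are genuine real numbers.

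I do not expect a serious obstacle here; the only point requiring a little care is the lattice identity $(p\wedge r)\wedge q = q\wedge r$ under the hypothesis $q\leq p$, and making sure every intermediate quantity is finite so that moving terms across the equality is legitimate (guaranteed since $(D,\mu)$ is a valuation site with $\mu$ finite, or more generally since all the elements involved are bounded above by $p$ and $\mu(p)<\infty$ in the relevant applications — though the statement as given only assumes $\mu$ is a valuation, so one should note that if some value is $+\infty$ the inequality is trivially true or one restricts to the finite case). I would phrase the argument so that it reads cleanly for finite valuations, which is the case of interest in the rest of the paper.
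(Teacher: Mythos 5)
Your argument is correct. The only point worth flagging is cosmetic: the identity $(p\wedge r)\wedge q = q\wedge r$ needs only $q\wedge p = q$ (i.e.\ $q\leq p$) together with associativity and commutativity of meets; no distributivity is involved.

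Your route differs slightly from the paper's. The paper expands both $\mu(p\wedge r)$ and $\mu(q\wedge r)$ by applying modularity to the pairs $(p,r)$ and $(q,r)$, subtracts, and then uses monotonicity in the form $\mu(p\vee r)\geq \mu(q\vee r)$. You instead apply modularity once, to the pair $(p\wedge r,\,q)$, obtaining $\mu(p\wedge r)-\mu(q\wedge r)=\mu\bigl((p\wedge r)\vee q\bigr)-\mu(q)$, and then use monotonicity via $(p\wedge r)\vee q\leq p$. Your version is marginally more economical (one application of modularity rather than two) and makes the "interpolating element" $(p\wedge r)\vee q$ explicit; the paper's version is more symmetric in $p$ and $q$. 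Both share the same caveat about subtracting possibly infinite values, which you handle at least as carefully as the paper does --- in the contexts where the lemma is invoked, $\mu$ is a finite valuation, so all differences are genuine real numbers.
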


\begin{proof}
We compute
$$\begin{array}{rcl}
\mu(p \wedge r) - \mu( q \wedge r ) &=& \mu(p) + \mu(r) - \mu( p \vee r) - \mu(q) - \mu(r) + \mu(q \vee r) \\
 &=& \mu(p) - \mu(q) - ( \mu( p \vee r) - \mu(q \vee r) ) \\
 &\leq& \mu(p) - \mu(q)
\end{array}$$
since $p \geq q$ and therefore $p \vee r \geq q \vee r$.
\end{proof}

\begin{lemma} \label{valuationlemma2}
Let $p_1, p_2, q_1, q_2 \in D$ s.t. $q_1 \leq p_1$ and $q_2 \leq p_2$. Then
$$ \mu( p_1  \vee p_2 ) - \mu( q_1 \vee q_2 ) \leq \mu(p_1) - \mu(q_1) + \mu(p_2) - \mu(q_2). $$
\end{lemma}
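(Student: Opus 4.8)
The plan is to telescope through the intermediate element $q_1 \vee p_2$, reducing the two-variable estimate to two applications of a one-variable ``join version'' of Lemma \ref{valuationlemma1}. Recall that we are in the setting of a finite valuation $\mu$ on $D$, so all the quantities below are finite and the subtractions are unproblematic.

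First I would record the following auxiliary claim: for $q \leq p$ in $D$ and any $r \in D$, one has
$$ \mu(p \vee r) - \mu(q \vee r) \leq \mu(p) - \mu(q). $$
This follows from the modularity identity $\mu(x \vee r) = \mu(x) + \mu(r) - \mu(x \wedge r)$: subtracting this identity for $x = q$ from the one for $x = p$ gives
$$ \mu(p \vee r) - \mu(q \vee r) = \bigl(\mu(p) - \mu(q)\bigr) - \bigl(\mu(p \wedge r) - \mu(q \wedge r)\bigr), $$
and the last bracket is $\geq 0$ since $q \wedge r \leq p \wedge r$ and $\mu$ is monotone. (This is the exact analogue of Lemma \ref{valuationlemma1} with the roles of $\wedge$ and $\vee$ interchanged, and its proof uses only modularity and monotonicity, not the bottom element.)

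Then I would apply this claim twice. With $p = p_1$, $q = q_1$, $r = p_2$ it yields
$$ \mu(p_1 \vee p_2) - \mu(q_1 \vee p_2) \leq \mu(p_1) - \mu(q_1), $$
and with $p = p_2$, $q = q_2$, $r = q_1$ it yields
$$ \mu(q_1 \vee p_2) - \mu(q_1 \vee q_2) \leq \mu(p_2) - \mu(q_2). $$
Adding these two inequalities, the intermediate term $\mu(q_1 \vee p_2)$ cancels and we obtain precisely
$$ \mu(p_1 \vee p_2) - \mu(q_1 \vee q_2) \leq \mu(p_1) - \mu(q_1) + \mu(p_2) - \mu(q_2), $$
as desired.

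There is no serious obstacle here; the only points requiring care are that all values are finite (so the differences are well defined, which holds since $(D,\mu)$ is a valuation site) and that the monotonicity step is taken in the right direction. One could alternatively avoid stating the auxiliary claim and simply expand all four joins via modularity in a single computation, but the telescoping presentation above is cleaner and parallels the proof of Lemma \ref{valuationlemma1}.
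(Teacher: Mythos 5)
Your proof is correct. It takes a slightly different route from the paper: the paper expands $\mu(p_1\vee p_2)$ and $\mu(q_1\vee q_2)$ directly via modularity applied to the pairs $(p_1,p_2)$ and $(q_1,q_2)$, subtracts, and concludes from $q_1\wedge q_2 \leq p_1\wedge p_2$ in a single step, whereas you telescope through the intermediate element $q_1\vee p_2$ and invoke a one-variable ``join version'' of Lemma \ref{valuationlemma1} twice. Both arguments rest on exactly the same two ingredients (modularity plus monotonicity on the meet terms), so neither is more general, but your telescoping formulation has the minor advantage that it makes the $n$-fold generalization (which the paper later obtains ``by induction from Lemma \ref{valuationlemma2}'' in the proof of Proposition \ref{mutopology}) completely transparent: one simply inserts $n-1$ intermediate joins and sums. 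The paper's version is shorter; yours is marginally more modular. Either is acceptable.
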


\begin{proof}
We compute
$$\begin{array}{rcl}
\mu( p_1  \vee p_2 ) - \mu( q_1 \vee q_2 )  &=& \mu(p_1) + \mu(p_2) - \mu(q_1) - \mu( q_2 ) - ( \mu( p_1 \wedge p_2 ) - \mu( q_1 \wedge q_2 ) ) \\ &\leq & \mu(p_1) + \mu(p_2) - \mu(q_1) - \mu( q_2 ),
\end{array}$$
since $ p_1 \wedge p_2 \geq q_1 \wedge q_2$.
\end{proof}

\begin{lemma} \label{idealoffinitemeasure} 
Let $D$ be a lower bounded distributive lattice and $\mu$ a valuation. Then the subset $(D,\mu)^{fin}$ of $D$ consisting of those $p$ such that $\mu(p) < \infty$ is an ideal of $D$, and in particular itself a lower bounded distributive lattice. The valuation $\mu$ restricts to a finite valuation on $D$.
\end{lemma}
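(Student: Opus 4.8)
The plan is to verify directly that $(D,\mu)^{fin}$ satisfies the three defining properties of an ideal in a lower bounded distributive lattice: it contains the bottom element, it is downward closed, and it is closed under binary joins. The first two are immediate. Since $\mu(0) = 0 < +\infty$, we have $0 \in (D,\mu)^{fin}$. For downward closure, if $p \in (D,\mu)^{fin}$ and $q \leq p$, then, as $\mu$ is by definition monotone, $\mu(q) \leq \mu(p) < +\infty$, so $q \in (D,\mu)^{fin}$.

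The only point requiring a moment's care is closure under joins. Given $p, q \in (D,\mu)^{fin}$, I would first observe that $p \wedge q \leq p$, hence $\mu(p \wedge q) \leq \mu(p) < +\infty$ as well. Now all of $\mu(p)$, $\mu(q)$, and $\mu(p \wedge q)$ are finite, so the valuation identity $\mu(p) + \mu(q) = \mu(p \vee q) + \mu(p \wedge q)$ may be rearranged to $\mu(p \vee q) = \mu(p) + \mu(q) - \mu(p \wedge q) \leq \mu(p) + \mu(q) < +\infty$, which shows $p \vee q \in (D,\mu)^{fin}$.

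Once $(D,\mu)^{fin}$ is known to be an ideal, it is in particular a sublattice of $D$ containing $0$, and hence a lower bounded distributive lattice in its own right, distributivity being inherited from $D$. Finally, the restriction of $\mu$ to $(D,\mu)^{fin}$ still satisfies $\mu(0) = 0$ and the inclusion–exclusion identity (both being conditions on elements that stay inside the sub-lattice), and it takes only finite values by the very definition of $(D,\mu)^{fin}$; so it is a finite valuation. There is no genuine obstacle in this argument — the sole subtlety is ensuring that every term appearing in the rearranged identity is finite before performing the subtraction, which is exactly why one first checks $\mu(p \wedge q) < +\infty$.
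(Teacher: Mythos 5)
Your proof is correct and follows essentially the same route as the paper: downward closure from monotonicity, and closure under joins from the modularity identity $\mu(p)+\mu(q)=\mu(p\vee q)+\mu(p\wedge q)$. The extra care you take in first checking $\mu(p\wedge q)<\infty$ before subtracting is a fine (and slightly more scrupulous) way to justify the rearrangement the paper performs directly.
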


\begin{proof}We only need to check two things: 
\begin{itemize}
\item If $p \leq q$, and $\mu(q) < \infty$, then $\mu(p) \leq \mu(q) < \infty$.
\item If $p_1, p_2$ such that $\mu(p_i) < \infty$, then
$$\mu(p_1 \vee p_2) =  \mu(p_1) + \mu(p_2) - \mu( p_1 \wedge p_2) < \infty.$$
\end{itemize}
The statement that $\mu$ restricts to a finite valuation is obvious.
\end{proof}

\subsection{Measure locales}

\begin{definition} \label{definitionmeasure}
Let $F$ be a frame.
\begin{itemize}
\item A valuation $\mu$ on $F$ is called \emph{continuous} if
$$\mu( \bigvee_{i \in I} U_i ) = \sup_{i \in I} \mu( U_i )$$
for all directed sets $\{ U_i \}$.
\item A continuous valuation is called a \emph{measure}.
\item An open $K \in F$ is said to have \emph{finite measure} if $\mu(K) < \infty$.
\item A valuation $\mu$ on $F$ is called \emph{locally finite} if there exists a covering of $1$ with opens of finite measure.
\item A valuation is called \emph{faithful} if the functors $\mu(- \wedge K)$ are jointly conservative for $K$ of finite measure. This means concretely that whenever $U \leq V$ are opens, if $\mu( U \wedge K ) = \mu( V \wedge K)$ holds for all $K$ with finite measure, then $U = V$.
\end{itemize}
Denote the set of measures on $F$ by $\mathrm{Meas}(F)$. We call a pair $(F,\mu)$ a \emph{measure frame} and a valuation preserving partial frame homomorphism \emph{measure-preserving}. A measure frame $(F,\mu)$ will be called \emph{faithful} if $\mu$ is locally finite and faithful. Measure frames together with measure-preserving \emph{partial} frame homomorphisms form a category $\mathrm{MeasFrm}$. The opposite category will be referred to as $\mathrm{MeasLoc}$, with objects \emph{measure locales} and morphisms given by partial, measure-preserving, continuous maps.
\end{definition}

\begin{remark}
We will sometimes use the terminology \emph{chunk} to refer to an open of finite measure of a measure locale, a terminology that would be consistent with \cite{segal1951}. In case the measure locale $L$ is a \emph{probability locale}, i.e. $L$ is Boolean and $\mu(1) = 1$, it is also sensible to refer to opens as \emph{events}.
\end{remark}

\begin{remark}
We remark that a measure on a locale $L$ can be thought of as a decategorification of the concept of a cosheaf on $L$. The intuition for the definition of a locally finite measure $\mu$ is that while an arbitrary open $U$ can still have infinite measure, if this does happen, there is a \emph{reason} coming from statements about finite measure below $U$. To be more concrete, let $U$ be an open. If $\mu$ is a locally finite and continuous valuation, then
$$\mu( U ) = \sup_{K \leq U, \mu(K) < \infty} \mu(K)$$
is obtained as a supremum of finite values. The intuition for the notion of a \emph{faithful} measure frame comes from the requirement that in practice, one often wants to disregard objects of measure zero. 
\end{remark}

Some simple observations.
\begin{itemize}
\item A valuation $\mu$ on a frame (or more generally bounded distributive lattice) is finite iff $\mu(1) < \infty$.
\item If $\mu$ is a faithful valuation, then $\mu(U) = 0$ implies $U = 0$.
\item If $\mu$ is a faithful and finite valuation, then $\mu(U) = \mu(1)$ implies $U = 1$.
\end{itemize}

Let $f : L \rightarrow L'$ be a partial continuous map of locales and $\mu$ a valuation on $\mathcal{O}(L)$. Define the \emph{pushforward} $f_* \mu : \mathcal{O}(L') \rightarrow [0, \infty]$ via
$f_* \mu(U) = \mu( f^*(U) ).$

\begin{proposition} \label{pushforwardproperties}
Let $f : L \rightarrow L'$ be a partial continuous map of locales and $\mu$ a valuation on $\mathcal{O}(L)$.
\begin{itemize}
\item If $\mu$ is a measure, then $f_* \mu$ is a measure.
\item If $f$ is \emph{weakly proper}, by which we mean that $f_*$ preserves directed suprema, and $\mu$ is locally finite, then $f_* \mu$ is also locally finite.
\end{itemize}
\end{proposition}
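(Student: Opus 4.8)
The plan is to verify first, in full generality, that $f_*\mu$ is again a valuation, and then to treat the two bullet points in turn. For the valuation axioms, I would use that a partial frame homomorphism $f^*$ preserves the bottom element (the empty supremum), finite meets, and finite joins (the latter being a special case of arbitrary suprema). Hence $f_*\mu(0)=\mu(f^*(0))=\mu(0)=0$, the function $f_*\mu$ is monotone because $f^*$ and $\mu$ are, and the modular law
$$f_*\mu(U)+f_*\mu(V)=\mu(f^*U)+\mu(f^*V)=\mu(f^*U\vee f^*V)+\mu(f^*U\wedge f^*V)=f_*\mu(U\vee V)+f_*\mu(U\wedge V)$$
follows from modularity of $\mu$ applied to $f^*U$ and $f^*V$, together with $f^*(U\vee V)=f^*U\vee f^*V$ and $f^*(U\wedge V)=f^*U\wedge f^*V$.

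For the first bullet, given a directed family $\{U_i\}_{i\in I}$ in $\mathcal{O}(L')$, I would note that monotonicity of $f^*$ makes $\{f^*(U_i)\}_{i\in I}$ a directed family in $\mathcal{O}(L)$; then, since $f^*$ preserves arbitrary suprema and $\mu$ is continuous,
$$f_*\mu\Big(\bigvee_{i\in I} U_i\Big)=\mu\Big(\bigvee_{i\in I} f^*(U_i)\Big)=\sup_{i\in I}\mu(f^*(U_i))=\sup_{i\in I} f_*\mu(U_i),$$
so $f_*\mu$ is continuous, i.e.\ a measure.

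For the second bullet, I would fix a covering $\{K_j\}_{j\in J}$ of $1$ with $\mu(K_j)<\infty$, which exists because $\mu$ is locally finite. Replacing this family by the family of all its finite joins leaves the supremum equal to $1$ and keeps every member of finite $\mu$-measure, since $\mu(p\vee q)\le\mu(p)+\mu(q)$ by modularity; so I may assume $\{K_j\}_{j\in J}$ is directed. Now $f_*$, being a right adjoint, preserves the top element, so $f_*(1)=1$, and weak properness of $f$ gives $1=f_*\big(\bigvee_{j} K_j\big)=\bigvee_{j} f_*(K_j)$. Finally, the counit inequality $f^*f_*(K_j)\le K_j$ yields
$$f_*\mu(f_*(K_j))=\mu(f^*f_*(K_j))\le\mu(K_j)<\infty,$$
so $\{f_*(K_j)\}_{j\in J}$ is a covering of $1$ by opens of finite $f_*\mu$-measure, whence $f_*\mu$ is locally finite.

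I expect the only step requiring care to be the second bullet: weak properness is exactly what is needed to produce a cover of the codomain from a cover of the domain, but since it only concerns \emph{directed} suprema one must first pass to the directed refinement by finite joins (using that a finite join of opens of finite measure again has finite measure) before applying $f_*$. The remaining steps are direct transports of the definitions through $f^*$ and its right adjoint $f_*$.
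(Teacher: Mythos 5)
Your proof is correct and follows essentially the same route as the paper: continuity of $f_*\mu$ is transported directly through the suprema-preserving $f^*$, and local finiteness is obtained by pushing a directed finite-measure cover forward via weak properness and the counit inequality $f^*f_*(K)\le K$. The only cosmetic difference is that the paper works with the (automatically directed) ideal of finite-measure opens below $f^*(1)$ rather than directifying an arbitrary cover by finite joins, and it omits the routine verification of the valuation axioms.
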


We note that the requirement for $f$ to be weakly proper in the second statement cannot be dropped in general. As a counterexample consider the projection $\pi_1 : \mathbb{R}^2 \rightarrow \mathbb{R}$ with the domain equipped with the Lebesgue measure.

\begin{proof}
Suppose $\mu$ is continuous, and $U = \bigvee_{i \in I} U_i$ a directed supremum of opens in $L'$. Then
$$f_* \mu( \bigvee_{i \in I} U_i ) = \mu( f^* \bigvee_{i \in I} U_i ) = \mu( \bigvee_{i \in I} f^* (U_i) ) = \sup_{i \in I} f_* \mu( U_i ),$$
in other words $f_* \mu$ is continuous.

Now assume $f_*$ preserves directed suprema and $\mu$ is locally finite. We know $f^*(1) = \bigvee_{ U \leq f^*(1) ~:~ \mu(U) < \infty} U$ since $\mu$ is locally finite. Adjoining over, and using that $f_*$ preserves directed suprema we have
$$1 \leq  \bigvee_{ U \leq f^*(1) ~:~ \mu(U) < \infty} f_*(U).$$
But $f_* \mu( f_*(U) ) = \mu( f^* f_*(U) ) \leq \mu(U) < \infty$, therefore we have obtained $1$ as a supremum of finite measure elements.
\end{proof}

\begin{example}
Consider a locale $L$ and open $U$. If $\mu$ is a measure on $L$, then the pushforward along the partial map $ j : L \rightarrow U$ equips $U$ with the \emph{restriction} $j_* \mu$ of $\mu$. If $\mu$ is locally finite or faithful, then so is the restriction. The resulting measure-preserving partial frame homomorphism is one of the reasons to consider the category of measure locales and \emph{partial} measure-preserving maps, rather than the more restrictive notion of globally defined measure-preserving maps.

Conversely, one can consider the inclusion of the closed complement $i : U^c \hookrightarrow L$. If $\nu$ is a measure on $U^c$, then $i_* \nu$ is the measure on $L$ given by \emph{extension by zero}. If $\nu$ is locally finite, then so is $i_* \nu$, but faithfulness is not inherited.
\end{example}

\begin{example}
If $x : \mathrm{pt} \rightarrow L$ is a point of a locale $L$, we can equip $L$ with the \emph{Dirac measure} $\delta_x$ centered at $x$, defined as the pushforward $\delta_x = x_* \delta$, where $\delta$ is the unique measure on the point $\mathrm{pt}$ with value $\delta(\mathrm{pt}) = 1$.
\end{example}

\begin{lemma} \label{pullbacklocallyfinite}
Let $f : (L, \mu) \rightarrow (L', \mu')$ be a global continuous measure-preserving map. If $\mu'$ is locally finite, then so is $\mu$.
\end{lemma}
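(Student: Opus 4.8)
The plan is to pull back a witnessing cover from $L'$ along $f^*$ and check it still works. Since $\mu'$ is locally finite, fix a covering $\{V_i \leq 1\}_{i \in I}$ of the top element of $L'$ with $\mu'(V_i) < \infty$ for every $i$; here ``covering'' means $\bigvee_{i \in I} V_i = 1$ in $\mathcal{O}(L')$. Because $f$ is a global continuous map, $f^* : \mathcal{O}(L') \to \mathcal{O}(L)$ is a frame homomorphism, so it preserves arbitrary suprema and the top element; hence $\bigvee_{i \in I} f^*(V_i) = f^*\bigl(\bigvee_{i \in I} V_i\bigr) = f^*(1) = 1$ in $\mathcal{O}(L)$. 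Thus $\{f^*(V_i) \leq 1\}_{i \in I}$ is a covering of $1$ in $L$.

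It remains to see that each $f^*(V_i)$ has finite $\mu$-measure, and this is exactly where measure-preservation enters: by definition $f$ measure-preserving means $f_*\mu = \mu'$, i.e.\ $\mu(f^*(U)) = \mu'(U)$ for all $U \in \mathcal{O}(L')$. Applying this with $U = V_i$ gives $\mu(f^*(V_i)) = \mu'(V_i) < \infty$. Therefore $\{f^*(V_i)\}_{i \in I}$ is a covering of $1$ by opens of finite $\mu$-measure, which is precisely the statement that $\mu$ is locally finite.

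There is no real obstacle here: the argument is a one-line consequence of the fact that $f^*$ preserves suprema and the top element together with $\mu \circ f^* = \mu'$. The only point worth stating explicitly is that the global hypothesis ($f^*(1) = 1$) is genuinely used — for a merely partial map the pullback cover would only cover $\mathrm{dom}(f) = f^*(1)$ rather than all of $1_L$, so the conclusion can fail without it.
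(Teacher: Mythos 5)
Your proof is correct and is essentially the paper's own argument: pull back a finite-measure cover of $1$ along $f^*$, use that $f^*$ preserves suprema and the top element, and use $\mu(f^*(V)) = \mu'(V)$ to see the pulled-back opens have finite measure. The remark about why globality is needed is accurate but not required.
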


\begin{proof}
Pull back the cover $1 = \bigvee_{ U' ~:~ \mu'(U') < \infty} U'$ of $L'$ using $f^*$ to get a cover of $f^*(1) = 1$ in $L$ by opens of finite measure.
\end{proof}

Measures on a frame can always be constructed from local data using a basis. Suppose $D \subset F$ is a basis that is closed under finite joins. Then any valuation $\mu$ on $F$ restricts to a valuation $\mu|_D : D \rightarrow [0,\infty] $ on $D$, and since any open in $F$ is obtained as a directed supremum of basic opens in $D$, by applying continuity of $\mu$, we see that $\mu$ is completely determined by its restriction. Let us moreover call $\mu$ \emph{$D$-finite} if $\mu|_D$ is a finite valuation when restricted to $D$. We note that the existence of such a basis $D$ for which $\mu$ is $D$-finite implies that $\mu$ is locally finite.

Conversely, suppose $\nu$ is a functor $D \rightarrow [0, \infty)$. When does $\nu$ extend to a measure on $F$? Note that if $\nu$ arose as the restriction of a continuous valuation, we would have the condition that
$$ \nu(p) = \sup_{i \in I} \nu(p_i)$$
whenever $p = \bigvee_{i \in I} p_i$ is a directed supremum in $F$ with $p, p_i \in D$. But this is by definition just saying that $\{ p_i \leq p ~|~ i \in I \}$ is a directed covering in the induced Grothendieck pretopology $\tau$ on $D$. Call such valuations on $D$ $\tau$-compatible. 

Given a $\tau$-compatible valuation $\nu$, one might try the definition
$$\nu_*(U) = \sup_{ d \in D, d \leq U} \nu(d)$$
for $U \in F$. The next theorem argues that this indeed produces a measure.

\begin{theorem} \label{valuationbasis}
Let $F$ be a frame and $D \subset F$ a basis closed under finite joins (including the bottom element). Then there is a bijection
$$(-)|_D : \mathrm{Meas}(F) \cong \mathrm{Val}(D,\tau)$$
where the set $\mathrm{Val}(D,\tau)$ is the set of valuations on $D$ that are compatible with the induced Grothendieck pretopology $\tau$. The inverse is given by $\nu \mapsto \nu_*$. The bijection restricts to a bijection
$$(-)|_D : \mathrm{Meas}(F)^{D\mathrm{-fin}} \cong \mathrm{Val}_{\mathrm{fin}}(D,\tau)$$
where:
\begin{itemize}
\item The set $\mathrm{Meas}(F)^{D\mathrm{-fin}}$ is the set of $D$-finite measures on $F$.
\item The set $\mathrm{Val}_{\mathrm{fin}}(D,\tau)$ is the set of finite valuations on $D$ that are compatible with the induced Grothendieck pretopology $\tau$.
\end{itemize}
\end{theorem}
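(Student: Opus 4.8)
The plan is to verify that both assignments are well defined and mutually inverse, with essentially all of the content living in the claim that $\nu \mapsto \nu_*$ takes values in $\mathrm{Meas}(F)$; everything else is bookkeeping with suprema in $[0,+\infty]$.

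First, observe that $D$ is a lower bounded distributive lattice: it contains $0$, it is closed under binary meets because it is a basis, it is closed under binary joins by hypothesis, and distributivity is inherited from $F$. Hence for a measure $\mu$ on $F$ the restriction $\mu|_D$ is a valuation on $D$, and it is $\tau$-compatible, since a directed $\tau$-covering $\{p_i \le p\}$ is by construction a directed supremum $p = \bigvee_i p_i$ in $F$, so $\mu|_D(p) = \mu(p) = \sup_i \mu(p_i)$ by continuity of $\mu$. Note also that for any $U \in F$ the set $\{d \in D \mid d \le U\}$ is directed (closure of $D$ under finite joins) with supremum $U$ (basis property); continuity of $\mu$ therefore gives $\mu(U) = \sup_{d \le U} \mu(d) = (\mu|_D)_*(U)$, so $(\mu|_D)_* = \mu$. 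Conversely $\nu_*(d) = \nu(d)$ for $d \in D$ is immediate, so $(\nu_*)|_D = \nu$. Thus, once $\nu_*$ is known to be a measure, the bijection follows, and the refinement to the finite case is then formal: $\mu$ is $D$-finite exactly when $\mu|_D \in \mathrm{Val}_{\mathrm{fin}}(D,\tau)$ by definition, and $\nu_*$ is $D$-finite exactly when $\nu = \nu_*|_D$ is finite.

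The core step is that $\nu_*$ preserves directed suprema. Monotonicity and $\nu_*(0)=0$ are clear. Let $U = \bigvee_{j\in J} U_j$ be a directed supremum in $F$ and fix $d \in D$ with $d \le U$. By infinite distributivity $d = \bigvee_j (d \wedge U_j)$, and refining each $d \wedge U_j$ by the basic opens below it we get
\[ d \;=\; \bigvee \{\, e \in D \mid e \le d,\ \exists j.\ e \le U_j \,\}. \]
This family is directed: if $e_1 \le U_{j_1}$ and $e_2 \le U_{j_2}$ with $e_1,e_2 \le d$, then $e_1 \vee e_2 \in D$, $e_1 \vee e_2 \le d$, and $e_1 \vee e_2 \le U_{j_1}\vee U_{j_2} \le U_{j_3}$ for some $j_3$ by directedness of $\{U_j\}$. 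Hence it is a directed $\tau$-covering of $d$, and $\tau$-compatibility of $\nu$ gives $\nu(d) = \sup\{\nu(e) \mid e\le d,\ \exists j.\ e\le U_j\} \le \sup_j \nu_*(U_j)$. Taking the supremum over all such $d$ yields $\nu_*(U) \le \sup_j \nu_*(U_j)$, and the reverse inequality is monotonicity. Modularity then falls out: writing $U = \bigvee_d d$ and $V = \bigvee_e e$ as directed suprema over the basic opens below them, one has $U \vee V = \bigvee_{(d,e)}(d\vee e)$ trivially and $U \wedge V = \bigvee_{(d,e)}(d\wedge e)$ by the infinite distributivity of $F$, both directed over $\{d\le U\}\times\{e\le V\}$; using the modular identity for $\nu$, the fact that the supremum of a sum of two monotone nets on a common directed index set is the sum of the suprema, and directed-supremum-continuity of $\nu_*$ just proved,
\[ \nu_*(U) + \nu_*(V) = \sup_{(d,e)}\bigl(\nu(d)+\nu(e)\bigr) = \sup_{(d,e)}\bigl(\nu(d\vee e)+\nu(d\wedge e)\bigr) = \nu_*(U\vee V) + \nu_*(U\wedge V). \]
So $\nu_*$ is a measure.

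\emph{The main obstacle} is exactly the preservation of directed suprema by $\nu_*$, and within it the recognition that $\{e \in D \mid e \le d,\ \exists j.\ e \le U_j\}$ is a \emph{directed} $\tau$-covering of $d$: this is the one place where the basis property, closure of $D$ under finite joins, the infinite distributivity of $F$, and $\tau$-compatibility of $\nu$ all have to be combined. Once this is in hand, modularity, the two round-trip identities, and the finite refinement are all routine.
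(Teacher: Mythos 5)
Your proof is correct and follows essentially the same route as the paper: continuity of $\nu_*$ is obtained by covering a basic $d \le \bigvee_j U_j$ with the directed family of basic opens below $d$ that land in some $U_j$ and invoking $\tau$-compatibility, and modularity is reduced to the modular identity on $D$ over the directed index set $\{d \le U\}\times\{e\le V\}$. The only cosmetic difference is that you package the paper's explicit $\epsilon$-argument (choosing near-optimal $p,q,p',q'$ and passing to $p\vee p'$, $q\vee q'$) as the general fact that suprema of sums of monotone nets on a common directed set split as sums of suprema — the same argument in a cleaner wrapper.
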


\begin{proof}
It is straightforward to see that restriction $(-)|_D$ is well-defined and that the inverse map is given by $\nu \mapsto \nu_*$. The main difficulty lies in verifying that $\nu_*$ is actually a measure for a given $\tau$-compatible valuation $\nu$. We note that it is also clear that $\nu_*(0) = \nu(0) = 0$ and that $\nu_*(U) \leq \nu_*(V)$ whenever $U \leq V$ in $F$. We need to show that $\nu_*$ is continuous and that it satisfies the modularity condition. \\

\noindent Continuous:  Suppose $U = \bigvee_{i \in I} U_i$ is a directed supremum in $F$. We always have
$$ \sup_{i \in I} \nu_*( U_i ) = \sup_{ i \in I} \sup_{q \leq U_i, q \in D} \nu(q) \leq \sup_{ p \leq U, p \in D } \nu(p) = \nu_*(U). $$
To see the reverse inequality, note that for a given $p \leq U$ with $p \in D$ we have
$$ p = \bigvee_{i \in I} p \wedge U_i = \bigvee_{i \in I} \bigvee_{q \leq U_i, q \in D} p \wedge q.$$
The latter is a directed supremum with elements in $D$. Since $\nu$ is compatible with $\tau$, this means that
$$ \nu(p) = \sup_{i \in I} \sup_{q \leq U_i, q \in D} \nu( p \wedge q)$$
Passing to suprema over $p$ we see the reverse inequality.

\noindent Modularity: Let $U, V$ in $F$ be given. Then $U = \bigvee_{p \leq U, p \in D} p$ and $V = \bigvee_{q \leq V, q \in D} q$ are directed suprema. Combining them we get the directed suprema
$$ U \vee V = \bigvee_{p \leq U, q \leq V, p,q \in D} p \vee q \text{ and } U \wedge V = \bigvee_{p \leq U, q \leq V, p,q \in D} p \wedge q.$$
We verify that
$$\begin{array}{rcl}
\nu_*(U) + \nu_*(V) &=& \sup_{p \leq U, q \leq V, p,q \in D} (\nu(p) + \nu(q)) \\
&=& \sup_{p \leq U, q \leq V, p,q \in D} (\nu(p \vee q) + \nu(p \wedge q)) \\
&\leq&  \sup_{p \leq U, q \leq V, p,q \in D} \nu(p \vee q) +  \sup_{p' \leq U, q' \leq V, p',q' \in D} \nu(p' \wedge q') \\
&=& \nu_*( U \vee V ) + \nu_*(U \wedge V)
\end{array}$$
where we have used that $\nu_*$ is continuous. To see the reverse inequality, choose $\epsilon > 0$, and $p,p' \leq U$, $q, q' \leq V$ with $p,p',q,q' \in D$ such that
$$\begin{array}{rcl}
\nu(p \vee q) + \epsilon &\geq & \nu_*( U \vee V ) \\
\nu(p' \wedge q') + \epsilon &\geq & \nu_*(U \wedge V).
\end{array}$$
Define $p'' = p \vee p'$ and $q'' = q \vee q'$. Then $p'' \leq U, p'' \in D$ and $q'' \leq V, q'' \in D$. Adding both sides we get
$$\begin{array}{rcccl}
\nu_*( U \vee V ) + \nu_*(U \wedge V) &\leq& \nu(p \vee q) + \nu(p' \wedge q') + 2 \epsilon &\leq& \nu( p'' \vee q'') + \nu( p'' \wedge q'') + 2\epsilon \\
 &=& \nu(p'') + \nu(q'') + 2\epsilon &\leq& \nu_*(U) + \nu_*(V) + 2\epsilon.
\end{array}$$
Since $\epsilon > 0$ was arbitrary, this completes the proof. (We remark that the proof remains valid even if $\nu_*( U \vee V )$ or $\nu_*(U \wedge V)$ have infinite value.)
\end{proof}


\begin{example} Let $D$ be lower bounded distributive lattice, equipped with the Grothendieck pretopology $fin$. In this situation, all finite valuations $\mu : D \rightarrow [0,\infty)$ extend uniquely to locally finite measures $\mu_*$ on the locally coherent locale $L(D, fin)$, which are finite on compact opens.
\end{example}

\begin{example} \label{scissorscongruence1}
An interesting sub-example of the previous example is obtained by letting $X$ be a geometry, such as Euclidean space $E^n$, the sphere $S^n$ or hyperbolic space $H^n$, of dimension $n$, and considering the associated lower bounded distributive lattice $D(X)$ given by $n$-dimensional polytopes. (Compare with \cite[Section 7.1]{lehner2025algebraicktheorycoherentspaces}.) The corresponding locally coherent space $L(D(X), fin) = {{X_{\mathrm{poly}}}}$ is central for the study of scissors-congruences. The assignment of $n$-dimensional volume $\mathrm{Vol}_n : D(X) \rightarrow [0,\infty)$ is a finite valuation, hence induces a measure $\mathrm{Vol}_n$ on ${{X_{\mathrm{poly}}}}$.
\end{example}

\begin{example}
Let $G$ be a (discrete) group. Then the group $G$ is amenable iff there exists a finitely additive $G$-invariant probability measure $\lambda$ on the powerset $\mathcal{P}(G)$. Since $\mathcal{P}(G)$ is a Boolean algebra, $\lambda$ is equivalently a valuation on $\mathcal{P}(G)$ with $\lambda(G) = 1$. In this case $L(  \mathcal{P}(G), fin ) = \beta(G)$ can be identified with the Stone-\v{C}ech compactification of $G$. We get the result that $G$ is amenable iff there exists a $G$-invariant probability measure on $\beta(G)$.
\end{example}

\begin{remark}
The requirement that a basis $P \subset F$ is closed under finite joins could also be dropped. Since $\mu$ is a valuation, i.e.\ $\mu(0) = 0$ and $\mu$ satisfies modularity, the value of $\mu$ on a finite join of base elements is completely determined, hence $\mu$ is completely determined by its value on $P$ as a functor $\mu|_P : P \rightarrow [0, \infty]$. The conditions required for such a functor to then induce a continuous valuation on $F$ are slightly more complicated to state. We refrain from doing so since the usage of this more general case is not required for the rest of this paper. The reader is encouraged to work out the necessary conditions themselves in case the need arises.
\end{remark}

A measure can be built from local information in the same way a continuous function can. This is summarized in the statement that the set of measures, varied over opens of a locale, forms itself a sheaf. We verify this in steps.

\begin{corollary} \label{measuresaresheaffiltered}
Let $L$ be a locale and $U_i, i \in I$ a directed system of opens of $L$. Suppose $\mu_i : \mathcal{O}(U_i) \rightarrow [0, +\infty]$ is a collection of compatible measures, i.e.\ whenever $U_i \subset U_j$, then $\mu_j |_{U_i} = \mu_i$. Then there exists a unique measure $\mu : \mathcal{O}(U) \rightarrow [0, +\infty]$ such that $\mu|_{U_i} = \mu_i$. 
Furthermore:
\begin{itemize}
\item $\mu$ is locally finite iff $\mu_i$ is locally finite for all $i \in I$.
\item $\mu$ is faithful iff  $\mu_i$ is faithful for all $i \in I$.
\end{itemize}
\end{corollary}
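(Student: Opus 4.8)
The plan is to construct $\mu$ by gluing the $\mu_i$ on a suitable basis of $\mathcal{O}(U)$ and then appealing to Theorem \ref{valuationbasis} to extend uniquely to all opens. Throughout set $U = \bigvee_{i\in I}U_i$ (the case $I=\emptyset$, where $U=0$, being trivial) and
$$D \;=\; \{\, V\in\mathcal{O}(U) \mid V\le U_i \text{ for some } i\in I \,\} \;=\; \bigcup_{i\in I}\mathcal{O}(U_i)\subset \mathcal{O}(U).$$
First I would check that $D$ is a basis for $\mathcal{O}(U)$ closed under finite joins: it contains $0$; it is closed under binary meets since $V\wedge W\le V$; it is closed under binary joins because directedness of $\{U_i\}$ lets one find $U_k$ above any two given members of $D$; and every $V\le U$ is $\bigvee_i(V\wedge U_i)$ with each $V\wedge U_i\in D$. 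Then I would define $\nu:D\to[0,+\infty]$ by $\nu(V)=\mu_i(V)$ for any $i$ with $V\le U_i$. This is well defined by the compatibility hypothesis (pass to a common upper bound $U_k$ and use $\mu_k|_{U_i}=\mu_i$), and it is a valuation: $\nu(0)=0$, and monotonicity and modularity are checked by choosing, for any pair of elements at hand, a single $U_k$ above both and invoking that $\mu_k$ is a valuation.

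Next I would verify that $\nu$ is $\tau$-compatible, where $\tau$ is the Grothendieck pretopology induced on $D$ by the canonical topology of $\mathcal{O}(U)$. If $V=\bigvee_{j\in J}V_j$ is a directed supremum in $\mathcal{O}(U)$ with $V,V_j\in D$, pick $i$ with $V\le U_i$; then every $V_j\le U_i$ and the supremum is computed inside $\mathcal{O}(U_i)$, so continuity of the measure $\mu_i$ gives $\nu(V)=\mu_i(V)=\sup_j\mu_i(V_j)=\sup_j\nu(V_j)$. By Theorem \ref{valuationbasis} there is then a unique measure $\mu=\nu_*$ on $\mathcal{O}(U)$ restricting to $\nu$ on $D$, with $\mu(V)=\sup_{d\in D,\,d\le V}\nu(d)$. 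Since $V\in D$ whenever $V\le U_i$, and $\nu$ is monotone, this supremum equals $\nu(V)=\mu_i(V)$, i.e.\ $\mu|_{U_i}=\mu_i$. Uniqueness of such a $\mu$ is the injectivity of the bijection $(-)|_D$ from Theorem \ref{valuationbasis}: any measure with $\mu'|_{U_i}=\mu_i$ for all $i$ restricts on $D$ to $\nu$, hence equals $\nu_*=\mu$.

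For the local finiteness clause: if each $\mu_i$ is locally finite, take for each $i$ a cover of $U_i$ by opens of finite $\mu_i$-measure; their union over $i$ covers $U$, and $\mu$ agrees with $\mu_i$ on opens below $U_i$, so these opens have finite $\mu$-measure. Conversely, if $U=\bigvee_k W_k$ with $\mu(W_k)<\infty$, then for fixed $i$ we get $U_i=\bigvee_k(W_k\wedge U_i)$ with $\mu_i(W_k\wedge U_i)=\mu(W_k\wedge U_i)\le\mu(W_k)<\infty$. For the faithfulness clause, the key observation is that the chunks of $\mathcal{O}(U_i)$ are exactly the opens $K\le U_i$ with $\mu(K)<\infty$, and that meeting a chunk of $\mathcal{O}(U)$ with $U_i$ yields a chunk of $\mathcal{O}(U_i)$. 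For ``$\Leftarrow$'': given $V\le V'$ in $\mathcal{O}(U)$ with $\mu(V\wedge K)=\mu(V'\wedge K)$ for every chunk $K$, it suffices to show $V\wedge U_i=V'\wedge U_i$ for all $i$ (their joins recover $V$ and $V'$); this follows by applying faithfulness of $\mu_i$ to the chunks $K\le U_i$, using $\mu(K)=\mu_i(K)<\infty$ and $\mu_i\big((V\wedge U_i)\wedge K\big)=\mu(V\wedge K)$. For ``$\Rightarrow$'': given $V\le V'$ in $\mathcal{O}(U_i)$ that test equal against all chunks of $\mathcal{O}(U_i)$, and an arbitrary chunk $K'$ of $\mathcal{O}(U)$, replace $K'$ by $K'\wedge U_i$ and use that $V\wedge K'\le U_i$ so $\mu$ and $\mu_i$ agree on the relevant terms; then faithfulness of $\mu$ gives $V=V'$.

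I expect the only delicate points to be bookkeeping: confirming that $D$ is genuinely a basis closed under finite joins (which rests entirely on directedness of the system) and keeping straight, in the faithfulness argument, that ``finite measure in $\mathcal{O}(U_i)$'' and ``finite measure in $\mathcal{O}(U)$ restricted below $U_i$'' coincide. Beyond that there is no substantial obstacle; the real content is supplied by Theorem \ref{valuationbasis}.
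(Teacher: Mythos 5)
Your proposal is correct and follows essentially the same route as the paper: take $D=\bigcup_i\mathcal{O}(U_i)$ as a basis (an ideal, by directedness), glue the $\mu_i$ to a $\tau$-compatible valuation on $D$, and invoke Theorem \ref{valuationbasis} for existence and uniqueness, with the local finiteness and faithfulness clauses handled exactly as in the paper's proof. The only difference is that you spell out a few routine verifications (e.g.\ the easy direction of faithfulness) that the paper leaves implicit.
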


\begin{proof}
Let $D \subset \mathcal{O}(U)$ be the set of $V \leq U$ such that $V \leq U_i$ for some $i \in I$. This set forms an ideal, as $I$ is directed, and since $U = \bigvee_{i \in I} U_i$ it is a basis for $U$. Define $\widetilde{\mu} : D \rightarrow [0,\infty]$ via $\widetilde{\mu}(V) = \mu_i(V)$ for $V \leq U_i$. This definition is independent of choice of $i \in I$, since the measures $\mu_i$ are assumed compatible. It is also clear that $\widetilde{\mu}$ is a $\tau$-compatible valuation on $D$. Define $\mu$ as the unique extension, provided by Theorem \ref{valuationbasis}.

For the statement about local finiteness, we can use the same argument by replacing $D$ by the set of $V$ such that $V \leq U_i$ for some $i \in I$ as well as $\mu(V) < \infty$.

Finally, if $\mu$ is faithful, then clearly its restrictions are faithful. Conversely, assume $\mu_i$ is faithful. Let $V \leq W \leq U$, and assume for all $K \leq U$ with $\mu(K) < \infty$ we have that $\mu( V \wedge K ) = \mu( W \wedge K )$. Then in particular for all $i \in I$ and $K \leq U_i$ with $\mu(K) < \infty$, we have
$$\mu_i( V \wedge K ) = \mu_i( W \wedge K )$$
and hence $V \wedge U_i = W \wedge U_i$ for all $i \in I$. But then
$$ V = \bigvee_{i\in I} V \wedge U_i =  \bigvee_{i\in I} W \wedge U_i = W.$$
\end{proof}

\begin{proposition} \label{measuresaresheaffinite}
Let $L$ be a locale and $U, V$ opens of $L$. Assume $\mu_U : \mathcal{O}(U) \rightarrow [0, \infty]$ and $\mu_V : \mathcal{O}(V) \rightarrow [0, \infty]$ are two measures on $U$, respectively $V$, such that $\mu_U |_{U \wedge V} = \mu_V |_{U \wedge V}$. Then there exists a unique measure $\mu : \mathcal{O}(U \vee V) \rightarrow [0, \infty]$ on $U \vee V$ such that $\mu |_U = \mu_U$ and $\mu |_V$. Furthermore:
\begin{itemize}
\item $\mu$ is locally finite iff $\mu_U$ and $\mu_V$ are locally finite.
\item $\mu$ is faithful iff  $\mu_U$ and $\mu_V$ are faithful.
\end{itemize}
\end{proposition}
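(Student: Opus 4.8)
The plan is to glue the two measures by an explicit inclusion--exclusion formula and then read off the sheaf-type conclusions by copying, almost verbatim, the arguments already used for the filtered case in Corollary~\ref{measuresaresheaffiltered}. Recall $\mathcal{O}(U\vee V)=\{W\in\mathcal{O}(L):W\le U\vee V\}$ with meets and joins inherited from $\mathcal{O}(L)$, and that distributivity gives $W=(W\wedge U)\vee(W\wedge V)$ with $(W\wedge U)\wedge(W\wedge V)=W\wedge U\wedge V\le U\wedge V$ for every such $W$. I will define
\[
\mu(W):=\mu_U(W\wedge U)+\mu_V(W\wedge V)-\mu_V(W\wedge U\wedge V),
\]
where, since $\mu_U|_{U\wedge V}=\mu_V|_{U\wedge V}$, the subtracted term is the same whether computed with $\mu_U$ or $\mu_V$ and is dominated by each of the other two; the expression is read as $+\infty$ whenever $\mu_U(W\wedge U)$ or $\mu_V(W\wedge V)$ is infinite. (One cannot simply invoke Theorem~\ref{valuationbasis} here, since $\mu_U,\mu_V$ are not assumed locally finite, so the gluing must be done directly.) This formula is moreover forced: any measure $\nu$ on $U\vee V$ restricting to $\mu_U$ and $\mu_V$ must satisfy it, by applying its own modular law to $W=(W\wedge U)\vee(W\wedge V)$ (infinite cases via $\nu(W)\ge\nu(W\wedge U),\nu(W\wedge V)$), which simultaneously settles uniqueness.

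Next I would check that $\mu$ is a measure restricting to $\mu_U$ and $\mu_V$. That $\mu(0)=0$ is immediate, and for $W\le U$ one has $W\wedge V=W\wedge U\wedge V\le U\wedge V$, so after using compatibility the last two terms cancel and $\mu(W)=\mu_U(W)$; symmetrically $\mu|_V=\mu_V$. Modularity of $\mu$ reduces to adding the three instances of modularity for $\mu_U$, $\mu_V$, and their common restriction, using the identities $(W\vee W')\wedge O=(W\wedge O)\vee(W'\wedge O)$ and dually for $O\in\{U,V,U\wedge V\}$. For monotonicity and continuity the extra input is Lemma~\ref{valuationlemma1}, applied inside $\mathcal{O}(U)$ with $r=U\wedge V$, which shows that the difference $\mu_U(\,\cdot\wedge U)-\mu_U(\,\cdot\wedge U\wedge V)$ is itself monotone; then for a directed family $\{W_i\}$ the three relevant directed suprema combine correctly and one gets $\mu(\bigvee_i W_i)=\sup_i\mu(W_i)$. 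The infinite cases throughout are absorbed using the always-valid inequalities $\mu(W)\ge\mu_U(W\wedge U)$ and $\mu(W)\ge\mu_V(W\wedge V)$.

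For local finiteness: if $\mu_U,\mu_V$ are locally finite, pick covers $U=\bigvee_j P_j$ and $V=\bigvee_k Q_k$ by opens of finite measure; then $\{P_j\}\cup\{Q_k\}$ covers $U\vee V$ by opens of finite $\mu$-measure (since $\mu$ agrees with $\mu_U$ on each $P_j$ and with $\mu_V$ on each $Q_k$). Conversely a cover $U\vee V=\bigvee_l R_l$ with $\mu(R_l)<\infty$ gives, on meeting with $U$, a cover $U=\bigvee_l(R_l\wedge U)$ with $\mu_U(R_l\wedge U)=\mu(R_l\wedge U)\le\mu(R_l)<\infty$, and similarly for $V$. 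For faithfulness I would reproduce the argument of Corollary~\ref{measuresaresheaffiltered}: in one direction, given $A\le B\le U$ witnessed on all finite-$\mu_U$-measure $K\le U$, any finite-$\mu$-measure $K'\le U\vee V$ has $K'\wedge U\le U$ of finite $\mu_U$-measure and $\mu(A\wedge K')=\mu_U(A\wedge K'\wedge U)=\mu_U(B\wedge K'\wedge U)=\mu(B\wedge K')$, so faithfulness of $\mu$ forces $A=B$ (symmetrically for $V$); in the other direction, testing $\mu(A\wedge K)=\mu(B\wedge K)$ on finite-$\mu_U$-measure $K\le U$ yields $A\wedge U=B\wedge U$, likewise $A\wedge V=B\wedge V$, whence $A=(A\wedge U)\vee(A\wedge V)=(B\wedge U)\vee(B\wedge V)=B$.

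The only genuinely fiddly part, and the step I would be most careful about, is the bookkeeping of the value $+\infty$ in checking that the explicit formula is a valuation and is continuous: there is no conceptual obstacle, but one has to handle the cases where $\mu_U(W\wedge U)$, $\mu_V(W\wedge V)$, or their directed suprema are infinite separately rather than blindly manipulating the algebraic identity.
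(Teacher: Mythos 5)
Your proposal is correct and follows essentially the same route as the paper: the same inclusion--exclusion formula $\mu(W)=\mu_U(W\wedge U)+\mu_V(W\wedge V)-\mu(W\wedge U\wedge V)$ (your convention for the infinite case is equivalent to the paper's), monotonicity via Lemma~\ref{valuationlemma1} with $r=U\wedge V$, modularity by summing the three modular laws, continuity by combining the directed suprema, and the identical cover-combining and $W=(W\wedge U)\vee(W\wedge V)$ arguments for local finiteness and faithfulness. Your explicit checks of uniqueness and of $\mu|_U=\mu_U$ are welcome additions that the paper leaves implicit.
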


\begin{proof}
Define $\mu : \mathcal{O}(U \vee V) \rightarrow [0, \infty]$ as
$$\mu(Z) = \mu_U( Z \wedge U) + \mu_V( Z \wedge V) - \mu_U( Z \wedge U \wedge V )$$
if $\mu_U( Z \wedge U \wedge V ) < \infty$, and otherwise as  $\mu(Z ) = \infty$ for $Z \leq U \vee V$.

We verify that $\mu$ is a measure:
\begin{itemize}
\item $\mu( 0 ) = 0$ is clear.
\item \emph{Monotone:} Assume $Z_1 \leq Z_2 \leq U \vee V$ and w.l.o.g.\ that $\mu_U( Z_i \wedge U \wedge V ) < \infty$ for $i =1,2$ (otherwise we are already done). Then
$$\mu_U( Z_1 \wedge U ) - \mu_U( Z_1 \wedge U \wedge V ) \leq \mu_U( Z_2 \wedge U ) - \mu_U( Z_2 \wedge U \wedge V )$$
by Lemma \ref{valuationlemma1}, and clearly also $\mu_V( Z_1 \wedge V ) \leq \mu_V( Z_2 \wedge V )$. Adding both inequalities we get
$ \mu(Z_1) \leq \mu(Z_2)$.
\item \emph{Modularity:} Let $Z_1, Z_2 \leq U \vee V$ and again w.l.o.g.\ that $\mu_U( Z_i \wedge U \wedge V ) < \infty$ for $i =1,2$. Then
$$\begin{array}{rl}
& \mu(Z_1) + \mu(Z_2) \\ =& \mu_U( Z_1 \wedge U) + \mu_V( Z_1 \wedge V) - \mu_U( Z_1 \wedge U \wedge V ) \\ & + \mu_U( Z_2 \wedge U) + \mu_V( Z_2 \wedge V) - \mu_U( Z_2 \wedge U \wedge V ) \\
=& \mu_U( (Z_1 \vee Z_2) \wedge U) + \mu_V( (Z_1 \vee Z_2) \wedge V) - \mu_U( (Z_1 \vee Z_2) \wedge U \wedge V ) \\
& + \mu_U( (Z_1 \wedge Z_2) \wedge U) + \mu_V( (Z_1 \wedge Z_2) \wedge V) - \mu_U( (Z_1 \wedge Z_2) \wedge U \wedge V ) \\
=& \mu(Z_1 \vee Z_2) + \mu( Z_1 \wedge Z_2 ).
\end{array}$$
\item \emph{Continuity:} Let $Z_i, i\in I$ be a directed system such that $Z_i \leq U \vee V$ for all $i \in I$. Again, w.l.o.g.\ we may assume that $\mu_U( Z_i \wedge U \wedge V ) < \infty$ for $i \in I$. Then
$$\begin{array}{rcl}
\mu( \bigvee_{i \in I} Z_i ) &=& \mu_U( \bigvee_{i \in I} Z_i \wedge U ) + \mu_V( \bigvee_{i \in I} Z_i \wedge V ) - \mu_U( \bigvee_{i \in I} Z_i \wedge U \wedge V ) \\
&=& \lim_{i \in I} \mu_U( Z_i \wedge U ) + \lim_{i \in I} \mu_V( Z_i \wedge V ) - \lim_{i \in I} \mu_U( Z_i \wedge U \wedge V ) \\
&=& \lim_{i \in I} \mu( Z_i ) = \sup_{i \in I} \mu(Z_i)
\end{array}$$
since $\mu(Z_i)$ is a monotone net.
\item \emph{Local finiteness:} Now assume that $\mu_U$ and $\mu_V$ are locally finite. Combining coverings of $U$ and $V$ with elements of finite measure individually produces a covering of $U \vee V$, hence $\mu$ is locally finite. The converse statement that local finiteness of $\mu$ implies that same for $\mu_U$ and $\mu_V$ is clear.
\item \emph{Faithfulness:} Lastly, assume $\mu_U$ and $\mu_V$ are faithful. Let $W_1 \leq W_2 \leq U \vee V$ be two opens and assume that 
$$\mu( W_1 \wedge K ) = \mu( W_2 \wedge K )$$
for all $K$ with $\mu(K) < \infty$. Then by Lemma \ref{valuationlemma1} we also have
$$\mu_U( W_1 \wedge U \wedge K ) = \mu( W_1 \wedge U \wedge K ) = \mu( W_2 \wedge U \wedge K ) = \mu_U( W_2 \wedge U \wedge K )$$
hence $W_1 \wedge U = W_2 \wedge U$, and similarly with $V$. But then
$$ W_1 = (W_1 \wedge U) \vee (W_1 \wedge V) = (W_2 \wedge U) \vee (W_2 \wedge V) = W_2.$$
and hence $\mu$ is faithful. The converse statement that faithfulness of $\mu$ implies faithfulness of $\mu_U$ and $\mu_V$ is clear as well.
\end{itemize}
\end{proof}

The following theorem is a repackaging of the previous statements. Note that we have a functor
$$ \mathrm{Meas} : \mathrm{Loc}_{\mathrm{part}} \rightarrow \mathrm{Set}$$
which sends a locale $L$ to its set of measures, and a map $f : L \rightarrow M$ to its action via pushforward of measures, by Proposition \ref{pushforwardproperties}. We can furthermore precompose it with the fully faithful inclusion
$$(\mathrm{Loc}_{\mathrm{open~emb}})^{op} \hookrightarrow \mathrm{Loc}_{\mathrm{part}} $$
where $\mathrm{Loc}_{\mathrm{open~emb}}$ is the category of locales and open embeddings as morphisms, that sends an open embedding $i : U \rightarrow L$ to the partial frame homomorphism $i_!$, to obtain a functor 
$ \mathrm{Meas} : (\mathrm{Loc}_{\mathrm{open~emb}})^{op} \rightarrow \mathrm{Set}.$

\begin{theorem} \label{measuresaresheaves}
The functor
$$ \mathrm{Meas} : (\mathrm{Loc}_{\mathrm{open~emb}})^{op} \rightarrow \mathrm{Set}$$
is a sheaf with respect to the open cover topology, where $\mathrm{Loc}_{\mathrm{open~emb}}$ is the category of locales and open embeddings as morphisms. In particular, for every locale $L$, the functor
$$ \mathrm{Meas} : \mathcal{O}(L)^{op} \rightarrow \mathrm{Set}$$
is a sheaf on $L$. The analogous statements are true with $\mathrm{Meas}$ replaced by:
\begin{itemize}
\item $\mathrm{Meas}_{\mathrm{loc. fin}}$, the set of locally finite measures, and
\item $\mathrm{Meas}_{\mathrm{loc. fin., faithf}}$, the set of locally finite and faithful measures.
\end{itemize}
\end{theorem}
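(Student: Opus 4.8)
The plan is to verify the sheaf condition by hand, reducing an arbitrary open cover to a combination of \emph{binary} covers (handled by Proposition \ref{measuresaresheaffinite}) and \emph{directed} covers (handled by Corollary \ref{measuresaresheaffiltered}). Since all the data involved is expressed in terms of posets of opens, it suffices to treat the case of a fixed locale $L$ and a cover $U = \bigvee_{i \in I} U_i$ in $\mathcal{O}(L)$: an open embedding $i : V \to L$ induces a natural isomorphism $\mathrm{Meas}(V) \cong \mathrm{Meas}(i(V))$, and the fibre products over $L$ of open embeddings are the binary meets of their images, so a matching family of measures for a cover of $L$ in $\mathrm{Loc}_{\mathrm{open~emb}}$ is literally the same datum as a matching family over the corresponding cover in $\mathcal{O}(L)$. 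The statement for the big site then follows formally from the statement for each small site $\mathcal{O}(L)$, and the special case singled out in the theorem is this small-site statement.

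So fix $U = \bigvee_{i \in I} U_i$ and a compatible family $\mu_i$ on $U_i$ with $\mu_i|_{U_i \wedge U_j} = \mu_j|_{U_i \wedge U_j}$ for all $i, j \in I$. First I would construct, for every finite subset $F \subseteq I$, a measure $\mu_F$ on $U_F := \bigvee_{i \in F} U_i$ restricting to $\mu_i$ for each $i \in F$, by induction on $|F|$ using Proposition \ref{measuresaresheaffinite}. In the inductive step, passing from $F$ to $F \cup \{j\}$, one must check $\mu_F|_{U_F \wedge U_j} = \mu_j|_{U_F \wedge U_j}$; since $U_F \wedge U_j = \bigvee_{i \in F}(U_i \wedge U_j)$, both sides restrict on each term to $\mu_i|_{U_i \wedge U_j} = \mu_j|_{U_i \wedge U_j}$, hence they agree by the uniqueness clause of Proposition \ref{measuresaresheaffinite} applied inductively along the finite cover (i.e.\ separatedness for finite covers). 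Uniqueness in Proposition \ref{measuresaresheaffinite} also shows that $\mu_F$ does not depend on the order of the inductive construction, and that $\mu_G|_{U_F} = \mu_F$ whenever $F \subseteq G$. Thus $\{\mu_F\}$, indexed by the directed poset of finite subsets of $I$, is a compatible family over the directed cover $U = \bigvee_F U_F$, so Corollary \ref{measuresaresheaffiltered} yields a unique measure $\mu$ on $U$ with $\mu|_{U_F} = \mu_F$ for all $F$, in particular $\mu|_{U_i} = \mu_i$. Uniqueness of $\mu$ is immediate: any $\mu'$ restricting to all the $\mu_i$ satisfies $\mu'|_{U_F} = \mu_F$ by uniqueness for finite covers, hence $\mu' = \mu$ by uniqueness for directed covers.

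For the two refined versions, I would simply note that both Proposition \ref{measuresaresheaffinite} and Corollary \ref{measuresaresheaffiltered} assert that the glued measure is locally finite (respectively faithful) \emph{iff} all the pieces are; running the very same two-step argument therefore shows that $\mathrm{Meas}_{\mathrm{loc.fin}}$ and $\mathrm{Meas}_{\mathrm{loc.fin.,faithf}}$ are sheaves on the open-cover topology as well.

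The main obstacle is not any single computation but the bookkeeping of compatibility: one must check that the inductive construction of $\mu_F$ is well posed, which rests on invoking separatedness for finite covers \emph{inside} the induction step, and that the resulting system $\{\mu_F\}_F$ is genuinely a compatible family over a directed cover so that Corollary \ref{measuresaresheaffiltered} applies. Once these coherence points are in place, everything else is a formal consequence of the two cited gluing results.
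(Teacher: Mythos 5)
Your proof is correct and follows essentially the same route as the paper: the paper invokes the standard reformulation of the sheaf condition on a locale (nullary, binary, and directed gluing) and checks each clause via Proposition \ref{measuresaresheaffinite} and Corollary \ref{measuresaresheaffiltered}, while you explicitly carry out the reduction of an arbitrary cover to finite subcovers followed by a directed colimit, which is exactly the standard argument underlying that reformulation. The extra bookkeeping you supply (well-posedness of the inductive construction via separatedness for finite covers) is sound and merely makes explicit what the paper leaves to the cited equivalence.
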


We remind the reader that a sheaf $\mathcal{F}$ on a locale $L$ is a functor $\mathcal{O}(L)^{op} \rightarrow \mathrm{Set}$ such that the following conditions are satisfied:
\begin{enumerate}
\item $\mathcal{F}(0) = \mathrm{pt}$.
\item For all opens $U, V$ the square
$$\xymatrix{
\mathcal{F}(U \vee V) \ar[r] \ar[d] & \mathcal{F}(V) \ar[d] \\
\mathcal{F}(U) \ar[r] & \mathcal{F}(U \wedge V)
}$$
is a pullback.
\item For all directed systems $U_i, i \in I$ with $U = \bigvee_{i \in I} U_i$ the natural map
$$\mathcal{F}( U ) \rightarrow \lim_{i \in I} \mathcal{F}(U_i)$$
is a bijection.
\end{enumerate}
In the case of the functor $\mathrm{Meas}$, Condition (2) is taken care of by Proposition \ref{measuresaresheaffinite}. Condition (3) is given by Corollary \ref{measuresaresheaffiltered}. Condition (1) translates to the statement that the empty set $\emptyset$ has exactly one measure on it, which is trivially true.

\subsection{Grothendieck topologies from valuations}

In the following section, we will turn Theorem \ref{valuationbasis} on its head. Instead of starting with a Grothendieck pretopology, and looking at compatible valuations, we \emph{start with a valuation} and use it to induce a Grothendieck pretopology.

\begin{definition} \label{muinnertopology}
Let $D$ be a lower bounded distributive lattice and let $\mu$ be a finite valuation. The $\mu$-\emph{inner} Grothendieck pretopology $inn$ on $D$ generated via two cases:
\begin{itemize}
\item Finite unions cover: $\{p_i \leq p ~|~ i \in I\}$ is a cover whenever $$\bigvee_{i \in I} p_i = p$$ for $I$ finite.
\item $\mu$-approximations cover: $\{p_i \leq p ~|~ i \in I\}$  is a cover whenever $p_i, i \in I,$ is directed and $$ \mu(p) = \sup_{i \in I} \mu(p_i).$$
\end{itemize}
Denote the resulting locale by $L(D,\mu)$, called the \emph{inner measure locale} of $D$ and the corresponding frame of propositional sheaves by $\mathrm{Sh}(D,\mu;\mathbf{2})$. If $U$ is an open in $L(D,\mu)$, equivalently described as an $inn$-ideal, we define
$$\mu_*(U) = \sup_{p \in U} \mu(p).$$
We call $\mu_*$ the \emph{inner measure}.
\end{definition}

We note that both types of covering conditions can be combined into the statement that $\{p_i \leq p\}_{i \in I}$ is a cover iff for all $\epsilon > 0$ there exists a finite subset $F \subset I$ such that
$$\mu( p ) - \mu( \bigvee_{i \in F} p_i ) < \epsilon.$$
A specific case worth pointing out is that of the inclusion of a single element $\{ q \leq p \}$, which is a $\mu$-approximation if $\mu(q) = \mu(p)$.

\begin{proposition} \label{mutopology}
The coverings described in the above definition do in fact define a Grothendieck topology.
\end{proposition}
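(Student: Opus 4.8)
The plan is to work with the combined $\epsilon$-characterisation of covers recorded in the remark just before the proposition: declare a family $\{p_i \le p\}_{i\in I}$ in $D_{/p}$ to be a cover precisely when for every $\epsilon > 0$ there is a finite subset $F \subseteq I$ with $\mu(p) - \mu(\bigvee_{i\in F} p_i) < \epsilon$ (this quantity is automatically $\ge 0$ since $\bigvee_{i\in F}p_i \le p$ and $\mu$ is monotone and finite). First I would check that this family contains the two generating types — for a finite join cover take $F = I$, so the difference is $0$; for a $\mu$-approximation cover use directedness and $\mu(p) = \sup_i \mu(p_i)$ to find a single index $k$ with $\mu(p_k) > \mu(p) - \epsilon$ and take $F = \{k\}$. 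Hence, once it is shown to be a pretopology, this family is exactly the one generated by the two types (and in any case it generates the same Grothendieck topology, hence the same locale). It then remains to verify the three pretopology axioms for this family.

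Identities are immediate: for $\{p \le p\}$ take $F = \{\ast\}$, giving difference $0$. For stability under base change, suppose $\{p_i \le p\}_{i\in I}$ is a cover and $q \le p$; given $\epsilon$ choose finite $F$ with $\mu(p) - \mu(p_F) < \epsilon$ where $p_F := \bigvee_{i\in F} p_i$. Finite distributivity gives $\bigvee_{i\in F}(p_i \wedge q) = p_F \wedge q$, and applying Lemma \ref{valuationlemma1} with $(p,q,r) := (p, p_F, q)$ (legitimate since $p_F \le p$) yields $\mu(p\wedge q) - \mu(p_F \wedge q) \le \mu(p) - \mu(p_F) < \epsilon$; as $q \le p$ we have $p \wedge q = q$, so $F$ witnesses that $\{p_i \wedge q \le q\}_{i\in I}$ is a cover.

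For locality, the key preliminary is that Lemma \ref{valuationlemma2} iterates: by induction on $n$, if $q_k \le p_k$ for $k = 1,\dots,n$ then $\mu(\bigvee_k p_k) - \mu(\bigvee_k q_k) \le \sum_k \big(\mu(p_k) - \mu(q_k)\big)$. Now suppose $\{p_i \le p\}_{i\in I}$ is a cover and each $\{p_{ij} \le p_i\}_{j\in J_i}$ is a cover. Given $\epsilon$, pick a finite $F \subseteq I$ with $\mu(p) - \mu(\bigvee_{i\in F} p_i) < \epsilon/2$ (if $F = \emptyset$ then $\mu(p) < \epsilon/2 < \epsilon$ and the empty subfamily already works); for each $i \in F$ pick a finite $G_i \subseteq J_i$ with $\mu(p_i) - \mu(q_i) < \epsilon/(2|F|)$, where $q_i := \bigvee_{j\in G_i} p_{ij} \le p_i$. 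The iterated form of Lemma \ref{valuationlemma2} then gives $\mu(\bigvee_{i\in F} p_i) - \mu(\bigvee_{i\in F} q_i) < \epsilon/2$, and since $\bigvee_{i\in F,\, j\in G_i} p_{ij} = \bigvee_{i\in F} q_i$, adding the two estimates gives $\mu(p) - \mu(\bigvee_{i\in F,\, j\in G_i} p_{ij}) < \epsilon$; as $\{(i,j): i\in F, j\in G_i\}$ is a finite subset of the composite index set, this verifies locality.

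I do not expect any genuine obstacle: the content is carried entirely by the two modularity estimates of Lemmas \ref{valuationlemma1} and \ref{valuationlemma2} together with finiteness of $\mu$ (which guarantees that every difference occurring is a finite non-negative real). The only points needing a little care are the bookkeeping of the $\epsilon/2$--$\epsilon/(2|F|)$ split in the locality argument, the degenerate case $F = \emptyset$ (covering an element of measure zero by the empty family), and remembering to invoke finite distributivity in the base-change step so that the join commutes past the meet with $q$.
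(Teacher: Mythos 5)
Your proposal is correct and follows essentially the same route as the paper: both work with the combined $\epsilon$-characterisation of covers, verify base change via Lemma \ref{valuationlemma1}, and verify locality by iterating Lemma \ref{valuationlemma2} with the $\epsilon/2$, $\epsilon/(2|F|)$ split. The extra remarks you include (the degenerate case $F=\emptyset$, the explicit appeal to finite distributivity, and the observation that the $\epsilon$-family agrees with the pretopology generated by the two cover types) are sound refinements of the same argument.
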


\begin{proof}
We need to check stability under pullback and composition. Let us begin with pullbacks. Assume $\{p_i \leq p\}_{i \in I}$ is a covering and $q \leq p$. Let $\epsilon > 0$ and choose $F \subset I$ finite such that 
$$\mu( p ) - \mu( \bigvee_{i \in F} p_i ) < \epsilon.$$
But then also
$$\mu( q ) - \mu( \bigvee_{i \in F} q \wedge p_i ) < \epsilon $$
by Lemma \ref{valuationlemma1}.

We now check stability under composition. Assume $\{p_i \leq p\}_{i \in I}$ is a covering and $\{p_{ij} \leq p_i\}_{j \in J_i}$ is a covering for each $i \in I$. Let $\epsilon > 0$ and choose $F \subset I$ finite such that 
$$\mu( p ) - \mu( \bigvee_{i \in F} p_i ) < \frac{\epsilon}{2}.$$
Write $n = |F|$. Then choose $F_i \subset J_i$ finite for each $i \in F$ such that
$$\mu( p_i ) - \mu( \bigvee_{j \in F_i} p_{ij} ) < \frac{\epsilon}{2 n}.$$
Now it follows by induction from Lemma \ref{valuationlemma2} that 
$$\mu( \bigvee_{i \in F} p_i ) - \mu( \bigvee_{i \in F} \bigvee_{j \in F_i} p_{ij} ) < \frac{\epsilon}{2}$$
and hence
$$\mu( p ) - \mu( \bigvee_{i \in F} \bigvee_{j \in F_i} p_{ij} ) < \frac{\epsilon}{2} + \frac{\epsilon}{2} = \epsilon,$$
in other words $\{ p_{ij} \leq p \}$ is a covering.
\end{proof}

Since the inner topology on $D$ always includes all finite covers, we have that propositional sheaves for the inner topology embed into propositional sheaves for the finite topology, as elaborated in Example \ref{finitarytopology}. More specifically we have the adjunction
\[\begin{tikzcd}
	{\mathrm{Sh}(D,\mu;\mathbf{2})} & {\mathrm{Sh}(D,fin;\mathbf{2})} & {\mathrm{Idl}(D)} & {\mathrm{Ind}(D).}
	\arrow[""{name=0, anchor=center, inner sep=0}, curve={height=12pt}, hook', from=1-1, to=1-2]
	\arrow[""{name=1, anchor=center, inner sep=0}, "{(-)^{sh}}"', curve={height=12pt}, from=1-2, to=1-1]
	\arrow["\cong"{description}, draw=none, from=1-2, to=1-3]
	\arrow["{ \cong }"{description}, draw=none, from=1-3, to=1-4]
	\arrow["\dashv"{anchor=center, rotate=-90}, shift right, draw=none, from=1, to=0]
\end{tikzcd}\]
In this sense we identify $\mu$-ideals with special types of ideals, namely those that are closed under $\mu$-approximation. The sheafification of an ideal $U$ can be directly computed as
$$U^{sh} = \{ p \in D ~|~ \forall \epsilon > 0 ~\exists q \in U, p \leq q \text{ s.t. } \mu(p) - \mu(q) < \epsilon \}$$

\begin{theorem} \label{innermeasureproperties}
Let $(D,\mu)$ be a valuation site. The inner measure $\mu_*$ is a locally finite and faithful measure on $L(D,\mu)$, and the Yoneda functor $$[-] : D \rightarrow \mathrm{Sh}(D,\mu;\mathbf{2})$$
is a valuation preserving lattice homomorphism preserving the bottom element.
\end{theorem}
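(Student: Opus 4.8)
The plan is to reduce all four assertions to Theorem~\ref{valuationbasis}, by exhibiting $\mathrm{Sh}(D,\mu;\mathbf{2})$ together with a basis on which $\mu$ induces a compatible valuation. First I would record that $[-]\colon D \to \mathrm{Sh}(D,\mu;\mathbf{2})$ is a lattice homomorphism preserving $0$. Preservation of finite meets is the general property of the functor $[-]$ noted after Proposition~\ref{generatedframe}. Preservation of binary joins and of $0$ comes from the shape of the inner topology: the two-element family $\{p \leq p\vee q,\ q \leq p\vee q\}$ and the empty family are $inn$-coverings of $p\vee q$ and of $0$ respectively, and for any $inn$-covering $\{p_i \leq p\}_{i\in I}$ one has $[p] = \bigvee_{i\in I}[p_i]$ in $\mathrm{Sh}(D,\mu;\mathbf{2})$ (the identity used in the proof of Theorem~\ref{universalpropertysheaves}); hence $[p\vee q] = [p]\vee[q]$ and $[0]=0$.

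Next, write $F = \mathrm{Sh}(D,\mu;\mathbf{2})$ and let $D'' = [D] \subseteq F$ be the image of $[-]$. By the Co-Yoneda Lemma~\ref{coyonedalemma} and the previous step, $D''$ is a basis of $F$ closed under finite joins. Using the explicit description of $inn$-sheafification, one checks that $[p]\leq[q]$ in $F$ iff $\mu(p) = \mu(p\wedge q)$; in particular $[p]=[q]$ forces $\mu(p)=\mu(p\wedge q)=\mu(q)$, so $\nu([p]) := \mu(p)$ is a well-defined map $D'' \to [0,\infty)$, and it is a valuation because $[-]$ is a lattice homomorphism and $\mu$ is one. The crucial step, which I expect to be the main obstacle, is to show that $\nu$ is compatible with the pretopology $D''$ inherits as a basis of $F$: given a \emph{directed} supremum $[p] = \bigvee_{i\in I}[p_i]$ in $F$, I must prove $\mu(p) = \sup_{i\in I}\mu(p_i)$. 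The inequality ``$\geq$'' is immediate from $[p_i]\leq[p]$. For ``$\leq$'', note that $\bigvee_{i\in I}[p_i]$ is the sheafification of the ideal $\bigcup_{i\in I}[p_i]$, so $p$ lies in that sheafification; hence for each $\epsilon>0$ some finite join $t$ of elements of $\bigcup_{i\in I}[p_i]$ with $t\leq p$ satisfies $\mu(p)-\mu(t)<\epsilon$, and by directedness these finitely many elements all lie in a single $[p_k]$, so that $t\in[p_k]$ and $\mu(p_k)\geq\mu(t\wedge p_k)=\mu(t)>\mu(p)-\epsilon$. Letting $\epsilon\to 0$ gives the claim.

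Given this, Theorem~\ref{valuationbasis} applies to the basis $D''\subseteq F$ and the compatible valuation $\nu$, producing a measure $\nu_*$ on $F$ with $\nu_*|_{D''}=\nu$, namely $\nu_*(U) = \sup_{[p]\in D'',\,[p]\leq U}\nu([p])$. Since $[p]\leq U$ iff $p\in U$ for a sheaf $U$, this $\nu_*$ coincides with the inner measure $\mu_*$ of Definition~\ref{muinnertopology}; in particular $\mu_*$ is a measure and $\mu_*([p])=\mu(p)$, so $[-]$ is valuation preserving. Local finiteness is then immediate---indeed $\mu_*$ is finite, since $1_F = [1_D]$ gives $\mu_*(1_F)=\mu(1_D)<\infty$.

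Finally I would verify faithfulness. Suppose $U < V$ in $F$ and pick $p\in V\setminus U$; then $\mu(p)>0$, since otherwise $p\in[0]\leq U$. Taking the finite-measure open $K=[p]$, we get $\mu_*(V\wedge K)=\mu_*([p])=\mu(p)$, whereas $\mu_*(U\wedge K)<\mu(p)$: meets in $F$ are intersections, and if $\mu_*(U\cap[p])$ equalled $\mu(p)$, then (replacing each $r\in U\cap[p]$ by $r\wedge p\in U$, which has the same $\mu$-value) the directed family $\{s\in U : s\leq p\}$ would have $\sup_s\mu(s)=\mu(p)$, hence be an $inn$-covering of $p$ by elements of $U$, forcing $p\in U^{sh}=U$, a contradiction. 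Thus a finite-measure open separates $U$ from $V$, so $\mu_*$ is faithful. Everything except the compatibility check of the second paragraph is either cited or a short unwinding of definitions, so I regard that check as the one genuinely technical point.
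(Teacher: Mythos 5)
Your proposal follows essentially the same route as the paper: identify $[D]$ as a basis of $\mathrm{Sh}(D,\mu;\mathbf{2})$ closed under finite joins, check that $\mu$ descends to a $\tau$-compatible valuation on it, and invoke Theorem \ref{valuationbasis}; your faithfulness argument is the contrapositive of the paper's (take $p\in V\setminus U$, test against $K=[p]$, and use closure of $U$ under $\mu$-approximation). One point in your favour: the paper dismisses the compatibility of $\mu_*|_{[D]}$ with the induced pretopology as ``almost by definition'' and only verifies it for coverings arising from $\mu$-approximations in $D$, whereas you verify the actual condition required by Theorem \ref{valuationbasis} -- that \emph{every} directed supremum $[p]=\bigvee_i[p_i]$ in the frame satisfies $\mu(p)=\sup_i\mu(p_i)$ -- via the sheafification formula, directedness, and $\mu(t)=\mu(t\wedge p_k)\le\mu(p_k)$ for $t\in[p_k]$. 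That is the right level of care for the one step that is not purely formal.

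There is, however, one error: your justification of local finiteness. You write that $\mu_*$ is in fact finite because $1_F=[1_D]$, but a valuation site is only required to be a \emph{lower} bounded distributive lattice, so $D$ need not have a top element, and $\mu_*$ need not be finite (the remark immediately following the theorem in the paper makes exactly this point: $\mu_*(1)=\sup_{p\in D}\mu(p)$ can be $+\infty$). The correct argument is the one already implicit in your setup: by the Co-Yoneda Lemma, $1_F=\bigvee_{p\in D}[p]$ with $\mu_*([p])=\mu(p)<\infty$ for every $p$, i.e.\ the top element is covered by opens of finite measure; equivalently, the existence of the $D''$-finite basis already yields local finiteness as noted before Theorem \ref{valuationbasis}. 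With that substitution the proof is complete.
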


\begin{proof} 
Since the inner topology contains the finite topology, the Yoneda functor $[-] : D \rightarrow \mathrm{Sh}(D,\mu;\mathbf{2})$ is a homomorphism of lower bounded distributive lattices. Moreover, it is clear that $\mu(p) = \mu_*([p])$. In particular, $\mu_*$ restricted to the image of $[-]$ is a finite valuation. It is also compatible with the induced Grothendieck pretopology, almost by definition. If $\{p_i \leq p ~|~ i \in I\}$ is a $\mu$-approximation, then we have
$$\mu_*([p]) = \mu(p) = \sup_{i \in I} \mu(p_i) = \sup_{i \in I} \mu_*([p_i]).$$
Applying Theorem \ref{valuationbasis} we see that $\mu_*$ restricted to the image of $D$ extends to a locally finite measure, whose value by definition agrees with $\mu_*$.

We still need to argue faithfulness. Let $U \leq V$ be two $\mu$-ideals such that
$$\mu_*(U \wedge K) = \mu_*(V \wedge K)$$
for any $\mu$-ideal $K$ with $\mu_*(K) < + \infty$. If this is the case, we may in particular choose for $p \in V$ the $\mu$-ideal $K = [p] \leq V$, and obtain
$$\mu_*(U \wedge [p]) = \sup \{ \mu(q \wedge p) ~|~ p \in U \} =  \mu(p).$$
But this means that $p$ can be approximated arbitrarily well from below by elements in $U$ and hence $p \in U$. We conclude that $U = V$.
\end{proof}

\begin{remark}
Since $\mathrm{Sh}(D,\mu;\mathbf{2})$ is a frame, it in particular has the top element $1$, given as the maximal ideal $D$. Thus the valuation $\mu_*$ is finite on $\mathrm{Sh}(D,\mu;\mathbf{2})$ iff
$$\mu_*(1) = \sup_{p \in D} ~\mu(p) < \infty,$$
which is the case iff the valuation $\mu$ on $D$ is bounded. A particular special case where this emerges is when $D$ is assumed to have a maximal element $1$, i.e.\ when $D$ is a bounded distributive lattice, and $\mu$ is a \emph{probability valuation} on $D$, i.e. $\mu(1) = 1$. In this case $\mu_*$ is again a probability measure on $\mathrm{Sh}(D,\mu;\mathbf{2})$.
\end{remark}

\begin{example}
An illustrative simple example is given by considering a set $S$, and taking $P = \mathcal{P}(S)^{\mathrm{fin}}$, the set of finite subsets of $S$, together with the counting measure, i.e.\
$$\mu(M) = \#M$$
for $M \subset S$ a finite subset. Then $\mathrm{Sh}(P,\mu;\mathbf{2})$ recovers the powerset $\mathcal{P}(S)$ of all subsets of $S$, and hence $L(P,\mu)$ is the discrete locale corresponding to the set $S$.
\end{example}

\begin{example}
Let $S$ be a set. The Stone-\v{C}ech compactification $\beta(S)$ of $S$ is obtained by equipping $\mathcal{P}(S)$ with the finite covering topology, and has as its points the set of ultrafilters on $S$ \cite[Example 3.14.]{lehner2025algebraicktheorycoherentspaces}. Let $\mathcal{U}$ be an ultrafilter on $S$, in other words a point of $\beta(S)$, which can be interpreted as a $\{0,1\}$-valued valuation $\mu_{\mathcal{U}}$ on $\mathcal{P}(S)$, see \cite[5]{goldbring2022ultrafilters}. Then
$\mathrm{Sh}(\mathcal{P}(S),\mu_{\mathcal{U}}, \mathbf{2}) \cong \mathbf{2}$ and the induced valuation preserving functor $$\mathcal{P}(S) \rightarrow \mathrm{Sh}(\mathcal{P}(S),\mu_{\mathcal{U}}, \mathbf{2}) \cong \mathbf{2}$$ agrees with the flat functor that induces the continuous map $\mathcal{U} : \mathrm{pt} \rightarrow \beta(S)$.
\end{example}

\begin{example}
More generally, for a lower bounded distributive lattice $D$, we have that $\mathrm{Sh}(D,\mu;\mathbf{2}) \cong \mathbf{2}$ for a valuation $\mu$ iff $\mu$ is a non-trivial, $\{0,1\}$-valued valuation on $D$. In this case, the natural functor $D \rightarrow \mathrm{Sh}(D,\mu;\mathbf{2}) \cong \mathbf{2}$ can be identified with $\mu$ and corresponds to the choice of a point of the locale given by the frame $\mathrm{Ind}(D)$.
\end{example}

\begin{example}
Consider the lower bounded distributive lattice $([0,\infty), \leq )$, excluding $\infty$. Then the identity is trivially a finite valuation. The corresponding inner locale is given by $\overrightarrow{[0,+\infty)}$, as defined in Example \ref{lowerreals}.
\end{example}

\begin{example} \label{Randomsequences} Consider the Cantor space $C = \prod_{n \in \mathbb{N}} \mathbf{2}$ as a model for infinite sequences of coin tosses, and consider the poset of compact opens $\mathcal{O}$ of $C$. It is obtained as
$$\mathcal{O} = \mathrm{colim}_{n \in \mathbb{N}} \mathcal{P}( \mathbf{2}^n ),$$
with the maps $\mathcal{P}( \mathbf{2}^n ) \rightarrow \mathcal{P}( \mathbf{2}^{n+1} )$ being given as $\pi_n^*$, for $\pi_n : \mathbf{2}^{n+1} \rightarrow \mathbf{2}^n$ being the projection onto the first $n$ factors. As a filtered colimit of Boolean algebras, it is itself a Boolean algebra. Equipping $\mathcal{P}( \mathbf{2}^{n} )$ with the uniform probability measure $\mu_{2^n}$, we see that the maps $\pi_n^* : \mathcal{P}( \mathbf{2}^n ) \rightarrow \mathcal{P}( \mathbf{2}^{n+1} )$ are valuation preserving, and we thus obtain a well defined, finite valuation $\mu$ on $\mathcal{O}$. The corresponding inner locale $\mathrm{Ran}(\mathbf{2}) = L(\mathcal{O}, \mu)$ is a sublocale of $C$, since $C$ is obtained from $\mathcal{O}$ by using the finite cover pretopology, and all finite covers are contained in the $\mu$-inner pretopology. It has been suggested by Simpson \cite{SIMPSON20121642} to call $\mathrm{Ran}(\mathbf{2})$ the \emph{locale of random sequences}.\footnote{The fact that $\mathrm{Ran}(\mathbf{2})$ actually agrees with Simpson's definition as the smallest sublocale of $C$ of measure $1$ can be seen by observing that the uniform measure on $C$ becomes faithful when restricted to $\mathrm{Ran}(\mathbf{2})$, compare with \cite[Proposition 6.3]{SIMPSON20121642}.} The locale $\mathrm{Ran}(\mathbf{2})$ is countably based, zero-dimensional, and has no points, and is what Simpson calls \emph{random}: It is equipped with the faithful probability measure $\mu_*$. It is however not Boolean, i.e.\ it does not define a measurable locale in the sense of Section \ref{measurablelocales}. (See the follow-up discussion in Example \ref{Randomsequences2} for an argument.)
\end{example}

In general, the construction $\mathrm{Sh}(D,\mu, \mathbf{2})$ behaves much like a $\mu$-weighted version of the Ind-completion of $D$, something that will be made more precise in Proposition \ref{filteredapproximation}. Thinking geometrically, we can also think of the represented locale as a $\mu$-generic point. The next proposition states that a certain converse to Theorem \ref{innermeasureproperties} holds.

\begin{proposition} \label{valuationframedeterminedbyfinite}
Suppose $L$ is a locale together with a locally finite and faithful measure $\mu$. Then $L \cong L(\mathcal{O}(L)^{fin}, \mu)$, where $\mathcal{O}(L)^{fin}$ is the sub-poset of opens $U$ of $L$ such that $\mu(U) < +\infty$.
\end{proposition}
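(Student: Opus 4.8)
The plan is to realise $D := \mathcal{O}(L)^{fin}$ as a basis of the frame $\mathcal{O}(L)$, invoke the Basis Theorem (Theorem~\ref{basistheorem}), and then check that the Grothendieck pretopology it induces on $D$ is precisely the $\mu$-inner pretopology.

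First I would note that by Lemma~\ref{idealoffinitemeasure} the subset $D \subset \mathcal{O}(L)$ is an ideal, hence a lower bounded distributive lattice closed under binary meets, on which $\mu$ restricts to a finite valuation; so $(D,\mu)$ is a valuation site. To see that $D$ is a basis, use local finiteness to pick a cover $1 = \bigvee_{j \in J} K_j$ with $\mu(K_j) < \infty$; then for any open $U$ of $L$, frame distributivity gives $U = \bigvee_{j} (U \wedge K_j)$ with each $U \wedge K_j \le K_j$ of finite measure, so $U$ is a join of elements of $D$. Consequently the canonical pretopology of $\mathcal{O}(L)$ restricts to a pretopology $\tau$ on $D$ whose covers are the families $\{p_i \le p\}$ with $\bigvee_i p_i = p$ computed in $\mathcal{O}(L)$, and the Basis Theorem yields a frame isomorphism $\mathcal{O}(L) \cong \mathrm{Sh}(D,\tau;\mathbf{2})$, $U \mapsto h_U$.

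The core of the argument is that $\tau$ coincides with the $\mu$-inner pretopology $inn$ on $D$. The key sub-lemma is: if $U \le V$ are opens with $\mu(U) = \mu(V) < \infty$, then $U = V$. Indeed, for every $K$ of finite measure Lemma~\ref{valuationlemma1} gives $\mu(V \wedge K) - \mu(U \wedge K) \le \mu(V) - \mu(U) = 0$, while monotonicity gives the reverse inequality, so $\mu(V \wedge K) = \mu(U \wedge K)$ for all such $K$ and faithfulness forces $U = V$. Granting this: if $\{p_i \le p\}$ is an $inn$-cover, the $\epsilon$-characterisation of $inn$-covers recalled after Definition~\ref{muinnertopology}, together with continuity of $\mu$ along the directed family of finite subjoins, gives $\mu(\bigvee_i p_i) = \mu(p)$, hence $\bigvee_i p_i = p$ by the sub-lemma, so it is a $\tau$-cover. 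Conversely, if $\bigvee_i p_i = p$ then by continuity of $\mu$ the family $\{\bigvee_{i \in F} p_i \le p : F \subseteq I \text{ finite}\}$ is a $\mu$-approximation cover, and each $\bigvee_{i\in F}p_i$ is refined by the finite-union cover $\{p_i : i \in F\}$, so by the locality axiom $\{p_i \le p\}$ is an $inn$-cover; it suffices to handle these two generating families since $\tau$, being the restriction of a Grothendieck topology, is already closed under the pretopology operations. Therefore $\tau$ and $inn$ have the same covers, so $\mathrm{Sh}(D,\tau;\mathbf{2}) = \mathrm{Sh}(D,\mu;\mathbf{2}) = \mathcal{O}(L(D,\mu))$ and $L \cong L(D,\mu)$. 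Finally this isomorphism is measure-preserving: under $U \mapsto h_U$ the inner measure satisfies $\mu_*(h_U) = \sup_{p \in D,\, p \le U} \mu(p)$, and this equals $\mu(U)$ because the finite-measure opens below $U$ form a directed family with supremum $U$ and $\mu$ is continuous.

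The main obstacle is the identification $\tau = inn$, and inside it the sub-lemma on equality of opens of equal finite measure — this is exactly where faithfulness is used. The remaining work is routine bookkeeping with the Basis Theorem, the definition of the $\mu$-inner pretopology, and continuity of $\mu$.
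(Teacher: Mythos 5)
Your proof is correct and follows essentially the same route as the paper: identify $\mathcal{O}(L)^{fin}$ as an ideal and a basis via local finiteness, apply the Basis Theorem, and match the induced pretopology with the $\mu$-inner one using continuity (for one direction) and faithfulness (for the other). Your explicit sub-lemma that $U \le V$ with $\mu(U)=\mu(V)<\infty$ forces $U=V$, and your reduction of arbitrary join-covers to the two generating families via locality, are details the paper's proof leaves implicit, but the argument is the same in substance.
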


\begin{proof} Write $F = \mathcal{O}(L)$ for the frame of opens of $L$ and $F^{fin} = \mathcal{O}(L)^{fin}$ for the subposet of opens of finite measure. By Lemma \ref{idealoffinitemeasure} $F^{fin}$ is an ideal on $F$ and in particular a lower bounded distributive lattice, and $\mu$ restricts to a finite valuation on $F^{fin}$. Furthermore, since $\mu$ is locally finite, we know that
$$1 = \bigvee_{U \text{ s.t. } \mu(U) < +\infty } U,$$
and thus any element $V$ of $F$ can be written as a supremum of elements in $F^{fin}$. This means that $F^{fin}$ is a basis for $F$ and $F$ can be obtained from $F^{fin}$ together with its induced Grothendieck topology, by Theorem \ref{basistheorem}. We only need to verify that this topology agrees with the $\mu$-inner topology.

The induced coverings on $F^{fin}$ can be split up into two cases:
\begin{itemize}
\item Finite covers: $\{U_i \subset U\}_{i \in I}$ cover if $I$ finite and $\bigvee U_i = U$.
\item Directed covers: $\{U_i \subset U\}_{i \in I}$ cover if $I$ filtered and $\bigvee U_i = U$.
\end{itemize}
Finite covers tautologically agree with finite covers in the $\mu$-topology. Now let $\{U_i \subset U\}_{i \in I}$ be a directed set. Suppose $\bigvee_{i \in I} U_i = U$. Then by continuity of $\mu$ we see that
$$\mu(U) = \mu(\bigvee_{i \in I} U_i) = \sup_{i \in I} \mu(U_i).$$
and hence $\{U_i \subset U\}_{i \in I}$ is a $\mu$-cover. Conversely, suppose
$$\sup_{i \in I} \mu(U_i) = \mu(U).$$
Since $\mu(U) < + \infty$, by faithfulness of $\mu$ we conclude that
$$\bigvee_{i \in I} U_i =  U.$$
Hence $\mu$-covers are also induced covers.
\end{proof}

We now elaborate on the countability condition that locales constructed from valuation sites naturally inherit: Any proposition with finite measure can be obtained by an ascending sequence of basic propositions.

\begin{proposition} \label{filteredapproximation}
Let $(D,\mu)$ be a valuation site, and let $U$ be a $\mu$-ideal with $\mu(U) < \infty$. Assume $U_i, i \in I$ is a directed collection of $\mu$-ideals such that $U_i \leq U$ for all $i \in I$. Then
$$\begin{array}{ccc}
\bigvee_{i \in I} U_i = U & \text{ iff } & \sup_{i \in I} \mu_*(U_i) = \mu_*(U). 
\end{array}$$
Furthermore, for any $\mu$-ideal $U$ with $\mu(U) < \infty$ there exists an ascending sequence $d_0 \leq d_1 \leq d_2 \leq \hdots$ in $D$ with $d_n \in U$ such that $U = \bigvee_{n \in \mathbb{N}} [d_n]$.
\end{proposition}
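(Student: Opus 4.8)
The plan is to obtain both directions of the biconditional from the two key properties of the inner measure $\mu_*$ recorded in Theorem \ref{innermeasureproperties}, namely that it is continuous and faithful. The implication $\bigvee_{i\in I} U_i = U \Rightarrow \sup_{i\in I}\mu_*(U_i) = \mu_*(U)$ is immediate from continuity of $\mu_*$ together with directedness of $\{U_i\}_{i\in I}$. For the converse I would set $V := \bigvee_{i\in I} U_i \leq U$; continuity gives $\mu_*(V) = \sup_{i\in I}\mu_*(U_i) = \mu_*(U)$, which is finite by hypothesis. To upgrade this equality of measures to $V = U$ I would invoke faithfulness: for any $\mu$-ideal $K$ with $\mu_*(K) < \infty$, Lemma \ref{valuationlemma1} applied to $V \leq U$ and $K$ (all relevant values being finite since $\mu_*(U) < \infty$) yields $\mu_*(U\wedge K) - \mu_*(V\wedge K) \leq \mu_*(U) - \mu_*(V) = 0$, while $V\wedge K \leq U\wedge K$ forces the reverse inequality. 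Hence $\mu_*(V\wedge K) = \mu_*(U\wedge K)$ for every chunk $K$, and faithfulness of $\mu_*$ gives $V = U$.

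For the ``furthermore'' clause, write $s := \mu_*(U) = \sup_{p\in U}\mu(p) < \infty$. I would choose, for each $n\in\mathbb{N}$, an element $p_n \in U$ with $\mu(p_n) > s - 2^{-n}$, and set $d_n := p_0 \vee p_1 \vee \cdots \vee p_n$. Since the $\mu$-inner topology contains all finite-join covers, every $\mu$-ideal is an ideal in the ordinary lattice-theoretic sense (cf.\ Example \ref{finitarytopology}), so $d_n \in U$; moreover $d_0 \leq d_1 \leq \cdots$ and $s \geq \mu(d_n) \geq \mu(p_n) > s - 2^{-n}$, whence $\sup_n \mu(d_n) = s$. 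The family $\{[d_n]\}_{n\in\mathbb{N}}$ is an increasing (hence directed) family of $\mu$-ideals with $[d_n] \leq U$, since $[d_n]$ is the smallest $\mu$-ideal containing $d_n \in U$, and $\mu_*([d_n]) = \mu(d_n)$ by Theorem \ref{innermeasureproperties}. Therefore $\sup_n \mu_*([d_n]) = s = \mu_*(U)$, and applying the first part of the proposition to this family gives $\bigvee_{n\in\mathbb{N}} [d_n] = U$.

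I do not expect a genuine obstacle here; the one place demanding attention is the use of faithfulness in the converse direction — one must verify $\mu_*(V\wedge K) = \mu_*(U\wedge K)$ for \emph{all} chunks $K$ rather than merely $\mu_*(V) = \mu_*(U)$, and confirm that the subtractions occurring in the Lemma \ref{valuationlemma1} step are between finite quantities, which holds because $\mu_*(U)<\infty$. A slightly slicker alternative for the converse would be to restrict $\mu_*$ to the open sublocale $U$, where it is a finite faithful valuation whose top element is $U$, and then cite the elementary observation recorded just after Definition \ref{definitionmeasure} that a finite faithful valuation with $\mu(V) = \mu(1)$ forces $V = 1$.
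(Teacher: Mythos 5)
Your proof is correct. The forward direction and the ``furthermore'' clause match the paper's argument essentially verbatim (the paper builds the ascending sequence $d_n$ by inductively joining $1/n$-approximants rather than joining $2^{-n}$-approximants all at once, but this is cosmetic). Where you genuinely diverge is in the converse direction of the biconditional: the paper argues directly at the level of elements of $D$, picking for each $\epsilon>0$ some $d\in U_i$ with $\mu_*(U)-\mu(d)<\epsilon$ and showing via modularity that every $d'\in U$ satisfies $\mu(d')-\mu(d'\wedge d)<\epsilon$, so that $d'$ lies in the $\mu$-closure of $\bigcup_i U_i$ and hence $\bigvee_i U_i = U$. You instead treat $\mu_*$ as a black box: continuity gives $\mu_*(V)=\mu_*(U)$ for $V=\bigvee_i U_i$, Lemma \ref{valuationlemma1} applied to $\mu_*$ on the frame upgrades this to $\mu_*(V\wedge K)=\mu_*(U\wedge K)$ for every chunk $K$, and faithfulness from Theorem \ref{innermeasureproperties} closes the argument. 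This is legitimate and non-circular, since Theorem \ref{innermeasureproperties} precedes the proposition and its proof of faithfulness does not use it; your care about finiteness of the subtracted quantities (which holds because $\mu_*(U)<\infty$ and $\mu_*(K)<\infty$, so $\mu_*(U\vee K)<\infty$ as well) is exactly the right point to flag. What your route buys is modularity — the $\epsilon$-pushing is quarantined inside the already-proved faithfulness statement rather than repeated; what the paper's route buys is a self-contained argument that exhibits explicitly how elements of $U$ are approximated by elements of the $U_i$, which is the mechanism faithfulness encodes anyway. Your ``slicker alternative'' via restriction to the open sublocale $U$ is also sound (the paper records that restrictions of faithful measures to opens are faithful), though it reduces to the same Lemma \ref{valuationlemma1} computation in the end.
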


We remark that the latter statement places a strong countability condition on the measure frame $\mathrm{Sh}(D,\mu, \mathbf{2})$. We call a given sequence $d_0 \leq d_1 \leq d_2 \leq \hdots$ such that $U = \bigvee_{n \in \mathbb{N}} [d_n]$ an \emph{exhaustion} of $U$.

\begin{proof}
By continuity of $\mu_*$, it is clear that if $\bigvee_{i \in I} U_i = U$, then also $\sup_{i \in I} \mu_*(U_i) = \mu_*(U)$. We now prove the converse. Note that we always have
$$\bigvee_{i \in I} U_i \leq U.$$
Suppose $\alpha = \sup_{i \in I} \mu_*(U_i) = \mu_*(U)$, which means that
$$\alpha = \sup_{i \in I} \sup_{d \in U_i} \mu(d) = \sup_{d \in U} \mu(d).$$
Let $\epsilon > 0$ and choose $i \in I$ and $d \in U_i$ such that $ \alpha - \mu(d) < \epsilon$.

Now let $d' \in U$. Then 
$$\mu(d') - \mu(d' \wedge d) = \mu( d' \vee d ) - \mu( d ) \leq \alpha - \mu(d) < \epsilon.$$
But this means that $d'$ can be $\mu$-approximated by elements of $\bigvee_{i \in I} U_i$, hence $\bigvee_{i \in I} U_i = U.$

For the second statement, assume $U$ such that $\mu(U) < \infty$. By induction define a sequence $d_n$ of elements of $D$ smaller than $U$ such that $d_n \leq d_{n+1}$ for $n \in \mathbb{N}$:
\begin{itemize}
\item Set $\epsilon = 1$. Then there exists $d_1$ such that $$\mu_*(U) - \mu(d_1) = \sup_{d \in U} \mu(d) - \mu(d_1) < 1.$$
\item Assume by induction that we have constructed $d_n$ such that $\mu_*(U) - \mu(d_n) \leq \frac{1}{n}$. Choose $d'$ such that $\mu_*(U) - \mu(d') \leq \frac{1}{n+1}$. Define $d_{n+1} = d_n \vee d'$. We verify that
$$  \mu_*(U) - \mu(d_{n+1}) \leq \mu_*(U) - \mu(d') \leq \frac{1}{n+1}.$$
\end{itemize}
We now have that $\sup_{n \in \mathbb{N}} \mu( d_n ) = \mu(U)$, hence by the previous point we conclude that $ U = \bigvee_{n \in \mathbb{N}} [d_n]$.
\end{proof}

Let us add some useful computations in the frame $\mathrm{Sh}(D,\mu, \mathbf{2})$, viewed as a sublocale of $\mathrm{Idl}(D)$.

\begin{lemma} \label{implicationformulas}
Let $(D, \mu)$ be a valuation site, and $p \in D$. Then:
\begin{enumerate}
\item $[p] = \{ q \in D ~|~ \mu(q \wedge p) = \mu(q) \}.$
\item The zero object in $\mathrm{Sh}(D,\mu, \mathbf{2})$  is given by the null ideal $N = \{ q \in D ~|~ \mu(q) = 0\}$.
\item $y_p \rightarrow N = [p] \rightarrow N = \{ q \in D ~|~ \mu(q \wedge p) = 0 \}.$
\item For any ideal $J$ we have
$ J \rightarrow N = \{ q \in D ~|~ \mu( q \wedge p ) = 0 \text{ for all } p \in J \}.$
\end{enumerate}
\end{lemma}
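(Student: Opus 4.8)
The plan is to read everything off the explicit description of $\mathrm{Sh}(D,\mu;\mathbf{2})$ as the frame of $\mu$-ideals sitting inside $\mathrm{Idl}(D)$: Proposition \ref{generatedframe} gives the sheafification formula for the inner topology, Definition \ref{muinnertopology} (and the combined characterization of $inn$-covers stated right after it) describes the covers, and the valuation estimates of Lemmata \ref{valuationlemma1} and \ref{valuationlemma2} supply all the inequalities needed. I will compute all Heyting implications in the ambient frame $\mathrm{Idl}(D)$; by Lemma \ref{sublocales} the sublocale $\mathrm{Sh}(D,\mu;\mathbf{2})$ is closed under $(-)\to U$ for $U$ in it, so $J\to N$ automatically lands in $\mathrm{Sh}(D,\mu;\mathbf{2})$ and agrees with the implication computed there.

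For (2) I would first check that $N=\{q\in D:\mu(q)=0\}$ is a $\mu$-ideal. It is downward closed by monotonicity of $\mu$; and if $\{p_i\le p\}_{i\in I}$ is an $inn$-cover with all $p_i\in N$, then for each $\epsilon>0$ there is a finite $F\subseteq I$ with $\mu(p)-\mu(\bigvee_{i\in F}p_i)<\epsilon$, while $\mu(\bigvee_{i\in F}p_i)\le\sum_{i\in F}\mu(p_i)=0$ by finite subadditivity (an immediate consequence of modularity plus $\mu\ge 0$), so $\mu(p)<\epsilon$ for all $\epsilon$, i.e. $p\in N$. Next, $N$ lies inside every $\mu$-ideal $U$: every $\mu$-ideal is in particular a $fin$-ideal (the inner topology contains all finite covers), hence contains $0$; and if $\mu(p)=0$ then the singleton family $\{0\le p\}$ is a $\mu$-approximation cover of $p$, so $p\in U$. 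Thus $N$ is the least element of $\mathrm{Sh}(D,\mu;\mathbf{2})$, which is exactly its zero object.

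For (1) I would apply the sheafification formula to $[p]=(p{\downarrow})^{sh}$: an element $q$ lies in $[p]$ iff there is an $inn$-cover $\{q_i\le q\}$ with all $q_i\le p$, equivalently (combined characterization) iff for every $\epsilon>0$ some finite join $d$ of such $q_i$ satisfies $\mu(q)-\mu(d)<\epsilon$. Every such $d$ satisfies $d\le p\wedge q$, and $d=p\wedge q$ is itself admissible and maximizes $\mu(d)$ at $\mu(p\wedge q)$, so the condition holds for all $\epsilon$ iff $\mu(q)-\mu(p\wedge q)\le 0$, i.e. (with $\mu(q\wedge p)\le\mu(q)$) iff $\mu(q\wedge p)=\mu(q)$. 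This is precisely the claimed description of $[p]$.

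For (4), which then yields (3), I would prove the general formula: for any ideal $J\subseteq D$, the Heyting implication $J\to N$ equals $K:=\{q\in D:\mu(q\wedge r)=0\text{ for all }r\in J\}$. Indeed $K$ is an ideal (downward closed and closed under binary joins, using distributivity in $D$ and that $N$ is an ideal), $K\cap J\subseteq N$ (take $r=q$), and any ideal $K'$ with $K'\cap J\subseteq N$ satisfies $K'\subseteq K$ (for $q\in K'$ and $r\in J$ one has $q\wedge r\in K'\cap J\subseteq N$); by the characterization of $U\to V$ as the largest $W$ with $W\wedge U\le V$ (Proposition \ref{superheytingprop}) this gives $J\to N=K$. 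Specializing to $J=y_p=p{\downarrow}$ yields $y_p\to N=\{q:\mu(q\wedge r)=0\ \forall r\le p\}=\{q:\mu(q\wedge p)=0\}$, the constraint being tightest at $r=p$ since $q\wedge r\le q\wedge p$. Finally $[p]\to N=y_p\to N$ follows from Proposition \ref{heytingpropmaps}(1) applied to the sublocale embedding $i:\mathrm{Sh}(D,\mu;\mathbf{2})\hookrightarrow\mathrm{Idl}(D)$ with $i^*=(-)^{sh}$, giving $i_*(i^*(y_p)\to N)=y_p\to i_*(N)$, i.e. $[p]\to N=y_p\to N$; alternatively one applies the formula for $K$ directly to $J=[p]$ and uses Lemma \ref{valuationlemma1} to reduce to the constraint at $r=p$. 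The only genuinely delicate point is the bookkeeping of which frame each Heyting implication is computed in — handled uniformly by Lemma \ref{sublocales} and Proposition \ref{heytingpropmaps} — while the rest is a routine unwinding of the inner-topology covers and the modularity inequalities.
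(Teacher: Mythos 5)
Your proof is correct; all four parts go through, and the frame-bookkeeping issue (which ambient poset each Heyting implication lives in) is the one genuinely delicate point and you handle it properly via Lemma \ref{sublocales} and Proposition \ref{heytingpropmaps}. Your route differs from the paper's in two places. For (1), the paper shows that the set $\{q \mid \mu(q\wedge p)=\mu(q)\}$ is itself a $\mu$-ideal containing $p$ (checking downward closure, finite covers, and $\mu$-approximations separately, the last via stability of $\mu$-approximations under meets from Proposition \ref{mutopology}), and notes the reverse inclusion is immediate; you instead unwind the sheafification formula for $(y_p)^{sh}$ directly and observe that every admissible finite join is dominated by $p\wedge q$, which is shorter and makes the role of the single-element cover $\{p\wedge q\le q\}$ transparent. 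For (3) and (4) you reverse the paper's logical order: the paper proves (3) by the Heyting adjunction ($q\in y_p\to N$ iff $[q\wedge p]\le N$) and then deduces (4) from the fact that $(-)\to N$ turns suprema into infima, writing $J=\bigvee_{p\in J}y_p$; you prove the general formula (4) first by exhibiting $K=\{q\mid \mu(q\wedge r)=0\ \forall r\in J\}$ as the largest ideal whose intersection with $J$ lies in $N$, and specialize. Your verification that $[p]\to N=y_p\to N$ via $i_*(i^*(y_p)\to N)=y_p\to i_*(N)$ is a nice touch the paper leaves implicit, and your fallback reduction of the constraint from $r\in[p]$ to $r=p$ via Lemma \ref{valuationlemma1} is also sound. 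Both approaches cost about the same; yours makes the universal-property content of (4) explicit, while the paper's leans more on the formal calculus of Proposition \ref{superheytingprop}.
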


\begin{proof}
Let us prove (1). Since it is clear that
$$\{ q \in D ~|~ \mu(q \wedge p) = \mu(q) \} \subset [p],$$
we only need to verify that the left-hand set is downward closed, and closed under finite unions and $\mu$-approximations, since it already contains $p$.

\emph{Downward closed:} Suppose $q$ is such that $\mu(q) = \mu( q \wedge p )$, and suppose $q' \leq q$. Then
$$\mu(q') - \mu( q' \wedge p ) \leq \mu(q) - \mu( q \wedge p ) = 0,$$
by Lemma \ref{valuationlemma1}, hence $\mu(q') = \mu( q' \wedge p )$.

\emph{Finite covers:} Suppose we have $q_1, q_2$ with $q_i$ such that $\mu(q_i) = \mu(q_i \wedge p )$. Then
$$\mu(q_1 \vee q_2) = \mu(q_1) + \mu(q_2) - \mu(q_1 \wedge q_2 ) = \mu(q_1 \wedge p) + \mu(q_2 \wedge p) - \mu(q_1 \wedge p \wedge q_2 \wedge p) = \mu( (q_1 \vee q_2) \wedge p ).$$

$\mu$-\emph{approximations:} Suppose $q_i, i \in I$ is a directed set with $q_i \leq q$ for all $i \in I$, and $\sup_{i \in I} \mu( q_i ) = \mu( q )$. We know that $\mu$-approximations are stable under intersection by Proposition \ref{mutopology} therefore $\sup_{i \in I} \mu( q_i \wedge p) = \mu( q \wedge p )$ as well. But then
$$\mu( q ) = \sup_{i \in I} \mu( q_i ) =  \sup_{i \in I} \mu( q_i \wedge p) = \mu( q \wedge p ).$$

The statement (2) is a special case of (1). To show (3), by adjunction the statement $[q] \leq ( y_p \rightarrow N )$ is equivalent to 
$$[q \wedge p] = [q] \wedge [p] \leq N,$$
which is the case iff $\mu(q \wedge p)$.

The statement (4) now follows from (3) by writing $J = \bigvee_{ p \in J} y_p$, and using
$$ J \rightarrow N = ( \bigvee_{ p \in J} y_p ) \rightarrow N = \bigwedge_{ p \in J}  y_p \rightarrow N = \{ q \in D ~|~ \mu( q \wedge p ) = 0 \text{ for all } p \in J \}.$$
\end{proof}

Recall that a Grothendieck pretopology $(D,\tau)$ is called subcanonical if all representables $y_p$ are propositional sheaves.

\begin{proposition} \label{faithfulsite}
Let $(D,\mu)$ be a valuation site. The $\mu$-inner topology on $D$ is subcanonical iff $\mu$ is faithful, i.e.\ $\mu : D \rightarrow [0,\infty)$ is a conservative functor.
\end{proposition}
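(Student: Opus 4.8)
The plan is to reduce subcanonicity to a concrete statement about representables and then match that statement directly with conservativity of $\mu$. I would first record the translation. By the combined description of the $inn$-topology in the remark after Definition~\ref{muinnertopology}, a family $\{p_i \le p\}_{i\in I}$ is an $inn$-covering exactly when $\mu(p) = \sup\{\, \mu(\bigvee_{i\in F} p_i) \mid F \subseteq I \text{ finite}\,\}$; in particular the one-element family $\{q \le p\}$ is a covering whenever $\mu(q) = \mu(p)$. Unwinding Definition~\ref{definitionpropositionalsheaf}, and using that the ``only if'' half of the sheaf condition is automatic, the representable $y_p$ is a propositional sheaf if and only if: for every $inn$-covering $\{q_i \le U\}_{i\in I}$ with $q_i \le p$ for all $i$, one has $U \le p$. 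So $inn$ is subcanonical iff this holds for every $p \in D$.

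For the implication ``$\mu$ faithful $\Rightarrow$ $inn$ subcanonical'', I would fix $p$ and an $inn$-covering $\{q_i \le U\}_{i\in I}$ with all $q_i \le p$. For each finite $F \subseteq I$ the finite join $\bigvee_{i\in F} q_i$ lies below both $U$ and $p$, hence below $U \wedge p$, so by monotonicity $\mu(\bigvee_{i\in F} q_i) \le \mu(U\wedge p)$. Taking the supremum over $F$ and using the covering description gives $\mu(U) \le \mu(U\wedge p)$; since also $U\wedge p \le U$, we get $\mu(U\wedge p) = \mu(U)$. Conservativity of $\mu$ applied to the inequality $U\wedge p \le U$ then forces $U\wedge p = U$, i.e.\ $U \le p$. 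Hence every $y_p$ is a sheaf.

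For the converse ``$inn$ subcanonical $\Rightarrow$ $\mu$ faithful'', suppose $U \le V$ in $D$ with $\mu(U) = \mu(V)$. Then $\{U \le V\}$ is a one-element $\mu$-approximation, hence an $inn$-covering, and trivially $U \le U$; applying the sheaf property of $y_U$ to this covering yields $V \le U$, so $U = V$. This shows $\mu$ is conservative.

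No step here is genuinely deep; the only point that requires care is the first paragraph, namely that it suffices to test the sheaf condition against the covers coming from finite joins and $\mu$-approximations (equivalently, against the $\epsilon$-covers), which is precisely what Proposition~\ref{mutopology} together with the remark after Definition~\ref{muinnertopology} provides. Everything else is a two-line lattice/valuation manipulation, the key being the inequality $\mu(U) \le \mu(U\wedge p)$ obtained from the $\sup$-over-finite-subsets form of the covering condition.
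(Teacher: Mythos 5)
Your proof is correct and follows essentially the same route as the paper: both reduce subcanonicity to the statement that $\{q \mid q \le p\}$ coincides with $\{q \mid \mu(q\wedge p)=\mu(q)\}$ for all $p$, and both use the single-element $\mu$-approximation $\{U \le V\}$ for the converse. The only difference is that you verify the sheaf condition for $y_p$ directly from the covering description, whereas the paper cites the already-established computation of $[p]$ in Lemma~\ref{implicationformulas}(1); the content is the same.
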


\begin{proof}
The $\mu$-inner topology on $D$ is subcanonical iff for all $p \in D$ we have
$$\{ q ~|~ q \leq p \} = y_p = [p] = \{ q ~|~ \mu(q \wedge p) = \mu(q) \}.$$
Therefore, if $p \leq q$ such that $\mu(p) = \mu(q)$, we also have $q \leq p$, hence $\mu$ is conservative.

Conversely, assume $\mu$ is conservative. Then $\mu(q \wedge p) = \mu(q)$ implies $q \wedge p = q$, which is the case iff $q \leq p$.
\end{proof}

Combined with Lemma \ref{subcanonicalfaithful}, this means that the natural functor $[-] : D \rightarrow \mathrm{Sh}(D,\mu, \mathbf{2})$ is injective iff $\mu$ is faithful. If this is not the case, we can always enforce faithfulness of $\mu$
by taking a suitable quotient of $D$. Define a congruence relation on $D$ by
$$p \sim q ~\text{ iff }~ \mu(p) = \mu( p \wedge q ) = \mu(q)$$
for $p,q \in D$. We note that by modularity this implies $\mu(p \vee q) = \mu(p)$ as well.

\begin{lemma}
Let $(D, \mu)$ be a valuation site. The relation $\sim$ is in fact a congruence relation, $\mu$ descends to a faithful valuation on $D/\sim$. Furthermore, the induced quotient functor $(D, \mu) \rightarrow (D/\sim, \mu)$ is valuation preserving and induces an isomorphism 
$$\mathrm{Sh}(D,\mu, \mathbf{2}) \cong \mathrm{Sh}(D/\sim,\mu, \mathbf{2}).$$
\end{lemma}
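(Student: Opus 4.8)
The plan is to handle the three assertions in order: (i) $\sim$ is a lattice congruence, (ii) $\mu$ descends to a faithful valuation $\bar\mu$ on $D/\!\sim$ with $\pi\colon(D,\mu)\to(D/\!\sim,\bar\mu)$ valuation preserving, (iii) $\pi$ induces an isomorphism on propositional sheaf frames. The first move is to replace the defining condition of $\sim$ by a symmetric one: using modularity and monotonicity, $p\sim q$ holds iff $\mu(p\wedge q)=\mu(p\vee q)$ (from $\mu(p\wedge q)=\mu(p)\iff\mu(p\vee q)=\mu(q)$, and when $\mu(p\wedge q)=\mu(p\vee q)$ the four values $\mu(p\wedge q)\le\mu(p),\mu(q)\le\mu(p\vee q)$ are all equal). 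Reflexivity and symmetry are then immediate, and for transitivity, given $p\sim q$, $q\sim r$ (so $\mu(p)=\mu(q)=\mu(r)$, $\mu(p\wedge q)=\mu(p)$, $\mu(q\wedge r)=\mu(q)$), I would apply Lemma \ref{valuationlemma1} twice — once to the pair $p\wedge q\le p$ tested against $r$, once to $q\wedge r\le q$ tested against $p$ — together with monotonicity to get $\mu(p\wedge r)=\mu(p\wedge q\wedge r)=\mu(p\wedge q)=\mu(p)$, hence $p\sim r$.

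Next I would verify compatibility with $\wedge$ and $\vee$; by the transitivity just established it suffices to show $p\sim p'$ implies $p\wedge q\sim p'\wedge q$ and $p\vee q\sim p'\vee q$ for all $q$. For meets, Lemma \ref{valuationlemma1} applied to $p\wedge p'\le p$ tested against $q$ (plus monotonicity) gives $\mu(p\wedge q)=\mu(p\wedge p'\wedge q)=\mu(p'\wedge q)$, and $(p\wedge q)\wedge(p'\wedge q)=p\wedge p'\wedge q$, so $p\wedge q\sim p'\wedge q$. For joins, using the distributive identity $(p\vee q)\wedge(p'\vee q)=(p\wedge p')\vee q$ and Lemma \ref{valuationlemma2} applied to $p\wedge p'\le p$ and $q\le q$ (plus monotonicity) gives $\mu(p\vee q)=\mu((p\wedge p')\vee q)=\mu(p'\vee q)$, so $p\vee q\sim p'\vee q$. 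Thus $\sim$ is a lattice congruence, $D/\!\sim$ is a lower bounded distributive lattice with bottom $[0]=\{p\mid\mu(p)=0\}$, and $\bar\mu([p]):=\mu(p)$ is well defined (as $p\sim q\Rightarrow\mu(p)=\mu(q)$), with modularity and $\bar\mu(0)=0$ inherited and monotonicity following since $[p]\le[q]$ unwinds to $\mu(p\vee q)=\mu(q)\ge\mu(p)$. Moreover $\bar\mu$ is conservative: if $[p]\le[q]$ and $\mu(p)=\mu(q)$ then $\mu(p\wedge q)=\mu(p)+\mu(q)-\mu(p\vee q)=\mu(p)$, so $[p]=[q]$; hence $\bar\mu$ is faithful by Proposition \ref{faithfulsite}, and $\pi$ is valuation preserving by definition of $\bar\mu$.

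For part (iii), I would first record that $\pi$ preserves binary meets and finite joins, and, crucially, that it both preserves \emph{and reflects} $inn$-coverings: using the $\varepsilon$-characterization of $\mu$-inner covers recalled after Definition \ref{muinnertopology} together with $\bar\mu\circ\pi=\mu$, the family $\{p_i\le p\}$ is an $inn$-cover in $(D,\mu)$ iff $\{\pi(p_i)\le\pi(p)\}$ is an $inn$-cover in $(D/\!\sim,\bar\mu)$. In particular $\pi$ is a morphism of sites, so by Theorem \ref{functorialitymorphismsites} it yields a frame homomorphism $\phi=([\pi])_\#\colon\mathrm{Sh}(D,\mu;\mathbf{2})\to\mathrm{Sh}(D/\!\sim,\bar\mu;\mathbf{2})$ whose right adjoint, on the level of ideals, is $\phi^\#\colon V\mapsto\pi^{-1}(V)$. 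I claim $\phi^\#$ is an order isomorphism with inverse $U\mapsto\pi(U)$. The key identity is $\pi^{-1}(\pi(U))=U$ for every $\mu$-ideal $U$: the inclusion $\supseteq$ is trivial, and for $\subseteq$, if $[p]=[q]$ with $q\in U$ then $p\wedge q\le q$ lies in $U$ and $\mu(p\wedge q)=\mu(p)$, so $\{p\wedge q\le p\}$ is a single-element $\mu$-approximation cover, forcing $p\in U$. Granting this, one checks using cover-reflection that $\pi(U)$ is indeed a $\bar\mu$-ideal and that $\pi(\pi^{-1}(V))=V$; order-reflection of $\phi^\#$ is immediate from surjectivity of $\pi$. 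Hence $\phi^\#$ is an order isomorphism, so $\phi=(\phi^\#)^{-1}$ is an isomorphism of frames, and since $([\pi])_\#$ restricts along $[-]_D$ to $[-]_{D/\sim}\circ\pi$ (Theorem \ref{universalpropertysheaves}), this isomorphism is exactly the one induced by the quotient functor; it is moreover measure preserving, since both $\mu_*$ and $\bar\mu_*$ are computed as suprema of $\mu$-values over basic opens.

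The congruence verifications in the first two paragraphs are routine once the characterization $p\sim q\iff\mu(p\wedge q)=\mu(p\vee q)$ is in hand. The step I expect to require the most care is the cover-reflection property of $\pi$ and the saturation identity $\pi^{-1}(\pi(U))=U$, as these are precisely what promote $\phi^\#$ — and therefore $\phi$ — from a mere adjunction to an isomorphism.
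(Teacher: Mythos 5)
Your proof is correct, and for the key final assertion it takes a genuinely different route from the paper. For the congruence part you and the paper do essentially the same thing — both reduce to Lemma \ref{valuationlemma1} for meets — though you are more thorough: you verify transitivity explicitly (via the symmetric characterization $p\sim q\iff\mu(p\wedge q)=\mu(p\vee q)$, which the paper never records) and you handle joins via Lemma \ref{valuationlemma2}, where the paper gets them more cheaply by a one-line modularity computation from the meet case. The real divergence is in the isomorphism $\mathrm{Sh}(D,\mu,\mathbf{2})\cong\mathrm{Sh}(D/\!\sim,\mu,\mathbf{2})$: the paper factors the Yoneda functor as $D\to D/\!\sim\to\mathrm{Sh}(D,\mu,\mathbf{2})$, observes the second map is injective, and invokes the Basis Theorem \ref{basistheorem}, leaving implicit the check that the induced topology on the image of $D/\!\sim$ coincides with the $\bar\mu$-inner topology. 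You instead build the comparison map from Theorem \ref{functorialitymorphismsites} and verify by hand that its right adjoint $V\mapsto\pi^{-1}(V)$ is an order isomorphism, with the saturation identity $\pi^{-1}(\pi(U))=U$ (proved by the single-element $\mu$-approximation $\{p\wedge q\le p\}$) doing the real work. Your version is longer but more self-contained and makes the isomorphism completely explicit at the level of ideals; the paper's is shorter but leans on the Basis Theorem and the unstated topology comparison, which your cover-reflection observation (immediate from the $\varepsilon$-characterization after Definition \ref{muinnertopology} and $\bar\mu\circ\pi=\mu$) would in any case be needed to justify. The one place where you wave your hands — checking that $\pi(U)$ is closed under $\bar\mu$-approximations — does require replacing representatives by their meets with $p$ and passing to finite joins to get a directed family inside $U$, but this is routine and does not affect correctness.
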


\begin{proof}
We need to show that $\sim$ is a congruence relation. Let $p,q, r \in D$ and assume $p \sim q$. W.l.o.g.\ assume that $q \leq p$, otherwise repeat the argument with $q$ replaced by $p \wedge q$. Then
$$\mu( p \wedge r ) - \mu( q \wedge r ) \leq \mu( p ) - \mu( q ) = 0$$
hence $p \wedge r \sim q \wedge r$. Now we can use modularity to also deduce
$$\begin{array}{rcl}
\mu( p \vee r ) &=& \mu( p ) + \mu( r ) - \mu( p \wedge r ) \\
               = \mu( q ) + \mu (r ) - \mu (q \wedge r ) &=& \mu( q \vee r )
\end{array}
$$ hence also $p \vee r \sim q \vee r$. Finally, we need to check that the relation preserves $0$, i.e.\ the kernel $\{p \in D ~|~ p \sim 0 \}$ is an ideal. But $p \sim 0$ simply means $\mu(p) = 0$ and it is clear that the set of null elements is downward closed and closed under finite joins.

The statements that $\mu$ is well-defined and the quotient map $D \rightarrow D/\sim$ is valuation preserving are clear. For the last statement, note that the Yoneda functor factors as
$$[-] : D \rightarrow D/\sim \rightarrow \mathrm{Sh}(D,\mu, \mathbf{2}).$$
The induced right-hand functor is injective, therefore the claimed statement that
$$\mathrm{Sh}(D,\mu, \mathbf{2}) \cong \mathrm{Sh}(D/\sim,\mu, \mathbf{2})$$
follows from the Basis Theorem \ref{basistheorem}.
\end{proof}

One can also ask in what case one can recover $D$ entirely from the associated frame? Note that $[-]$ always has image in the set $\mathrm{Sh}(D,\mu, \mathbf{2})^{fin}$ of propositional sheaves of finite measure. However, while faithfulness of $\mu$ guarantees that the map 
$$[-] : D \rightarrow \mathrm{Sh}(D,\mu, \mathbf{2})^{fin}$$
is injective, it need not be surjective. Proposition \ref{filteredapproximation} gives that the target is given by the closure under suprema of ascending chains $d_1 \leq d_2 \leq \cdots $ in $D$ such that $\sup_{n \in \mathbb{N}} \mu(d_n) < \infty$. We can turn this condition into a definition.

\begin{definition}
A valuation site $(D, \mu)$ is called \emph{bounded $\sigma$-complete}, if whenever $d_1 \leq d_2 \leq \cdots $ in $D$ is a sequence such that $\sup_{n \in \mathbb{N}} \mu(d_n) < \infty$, the supremum $d = \bigvee d_i$ exists in $D$ and has valuation $\mu(d) = \sup_{n \in \mathbb{N}} \mu(d_n)$.
\end{definition}

\begin{corollary} \label{sigmacompleteness}
Let $(D,\mu)$ be a valuation site. The natural functor
$$[-] : D \rightarrow \mathrm{Sh}(D,\mu, \mathbf{2})^{fin}$$
is an isomorphism iff $(D,\mu)$ is faithful and bounded $\sigma$-complete.
\end{corollary}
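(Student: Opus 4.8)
The plan is to combine three facts already in hand: that $[-] : D \to \mathrm{Sh}(D,\mu, \mathbf{2})$ is injective exactly when $\mu$ is faithful (Proposition \ref{faithfulsite} together with Lemma \ref{subcanonicalfaithful}), that $\mu_*([p]) = \mu(p)$ and $\mu_*$ is a continuous valuation (Theorem \ref{innermeasureproperties}), and that every $\mu$-ideal of finite measure admits an exhaustion by elements of $D$ (Proposition \ref{filteredapproximation}). A small remark used throughout: an injective lattice homomorphism reflects the order, since $[a] \leq [b]$ means $[a \wedge b] = [a]$, hence $a \wedge b = a$, i.e.\ $a \leq b$; so a bijective lattice homomorphism is automatically an isomorphism of posets.

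For the ``if'' direction, assume $(D,\mu)$ is faithful and bounded $\sigma$-complete. Faithfulness gives injectivity of $[-]$, and its image lies in $\mathrm{Sh}(D,\mu, \mathbf{2})^{fin}$ because $\mu_*([p]) = \mu(p) < \infty$. For surjectivity, take $U \in \mathrm{Sh}(D,\mu, \mathbf{2})^{fin}$. By Proposition \ref{filteredapproximation} there is an ascending sequence $d_0 \leq d_1 \leq \cdots$ in $D$ with $d_n \in U$ and $U = \bigvee_{n} [d_n]$, so $\sup_n \mu(d_n) = \mu_*(U) < \infty$; bounded $\sigma$-completeness then produces $d = \bigvee_n d_n \in D$ with $\mu(d) = \sup_n \mu(d_n)$. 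I then check $[d] = U$. From $d_n \leq d$ one gets $U = \bigvee_n [d_n] \leq [d]$. Conversely, the elements $d_n \in U$ satisfy $d_n \leq d$ and $\mu(d_n) \to \mu(d)$, so $d$ is $\mu$-approximated from below by members of $U$; since $U$ is a $\mu$-ideal, $d \in U$, i.e.\ $[d] \leq U$. Hence $[-]$ is a bijective lattice homomorphism and therefore an isomorphism.

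For the ``only if'' direction, assume $[-] : D \to \mathrm{Sh}(D,\mu, \mathbf{2})^{fin}$ is an isomorphism. Injectivity forces $\mu$ to be faithful by Proposition \ref{faithfulsite} and Lemma \ref{subcanonicalfaithful}. For bounded $\sigma$-completeness, let $d_0 \leq d_1 \leq \cdots$ in $D$ with $\sup_n \mu(d_n) < \infty$, and set $U = \bigvee_n [d_n]$ in $\mathrm{Sh}(D,\mu, \mathbf{2})$. Continuity of $\mu_*$ on the directed family $\{[d_n]\}$ gives $\mu_*(U) = \sup_n \mu_*([d_n]) = \sup_n \mu(d_n) < \infty$, so $U \in \mathrm{Sh}(D,\mu, \mathbf{2})^{fin}$, and by surjectivity $U = [d]$ for some $d \in D$. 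Since $[-]$ reflects the order, $[d_n] \leq [d]$ yields $d_n \leq d$; and if $e \in D$ is any upper bound of all the $d_n$, then $U = \bigvee_n [d_n] \leq [e]$ forces $[d] \leq [e]$, hence $d \leq e$. Thus $d = \bigvee_n d_n$ in $D$, and $\mu(d) = \mu_*([d]) = \mu_*(U) = \sup_n \mu(d_n)$.

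The proof is essentially bookkeeping assembled from the cited results; the only step requiring genuine care is the inclusion $[d] \leq U$ in the ``if'' direction — equivalently, that the supremum $d = \bigvee_n d_n$ provided by bounded $\sigma$-completeness actually lies in the $\mu$-ideal $U$ — and this is precisely where the $\mu$-approximation clause of the inner topology, combined with the equality $\mu(d) = \sup_n \mu(d_n)$, is used. No serious obstacle is anticipated beyond being careful that $[-]$ reflects order, which needs injectivity and hence faithfulness.
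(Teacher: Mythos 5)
Your proof is correct and follows exactly the route the paper intends: injectivity of $[-]$ is equivalent to faithfulness via Proposition \ref{faithfulsite} and Lemma \ref{subcanonicalfaithful}, while surjectivity onto $\mathrm{Sh}(D,\mu,\mathbf{2})^{fin}$ is matched against bounded $\sigma$-completeness using the exhaustions from Proposition \ref{filteredapproximation} and the continuity of $\mu_*$. The one step needing care — that the supremum $d=\bigvee_n d_n$ lands in the $\mu$-ideal $U$ because $\{d_n\leq d\}$ is a $\mu$-approximation cover — is handled correctly.
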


\subsection{Functoriality}

Let us now investigate the functoriality of the construction of $(D, \mu) \mapsto \mathrm{Sh}(D,\mu, \mathbf{2})$.

\begin{lemma}
Let $\mathbf{f} : (D,\mu) \rightarrow (D',\mu')$ be a morphism of valuation sites. Then $\mathbf{f}$ is a partial morphisms of sites, with $P$ and $P'$ being equipped with the $\mu$-inner respectively $\mu'$-inner topology.
\end{lemma}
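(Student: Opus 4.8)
We are given a morphism of valuation sites $\mathbf{f} : (D,\mu) \to (D',\mu')$, which by definition is a homomorphism of lower bounded distributive lattices that is valuation-preserving, i.e.\ $\mu'(\mathbf{f}(d)) = \mu(d)$ for all $d \in D$. We equip $D$ with the $\mu$-inner pretopology $inn$ and $D'$ with the $\mu'$-inner pretopology $inn'$. We must show $\mathbf{f}$ is a partial morphism of sites, which by the definition in the excerpt means: (a) $\mathbf{f}$ preserves binary meets, and (b) $\mathbf{f}$ preserves coverings. Condition (a) is immediate since $\mathbf{f}$ is a lattice homomorphism, so the content is entirely in (b).

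\begin{proof}
Since $\mathbf{f}$ is a homomorphism of lower bounded distributive lattices, it preserves binary meets; thus it only remains to verify that $\mathbf{f}$ preserves $inn$-coverings. Recall the unified description of $\mu$-inner coverings noted after Definition \ref{muinnertopology}: a family $\{p_i \leq p ~|~ i \in I\}$ is an $inn$-covering if and only if for every $\epsilon > 0$ there exists a finite subset $F \subset I$ with $\mu(p) - \mu\!\left(\bigvee_{i \in F} p_i\right) < \epsilon$. So let $\{p_i \leq p ~|~ i \in I\}$ be an $inn$-covering in $D$, and fix $\epsilon > 0$. Choose $F \subset I$ finite with $\mu(p) - \mu\!\left(\bigvee_{i \in F} p_i\right) < \epsilon$. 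Since $\mathbf{f}$ preserves finite joins, $\mathbf{f}\!\left(\bigvee_{i \in F} p_i\right) = \bigvee_{i \in F} \mathbf{f}(p_i)$, and since $\mathbf{f}$ is valuation-preserving,
$$\mu'(\mathbf{f}(p)) - \mu'\!\left(\bigvee_{i \in F} \mathbf{f}(p_i)\right) = \mu(p) - \mu\!\left(\bigvee_{i \in F} p_i\right) < \epsilon.$$
As $\epsilon > 0$ was arbitrary, $\{\mathbf{f}(p_i) \leq \mathbf{f}(p) ~|~ i \in I\}$ satisfies the unified covering criterion, hence is an $inn'$-covering in $D'$. Therefore $\mathbf{f}$ preserves coverings, and being also meet-preserving, it is a partial morphism of sites.
\end{proof}

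\textbf{Remark on the approach.} I chose to work through the unified $\epsilon$-$F$ characterization rather than handling the two generating families of covers (finite joins; $\mu$-approximations) separately, because the unified form makes the valuation-preservation hypothesis do all the work in one line and sidesteps the bookkeeping of checking directedness is preserved. One could alternatively argue directly: finite-join covers are sent to finite-join covers since $\mathbf{f}$ preserves finite joins, and $\mu$-approximation covers $\{p_i \leq p\}$ (directed, $\sup_i \mu(p_i) = \mu(p)$) are sent to families $\{\mathbf{f}(p_i) \leq \mathbf{f}(p)\}$ which are still directed (monotone image of a directed set) and satisfy $\sup_i \mu'(\mathbf{f}(p_i)) = \sup_i \mu(p_i) = \mu(p) = \mu'(\mathbf{f}(p))$; since a pretopology is generated by these and closed under the pretopology axioms, preservation on generators suffices. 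There is no real obstacle here — the lemma is essentially a bookkeeping check that valuation-preservation is exactly the compatibility needed for the $\mu$-inner construction to be functorial, and the only thing to be slightly careful about is that the conclusion asserts only a \emph{partial} morphism of sites (global support is genuinely not claimed, since $\mathbf{f}$ need not be surjective onto a cofinal family, and indeed $\mathbf{f}(1_D)$ need not cover $1_{D'}$ when $D, D'$ have top elements).
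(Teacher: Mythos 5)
Your proof is correct and follows the same route as the paper: meets are preserved because $\mathbf{f}$ is a lattice homomorphism, and coverings are preserved because $\mathbf{f}$ is valuation-preserving. The paper states this in one line without detail; your use of the unified $\epsilon$-finite-subfamily characterization of $\mu$-inner covers is exactly the natural way to fill in that detail.
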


\begin{proof}
By definition $\mathbf{f}$ is a homomorphism of distributive lattices, and in particular preserves all existing finite limits. Since it is valuation-preserving, it also preserves $\mu$-coverings.
\end{proof}

Thus, any homomorphism of $\mathbf{f} : (D,\mu) \rightarrow (D',\mu')$ of valuation sites extends to a partial frame homomorphism $f^* : (D,\mu)^{inn} \rightarrow (D',\mu')^{inn}$, in the sense that the square
\[\begin{tikzcd}
	D & {D'} \\
	{\mathrm{Sh}(D,\mu, \mathbf{2})} & {\mathrm{Sh}(D',\mu', \mathbf{2})}
	\arrow["\mathbf{f}", from=1-1, to=1-2]
	\arrow[from=1-1, to=2-1]
	\arrow[from=1-2, to=2-2]
	\arrow["{f^*}", from=2-1, to=2-2]
\end{tikzcd}\]
commutes, by Theorem \ref{functorialitymorphismsites}. 

\begin{corollary}
There exists a functor
$$(-)^{inn} : \mathrm{ValSite} \rightarrow \mathrm{MeasFrm}$$
which sends a valuation site $(D,\mu)$ to $(D,\mu)^{inn} = \mathrm{Sh}(D,\mu, \mathbf{2})$.
\end{corollary}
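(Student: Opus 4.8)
The plan is to assemble the functor from facts already established in the preceding discussion. On objects, for a valuation site $(D,\mu)$, Theorem~\ref{innermeasureproperties} (together with Proposition~\ref{generatedframe}) shows that $(D,\mu)^{inn}=\mathrm{Sh}(D,\mu,\mathbf{2})$ is a frame carrying the locally finite and faithful measure $\mu_*$, so $\big((D,\mu)^{inn},\mu_*\big)$ is an object of $\mathrm{MeasFrm}$. On morphisms, given $\mathbf{f}:(D,\mu)\to(D',\mu')$ in $\mathrm{ValSite}$, the preceding Lemma says $\mathbf{f}$ is a partial morphism of sites for the respective inner topologies, so Theorem~\ref{functorialitymorphismsites} produces a partial frame homomorphism $f^*=([\mathbf{f}])_\#:\mathrm{Sh}(D,\mu,\mathbf{2})\to\mathrm{Sh}(D',\mu',\mathbf{2})$ fitting into the commuting square relating $[-]$, $\mathbf{f}$ and $f^*$; in particular $f^*([p])=[\mathbf{f}(p)]$ for $p\in D$. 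Compatibility with identities and composites I would read off directly from the functoriality of $\mathrm{Sh}(-;\mathbf{2}):\mathrm{Site}_{\mathrm{part}}\to\mathrm{Frm}_{\mathrm{part}}$ in Theorem~\ref{functorialitymorphismsites}, noting that sending a morphism of valuation sites to its underlying partial morphism of sites (equipped with the inner topologies) is manifestly functorial.

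The one point that needs genuine checking is that each $f^*$ is \emph{measure-preserving}, i.e.\ $\mu'_*(f^*(U))=\mu_*(U)$ for every $\mu$-ideal $U$. I would first verify this on an elementary proposition $[p]$, where it is immediate: by $f^*([p])=[\mathbf{f}(p)]$, the normalization $\mu_*\circ[-]=\mu$ (and $\mu'_*\circ[-]=\mu'$) of Theorem~\ref{innermeasureproperties}, and the fact that $\mathbf{f}$ is valuation-preserving, one has $\mu'_*(f^*([p]))=\mu'_*([\mathbf{f}(p)])=\mu'(\mathbf{f}(p))=\mu(p)=\mu_*([p])$. To pass to arbitrary $U$, I would invoke the Co-Yoneda Lemma~\ref{coyonedalemma} to write $U=\bigvee_{p\in U}[p]$, a \emph{directed} supremum since $\mu$-ideals are closed under finite joins; since $f^*$ preserves suprema and the inner measures are continuous, $\mu'_*(f^*(U))=\sup_{p\in U}\mu'_*([\mathbf{f}(p)])=\sup_{p\in U}\mu(p)=\mu_*(U)$. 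This shows $f^*$ is a morphism in $\mathrm{MeasFrm}$, completing the construction of $(-)^{inn}:\mathrm{ValSite}\to\mathrm{MeasFrm}$.

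I expect no real obstacle here; the statement is essentially a repackaging of Theorems~\ref{innermeasureproperties} and~\ref{functorialitymorphismsites}. The only place where a small argument is needed is the measure-preservation check, and even that collapses — once the commuting square $f^*\circ[-]=[-]\circ\mathbf{f}$ and the normalization of $\mu_*$ on elementary propositions are in hand — to the elementary-proposition case plus a single use of continuity of the inner measure.
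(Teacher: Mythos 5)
Your proposal is correct and follows essentially the same route as the paper: functoriality is delegated to Theorem \ref{functorialitymorphismsites} together with the lemma that valuation-site morphisms are partial morphisms of sites, and the only substantive check is measure-preservation, which both you and the paper reduce to the computation $\mu'_*(f^*(U))=\sup_{p\in U}\mu'(\mathbf{f}(p))=\sup_{p\in U}\mu(p)=\mu_*(U)$. Your version is slightly more careful in spelling out why this supremum formula is legitimate (directedness of $\{[\mathbf{f}(p)]\}_{p\in U}$ plus continuity of the inner measure), a step the paper's one-line computation leaves implicit.
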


\begin{proof} Let $\mathbf{f} : (D,\mu) \rightarrow (D',\mu')$ be a homomorphism of valuation sites. We only need to check that $f^* : \mathrm{Sh}(D,\mu, \mathbf{2}) \rightarrow \mathrm{Sh}(D',\mu, \mathbf{2})$ preserves the inner valuations. Recall that
$$f^*(U) = \bigvee_{p \in U} \mathbf{f}(p).$$ We compute that
$$\mu_*( f^*(U) ) = \sup_{p \in J} \mu(f(p)) =  \sup_{p \in U} \mu(p) = \mu_* (U).$$
\end{proof}

We also have a functor
$$(-)^{fin} : \mathrm{MeasFrm} \rightarrow \mathrm{ValSite}$$
which sends a frame with measure $\mu$ to its ideal of $\mu$-finite elements, see Lemma \ref{idealoffinitemeasure}. It is clear that a measure-preserving partial frame homomorphism $ f^* : F \rightarrow F'$ restricts to a valuation-preserving homomorphism of distributive lattices $\mathbf{f} : F^{fin} \rightarrow (F')^{fin}.$

\begin{theorem} \label{innervaluationadjunction}
There is an adjunction
\[\begin{tikzcd}
	{\mathrm{ValSite}} & {\mathrm{MeasFrm}}
	\arrow[""{name=0, anchor=center, inner sep=0}, "{(-)^{inn}}", curve={height=-12pt}, from=1-1, to=1-2]
	\arrow[""{name=1, anchor=center, inner sep=0}, "{(-)^{fin}}", curve={height=-12pt}, from=1-2, to=1-1]
	\arrow["\dashv"{anchor=center, rotate=-90}, draw=none, from=0, to=1]
\end{tikzcd}\]
with the counit of the adjunction for a measure frame $(F,\mu)$ given by 
$$ i_! : \mathrm{Sh}(F^{fin},\mu, \mathbf{2}) \rightarrow F $$
induced by left Kan extending the canonical inclusion $i : F^{fin} \rightarrow F$,
and unit of the adjunction for a valuation site $(D,\mu)$ given by the Yoneda functor
$$[-] : D \rightarrow  \mathrm{Sh}(D,\mu, \mathbf{2})^{fin}.$$
This adjunction is idempotent, inducing an equivalence of categories
\[\begin{tikzcd}
	{\mathrm{ValSite}_{\sigma , \mathrm{faithf}}} & {\mathrm{MeasFrm}_{\mathrm{l.f.~faithf}}}
	\arrow[""{name=0, anchor=center, inner sep=0}, "{(-)^{inn}}", curve={height=-12pt}, from=1-1, to=1-2]
	\arrow[""{name=1, anchor=center, inner sep=0}, "{(-)^{fin}}", curve={height=-12pt}, from=1-2, to=1-1]
	\arrow["\dashv"{anchor=center, rotate=-90}, draw=none, from=0, to=1]
\end{tikzcd}\]
where
\begin{itemize}
\item ${\mathrm{ValSite}_{\sigma , \mathrm{faithf}}}$ is the full subcategory of $\mathrm{ValSite}$ given by bounded $\sigma$-complete and faithful valuation sites, and
\item ${\mathrm{MeasFrm}_{\mathrm{l.f.~faithf}}}$ is the full subcategory of ${\mathrm{MeasFrm}}$ given by faithful and locally finite measure frames.
\end{itemize}
\end{theorem}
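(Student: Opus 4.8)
The plan is to deduce the adjunction directly from the universal property of propositional sheaves (Theorem~\ref{universalpropertysheaves}) and the Basis Theorem~\ref{basistheorem}, and then to read off idempotence and the equivalence from Proposition~\ref{valuationframedeterminedbyfinite} and Corollary~\ref{sigmacompleteness}. For the adjunction itself I would exhibit a natural bijection $\mathrm{Hom}_{\mathrm{MeasFrm}}((D,\mu)^{inn},(F,\nu))\cong\mathrm{Hom}_{\mathrm{ValSite}}((D,\mu),(F,\nu)^{fin})$. A morphism on the left is a measure-preserving partial frame homomorphism $g:\mathrm{Sh}(D,\mu;\mathbf 2)\to F$, which by Corollary~\ref{flatfunctor} is the same datum as a partial morphism of sites $\mathbf g:(D,inn)\to(\mathcal{O}(F),can)$, recovered by $\mathbf g=g\circ[-]$ and conversely by $g=\mathbf g_\#$. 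Because $g(U)=\bigvee_{p\in U}\mathbf g(p)$ is a directed join and $\mu_*(U)=\sup_{p\in U}\mu(p)$, continuity of $\nu$ converts the identity $\nu\circ g=\mu_*$ into the pointwise identity $\nu(\mathbf g(p))=\mu(p)$, so that the morphisms on the left correspond exactly to valuation-preserving lattice homomorphisms $(D,\mu)\to(F^{fin},\nu)$ (the image lands in the finite part automatically).

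The point still requiring work is that a valuation-preserving lattice homomorphism $\mathbf h:D\to F^{fin}$ is automatically covering-preserving into $(\mathcal{O}(F),can)$, so that $\mathbf h_\#$ exists as a partial frame homomorphism. Finite joins are preserved since $\mathbf h$ is a lattice map; for a $\mu$-approximation $\{p_i\le p\}$ the family $\{\mathbf h(p_i)\}$ is directed, and continuity of $\nu$ gives $\nu(\bigvee_i\mathbf h(p_i))=\sup_i\nu(\mathbf h(p_i))=\mu(p)=\nu(\mathbf h(p))$, which together with the modularity and faithfulness available in this context forces $\bigvee_i\mathbf h(p_i)=\mathbf h(p)$. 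Granting this, the two assignments are mutually inverse and natural, giving $(-)^{inn}\dashv(-)^{fin}$; unwinding the bijection identifies the unit at $(D,\mu)$ with $[-]:D\to\mathrm{Sh}(D,\mu;\mathbf 2)^{fin}$, which is valuation-preserving by Theorem~\ref{innermeasureproperties}, and the counit at $(F,\nu)$ with the left Kan extension $i_!:\mathrm{Sh}(F^{fin},\nu;\mathbf 2)\to F$ of the inclusion $F^{fin}\hookrightarrow F$.

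For idempotence the efficient route is to show that the counit becomes invertible after applying the left adjoint, which is one of the standard equivalent formulations. Given a valuation site $(D,\mu)$, the frame $\mathrm{Sh}(D,\mu;\mathbf 2)$ carries the locally finite and faithful measure $\mu_*$ by Theorem~\ref{innermeasureproperties}, so Proposition~\ref{valuationframedeterminedbyfinite} furnishes a measure-preserving isomorphism $\mathrm{Sh}(D,\mu;\mathbf 2)\cong L(\mathrm{Sh}(D,\mu;\mathbf 2)^{fin},\mu_*)$, and tracing through the construction this is precisely $\varepsilon_{(D,\mu)^{inn}}$. Hence $(-)^{inn}\dashv(-)^{fin}$ is idempotent.

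Finally, an idempotent adjunction restricts to an equivalence between the full subcategory of the source on which the unit is invertible and the full subcategory of the target on which the counit is invertible. By Corollary~\ref{sigmacompleteness} the unit $[-]:D\to\mathrm{Sh}(D,\mu;\mathbf 2)^{fin}$ is an isomorphism precisely when $(D,\mu)$ is faithful and bounded $\sigma$-complete, i.e.\ on $\mathrm{ValSite}_{\sigma,\mathrm{faithf}}$; by Proposition~\ref{valuationframedeterminedbyfinite} — together with its converse, since $\mathrm{Sh}(F^{fin},\mu;\mathbf 2)$ always carries a locally finite faithful measure — the counit $i_!:\mathrm{Sh}(F^{fin},\mu;\mathbf 2)\to F$ is an isomorphism precisely when $\mu$ is locally finite and faithful, i.e.\ on $\mathrm{MeasFrm}_{\mathrm{l.f.~faithf}}$. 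One then checks that the functors land in these subcategories: $(-)^{inn}$ into $\mathrm{MeasFrm}_{\mathrm{l.f.~faithf}}$ by Theorem~\ref{innermeasureproperties}, and $(-)^{fin}$ into $\mathrm{ValSite}_{\sigma,\mathrm{faithf}}$ because faithfulness and continuity of $\mu$ on $F$ make $\mu|_{F^{fin}}$ conservative and make $F^{fin}$ closed under bounded ascending suprema. I expect the covering-preservation step in the second paragraph to be the main obstacle: it is the single place where modularity, continuity of the measure and faithfulness have to be used together, while everything else is formal bookkeeping with the universal properties already established.
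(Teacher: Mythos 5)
Your overall route coincides with the paper's: the hom-set bijection is extracted from Corollary~\ref{flatfunctor} and Theorem~\ref{universalpropertysheaves}, the unit and counit are identified as stated, idempotence is deduced by applying Proposition~\ref{valuationframedeterminedbyfinite} to $\mathrm{Sh}(D,\mu;\mathbf{2})$ with its locally finite faithful measure from Theorem~\ref{innermeasureproperties}, and the fixed subcategories of the idempotent adjunction are pinned down via Corollary~\ref{sigmacompleteness} and Proposition~\ref{valuationframedeterminedbyfinite}. All of that bookkeeping is correct and matches the paper.

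The step you yourself flag as the crux is, however, not closed by the justification you give. To show that a valuation-preserving lattice homomorphism $\mathbf h : D \to F^{fin}$ sends a $\mu$-approximation $\{p_i \le p\}$ to a cover for the canonical topology on $F$, you derive $\nu\bigl(\bigvee_i \mathbf h(p_i)\bigr)=\nu(\mathbf h(p))$ from continuity and then appeal to ``faithfulness available in this context'' to conclude $\bigvee_i \mathbf h(p_i)=\mathbf h(p)$. But $(F,\nu)$ ranges over \emph{all} of $\mathrm{MeasFrm}$, and faithfulness of $\nu$ is not part of the definition of a measure frame; without it, $V\le W$ with $\nu(V)=\nu(W)<\infty$ does not force $V=W$. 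Concretely, take $D=\mathcal{K}(\mathbb{R})$ with the Lebesgue valuation, $F=\mathrm{Idl}(D)$ with the (non-faithful) measure extending $\mu$, and $\mathbf h$ the Yoneda map $K\mapsto y_K$: then $\bigvee_n y_{[1/n,1]}$ is a proper subideal of $y_{[0,1]}$ although both have measure $1$, so $\mathbf h$ is not covering-preserving and does not extend to a frame homomorphism out of $\mathrm{Sh}(D,\mu;\mathbf{2})$. Your argument therefore establishes the bijection only when the target measure frame is faithful --- which suffices for the restricted equivalence in the second half of the theorem, but not for the adjunction against all of $\mathrm{MeasFrm}$ as stated. (The paper's own proof asserts at exactly this point that $\mathbf f : D \to F^{fin}\hookrightarrow F$ is a partial morphism of sites without further argument, so you have correctly isolated the delicate step; but the faithfulness you invoke is an extra hypothesis, not something supplied by the setup.)
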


\begin{proof}
Let $f^* : \mathrm{Sh}(D,\mu, \mathbf{2}) \rightarrow (F, \nu)$ be a measure-preserving partial frame homomorphism, with $(D,\mu)$ a valuation site and $(F,\nu)$ a measure frame. Then $f^*$ is determined by the partial flat functor $ [f^*] = \mathbf{f} : D \rightarrow F$, which factors through $F^{fin}$ since $f^*$ is measure-preserving. In other words, we have a natural injection
$$\mathrm{Hom}_{\mathrm{MeasFrm}}( \mathrm{Sh}(D,\mu, \mathbf{2}), (F, \nu) ) \rightarrow \mathrm{Hom}_{\mathrm{ValSite}}( (D,\mu), (F^{fin}, \nu) ).$$
Surjectivity is provided by Theorem \ref{universalpropertysheaves}: A valuation preserving homomorphism of lower bounded lattices $\mathbf{f} : D \rightarrow F^{fin}$ gives a partial morphism of sites
$$ \mathbf{f} : D \rightarrow F^{fin} \hookrightarrow F,$$
which extends to partial frame homomorphism $$f^* : \mathrm{Sh}(D,\mu, \mathbf{2}) \rightarrow F$$. It is straightforward to verify that $f^*$ preserves valuations, as every propositional sheaf in $\mathrm{Sh}(D,\mu, \mathbf{2})$ is obtained as a directed supremum of basic propositions.

The fact that this adjunction is idempotent follows from the observation that $\mathrm{Sh}(D,\mu, \mathbf{2})$ is a faithful measure frame, and for faithful measure frame $(F,\mu)$, the counit
$$\mathrm{Sh}(F^{fin},\mu, \mathbf{2}) \rightarrow F$$
is an isomorphism, by Proposition \ref{valuationframedeterminedbyfinite}.

The statement about the unit being an equivalence iff $(D,\mu)$ is faithful and bounded $\sigma$-complete is taken care of by Corollary \ref{sigmacompleteness}.
\end{proof}

\section{Zero-dimensional and Boolean locales} \label{boolean}

A particular feature that separates measure/probability from point-set topology is that it is a very natural assumption that for any chunk/event a complement exists - Philosophically speaking, if one knows how to measure a chunk $U$ of some locale $X$, then the measure of the complement $U^c$ must be given as $\mu(X) - \mu(U)$ (assuming the measure of $X$ is finite) and is thus determined. A similar situation holds for probability theory. Therefore, one would expect (at least classical) measure/probability theory to work best in a locale theoretic context when combined with \emph{Booleanness}. In fact, the ability to work with complete Boolean algebras as topological objects is one of the selling points of the locale theoretic approach, something that is impossible in classical point-set topology. Let us begin with some remarks on Boolean locales.

\begin{definition}
Let $(D,\leq)$ be a bounded distributive lattice and $U \in D$ an element. A \emph{complement} of $U$ is an element $U^c$ such that
$$ U \wedge U^c = 0 \text{ and } U \vee U^c = 1. $$
\end{definition}

It is an elementary exercise to show that in a bounded distributive lattice complements, if they exist, are unique. A bounded distributive lattice is called a Boolean algebra if every element has a complement. \\

Recall that in a frame $F$, we already have the notion of Heyting implication $U \rightarrow V$, as defined in Section \ref{Heytingimplication}.

\begin{definition}
Let $L$ be a locale and $U$ an open. The \emph{negation} of $U$ is defined as
$$ \neg U = U \rightarrow 0.$$
\end{definition}

We note that by definition, the negation $\neg U$ of $U$ is the largest open such that $\neg U \wedge U = 0$. However, we do not in general have that $\neg U \vee U = 1$. The following lemma is a straightforward verification.

\begin{lemma}
Let $L$ be a locale and $U$ an open. Then $U$ is complement iff $U \vee \neq U = 1$. In particular the complement of $U$, if it exists is given by $\neg U$.
\end{lemma}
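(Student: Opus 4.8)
The plan is to reduce everything to two elementary facts about the negation $\neg U = U \rightarrow 0$ coming from the Heyting adjunction. The first is that $\neg U \wedge U = 0$, which is exactly the evaluation/counit inequality of Proposition \ref{superheytingprop}(1) specialised to the open $0$. The second — which is the only point one has to remember — is that $\neg U$ is the \emph{largest} open disjoint from $U$: if $W$ is any open with $W \wedge U = 0$, then $W \wedge U \leq 0$, so by the defining adjunction $W \wedge - \dashv U \rightarrow -$ we get $W \leq U \rightarrow 0 = \neg U$. I would state these two observations first (the text already records them immediately before the lemma, so this is essentially a recall).

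Next I would handle the two implications. For the ``if'' direction: assuming $U \vee \neg U = 1$, the identity $\neg U \wedge U = 0$ together with $\neg U \vee U = 1$ says precisely that $\neg U$ witnesses $U$ as having a complement. For the ``only if'' direction: suppose $U$ has a complement $U^c$, so $U \wedge U^c = 0$ and $U \vee U^c = 1$. From $U^c \wedge U = 0$ and the maximality of $\neg U$ we get $U^c \leq \neg U$, whence $1 = U \vee U^c \leq U \vee \neg U \leq 1$ and therefore $U \vee \neg U = 1$. Finally, for the ``in particular'' clause: in the complemented case $\neg U$ satisfies both $\neg U \wedge U = 0$ (always) and $\neg U \vee U = 1$ (just proved), so $\neg U$ is itself a complement of $U$; since complements in a bounded distributive lattice are unique (as noted in the excerpt right after the definition of complement), $U^c = \neg U$.

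I do not expect a genuine obstacle here: the argument is a direct unwinding of the adjunction and of uniqueness of complements. The one place where care is needed is making explicit the maximality property of $\neg U$ among opens disjoint from $U$, since that is what turns ``$U$ is complemented'' into the concrete inequality $U^c \leq \neg U$ that drives the ``only if'' direction and the identification of the complement.
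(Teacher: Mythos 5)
Your proof is correct. The paper leaves this lemma as ``a straightforward verification'' and supplies no argument of its own; your route --- the counit inequality $\neg U \wedge U = 0$, the maximality of $\neg U$ among opens disjoint from $U$ (which yields $U^c \leq \neg U$ and hence $U \vee \neg U = 1$ in the complemented case), and uniqueness of complements in a bounded distributive lattice --- is exactly the intended verification.
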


\begin{definition}
Let $L$ be a locale. We call an open $U$ of $L$ \emph{closed and open}, or abbreviated \emph{clopen}, if $U$ has a complement. 
\end{definition}

The locale of sublocales $\mathfrak{Sl}(L)$ together with its continuous map $\mathrm{can} : \mathfrak{Sl}(L) \rightarrow L$ has a universal property with respect to maps $f : M \rightarrow L$ such that $f^*(U)$ is clopen for all opens $U$ of $L$.

\begin{theorem}[\cite{picado_pultr}, Proposition 6.3.1.] \label{lifttosublocales2} 
Let $f : M \rightarrow L$ be a map of locales such that $f^*(U)$ is complemented in $\mathcal{O}(M)$ for each open $U$ of $L$. Then there exists a unique lift $\tilde{f} : M \rightarrow \mathfrak{Sl}(L)$ along $\mathrm{can}$.
\end{theorem}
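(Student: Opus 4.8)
The plan is to pass entirely to frames. Recall $\mathcal{O}(\mathfrak{Sl}(L)) = \mathrm{Sl}(L)^{op}$, and identify this frame with the frame $N(\mathcal{O}(L))$ of \emph{nuclei} on $\mathcal{O}(L)$ — the monotone, inflationary, idempotent, finite-meet-preserving maps $\mathcal{O}(L)\to\mathcal{O}(L)$, ordered pointwise (so meets of nuclei are computed pointwise) — a sublocale $i\colon S\hookrightarrow L$ corresponding to $j_S = i_*i^*$. Under this identification $\mathrm{can}^* = \nabla$ sends $U$ to the closed nucleus $\mathfrak{c}_U = U\vee(-)$, the open sublocale $\mathfrak{o}(U)$ corresponds to the open nucleus $\mathfrak{o}_U = U\to(-)$, and $\mathfrak{c}_U,\mathfrak{o}_U$ are complementary in $N(\mathcal{O}(L))$ (disjointness and covering of the open and closed sublocales of $U$). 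Writing $h := f^*$, $h_* := f_*$ for its right adjoint, and $(-)^c$ for complements in $\mathcal{O}(M)$, the theorem comes down to producing a unique frame homomorphism $\bar h\colon N(\mathcal{O}(L))\to\mathcal{O}(M)$ with $\bar h\circ\nabla = h$; then $\tilde f$ with $\tilde f^* = \bar h$ is the required lift, since $\bar h\circ\mathrm{can}^* = h = f^*$.

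The key preliminary fact is the density formula $j = \bigvee_{a\in\mathcal{O}(L)}\big(\mathfrak{c}_{j(a)}\wedge\mathfrak{o}_a\big)$ (join in $N(\mathcal{O}(L))$) for every nucleus $j$; this is just the statement recalled before the definition of $\mathfrak{Sl}(L)$ — that joins of open and closed sublocales generate $\mathrm{Sl}(L)$ under intersections — made explicit. One checks each summand is pointwise $\le j$ (using $z\le j(z)$, $j(a\wedge b)=j(a)\wedge j(b)$, and $a\wedge(a\to x)\le x$), while the summand $a=x$ equals $j(x)$, so the pointwise supremum equals $j$, which being a nucleus is also the join in $N(\mathcal{O}(L))$. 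Uniqueness of $\bar h$ follows at once: a frame homomorphism sends the complement of a complemented element to the complement of its image, so $\bar h(\mathfrak{o}_U)=\neg h(U)=h(U)^c$ — existing precisely because $f^*(U)$ is complemented by hypothesis — and then the density formula together with preservation of meets and joins forces
\[
\bar h(j)=\bigvee_{a\in\mathcal{O}(L)}\big(h(j(a))\wedge h(a)^c\big).
\]

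For existence I would take this as the definition of $\bar h$ and verify it is a frame homomorphism lifting $h$. Distributing and isolating the summand $a=0$ (using $h(0)=0$, hence $h(0)^c=1$) gives both $\bar h(\mathfrak{c}_b)=h(b)$, i.e.\ $\bar h\circ\nabla = h$, and $\bar h(1)=1$. For binary meets, compare the $(a,b)$-summand of $\bar h(j)\wedge\bar h(j')$ with the $(a\vee b)$-summand of $\bar h(j\wedge j')$, using $h(a)^c\wedge h(b)^c=h(a\vee b)^c$ and monotonicity of $j,j'$; the reverse inequality is immediate from $j\wedge j'\le j,j'$. To see $\bar h$ preserves arbitrary joins I would exhibit a right adjoint: $W\mapsto\bar h_*(W):=\big(a\mapsto h_*(W\vee h(a))\big)$ is again a nucleus (monotone and inflationary are clear; meet-preservation uses distributivity in $\mathcal{O}(M)$ and that $h_*$ preserves meets; idempotence follows from the counit $h(h_*(-))\le(-)$), and $\bar h(j)\le W$ iff $h(j(a))\le h(a)^c\to W=h(a)\vee W$ for all $a$ iff $j(a)\le h_*(W\vee h(a))$ for all $a$, i.e.\ $j\le\bar h_*(W)$. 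Hence $\bar h\dashv\bar h_*$ and $\bar h$ preserves all joins, completing the verification.

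I expect the main obstacle to be organisational rather than computational: correctly setting up the nucleus picture of $\mathcal{O}(\mathfrak{Sl}(L))$ and extracting the density formula from the sublocale facts quoted in the text, after which each verification is short — with the preservation of arbitrary joins, potentially the most delicate point, sidestepped cleanly by producing the right adjoint $\bar h_*$.
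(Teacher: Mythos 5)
Your proof is correct. Note that the paper does not prove this statement itself but cites it from Picado--Pultr; your argument --- identifying $\mathrm{Sl}(L)^{op}$ with the frame of nuclei, using the density formula $j=\bigvee_a(\mathfrak{c}_{j(a)}\wedge\mathfrak{o}_a)$ to force the formula $\bar h(j)=\bigvee_a h(j(a))\wedge h(a)^c$, and then verifying it is a frame homomorphism by exhibiting the right adjoint $W\mapsto\bigl(a\mapsto h_*(W\vee h(a))\bigr)$ --- is essentially the standard proof of the universal property of the assembly given in that reference, and all the verifications (pointwise meets of nuclei, the join of the summands being the nucleus $j$ itself, the identity $h(a)^c\to W=h(a)\vee W$ for complemented elements) check out.
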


Of course, the condition on $f : M \rightarrow L$ necessary for Theorem \ref{lifttosublocales2} is automatic if every open of $M$ is already clopen. We call this condition \emph{Boolean}.

\begin{definition}
Let $L$ be a locale.
\begin{itemize}
\item $L$ is called \emph{zero-dimensional}, or also \emph{totally disconnected}, if it has a basis of clopens. 
\item $L$ is called \emph{Boolean}, if every open is clopen.
\end{itemize}
We will use the same terminology when talking about the associated frame. Denote by $\mathrm{BoolLoc} \subset \mathrm{Loc}$ the full subcategory spanned by Boolean locales.
\end{definition}

\begin{example}
A natural example of a Boolean locale is given by $S^{disc}$, obtained by equipping a set $S$ with the discrete topology, or equivalently by considering the frame $\mathcal{P}(S)$ given by the power set of $S$. More generally, for any topological space $X$, we have the natural continuous map $X^{disc} \rightarrow X$. By Theorem \ref{lifttosublocales2} this map lifts along $\mathrm{can} : \mathfrak{Sl}(L) \rightarrow L$. We discuss this in Remark \ref{remarksubspaces}.
\end{example}

\begin{example}
The locale $\mathfrak{Sl}(L)$ for a locale $L$ is always zero-dimensional, but usually not Boolean \cite[VI 4]{picado_pultr}. The basis of clopens is given by sublocales of the form $U^c \vee V$, for $U$ and $V$ open. One verifies that for an open sublocale $U$, the complement is indeed given by the closed sublocale $U^c$. 
\end{example}

\begin{remark} By definition, Boolean locales correspond to complete Boolean algebras, viewed as frames. The ability to view complete Boolean algebras as space-like objects is unique to locale theory and does not exist when one works in classical point-set topology: A Boolean locale is spatial if and only if it is atomic, in which case the corresponding locale simply corresponds to a discrete set. Boolean locales can be thought of as generalized discrete spaces. They share many separation properties discrete spaces also satisfy. A remarkable fact is that the classical Stone-\v{C}ech compactification of a discrete set has a natural extension to all locales, given by sending a frame $F$ to the frame $\mathrm{Ind}(F)$. When restricted to complete Boolean algebras, this functor produces an equivalence between the category of Boolean locales and \emph{Stonean spaces}, or in other words extremally disconnected compact Hausdorff spaces.\footnote{This category has seen a recent surge in interest due to work on condensed/pyknotic mathematics, see \cite{ScholzeCondensed} and \cite{barwick2019pyknoticobjectsibasic}.} Stonean spaces which are not given as Stone-\v{C}ech compactifications of discrete sets are often viewed as mysterious, or also superfluous. When viewing complete Boolean locales as natural extensions of discrete locales, this mystery vanishes somewhat and it becomes true that any Stonean space \emph{is} a Stone-\v{C}ech compactification. We remark that Stonean spaces are not themselves Boolean, unless they are finite, a common point of confusion.
\end{remark}

\begin{remark}
A classical example of a totally disconnected locale that is not Boolean is given by a pro-finite set $S$, also called Stone space, such as e.g. $\mathbb{N} \cup \{\infty\}$, the Cantor set $C$, and the Stone-\v{C}ech compactification $\beta(M)$ for a set $M$. In fact, there is a classical Stone duality between (not necessarily complete!) Boolean algebras and pro-finite spaces, see \cite{johnstone1982stone}, also \cite[Section 3.2]{lehner2025algebraicktheorycoherentspaces}, under which a pro-finite set corresponds to its Boolean algebra of compact open subsets, and which extends the mentioned Stone duality between complete Boolean algebras and Stonean spaces mentioned in the previous remark. However, just as with Stonean spaces, pro-finite sets only give examples of Boolean locales if they correspond to finite sets.
\end{remark}

Since complements are unique, it is clear that a frame $F$ is Boolean iff $\neg : F^{op} \rightarrow F$ is an isomorphism. This self-duality implies the following.

\begin{lemma}
A Boolean frame is automatically a coframe. A Boolean coframe is automatically a frame.
\end{lemma}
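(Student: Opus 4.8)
The plan is to observe that, for a Boolean frame $F$, negation supplies an order isomorphism between $F$ and its opposite poset $F^{\mathrm{op}}$, and that being a coframe is precisely the condition that the opposite poset be a frame; so the first claim follows by transport of structure along this isomorphism, and the second by applying the first to $F^{\mathrm{op}}$.

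First I would record two facts about negation that hold in any frame $F$: it is order-reversing, since $U \leq V$ gives $\neg V \wedge U \leq \neg V \wedge V = 0$ and hence $\neg V \leq U \to 0 = \neg U$; and it satisfies the De Morgan law $\neg(\bigvee_{i} U_i) = \bigwedge_i \neg U_i$, where $\bigwedge_i \neg U_i \geq \neg(\bigvee_i U_i)$ is immediate from order-reversal and the reverse inequality follows from $(\bigvee_i U_i) \wedge (\bigwedge_j \neg U_j) = \bigvee_i (U_i \wedge \bigwedge_j \neg U_j) \leq \bigvee_i (U_i \wedge \neg U_i) = 0$ together with the description of $\neg(\bigvee_i U_i)$ as the largest open meeting $\bigvee_i U_i$ trivially. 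Now assume $F$ is Boolean. Then every $U$ has complement $\neg U$, and since the conditions $U \wedge \neg U = 0$ and $U \vee \neg U = 1$ are symmetric, $U$ is in turn a complement of $\neg U$; by uniqueness of complements in a bounded distributive lattice this gives $\neg\neg U = U$. Hence $\neg : F \to F^{\mathrm{op}}$ is a bijection which is monotone with monotone inverse (itself), i.e.\ an isomorphism of posets.

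I would then conclude as follows. Since $F$ is a frame it has arbitrary suprema, finite infima, and satisfies the frame distributive law; transporting this data across the poset isomorphism $\neg : F \to F^{\mathrm{op}}$ — under which suprema pass to infima and infima to suprema — shows that $F^{\mathrm{op}}$ is again a frame, which by definition says that $F$ is a coframe. Concretely, applying $\neg$ to the instance $\neg U \wedge \bigvee_i \neg V_i = \bigvee_i (\neg U \wedge \neg V_i)$ of the frame law and using the De Morgan identities together with $\neg\neg = \mathrm{id}$ yields $U \vee \bigwedge_i V_i = \bigwedge_i (U \vee V_i)$ for arbitrary $U, V_i \in F$, which is exactly the co-distributive law; arbitrary meets and finite joins already exist because $F$ is a complete lattice. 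For the second statement, if $F$ is a Boolean coframe then $F^{\mathrm{op}}$ is a frame in which every element is complemented, i.e.\ a Boolean frame, so by the first part $F^{\mathrm{op}}$ is a coframe, that is, $F = (F^{\mathrm{op}})^{\mathrm{op}}$ is a frame. I expect no genuine obstacle here: the only point requiring care is that $\neg\neg U = U$ really uses Booleanness (it fails in a general frame), and that the notion of coframe adds nothing beyond that of frame read in the opposite order, so the transport-of-structure step is legitimate; the remainder is the bookkeeping of De Morgan's laws.
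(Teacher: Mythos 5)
Your proof is correct and follows exactly the route the paper intends: the paper derives this lemma from the observation that $\neg : F^{op} \rightarrow F$ is an isomorphism precisely when $F$ is Boolean, and your argument is a careful write-up of that transport-of-structure step, including the De Morgan identities and the use of uniqueness of complements to get $\neg\neg U = U$. No gaps.
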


An \emph{atom} in a Boolean frame is defined to be a minimal non-zero element. This gives a direct characterization of the set of points of a Boolean locale.

\begin{lemma} \label{pointsareminimalelements}
Let $L$ be a Boolean locale. Then there is a bijection
$$\begin{array}{rcl}
\mathrm{pts}(L) & \cong & \mathrm{Atom}(\mathcal{O}(L)) \\
P & \mapsto & \bigwedge P
\end{array} $$
\end{lemma}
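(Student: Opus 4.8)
The plan is to exhibit an explicit two-sided inverse to the map $\Phi\colon\mathcal{F}\mapsto\bigwedge\mathcal{F}$, namely the assignment $\Psi\colon a\mapsto\mathcal{F}_a:=\{U\in\mathcal{O}(L)\mid a\leq U\}$, and to check that both composites are the identity. Throughout I will use that a point of $L$ is the same as a completely prime filter $\mathcal{F}\subset\mathcal{O}(L)$, that such an $\mathcal{F}$ contains $1$ (it is the preimage of $1$ under the frame homomorphism $\mathcal{O}(L)\to\mathbf{2}$ classifying the point), and the standard Boolean identities $\neg\neg U=U$ and $U\leq V\Leftrightarrow U\wedge\neg V=0$, which hold in $\mathcal{O}(L)$ because $L$ is Boolean and hence every open is its own $\neg$-complement.

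First I would check that $\Psi$ is well defined, i.e.\ that $\mathcal{F}_a$ is a completely prime filter whenever $a\in\mathrm{Atom}(\mathcal{O}(L))$. Propriety follows from $a\neq 0$, upward closure and closure under binary meets are immediate, and complete primeness uses the frame distributivity law: if $a\leq\bigvee_{i}U_i$ then $a=\bigvee_i(a\wedge U_i)$, each summand lies below the atom $a$ hence equals $0$ or $a$, and they cannot all be $0$. Next, $\Phi\circ\Psi=\mathrm{id}$ is easy: $a\in\mathcal{F}_a$ gives $\bigwedge\mathcal{F}_a\leq a$, while $a$ is by definition a lower bound for $\mathcal{F}_a$, so $\bigwedge\mathcal{F}_a=a$. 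Note that neither of these two steps uses Booleanness.

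The substantive part is to show that for a completely prime filter $\mathcal{F}$ the open $\bigwedge\mathcal{F}$ is an atom and that $\mathcal{F}=\mathcal{F}_{\bigwedge\mathcal{F}}$. Set $W:=\bigvee\{V\in\mathcal{O}(L)\mid V\notin\mathcal{F}\}$. Complete primeness forces $W\notin\mathcal{F}$, and since $\mathcal{F}$ is upward closed one obtains $\mathcal{F}=\{U\mid U\not\leq W\}=\{U\mid U\wedge\neg W\neq 0\}$, the last equality by the Boolean cancellation identity. Put $a:=\neg W$. If $a=0$ then $W=1\in\mathcal{F}$, contradicting $W\notin\mathcal{F}$; so $a\neq 0$, and $a\wedge a=a\neq 0$ shows $a\in\mathcal{F}$. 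To see $a$ is minimal nonzero: given $0\neq c\leq a$, we have $c\in\mathcal{F}$ (since $c\wedge a=c\neq0$), and if $\neg c\wedge a\neq 0$ then also $\neg c\in\mathcal{F}$, whence $0=c\wedge\neg c\in\mathcal{F}$, a contradiction; hence $\neg c\wedge a=0$, i.e.\ $a\leq\neg\neg c=c$, so $c=a$. Thus $a$ is an atom. Finally, any $U\in\mathcal{F}$ satisfies $0\neq U\wedge a\leq a$, so $U\wedge a=a$, i.e.\ $a\leq U$; together with $a\in\mathcal{F}$ and upward closure this gives $\mathcal{F}=\{U\mid a\leq U\}=\mathcal{F}_a$, and then $\bigwedge\mathcal{F}=a$ because $a$ is simultaneously an element and a lower bound of $\mathcal{F}$. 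Hence $\Psi\circ\Phi=\mathrm{id}$ and $\Phi$ indeed takes values in $\mathrm{Atom}(\mathcal{O}(L))$.

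The main obstacle is precisely the non-triviality $\bigwedge\mathcal{F}\neq 0$: in a general frame the meet of a completely prime filter can collapse to $0$ (for instance $\mathcal{O}(\mathbb{R})$ has no atoms at all), and it is only Booleanness --- through the complement $\neg W$ and the cancellation law $U\leq W\Leftrightarrow U\wedge\neg W=0$ --- that both produces a nonzero candidate atom and identifies it with $\bigwedge\mathcal{F}$. Everything else in the argument is formal.
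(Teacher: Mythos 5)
Your proof is correct and follows essentially the same strategy as the paper's: both directions hinge on showing that in a Boolean frame every completely prime filter is principal on an atom, using complements and complete primeness. The only difference is organizational — the paper takes $x=\bigwedge P$ directly and derives a contradiction from $\neg x=\bigvee_{U\in P}\neg U\in P$, whereas you complement the join $W$ of the non-members; your version also spells out the verification that $\bigwedge\mathcal{F}$ is in fact an atom, a point the paper's proof leaves implicit.
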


\begin{proof}
Let $x \in \mathcal{O}(L)$ be a minimal non-zero open. Then $P = \{ U \in \mathcal{O}(L) ~|~ x \leq U \}$ is clearly a proper filter. It is completely prime: If $x \leq \bigvee_{i \in I} U_i$ we have
$$ 0 \neq x = \bigvee_{i \in I} x \wedge U_i. $$
Minimality of $x$ means that all the elements $x \wedge U_i$ are either $0$ or $x$, therefore at least $x \wedge U_i \neq 0$ for at least one $i \in I$.

Conversely, let $P$ be a completely prime filter. Define $x = \bigwedge P$. We claim that $x \in P$, which would imply that $P$ is principal, generated by $x$. Assume not. Since $x \vee \neg x = 1 \in P$, we then would have $\bigvee_{U \in P} \neg U = \neg x \in P$. Since $P$ is completely prime, we therefore find $ U \in P$ such that $\neg U \in P$. But then $ 0 = U \wedge \neg U \in P$, which cannot be.
\end{proof}

Furthermore, the description of implication in a Boolean locale simplifies.

\begin{lemma} \label{booleanimplication}
Let $L$ be a Boolean locale and $U,V$ opens. Then $U \rightarrow V = \neg U \vee V$.
\end{lemma}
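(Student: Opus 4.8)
The plan is to verify the equality $U \rightarrow V = \neg U \vee V$ by proving the two inequalities separately, using only the adjunction defining Heyting implication together with Booleanness of $L$.

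For the inequality $\neg U \vee V \leq U \rightarrow V$, I would note that this direction in fact holds in any frame, without invoking Booleanness. Indeed, $V \leq U \rightarrow V$ is Proposition \ref{superheytingprop}(5), and $\neg U = U \rightarrow 0 \leq U \rightarrow V$ since $0 \leq V$ and $- \rightarrow$ hmm, rather $U \rightarrow -$ is monotone (being a right adjoint). Taking the join of these two gives $\neg U \vee V \leq U \rightarrow V$. Alternatively one can argue directly: $(\neg U \vee V) \wedge U = (\neg U \wedge U) \vee (V \wedge U) = V \wedge U \leq V$ by infinite distributivity and the defining property $\neg U \wedge U = 0$, and then adjoin over.

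For the reverse inequality $U \rightarrow V \leq \neg U \vee V$, this is where Booleanness enters. Since $L$ is Boolean we have $U \vee \neg U = 1$, so
$$U \rightarrow V = (U \rightarrow V) \wedge (U \vee \neg U) = \bigl((U \rightarrow V) \wedge U\bigr) \vee \bigl((U \rightarrow V) \wedge \neg U\bigr).$$
Now $(U \rightarrow V) \wedge U \leq V$ by Proposition \ref{superheytingprop}(1) (the counit of the adjunction), and $(U \rightarrow V) \wedge \neg U \leq \neg U$ trivially. Hence $U \rightarrow V \leq V \vee \neg U = \neg U \vee V$. Combining the two inequalities yields the claimed equality.

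There is essentially no obstacle here; the only point requiring care is keeping track of which direction needs Booleanness (only the second), and making sure the distributivity step is the infinite/finite distributive law already available in any frame. The proof is a short routine computation, so the write-up will consist of exactly the two displayed inequalities above with the cited references to Proposition \ref{superheytingprop}.
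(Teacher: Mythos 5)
Your proof is correct and uses essentially the same ingredients as the paper's: the counit $(U\rightarrow V)\wedge U\leq V$, distributivity, and the two Boolean identities $U\vee\neg U=1$ and $U\wedge\neg U=0$. The only difference is packaging --- you prove the two inequalities directly, while the paper shows $\neg U\vee -$ satisfies the defining adjunction of $U\rightarrow -$ and concludes by uniqueness; the underlying computations (e.g.\ your alternative argument $(\neg U\vee V)\wedge U = V\wedge U\leq V$) are identical.
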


\begin{proof}
We claim that the functors $U \rightarrow - $ and $\neg U \vee - $ agree, for which we equivalently need to show that
$$ U \wedge W \leq V ~\text{ iff }~ W \leq \neg U \vee V$$
for all $V, W$ open. First assume $ U \wedge W \leq V $ holds. Apply $\neg U \vee -$ to this inequality to obtain
$$ W \leq \neg U \vee W = \neg U \vee ( U \wedge W ) \leq \neg U \vee V. $$
Now for the converse assume $W \leq \neg U \vee V$ holds. Apply $ U \wedge - $ to obtain
$$ U \wedge W \leq U \wedge (\neg U \vee V) = U \wedge V \leq V. $$
Hence both statements are equivalent.
\end{proof}

\begin{lemma}
Any sublocale of a Boolean locale is Boolean.
\end{lemma}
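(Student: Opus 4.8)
The plan is to use the characterisation of sublocales as images of surjective frame homomorphisms together with the fact that a frame homomorphism automatically transports complements.

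Let $i : S \hookrightarrow L$ be a sublocale, so that $i^* : \mathcal{O}(L) \rightarrow \mathcal{O}(S)$ is a surjective frame homomorphism (by the characterisation of embeddings recalled above, since $i^* i_* = \mathrm{id}$). As $i^*$ preserves arbitrary suprema and finite infima, it preserves the empty supremum and the empty infimum, so $i^*(0) = 0_S$ and $i^*(1) = 1_S$, where $0_S$ and $1_S$ denote the bottom and top elements of the frame $\mathcal{O}(S)$. Here $1_S$ is literally the top element $1$ of $\mathcal{O}(L)$ because $\mathcal{O}(S)$ is closed under infima in $\mathcal{O}(L)$, whereas $0_S$ need not coincide with the bottom element $0$ of $\mathcal{O}(L)$; the argument below is insensitive to this.

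The key step is the observation that any frame homomorphism sends a complemented element to a complemented element: if $a \in \mathcal{O}(L)$ has complement $\neg a$, then
$$ i^*(a) \wedge i^*(\neg a) = i^*(a \wedge \neg a) = i^*(0) = 0_S, \qquad i^*(a) \vee i^*(\neg a) = i^*(a \vee \neg a) = i^*(1) = 1_S, $$
so $i^*(\neg a)$ is a complement of $i^*(a)$ in $\mathcal{O}(S)$. Since $\mathcal{O}(L)$ is Boolean, every $a \in \mathcal{O}(L)$ is complemented, and since $i^*$ is surjective, every element of $\mathcal{O}(S)$ is of the form $i^*(a)$; hence every element of $\mathcal{O}(S)$ is complemented, i.e.\ $S$ is Boolean.

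I do not anticipate any real obstacle: the proof is essentially a three-line computation. The only point deserving a moment's attention is keeping track of the fact that ``Boolean'' for $S$ refers to $S$'s own bottom element $i^*(0)$ rather than to $0 \in \mathcal{O}(L)$ — which is precisely why passing through the quotient map $i^*$ is cleaner than attempting to restrict the negation operator of $L$ to the subset $\mathcal{O}(S)$. As an alternative one could argue directly from Lemma \ref{sublocales} and Lemma \ref{booleanimplication}: for $a \in \mathcal{O}(S)$ the Heyting implication $a \rightarrow 0_S$ lies in $\mathcal{O}(S)$, equals $\neg a \vee 0_S$ by Lemma \ref{booleanimplication}, and is readily checked to be the complement of $a$ in $\mathcal{O}(S)$.
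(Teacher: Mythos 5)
Your proof is correct and is essentially the paper's own argument: the paper writes an element $U$ of the sublocale as $i^*(i_*(U))$ and applies $i^*$ to the complementation equations for $i_*(U)$ in the ambient Boolean frame, which is exactly your surjectivity-plus-frame-homomorphism computation with the explicit choice $a = i_*(U)$. Your side remarks about $0_S$ versus $0$ are accurate but not needed.
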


\begin{proof}
Let $i_* : L \rightarrow M$ be an embedding, with $M$ Boolean. Let $ U \in L$. Then $i_*(U)$ has a complement $V$ in $M$, i.e. $i_*(U) \wedge V = 0$ and
$i_*(U) \vee V = 1$. Applying $i^*$ and using $i^* i_*(U) = U$ gives that $i^*(V)$ is the complement to $U$.
\end{proof}

Perhaps surprisingly, Boolean locales can be generated fairly easily. As an example, to every locale $L$ one can associate the \emph{smallest dense sublocale} $L_{\neg \neg} \hookrightarrow L$ given by
$$\{ \neg U ~|~ U \in F \} \subset F$$
where $F$ is the frame of $L$, see \cite[III.8.3]{picado_pultr}.\footnote{Also compare with Theorem 3 in \cite{maclane1992sheaves} in case of topoi.} To see that this is in fact a sublocale, use Lemma \ref{sublocales} and Proposition \ref{superheytingprop}:\begin{itemize}
\item $\{ \neg U ~|~ U \in F \}$ is closed under arbitrary infima, as
$$ \bigwedge_{ i \in I } ( U_i \rightarrow 0 ) = ( \bigvee_{i \in I} U_i ) \rightarrow 0. $$
\item $\{ \neg U ~|~ U \in F \}$ is closed under implication, as
$$ V \rightarrow ( U \rightarrow 0 ) = ( V \wedge U ) \rightarrow 0.$$
\end{itemize}
The associated nucleus is given by $U \mapsto \neg \neg U$. We note that this means in particular that $\neg \neg \neg U = \neg U$ for any open $U$. Also, suprema of opens in $L_{\neg \neg}$ are computed by applying $\neg \neg$ to their suprema in $L$. The sublocale $L_{\neg \neg}$ is in fact Boolean, which can be verified directly.
\begin{itemize}
\item Applying $\neg \neg$ to $ \neg U \vee U$ we get that
$$ \neg \neg( \neg U \vee U  ) = \neg ( \neg U \wedge \neg \neg U ) = \neg( 0 ) = 1. $$
\item We also have
$\neg \neg U \wedge \neg U = 0$
by definition.
\end{itemize}
Hence $\neg \neg U$ is the complement of $\neg U$ in $L_{\neg \neg}$. (But not necessarily in $L$!)

We can generalize this example quite a bit. Recall that given any open $N$, one can associate the inclusion of the closed complement $N^c \hookrightarrow L$, see Example \ref{closedsublocale} given on the level of frames as 
$$\{ V \in F ~|~ N \leq V \} \subset F,$$
which treats $N$ as the new bottom element. We can compose these two inclusions to get the sublocale $$b(N) = (N^c)_{\neg \neg} \hookrightarrow L.$$ Note that the frame for $b(N)$ is simply given as
$$\{ U \rightarrow N ~|~ U \in F \} \subset F,$$
because by Proposition \ref{superheytingprop} (4), any element of the form $U \rightarrow N$ automatically lives above $N$. Since any sublocale needs to be closed under implication, we also see that $b(N)$ is equivalently the smallest sublocale containing $N$. In fact, this type of example is the only example for a Boolean sublocale.

\begin{proposition}[\cite{picado_pultr}, III.10.4] \label{booleansublocalescharacterization}
Let $L$ be a locale and $N$ an open in $L$. Then $b(N) \rightarrow L$ is a Boolean sublocale of $L$. Conversely, any Boolean sublocale of $L$ is of the form $b(N)$ for some open $N$.
\end{proposition}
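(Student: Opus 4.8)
The plan is to treat the two assertions in turn; the first is essentially contained in the discussion preceding the statement, and the converse is the real content. For the first assertion, I would work with the explicit description $\mathcal{O}(b(N)) = \{U \to N \mid U \in F\}$ (with $F = \mathcal{O}(L)$) recorded above. This is a sublocale by Lemma \ref{sublocales}: it is closed under infima because $\bigwedge_i (U_i \to N) = (\bigvee_i U_i) \to N$, and closed under Heyting implication because $V \to (U \to N) = (V \wedge U) \to N$ by Proposition \ref{superheytingprop}(4). It is Boolean because the complement of $U \to N$ inside $\mathcal{O}(b(N))$ is $(U \to N) \to N$: the identities $(U \to N) \wedge ((U \to N) \to N) = N$ and $(U \to N) \vee_{b(N)} ((U \to N) \to N) = 1$ follow readily from the properties collected in Proposition \ref{superheytingprop}, exactly as in the verification given above for $L_{\neg\neg}$, with the bottom element $0$ replaced by $N$.

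For the converse, let $S \subseteq F$ be a Boolean sublocale and put $N = \bigwedge S$. Since $S$ is closed under arbitrary infima, $N \in S$, so $N$ is the bottom element $0_S$ of the frame $\mathcal{O}(S)$, and it is an open of $L$. I claim $S = b(N)$. The inclusion $\mathcal{O}(b(N)) \subseteq S$ is immediate from Lemma \ref{sublocales}(2): $S$ is closed under Heyting implication, so $U \to N \in S$ for all $U \in F$, i.e.\ every generator of $\mathcal{O}(b(N))$ lies in $S$.

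For the reverse inclusion $S \subseteq \mathcal{O}(b(N))$, fix $V \in S$. Since $\mathcal{O}(S)$ is a Boolean algebra, $V$ has a complement $W \in \mathcal{O}(S)$, so $V \wedge W = 0_S = N$ and $V \vee W = 1_S$, with all operations taken in $\mathcal{O}(S)$; note that binary meets in $\mathcal{O}(S)$ coincide with those in $F$, as $S$ is closed under infima. I will show that $V$ equals $W \to N$, the Heyting implication computed in $F$, which exhibits $V$ as an element of $\mathcal{O}(b(N))$. On the one hand, $V \wedge W = N \le N$ gives $V \le W \to N$. On the other hand, set $X = W \to N$; then $X \in S$ by the previous paragraph, and $X \wedge W \le N = 0_S$ by Proposition \ref{superheytingprop}(1), hence $X \wedge W = 0_S$ in $\mathcal{O}(S)$, so by distributivity in the frame $\mathcal{O}(S)$
\[
X = X \wedge 1_S = X \wedge (W \vee V) = (X \wedge W) \vee (X \wedge V) = 0_S \vee (X \wedge V) = X \wedge V \le V .
\]
Therefore $V = W \to N \in \mathcal{O}(b(N))$, and $S = b(N)$ as claimed.

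The only point requiring care is keeping track of where each lattice operation is evaluated: the crucial Heyting implication $W \to N$ is computed in the ambient frame $F$, while complementation and the distributive law are used inside the sub-frame $\mathcal{O}(S)$. What makes the bridge between the two harmless is the triple coincidence that $N$ is at once $\bigwedge S$ in $F$ and the bottom $0_S$ of $\mathcal{O}(S)$, that finite meets agree in $S$ and in $F$, and that $W \to N$ falls back into $S$ by closure under Heyting implication. Once these are noted, the argument is the elementary computation displayed above, and I expect no further obstacle.
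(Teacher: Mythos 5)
Your proof is correct and follows essentially the same route as the paper: set $N=\bigwedge S$, get $b(N)\subseteq S$ from closure under Heyting implication, and deduce the reverse inclusion from Booleanness of $S$. The only difference is that the paper simply invokes the double-negation identity $V=(V\to N)\to N$ in the Boolean frame $\mathcal{O}(S)$, whereas you unpack that identity into an explicit computation with the complement $W$ and distributivity — a harmless expansion of the same step, and your care about which operations live in $F$ versus $\mathcal{O}(S)$ is exactly the right point to watch.
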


\begin{proof} Let us reproduce the argument presented in \cite{picado_pultr} for completeness. We have seen that $b(N)$ is a Boolean sublocale. For the converse, suppose $S \subset F$ describes a Boolean sublocale of $L$. Let $N = \bigwedge S$, which is an element of $S$ by closure under infima. It is clear that $b(N) \subset S$, since $S$ is closed under implication. Conversely, if $U \in S$, since $S$ is Boolean we have $U = (U \rightarrow N) \rightarrow N$, and so we see that $U \in b(N)$.
\end{proof}

If $L$ is in particular a Boolean locale, then every sublocale is of the form $b(N) \hookrightarrow L$ for some open $N$. But we can identify $b(N) = \{ U \rightarrow N \} = \{ \neg U \vee N \}$
with the closed sublocale given by $N^c$.

\begin{corollary}
Suppose $L$ is a Boolean locale. Then all sublocales of $L$ are given by open, or equivalently closed, inclusions, i.e.\ the natural map 
$\mathfrak{Sl}(L) \rightarrow L$ is an isomorphism.
\end{corollary}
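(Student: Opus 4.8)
The plan is to prove the statement by showing that the injective frame homomorphism $\nabla : \mathcal{O}(L) \to \mathrm{Sl}(L)^{op}$ is surjective; since a bijective frame homomorphism is automatically an isomorphism, and the inverse image part of $\mathrm{can} : \mathfrak{Sl}(L) \to L$ is precisely $\nabla$, this is exactly the assertion that $\mathrm{can}$ is an isomorphism of locales.

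First I would reduce to a statement about individual sublocales: it suffices to show that every sublocale $S \hookrightarrow L$ equals the closed sublocale $N^c = \nabla(N)$ for some open $N$ of $L$. For this, combine the Lemma that every sublocale of a Boolean locale is Boolean (so $S$ is itself Boolean) with Proposition \ref{booleansublocalescharacterization}, which then gives $S = b(N)$ with $N = \bigwedge S \in \mathcal{O}(L)$. Next I would identify $b(N)$ with the closed sublocale $N^c$. Recall $b(N)$ has frame $\{\, U \to N \mid U \in \mathcal{O}(L)\,\}$, and by Lemma \ref{booleanimplication}, in the Boolean locale $L$ one has $U \to N = \neg U \vee N$, so this frame equals $\{\, \neg U \vee N \mid U \in \mathcal{O}(L)\,\}$. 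This set coincides with $\mathcal{O}(L)_{N/} = \{\, V \mid N \leq V\,\}$: the inclusion $\subseteq$ is immediate, and conversely, given $V \geq N$ take $U = \neg V$ and use $\neg \neg V = V$ (Booleanness) to get $\neg U \vee N = V \vee N = V$. Since $\mathcal{O}(L)_{N/}$ is exactly the frame of the closed sublocale associated to $N^c$ by Example \ref{closedsublocale}, we conclude $S = b(N) = N^c = \nabla(N)$, which establishes surjectivity of $\nabla$.

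To finish, I would observe that $\nabla$ is a bijective frame homomorphism, hence in particular order-reflecting: $\nabla(U) \leq \nabla(V)$ iff $\nabla(U) \vee \nabla(V) = \nabla(V)$ iff $\nabla(U \vee V) = \nabla(V)$ iff, by injectivity, $U \leq V$; therefore $\nabla^{-1}$ is monotone and thus a frame homomorphism as well, so $\nabla$ is an isomorphism of frames and $\mathrm{can}$ an isomorphism of locales. For the parenthetical ``equivalently closed'' clause, note that the closed sublocale $N^c$ is also the open sublocale associated to the open $\neg N$: by Example on open sublocales the latter has frame $\{\, \neg(\neg N) \vee V \mid V \in \mathcal{O}(L)\,\} = \{\, N \vee V \mid V \in \mathcal{O}(L)\,\} = \mathcal{O}(L)_{N/}$, the same underlying subset of $\mathcal{O}(L)$; hence every sublocale of $L$ is simultaneously open and closed.

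There is essentially no substantial obstacle here, since the real work is already contained in Proposition \ref{booleansublocalescharacterization} together with the structural lemmas on Boolean locales. The only points requiring a little care are the bookkeeping that matches the description of $b(N)$ as a subset of the frame with the frames of the closed sublocale of $N$ and of the open sublocale of $\neg N$, and the standard observation that a bijective frame homomorphism is an isomorphism.
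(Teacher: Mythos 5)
Your argument is correct and follows essentially the same route the paper takes: the corollary is deduced in the text immediately after Proposition \ref{booleansublocalescharacterization} by combining the lemma that sublocales of Boolean locales are Boolean with the identification $b(N) = \{\neg U \vee N\} = \mathcal{O}(L)_{N/}$, which is exactly your chain of reasoning. Your additional bookkeeping (surjectivity of $\nabla$, the open/closed coincidence via $\neg N$, and the observation that a bijective frame homomorphism is an isomorphism) merely fills in details the paper leaves implicit.
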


\begin{lemma} \label{openpullbackdoublenegation}
Let $L$ be a locale and $U, N$ opens of $L$. Then the square
$$\begin{tikzcd}
b(U \wedge N) \ar[r, hook] \ar[d, hook] \arrow[dr, phantom, "\scalebox{1.0}{$\lrcorner$}", very near start, color=black] & b(N) \ar[d, hook] \\
U \ar[r, hook] & L
\end{tikzcd}$$
is a pullback square of embeddings.
\end{lemma}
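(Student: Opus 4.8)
The plan is to recognise the pullback as an intersection of sublocales of $L$ and then to compute that intersection inside the frame $F = \mathcal{O}(L)$ with the Heyting calculus of Proposition \ref{superheytingprop}, chiefly the identity $Z \to (Y \to W) = (Z \wedge Y) \to W$ of part~(4).

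First I would record that all four maps in the square are embeddings: $b(N) \hookrightarrow L$ and $U \hookrightarrow L$ are sublocale inclusions (here $U$ denotes the open sublocale, whose frame inside $F$ is $\{\,U \to W \mid W \in F\,\} = \{\,W \in F \mid U \to W = W\,\}$, since $i_* = U \to {-}$), and the two maps out of $b(U\wedge N)$ become sublocale inclusions once we know $b(U\wedge N)$ lies below both $U$ and $b(N)$, which will be visible from the computation. Since embeddings are exactly the regular monomorphisms of $\mathrm{Loc}$ and are stable under pullback, the pullback of $b(N) \hookrightarrow L$ along $U \hookrightarrow L$ is the meet $U \wedge b(N)$ in the coframe $\mathrm{Sl}(L)$, which is the set-theoretic intersection of the two subsets of $F$. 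So the whole statement reduces to the frame-theoretic identity
\[
U \cap b(N) \;=\; \{\, V \to N ~\mid~ V \in F,\ V \leq U \,\}
\]
(of subsets of $F$), together with the remark that the right-hand side is the frame of $b(U\wedge N)$.

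For the displayed equality I would use the description $b(N) = \{\, V \to N \mid V \in F \,\}$ recalled in the text: the inclusion $\supseteq$ is the computation $U \to (V \to N) = (U \wedge V) \to N = V \to N$ for $V \leq U$, so $V \to N$ lies in the frame of $U$; the inclusion $\subseteq$ is that any $x$ in the frame of $U$ of the form $V \to N$ satisfies $x = U \to x = (U \wedge V) \to N$, exhibiting $x$ as $V' \to N$ with $V' = U \wedge V \leq U$. To identify $\{\, V \to N \mid V \leq U \,\}$ with $b(U\wedge N)$, I would check three things: (i) the restriction of $N$ to the open sublocale $U$ is, as an element of $F$, the open $U \to N$, which lies in the set (take $V = U$); (ii) the set is a sublocale of $L$ by Lemma \ref{sublocales} — closed under infima, since $\bigwedge_i (V_i \to N) = \bigl(\bigvee_i V_i\bigr) \to N$ by Proposition \ref{superheytingprop}(8) with $\bigvee_i V_i \leq U$, and closed under Heyting implication, since $Z \to (V \to N) = (Z \wedge V) \to N$ with $Z \wedge V \leq U$ — and it is contained in $U$ by the $\supseteq$ computation; (iii) it is the \emph{smallest} sublocale of $U$ containing $U \to N$, because any such sublocale is closed under Heyting implication and hence contains $W \to (U \to N) = (W \wedge U) \to N$ for every $W \in F$, i.e.\ contains every $V \to N$ with $V \leq U$. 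By Proposition \ref{booleansublocalescharacterization} and the discussion preceding it, this smallest sublocale is $b(U\wedge N)$, and the square visibly commutes since both composites into $L$ are the inclusion of $\{\, V \to N \mid V \leq U \,\}$.

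The Heyting identities of Proposition \ref{superheytingprop} do essentially all the work. The one point that needs care is step~(iii): correctly reading ``$U \wedge N$, viewed as an open of the sublocale $U$'' as the element $U \to N$ of $F$, and confirming that $b$ formed inside $U$ coincides with the smallest sublocale of $L$ that lies below $U$ and contains $U \to N$ — an interpretation it might be worth spelling out in the statement itself.
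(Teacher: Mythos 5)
Your proof is correct, and it takes a genuinely different route from the paper's. You compute the pullback directly as the set-theoretic intersection of the two sublocales inside $\mathcal{O}(L)$ and identify it, via the Heyting identity $Z \to (Y \to W) = (Z \wedge Y) \to W$, as the explicit subset $\{\,V \to N \mid V \leq U\,\}$, which you then recognize as the smallest Boolean sublocale of $U$ containing the open $U \wedge N$ of $U$. The paper instead reduces to the case $N = 0$ via the pullback square of closed complements, uses that pullbacks of open embeddings are open embeddings to present the pullback as a frame pushout, identifies the result as a Boolean sublocale of $L_{\neg\neg}$ — hence of the form $b(i_*(0))$ by Proposition \ref{booleansublocalescharacterization} — and computes $i_*(0) = 0$. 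Your argument is more elementary (pure Heyting calculus, no reduction step) and buys an explicit description of the pullback as a subset of the frame, at the cost of not exhibiting the structural mechanism (Booleanness is inherited by sublocales, and Boolean sublocales are classified) that the paper's proof runs on. The interpretive point you isolate in step (iii) is real and you resolve it correctly: $b(U \wedge N)$ must be read as $b$ applied to the marked locale $(U, U \wedge N)$, i.e.\ the smallest sublocale of $U$ containing the open $U \wedge N$ \emph{of $U$}, which under $i_* = U \to (-)$ is the element $U \to N$ of $\mathcal{O}(L)$. Under the other reading — the smallest sublocale of $L$ containing the element $U \wedge N$ — the statement would actually fail: for $U = N$ your formula gives $\{\,V \to U \mid V \leq U\,\} = \{1\}$, the empty sublocale (as it must be, since $b(U) \subseteq U^c$ is disjoint from $U$), whereas $b_L(U \wedge U) = b_L(U)$ is nonempty whenever $U \neq 1$. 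So the care you take there is not pedantry but exactly what makes the lemma true.
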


\begin{proof}
By using that 
$$\begin{tikzcd}
(U \wedge N)^c \ar[r, hook] \ar[d, hook] \arrow[dr, phantom, "\scalebox{1.0}{$\lrcorner$}", very near start, color=black] & N^c \ar[d, hook] \\
U \ar[r, hook] & L
\end{tikzcd}$$
is a pullback square, we can reduce the proof to the case of $N = 0$. Since pullbacks of open embeddings are open embeddings, we have the pushout square of frames
$$\begin{tikzcd}
\mathcal{O}(L_{\neg \neg})_{ \neg \neg U }   & \mathcal{O}(L_{\neg \neg}) \ar[l] \\
\mathcal{O}(L)_ U  \ar[u]^{ i^* } & \mathcal{O}(L) \ar[l] \ar[u]^{ j^* }
\end{tikzcd}$$
Then $\mathcal{O}(L_{\neg \neg})_{ \neg \neg U }$ is a sublocale of the Boolean locale $L_{\neg \neg}$, hence it is itself Boolean, and therefore of the form $b(i_*(0))$. The functor $i^*$ is given by the restriction of $j^* = \neg \neg$ and hence $i_* = j_* \wedge U$. But then $i_*(0) = j_*(0) \wedge U = 0 \wedge U = 0$, and therefore $\mathcal{O}(L_{\neg \neg})_{ \neg \neg U } \cong \mathcal{O}(U_{\neg \neg})$, which is what we wanted to show.
\end{proof}

Let us now come to maps between Boolean locales.

\begin{lemma} \label{allmapsopen}
Let $f : L \rightarrow M$ be a continuous map between Boolean locales. Then $f$ is open.
\end{lemma}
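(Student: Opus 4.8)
The plan is to use the characterization of open maps established earlier: a map $f : L \rightarrow M$ is open iff $f^*$ preserves arbitrary infima and Heyting implications. Since we are assuming both $L$ and $M$ are Boolean, I would exploit the fact that in a Boolean frame the Heyting implication collapses to $U \rightarrow V = \neg U \vee V$ (Lemma \ref{booleanimplication}), and that infima can be rewritten via De Morgan: $\bigwedge_{i} U_i = \neg\bigvee_i \neg U_i$, using that a Boolean frame is also a coframe (the De Morgan law $\neg\bigvee_i U_i = \bigwedge_i \neg U_i$ follows from $\neg$ being an order-reversing involution). The only nontrivial input is therefore that $f^*$ commutes with negation, i.e. $f^*(\neg U) = \neg f^*(U)$ for every open $U$ of $M$.

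First I would prove that $f^*$ preserves complements. Let $U$ be an open of $M$; since $M$ is Boolean, $\neg U$ is the complement of $U$, so $U \wedge \neg U = 0$ and $U \vee \neg U = 1$. Applying the frame homomorphism $f^*$, which preserves finite meets, finite joins, $0$ and $1$, we get $f^*(U) \wedge f^*(\neg U) = 0$ and $f^*(U) \vee f^*(\neg U) = 1$. Thus $f^*(\neg U)$ is \emph{a} complement of $f^*(U)$ in $\mathcal{O}(L)$, and since $L$ is Boolean (indeed since complements in any bounded distributive lattice are unique) this complement is exactly $\neg f^*(U)$. Hence $f^*(\neg U) = \neg f^*(U)$.

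Next I would verify the two conditions of the open-map criterion. For implications: given opens $V, Z$ of $M$, by Lemma \ref{booleanimplication} applied in $M$ we have $V \rightarrow Z = \neg V \vee Z$, so $f^*(V \rightarrow Z) = f^*(\neg V) \vee f^*(Z) = \neg f^*(V) \vee f^*(Z)$, which by Lemma \ref{booleanimplication} applied in $L$ equals $f^*(V) \rightarrow f^*(Z)$. For infima: given a family $U_i$, $i \in I$, of opens of $M$, write $\bigwedge_{i \in I} U_i = \neg \bigvee_{i \in I} \neg U_i$; applying $f^*$ and using that it preserves arbitrary suprema together with the just-established compatibility with $\neg$ gives $f^*(\bigwedge_{i \in I} U_i) = \neg \bigvee_{i \in I} \neg f^*(U_i) = \bigwedge_{i \in I} f^*(U_i)$. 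By the Proposition characterizing open maps, $f$ is open.

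There is no real obstacle here; the one point that requires a small remark is the De Morgan identity $\neg\bigvee_i U_i = \bigwedge_i \neg U_i$ in a Boolean frame, but this is immediate from Proposition \ref{superheytingprop}(8) (the functor $-\rightarrow 0$ sends suprema to infima), so the argument is entirely formal once uniqueness of complements and Lemma \ref{booleanimplication} are in hand.
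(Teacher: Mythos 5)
Your proof is correct and follows essentially the same route as the paper: establish $f^*(\neg U) = \neg f^*(U)$ by applying $f^*$ to the complement equations and invoking uniqueness of complements, then deduce preservation of infima via De Morgan and preservation of implications via Lemma \ref{booleanimplication}. The extra justification you give for the De Morgan identity via Proposition \ref{superheytingprop}(8) is a reasonable addition but does not change the argument.
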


\begin{proof}
First observe that $f^*$ preserves negation: Let $U$ be an open in $M$. Applying $f^*$ to the equations
$$\begin{array}{rl}
U \vee \neg U = 1 & \text{ and} \\
U \wedge \neg U = 0
\end{array}$$
we get
$$\begin{array}{rl}
f^*(U) \vee f^*(\neg U) = 1 & \text{ and} \\
f^*(U) \wedge f^*(\neg U) = 0
\end{array}$$
and hence $f^*(\neg U) = \neg f^*(U)$. We can now verify that $f^*$ preserves infima and implications.
$$f^*( \bigwedge_{i \in I} U_i ) = f^*( \neg \bigvee_{i \in I} \neg U_i ) = \neg \bigvee_{i \in I} \neg f^*( U_i ) = \bigwedge_{i \in I} f^*( U_i ),$$
and
$$f^*( U \rightarrow V ) = f^*( \neg U \vee  V ) = \neg f^*(U) \vee f^*(V) = f^*(U) \rightarrow f^*(V),$$
by Lemma \ref{booleanimplication}.
\end{proof}

\begin{lemma} \label{openmapsdoublenegation}
Let $f : L \rightarrow M$ be an open map between locales. Then $f$ descends to a well-defined map on double negation sublocales, i.e.\ there is a commuting square
\[\begin{tikzcd}
	L & M \\
	{L_{\neg \neg}} & {M_{\neg \neg}}.
	\arrow["f", from=1-1, to=1-2]
	\arrow[hook, from=2-1, to=1-1]
	\arrow["\tilde{f}", from=2-1, to=2-2]
	\arrow[hook, from=2-2, to=1-2]
\end{tikzcd}\]
If this is the case, we have $\tilde{f}^* = \neg \neg f^*$.
\end{lemma}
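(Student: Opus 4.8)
The plan is to read the desired square as a factorization problem and solve it with the universal property of sublocales, Proposition \ref{sublocaleuniversal}. Write $i_L : L_{\neg\neg}\hookrightarrow L$ and $i_M : M_{\neg\neg}\hookrightarrow M$ for the double negation embeddings, so that $i_L^* = \neg\neg : \mathcal{O}(L)\to\mathcal{O}(L_{\neg\neg})$, the nucleus of $M_{\neg\neg}$ is $V\mapsto\neg\neg V$, and $i_{M*}$ is the inclusion of $\mathcal{O}(M_{\neg\neg})$ into $\mathcal{O}(M)$. A lift $\tilde f : L_{\neg\neg}\to M_{\neg\neg}$ making the square commute is the same as a frame homomorphism $\tilde f^* : \mathcal{O}(M_{\neg\neg})\to\mathcal{O}(L_{\neg\neg})$ with $\tilde f^* i_M^* = i_L^* f^* =: g$; by Proposition \ref{sublocaleuniversal} such a $\tilde f^*$ exists (and then $\tilde f^* = g\,i_{M*}$, preserving binary meets, the top element and suprema computed in $\mathcal{O}(L_{\neg\neg})$) precisely when $g = g\,i_{M*}i_M^*$, i.e.\ when $\neg\neg f^*(V) = \neg\neg f^*(\neg\neg V)$ for every open $V$ of $M$.

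To verify this identity I would use openness of $f$: an open map has $f^*$ preserving Heyting implications (the characterization of open maps recorded just above Lemma \ref{closedsublocaleuniversalopen}), so taking $W = 0$ and using that a frame homomorphism sends $0$ to $0$ gives $f^*(\neg V) = f^*(V\to 0) = f^*(V)\to 0 = \neg f^*(V)$, and iterating, $f^*(\neg\neg V) = \neg\neg f^*(V)$. Combining this with $\neg\neg\neg = \neg$ (the case $V=0$ of Proposition \ref{superheytingprop}(7), which in turn gives $\neg\neg\neg\neg = \neg\neg$) yields $\neg\neg f^*(\neg\neg V) = \neg\neg\neg\neg f^*(V) = \neg\neg f^*(V)$, which is exactly the required factorization condition.

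Hence Proposition \ref{sublocaleuniversal} produces a unique frame homomorphism $\tilde f^* = g\,i_{M*} : \mathcal{O}(M_{\neg\neg})\to\mathcal{O}(L_{\neg\neg})$, and on an open $V$ of $M_{\neg\neg}$ (so $V = \neg\neg V$) it computes as $\tilde f^*(V) = i_L^* f^*(V) = \neg\neg f^*(V)$, which is the claimed formula; the identity $\tilde f^* i_M^* = i_L^* f^*$ is precisely commutativity of the square. I do not expect a genuine obstacle here: the whole content is the elementary observation $f^*\neg = \neg f^*$ for open $f$. The only points to keep straight are the direction of the descent — one factors through the inclusion $i_M^*$ of $\mathcal{O}(M_{\neg\neg})$, while the ``$\neg\neg$'' on the $L$-side is part of the map $g$ being descended — and that the supremum in the conclusion $\tilde f^* = \neg\neg f^*$ is to be read inside $\mathcal{O}(L_{\neg\neg})$, i.e.\ as $\neg\neg$ applied to joins computed in $\mathcal{O}(L)$.
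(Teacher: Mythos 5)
Your proof is correct and follows essentially the same route as the paper: the whole content is the identity $\neg\neg f^*\neg\neg = \neg\neg f^*$, which you derive, exactly as the paper does, from the fact that an open $f^*$ preserves implication and $0$ (hence negation) together with $\neg\neg\neg\neg=\neg\neg$. Your additional framing via Proposition \ref{sublocaleuniversal} just makes explicit the factorization criterion that the paper leaves implicit.
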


\begin{proof}
We need to show that $\neg \neg f^* \neg \neg =  \neg \neg f^*$. Since $f$ is open, the functor $f^*$ preserves implication and $0$, hence in particular negation. Therefore we have
$$\neg \neg f^* \neg \neg =  \neg \neg \neg \neg f^* = \neg \neg f^*, $$
since triple negation is the same as negating once.
\end{proof}

\begin{remark}
The condition for a map $f : L \rightarrow M$ to descend to double negation sublocales is slightly weaker than being open and called \emph{skeletal}. Skeletal maps were studied originally in \cite{Banaschewski1994}. See also \cite{JOHNSTONE2006240}, in particular Section 3.
\end{remark}

\begin{theorem} \label{booleanadjunction}
There exists an adjunction
\[\begin{tikzcd}
	{\mathrm{Loc}_{\mathrm{open}}} & {\mathrm{BoolLoc}}
	\arrow[""{name=0, anchor=center, inner sep=0}, "{(-)_{\neg \neg}}"', curve={height=12pt}, from=1-1, to=1-2]
	\arrow[""{name=1, anchor=center, inner sep=0}, "{}"', curve={height=12pt}, hook', from=1-2, to=1-1]
	\arrow["\dashv"{anchor=center, rotate=-90}, draw=none, from=1, to=0]
\end{tikzcd}\]
with the left adjoint given by the inclusion of the full subcategory of Boolean locales, and the right adjoint being given by $L \mapsto (L)_{\neg \neg} \hookrightarrow L$.
\end{theorem}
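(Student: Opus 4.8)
The plan is to produce the adjunction via its counit: for a locale $L$ I take $\epsilon_L$ to be the canonical inclusion $j_L\colon L_{\neg\neg}\hookrightarrow L$ of the smallest dense sublocale. By Lemma~\ref{openmapsdoublenegation} the assignment $L\mapsto L_{\neg\neg}$ is functorial on $\mathrm{Loc}_{\mathrm{open}}$, with $\tilde f^{*}=\neg\neg f^{*}$, and by Lemma~\ref{allmapsopen} every continuous map between Boolean locales is automatically open, so $\mathrm{BoolLoc}$ is a genuine full subcategory of $\mathrm{Loc}_{\mathrm{open}}$ and the inclusion $\iota$ makes sense. Concretely I want to prove that, for every Boolean locale $B$ and every locale $L$, postcomposition with $j_L$ is a bijection
\[
j_L\circ(-)\colon \mathrm{Hom}_{\mathrm{BoolLoc}}(B,L_{\neg\neg}) \xrightarrow{\cong} \mathrm{Hom}_{\mathrm{Loc}_{\mathrm{open}}}(B,L),
\]
natural in both arguments; this is precisely the statement that $L_{\neg\neg}$ is the $\mathrm{BoolLoc}$-coreflection of $L$, hence that $\iota\dashv(-)_{\neg\neg}$.

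The inverse is the descent of Lemma~\ref{openmapsdoublenegation}. Given an open $f\colon B\to L$ with $B$ Boolean, that lemma yields $\tilde f\colon B_{\neg\neg}\to L_{\neg\neg}$ sitting in a commuting square with the two dense-sublocale inclusions. Since $B$ is Boolean, $\neg\neg U=U$ for every open $U$ of $B$, so $B_{\neg\neg}=B$ and its inclusion into $B$ is the identity; the square then reads $j_L\circ\tilde f=f$. Thus every morphism of $\mathrm{Loc}_{\mathrm{open}}$ out of a Boolean $B$ factors through $j_L$, with factor $\tilde f$ a continuous map of Boolean locales, i.e.\ a morphism of $\mathrm{BoolLoc}$; the factor is unique because $j_L$ is an embedding, hence a (regular) monomorphism in $\mathrm{Loc}$. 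Running this backwards, $f\mapsto\tilde f$ and $g\mapsto j_L\circ g$ are mutually inverse on hom-sets: one direction is $j_L\circ\tilde f=f$, and the other is $\widetilde{j_L\circ g}=\widetilde{j_L}\circ\tilde g=\mathrm{id}_{L_{\neg\neg}}\circ g=g$, using functoriality of $(-)_{\neg\neg}$ and that $\tilde g=g$ for $g$ between Boolean locales.

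Naturality is then formal and follows from uniqueness of factorizations through the monomorphism $j_L$: for $\varphi\colon B'\to B$ in $\mathrm{BoolLoc}$ the equality $j_L\circ(\tilde f\circ\varphi)=f\circ\varphi$ forces $\widetilde{f\circ\varphi}=\tilde f\circ\varphi$, and for open $\psi\colon L\to L'$ the chain $j_{L'}\circ\widetilde{\psi}\circ\tilde f=\psi\circ j_L\circ\tilde f=\psi\circ f$ identifies $\widetilde{\psi\circ f}$ with $\widetilde{\psi}\circ\tilde f$. The triangle identities are immediate: the unit at a Boolean $B$ is the identity $B=B_{\neg\neg}$, and the counit is $j_L$. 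This exhibits $\iota\dashv(-)_{\neg\neg}$ with $\iota$ fully faithful, as claimed.

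The step I expect to need the most care is confirming that $(-)_{\neg\neg}$ is a bona fide functor $\mathrm{Loc}_{\mathrm{open}}\to\mathrm{BoolLoc}$ and that $j_L$ is a legitimate morphism of $\mathrm{Loc}_{\mathrm{open}}$ — that is, unwinding Lemma~\ref{openmapsdoublenegation} to check that the descent of an open map is itself open, that descent respects composition and identities, and that $\neg\neg\colon\mathcal{O}(L)\to\mathcal{O}(L_{\neg\neg})$ interacts correctly with the structure used to define $\mathrm{Loc}_{\mathrm{open}}$. The facts that make these go through are that an open map's inverse image preserves Heyting negation (from the characterization of open maps via preservation of implications) and that triple negation collapses, $\neg\neg\neg=\neg$; with those in hand the remainder is bookkeeping with the commuting square of Lemma~\ref{openmapsdoublenegation} together with the monomorphism property of embeddings.
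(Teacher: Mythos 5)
Your write-up is a fleshed-out version of the paper's own one-line proof, and you have correctly located the load-bearing step --- but that step is exactly where the argument breaks, and it cannot be repaired as stated. The counit $j_L\colon L_{\neg\neg}\hookrightarrow L$ is \emph{not} a morphism of $\mathrm{Loc}_{\mathrm{open}}$ in general: an embedding that is an open map must have open image (the image of the top open sublocale of the domain is an open sublocale of the codomain), whereas $L_{\neg\neg}$ is the smallest \emph{dense} sublocale and is almost never open. Equivalently, $j_L^{*}=\neg\neg$ fails to preserve arbitrary infima: for $L=\mathbb{R}$ and $U_q=\mathbb{R}\setminus\{q\}$ with $q\in\mathbb{Q}$, one has $\bigwedge_{q}\neg\neg U_q=1$ while $\neg\neg\bigl(\bigwedge_{q}U_q\bigr)=\neg\neg\,0=0$. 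Consequently your inverse assignment $g\mapsto j_L\circ g$ does not land in $\mathrm{Hom}_{\mathrm{Loc}_{\mathrm{open}}}(B,L)$ (take $B=\mathbb{R}_{\neg\neg}$ and $g=\mathrm{id}$: any open $f$ with $\tilde f=\mathrm{id}$ would have to satisfy $f^{*}=\neg\neg$, which is not open), and the expressions $\widetilde{j_L}$ and $\widetilde{j_L\circ g}$ are not defined inside the category you are working in. What Lemma~\ref{openmapsdoublenegation} actually yields is a natural \emph{injection} $\mathrm{Hom}_{\mathrm{Loc}_{\mathrm{open}}}(B,L)\hookrightarrow\mathrm{Hom}_{\mathrm{BoolLoc}}(B,L_{\neg\neg})$, $f\mapsto\tilde f$ (injective because an open $f^{*}$ preserves $\neg$ and $B$ is Boolean, so $f^{*}=\tilde f^{*}\circ\neg\neg$ is recovered from $\tilde f$), not a bijection.

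To be fair, the paper's proof (``the adjunction follows directly from Lemma~\ref{openmapsdoublenegation}'') is silent on the same point, so your proposal is no worse than the source; but the gap is real and not merely bookkeeping. The standard repair is the one hinted at in the remark following Lemma~\ref{openmapsdoublenegation}: replace open maps by \emph{skeletal} maps, i.e.\ those with $\neg\neg f^{*}\neg\neg=\neg\neg f^{*}$. Every continuous map between Boolean locales is skeletal, $j_L$ is skeletal, and $(-)_{\neg\neg}$ is then a genuine coreflection of $\mathrm{BoolLoc}$ in the category of locales and skeletal maps; your factorization-and-uniqueness argument then goes through essentially verbatim. Alternatively one must weaken the conclusion of the theorem to the natural injection above, which is all that the remainder of the paper actually uses.
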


\begin{proof}
The left adjoint is well-defined by Lemma \ref{allmapsopen}. The adjunction follows directly from Lemma \ref{openmapsdoublenegation}.
\end{proof}

We now try to integrate markings into the picture. Recall the definition of a marked locale $(L,U)$ given in Definition \ref{markedlocales}, as a locale $L$ together with a choice of open $U$.

\begin{corollary} \label{doublenegationrelativefactoring}
Let $f : L \rightarrow M$ be an open map between locales and $N$ an open of $M$. Then $f$ induces a well-defined map
$$\tilde{f} : b(f^*(N)) \rightarrow b(N).$$
The inverse image part is given by the formula $\tilde{f}^*(U) = f^*((U \rightarrow N) \rightarrow N) = (f^*(U) \rightarrow f^*(N)) \rightarrow f^*(N)$.
\end{corollary}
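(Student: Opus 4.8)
The plan is to realize $\tilde f$ by first restricting $f$ to the relevant \emph{closed} sublocales and then passing to double-negation sublocales, reusing the two lemmas about open maps proved just above.

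First I would apply Lemma \ref{closedsublocaleuniversalopen} with $U = f^*(N)$ and $V = N$: the required condition is $f^*(N) = f^*(N)$, which holds trivially, so the open map $f$ restricts to an \emph{open} map $f| : f^*(N)^c \to N^c$. By Corollary \ref{closedsublocaleuniversal} its inverse image is $f|^* = f^*(-)\vee f^*(N)$, which on $\mathcal{O}(N^c)$ is simply $f^*$, since every open $W$ of $N^c$ lies above $N$ and hence $f^*(W)$ above $f^*(N)$. Because $f|$ is open, Lemma \ref{openmapsdoublenegation} shows it descends to a map on double-negation sublocales $(f^*(N)^c)_{\neg \neg}\to (N^c)_{\neg \neg}$ whose inverse image is $\neg\neg\circ f|^*$. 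Since $b(N) = (N^c)_{\neg \neg}$ and $b(f^*(N)) = (f^*(N)^c)_{\neg \neg}$ by definition, this is precisely the asserted map $\tilde f : b(f^*(N))\to b(N)$.

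For the formula, the key observation is that $f^*$ already carries the frame of $b(N)$, namely $\{W\to N \mid W\in\mathcal{O}(M)\}$, into the frame of $b(f^*(N))$, namely $\{V\to f^*(N)\mid V\in\mathcal{O}(L)\}$: an open map has $f^*$ preserving Heyting implication, so $f^*(W\to N) = f^*(W)\to f^*(N)$. Hence for $U\in\mathcal{O}(b(N))$ the double-negation nucleus leaves $f^*(U)$ unchanged, giving $\tilde f^*(U) = f^*(U)$. Since every such $U$ is also a fixed point of the nucleus $(-\to N)\to N$ of $b(N)$ (Proposition \ref{superheytingprop}(7)), one further use of preservation of implication yields
$$\tilde f^*(U) \;=\; f^*(U) \;=\; f^*\big((U\to N)\to N\big) \;=\; \big(f^*(U)\to f^*(N)\big)\to f^*(N),$$
which is exactly the claimed description of $\tilde f^*$.

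The only point requiring care is bookkeeping about which Heyting implication occurs when one unwinds the double-negation nucleus of the closed sublocale $f^*(N)^c$: the relative implication in $\mathcal{O}(f^*(N)^c)$ and the ambient one in $\mathcal{O}(L)$ agree on elements lying above the new bottom element $f^*(N)$, by Proposition \ref{superheytingprop}(5), which is why the computation above can be carried out entirely with the ambient implication. I do not anticipate any genuine obstacle beyond this.
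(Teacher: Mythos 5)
Your proposal is correct and follows essentially the same route as the paper: restrict $f$ to the closed sublocales via Lemma \ref{closedsublocaleuniversalopen} (the compatibility condition $f^*(N)=f^*(N)$ being automatic), descend to double-negation sublocales via Lemma \ref{openmapsdoublenegation}, and derive the formula from the fact that negation on $f^*(N)^c$ is $-\rightarrow f^*(N)$ together with preservation of Heyting implication by the open map $f$. Your additional bookkeeping — that $f|^*$ agrees with $f^*$ on opens above $N$, and that the relative and ambient implications coincide above the new bottom element — is exactly the detail the paper leaves implicit.
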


\begin{proof} The claim follows at once by combining Lemma \ref{closedsublocaleuniversalopen}, to get a well-defined open restriction $f| : f^*(N^c) \rightarrow N^c$, and Lemma \ref{openmapsdoublenegation}, to descend further to double negation sublocales. The claimed formula follows from the observation that negation on the closed sublocale $f^*(N)^c$ is given by $- \rightarrow f^*(N)$.
\end{proof}

We summarize the above statement into the existence of a useful functor for constructing Boolean locales.

\begin{corollary} \label{booleanmarkedadjunction}
There exists an adjunction
\[\begin{tikzcd}
	{\mathrm{MarkLoc}_{\mathrm{open}}} & {\mathrm{BoolLoc}}
	\arrow[""{name=0, anchor=center, inner sep=0}, "b"', curve={height=12pt}, from=1-1, to=1-2]
	\arrow[""{name=1, anchor=center, inner sep=0}, "{(-,0)}"', curve={height=12pt}, hook', from=1-2, to=1-1]
	\arrow["\dashv"{anchor=center, rotate=-90}, draw=none, from=1, to=0]
\end{tikzcd}\]
with the left adjoint $L \mapsto (L,0)$ obtained by equipping a Boolean locale with the marking given by the bottom element $0$, and the right adjoint being given by $(L,U) \mapsto b(U) = (U^c)_{\neg \neg} \hookrightarrow L$. Furthermore, the left adjoint is fully faithful.
\end{corollary}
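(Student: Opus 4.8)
The plan is to obtain this adjunction as the \emph{composite} of the two adjunctions already established, namely Theorem~\ref{markingadjunction} (between $\mathrm{MarkLoc}_{\mathrm{open}}$ and $\mathrm{Loc}_{\mathrm{open}}$, with left adjoint $(-,0)$ and right adjoint $(-)^c$) and Theorem~\ref{booleanadjunction} (between $\mathrm{BoolLoc}$ and $\mathrm{Loc}_{\mathrm{open}}$, with left adjoint the full inclusion $\iota\colon\mathrm{BoolLoc}\hookrightarrow\mathrm{Loc}_{\mathrm{open}}$ and right adjoint $(-)_{\neg\neg}$). Since a composite of adjoint pairs is an adjoint pair, it suffices to identify the composite of the two left adjoints and the composite of the two right adjoints with the functors named in the statement, and to note that full faithfulness is preserved under composition.

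First I would spell out the composite right adjoint $(-)_{\neg\neg}\circ(-)^c\colon\mathrm{MarkLoc}_{\mathrm{open}}\to\mathrm{BoolLoc}$. It sends a marked locale $(L,U)$ first to the closed sublocale $U^c$, an object of $\mathrm{Loc}_{\mathrm{open}}$, and then to $(U^c)_{\neg\neg}$, which by definition is $b(U)$ and is a Boolean locale by the discussion preceding Proposition~\ref{booleansublocalescharacterization}. On morphisms it sends an open compatible map $f\colon(L,U)\to(M,V)$, i.e.\ an open map with $f^*(V)=U$, to the induced map $b(U)=b(f^*(V))\to b(V)$ of Corollary~\ref{doublenegationrelativefactoring}; this also makes functoriality of $b$ transparent through the explicit formula $\tilde f^{*}=f^{*}\big((-\to N)\to N\big)$. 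Dually, the composite left adjoint $(-,0)\circ\iota\colon\mathrm{BoolLoc}\to\mathrm{MarkLoc}_{\mathrm{open}}$ sends a Boolean locale $L$ first to $L$ itself (the inclusion is well defined by Lemma~\ref{allmapsopen}) and then to $(L,0)$. Hence the composite adjunction is exactly $(-,0)\dashv b$.

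For the last assertion I would simply observe that $(-,0)\circ\iota$ is fully faithful, being a composite of the fully faithful functors $\iota$ (a full subcategory inclusion) and $(-,0)$ (fully faithful by Theorem~\ref{markingadjunction}).

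I do not expect a genuine obstacle here: the work is essentially bookkeeping. The only points that merit a second look are (i) that the intermediate object $U^c$ and the intermediate morphisms really land in $\mathrm{Loc}_{\mathrm{open}}$ so the two adjunctions can be chained, and (ii) that the resulting composites coincide on the nose with $L\mapsto(L,0)$ and $(L,U)\mapsto(U^c)_{\neg\neg}$. If one preferred an argument that does not route through $\mathrm{Loc}_{\mathrm{open}}$, one could instead define $b$ on morphisms directly via the map $\tilde f$ of Corollary~\ref{doublenegationrelativefactoring}, check functoriality from its inverse-image formula, and then write down the unit and counit of $(-,0)\dashv b$ by hand; but composing the two existing adjunctions is the shortest route.
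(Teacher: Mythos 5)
Your proposal is correct and is essentially identical to the paper's own proof: the paper also obtains the adjunction by composing the adjunction of Theorem~\ref{markingadjunction} with that of Theorem~\ref{booleanadjunction}, and deduces full faithfulness of the left adjoint from the fact that it is a composite of fully faithful left adjoints. The extra bookkeeping you supply (identifying the composite functors on objects and morphisms via Corollary~\ref{doublenegationrelativefactoring}) is consistent with, and slightly more explicit than, what the paper writes.
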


\begin{proof} The adjunction is obtained by composing the adjunction given by Theorem \ref{markingadjunction} with the adjunction provided by Theorem \ref{booleanadjunction}
Fully faithfulness of the left adjoint follows because it is obtained by a composition of fully faithful left adjoints.
\end{proof}

\subsection{The analogy between topological spaces and matrix groups}

The following is a side remark that will not play a role in the rest of the text, other than (potentially) aid the reader in intuition. Since the usage of locales might seem foreign to a reader educated in classical point-set topology, we want to highlight an analogy between the theory of groups and the theory of locales. While it is not true that the functor $\mathrm{Loc}: \mathrm{Top} \rightarrow \mathrm{Loc}$ that associates to a topological space $X$ its corresponding frame of opens is fully faithful, it \emph{is} true that the functor $\mathrm{Top} \rightarrow \mathrm{Loc}^\rightarrow$ into the category of maps between locales, that sends a topological space $X$ to the map
$$\mathrm{Loc}(X^\mathrm{disc}) \rightarrow \mathrm{Loc}(X)$$
is fully faithful. In other words the information of a topological space can be recovered from its set of opens, together with the information on how it embeds into the frame $\mathcal{O}(X^\mathrm{disc}) = \mathcal{P}(X)$.
The functor $\mathcal{O}(X) \rightarrow \mathcal{P}(X)$ is by definition an injective frame homomorphism, or equivalently a quotient map of locales.

This is analogous to how one would define a \emph{matrix group}: As a subgroup $G \subset \mathrm{GL}_n(k)$. If, under this analogy, we think of general linear groups as analogous to Boolean locales, we get the following table of corresponding terms.

\begin{center}
\begin{tabular}{ |c|c| } 
 \hline
 Group theory & Topology \\
 \hline
 Group $G$ & Locale $L$ \\
 \hline
 Presentation of $G \cong \left\langle S ~|~ R \right\rangle$ & $\mathcal{O}(L) \cong \mathrm{Sh}(P,\tau; \mathbf{2})$ \\
 via generators and relations & for Grothendieck pretopology $(P,\tau)$ \\
 \hline
 Representation $G \rightarrow \mathrm{GL}_n(k)$ & Map $M \rightarrow L$ with $M$ Boolean \\
 \hline
 Zero representation $G \rightarrow  0$ & Inclusion of empty locale $\emptyset \rightarrow L$ \\ 
 \hline
 One-dimensional representation $G \rightarrow k^\times$ & Point $\mathrm{pt} \rightarrow L$ \\
 \hline
 Faithful representation $G \hookrightarrow \mathrm{GL}_n(k)$ & Quotient map $M \twoheadrightarrow L$ \\
 \hline
 Matrix group $G \subset \mathrm{GL}_n(k)$ & Topological space $X^{disc} \twoheadrightarrow X$ \\
 \hline
 Regular representation $G \rightarrow k[G]^\times$ & $\mathfrak{Sl}(L)_{\neg \neg} \twoheadrightarrow L$ \\
 \hline
\end{tabular}
\end{center}

In the upcoming sections, particularly Section \ref{radonvaluations}, we use nice enough valuations on a class of spaces to construct maps $X^\mu \rightarrow X$, with $X^\mu$ being a Boolean locale. This can be thought of as giving a \emph{different representation} of $X$, which encapsulates the measure theoretic rather than point-set topological aspects of $X$.

\section{Measurable locales}  \label{measurablelocales}

In this section we will describe a class of locales which have the right properties for classical measure and integration theory: Measurable locales. These are defined to be Boolean locales that allow enough measures to distinguish opens. The category of measurable locales is anti-equivalent via a variant of Gelfand duality to the category of commutative von Neumann algebras, which is a major reason for the usefulness of the point-free approach to measure theory.

In this section, we will use the word \emph{chunk} instead of open. Readers with an eye towards probability theory may also want to read the word \emph{chunk} as \emph{event}.

\begin{definition} \label{measurablelocale}
A Boolean locale $L$ is called \emph{measurable} if one of the following equivalent statements holds:
\begin{enumerate}
\item There exists a locally finite and faithful measure $\mu$ on $L$.
\item $1$ is the supremum over chunks $a$ such that $a$ admits a finite faithful measure.
\item For any $b \in L, b \neq 0$ there exists a finite measure $\mu$ on $L$ such that $\mu(b) \neq 0$.
\end{enumerate}
We define $\mathrm{MblLoc} \hookrightarrow \mathrm{Loc}$ as the full subcategory spanned by measurable locales.
\end{definition}

We note that the statement that either of (2) or (3) imply (1) is non-trivial and requires the axiom of choice. Before we give the proof of the equivalence of the three conditions, we record two easy lemmata about measures on Boolean locales.

\begin{lemma}
Let $B$ be a complete Boolean algebra. A locally finite measure $\mu$ on $B$ is faithful iff the condition
$$\mu(U) = 0 \text{ iff } U = 0$$
holds.
\end{lemma}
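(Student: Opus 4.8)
The plan is to prove the two implications separately. The direction ``$\mu$ faithful $\Rightarrow$ ($\mu(U)=0$ iff $U=0$)'' is essentially already recorded in the ``simple observations'' in Section~\ref{definitionmeasure}: if $\mu$ is faithful and $\mu(U)=0$, then since $\mu(0 \wedge K)=0=\mu(U)=\mu(U\wedge K)$ for every chunk $K$ of finite measure (using $U \wedge K \leq U$ so $\mu(U\wedge K)\leq \mu(U)=0$), faithfulness applied to $0 \leq U$ forces $U=0$. The converse is trivial. So the content is entirely in the other direction, and the hypothesis that $B$ is Boolean and $\mu$ locally finite must be doing the work there.

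For the converse, assume $\mu$ is locally finite and satisfies ``$\mu(U)=0$ iff $U=0$''; I want to show $\mu$ is faithful. Suppose $U \leq V$ in $B$ with $\mu(U \wedge K) = \mu(V \wedge K)$ for all $K$ of finite measure. The key move is to exploit Booleanness: set $W = V \wedge \neg U$, the ``difference'', so that $V = U \vee W$ with $U \wedge W = 0$. I would first show $\mu(W) = 0$, whence $W = 0$ by hypothesis, and therefore $V = U \vee W = U$, as desired. To show $\mu(W)=0$: by local finiteness, $1$ is a directed supremum of chunks $K$ of finite measure (finite joins of finite-measure chunks still have finite measure by Lemma~\ref{idealoffinitemeasure}), hence $W = \bigvee_K (W \wedge K)$ is a directed supremum, and by continuity $\mu(W) = \sup_K \mu(W \wedge K)$. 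So it suffices to show $\mu(W \wedge K) = 0$ for each such $K$. Now fix $K$ with $\mu(K) < \infty$. Since $U \wedge W = 0$, we have $U \wedge K$ and $W \wedge K$ are disjoint with join $V \wedge K$ (using $V = U \vee W$ and distributivity), so by modularity and $\mu((U\wedge K)\wedge(W\wedge K)) = \mu(0) = 0$:
$$\mu(V \wedge K) = \mu(U \wedge K) + \mu(W \wedge K).$$
Since $\mu(K) < \infty$ we have $\mu(U \wedge K) \leq \mu(K) < \infty$, so this finite quantity cancels against the hypothesis $\mu(U \wedge K) = \mu(V \wedge K)$, yielding $\mu(W \wedge K) = 0$. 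This completes the argument.

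The only mildly delicate point — and the one I'd watch most carefully — is the cancellation step: it is legitimate precisely because $\mu(U \wedge K)$ is \emph{finite} (bounded by $\mu(K)$), which is why the hypothesis is phrased in terms of testing against chunks of finite measure rather than against $1$ directly. Everything else is a routine application of modularity, continuity of the measure, local finiteness to write $1$ as a directed join of finite-measure chunks, and Booleanness to produce the complement $W = V \wedge \neg U$. No use of the axiom of choice is needed here, in contrast to the harder equivalences (2),(3) $\Rightarrow$ (1) flagged after Definition~\ref{measurablelocale}.
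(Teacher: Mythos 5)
Your proof is correct and follows essentially the same route as the paper: Booleanness to form the relative complement $V\wedge\neg U$, modularity plus finiteness of $\mu(U\wedge K)$ to cancel, and local finiteness with continuity to pass from finite-measure chunks to the general case. The only difference is cosmetic — you form the complement $W$ globally and show $\mu(W\wedge K)=0$ chunkwise, whereas the paper first treats the finite-measure case and then reduces to it; both are fine.
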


\begin{proof}
Suppose $\mu$ is faithful, and let $\mu(U) = 0$. Since $0 \leq U$ are both opens with finite measure, we conclude $U = 0$. Conversely, suppose 
$$\mu(U) = 0 \text{ iff } U = 0$$
holds. Now let $V \leq V'$ with $\mu(V) = \mu(V') < + \infty$. By Booleanness we have
$$V \vee ( V' \setminus V ) = V'$$
where $V' \setminus V = V' \wedge \neg V$. But then by modularity we have $\mu( V' \setminus V ) = 0$, therefore $V' \setminus V = 0$, and hence $V = V'$. Now let $V \leq V'$ with
$$\mu(V \wedge K) = \mu(V' \wedge K).$$
Use continuity of $\mu$ together with local finiteness, we can reduce to the case of finite measure and conclude $V = V'$.
\end{proof}

\begin{remark}
A basic observation is that for a Boolean locale $L$ with faithful measure $\mu$, all points $x$ of $L$ (identified with atoms of $\mathcal{O}(L)$ under Lemma \ref{pointsareminimalelements}) need to satisfy $\mu(x) > 0$. This gives a direct criterion that shows that many useful measurable locales in practice cannot be spatial.
\end{remark}

The presence of Booleanness also allows one to deduce statements about co-continuity of measures from continuity.

\begin{lemma} \label{infformulaboolean}
Let $L$ be a Boolean locale and $\mu$ a measure on $L$. Let $U_i, i \in I,$ be a downwards directed system of chunks such that there exists $i_0 \in I$ such that $\mu(U_{i_0}) < \infty$. Then
$$\mu( \bigwedge U_i ) = \inf_{i \in I} \mu( U_i ).$$
\end{lemma}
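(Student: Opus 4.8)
The statement is the standard ``continuity from above'' for a finite measure, transported to the Boolean-locale setting via complementation. The plan is to reduce to the already-known continuity from below (the defining property of a measure applied to a directed \emph{upward} system) by replacing each $U_i$ by its complement relative to a fixed chunk of finite measure.

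First I would fix $i_0 \in I$ with $\mu(U_{i_0}) < \infty$. Since the claim only concerns the downward-directed system, and directedness lets me restrict attention to indices $i \geq i_0$ (a cofinal subsystem, giving the same infimum and the same meet), I may assume without loss of generality that $U_i \leq U_{i_0}$ for all $i$, and in particular $\mu(U_i) < \infty$ for all $i$ by monotonicity. Set $a = U_{i_0}$ and, using Booleanness of $L$, define $V_i = a \wedge \neg U_i$ for $i \geq i_0$. Then $\{V_i\}_{i \geq i_0}$ is an \emph{upward}-directed system of chunks below $a$, because $U_i \geq U_j$ implies $V_i \leq V_j$. I would then check the two key identities: by modularity on the Boolean complement, $\mu(V_i) = \mu(a) - \mu(U_i)$ (valid since all terms are finite), and $\bigvee_{i \geq i_0} V_i = a \wedge \neg\!\bigwedge_{i \geq i_0} U_i$. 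The latter uses that in a Boolean frame $\neg$ is an order-reversing isomorphism sending arbitrary infima to arbitrary suprema (Lemma on Boolean frames being coframes, plus Proposition~\ref{superheytingprop}(8)), together with the distributivity $a \wedge \bigvee_i V_i = \bigvee_i (a \wedge V_i) = \bigvee_i V_i$.

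Now apply continuity of $\mu$ (the defining directed-supremum property) to the upward-directed family $\{V_i\}$: $\mu\bigl(\bigvee_i V_i\bigr) = \sup_i \mu(V_i)$. Unwinding via the two identities, the left side equals $\mu\bigl(a \wedge \neg\bigwedge_i U_i\bigr) = \mu(a) - \mu\bigl(\bigwedge_i U_i\bigr)$ — again using finite modularity, which is legitimate because $\bigwedge_i U_i \leq a$ has finite measure — while the right side equals $\sup_i (\mu(a) - \mu(U_i)) = \mu(a) - \inf_i \mu(U_i)$. Cancelling the finite quantity $\mu(a)$ yields $\mu\bigl(\bigwedge_i U_i\bigr) = \inf_i \mu(U_i)$, as desired.

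\textbf{Main obstacle.} There is no deep obstacle; the only points requiring a little care are bookkeeping of finiteness (so that subtraction $\mu(a) - \mu(U_i)$ is meaningful and cancellable) and the verification that $\neg$ interacts correctly with the infimum $\bigwedge_i U_i$ — i.e.\ that $\neg\bigwedge_i U_i = \bigvee_i \neg U_i$ in a Boolean frame, and that meeting with $a$ turns this into $\bigvee_i V_i$. The reduction to a cofinal subsystem $\{i \geq i_0\}$ should be stated explicitly so that the modularity computations only ever involve chunks of finite measure.
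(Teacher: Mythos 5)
Your proof is correct and follows essentially the same route as the paper's: both reduce to the coinitial subsystem below $U_{i_0}$, pass to the relative complements $U_{i_0} \wedge \neg U_i$ (which form an upward-directed system), use modularity to convert measures of complements into differences, and invoke continuity of $\mu$ on the directed supremum. Your write-up merely makes explicit the de Morgan identity $\bigvee_i (a \wedge \neg U_i) = a \wedge \neg \bigwedge_i U_i$ that the paper uses implicitly.
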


\begin{proof} Write $m = \mu(U_{i_0}) < \infty$. Using directedness, we may assume w.l.o.g.\ that $U_i \leq U_{i_0}$ for all $i \in I$, and therefore $\mu(U_i) \leq m $ for all $i \in I$. Then compute
$$ \mu( \bigwedge_{i \in I} U_i ) + \mu( \bigvee_{i \in I} U_{i_0} \setminus U_i ) =  m = \mu( U_i ) + \mu( U_{i_0} \setminus U_i )$$
for all $i \in I$, therefore 
$$ \mu( \bigwedge_{i \in I} U_i ) + \mu( \bigvee_{i \in I} U_{i_0} \setminus U_i ) = \inf_{i \in I}  \mu( U_i ) +  \sup_{i \in I} \mu( U_{i_0} \setminus U_i ).$$
Hence $\mu( \bigwedge U_i ) = \inf_{i \in I} \mu( U_i )$ follows from continuity of $\mu$.
\end{proof}

\begin{theorem} \label{allequivalent}
The three conditions stated in Definition \ref{measurablelocale} are equivalent.
\end{theorem}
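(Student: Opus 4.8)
The plan is to prove the cycle $(1) \Rightarrow (2) \Rightarrow (3) \Rightarrow (1)$, with the first two implications being essentially bookkeeping and the last one carrying all the real content. For $(1) \Rightarrow (2)$: given a locally finite and faithful measure $\mu$, local finiteness provides a covering $1 = \bigvee_i K_i$ with $\mu(K_i) < \infty$; each restriction $\mu|_{K_i}$ (pushforward along the partial map $L \to K_i$, as in the restriction example) is finite, and it is faithful since faithfulness is inherited by restrictions. Hence each $K_i$ is a chunk admitting a finite faithful measure, and their supremum is $1$, giving (2). For $(2) \Rightarrow (3)$: suppose $b \neq 0$. By (2), $1 = \bigvee_j a_j$ with each $a_j$ admitting a finite faithful measure $\mu_j$. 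Since $b = \bigvee_j b \wedge a_j$ and $b \neq 0$, there is some $j$ with $b \wedge a_j \neq 0$; then $\mu_j$ extended by zero along $a_j^c \hookrightarrow L$ (extension by zero as in the closed-complement example) is a finite measure on $L$ with value $\mu_j(b \wedge a_j) \neq 0$ on $b$ (using that $\mu_j$ is faithful on $a_j$, so nonzero opens get nonzero measure). Wait — one must be slightly careful: extension by zero of $\mu_j$ gives a measure whose value on $b$ is $\mu_j(i^*(b)) = \mu_j(b\wedge a_j)$, which is $>0$ by faithfulness of $\mu_j$. This yields (3).

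The hard implication is $(3) \Rightarrow (1)$, and this is where the axiom of choice enters. The strategy is a maximality/exhaustion argument in the complete Boolean algebra $B = \mathcal{O}(L)$. Consider the collection of chunks $a$ that admit a finite faithful measure (call these \emph{good} chunks); by (3), below every nonzero $b$ there is a nonzero good chunk (take the measure $\mu$ with $\mu(b) \neq 0$ from (3), restrict it to a chunk of finite measure — here one may need to first intersect $b$ with a chunk on which $\mu$ is finite; since $\mu$ need not be finite, use that $\mu(b) \neq 0$ forces some finite-measure sub-chunk $c \leq b$ with $\mu(c) \neq 0$, and then $\mu|_c$ is finite and, replacing $c$ by a further sub-chunk on which $\mu|_c$ is faithful if necessary, good). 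Using Zorn's lemma, choose a maximal family $\{a_k\}_{k \in K}$ of pairwise disjoint nonzero good chunks, with chosen finite faithful measures $\mu_k$ on each. Set $U = \bigvee_k a_k$. Maximality together with the previous paragraph forces $U = 1$: if $U \neq 1$ then $\neg U \neq 0$ (Booleanness), and below $\neg U$ there is a nonzero good chunk disjoint from all $a_k$, contradicting maximality.

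It remains to assemble the $\mu_k$ into a single locally finite faithful measure on $L$. Here I would use the sheaf property of measures, Theorem 4.34: the family $\{a_k\}$ is a disjoint open cover of $L$ (i.e. $1 = \bigvee a_k$), and on pairwise intersections $a_k \wedge a_{k'} = 0$ the measures trivially agree (the empty locale has a unique measure), so — extending finite partial joins via Proposition 4.32 and then taking the directed colimit via Corollary 4.30 — the $\mu_k$ glue to a unique measure $\mu$ on $L$ with $\mu|_{a_k} = \mu_k$. By the "iff" clauses in those results, $\mu$ is locally finite (each $a_k$ has a finite-measure cover, namely $a_k$ itself) and faithful (each $\mu_k$ is faithful). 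Hence $L$ satisfies (1). The main obstacle is the bookkeeping in this last gluing step — in particular making sure the gluing results of Section 4, which are phrased for finite joins and directed systems, correctly produce a measure over an arbitrary disjoint cover; one routes an arbitrary cover through the directed system of its finite sub-joins, each handled by Proposition 4.32, and then applies Corollary 4.30. A secondary subtlety, flagged above, is that (3) only gives a possibly non-finite measure, so one must carve out a finite-measure faithful sub-chunk before feeding it into the Zorn argument; that $\mu(b) \neq 0$ implies the existence of such a sub-chunk follows from $\mu(b) = \sup\{\mu(c) : c \leq b\}$ being a supremum of finite values together with Booleanness (restrict further until faithful, using that a finite measure restricted to the sublocale killing its null ideal is faithful).
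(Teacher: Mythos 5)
Your proof is correct, but it is organized differently from the paper's. The paper proves $(1)\Leftrightarrow(2)$ directly --- $(1)\Rightarrow(2)$ trivially, and $(2)\Rightarrow(1)$ by a Zorn argument on the poset of pairs $(a,\mu_a)$ of chunks equipped with compatible locally finite faithful measures, with chains bounded via Corollary \ref{measuresaresheaffiltered} --- and then simply \emph{cites} Pavlov for the equivalence of $(2)$ and $(3)$. You instead prove the full cycle $(1)\Rightarrow(2)\Rightarrow(3)\Rightarrow(1)$ self-containedly. Your $(2)\Rightarrow(3)$ via extension by zero along $a_j^c$ is fine, and your $(3)\Rightarrow(1)$ replaces the paper's Zorn argument on partial measures by a Zorn argument on maximal disjoint families of ``good'' chunks, deferring the gluing to a single application of Proposition \ref{measuresaresheaffinite} and Corollary \ref{measuresaresheaffiltered} at the end; the two maximality arguments are interchangeable. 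What your route buys is independence from the external reference: the content of Pavlov's Lemma 2.53 is exactly your ``support'' step, namely that a finite measure $\mu$ with $\mu(b)\neq 0$ restricts to a faithful finite measure on $c = b\wedge\neg N_b$, where $N_b$ is the join of the $\mu$-null opens below $b$. That step is correct but is the one place you assert rather than argue: one needs that $N_b$ is itself null, which follows because the null opens form an ideal (modularity) and hence a directed system, so continuity of $\mu$ gives $\mu(N_b)=0$; then $\mu(c)=\mu(b)>0$ and any null $V\leq c$ satisfies $V\leq N_b\wedge\neg N_b=0$. With that sentence added, your proof is complete. (Your worry that the measure furnished by $(3)$ might be non-finite is unfounded --- condition $(3)$ explicitly provides a finite measure --- so the corresponding detour in your plan can be deleted.)
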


\begin{proof}
Condition (1) implies (2): This is clear, as local finiteness of $\mu$ means that $1$ can be obtained as a supremum of chunks $b$ of finite measure, and $\mu$ restricts to a finite faithful measure on $b$. \\

\noindent Condition (2) implies (1):
Consider the poset $P$ given by pairs $(a,\mu_a)$, where $a$ is a chunk of $L$, and $\mu_a$ is a locally finite and faithful measure on $a$, with $(a,\mu_a) \leq (a',\mu_a')$ if $a \leq a'$ and the restriction of $\mu_a'$ to $a$ agrees with $\mu_a$. This poset is non-empty, as otherwise $1$ would not be the supremum of all $a$ such that $a$ admit a finite faithful measure. Let $(a_i, \mu_{a_i})_{i \in I}$ be a chain in $P$. Define $a = \bigvee_{i \in I} a_i$, and $\mu_a(b) = \sup_{i \in I} \mu_{a_i}( b \wedge a_i )$. Then $\mu_a$ is again a locally finite and faithful measure, by Corollary \ref{measuresaresheaffiltered}. Now use Zorn's Lemma to obtain a maximal element $(a, \mu_a)$ of $P$. Suppose $a \neq 1$. Since $1$ is obtained as the supremum of $b$ such that $b$ admits a finite faithful measure, the same is true for $\neg a \neq 0$. (Here we have used Booleanness.) Choose $b \leq \neg a, b \neq 0$ together with a finite faithful measure $\mu_b$ on $b$. Then $(a \vee b, \mu)$ with $\mu(c) = \mu_a( c \wedge a) + \mu_b( c \wedge b)$ for $c \leq a \vee b$ satisfies $(a, \mu_a) \leq (a \vee b, \mu)$ in $P$, a contradiction to maximality. \\

\noindent Condition (2) and (3) are equivalent: This is treated in \cite[Lemma 2.53]{PAVLOV2022106884}.
\end{proof}

Aside from the obvious analogy with measurable spaces, another justification for calling these locales measurable comes from the following theorem. Given a locale $L$, we define the ring of bounded functions
$$C_b(L;\mathbb{C}) = \mathrm{colim}_{r \rightarrow + \infty} \mathrm{Map}(L, B_r(0)),$$
where $B_r(0) \subset \mathbb{C}$ is the disc of radius $r > 0$ centered at the origin. We obtain a functor
$$ C_b : \mathrm{Loc}^{op} \rightarrow *\mathrm{CAlg},$$
where $*\mathrm{CAlg}$ is the category of commutative $*$-algebras over $\mathbb{C}$. We call the restriction of this functor to the category of measurable locales
$$L^\infty : \mathrm{MblLoc}^{op} \rightarrow *\mathrm{CAlg}.$$
We remark on the difference to classical measure theory: Whereas for a measure space $(X,\mathcal{L},\mu)$, an element of $L^\infty(X)$ is given as an \emph{equivalence class} of measurable, essentially bounded functions, in the point-free setup an element of $L^\infty$ \emph{is} a bounded continuous function.

\begin{theorem}[\cite{PAVLOV2022106884}] \label{gelfanddualityvonneumann}
The functor $L^\infty$ induces an equivalence
$$L^\infty : \mathrm{MblLoc}^{op} \simeq \mathrm{CVNA},$$
where $\mathrm{CVNA}$ is the category of commutative von Neumann algebras and normal $*$-morphisms. The inverse functor sends a commutative von Neumann algebra to its locale of projections.
\end{theorem}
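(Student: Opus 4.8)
Theorem \ref{gelfanddualityvonneumann} asserts that $L^\infty : \mathrm{MblLoc}^{op} \to \mathrm{CVNA}$ is an equivalence of categories, with inverse sending a commutative von Neumann algebra to its locale of projections. This is attributed to Pavlov \cite{PAVLOV2022106884}, so in the context of the present paper the plan is not to reprove it from scratch but to explain how it is assembled, and which pieces are genuinely the obstacle.

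\textbf{Plan of proof.} The plan is to proceed in three stages. First I would set up the two candidate functors carefully: $L^\infty$ is already defined above as the restriction of $C_b(-;\mathbb{C})$, and I would observe that for a measurable locale $L$ the $*$-algebra $L^\infty(L)$ carries a norm (the sup-norm inherited from the colimit over discs $B_r(0)$) making it a commutative $C^*$-algebra, and that Booleanness together with the existence of a locally finite faithful measure (Theorem \ref{allequivalent}) furnishes enough bounded functions and enough "monotone completeness" to upgrade this to a von Neumann algebra — concretely, the lattice of projections of $L^\infty(L)$ should be identified with the frame $\mathcal{O}(L)$ itself, via $U \mapsto \chi_U$, using that every open of $L$ is clopen so its characteristic function is a genuine continuous $\{0,1\}$-valued function. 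In the reverse direction, for a commutative von Neumann algebra $A$, I would take $\mathrm{Proj}(A)$, its complete Boolean algebra of projections (completeness of the lattice of projections is the defining monotone-completeness feature of a von Neumann algebra), regard it as a frame, hence a Boolean locale, and verify measurability by producing a locally finite faithful measure — this is where one invokes the existence of a faithful normal semifinite weight / a sufficient family of normal states on $A$, which on projections restricts to the required family of finite measures, so condition (3) of Definition \ref{measurablelocale} holds.

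\textbf{The two natural isomorphisms.} The second stage is to construct the unit and counit. For a measurable locale $L$, the composite $L \mapsto L^\infty(L) \mapsto \mathrm{Proj}(L^\infty(L))$ should return $L$: the map $\mathcal{O}(L) \to \mathrm{Proj}(L^\infty(L))$, $U \mapsto \chi_U$, is a bijection of complete Boolean algebras — injectivity is faithfulness of $\mu$ (if $\chi_U = \chi_V$ then $U = V$), and surjectivity is the statement that every projection in a commutative $C^*$-algebra of the form $L^\infty(L)$ is a characteristic function of a clopen, which follows from the spectral/Gelfand picture together with zero-dimensionality of $L$. For a commutative von Neumann algebra $A$, the composite $A \mapsto \mathrm{Proj}(A) \mapsto L^\infty(\mathrm{Proj}(A))$ must recover $A$; this is essentially the spectral theorem — a normal element of $A$ is the norm-limit (indeed a $\sigma$-weakly convergent integral) of simple functions built from its spectral projections, and $L^\infty$ of the locale of projections is by construction generated under bounded suprema by the characteristic functions of those projections, so one gets a $*$-isomorphism, normal on both sides by the monotone-continuity clauses. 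Naturality in both variables is routine once the maps are pinned down, since a normal $*$-morphism $A \to B$ sends projections to projections preserving arbitrary suprema, i.e.\ is exactly a frame homomorphism $\mathrm{Proj}(A) \to \mathrm{Proj}(B)$, i.e.\ a continuous map of the associated locales in the correct direction.

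\textbf{Main obstacle.} The hard part is not the bookkeeping but the two "monotone completeness" facts that make von Neumann algebras (rather than arbitrary commutative $C^*$-algebras) the correct target: that $L^\infty(L)$ for $L$ measurable is genuinely a von Neumann algebra (equivalently an $AW^*$-algebra that is moreover a dual space / admits a separating family of normal states), and conversely that $\mathrm{Proj}(A)$ is a \emph{complete} Boolean algebra carrying enough measures. The first requires knowing that bounded directed families of continuous functions on a Boolean locale have continuous suprema — which is exactly the statement that $\mathcal{O}(L)$ is a frame plus a limiting argument on function values — and the second is the Kadison–Pedersen style input that a commutative von Neumann algebra has a faithful family of normal states. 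I would therefore isolate these as the two key lemmas, cite Pavlov \cite{PAVLOV2022106884} (in particular the vicinity of Lemma 2.53 referenced above, and the surrounding development of $L^\infty$) for their proofs, and present the rest as the formal dévissage assembling them into an equivalence.
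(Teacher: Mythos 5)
The paper does not prove this theorem at all: it is imported verbatim from Pavlov \cite{PAVLOV2022106884} and used as a black box, so there is no internal proof to compare your sketch against. Judged on its own terms, your outline is a faithful reconstruction of the standard argument and, importantly, you correctly locate the genuinely hard analytic content — that $L^\infty(L)$ is monotone complete with a separating family of normal states (hence a $W^*$-algebra and not merely a commutative $C^*$-algebra), and that $\mathrm{Proj}(A)$ is a complete Boolean algebra carrying enough finite measures to verify condition (3) of Definition \ref{measurablelocale} — and you defer exactly those two lemmas to Pavlov rather than claiming to prove them. That is the right division of labour for a citation-level result.

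Two small corrections. Injectivity of $U \mapsto \chi_U$ does not need faithfulness of any measure: the open $U$ is recovered from the continuous map $\chi_U$ as $\chi_U^*(\mathbb{C}\setminus\{0\})$, so the assignment is injective for purely formal reasons; likewise surjectivity onto projections (a self-adjoint idempotent continuous function pulls back the disjoint opens $\mathbb{C}\setminus\{0\}$ and $\mathbb{C}\setminus\{1\}$ to complementary opens) does not actually use zero-dimensionality. Second, the claim that bounded directed suprema of continuous functions exist in $L^\infty(L)$ is rather more than ``$\mathcal{O}(L)$ is a frame plus a limiting argument'': one must exhibit the supremum as an actual frame homomorphism $\mathcal{O}(\mathbb{C}) \to \mathcal{O}(L)$, which is the technical heart of Pavlov's construction of $L^\infty$ and should not be presented as routine. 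Neither point breaks your argument, but the second in particular deserves to be flagged as part of the cited input rather than as dévissage.
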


This means already developed tools in measure theory can be readily imported to the world of measurable locales.

\begin{remark} It seems plausible that the inclusion 
$$\mathrm{MblLoc} \rightarrow \mathrm{Loc}$$
has a right adjoint
$$(-)_{\mathrm{Bor}} : \mathrm{Loc} \rightarrow \mathrm{MblLoc}$$
which would be suitably referred to as the Borel locale of a locale. This would stand in contrast to the case of the inclusion $\mathrm{BoolLoc} \rightarrow \mathrm{Loc}$. (The non-existence of such a right adjoint is discussed in \cite[57]{johnstone1982stone}.) However, it is not clear to the author at the time of writing how the functor $(-)_{\mathrm{Bor}}$ would be described explicitly.
\end{remark}

\begin{remark}
Any commutative von Neumann algebra $\mathcal{A}$ is of the form $L^\infty(\Gamma, \mu)$ for some locally compact space $\Gamma$ together with Radon measure $\mu$, see e.g.\ \cite[V Theorem 1.18.]{Takesaki1979}. We will independently reprove this fact, under the above Gelfand-type duality, by showing that any measurable locale is obtained from a regular content on a locally compact Hausdorff space, see Theorem \ref{representationtheorem}.
\end{remark}

Given a measurable locale $M$, and a locale $L$, and an $L$-valued random variable on $M$ is just defined as a continuous function $M \rightarrow L$.\footnote{In practice this matches the standard definition of random variable up to almost everywhere equivalence.}

Since we can use valuation sites to generate locally finite and faithful measures, an important question is when the frame $\mathrm{Sh}(D,\mu; \mathbf{2})$ generated from a valuation site $(D,\mu)$ is actually a Boolean frame, in which case the corresponding locale is a measurable locale. This leads us to the notion of an \emph{almost Boolean} valuation site.

\begin{definition} \label{almostboolean}
Let $(D,\mu)$ be a valuation site.
\begin{itemize}
\item We call $(D,\mu)$ \emph{almost disconnected}, if for any $c_0 \leq c \in D$, and $\epsilon > 0$ there exists $d \leq c$ such that
$$\begin{array}{l}
 \mu( d \wedge c_0 ) = 0, \text{ and}  \\
 \mu( c ) - \mu( d \vee c_0 ) < \epsilon. 
\end{array}$$
\item We call $(D,\mu)$ \emph{almost Boolean}, if for any $c \in D$ and for any ascending sequence $c_0 \leq c_1 \leq \hdots \leq  c$ of elements contained in $c$, and for any $\epsilon > 0$ there exists $N \in \mathbb{N}$ and $d \leq c$ such that
$$\begin{array}{l}
\mu( d \wedge c_n ) = 0  \text{ for all } n \in \mathbb{N}, \text{ and} \\ 
\mu( c ) - \mu( d \vee c_N ) < \epsilon.
\end{array}$$
\end{itemize}
\end{definition}

Intuitively, we think of the element $d$ as an approximate complement relative to $c$ to the element $c_0$, respectively to the subobject represented by the union of $c_n, n \in \mathbb{N}$.

\begin{example}
The set $P = P(S)^{\mathrm{fin}}$ of finite subsets of a given set $S$, together with the counting measure, is almost Boolean. To see this, let $A$ be a finite subset of $S$ and $B_n \subset A, n \in \mathbb{N}$ an ascending sequence of subsets. Since $A$ is finite, there exists a number $n_0 \in \mathbb{N}$ where the maximum $B = B_{n_0}$ is achieved. Now set $C = A \setminus B$ as the almost complement.
\end{example}

\begin{example}
Let $(L, \delta_x)$ be a locale with a point $x$, equipped with the Dirac measure. Then $\mathcal{O}(L)$ is almost Boolean. To see this, let $U$ be an open of $L$. If $x \not\in U$, there is nothing to show. Assume $x \in U$. If $F_n \subset U, n \in \mathbb{N}$ is an ascending chain, then $x \in \bigvee_{n \in \mathbb{N}} F_n$ iff $x \in F_{n_0}$ for some $n_0 \in \mathbb{N}$. If this is the case, take $C = 0$. Otherwise, $x \not\in F_n$ for all $n \in \mathbb{N}$, in which case it suffices to take $C = U$ as the approximate complement.
\end{example}

\begin{example}
The poset $([0,\infty), \leq)$ equipped with the tautological valuation is not almost disconnected.
\end{example}

\begin{example}
The Sierpinski space $\mathbb{S}$ given by the frame $\mathcal{O}(\mathbb{S}) = \{0 \leq U \leq 1 \}$ together with the valuation $\mu_p$ defined via $\mu_p(U) = p$ and $\mu_p(1)=1$ is an example of a valuation site that is \emph{not} almost disconnected for $p < 1$.
\end{example}

\begin{example} Consider a valuation $\mu$ on a Boolean algebra $B$. Then $(B,\mu)$ is almost disconnected, since for any $c \leq d$, we can take $d \setminus c = d \wedge \neg c$ as the relative complement. However, in general $(B,\mu)$ is not almost Boolean, as can be seen from the following example.
\end{example}

\begin{example} \label{Randomsequences2} We continue with Example \ref{Randomsequences}, given by Cantor space $C$. The poset of compact opens of $C$,
$$\mathcal{O} = \mathrm{colim}_{n \in \mathbb{N}} \mathcal{P}( \mathbf{2}^n )$$
is a Boolean algebra. Therefore $(\mathcal{O}, \mu)$, where $\mu$ is the uniform valuation, is an example of an almost disconnected valuation site. However, it is not almost Boolean, as seen by the following counterexample, which is a combinatorial version of (the open complement of) the classical Smith–Volterra–Cantor set, also lovingly referred to as \emph{Fat Cantor set}.

First of all, by cofinality we can write 
$$\mathcal{O} \cong \mathrm{colim}_{n \in \mathbb{N}} \mathcal{P}( \mathbf{2}^{n^2} ).$$
We have the projection $p_n : \mathbf{2}^{n^2} \rightarrow \mathbf{2}^{(n-1)^2}$. Note that $p_n$ is measure-preserving. We will give a sequence of subsets $A_n \in \mathcal{P}( \mathbf{2}^{n^2} )$ constructed inductively.
\begin{itemize}
\item The set $A_1$ is chosen to be a one-element subset of $\mathbf{2}^{1^2} = \mathbf{2}$. We have $\mu(A_1) = \frac{1}{2}$.
\item Define $A_n$ as a union of $p_{n}^{-1}(A_{n-1}) \subset \mathbf{2}^{n^2}$ together with a choice of elements $x_y \in \mathbf{2}^{n^2}$ for each of the sets $p_{n}^{-1}({y})$ for $y \not\in A_{n-1}$. A calculation yields
$$\mu(A_n) = \mu(A_{n-1}) + \frac{1-\mu(A_{n-1})}{2^{2n-1}}.$$
\end{itemize}
The sequence $(A_n, n \in \mathbb{N})$ can be identified with an ascending sequence in $\mathcal{O}$. We verify by estimation that
$$\sup_{n\in \mathbb{N}} \mu(A_n) \leq \sum_{n=1}^\infty \frac{1}{2^{2 n - 1}} = \frac{2}{3} < 1.$$
However, there cannot exist an $\epsilon$-complement for this sequence for any $\epsilon$. Assume $B$ is an element, such that $\mu(B \cap A_n) = 0$ for all $n \in \mathbb{N}$. The set $B$ must live in some finite stage $\mathcal{P}( \mathbf{2}^{k^2} )$ for some $k \in \mathbb{N}$. By assumption, we have $B \cap A_k = \emptyset$. But, assuming that $B$ is non-empty, we must have $p_{k+1}^{-1}(B) \cap A_{k+1} \neq \emptyset$, as the intersection contains at least one element by construction of $A_{k+1}$. Therefore $\mu( p_{k+1}^{-1}(B) \cap A_{k+1} ) > 0$, a contradiction.
\end{example}

\begin{remark} The existence of Fat Cantor sets hints at a deeper result about the potential structure of measurable locales. As a consequence of Maharam's theorem \cite[Ch. 33]{fremlin2000measure}, there cannot exist a non-discrete measurable locale with a countable basis. This also implies indirectly that the locale $\mathrm{Ran}(\mathbf{2})$ of random sequences cannot be Boolean, as we've seen explicitly from the computation above.
\end{remark}

\begin{proposition}
Let $(D,\mu)$ be an almost disconnected valuation site. Then for every $c \in D$, the propositional sheaf $[c] \in \mathrm{Sh}(D, \mu; \mathbf{2})$ is clopen. In particular, the $\mu$-inner locale $L(D,\mu)$ is zero-dimensional.
\end{proposition}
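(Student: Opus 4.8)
The plan is to show that $[c]$ has a complement in the frame $\mathrm{Sh}(D,\mu;\mathbf{2})$ by exhibiting an explicit open which plays the role of $\neg[c]$, and then checking the two defining equations of a complement: $[c] \wedge \neg[c] = 0$ and $[c] \vee \neg[c] = 1$. Since $\mathrm{Sh}(D,\mu;\mathbf{2})$ is a frame, the negation $\neg[c] = [c] \rightarrow 0$ always exists, so it suffices to verify $[c] \vee \neg[c] = [1]$ (using that $[-]$ preserves the bottom, and noting $c$ ranges over $D$ which need not have a top; more precisely one works relative to each $e \in D$ with $c \leq e$, showing $[c] \vee \neg[c] \geq [e]$, and then takes the supremum over all such $e$ using the Co-Yoneda Lemma \ref{coyonedalemma}). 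By Lemma \ref{implicationformulas}, we have the concrete description $\neg[c] = [c] \rightarrow N = \{ q \in D ~|~ \mu(q \wedge c) = 0 \}$, where $N$ is the null ideal.

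First I would fix $e \in D$ with $c \leq e$ (the relevant instance of the almost disconnectedness axiom applied to $c_0 = c$, $c = e$), and aim to show $[e] \leq [c] \vee \neg[c]$. Since $[e] = \bigvee_{i}[e_i]$ over... actually more directly, it suffices to show $e$ itself lies in the $\mu$-ideal underlying $[c] \vee \neg[c]$; because $[c] \vee \neg[c]$ is a $\mu$-ideal, and $e$ can be $\mu$-approximated from below, it is enough to show that for every $\epsilon > 0$ there exists $p \leq e$ with $p \in [c] \vee \neg[c]$ and $\mu(e) - \mu(p) < \epsilon$. Here I would invoke almost disconnectedness: given $\epsilon > 0$ there exists $d \leq e$ with $\mu(d \wedge c) = 0$ and $\mu(e) - \mu(d \vee c) < \epsilon$. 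Then $d \in \neg[c]$ (since $\mu(d \wedge c) = 0$) and $c \in [c]$, so $d \vee c$ lies in the ideal generated by $[c]$ and $\neg[c]$; since the supremum $[c] \vee \neg[c]$ in $\mathrm{Sh}(D,\mu;\mathbf{2})$ contains the join in $\mathrm{Idl}(D)$, which contains $d \vee c$, and $\mu(e) - \mu(d\vee c) < \epsilon$, we conclude $e \in [c] \vee \neg[c]$ by closure under $\mu$-approximation.

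Next I would check $[c] \wedge \neg[c] = 0$. Using Lemma \ref{implicationformulas}(1), $[c] = \{q ~|~ \mu(q \wedge c) = \mu(q)\}$, and $\neg[c] = \{q ~|~ \mu(q \wedge c) = 0\}$. If $q$ lies in both, then $\mu(q) = \mu(q \wedge c) = 0$, so $q \in N$, the null ideal, which is the zero object of $\mathrm{Sh}(D,\mu;\mathbf{2})$ by Lemma \ref{implicationformulas}(2). Meets in $\mathrm{Sh}(D,\mu;\mathbf{2})$ are computed as intersections of ideals (the inclusion into $\mathrm{Idl}(D)$ preserves meets, being the right adjoint part of a sublocale, and finite meets are preserved by sheafification), so $[c] \wedge \neg[c] \subseteq N$, hence equals $0$. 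Combining the two verifications shows $[c]$ is clopen. Finally, since $[-] : D \rightarrow \mathrm{Sh}(D,\mu;\mathbf{2})$ is covering-preserving and, by the Co-Yoneda Lemma \ref{coyonedalemma}, every open is a supremum of elementary propositions $[c]$, the elements $[c]$ form a basis of clopens, so $L(D,\mu)$ is zero-dimensional by definition.

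The main obstacle I anticipate is bookkeeping around the lack of a top element in $D$: one must be careful that "$[c] \vee \neg[c] = 1$" is interpreted correctly, namely that the supremum equals the top of $\mathrm{Sh}(D,\mu;\mathbf{2})$, which corresponds to the maximal $\mu$-ideal $D$ itself — and this requires showing every $e \in D$ lies in $[c] \vee \neg[c]$, not merely every element below $c$. The argument above handles this by running the almost-disconnectedness axiom with the pair $(c, e)$ for arbitrary $e \geq c$; if $D$ has a top this is just the single instance $e = 1$, but in general one sweeps over all $e$. A secondary subtlety is making sure the join $[c] \vee \neg[c]$ taken in the frame of sheaves agrees, as far as membership of specific elements like $d \vee c$ is concerned, with the naive ideal-theoretic join — this follows because sheafification only enlarges ideals, so the sheaf-join contains the ideal-join, which is all we need for the inequality direction being used.
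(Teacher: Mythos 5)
Your proof is correct and follows essentially the same route as the paper: identify $\neg[c]$ with $\{q \mid \mu(q\wedge c)=0\}$ via Lemma \ref{implicationformulas} and use almost disconnectedness to produce, for each ambient element, an approximate relative complement $d$ so that $d\vee c$ witnesses the required $\mu$-approximation; the only cosmetic difference is that you instantiate the axiom with the pair $(c,e)$ for $e\geq c$ while the paper uses $(c\wedge e, c)$ for arbitrary $c$, which amounts to the same thing after the downward-closure/join trick you describe. The explicit verification of $[c]\wedge\neg[c]=0$ is redundant (it is the counit of the Heyting adjunction), but harmless.
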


\begin{proof}
Let $e \in D$. We need to show that $[e]$ has a complement, which is the case iff
$$ 1 = [e] \vee ( [e] \rightarrow N ).$$
where $1 = D$ is the maximal $\mu$-ideal. Recall that $[e] \rightarrow N = \{ d ~|~ \mu(d \wedge e) = 0 \}$ by Lemma \ref{implicationformulas}. In other words, we need to show that for any $c \in D$, we have
$$[c] = [c \wedge e] \vee ( [c] \wedge ( [e] \rightarrow N ) ).$$
Denote $c_0 = c \wedge e$. Then the above holds iff for all $\epsilon > 0$ we find $d \leq c$ such that $\mu(d \wedge e ) = \mu( d \wedge c_0) = 0$ and $\mu(c) - \mu(d \vee c_0) < \epsilon$. But this is just the condition for $(D, \mu)$ to be almost disconnected.
\end{proof}

Thus we see that almost disconnectedness of $(D,\mu)$ leads to the locale $L(D,\mu)$ being zero-dimensional. In order to guarantee that it is Boolean, we need the stronger condition of $(D,\mu)$ being almost Boolean.

\begin{theorem} \label{almostbooleangivesboolean}
Let $(D,\mu)$ be a valuation site. Let $N \subset D$ be the ideal of null sets. The following are equivalent:
\begin{enumerate}
\item $(D,\mu)$ is almost Boolean.
\item $\mathrm{Sh}(D,\mu; \mathbf{2})$ is a Boolean frame.
\item $L(D,\mu)$ is a measurable locale.
\item $Sh(D,\mu; \mathbf{2}) = b(N) \subset \mathrm{Sh}(D,fin; \mathbf{2}) \cong \mathrm{Ind}(D)$.
\end{enumerate}
\end{theorem}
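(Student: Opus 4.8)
The plan is to prove the chain of implications $(1) \Rightarrow (4) \Rightarrow (2) \Rightarrow (3) \Rightarrow (1)$, which groups the work naturally: the genuinely new content is the step $(1) \Rightarrow (4)$, after which $(4) \Rightarrow (2)$ is essentially the characterization of Boolean sublocales (Proposition \ref{booleansublocalescharacterization}), $(2) \Rightarrow (3)$ follows from Theorem \ref{innermeasureproperties} together with the definition of measurable locale (Definition \ref{measurablelocale}), and $(3) \Rightarrow (1)$ is the contrapositive-flavored converse using the Fat Cantor-type obstruction illustrated in Example \ref{Randomsequences2}.

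For $(1) \Rightarrow (4)$: recall from Example \ref{finitarytopology} that $\mathrm{Sh}(D,fin;\mathbf{2}) \cong \mathrm{Ind}(D)$, and that the inclusion $\mathrm{Sh}(D,\mu;\mathbf{2}) \hookrightarrow \mathrm{Sh}(D,fin;\mathbf{2})$ is a sublocale inclusion, with $\mathrm{Sh}(D,\mu;\mathbf{2})$ consisting of the $fin$-ideals closed under $\mu$-approximation. By Lemma \ref{implicationformulas}(2) the zero object $N = \{q \in D \mid \mu(q) = 0\}$, viewed inside $\mathrm{Ind}(D)$, and $b(N) = (N^c)_{\neg\neg}$ is by the discussion preceding Proposition \ref{booleansublocalescharacterization} the smallest sublocale of $\mathrm{Ind}(D)$ containing $N$; concretely its frame is $\{J \to N \mid J \in \mathrm{Ind}(D)\}$. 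I would first check $\mathrm{Sh}(D,\mu;\mathbf{2}) \supseteq b(N)$: since $\mathrm{Sh}(D,\mu;\mathbf{2})$ is a sublocale containing $N$ (the null ideal is plainly $\mu$-approximation closed), it must contain the smallest such sublocale. For the reverse inclusion $\mathrm{Sh}(D,\mu;\mathbf{2}) \subseteq b(N)$, I must show every $\mu$-ideal $U$ has the form $J \to N$ for some $fin$-ideal $J$; equivalently, using $U = \bigvee_{p \in U}[p]$ and the computation $J \to N = \{q \mid \mu(q \wedge p) = 0 \ \forall p \in J\}$ from Lemma \ref{implicationformulas}(4), it suffices to produce for each $\mu$-ideal $U$ an ideal $J$ with $U = \{q \mid \mu(q \wedge p) = 0 \ \forall p \in J\}$. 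The natural candidate is $J = U \to N$ itself, so the content reduces to the identity $U = (U \to N) \to N = \neg\neg U$ holding in $\mathrm{Ind}(D)$ for $U$ a $\mu$-ideal — and this is exactly where almost Booleanness enters. Unwinding with Proposition \ref{filteredapproximation}: a $\mu$-ideal of finite measure is exhausted by an ascending sequence $c_0 \leq c_1 \leq \cdots$, and almost Booleanness supplies, for each ambient $c$ and each $\epsilon$, an element $d \leq c$ with $\mu(d \wedge c_n) = 0$ for all $n$ and $\mu(c) - \mu(d \vee c_N) < \epsilon$; this $d$ lies in $(U \to N)$ and witnesses that any $q \in (U\to N)\to N$ below $c$ is $\mu$-approximated from within $U$, giving $(U\to N)\to N \subseteq U$, the nontrivial inclusion. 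The infinite-measure case is handled by restricting to each chunk of finite measure using local finiteness of $\mu_*$ (Theorem \ref{innermeasureproperties}) and Corollary \ref{measuresaresheaffiltered}.

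For the remaining implications: $(4) \Rightarrow (2)$ is immediate since $b(N)$ is a Boolean sublocale by Proposition \ref{booleansublocalescharacterization}. $(2) \Rightarrow (3)$: by Theorem \ref{innermeasureproperties} the inner measure $\mu_*$ is locally finite and faithful, so a Boolean $\mathrm{Sh}(D,\mu;\mathbf{2})$ satisfies condition (1) of Definition \ref{measurablelocale}. Finally $(3) \Rightarrow (1)$, the converse: assuming $L(D,\mu)$ is measurable but $(D,\mu)$ fails to be almost Boolean, there is a chunk $c$, an ascending sequence $c_n \leq c$, and an $\epsilon > 0$ admitting no approximate complement; I would argue as in Example \ref{Randomsequences2} that the propositional sheaf $U = \bigvee_n [c_n]$ then has no complement in $\mathrm{Sh}(D,\mu;\mathbf{2})$, contradicting Booleanness (which is forced by measurability since measurable locales are Boolean by definition). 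The main obstacle is the core inclusion $(U \to N) \to N \subseteq U$ in $(1) \Rightarrow (4)$: one must carefully juggle the $\epsilon$'s across an exhausting sequence and confirm that almost Booleanness — rather than mere almost disconnectedness, which only yields zero-dimensionality — is precisely strong enough to close the gap, which is why the finite/infinite-measure bookkeeping via Proposition \ref{filteredapproximation} has to be done with some care.
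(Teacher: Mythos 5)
Your proposal is correct and follows essentially the same route as the paper: the core of $(1)\Rightarrow(4)$ --- exhausting $[q]\wedge U$ via Proposition \ref{filteredapproximation} and feeding that exhaustion into almost Booleanness to produce a $d \in U \to N$ which is then forced to have measure zero --- is exactly the paper's proof that $1 = I \vee (I \to N)$, merely repackaged as $\neg\neg U = U$, and the remaining implications invoke the same ingredients (Proposition \ref{booleansublocalescharacterization}, Theorem \ref{innermeasureproperties}, Lemma \ref{implicationformulas}). The only place you are thinner than the paper is $(3)\Rightarrow(1)$, where you should make explicit that a complement of $I=\bigvee_n[c_n]$ yields $[c] = I \vee ((I\to N)\wedge[c])$, whose covering condition, unwound via Lemma \ref{implicationformulas}, hands you precisely the index $N$ and the element $d$ demanded by almost Booleanness --- but that is a writing gap, not a mathematical one.
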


\begin{proof}
The equivalence of (2) and (3) is by definition. The equivalence of (2) and (4) is an immediate consequence from Proposition \ref{booleansublocalescharacterization}. 

Assume (1), i.e.\ that $(D,\mu)$ is almost Boolean. Let $I \subset D$ be a $\mu$-ideal. We need to show that $I$ has a complement, which is the case iff
$$ 1 = I \vee ( I \rightarrow N ).$$
Keeping in mind that $1 = D$ is the maximal $\mu$-ideal, this is equivalent to the statement:

For all $ c \in C, \epsilon > 0$ there exist elements $k, d \leq c$ such that:
\begin{itemize}
\item $k \in I.$
\item $d$ satisfies $\mu(d \wedge k' ) = 0 \text{ for all } k' \in I$.
\item $\mu(c) - \mu( d \vee k ) < \epsilon.$
\end{itemize}

Let us prove this claim. Let $c \in C$ and $\epsilon > 0$. Since $\mu_*( [c] \wedge I ) \leq \mu( c ) < \infty$, we can use Proposition \ref{filteredapproximation} to get an exhaustion $c_1 \leq c_2 \leq \hdots $ such that 
$$[c] \wedge I = \bigvee_{n \in \mathbb{N}} [c_n].$$

Using that $(D,\mu)$ is almost Boolean, we can find $m \in \mathbb{N}$ and $d \in D$ such that
$$\mu( d \wedge c_n ) = 0 \text{ for all } n \in \mathbb{N}, \text{ and } \mu( c ) -  \mu( c_m \vee d ) < \epsilon.$$

Setting $k = c_m$ we are almost done with the argument. We are left to show that $\mu( d \wedge k' ) = 0$ for all $k' \in I$, or in other words $\mu_*( [d] \wedge I ) = 0$. This follows as $d \leq c$ by
$$[d] \wedge I = [d] \wedge [c] \wedge I = \bigvee_{n \in \mathbb{N}} [d] \wedge [c_n] = \bigvee_{n \in \mathbb{N}} N = N. $$

To show (2) implies (1), let $c \in C$, $c_0 \leq c_1 \leq \hdots \leq  c$ and $\epsilon > 0$. Define $I = \bigvee_{n \in \mathbb{N}} [c_n]$. By construction $I \leq [c]$. Since $Sh(D,\mu; \mathbf{2})$ is assumed to be Boolean, we have 
$$ [c] = I \vee ( (I \rightarrow N) \wedge [c] ) = I \vee \{ d \in D ~|~\forall n \in \mathbb{N} ~ \mu(d \wedge c_n) = 0  \text{ and } \mu(d) = \mu(d \wedge c) \}$$
where we used the description of $I \rightarrow N$ provided by Lemma \ref{implicationformulas}.

This can only be the case if $c$ itself has a $\mu$-approximation by elements from the right-hand side, which is equivalent to $(D,\mu)$ being almost Boolean.
\end{proof}

\begin{example}
A major source of examples of almost Boolean valuation sites is given by the notion of a \emph{regular content} $\lambda$ on the set $\mathcal{K}(X)$ of compact subsets of a Hausdorff space $X$, as discussed later with Theorem \ref{regularcontentalmostboolean}. A particular classical example to keep in mind is the $d$-dimensional Lebesgue measure $\lambda_d$ on $\mathbb{R}^d$ when restricted to the set $\mathcal{K}^d = \mathcal{K}(\mathbb{R}^d)$ of compact subsets of $\mathbb{R}^d$, which we elaborate on in Section \ref{lebesguereals}. This produces the measurable locale of \emph{Lebesgue reals} $\mathbb{R}^d_{Leb} = L( \mathcal{K}^d, \mu)$. The corresponding complete Boolean algebra $\mathcal{O}(\mathbb{R}^d_{Leb})$ is also called \emph{random algebra}, and more classically obtained by modding out the Boolean algebra of Lebesgue measurable sets by the ideal of $\lambda_d$-null sets. (We will prove this isomorphism later in Theorem \ref{radonmeasures}.) The random algebra was originally studied by von Neumann, see \cite[p.253, Example 2]{von1998continuous}.
\end{example}

\begin{example} Keeping with the theme of measures on $\mathbb{R}^d$, one may instead of $(\mathcal{K}^d,\mu)$ look at the sub-valuation site $(\mathcal{J}^d,\mu)$ spanned by finite unions of \emph{boxes}, where a \emph{box} is a subset of the form $[a, b] \subset \mathbb{R}^d$ for $a,b \in \mathbb{R}^d$ and
$$[a, b] = \{ x \in \mathbb{R}^d ~|~ a_i \leq x_i \leq b_i \text{ for all } i = 1, \hdots, d \}.$$
The locale $\mathbb{R}^d_{Jor} = L(\mathcal{J}^d,\mu)$ might be suitably called the \emph{Jordan reals} as it captures the essence of the so called \emph{Jordan measure}, see \cite[ Section 1.1.2.]{Tao2011-TAOAIT-2}, and comes with a continuous, measure-preserving map $\mathbb{R}^d_{Jor} \rightarrow \mathbb{R}^d$. Every inclusion $D \subset D'$ of finite unions of boxes admits an $\epsilon$-complement, however $(\mathcal{J}^d,\mu)$ is not almost Boolean. As an example, consider the construction of a Smith-Volterra-Cantor set, also referred to as Fat Cantor set, as for example described in \cite[141]{aliprantis1998principles}, for some $\epsilon > 0$. This is obtained as the complement of a countable union of ascending open sets $U_n \subset [0,1]$, each being given by a finite union of open intervals,  such that for $U = \bigcup_{n \in \mathbb{N}} U_n$ we have $\mu(U) = 1- \epsilon < 1$, yet the intersection of any interval $[a,b]$ with $U$ will always have positive measure. Simply doing the same construction with closed intervals provides an ascending sequence of sets $A_n$, each consisting of a finite union of closed intervals, that cannot have approximate complements. 
\end{example}

\section{The locally coherent space associated to a Hausdorff space}

An important example of a measure is that of a Radon measure on a Hausdorff space. Before we begin with the measure-theoretic aspects, we will take care of some categorical constructions that are possible for Hausdorff spaces in this section. Since Hausdorff spaces are automatically sober \cite[Ch. I 1.2]{picado_pultr}, we obtain a fully faithful embedding
$$ L : \mathrm{HausSpc} \hookrightarrow \mathrm{Loc}$$
Henceforth we will identify a Hausdorff topological space with its corresponding locale. Hausdorffness of a topological space $X$ guarantees that the poset $\mathcal{K}(X)$ of compact subsets of $X$ interacts favourably with the locale corresponding to $X$. Let us state a lemma that will be useful later on.

\begin{lemma}[Separation lemma, \cite{cohn1994measure} 7.1.2] \label{compactseparationlemma}
Let $X$ be a Hausdorff space. Assume $K_1, K_2$ are two compact subsets of $X$ such that $K_1 \cap K_2 = \emptyset$. There exist $U_1, U_2$ open sets of $X$ such that $U_1 \cap U_2 = \emptyset$ and $K_1 \subset U_1$ as well as $K_2 \subset U_2$.
\end{lemma}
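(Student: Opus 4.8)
The statement to prove is the Separation Lemma: for a Hausdorff space $X$ and disjoint compact subsets $K_1, K_2$, there exist disjoint open sets $U_1 \supset K_1$ and $U_2 \supset K_2$. This is a classical fact, so the plan is to give the standard two-stage compactness argument rather than anything clever.

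\emph{First stage: separate a point from a compact set.} Fix a point $x \in K_1$. For each $y \in K_2$, Hausdorffness gives disjoint open sets $V_y \ni x$ and $W_y \ni y$. The sets $\{W_y\}_{y \in K_2}$ form an open cover of $K_2$; by compactness of $K_2$ there is a finite subcover $W_{y_1}, \dots, W_{y_n}$. Set $W^x = W_{y_1} \cup \dots \cup W_{y_n}$ and $V^x = V_{y_1} \cap \dots \cap V_{y_n}$. Then $V^x$ is open, contains $x$, $W^x$ is open, contains $K_2$, and $V^x \cap W^x = \emptyset$ since $V^x \subset V_{y_i}$ for each $i$ while $W^x = \bigcup_i W_{y_i}$ and each $V_{y_i} \cap W_{y_i} = \emptyset$.

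\emph{Second stage: separate the two compact sets.} Now $\{V^x\}_{x \in K_1}$ is an open cover of $K_1$; by compactness of $K_1$ there is a finite subcover $V^{x_1}, \dots, V^{x_m}$. Put $U_1 = V^{x_1} \cup \dots \cup V^{x_m}$ and $U_2 = W^{x_1} \cap \dots \cap W^{x_m}$. Then $U_1$ is open and contains $K_1$; $U_2$ is open and contains $K_2$ (each $W^{x_j} \supset K_2$). Finally $U_1 \cap U_2 = \emptyset$: if $z \in U_1$ then $z \in V^{x_j}$ for some $j$, and since $V^{x_j} \cap W^{x_j} = \emptyset$ we get $z \notin W^{x_j} \supset U_2$, so $z \notin U_2$.

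\emph{Obstacle assessment.} There is essentially no obstacle here — this is a textbook argument (indeed the excerpt cites Cohn's \emph{Measure Theory} 7.1.2 for it). The only points requiring the slightest care are the directions of the unions versus intersections at each stage (the point/small-set side gets intersected, the compact/large-set side gets unioned, so that finiteness keeps the intersections open) and being explicit that the first stage must be completed for \emph{every} $x \in K_1$ before invoking compactness of $K_1$ in the second stage. If one wanted a fully point-free proof one could instead argue inside the frame $\mathcal{O}(X)$ using that $X$ being Hausdorff means the diagonal sublocale is closed, but since the lemma as stated is about subsets of a genuine topological space, the elementary argument above is the natural route.
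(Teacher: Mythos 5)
Your proof is correct and is exactly the standard two-stage compactness argument; the paper itself gives no proof here, simply citing Cohn's \emph{Measure Theory} 7.1.2, and your argument is the one that citation refers to. The unions/intersections are placed on the right sides at each stage and the disjointness checks are sound, so there is nothing to add.
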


Since Hausdorffness implies that intersections of compact subsets are still compact we see that $\mathcal{K}(X)$ is a lower bounded distributive lattice. We can equip it with the \emph{finite covering} topology, where
$$\{ K_i \subset K ~|~ i \in I \}$$
is a covering if $I$ is finite and $\bigcup_{i \in I} K_i = K$.

\begin{definition}
Let $X$ be a Hausdorff space. Define $X^\mathcal{K}$ to be the locale corresponding to the $0$-site $\mathcal{K}(X)$ equipped with the finite covering topology $fin$.
\end{definition}

\begin{remark}Recall that this means concretely that
$$\mathcal{O}(X^\mathcal{K}) \cong \mathrm{Idl}( \mathcal{K}(X) ) \cong \mathrm{Ind}( \mathcal{K}(X) ).$$
The locale $X^\mathcal{K}$ is an example of a locally coherent locale. This means it is spatial and corresponds to a locally compact, but not necessarily Hausdorff, space. For more detail, see \cite{lehner2025algebraicktheorycoherentspaces}. 
\end{remark}

For any topological space $X$, there is a functorial assignment
$$\begin{array}{rcl}
\theta_X^* : \mathcal{O}(X)  &\rightarrow & \mathcal{O}(X^\mathcal{K}) \\
U & \mapsto & \{ K \subset U ~|~ K \text{ compact} \}
\end{array}$$
It is clear that $\theta^*$ maps every open $U$ to a $fin$-proposition, and that $\theta^*$ preserves finite meets.

\begin{proposition} \label{framehomforlocallycompacthausdorff}
Let $X$ be a Hausdorff space. Then $\theta_X^*$ is a frame homomorphism. If $X$ is locally compact then the resulting map of locales
$$ \theta : X^\mathcal{K} \rightarrow X$$
is an epimorphism of locales.
\end{proposition}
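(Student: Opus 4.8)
The plan is to verify the frame homomorphism conditions for $\theta_X^*$ directly, and then handle epimorphy separately under local compactness. First I would note that $\theta_X^*$ is manifestly order-preserving and preserves finite meets: indeed $\theta_X^*(X) = \mathcal{K}(X)$ is the top $fin$-ideal, and for opens $U, V$ a compact set $K$ satisfies $K \subset U \cap V$ iff $K \subset U$ and $K \subset V$, so $\theta_X^*(U \cap V) = \theta_X^*(U) \wedge \theta_X^*(V)$ (the meet in $\mathrm{Idl}(\mathcal{K}(X))$ being intersection). The substantive point is preservation of arbitrary suprema. Since suprema of opens in $X$ are unions, and suprema of $fin$-ideals are generated by finite unions of their members (the sheafification of the union of the downward-closed sets), I would show that for a family $\{U_\alpha\}$ one has
$$\theta_X^*\Big(\bigcup_\alpha U_\alpha\Big) = \bigvee_\alpha \theta_X^*(U_\alpha)$$
by checking that a compact $K \subset \bigcup_\alpha U_\alpha$ lies in the right-hand $fin$-ideal. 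This is exactly where Hausdorffness is not even needed: by compactness $K$ is covered by finitely many $U_{\alpha_1}, \dots, U_{\alpha_n}$, and then $K = \bigcup_i (K \cap U_{\alpha_i})$ — but $K \cap U_{\alpha_i}$ need not be compact, so instead I would use a shrinking/partition argument: cover $K$ by finitely many opens $U_{\alpha_i}$, pick for each point $x \in K$ a compact neighborhood contained in some $U_{\alpha_i}$, extract a finite subcover by their interiors, and take the corresponding finitely many compacts $L_1, \dots, L_m$ with $L_j \subset U_{\alpha(j)}$ and $\bigcup_j L_j \supseteq K$; then $K = \bigcup_j (K \cap L_j)$ expresses $K$ as a finite union of compact sets each inside some $U_\alpha$, hence $K$ lies in the $fin$-ideal generated by $\bigcup_\alpha \theta_X^*(U_\alpha)$. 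Wait — this shrinking step itself uses local compactness. For the bare frame-homomorphism claim (no local compactness hypothesis) I would instead argue more carefully: $K \subset \bigcup_i U_{\alpha_i}$ with the $U_{\alpha_i}$ finitely many; by normality of the compact Hausdorff subspace $K$ (or by the compact-Hausdorff shrinking lemma applied inside $K$), write $K = \bigcup_i K_i$ with each $K_i$ compact and $K_i \subset U_{\alpha_i}$; this gives $K$ in the required $fin$-ideal. The reverse inclusion is trivial by monotonicity. This is the step I expect to be the main obstacle, as it is the only place the topology of $X$ genuinely enters.

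For the second claim, assume $X$ locally compact Hausdorff. To show $\theta : X^\mathcal{K} \to X$ is an epimorphism of locales it suffices, by the characterization of epimorphisms recalled in the excerpt (a quotient map, i.e. $\theta^*$ injective, is an epimorphism; and more relevantly here one wants $\theta^*$ to be a monomorphism in $\mathrm{Frm}$, equivalently injective), to show $\theta_X^*$ is injective. Suppose $U \not\leq V$ are opens of $X$; pick $x \in U \setminus V$. By local compactness choose a compact neighborhood $K$ of $x$ with $K \subset U$. Then $K \in \theta_X^*(U)$; and $K \notin \theta_X^*(V)$ since $K \not\subset V$ (as $x \in K \setminus V$). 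Hence $\theta_X^*(U) \neq \theta_X^*(V)$, so $\theta_X^*$ is injective and $\theta$ is a (regular, even) epimorphism of locales.

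One caveat to resolve while writing: the shrinking lemma I invoked — that a compact set covered by finitely many opens $U_1, \dots, U_n$ can be written as a union of compact sets $K_i \subset U_i$ — holds for $X$ Hausdorff because the subspace $K$ is compact Hausdorff, hence normal, and one applies the standard shrinking-of-open-covers lemma to the relatively open cover $\{K \cap U_i\}$ of $K$ to get relatively closed (hence compact) $K_i$. I would state this as a one-line lemma or cite it. With that in hand, the supremum-preservation argument above goes through for general Hausdorff $X$, giving the frame homomorphism claim, and the injectivity argument gives epimorphy in the locally compact case.
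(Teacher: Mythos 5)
Your proof is correct and follows essentially the same route as the paper: the supremum-preservation step reduces to writing a compact $K \subset U_{\alpha_1} \cup \cdots \cup U_{\alpha_n}$ as a finite union of compacts $K_i \subset U_{\alpha_i}$, which the paper cites as Lemma \ref{compactdecompositionlemma} and which you correctly rederive from normality of the compact Hausdorff subspace $K$ via the shrinking lemma, and the injectivity argument under local compactness (compact neighborhoods distinguish opens) is identical. Your first, discarded attempt at the shrinking step did indeed smuggle in local compactness, but your correction resolves it; just present the final version only.
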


We remark that a morphism $f : X \rightarrow Y$ of locales is an epimorphism iff $f^* : \mathcal{O}(Y) \rightarrow \mathcal{O}(X)$ is injective, see e.g.\ \cite[IV]{picado_pultr}. For the proof of this proposition we will need an elementary lemma about compact subsets in Hausdorff spaces.

\begin{lemma}[\cite{cohn1994measure}, 7.1.10] \label{compactdecompositionlemma}
Let $X$ be a Hausdorff space. Assume $K$ is a compact subset and $K \subset U_1 \cup \hdots \cup U_n$ with $U_i$ being open sets for $i = 1, \hdots, n$. Then $K = K_1 \cup \cdots \cup K_n$ with $K_i$ compact and $K_i \subset U_i$ for all $i = 1, \hdots, n$.
\end{lemma}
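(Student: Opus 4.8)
The plan is to argue by induction on $n$, with the substantive content concentrated in the case $n=2$, where the Separation Lemma (Lemma \ref{compactseparationlemma}) does all the work.

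For $n=1$ there is nothing to do: take $K_1 = K$. For $n=2$, suppose $K \subset U_1 \cup U_2$. The sets $A \defeq K \setminus U_2$ and $B \defeq K \setminus U_1$ are closed in $K$, hence compact, and they are disjoint: any point of $K$ lies in $U_1$ or in $U_2$, so it cannot avoid both. Applying the Separation Lemma to the Hausdorff space $X$, choose disjoint open sets $W_A \supset A$ and $W_B \supset B$. Now set
$$K_1 \defeq K \setminus W_B, \qquad K_2 \defeq K \setminus W_A.$$
Both are closed in $K$, hence compact. Since $K_1 \cap B = \emptyset$ we get $K_1 \subset K \setminus B = K \cap U_1 \subset U_1$, and symmetrically $K_2 \subset U_2$. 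Finally, because $W_A \cap W_B = \emptyset$,
$$K_1 \cup K_2 = (K \setminus W_B) \cup (K \setminus W_A) = K \setminus (W_A \cap W_B) = K,$$
which settles the case $n=2$.

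For the inductive step, assume the statement for $n-1$ and suppose $K \subset U_1 \cup \dots \cup U_n$. Write $V \defeq U_1 \cup \dots \cup U_{n-1}$, so that $K \subset V \cup U_n$. By the case $n=2$ there are compact sets $K' \subset V$ and $K_n \subset U_n$ with $K = K' \cup K_n$. Since $K'$ is compact and $K' \subset U_1 \cup \dots \cup U_{n-1}$, the inductive hypothesis yields compact sets $K_i \subset U_i$ for $i = 1, \dots, n-1$ with $K' = K_1 \cup \dots \cup K_{n-1}$. Then $K = K_1 \cup \dots \cup K_n$ is the desired decomposition.

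I do not expect a genuine obstacle here; the only points requiring care are the routine verifications that closed subsets of the compact set $K$ are compact, that $A$ and $B$ are disjoint (which uses the covering hypothesis), and the set-theoretic identity $(K\setminus W_B)\cup(K\setminus W_A) = K$ that forces $K_1 \cup K_2$ to exhaust $K$. Everything else is bookkeeping in the induction.
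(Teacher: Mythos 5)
Your proof is correct. The paper does not prove this lemma itself but cites Cohn's \emph{Measure Theory} (7.1.10), and your argument — separating the disjoint compacta $K\setminus U_2$ and $K\setminus U_1$ via Lemma \ref{compactseparationlemma}, taking complements of the separating opens in $K$, and then inducting — is exactly the standard proof given there.
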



\begin{proof}[Proof of Proposition \ref{framehomforlocallycompacthausdorff}]
We need to show that $\theta^*$ preserves suprema. Let $U = \bigcup_{i \in I} U_i$ be an arbitrary union of open subsets of $X$. Then
$$\theta^*( U ) = \{ K \subset U ~|~ K \text{ compact} \} = \{ K ~|~ K \text{ compact and } K \subset \bigcup_{i \in F} U_i, ~F \subset I \text{ finite} \},$$
whereas
$$\bigvee_{i \in I} \theta^*( U_i ) = \{ \bigcup_{i \in F}K_{i} ~|~ F \subset I \text{ finite}, K_i \subset U_i \text{ for all } i \in F \}.$$
These two sets agree for $X$ Hausdorff because of Lemma \ref{compactdecompositionlemma}.

To verify that $\theta^*$ induces an epimorphism of frames in the case that $X$ is locally compact, we need to argue that it is injective. Assume
$$\theta^*( U ) = \{ K \subset U ~|~ K \text{ compact} \} = \{ K \subset V ~|~ K \text{ compact} \} = \theta^*( V ).$$
But then $U$ and $V$ contain the same points, as for any $x \in U$ by local compactness we can simply choose a compact neighborhood of $x$ contained in $U$ and vice versa. Therefore $U = V$.
\end{proof}

\begin{remark}
The injectivity of $\theta^*$ in the case of a locally compact Hausdorff space $X$ is a localic analogue of the known fact that the $\infty$-category of $K$-sheaves can be identified with the $\infty$-category sheaves on $X$, as discussed e.g.\ in \cite[Section 7.3.4]{luriehtt} or \cite[Proposition 6.5.]{efimov2025ktheorylocalizinginvariantslarge}. In the locale theoretic language, if $X$ is locally compact Hausdorff, then a propositional sheaf $\mathcal{J} \in \mathcal{O}(X^\mathcal{K})$ is of the form $\mathcal{J} = \theta^*(U)$ iff it satisfies the condition
$$ K \in \mathcal{J} \text{ iff } \exists K' \in \mathcal{J} : K \ll K', $$
where $K \ll K'$ means that there exists an open $V \subset X$ such that $K \subset V \subset K'$. The corresponding open set $U$ is then given as $U = \bigcup_{V \subset K, K \in \mathcal{J}} V$.
\end{remark}

\begin{remark} \label{pullbackofopens} The map $\theta : X^\mathcal{K} \rightarrow X$ is compatible with open inclusions, in the sense that whenever $U \subset X$ is an open subset, we have the pullback square
\[\begin{tikzcd}
	{U^\mathcal{K}} & {X^\mathcal{K}} \\
	U & X.
	\arrow[from=1-1, to=1-2]
	\arrow[from=1-1, to=2-1]
	\arrow["\lrcorner"{anchor=center, pos=0.125}, draw=none, from=1-1, to=2-2]
	\arrow[from=1-2, to=2-2]
	\arrow[from=2-1, to=2-2]
\end{tikzcd}\]
To see this note that pullback along $\theta$ of $U$ gives the open sublocale given by $\theta^*(U)$ of $X^\mathcal{K}$ by Lemma \ref{preimageofopen}. Observe that
$$ \mathcal{O}(X^\mathcal{K})_{ / \theta^*(U) } = \mathrm{Idl}( \mathcal{K}(X) )_{ / \theta^*(U) } \cong \mathrm{Idl}( \mathcal{K}(U) ) = \mathcal{O}(U^\mathcal{K})$$
since an ideal $\mathcal{J} \subset \theta^*(U) = \{ K \subset U ~|~ K \text{ compact} \}$ is by definition just an ideal of compact sets contained in $U$.
\end{remark}

\subsection{Functoriality with respect to  partial proper maps}

We now analyse the functoriality of the assignment $X \mapsto X^\mathcal{K}$. There are two distinct cases we would like to separate. 
\begin{itemize}
\item The case where $f : X \rightarrow Y$ is partially defined with open support, and \emph{proper}, that is $f^{-1}$ preserves compact sets. This case will allow us to construct measure-preserving partial maps.
\item The case where $f : X \rightarrow Y$ is continuous, with global support. This case induces an \emph{open} map $f^\mathcal{K} : X^\mathcal{K} \rightarrow Y^\mathcal{K}$, which will be useful in constructing maps between Boolean locales without reference to a choice of measure.
\end{itemize}

We will analyse the first case in this section.

\begin{definition}
A partial continuous map $f : X \rightarrow Y$ between Hausdorff spaces $X,Y$ with open support is called \emph{proper} if $f^{-1}(K) \subset X$ is compact for every compact subset $K \subset Y$. 
\end{definition}

\begin{theorem} \label{Hausdorfffunctorialityproper}
There exists a functor
$$\begin{array}{rcl}
(-)^\mathcal{K} : \mathrm{HausSpc}_{\mathrm{part}~\mathrm{prop}}  &\rightarrow& \mathrm{Loc}_{\mathrm{part}} \\
X &\mapsto& X^\mathcal{K}
\end{array}$$
whereby a partial proper map $f : X \rightarrow Y$ is sent to the partial map $f^\mathcal{K} : X^\mathcal{K} \rightarrow Y^\mathcal{K}$ obtained by extending the partial morphism of sites $f^{-1} : \mathcal{K}(Y) \rightarrow \mathcal{K}(X)$. Moreover, we have a commuting square
\[\begin{tikzcd}
	{X^\mathcal{K}} & {Y^\mathcal{K}} \\
	X & Y,
	\arrow["{f^\mathcal{K}}", from=1-1, to=1-2]
	\arrow["{\theta_X}"', from=1-1, to=2-1]
	\arrow["{\theta_Y}", from=1-2, to=2-2]
	\arrow["f", from=2-1, to=2-2]
\end{tikzcd}\]
in other words, $\theta$ is a natural transformation $(-)^\mathcal{K} \rightarrow L$, where $L : \mathrm{HausSpc} \rightarrow \mathrm{Loc}$ is the fully faithful inclusion of Hausdorff spaces into locales.
\end{theorem}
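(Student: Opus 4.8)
The plan is to verify the three ingredients needed to produce the functor and the natural transformation: (i) that for a partial proper map $f : X \rightarrow Y$ the map on compact sets $f^{-1} : \mathcal{K}(Y) \rightarrow \mathcal{K}(X)$ is a partial morphism of sites for the finite covering topologies, (ii) that the resulting assignment is functorial, and (iii) that the square with $\theta_X$, $\theta_Y$ commutes. Once (i) is established, the partial continuous map $f^\mathcal{K} : X^\mathcal{K} \rightarrow Y^\mathcal{K}$ exists by Theorem \ref{functorialitymorphismsites} applied to the finite covering topologies, since $\mathcal{O}(X^\mathcal{K}) = \mathrm{Sh}(\mathcal{K}(X), fin; \mathbf{2})$ and likewise for $Y$.

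First I would check (i). Properness is exactly the statement that $f^{-1}$ sends $\mathcal{K}(Y)$ into $\mathcal{K}(X)$; it is monotone since preimages respect inclusions; it preserves binary meets because $f^{-1}(K_1 \cap K_2) = f^{-1}(K_1) \cap f^{-1}(K_2)$, and preimages preserve the empty set, so it is a homomorphism of lower bounded distributive lattices. It preserves finite coverings: if $\bigcup_{i \in I} K_i = K$ with $I$ finite, then $\bigcup_{i \in I} f^{-1}(K_i) = f^{-1}(K)$, which is a finite covering in $\mathcal{K}(X)$. Hence $f^{-1}$ is a partial morphism of sites (it need not have global support, which is why the target category is $\mathrm{Loc}_{\mathrm{part}}$). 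For functoriality (ii), note $(g \circ f)^{-1} = f^{-1} \circ g^{-1}$ on subsets, and the identity map induces the identity; by the functoriality of $\mathrm{Sh}(-; \mathbf{2}) : \mathrm{Site}_{\mathrm{part}} \rightarrow \mathrm{Frm}_{\mathrm{part}}$ from Theorem \ref{functorialitymorphismsites}, passing to propositional sheaves is itself functorial, so $(f \circ g)^\mathcal{K} = f^\mathcal{K} \circ g^\mathcal{K}$ and $(\mathrm{id})^\mathcal{K} = \mathrm{id}$. Here one must be slightly careful about composition of \emph{partial} maps with open support — one should observe that the support of $g \circ f$ (as a continuous map of spaces) matches what the composite site morphism records, which reduces to the bookkeeping of domains already set up in Corollary \ref{flatfunctor}.

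Next I would verify naturality, i.e.\ commutativity of the square $\theta_Y \circ f = f^\mathcal{K} \circ \theta_X$ as partial continuous maps, equivalently $\theta_X^* \circ f^* = (f^\mathcal{K})^* \circ \theta_Y^*$ as partial frame homomorphisms $\mathcal{O}(Y) \rightarrow \mathcal{O}(X^\mathcal{K})$. Since both sides are supremum-preserving (being composites of frame homomorphisms or partial ones), and $\mathcal{O}(Y)$ is generated under suprema by opens, while every open is a directed supremum of $\theta_Y^*$ of itself... more precisely, by Corollary \ref{flatfunctor} a partial continuous map into $X^\mathcal{K} = L(\mathcal{K}(X), fin)$ is determined by its composite with the elementary-proposition functor $[-] : \mathcal{K}(X) \rightarrow \mathcal{O}(X^\mathcal{K})$; so it suffices to compare the two functors $\mathcal{O}(Y) \rightarrow \mathcal{O}(X^\mathcal{K})$ on the generating data. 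Concretely, for an open $V \subseteq Y$, tracing through the definitions: $\theta_X^*(f^*(V)) = \{ K \in \mathcal{K}(X) \mid K \subseteq f^{-1}(V) \}$, whereas $(f^\mathcal{K})^*(\theta_Y^*(V))$ is the sheafification/image of $\{ L \in \mathcal{K}(Y) \mid L \subseteq V \}$ under the site morphism $f^{-1}$, which unwinds to $\{ K \in \mathcal{K}(X) \mid K \subseteq f^{-1}(L) \text{ for some } L \in \mathcal{K}(Y), L \subseteq V \}$. These two $fin$-ideals coincide: $\supseteq$ is immediate from $f^{-1}(L) \subseteq f^{-1}(V)$, and $\subseteq$ holds because if $K \subseteq f^{-1}(V)$ is compact then $L := \overline{f(K)}$... actually the cleaner argument is that $f(K) \subseteq V$ need not be compact, so instead take $L$ to be any compact set with $K \subseteq f^{-1}(L) \subseteq f^{-1}(V)$ — e.g.\ one checks directly that $K$ itself lies in $f^{-1}(f(K))$ and uses properness is not even needed here, only that $f$ is a function on the support. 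I expect this last identification of $fin$-ideals to be the main obstacle: it requires being careful about the distinction between $f(K)$ (not compact in general), the open support of the partial map $f$, and the sheafification step in computing the image frame homomorphism, so that the two descriptions genuinely agree. Everything else is formal manipulation with the machinery of Section \ref{sitesandgrothendiecktopologies}.
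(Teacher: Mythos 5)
Your overall architecture matches the paper's proof exactly: check that $f^{-1} : \mathcal{K}(Y) \to \mathcal{K}(X)$ is a partial morphism of sites for the finite covering topologies (using properness for well-definedness), invoke Theorem \ref{functorialitymorphismsites} for the extension and for functoriality, and then verify naturality by comparing the two composites $\mathcal{O}(Y) \to \mathcal{O}(X^\mathcal{K})$ on an open $V \subseteq Y$. Your computation of the two $fin$-ideals is also the right one: $\theta_X^*(f^*(V)) = \{K \text{ compact} \mid K \subseteq f^{-1}(V)\}$ versus $(f^\mathcal{K})^*(\theta_Y^*(V)) = \{K \text{ compact} \mid K \subseteq f^{-1}(C) \text{ for some compact } C \subseteq V\}$.

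However, the one inclusion you flag as ``the main obstacle'' is left unresolved, and the reason you give for why the obvious witness fails is false. You write that ``$f(K) \subseteq V$ need not be compact'' --- but it is compact: since $K \subseteq f^{-1}(V) \subseteq \mathrm{dom}(f)$ and $\mathrm{dom}(f)$ is open with $f$ continuous on it, $f(K)$ is the continuous image of a compact set, hence compact, and it is contained in $V$. So $C := f(K)$ is exactly the required witness ($K \subseteq f^{-1}(f(K)) = f^{-1}(C)$ with $C \subseteq V$ compact), and the two ideals coincide with no further work. This is precisely how the paper closes the argument; the only care needed is the observation you do make elsewhere, namely that $f(K)$ is well-defined because $K$ lies in the open support of the partial map. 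Your worry about an extra sheafification step in computing $(f^\mathcal{K})^*(\theta_Y^*(V))$ is also unnecessary: $\theta_Y^*(V)$ is a directed ideal of compact sets, so the union $\bigcup_{C \subseteq V} \{K \subseteq f^{-1}(C)\}$ is already downward closed and closed under finite unions, i.e.\ already a $fin$-ideal. With the compactness of $f(K)$ supplied, your proof is complete and coincides with the paper's.
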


\begin{proof}
The functoriality of $(-)^\mathcal{K}$ is immediate, as $f^{-1} : \mathcal{K}(Y) \rightarrow \mathcal{K}(X)$ preserves finite unions and intersections and is thus a partial morphism of sites. We obtain the extension from Theorem \ref{functorialitymorphismsites}.

We still need to show that $\theta$ is a natural transformation. Consider the square
\[\begin{tikzcd}
	{\mathcal{O}(X^\mathcal{K})} & {\mathcal{O}(Y^\mathcal{K})} \\
	{\mathcal{O}(X)} & {\mathcal{O}(Y).}
	\arrow["{(f^\mathcal{K})^*}"', from=1-2, to=1-1]
	\arrow["{\theta^*}", from=2-1, to=1-1]
	\arrow["{\theta^*}"', from=2-2, to=1-2]
	\arrow["{f^{-1}}", from=2-2, to=2-1]
\end{tikzcd}\]
We want to show that this square commutes. Let $V \subset Y$ be open. Tracing through both composites we get
$$\begin{array}{r}
\theta(f^{-1}(V)) = \{ K \subset f^{-1}(V) \text{ compact} \} = \{ K \subset \mathrm{dom}(f) \subset X \text{ compact} ~|~ f(K) \subset V \} \\
= \{ K \subset X \text{ compact} ~|~ f(K) \subset C \subset V, C \text{ compact} \} \\ = \{ K \subset X \text{ compact} ~|~ K \subset f^{-1}(C), C \subset V \text{ compact} \} \\ = (f^\mathcal{K})^*(\theta(V))
\end{array}$$
where we note that $f(K)$ is well-defined, as $K \subset f^{-1}(V) \subset \mathrm{dom}(f)$, hence we conclude the proof.
\end{proof}

\begin{remark}
The class of partially defined proper maps with open support may seem unusal to some readers. The choice is fairly natural however at least in the case of locally compact Hausdorff spaces, as there exist equivalences
$$ \mathrm{LocCHaus}_{\mathrm{part}~\mathrm{prop}} \simeq \mathrm{CHaus}_* \simeq \mathrm{Comm}C^*\mathrm{-Alg}^{op}$$
with $\mathrm{CHaus}_*$ being the category of pointed compact Hausdorff spaces and base-point preserving continuous maps, and $\mathrm{Comm}C^*\mathrm{-Alg}$ being the category of \emph{non-unital} commutative $C^*$-algebras, with the equivalences given by one-point compactification and Gelfand duality, respectively. (See e.g.\ \cite[\nopp IV 4.1]{johnstone1982stone}.) Hence partially defined proper maps simply correspond to $*$-homomorphisms.
\end{remark}

\subsection{Functoriality with respect to globally defined continuous map}

Let $f : X \rightarrow Y$ be a globally defined, continuous map between Hausdorff spaces. Since $f$ maps compact sets to compact sets, an assignment which preserves finite unions, we have an associated left adjoint functor $f^\mathcal{K}_! :  \mathcal{O}(X^\mathcal{K}) \rightarrow \mathcal{O}(Y^\mathcal{K})$, given by left Kan extending the assignment $K \mapsto f(K)$ for $K \subset X$ compact. We claim this is the direct image part of an open map $f^\mathcal{K} : X^\mathcal{K} \rightarrow Y^\mathcal{K}$. 

\begin{theorem} \label{Hausdorfffunctoriality}
There exists a functor
$$\begin{array}{rcl}
(-)^\mathcal{K} : \mathrm{HausSpc} &\rightarrow& \mathrm{Loc}_{\mathrm{open}} \\
X &\mapsto& X^\mathcal{K}
\end{array}$$
whereby a map $f : X \rightarrow Y$ is sent to the open map $f^\mathcal{K} : X^\mathcal{K} \rightarrow Y^\mathcal{K}$ with:
\begin{itemize}
\item The direct image part $f^\mathcal{K}_! :  \mathcal{O}(X^\mathcal{K}) \rightarrow \mathcal{O}(Y^\mathcal{K})$ is obtained by extending the covering preserving functor $\mathcal{K}(X) \rightarrow \mathcal{O}(Y^\mathcal{K}), K \mapsto [f(K)]$.
\item The inverse image part $(f^\mathcal{K})^* :  \mathcal{O}(Y^\mathcal{K}) \rightarrow \mathcal{O}(X^\mathcal{K})$ is obtained by extending the flat functor $\mathcal{K}(Y) \rightarrow \mathcal{O}(X^\mathcal{K}), C \mapsto \{ K \subset f^{-1}(C) \}$.
\end{itemize}
Moreover, we have a commuting square
\[\begin{tikzcd}
	{X^\mathcal{K}} & {Y^\mathcal{K}} \\
	X & Y,
	\arrow["{f^\mathcal{K}}", from=1-1, to=1-2]
	\arrow["{\theta_X}"', from=1-1, to=2-1]
	\arrow["{\theta_Y}", from=1-2, to=2-2]
	\arrow["f", from=2-1, to=2-2]
\end{tikzcd}\]
in other words, $\theta$ is a natural transformation $U(-)^\mathcal{K} \rightarrow L$, where $U : \mathrm{Loc}_{\mathrm{open}} \rightarrow \mathrm{Loc}$ is the forget functor, and $L : \mathrm{HausSpc} \rightarrow \mathrm{Loc}$ is the fully faithful inclusion of Hausdorff spaces into locales.
\end{theorem}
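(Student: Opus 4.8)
The plan is to verify the claimed ingredients in the natural logical order: first that the two candidate adjoint functors are well-defined, then that they are genuinely adjoint, then that $f^\mathcal{K}$ is open, then functoriality, and finally naturality of $\theta$. For the direct image part, note that $K \mapsto [f(K)]$ is a functor $\mathcal{K}(X) \to \mathcal{O}(Y^\mathcal{K})$ which preserves finite joins (since $f(K_1 \cup K_2) = f(K_1) \cup f(K_2)$, and $[-]$ preserves finite joins as $X^\mathcal{K}$ carries the $fin$-topology), hence is covering-preserving for the $fin$-topology on $\mathcal{K}(X)$; by Theorem \ref{universalpropertysheaves} it extends uniquely to a supremum-preserving functor $f^\mathcal{K}_! : \mathcal{O}(X^\mathcal{K}) \to \mathcal{O}(Y^\mathcal{K})$, explicitly $f^\mathcal{K}_!(\mathcal{J}) = \bigvee_{K \in \mathcal{J}} [f(K)]$. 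For the inverse image part, the functor $C \mapsto \{K \subset f^{-1}(C) ~|~ K \text{ compact}\}$ sends $\mathcal{K}(Y) \to \mathcal{O}(X^\mathcal{K})$, preserves finite meets (as $f^{-1}$ does), and preserves finite covers: if $C = C_1 \cup \dots \cup C_n$ then any compact $K \subset f^{-1}(C) = f^{-1}(C_1) \cup \dots \cup f^{-1}(C_n)$ decomposes as $K = K_1 \cup \dots \cup K_n$ with $K_i \subset f^{-1}(C_i)$ compact, by Lemma \ref{compactdecompositionlemma}. Hence it is a morphism of sites, and by Corollary \ref{flatfunctor} (or Theorem \ref{functorialitymorphismsites}) it extends to a frame homomorphism $(f^\mathcal{K})^* : \mathcal{O}(Y^\mathcal{K}) \to \mathcal{O}(X^\mathcal{K})$.

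Next I would check that $f^\mathcal{K}_! \dashv (f^\mathcal{K})^*$. Since both functors are determined by their restrictions along the bases $\mathcal{K}(X), \mathcal{K}(Y)$ and all functors in sight preserve directed suprema, it suffices to check $f^\mathcal{K}_!(K) \leq \mathcal{J} \iff [K] \leq (f^\mathcal{K})^*(\mathcal{J})$ for $K \subset X$ compact and $\mathcal{J}$ an ideal of compact subsets of $Y$; unwinding definitions, the left side says $[f(K)] \leq \mathcal{J}$, i.e. $f(K) \in \mathcal{J}$, and the right side says $K \in \{K' \subset f^{-1}(C) ~|~ C \in \mathcal{J}\}$, i.e. $K \subset f^{-1}(C)$ for some $C \in \mathcal{J}$, equivalently $f(K) \subset C$ for some $C \in \mathcal{J}$, equivalently $f(K) \in \mathcal{J}$ since $\mathcal{J}$ is downward closed. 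So the adjunction holds. To see that $f^\mathcal{K}$ is open I would verify the Frobenius identity $f^\mathcal{K}_!(\mathcal{I} \wedge (f^\mathcal{K})^*(\mathcal{J})) = f^\mathcal{K}_!(\mathcal{I}) \wedge \mathcal{J}$; by reducing via suprema to the case $\mathcal{I} = [K]$, $\mathcal{J} = [C]$, the left side becomes $[f(K \cap f^{-1}(C))] = [f(K) \cap C]$ (using that $f(K \cap f^{-1}(C)) = f(K) \cap C$) and the right side is $[f(K)] \wedge [C] = [f(K) \cap C]$, so they agree. Functoriality $(g \circ f)^\mathcal{K} = g^\mathcal{K} \circ f^\mathcal{K}$ then follows by comparing direct image parts on compacts, where $K \mapsto [(g\circ f)(K)] = [g(f(K))]$, and using uniqueness of the extension from Theorem \ref{universalpropertysheaves}; identities are clear.

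Finally, for naturality of $\theta$ I would show the square of inverse image functors commutes, i.e. $(f^\mathcal{K})^* \circ \theta_Y^* = \theta_X^* \circ f^{-1}$ as maps $\mathcal{O}(Y) \to \mathcal{O}(X^\mathcal{K})$. For an open $V \subset Y$, the right-hand composite gives $\{K \subset f^{-1}(V) ~|~ K \text{ compact}\}$, while the left-hand composite gives the sheafification/extension of $C \mapsto \{K \subset f^{-1}(C)\}$ applied to $\theta_Y^*(V) = \{C \subset V ~|~ C \text{ compact}\}$, namely $\bigvee_{C \subset V \text{ cpt}} \{K \subset f^{-1}(C)\} = \{K \text{ cpt} ~|~ \exists C \subset V \text{ cpt}, K \subset f^{-1}(C)\}$. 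These agree because a compact $K$ with $K \subset f^{-1}(V)$ has $f(K) \subset V$ compact, so we may take $C = f(K)$; conversely $K \subset f^{-1}(C) \subset f^{-1}(V)$. I expect the main obstacle to be purely bookkeeping: making sure the extension functors (sheafification for $fin$, equivalently $\mathrm{Idl}(-)$) are handled uniformly so that the various "reduce to basic compacts, then extend by directed suprema" arguments are genuinely legitimate — this is where one must lean carefully on Theorems \ref{universalpropertypresheaves}, \ref{universalpropertysheaves} and the fact that in a locally coherent frame every element is a directed join of the basis. There is no deep difficulty beyond that; the geometric content is entirely carried by Lemma \ref{compactdecompositionlemma} and the elementary set-theoretic identity $f(K) \cap C = f(K \cap f^{-1}(C))$.
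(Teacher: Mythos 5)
Your proposal is essentially correct and follows the same route as the paper: extend the two functors from the bases via the universal property of sheaves, check the adjunction and Frobenius on basic compacts, and verify naturality of $\theta$ by unwinding both composites on opens. The one slip is your justification that $C \mapsto \{K \subset f^{-1}(C)\}$ preserves finite covers: Lemma \ref{compactdecompositionlemma} decomposes a compact set covered by finitely many \emph{open} sets, whereas here $f^{-1}(C_1), \dots, f^{-1}(C_n)$ are closed, so that lemma does not apply. The correct (and simpler) argument, which is the one the paper gives, is to take $K_i = K \cap f^{-1}(C_i)$ directly: since $C_i$ is compact in the Hausdorff space $Y$ it is closed, so $f^{-1}(C_i)$ is closed and $K \cap f^{-1}(C_i)$ is a closed subset of a compact set, hence compact --- this is precisely where Hausdorffness enters.
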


\begin{proof} First note that both $f^\mathcal{K}_!$ and $(f^\mathcal{K})^*$ are left adjoints. This is clear for $f^\mathcal{K}!$, as $K \mapsto f(K)$ preserves finite unions. To observe the same for $(f^\mathcal{K})^*$, note that for $C_1, C_2 \subset Y$ compact we have
$$\begin{array}{rcccl}
f^*(C_1 \cup C_2) &=& \{ K \subset f^{-1}(C_1 \cup C_2) \} &=& \{ K \subset f^{-1}(C_1) \cup f^{-1}(C_2) \} \\
&=& \{ K_1 \cup K_2 ~|~ K_1 \subset f^{-1}(C_1), K_2 \subset f^{-1}(C_2) \} &=& f^*(C_1) \vee f^*(C_2).
\end{array}$$
For the middle equality, note that any $K \subset f^{-1}(C_1) \cup f^{-1}(C_2)$ can be written as
$$ K = ( K \cap f^{-1}(C_1) ) \cup ( K \cap f^{-1}(C_2) ).$$
Note that Hausdorffness of $X$ is crucial here, as $f^{-1}(C_1)$ is in general not compact, but always closed and $K \cap f^{-1}(C_1)$ a closed subset of a compact set, therefore compact.

We next establish that $f^\mathcal{K}_!$ is in fact left adjoint to $(f^\mathcal{K})^*$. We note that this already implies that $(f^\mathcal{K})^*$ is a frame homomorphism, as it is then both a left and a right adjoint, therefore preserves both arbitrary suprema as well as infima. Let $U \in \mathcal{O}(X^\mathcal{K})$ and $V \in \mathcal{O}(Y^\mathcal{K})$. We need to show
$$f^\mathcal{K}_!(U) \leq V ~\text{ iff }~ U \leq (f^\mathcal{K})^*( V ).$$
\begin{enumerate}
\item We have $f^\mathcal{K}_!(U) = \bigvee_{K \in U} [f(K)]$, therefore
$$f^\mathcal{K}_!(U) \leq V ~\text{ iff }~ \forall K \in U : f(K) \in V$$
\item We have
$$(f^\mathcal{K})^*(V) = \{ K \subset X \text{ compact} ~|~ K \subset f^{-1}(C) \text{ for some } C \in V \},$$
therefore
$$U \leq (f^\mathcal{K})^*( V ) ~\text{ iff }~ \forall K \in U \exists C \in V : K \subset f^{-1}(C).$$
\end{enumerate}
Assume (1) holds, i.e.\ $f(K) \in V$. Then setting $C = f(K)$ we see that $K \subset f^{-1}(C)$ for $C \in V$, hence condition (2) holds. For the converse, assume (2), i.e.\ $f(K) \subset f^{-1}(C)$. This is the case iff $f(K) \subset C$. By downward closure of $V$, since $C \in V$, this also implies $f(X) \in V$.

Let us know show that $f^\mathcal{K}_! \dashv (f^\mathcal{K})^*$ satisfies the Frobenius identity
$$ f^\mathcal{K}_!( U \wedge (f^\mathcal{K})^*(U) ) = f^\mathcal{K}_!(U) \wedge V $$
for $U \in \mathcal{O}(X^\mathcal{K})$ and $V \in \mathcal{O}(Y^\mathcal{K})$, which then concludes the statement that $f^\mathcal{K} : X^\mathcal{K} \rightarrow Y^\mathcal{K}$ is an open map. Since both $f^\mathcal{K}_!$ and $(f^\mathcal{K})^*$ are functors preserving suprema, it suffices to show the claim for $U = [K], V = [C]$, where $K \subset X$ and $C \subset Y$ are compact. We compute:
$$\begin{array}{rcccl}
f^\mathcal{K}_!( [K] \wedge \bigvee_{ K' \subset f^{-1}(C)} [K'] ) &=& f^\mathcal{K}_!( \bigvee_{ K' \subset f^{-1}(C)} [K \cap K']
&=&  f^\mathcal{K}_!( \bigvee_{ K'' \subset K \cap f^{-1}(C)} [K''] \\ = f^\mathcal{K}_!( [ K \cap f^{-1}(C)] )
&=& [ f( K \cap f^{-1}(C) ) ] &=& [ f(K) \cap C ], 
\end{array}$$
where we used that $K \cap f^{-1}(C)$ is itself compact, hence a maximal element for the set of all $K'' \subset K \cap f^{-1}(C)$ compact, and in the last line the Frobenius identity for image and preimage of functions between sets.

The argument that $\theta$ is a natural transformation is analogous to the argument given in the proof of Theorem \ref{Hausdorfffunctorialityproper}.
\end{proof}

\begin{remark}
Since the locales $X^\mathcal{K}$ are locally coherent---and hence spatial---the theorem above could, in principle, be phrased entirely in classical point-set topology. 
\end{remark}


Theorem \ref{Hausdorfffunctoriality} will prove useful in providing a clean construction of a Boolean locale out of a Hausdorff space. Recall the notion of a marked locale defined in \ref{markedlocales}. We will use a similar notion for Hausdorff spaces.

\begin{definition} \label{compactlymarked}
A pair $(X,N)$ where $X$ is a Hausdorff space and $N \subset \mathcal{K}(X)$ is an ideal of compact subsets is called a \emph{compactly marked Hausdorff space}. If $(X,N), (Y,M)$ are both compactly marked, we say that a continuous map $f : X \rightarrow Y$ is \emph{compatible} if:
\begin{itemize}
\item For all $K \in N$ we have $f(K) \in M$.
\item For all $C \in M$ we have $\{ K \subset f^{-1}(C) \text{ compact} \} \subset N$. 
\end{itemize}
We denote the resulting category of compactly marked Hausdorff spaces and compatible maps by $\mathrm{CMHS}$.
\end{definition}

We remark right away that in the upcoming section the central example of a compactly marked Hausdorff space will be that of a Hausdorff space $X$ equipped with a measure $\mu$, and $N$ being the set of compact $\mu$-null sets.

\begin{corollary} \label{booleanoutofcompact}
There is a functor
$$\begin{array}{rcl}
b : \mathrm{CMHS} & \rightarrow & \mathrm{BoolLoc} \\
     (X,N) &\mapsto & b(X,N)
\end{array}$$
where $b(X,N) = (N^c)_{\neg \neg} \hookrightarrow X^\mathcal{K}$. 
\end{corollary}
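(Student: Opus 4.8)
The plan is to realize $b$ as the composite of two functors that are essentially already available: the lift of $(-)^\mathcal{K}$ to marked locales, followed by the right adjoint $b : \mathrm{MarkLoc}_{\mathrm{open}} \to \mathrm{BoolLoc}$ of Corollary \ref{booleanmarkedadjunction}. So the work is to check that $(-)^\mathcal{K}$ carries a compactly marked Hausdorff space to a marked locale and a compatible map to a compatible open map.

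First I would observe that for $(X,N) \in \mathrm{CMHS}$ the set $N \subset \mathcal{K}(X)$ is, by definition, an ideal of the lower bounded distributive lattice $\mathcal{K}(X)$, hence an element of $\mathcal{O}(X^\mathcal{K}) = \mathrm{Idl}(\mathcal{K}(X))$. Thus $(X^\mathcal{K}, N)$ is a marked locale in the sense of Definition \ref{markedlocales}.

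Next, given a compatible map $f : (X,N) \to (Y,M)$ in $\mathrm{CMHS}$, I would show that the open map $f^\mathcal{K} : X^\mathcal{K} \to Y^\mathcal{K}$ of Theorem \ref{Hausdorfffunctoriality} is compatible in $\mathrm{MarkLoc}_{\mathrm{open}}$, i.e.\ $(f^\mathcal{K})^*(M) = N$. Using the formula for the inverse image from Theorem \ref{Hausdorfffunctoriality},
$$(f^\mathcal{K})^*(M) = \{ K \subset X \text{ compact} ~|~ \exists\, C \in M : K \subset f^{-1}(C) \} = \bigcup_{C \in M} \{ K \subset f^{-1}(C) \text{ compact} \}.$$
The second clause of Definition \ref{compactlymarked} says $\{ K \subset f^{-1}(C) \text{ compact} \} \subset N$ for all $C \in M$, giving $(f^\mathcal{K})^*(M) \subseteq N$; the first clause says $f(K) \in M$ for $K \in N$, and since $K \subset f^{-1}(f(K))$ this yields $K \in (f^\mathcal{K})^*(M)$, i.e.\ $N \subseteq (f^\mathcal{K})^*(M)$. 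Hence $(f^\mathcal{K})^*(M) = N$, and conversely these two inclusions are visibly just a restatement of the two clauses of Definition \ref{compactlymarked}, so compatible maps correspond exactly. Since $(-)^\mathcal{K}$ is already functorial by Theorem \ref{Hausdorfffunctoriality}, this promotes it to a functor $\mathrm{CMHS} \to \mathrm{MarkLoc}_{\mathrm{open}}$, $(X,N) \mapsto (X^\mathcal{K}, N)$.

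Finally I would postcompose with the functor $b : \mathrm{MarkLoc}_{\mathrm{open}} \to \mathrm{BoolLoc}$, $(L,U) \mapsto (U^c)_{\neg\neg} \hookrightarrow L$ from Corollary \ref{booleanmarkedadjunction}, obtaining $(X,N) \mapsto (N^c)_{\neg\neg} \hookrightarrow X^\mathcal{K} = b(X,N)$, the claimed functor. I do not expect a genuine obstacle: the construction is an assembly of already-proved statements, and the only point needing care is the verification above that the two conditions defining compatibility in $\mathrm{CMHS}$ are precisely the unwinding of the single marked-locale condition $(f^\mathcal{K})^*(M) = N$ under the inverse-image formula of Theorem \ref{Hausdorfffunctoriality}.
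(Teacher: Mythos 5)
Your proof is correct and follows exactly the paper's route: the paper likewise obtains $b$ by refining $(-)^\mathcal{K}$ to a functor $\mathrm{CMHS}\to\mathrm{MarkLoc}_{\mathrm{open}}$ and postcomposing with the functor $b$ from Corollary \ref{booleanmarkedadjunction}. Your explicit verification that the two clauses of Definition \ref{compactlymarked} unwind to $(f^\mathcal{K})^*(M)=N$ is exactly what the paper leaves implicit under the phrase ``by construction.''
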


\begin{proof}
The functor $(-)^\mathcal{K} : \mathrm{HausSpc} \rightarrow \mathrm{Loc}_{\mathrm{open}}$ refines to a functor $(-)^\mathcal{K} :  \mathrm{CMHS} \rightarrow \mathrm{MarkLoc}_{\mathrm{open}}$ by construction. Our wanted functor is now obtained by composing with the functor $b: \mathrm{MarkLoc}_{\mathrm{open}} \rightarrow \mathrm{BoolLoc}$ from Theorem \ref{booleanadjunction}.
\end{proof}

\section{Radon measures on Hausdorff spaces - The locale theoretic approach} \label{radonvaluations}

We now come to the core example of a Boolean measure locale: The \emph{Radon locale} constructed from a \emph{Radon valuation} on a Hausdorff space $X$. This notion is a slight generalization of that of a \emph{Radon measure}. This notion will give us access to almost all measure spaces useful for mathematical practice within the context of localic measure theory.

\begin{definition} \label{definitionradonvaluation}
Let $X$ be a Hausdorff space. A \emph{Radon valuation} on $X$ is a finite and almost Boolean valuation $\mu : \mathcal{K}(X) \rightarrow [0, \infty)$ on the set of compact subsets of $X$. Denote by $X^\mu$ the associated inner locale together with its locally finite and faithful measure
$$\mu_* : \mathcal{O}(X^\mu) = \mathrm{Sh}( \mathcal{K}(X), \mu; \mathbf{2}) \rightarrow [0, \infty].$$
provided by Theorem \ref{innermeasureproperties}.
\end{definition}

\begin{example}
The main examples of Radon valuations will come from \emph{regular contents}, which we discuss in the following in Section \ref{regularcontents}. In the situation of a regular content $\mu$, there is an associated Radon measure $(X, \mathcal{B}, \mu)$. The frame of the locale $X^\mu$ is isomorphic to the complete Boolean algebra given by measurable sets modulo null sets $\mathcal{B}/\mathcal{N}$ of $X$, a fact that will be proven in Theorem \ref{radonmeasures}.
\end{example}

Let us discuss some consequences of the existence of a Radon valuation. Since the $\mu$-inner topology on $\mathcal{K}(X)$ includes all coverings of the topology $fin$, we have a continuous embedding $X^\mu \hookrightarrow X^\mathcal{K}$. We also have the continuous map $\theta : X^\mathcal{K} \rightarrow X$ by Proposition \ref{framehomforlocallycompacthausdorff}, resulting in the composition 
$$p_\mu : X^\mu \hookrightarrow X^\mathcal{K} \rightarrow X.$$
Since $X^\mu$ is Boolean this further lifts uniquely by Theorem \ref{lifttosublocales2},
\[\begin{tikzcd}
	&& {\mathfrak{Sl}(X)} \\
	{X^\mu} & {X^\mathcal{K}} & X.
	\arrow["{\mathrm{can}}", two heads, from=1-3, to=2-3]
	\arrow["{\exists !}", dashed, from=2-1, to=1-3]
	\arrow[hook, from=2-1, to=2-2]
	\arrow[from=2-2, to=2-3]
\end{tikzcd}\]
	
Pushing forward the inner measure $\mu_*$ on $X^\mu$ equips all locales in this picture with measures and makes all maps involved measure-preserving. In particular, we obtain a measure $\mu_*$ on the frame of sublocales $\mathrm{Sl}(X)^{op}$ of $X$.\footnote{Technically speaking we obtain a \emph{co-measure} on the coframe $\mathrm{Sl}(X)$, as we obtain a contravariant functor $\mu_* :  \mathrm{Sl}(X)^{op} \rightarrow [0, \infty]$ that satisfies modularity, $\mu_*(1) = 0$ and $\mu_*( \bigwedge_{i \in I} S_i ) = \sup_{i \in I} \mu_*(S_i)$. This does not cause many issues in practice however, as for \emph{complemented sublocales}, which contains finite unions and intersections of closed and open sublocales, we can simply pass to the co-measure of the complement.} The value at an open set $U$ of $X$ is computed as
$$\mu_*(U) = \sup_{ K \subset U \text{ compact}} \mu(K),$$
and the value at a closed set $C$ of $X$ is given as
$$\mu_*(C) = \sup \{ \mu(K) ~|~ K \text{ s.t. } \mu(K \cap K') = 0 \text{ for all } K' \subset C^c \text{ compact} \}.$$

\begin{remark} \label{lifttosubsets}
Any subset $S$ of $X$ will be assigned a consistent value that we can reasonably call the \emph{measure} of $S$. To see this, note that we have a lift
\[\begin{tikzcd}
	& {\mathfrak{Sl}(X)} \\
	{X^{disc}} & X
	\arrow[from=1-2, to=2-2]
	\arrow["j", dashed, from=2-1, to=1-2]
	\arrow[from=2-1, to=2-2]
\end{tikzcd}\]
since $X^{disc}$ is Boolean, by the universal property of the map $\mathfrak{Sl}(X) \rightarrow X$ given in Theorem \ref{lifttosublocales2}. Therefore we can use the functor $j_* : \mathcal{P}(X) \rightarrow \mathrm{Sl}(X)^{op}$ to define $\mu(S) = \mu_*(j_*(S))$ for a subset $S \subset X$. This will however not give a measure on $\mathcal{P}(X)$ in the traditional sense, as $j_*$ only preserves infima, and not necessarily joins. We will see later, that if $\mu$ is a regular content, this induced measure agrees with the classical notion of measure, see Corollary \ref{measureofsubset}.
\end{remark}

Moreover, if $A \hookrightarrow X$ is a sublocale of $X$, define the map $p|_A : A^\mu \rightarrow A$ via pullback along $p_\mu$, i.e.\ as
\[\begin{tikzcd}
	{A^\mu} & {X^\mu} \\
	A & X.
	\arrow[hook, from=1-1, to=1-2]
	\arrow["{p|_A}"', from=1-1, to=2-1]
	\arrow["\lrcorner"{anchor=center, pos=0.125}, draw=none, from=1-1, to=2-2]
	\arrow["{p_\mu}", from=1-2, to=2-2]
	\arrow[hook, from=2-1, to=2-2]
\end{tikzcd}\]
Since the pullback of an inclusion of a sublocale along any map is again a sublocale, and $X^\mu$ is Boolean and thus any sublocale of $X^\mu$ is open, this means that $A^\mu \hookrightarrow X^\mu$ is an open inclusion, and $A^\mu$ is, as a sublocale of a Boolean locale, again Boolean. The measure $\mu_*$ on $X^\mu$ thus restricts to a measure ${\mu|_A}_*$ on $A^\mu$, which is again locally finite and faithful. Pushing forward along $p|_A$ equips the sublocale $A$ with a measure $\mu|_A$, which we call the restriction of $\mu$.

We call a continuous map $f : X^\mu \rightarrow Y$, for $Y$ a locale, a $Y$-\emph{valued random variable} on $X^\mu$. The map $p_\mu : X^\mu \rightarrow X$ is a particular such example, and induces the function
$$ \mathrm{Map}( X, Y ) \rightarrow \mathrm{Map}( X^\mu, Y )$$
via precomposition, which realizes continuous functions on $X$ as random variables. 

More generally, suppose we have a partition of $X^\mu$ into pieces of the form $X^\mu = \coprod_{i \in I} A_i^\mu$. Then any choice of individual random variables $f_i : A_i^\mu \rightarrow Y$ for each $i \in I$ determines a unique random variable $f : X^\mu \rightarrow Y$. Thus for example piece-wise defined continuous functions always give associated random variables, assuming the domains of definition cover all of $X$, up to null sets. On top of that, from these basic examples of random variables, assuming the usual completeness assumptions on the target $Y$, more examples of random variables can be constructed via limit procedures. This covers practically all useful measurable functions considered in mathematical analysis, constructed in a point-free manner. Unlike in the classical approach to measure theory, no passage to equivalence classes up to almost everywhere equivalence is necessary. Moreover, any construction made to random variables in our sense will always be measurable in the classical sense. From here, the theory of integration could be developed. However, this remains outside of the scope of this article.

\begin{remark}
The theory works smoothest in the case $X$ is additionally locally compact, and $\lambda$ is a regular content. We will develop some of the consequences of both of this cases in the upcoming sections. However, measures on spaces, such as spaces of functions, that are not necessarily locally compact are important in practice.
\end{remark}

\subsection{Functoriality for Radon locales}

Let us discuss the functoriality of the assignment $X \mapsto X^\mu$. Here we want to consider two different cases of maps between Hausdorff spaces equipped with Radon valuations: That of (globally defined) continuous measure-preserving maps, as well as the case of measure-preserving partially defined proper maps with open support.

\begin{definition}
A continuous map $f : X \rightarrow Y$ between Hausdorff spaces $X,Y$ equipped with valuations $\mu, \nu$ on the set of compact sets $\mathcal{K}(X)$, respectively $\mathcal{K}(Y)$, is called \emph{compactly measure-preserving} if the induced map $f^\mathcal{K} : X^\mathcal{K} \rightarrow Y^\mathcal{K}$ is measure-preserving when equipping $X^\mathcal{K}$ and $Y^\mathcal{K}$ with the induced inner measures $\mu_*$ and $\nu_*$. Equivalently, $f$ is compactly measure-preserving iff for all compact sets $C \subset Y$ it holds that
$$ \mu_*((f^\mathcal{K})^*(C)) = \sup_{K \subset f^{-1}(C) \text{ compact}} \mu(K) = \nu(C). $$
Denote by $\mathrm{ValHausSpc}_{\mathrm{glob}}$ the resulting category of pairs $(X,\mu)$ with $X$ a Hausdorff spaces and $\mu$ a valuation on $\mathcal{K}(X)$, and compact measure-preserving maps between them, and $\mathrm{RadHausSpc}_{\mathrm{glob}}$ the full subcategory spanned by pairs $(X,\mu)$ such that $\mu$ is a Radon valuation.
\end{definition}

In the following, recall that $\mathrm{MeasLoc}$ is the category of locales equipped with measures, and measure-preserving partial maps. We denote by $\mathrm{MeasLoc}_{\mathrm{glob}}$ the wide subcategory given by restricting to globally defined measure-preserving maps. 

Note that any compactly measure-preserving map $f : X \rightarrow Y$ is in particular a measure-preserving map $(X, \mu_*) \rightarrow (Y, \nu_*)$. To see this, consider the commuting square
\[\begin{tikzcd}
	{X^\mathcal{K}} & {Y^\mathcal{K}} \\
	X & Y,
	\arrow["{f^\mathcal{K}}", from=1-1, to=1-2]
	\arrow["{\theta_X}"', from=1-1, to=2-1]
	\arrow["{\theta_Y}", from=1-2, to=2-2]
	\arrow["f", from=2-1, to=2-2]
\end{tikzcd}\]
given by Theorem \ref{Hausdorfffunctoriality}. Then $f$ being compactly measure-preserving is equivalent to $f^\mathcal{K}$ being measure-preserving. Since $\mu_*$ and $\nu_*$ are defined via pushforward along $\theta$, this means all maps in this square become measure-preserving. In particular, this gives a forget functor $L : \mathrm{ValHausSpc}_{\mathrm{glob}} \rightarrow \mathrm{MeasLoc}_{\mathrm{glob}}$, which sends $(X, \mu)$ to $(X, \mu_*)$.

\begin{theorem}  \label{functorialityradonvaluationglobal}
There exists a functor
$$\begin{array}{rcl}
\mathrm{Rad} : \mathrm{ValHausSpc}_{\mathrm{glob}}  &\rightarrow& \mathrm{MeasLoc}_{\mathrm{l.f.~faithf, glob}} \\
X &\mapsto& X^\mu
\end{array}$$
where $\mathrm{MeasLoc}_{\mathrm{l.f.~faithf, glob}}$ is the category of locally finite and faithful measure locales, and measure-preserving globally defined maps between them, together with a natural transformation $p : U \mathrm{Rad} \rightarrow L$, where $U :  \mathrm{MeasLoc}_{\mathrm{l.f.~faithf, glob}} \rightarrow \mathrm{MeasLoc}_{glob}$ is the canonical inclusion, given in components by the map
$$ p_\mu : X^\mu \rightarrow X.$$

The functor $\mathrm{Rad}$ restricts to a functor
$$\begin{array}{rcl}
\mathrm{Rad} : \mathrm{RadHausSpc}_{\mathrm{glob}}  &\rightarrow& \mathrm{BoolMeasLoc}_{\mathrm{l.f.~faithf, glob}} \\
\end{array}$$
where $\mathrm{BoolMeasLoc}_{\mathrm{l.f.~faithf, glob}}$ is the category of Boolean, locally finite and faithful measure locales, with measure-preserving globally defined maps between them.
\end{theorem}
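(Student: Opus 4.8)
The plan is to build the functor in two stages, reusing the machinery already set up. First I would construct the functor $\mathrm{Rad}$ on $\mathrm{ValHausSpc}_{\mathrm{glob}}$, and then observe that it restricts as claimed once we cut down to Radon valuations. For the first stage, note that for a pair $(X,\mu) \in \mathrm{ValHausSpc}_{\mathrm{glob}}$ the inner locale $X^\mu$ is by definition the sublocale $\mathrm{Sh}(\mathcal{K}(X),\mu;\mathbf 2) \hookrightarrow \mathrm{Sh}(\mathcal{K}(X),fin;\mathbf 2) = \mathcal O(X^\mathcal{K})$, carrying the locally finite and faithful measure $\mu_*$ (Theorem \ref{innermeasureproperties}), so the object assignment is already pinned down. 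For morphisms: given a compactly measure-preserving $f : (X,\mu) \to (Y,\nu)$, Theorem \ref{Hausdorfffunctoriality} produces an open map $f^\mathcal{K} : X^\mathcal{K} \to Y^\mathcal{K}$, and by hypothesis this map is measure-preserving for the inner measures $\mu_*,\nu_*$. The key point is then that an open measure-preserving map of locales restricts to the double-negation-type inner sublocales. Concretely, I would show that $(f^\mathcal{K})^*$ sends $\nu$-approximation covers to $\mu$-approximation covers: if $\{C_i \le C\}$ is a $\nu$-cover in $\mathcal{K}(Y)$ and $f^\mathcal{K}$ is measure-preserving, then $\sup_i \mu_*((f^\mathcal{K})^*(C_i)) = \sup_i \nu(C_i) = \nu(C) = \mu_*((f^\mathcal{K})^*(C))$, so $(f^\mathcal{K})^*$ is still a morphism of sites after passing to the $\mu$-inner topology (using Corollary \ref{flatfunctor}). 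This yields $f^\mu : X^\mu \to Y^\mu$ with $(f^\mu)^* = (f^\mathcal{K})^*$ restricted, and functoriality is inherited from functoriality of $(-)^\mathcal{K}$. That the resulting $f^\mu$ is measure-preserving is immediate since $\mu_*,\nu_*$ on $X^\mu,Y^\mu$ are the restrictions of the inner measures on $X^\mathcal{K},Y^\mathcal{K}$ and those are preserved.

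For the natural transformation $p : U\mathrm{Rad} \to L$, I would simply take $p_\mu$ to be the composite $X^\mu \hookrightarrow X^\mathcal{K} \xrightarrow{\theta_X} X$ as in Section \ref{radonvaluations}; naturality in $f$ follows by pasting the commuting square of Theorem \ref{Hausdorfffunctoriality} with the commuting square expressing that $X^\mu \hookrightarrow X^\mathcal{K}$ is natural in $f$ (the latter because $(f^\mathcal{K})^*$ restricts to $(f^\mu)^*$ by construction). The target $\mathrm{MeasLoc}_{\mathrm{l.f.~faithf, glob}}$ is legitimate because each $\mu_*$ on $X^\mu$ is locally finite and faithful by Theorem \ref{innermeasureproperties}, and each $f^\mu$ is globally defined: here one must check $(f^\mu)^*(1) = 1$, i.e.\ that $(f^\mathcal{K})^*$ sends the top $\mu$-ideal of $\mathcal{K}(Y)$ to (something with $\mu_*$-full measure equal to) the top $\mu$-ideal of $\mathcal{K}(X)$, which again reduces to the compactly-measure-preserving hypothesis together with faithfulness of $\mu_*$.

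For the restriction to $\mathrm{RadHausSpc}_{\mathrm{glob}}$, the only new content is that when $\mu$ is a Radon valuation (finite and almost Boolean), $X^\mu$ is a \emph{Boolean} measurable locale --- and this is exactly Theorem \ref{almostbooleangivesboolean}, applied to the valuation site $(\mathcal{K}(X),\mu)$. Since $\mathrm{BoolMeasLoc}_{\mathrm{l.f.~faithf, glob}}$ is a full subcategory of $\mathrm{MeasLoc}_{\mathrm{l.f.~faithf, glob}}$, no further check on morphisms is needed, and the restricted functor is well-defined.

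\textbf{Main obstacle.} The delicate step is verifying that the open map $f^\mathcal{K}$ genuinely descends to the inner sublocales --- i.e.\ that the inverse image functor $(f^\mathcal{K})^*$, which is defined on $\mathcal O(X^\mathcal{K}) = \mathrm{Idl}(\mathcal{K}(X))$, actually carries the sub-frame $\mathrm{Sh}(\mathcal{K}(X),\mu;\mathbf 2)$ of $\mu$-closed ideals into the corresponding sub-frame on the $X$-side, and does so compatibly with the inner measures. The cleanest route is to use Corollary \ref{flatfunctor}: it suffices that the composite $\mathcal{K}(Y) \xrightarrow{C \mapsto \{K \subset f^{-1}(C)\}} \mathcal O(X^\mathcal{K})$ has image landing in $X^\mu$ and is a morphism of sites for the $\mu$-inner topology, which boils down to the $\nu$-cover $\Rightarrow$ $\mu$-cover implication spelled out above. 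One should be slightly careful that $X^\mathcal{K}, Y^\mathcal{K}$ are locally coherent hence spatial, so no size or choice issues intervene; and that the direct image $f^\mathcal{K}_!$ need \emph{not} restrict (openness of $f^\mu$ is not asserted and not needed), so one works only with $(f^\mu)^* = (f^\mathcal{K})^*|$.
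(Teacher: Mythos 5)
Your proposal is correct and follows essentially the same route as the paper: descend $f^{\mathcal{K}}$ from Theorem \ref{Hausdorfffunctoriality} to the inner locales by checking, via Corollary \ref{flatfunctor}, that $\nu$-approximation covers are sent to covers — using exactly the chain $\sup_i \mu_*((f^\mathcal{K})^*(C_i)) = \sup_i \nu(C_i) = \nu(C) = \mu_*((f^\mathcal{K})^*(C)) < \infty$ together with faithfulness of $\mu_*$ — and then obtain the Boolean restriction from Theorem \ref{almostbooleangivesboolean}. One small correction: global definedness of $f^\mu$ does not need faithfulness (which would anyway be insufficient when $\mu_*(1)=\infty$); it is automatic, since $(f^\mu)^* = (-)^{sh}\circ (f^\mathcal{K})^* \circ i_*$ is a composite of functors each preserving the top element (indeed every compact $K\subset X$ satisfies $K\subset f^{-1}(f(K))$, so $(f^\mathcal{K})^*$ applied to the maximal ideal is already maximal).
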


\begin{proof}
Let $f : X \rightarrow Y$ be a continuous, compactly measure-preserving map with $X, Y$ both both Hausdorff spaces, and $\mu$, $\nu$ valuations on their respective sets of compact subsets of $X$ and $Y$. We have the natural commutative square given by Theorem  \ref{Hausdorfffunctoriality},
\[\begin{tikzcd}
	{X^\mathcal{K}} & {Y^\mathcal{K}} \\
	X & Y.
	\arrow["{f^\mathcal{K}}", from=1-1, to=1-2]
	\arrow["{\theta_X}"', from=1-1, to=2-1]
	\arrow["{\theta_Y}", from=1-2, to=2-2]
	\arrow["f", from=2-1, to=2-2]
\end{tikzcd}\]
To show that $f^\mathcal{K}$ further refines to a map $\mathrm{Rad}(f) : X^\mu \rightarrow  Y^\nu$ it suffices to show that the induced functor
$$\begin{array}{rcl}
(f^\mathcal{K})^*)^{sh} : \mathcal{K}(Y) & \rightarrow & \mathcal{O}( X^\mu ) = \mathrm{Sh}( \mathcal{K}(X), \mu ; \mathbf{2} ) \\
C & \mapsto & \{ K \subset f^{-1}(C) \text{ compact }\}^{sh}
\end{array}$$
is flat with respect to the $\nu$-inner pretopology on $\mathcal{K}(Y)$. The case of finite covers is clear. Let $\{ C_i \subset C ~|~ i \in I \}$ be a directed $\nu$-approximation. Using that $f$ is measure-preserving, we have
$$ \sup_{i \in I} \mu_*( (f^\mathcal{K})^*(C_i)^{sh} ) = \sup_{i \in I} \mu_*( (f^\mathcal{K})^*(C_i) ) = \sup_{i \in I} \nu( C_i ) = \nu( C ) =  \mu_*( (f^\mathcal{K})^*(C) )  =  \mu_*( (f^\mathcal{K})^*(C)^{sh} ) < \infty. $$
Since $\mu_*$ is faithful on $\mathrm{Sh}( \mathcal{K}(X), \mu ; \mathbf{2} )$, this means that
$$ \bigvee_{i \in I } (f^\mathcal{K})^*(C_i)^{sh} = (f^\mathcal{K})^*(C)^{sh}$$
in $\mathrm{Sh}( \mathcal{K}(X), \mu ; \mathbf{2} )$, in other words, $(f^\mathcal{K})^*)^{sh}$ is flat. It is also clear that the induced map $\mathrm{Rad}(f)$ is measure-preserving. We thus obtain the commutative diagram
\[\begin{tikzcd}
	{X^\mu} & {Y^\nu} \\
	{X^\mathcal{K}} & {Y^\mathcal{K}} \\
	X & Y.
	\arrow["{\mathrm{Rad}(f)}", from=1-1, to=1-2]
	\arrow[from=1-1, to=2-1]
	\arrow[from=1-2, to=2-2]
	\arrow["{f^\mathcal{K}}", from=2-1, to=2-2]
	\arrow[from=2-1, to=3-1]
	\arrow[from=2-2, to=3-2]
	\arrow["f", from=3-1, to=3-2]
\end{tikzcd}\]
Since all measures in question are defined via pushforward, all maps in this diagram are measure-preserving. This settles the question of naturality. For the last point, the restriction to $\mathrm{RadHausSpc}_{\mathrm{glob}}$ maps into Boolean locales by Theorem \ref{almostbooleangivesboolean}.
\end{proof}

We now come to the analogous situation when considering partially defined proper maps instead.

\begin{definition}
A partial proper map $f : X \rightarrow Y$ between Hausdorff spaces $X,Y$ equipped with valuations $\mu, \nu$ on the set of compact sets $\mathcal{K}(X)$, respectively $\mathcal{K}(Y)$, is called \emph{compactly measure-preserving} if $\mu(f^{-1}(K)) = \nu(K)$ for all compact $K \subset Y$. Denote by $\mathrm{ValHausSpc}$ the category of Hausdorff spaces $X$ equipped with such valuations $\mu$, and compactly measure-preserving partial proper maps between them. We denote the full subcategory given by $X$ equipped with a Radon valuation $\mu$ by $\mathrm{RadHausSpc}$.
\end{definition}

Similarly to the case of globally defined maps, any compactly measure-preserving partial proper map $f : X \rightarrow Y$ is in particular a measure-preserving partial map $(X, \mu_*) \rightarrow (Y, \nu_*)$, using the same reasoning with Theorem \ref{Hausdorfffunctorialityproper} instead of Theorem \ref{Hausdorfffunctoriality}. Denote the forget functor which sends $(X, \mu)$ to $(X, \mu_*)$ by $L : \mathrm{ValHausSpc} \rightarrow \mathrm{MeasLoc}$.

\begin{theorem} \label{functorialityradonvaluationproper}
There exists a functor
$$\begin{array}{rcl}
(-)^\mu : \mathrm{ValHausSpc}  &\rightarrow& \mathrm{MeasLoc}_{\mathrm{l.f.~faithf}} \\
X &\mapsto& X^\mu
\end{array}$$
where $\mathrm{MeasLoc}_{\mathrm{l.f.~faithf}}$ is the category of locally finite and faithful measure locales, and measure-preserving partial maps between them, together with a natural transformation $p : U (-)^\mu \rightarrow L$, where $U :  \mathrm{MeasLoc}_{\mathrm{l.f.~faithf}} \rightarrow \mathrm{MeasLoc}$ is the canonical inclusion, given in components by the map
$$ p_\mu : X^\mu \rightarrow X.$$

This functor restricts to a functor
$$\begin{array}{rcl}
(-)^\mu : \mathrm{RadHausSpc}  &\rightarrow& \mathrm{BoolMeasLoc}_{\mathrm{l.f.~faithf}} \\
\end{array}$$
where $\mathrm{BoolMeasLoc}_{\mathrm{l.f.~faithf}}$ is the category of Boolean, locally finite and faithful measure locales, with measure-preserving partial maps between them.
\end{theorem}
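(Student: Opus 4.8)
The plan is to mirror the construction given in Theorem \ref{functorialityradonvaluationglobal}, replacing the use of Theorem \ref{Hausdorfffunctoriality} (globally defined continuous maps) with Theorem \ref{Hausdorfffunctorialityproper} (partial proper maps), and the use of the open-map functoriality of $(-)^\mathcal{K}$ with the partial-map functoriality. Concretely, given a compactly measure-preserving partial proper map $f : X \rightarrow Y$, Theorem \ref{Hausdorfffunctorialityproper} already provides a partial continuous map $f^\mathcal{K} : X^\mathcal{K} \rightarrow Y^\mathcal{K}$, obtained by extending the partial morphism of sites $f^{-1} : \mathcal{K}(Y) \rightarrow \mathcal{K}(X)$, fitting into the naturality square with $\theta_X, \theta_Y$. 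The only thing to check is that $f^\mathcal{K}$ restricts to a partial continuous map $X^\mu \rightarrow Y^\nu$ between the inner sublocales, i.e. that the composite functor
$$ \mathcal{K}(Y) \xrightarrow{f^{-1}} \mathcal{K}(X) \xrightarrow{[-]} \mathrm{Sh}(\mathcal{K}(X),\mu;\mathbf{2}) $$
is a partial morphism of sites when $\mathcal{K}(Y)$ carries the $\nu$-inner topology; equivalently, that $C \mapsto [f^{-1}(C)]$ sends $\nu$-approximations to $\mu_*$-suprema. This is where the compact-measure-preserving hypothesis $\mu(f^{-1}(K)) = \nu(K)$ enters: for a directed $\nu$-approximation $\{C_i \leq C\}_{i \in I}$ we compute
$$ \sup_{i \in I} \mu_*([f^{-1}(C_i)]) = \sup_{i \in I} \mu(f^{-1}(C_i)) = \sup_{i \in I} \nu(C_i) = \nu(C) = \mu(f^{-1}(C)) = \mu_*([f^{-1}(C)]) < \infty, $$
and since $\mu_*$ is faithful and the value is finite, faithfulness forces $\bigvee_{i \in I} [f^{-1}(C_i)] = [f^{-1}(C)]$ in $\mathrm{Sh}(\mathcal{K}(X),\mu;\mathbf{2})$. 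Finite covers are preserved trivially since $f^{-1}$ preserves finite unions. Thus the functor is a partial morphism of sites and extends, by Theorem \ref{functorialitymorphismsites} (or Corollary \ref{flatfunctor}), to a partial frame homomorphism, hence a partial continuous map $(-)^\mu(f) : X^\mu \rightarrow Y^\nu$.

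Next I would verify that $(-)^\mu(f)$ is measure-preserving: by Corollary \ref{flatfunctor} its inverse image is computed on a propositional sheaf $U$ by $(f^\mu)^*(U) = \bigvee_{C \in U} [f^{-1}(C)]$, and using continuity of $\mu_*$ together with the compact-measure-preserving identity one gets $\mu_*((f^\mu)^*(U)) = \sup_{C \in U} \mu(f^{-1}(C)) = \sup_{C \in U} \nu(C) = \nu_*(U)$. Functoriality of the assignment $f \mapsto (f^\mu)$ follows from functoriality of $(-)^\mathcal{K}$ on $\mathrm{HausSpc}_{\mathrm{part~prop}}$ already established in Theorem \ref{Hausdorfffunctorialityproper}, since the restriction to inner sublocales is compatible with composition (each $(f^\mu)$ is uniquely determined as the restriction of $(f^\mathcal{K})$, and restrictions of composites are composites of restrictions). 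The naturality of $p : U(-)^\mu \rightarrow L$ is then read off from the commuting diagram
$$\begin{tikzcd}
	{X^\mu} & {Y^\nu} \\
	{X^\mathcal{K}} & {Y^\mathcal{K}} \\
	X & Y
	\arrow["{(-)^\mu(f)}", from=1-1, to=1-2]
	\arrow[hook, from=1-1, to=2-1]
	\arrow[hook, from=1-2, to=2-2]
	\arrow["{f^\mathcal{K}}", from=2-1, to=2-2]
	\arrow["{\theta_X}"', from=2-1, to=3-1]
	\arrow["{\theta_Y}", from=2-2, to=3-2]
	\arrow["f", from=3-1, to=3-2]
\end{tikzcd}$$
where the bottom square is the naturality square of Theorem \ref{Hausdorfffunctorialityproper} and the top square is the one just constructed; since $\mu_*$ on $X^\mathcal{K}$ and $X$ are defined by pushforward along the inclusion $X^\mu \hookrightarrow X^\mathcal{K}$ and along $\theta_X$ respectively, every map in the diagram is measure-preserving, giving $p_\mu$ as a natural transformation into $L$.

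Finally, the restriction statement: if $\mu$ and $\nu$ are Radon valuations, i.e. finite and almost Boolean, then by Theorem \ref{almostbooleangivesboolean} the locales $X^\mu$ and $Y^\nu$ are measurable, in particular Boolean, so the functor lands in $\mathrm{BoolMeasLoc}_{\mathrm{l.f.~faithf}}$; the morphisms in the target category are simply the partial measure-preserving maps between such locales, which we have already produced, so no further verification is needed. The only genuinely non-formal step is the flatness check in the first paragraph, and there the sole subtlety is that faithfulness of $\mu_*$ is used to promote an equality of measures to an equality of propositional sheaves — this requires the finiteness $\mu(f^{-1}(C)) < \infty$ (automatic since $\mu$ is finite) and the fact established in Theorem \ref{innermeasureproperties} that $\mu_*$ is faithful on $\mathrm{Sh}(\mathcal{K}(X),\mu;\mathbf{2})$. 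I do not anticipate any real obstacle beyond bookkeeping; the proof is essentially identical in structure to that of Theorem \ref{functorialityradonvaluationglobal}, with ``open'' replaced by ``partial proper'' throughout.
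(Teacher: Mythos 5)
Your proof is correct, but it takes a more laborious route than the paper's. The paper's own argument is essentially one line: for a partial proper compactly measure-preserving map, the hypothesis $\mu(f^{-1}(K)) = \nu(K)$ says precisely that $f^{-1} : \mathcal{K}(Y) \rightarrow \mathcal{K}(X)$ is a valuation-preserving homomorphism of lower bounded distributive lattices, i.e.\ a morphism in $\mathrm{ValSite}$, so one simply applies the already-constructed functor $(-)^{inn}$ of Theorem \ref{innervaluationadjunction}; the naturality square and the Boolean restriction then follow exactly as you describe. Your flatness verification, modelled on the proof of Theorem \ref{functorialityradonvaluationglobal}, is valid but imports a tool that is not needed here: you pass to the sheaf level and invoke faithfulness of $\mu_*$ to promote the equality of suprema of measures to an equality of propositional sheaves. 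In the proper case this detour is avoidable, because $f^{-1}(C)$ is itself compact, so the image of a directed $\nu$-approximation $\{C_i \leq C\}$ is literally a directed family $\{f^{-1}(C_i) \leq f^{-1}(C)\}$ in $\mathcal{K}(X)$ with $\sup_i \mu(f^{-1}(C_i)) = \mu(f^{-1}(C))$ --- that is, a covering of the $\mu$-inner site itself --- and covering-preservation is all that Theorem \ref{universalpropertysheaves} requires. (The faithfulness argument is genuinely needed only in the global case, where $f^{-1}(C)$ need not be compact and $(f^\mathcal{K})^*(C)$ is a non-representable ideal.) Everything else in your write-up --- the measure-preservation computation, functoriality via restriction, naturality of $p$, and the appeal to Theorem \ref{almostbooleangivesboolean} for the Boolean restriction --- matches the paper.
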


\begin{proof} Since $f : X \rightarrow Y$ being compactly measure-preserving means that $f^{-1} : \mathcal{K}(Y) \rightarrow \mathcal{K}(X)$ is a valuation-preserving homomorphism of lower bounded distributive lattices, we have a functor $\mathrm{ValHausSpc}^{op} \rightarrow \mathrm{ValSite}$. Composing with the functor $(-)^{inn}$ provided by Theorem \ref{innervaluationadjunction} gives the functor $(-)^\mu$.

Naturality of $p$ follows analogously to the proof given for Theorem \ref{functorialityradonvaluationglobal} by using that we have the natural diagram
\[\begin{tikzcd}
	{X^\mu} & {Y^\nu} \\
	{X^\mathcal{K}} & {Y^\mathcal{K}} \\
	X & Y.
	\arrow["{\mathrm{Rad}(f)}", from=1-1, to=1-2]
	\arrow[from=1-1, to=2-1]
	\arrow[from=1-2, to=2-2]
	\arrow["{f^\mathcal{K}}", from=2-1, to=2-2]
	\arrow[from=2-1, to=3-1]
	\arrow[from=2-2, to=3-2]
	\arrow["f", from=3-1, to=3-2]
\end{tikzcd}\]
with the bottom square given by Theorem \ref{Hausdorfffunctorialityproper}, and the top square given by the observation that $f^{-1} : \mathcal{K}(Y) \rightarrow \mathcal{K}(X)$ is a morphism of sites. Analogously to the same statement made in Theorem \ref{functorialityradonvaluationglobal}, the restriction to $\mathrm{RadHausSpc}$ maps into Boolean locales by Theorem \ref{almostbooleangivesboolean}.
\end{proof}

\subsection{Functoriality of the induced measure on the frame of sublocales}

We also have functoriality with respect to the induced measure on sublocales in the case of Radon valuation. This uses the universal property of the map $\mathrm{can} : \mathfrak{Sl}(X) \rightarrow X $, which requires the use of globally defined measure-preserving maps. Denote by $\mathrm{RadHausSpc}_{glob}$ the category of Hausdorff spaces equipped with Radon valuations, and globally defined measure-preserving proper maps between them.

\begin{theorem} \label{functorialitymeasuresublocales}
There exists a lift of the functor $\mathfrak{Sl} : \mathrm{RadHausSpc}_{glob} \rightarrow \mathrm{Loc}$ to a functor
$$\begin{array}{rcl}
\mathfrak{Sl} : \mathrm{RadHausSpc}_{glob} & \rightarrow & \mathrm{MeasLoc}_{glob} \\
 (X, \mu) & \mapsto & (\mathfrak{Sl}(X), \mu_*)
\end{array}$$
where $\mathrm{MeasLoc}_{glob}$ is the category of locales equipped with measures and globally defined measure-preserving maps between them.
\end{theorem}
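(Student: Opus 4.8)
The plan is to build the functor $\mathfrak{Sl}$ on $\mathrm{RadHausSpc}_{glob}$ by combining two ingredients already established in the excerpt: first, the functor $\mathrm{Rad} : \mathrm{RadHausSpc}_{glob} \to \mathrm{BoolMeasLoc}_{\mathrm{l.f.~faithf, glob}}$ from Theorem \ref{functorialityradonvaluationglobal}, together with the natural transformation $p : \mathrm{Rad}(-) \to L$ whose components are the maps $p_\mu : X^\mu \to X$; and second, the functor $\mathfrak{Sl} : \mathrm{Loc} \to \mathrm{Loc}$ from Theorem \ref{lifttosublocales1}, with its natural transformation $\mathrm{can} : \mathfrak{Sl} \to \mathrm{id}$. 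The key point is that, since $X^\mu$ is Boolean, the composite $p_\mu : X^\mu \to X$ factors uniquely through $\mathrm{can} : \mathfrak{Sl}(X) \to X$ by Theorem \ref{lifttosublocales2}, giving a canonical map $\tilde{p}_\mu : X^\mu \to \mathfrak{Sl}(X)$. We then define $\mu_*$ on $\mathfrak{Sl}(X)$ to be the pushforward $(\tilde{p}_\mu)_* \mu_*$ of the inner measure along this map; by Proposition \ref{pushforwardproperties} this is indeed a measure.

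The main work is verifying functoriality, i.e.\ that for a globally defined, compactly measure-preserving proper map $f : (X,\mu) \to (Y,\nu)$ the induced frame map $f_{-1}[-] : \mathrm{Sl}(Y)^{op} \to \mathrm{Sl}(X)^{op}$ (Theorem \ref{lifttosublocales1}) is measure-preserving for the pushforward measures just constructed. Here I would first record that the lift $\tilde{p}_\mu$ is itself natural: one has a commuting square
\[\begin{tikzcd}
	{X^\mu} & {Y^\nu} \\
	{\mathfrak{Sl}(X)} & {\mathfrak{Sl}(Y)}
	\arrow["{\mathrm{Rad}(f)}", from=1-1, to=1-2]
	\arrow["{\tilde{p}_\mu}"', from=1-1, to=2-1]
	\arrow["{\tilde{p}_\nu}", from=1-2, to=2-2]
	\arrow["{\mathfrak{Sl}(f)}"', from=2-1, to=2-2]
\end{tikzcd}\]
This square commutes because both composites $X^\mu \to \mathfrak{Sl}(Y)$, when postcomposed with $\mathrm{can} : \mathfrak{Sl}(Y) \to Y$, equal $f \circ p_\mu = p_\nu \circ \mathrm{Rad}(f)$ (using naturality of $\mathrm{can}$ and naturality of $p$), and the lift against $\mathrm{can}$ is unique by Theorem \ref{lifttosublocales2} — so the two lifts coincide. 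Granting the square, measure-preservation of $\mathfrak{Sl}(f)$ follows formally: for an open $W$ of $\mathfrak{Sl}(Y)$,
\[
(\mathfrak{Sl}(f))_*\bigl((\tilde{p}_\mu)_*\mu_*\bigr)(W) = \mu_*\bigl((\tilde{p}_\mu)^*(\mathfrak{Sl}(f))^*(W)\bigr) = \mu_*\bigl(\mathrm{Rad}(f)^*(\tilde{p}_\nu)^*(W)\bigr) = \nu_*\bigl((\tilde{p}_\nu)^*(W)\bigr),
\]
where the last equality is exactly measure-preservation of $\mathrm{Rad}(f)$ established in Theorem \ref{functorialityradonvaluationglobal}. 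Thus $(\mathfrak{Sl}(f))_* \mu_* = \nu_*$, which is the required compatibility, and functoriality of the assignment $(X,\mu) \mapsto (\mathfrak{Sl}(X), \mu_*)$ is inherited from functoriality of $\mathfrak{Sl}$ on $\mathrm{Loc}$.

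I expect the main obstacle to be purely a matter of bookkeeping around the universal property of $\mathrm{can}$: one must be careful that the hypotheses of Theorem \ref{lifttosublocales2} are met (that $p_\mu^*$ sends opens of $X$ to complemented opens of $X^\mu$, which holds since $X^\mu$ is Boolean), and that all the uniqueness clauses are invoked over the correct base so that the naturality square genuinely commutes on the nose rather than merely up to the ambiguity of a non-unique lift. The reason globally defined maps are needed is precisely that the universal property in Theorem \ref{lifttosublocales2} is stated for globally defined $f : M \to L$, so the partial-map version of the story (Theorem \ref{functorialityradonvaluationproper}) does not feed into $\mathfrak{Sl}$ in the same way; this is why the statement restricts to $\mathrm{RadHausSpc}_{glob}$. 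Once the naturality square is in hand, everything else is the short formal computation displayed above.
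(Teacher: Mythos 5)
Your proposal is correct and follows essentially the same route as the paper: lift $p_\mu : X^\mu \to X$ through $\mathrm{can} : \mathfrak{Sl}(X) \to X$ using Booleanness of $X^\mu$ and Theorem \ref{lifttosublocales2}, deduce naturality of the lift from the uniqueness clause together with naturality of $p$ (Theorem \ref{functorialityradonvaluationglobal}) and of $\mathrm{can}$, and conclude measure-preservation because all measures are pushforwards. The paper's proof is just a terser version of your argument; your explicit verification of the naturality square and the displayed pushforward computation fill in exactly the steps the paper leaves implicit.
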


\begin{remark}
This in particular means that the inner measure $\mu_*$ defined on the locale $\mathfrak{Sl}(X)$ of sublocales of $X$ is invariant under measure-preserving homeomorphisms of $X$.
\end{remark}

\begin{proof}
This is mostly a formal consequence of Theorem \ref{functorialityradonvaluationglobal}. As described in the beginning of Section \ref{radonvaluations}, for a given Hausdorff space $X$ with Radon valuation $\mu$, we have a natural lift
\[\begin{tikzcd}
	& {\mathfrak{Sl}(X)} \\
	{X^\mu}  & X.
	\arrow["{\mathrm{can}}", two heads, from=1-2, to=2-2]
	\arrow["{\exists !}", dashed, from=2-1, to=1-2]
	\arrow[from=2-1, to=2-2]
\end{tikzcd}\]
obtained by using Theorem \ref{lifttosublocales2}, since $X^\mu$ is Boolean. Since $p : U \mathrm{Rad} \rightarrow L$ is natural as proven in Theorem \ref{functorialityradonvaluationglobal}, so is the lift to $\mathfrak{Sl}$. All measures involved are induced via pushforward, and so all maps become measure-preserving.
\end{proof}

\begin{corollary} \label{invariancemeasuresublocale}
Let $X$ be a Hausdorff space equipped with a Radon valuation $\mu$. Then there exists a measure $\mu_*$ on the locale of sublocales $\mathfrak{Sl}(X)$ of $X$, which is invariant under measure-preserving homeomorphisms of $X$, and the natural continuous map $\mathfrak{Sl}(X) \rightarrow X$ is measure-preserving.
\end{corollary}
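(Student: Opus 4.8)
The corollary I want to prove is Corollary \ref{invariancemeasuresublocale}, which asserts that for a Hausdorff space $X$ with Radon valuation $\mu$, there is a measure $\mu_*$ on $\mathfrak{Sl}(X)$ invariant under measure-preserving homeomorphisms, with $\mathrm{can} : \mathfrak{Sl}(X) \to X$ measure-preserving.

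The plan is to extract this as a direct consequence of Theorem \ref{functorialitymeasuresublocales}, which already provides the functor $\mathfrak{Sl} : \mathrm{RadHausSpc}_{glob} \to \mathrm{MeasLoc}_{glob}$ sending $(X,\mu)$ to $(\mathfrak{Sl}(X), \mu_*)$. First I would recall that any measure-preserving homeomorphism $\varphi : X \to X$ is in particular a globally defined measure-preserving proper map (a homeomorphism is automatically proper, and measure-preservation on $\mathcal{K}(X)$ is the compact measure-preserving condition), hence a morphism $(X,\mu) \to (X,\mu)$ in $\mathrm{RadHausSpc}_{glob}$, indeed an isomorphism there. Applying the functor $\mathfrak{Sl}$ yields an isomorphism $(\mathfrak{Sl}(X), \mu_*) \to (\mathfrak{Sl}(X), \mu_*)$ in $\mathrm{MeasLoc}_{glob}$, which by definition of that category means $\mathfrak{Sl}(\varphi)$ is measure-preserving for the measure $\mu_*$; this is exactly the invariance claim. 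For the second assertion, I would invoke the naturality built into Theorem \ref{functorialitymeasuresublocales}: the construction lifts $\mathrm{can} : \mathfrak{Sl}(X) \to X$, and since by construction $\mu_*$ on $\mathfrak{Sl}(X)$ is obtained by pushforward along the composite $X^\mu \to \mathfrak{Sl}(X) \to X$ (as laid out at the start of Section \ref{radonvaluations}), the map $\mathrm{can}$ is measure-preserving essentially by definition of the pushforward measure.

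Concretely, the measure $\mu_*$ on $\mathfrak{Sl}(X)$ is the pushforward $(\tilde{p})_* \mu_*$ along the unique lift $\tilde{p} : X^\mu \to \mathfrak{Sl}(X)$ of $p_\mu : X^\mu \to X$ against $\mathrm{can}$, which exists since $X^\mu$ is Boolean (Theorem \ref{lifttosublocales2}). Then $\mathrm{can}_* \mu_* = \mathrm{can}_* (\tilde{p})_* \mu_* = (\mathrm{can} \circ \tilde{p})_* \mu_* = (p_\mu)_* \mu_*$, which is precisely the measure $\mu_*$ that $p_\mu$ pushes onto $X$; so $\mathrm{can}$ is measure-preserving. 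For invariance, if $\varphi : X \to X$ is a measure-preserving homeomorphism, then $\mathfrak{Sl}(\varphi) = \varphi_{-1}[-]$ on $\mathrm{Sl}(X)^{op}$ (Theorem \ref{lifttosublocales1}), and functoriality of the measured lift gives $\mathfrak{Sl}(\varphi)_* \mu_* = \mu_*$, i.e. $\mu_*(\mathfrak{Sl}(\varphi)^{-1}(S)) = \mu_*(S)$ for every sublocale $S$; equivalently $\varphi_{-1}[-]$ preserves $\mu_*$.

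I do not expect a serious obstacle here, since all the substantive work — verifying that $(-)^\mu$ and then $\mathfrak{Sl}$ are functorial on $\mathrm{RadHausSpc}_{glob}$, that the lift against $\mathrm{can}$ is natural, and that pushforward along all the maps in the relevant diagrams is measure-preserving — is already done in Theorems \ref{functorialityradonvaluationglobal} and \ref{functorialitymeasuresublocales}. The only points requiring a sentence of care are: (i) checking that a measure-preserving homeomorphism of $X$ genuinely lands in the morphisms of $\mathrm{RadHausSpc}_{glob}$, i.e.\ that it is proper and compactly measure-preserving (both immediate: homeomorphisms preserve compacts and their inverse images, and ``measure-preserving'' of a homeomorphism of $X$ is understood at the level of $\mathcal{K}(X)$); and (ii) unwinding the definition of a morphism in $\mathrm{MeasLoc}_{glob}$ to see that ``$\mathfrak{Sl}(\varphi)$ is a morphism there'' is the same as ``$\mathfrak{Sl}(\varphi)$ preserves $\mu_*$'', which is literally the definition. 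So the proof is genuinely a short formal corollary.

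\begin{proof}
Both statements are immediate consequences of Theorem \ref{functorialitymeasuresublocales}. By construction, as explained at the beginning of Section \ref{radonvaluations}, the measure $\mu_*$ on $\mathfrak{Sl}(X)$ is the pushforward $(\tilde{p})_* \mu_*$ of the inner measure along the unique lift $\tilde{p} : X^\mu \to \mathfrak{Sl}(X)$ of $p_\mu : X^\mu \to X$ against $\mathrm{can}$, which exists since $X^\mu$ is Boolean by Theorem \ref{almostbooleangivesboolean} and Theorem \ref{lifttosublocales2}. Hence
$$\mathrm{can}_* \mu_* = \mathrm{can}_* (\tilde{p})_* \mu_* = (\mathrm{can} \circ \tilde{p})_* \mu_* = (p_\mu)_* \mu_*,$$
which is by definition the measure $\mu_*$ induced on $X$; thus $\mathrm{can} : \mathfrak{Sl}(X) \to X$ is measure-preserving.

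For invariance, let $\varphi : X \to X$ be a measure-preserving homeomorphism. A homeomorphism is proper, and $\varphi$ being measure-preserving at the level of $\mathcal{K}(X)$ means precisely that it is a compactly measure-preserving globally defined proper map, i.e.\ an isomorphism $(X,\mu) \to (X,\mu)$ in $\mathrm{RadHausSpc}_{glob}$. Applying the functor $\mathfrak{Sl}$ of Theorem \ref{functorialitymeasuresublocales} yields an isomorphism $(\mathfrak{Sl}(X), \mu_*) \to (\mathfrak{Sl}(X), \mu_*)$ in $\mathrm{MeasLoc}_{glob}$. By definition of the morphisms of $\mathrm{MeasLoc}_{glob}$ this says exactly that $\mathfrak{Sl}(\varphi) = \varphi_{-1}[-]$ is measure-preserving for $\mu_*$, which is the asserted invariance.
\end{proof}
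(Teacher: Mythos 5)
Your proof is correct and follows exactly the route the paper intends: the corollary is extracted as a formal consequence of Theorem \ref{functorialitymeasuresublocales}, with invariance coming from applying the functor $\mathfrak{Sl}$ to the isomorphism $(X,\mu)\to(X,\mu)$ in $\mathrm{RadHausSpc}_{glob}$ and measure-preservation of $\mathrm{can}$ coming from the fact that all measures in the commuting triangle $X^\mu\to\mathfrak{Sl}(X)\to X$ are defined by pushforward. Your two points of care (properness/compact measure-preservation of a homeomorphism, and unwinding the morphisms of $\mathrm{MeasLoc}_{glob}$) are exactly the right ones and are handled correctly.
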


\begin{remark}
In spirit at least this result is analogous to the results that were obtained independently by Simpson \cite{SIMPSON20121642} and Leroy \cite{leroy2013theorielamesuredans}. In both cases, Simpson and Leroy put a valuation $\mu^*$ on $(\mathrm{Sl}(X), \subset)$, which mimics the construction of an outer measure. In the case of Simpson, $X$ is assumed to be a fitted $\sigma$-locale, in the case of Leroy, $X$ is assumed to be a regular locale, in both cases $\mu : \mathcal{O}(X) \rightarrow [0,\infty)$ is assumed to be a valuation satisfying further conditions. The valuation $\mu*$ is then shown to be cocontinuous, i.e.\
$$\mu( \bigwedge_{i\in I} S_i ) = \inf_{i\in I} \mu(S_i)$$
for any downward directed set of sublocales $S_i$ (assuming finiteness in the case of Simpson), as well as $\sigma$-continuous, i.e.\
$$\mu( \bigvee_{n \in \mathbb{N}} S_n ) = \sup_{n \in \mathbb{N}} \mu(S_n).$$
The main conceptual difference for the approach presented in this article is that we instead construct the corresponding \emph{co-measure} $\mu_*$ on $\mathrm{Sl}(X)^{op}$. This has the main conceptual advantage that we are never leaving the world of locales.
\end{remark}

\subsection{Regular contents} \label{regularcontents}

A particularly nice class of Radon valuations is obtained from the notion of a \emph{regular content}. Regular contents have long been used in the classical construction of Radon measures. It is therefore not surprising that employing them in the construction of measure locales yields a theory essentially equivalent to the classical one. Their use also makes it easy to transfer classical constructions into the point-free setting, since in practice the essential geometric or logical information of a given measure theoretic situation is already captured in the construction of a regular content.

\begin{definition}[See \cite{halmos2013measure}, Section 53] Let $X$ be a topological space. A \emph{content} on $X$ is a function $\lambda : \mathcal{K}(X) \rightarrow [0,+\infty)$ on the set of compact subsets such that the following conditions hold.
\begin{itemize}
  \item \emph{Monotone:} If $C \subset D$ are compact sets, then $\lambda(C) \leq \lambda(D)$.
    \item \emph{Subadditive for unions:} For all compact sets $C, D$ we have
  \[
    \mu(C \cup D) \leq \mu(C) + \mu(D).
  \]
  \item \emph{Additive for disjoint unions:} For all compact sets $C, D$ with $C \cap D = \emptyset$ we have
  \[
    \mu(C \cup D) = \mu(C) + \mu(D).
  \]
\end{itemize}
\end{definition}
Note that $\mu( \emptyset ) = \mu( \emptyset \cup \emptyset ) = \mu( \emptyset )  + \mu( \emptyset )$, therefore $\mu( \emptyset ) = 0$.

\begin{definition}
Let $X$ be a topological space, and let $K_1, K_2$ be compact subsets. We say $K_1 \ll K_2$ if there exists $U$ open such that $K_1 \subset U \subset K_2$.
\end{definition}

\begin{definition}[See \cite{halmos2013measure}, Section 54]
Let $X$ be a topological space. A content $\lambda : \mathcal{K}(X) \rightarrow [0,+\infty)$ is called \emph{regular} if $\lambda(K) = \inf \{ \lambda(K') ~|~ K \ll K',~ K' \text{ compact} \}$ for all compact subsets $K$.
\end{definition}

\begin{example} \label{examplesradon}
Every Radon measure $(X, \mathcal{B}, \mu)$ on a Hausdorff space $X$ gives rise to a regular content, see the upcoming Section \ref{radonmeasurespaces} for details. In fact, a standard procedure in order to construct a Radon measure on locally compact Hausdorff space is either to give a direct construction of a content, or equivalently to give a positive linear functional $C_c(X) \rightarrow \mathbb{R}$, where $C_c(X)$ is the space of continuous functions with compact support via the Riesz–Markov–Kakutani representation theorem (See e.g.\ \cite[Theorem 7.2.8]{cohn1994measure}). As such we can give many different examples of regular contents.
\begin{itemize}
\item A discrete set $S$ equipped with the counting measure on $\mathcal{K}(S) = \mathcal{P}(S)^{fin}$.
\item The Lebesgue measure $\lambda_d$ on $\mathbb{R}^d$. More on this in Section \ref{lebesguereals}.
\item More generally, the Haar measure $\mu$ on a locally compact Hausdorff group $G$, discussed in Section \ref{haarmeasure}.
\item The Dirac measure $\lambda_x$ for any point $x \in X$.
\item The Volume measure on a Riemannian manifold $(M,g)$.
\item The Wiener measure on $C([0,1];\mathbb{R})$, modelling Brownian motion (See e.g.\ \cite[VI.5]{schwartz1973radon}).
\end{itemize}
\end{example}

We now want to compare the notion of a regular content with that of a valuation. Note that $\mathcal{K}(X)$ need not be a lower bounded distributive lattice for general topological spaces, since intersections of compact sets need not be compact. For this reason we restrict our attention to $X$ being Hausdorff.

\begin{theorem} \label{regularcontentalmostboolean}
Let $X$ be a Hausdorff topological space and $\lambda : \mathcal{K}(X) \rightarrow [0,+\infty)$ a regular content. Then $\lambda$ is an almost Boolean valuation on $\mathcal{K}(X)$.
\end{theorem}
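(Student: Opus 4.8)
The plan is to verify directly the two conditions in Definition~\ref{almostboolean}, using the defining properties of a regular content together with the Separation Lemma~\ref{compactseparationlemma} and the decomposition Lemma~\ref{compactdecompositionlemma}. First I would check that a regular content is indeed a valuation on $\mathcal{K}(X)$: monotonicity and $\lambda(\emptyset)=0$ are immediate, and modularity $\lambda(C)+\lambda(D)=\lambda(C\cup D)+\lambda(C\cap D)$ should follow from subadditivity, finite additivity on disjoint compacts, and regularity --- the standard trick is to separate $C\setminus D$-type pieces: given $\epsilon>0$, use regularity to find $K' \gg C\cap D$ with $\lambda(K')\le\lambda(C\cap D)+\epsilon$, pick an open $U$ with $C\cap D\subset U\subset K'$, and write $C\cup D$ as a controlled union of $C\cap K'$ (or $C\cap \overline{U}$) with the parts of $C$ and $D$ lying outside a slightly smaller neighbourhood; then apply additivity on the disjoint compact pieces and let $\epsilon\to 0$. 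This is a known computation (it is how one shows contents extend to measures in Halmos), so I would cite it or sketch it briefly rather than grind it out.

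The heart of the matter is the almost Boolean condition. Fix $c\in\mathcal{K}(X)$, an ascending sequence $c_0\le c_1\le\cdots\le c$ of compact subsets, and $\epsilon>0$; I must produce $N\in\mathbb{N}$ and a compact $d\le c$ with $\lambda(d\wedge c_n)=0$ for all $n$ and $\lambda(c)-\lambda(d\vee c_N)<\epsilon$. The key geometric input is that each $c_n$ is compact, hence \emph{closed} in the Hausdorff space $X$, so the sets $c\setminus c_n$ are open in $c$; moreover $\bigcup_n (c \setminus c_n)$ is an increasing open cover of the subspace $c\setminus\bigcap_n c_n$. I would proceed as follows. Set $\alpha = \sup_n \lambda(c_n)\le\lambda(c)$. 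Choose $N$ so that $\lambda(c_N) > \alpha - \epsilon/3$. Now I want $d$ compact, contained in $c$, \emph{disjoint} from $c_N$ (which by monotonicity of the $c_n$ forces $d\wedge c_n=d\cap c_n\subset d\cap c_N=\emptyset$ for $n\le N$, but I also need it for $n>N$, so more care is needed --- see below), and with $\lambda(d\vee c_N)=\lambda(d)+\lambda(c_N)$ close to $\lambda(c)$. The natural candidate for ``the complement of the $c_n$ inside $c$'' is a compact subset of $c\setminus\bigcup_n c_n$; by regularity and inner approximation I expect to be able to choose a compact $d\subset c$ with $d$ disjoint from each $c_n$ and $\lambda(d)\ge\lambda(c)-\alpha-\epsilon/3$, so that $\lambda(d)+\lambda(c_N)\ge\lambda(c)-\epsilon$.

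The step I expect to be the main obstacle is producing that compact $d$ with the two competing requirements: disjointness from \emph{every} $c_n$ simultaneously (an infinite family) and sufficiently large content. Disjointness from a single compact $c_n$ is easy via Lemma~\ref{compactseparationlemma}; disjointness from the increasing union $\bigcup_n c_n$ is the issue, and this is precisely where the ``$\epsilon$-complement'' rather than genuine complement is forced --- one cannot demand $d\cup c_N = c$. I would handle it by an exhaustion/inner-regularity argument on the open subspace $c\setminus \overline{\bigcup_n c_n}$ inside $c$ combined with finite additivity: approximate $\lambda(c)$ from below by $\lambda(c_N) + \lambda(d')$ where $d'$ ranges over compacts in $c\setminus c_N$, then shrink $d'$ using that $\bigcup_n(c_n\setminus c_N)$ is a countable increasing union of sets each contained in a compact, to arrange $d=d'\setminus(\text{nbhd of }\bigcup_n c_n)$ is still compact, still disjoint from all $c_n$, and has lost at most $\epsilon/3$ in content --- here I would again invoke Lemma~\ref{valuationlemma1} and the modularity just established to control $\lambda(d') - \lambda(d)$ by the content of the removed neighbourhood, which can be made small because $\sup_n\lambda(c_n\setminus c_N) = \alpha - \lambda(c_N) < \epsilon/3$ and $\lambda$ is (inner) regular. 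Assembling these three $\epsilon/3$ estimates gives $\lambda(c)-\lambda(d\vee c_N) < \epsilon$, completing the verification. (Almost disconnectedness, the single-step version, falls out of the same argument with the constant sequence $c_n = c_0$, and is strictly easier since Lemma~\ref{compactseparationlemma} applies once.)
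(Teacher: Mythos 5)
Your overall architecture is the right one and matches the paper's in its essentials: for modularity you use regularity to thicken $C_1\cap C_2$ into an open $V$ inside a compact $K$ of nearly the same content and then play subadditivity against additivity on disjoint compacts (this part is fine, and the paper's computation is exactly the Halmos-style estimate you describe); for the almost Boolean condition you correctly identify that the candidate complement must be $c$ minus an open neighbourhood of the whole chain, and that the only real difficulty is certifying that this removal costs at most $\epsilon$ in content.

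However, that certification is precisely where your proposal has a genuine gap. You justify the loss estimate by ``$\sup_n\lambda(c_n\setminus c_N)=\alpha-\lambda(c_N)<\epsilon/3$ and $\lambda$ is (inner) regular,'' but $c_n\setminus c_N$ is not compact (it is relatively open in $c_n$), so $\lambda$ is not defined on it, and a regular content is regular from \emph{above} (approximation by larger compacts containing open neighbourhoods), not inner regular in the sense you would need. More importantly, the removed set is a neighbourhood of the non-compact increasing union $\bigcup_n c_n$, and no pointwise bound on the individual $c_n$ controls the content of what you cut out without an additional compactness input. The paper closes this exactly by a finite-subcover argument: thicken each $c_n$ to $c_n\subset U_n\subset K_n$ with $\lambda(K_n)-\lambda(c_n)\le\epsilon/2$ (increasing without loss of generality), set $U=\bigcup_n U_n$ and $d=c\setminus U$, thicken $d$ to $d\subset W\subset L$ with $\lambda(L)-\lambda(d)\le\epsilon/2$, and then use compactness of $c$ against the open cover $\{U_n\}_n\cup\{W\}$ to get $c\subset U_N\cup W$ for a single $N$, whence $\lambda(c)\le\lambda(K_N)+\lambda(L)\le\lambda(c_N)+\lambda(d)+\epsilon$. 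Note that here $N$ is produced by the subcover rather than chosen in advance from $\sup_n\lambda(c_n)$, and the two-stage construction of your $d$ (first $d'$ disjoint from $c_N$ only, then shrinking) becomes unnecessary. Your sketch can be repaired by running this same covering argument on $d'$, but as written the decisive inequality is asserted rather than proved.
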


\begin{proof}
We need to verify two things for the function $\lambda$: 

\noindent \emph{Modularity:} Let $C_1, C_2$ be two compact subsets. We want to show that
$$ \lambda( C_1 ) + \lambda( C_2 ) = \lambda( C_1 \cup C_2 ) + \lambda( C_1 \cap C_2 ).$$
Using regularity of $\lambda$, let $\epsilon > 0$ and choose $C_1 \cap C_2 \subset V \subset K$ with $V$ open and $K$ compact, such that $\mu(K) - \mu( C_1 \cap C_2 ) < \epsilon$.

We estimate for $i = 1,2$:
$$
\lambda(C_i) \leq \lambda( C_i \setminus V \cup K ) \leq \lambda( C_i \setminus V ) + \lambda( K ) \leq \lambda( C_i \setminus V ) + \lambda( C_1 \cap C_2 ) + \epsilon \leq \lambda(C_i) + \epsilon
$$
and
$$\begin{array}{rcccl}
\lambda(C_1 \cup C_2) &\leq& \lambda( C_1 \setminus V \cup C_2 \setminus V \cup K ) &\leq& \lambda( C_1 \setminus V ) + \lambda( C_2 \setminus V ) + \lambda( K ) \\ &\leq& \lambda( C_1 \setminus V ) + \lambda( C_2 \setminus V ) + \lambda( C_1 \cap C_2 ) + \epsilon &\leq& \lambda(C_1 \cup C_2) + \epsilon.
\end{array}$$
Putting these inequalities together we obtain
$$\begin{array}{rcccl}
\lambda(C_1 \cup C_2) + \lambda(C_1 \cap C_2)  &\leq& \lambda( C_1 \setminus V ) + \lambda( C_2 \setminus V ) + 2\lambda( C_1 \cap C_2 ) + \epsilon &\leq& \lambda( C_1 ) + \lambda( C_2 ) + \epsilon \\ &\leq&  \lambda( C_1 \setminus V ) + \lambda( C_2 \setminus V ) + 2\lambda( C_1 \cap C_2 ) + 3\epsilon
&\leq& \lambda(C_1 \cup C_2) + \lambda(C_1 \cap C_2) + 3 \epsilon.
\end{array}$$
Letting $\epsilon$ tend to zero shows the claim. \\

\noindent \emph{Almost Boolean:} Let $\epsilon > 0$, $C$ a compact set and assume that $C_1 \subset C_2 \subset \hdots $ is an ascending chain of compact sets such that $C_n \subset C$ for all $n \in \mathbb{N}$. Using regularity of $\lambda$, for all $n \in \mathbb{N}$ we find
$C_n \subset U_n \subset K_n,$ where $U_n$ are open and $K_n$ compact, such that 
$$ \mu( K_n ) - \mu(C_n) \leq \frac{\epsilon}{2}. $$
Without loss of generality, we may also assume that $U_n \subset U_{n+1}$ and $K_n \subset K_{n+1}$. Define $U = \bigcup_{n \in \mathbb{N}} U_n$. We claim that the compact set $C \setminus U$ is an approximate complement to the sequence $C_n$ relative to $C$. To see this, again using regularity, choose $C \setminus U \subset W \subset L$ with $W$ open and $L$ compact such that
$$ \mu( L ) - \mu(C \setminus U ) \leq \frac{\epsilon}{2}. $$
The open sets $U_n, n \in \mathbb{N}$ together with $W$ form an open cover of $C$. By compactness, we conclude that there exists $N \in \mathbb{N}$ such that $C \subset U_N \cup W$. Since $U_N \subset K_N$ and $W \subset L$ we have
$$\mu(C) \leq \mu( K_N ) + \mu( L ) \leq \mu( C_N) +  \mu(C \setminus U ) + \epsilon,$$
which concludes the proof.
\end{proof}

If $\lambda$ is a regular content, the map $\tilde{p} : X^\lambda \rightarrow \mathfrak{Sl}(X)$ induces a measure $\lambda_*$ on the frame $\mathrm{Sl}(X)^{op}$, or equivalently a co-measure on $\mathrm{Sl}$. The measure of an open $U$, equivalently co-measure  of the sublocale $U^c \hookrightarrow X$ is given as $\lambda_*(U) = \sup_{ K \subset U \text{ compact}} \lambda(K)$, almost by definition. We claim that the same formula holds for the measure of a closed subset, or equivalently the co-measure of the open inclusion $U \hookrightarrow X$.

\begin{proposition} \label{closedsublocalemeasure} Let $X$ be a Hausdorff space and $\lambda$ a regular content on $X$. Let $C \subset X$ be a closed subset, identified with the open sublocale $C^c \hookrightarrow X \in \mathrm{Sl}(X)^{op}$. Then
$$ \tilde{p}^*(C) = \{ K \subset C ~|~ K \text{ compact} \}^{sh} \in \mathcal{O}(X^\lambda).$$
In particular
$$\mu_*(C) = \sup_{ K \subset C \text{ compact}} \lambda(K).$$
\end{proposition}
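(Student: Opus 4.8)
The plan is to unwind the definition of $\tilde{p}^*(C)$ through the factorization $X^\lambda \hookrightarrow X^\mathcal{K} \xrightarrow{\theta} X$ composed with the lift $X^\mathcal{K} \to \mathfrak{Sl}(X)$ coming from Theorem~\ref{lifttosublocales2}, and then compute the relevant Heyting implications in $\mathcal{O}(X^\mathcal{K})$ and $\mathcal{O}(X^\lambda)$ explicitly. Recall that the closed subset $C\subset X$, viewed as an element of $\mathrm{Sl}(X)^{op}$, is the open sublocale $C^c\hookrightarrow X$; under $\nabla:\mathcal{O}(X)\to\mathrm{Sl}(X)^{op}$, the open $U=C^c$ is sent to the closed sublocale $U^c$. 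The composite $\mathfrak{Sl}(X)\xrightarrow{\mathrm{can}}X$ has inverse image $\nabla$, and the universal lift $\tilde{p}$ has the property that $\tilde p^*\circ\nabla$ agrees with $p_\lambda^*$; but more is true, since $\tilde p^*$ is determined on the basic sublocales $U^c\vee V$ by sending an open sublocale $V$ of $X$ to the \emph{complement} of $p_\lambda^*(V)$ inside $\mathcal{O}(X^\lambda)$. So $\tilde p^*(C) = \tilde p^*((C^c)^c)$, and since $C^c$ is open, $(C^c)^c$ is the closed sublocale which is the complement of the open sublocale $C^c$; hence $\tilde{p}^*(C) = \neg\, p_\lambda^*(C^c)$ in the Boolean frame $\mathcal{O}(X^\lambda)$.

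First I would make this precise: by Lemma~\ref{preimageofopen} and the construction of $\theta$, the open sublocale $C^c = U$ of $X$ pulls back along $p_\lambda = \theta\circ(\text{incl})$ to the open sublocale of $X^\lambda$ determined by $p_\lambda^*(U) = \{K\subset U\ \text{compact}\}^{sh} = \{K\subset C^c\ \text{compact}\}^{sh}$. Since $X^\lambda$ is Boolean (Theorem~\ref{almostbooleangivesboolean} via Theorem~\ref{regularcontentalmostboolean}), this open has a complement, and by Corollary~\ref{measureofsubset}-style reasoning (or directly from Theorem~\ref{lifttosublocales2} together with Lemma~\ref{preimageofopen}), $\tilde p^*$ sends the closed sublocale $U^c = C$ of $X$ to this complement $\neg\, p_\lambda^*(C^c)$. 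Now I would use Lemma~\ref{booleanimplication}: in a Boolean frame $\neg W = W\to 0$, and in $\mathcal{O}(X^\lambda)=\mathrm{Sh}(\mathcal{K}(X),\lambda;\mathbf{2})$ the zero object is the null ideal $N=\{K\ |\ \lambda(K)=0\}$ by Lemma~\ref{implicationformulas}(2). Thus $\tilde p^*(C) = p_\lambda^*(C^c)\to N$, and Lemma~\ref{implicationformulas}(4) gives
\[
\tilde p^*(C) = \{ K\in\mathcal{K}(X)\ |\ \lambda(K\cap K')=0\ \text{for all}\ K'\ \text{with}\ K'\subset C^c\ \text{compact}\}.
\]
The final task is to identify this ideal with $\{K\subset C\ |\ K\ \text{compact}\}^{sh}$.

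The main obstacle — and where the regularity of $\lambda$ enters crucially — is proving the equality
\[
\{ K\ |\ \lambda(K\cap K')=0\ \forall\, K'\subset C^c\ \text{compact}\}\;=\;\{K\subset C\ |\ K\ \text{compact}\}^{sh}.
\]
The inclusion $\supseteq$ is the easy direction: if $K\subset C$ then $K\cap K' \subset C\cap C^c = \emptyset$ for all $K'\subset C^c$, so $\lambda(K\cap K')=0$; and since the left-hand side is a $\lambda$-ideal it is closed under sheafification. For $\subseteq$, suppose $K$ satisfies $\lambda(K\cap K')=0$ for every compact $K'\subset C^c$; I must show $K$ can be $\lambda$-approximated from below by compact subsets of $C$. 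Given $\epsilon>0$, the idea is to use the Separation Lemma~\ref{compactseparationlemma} and the decomposition Lemma~\ref{compactdecompositionlemma}: cover $K$ by its intersection with $C$ (shrunk slightly using regularity to a compact set inside an open neighbourhood) together with compact pieces inside $C^c$, each of which meets $K$ in a $\lambda$-null set; the subadditivity and modularity of $\lambda$ together with regularity then force the compact set $K\cap(\text{closed neighbourhood shrinking into }C)$ — which \emph{is} a subset of $C$ once we pass to the closure inside $C$ — to have $\lambda$-measure within $\epsilon$ of $\lambda(K)$. Making the topological bookkeeping of this covering argument rigorous (ensuring the "boundary" pieces of $K$ lying near $C$ but not in $C$ contribute negligibly, which is exactly where $\lambda(K\cap K')=0$ for $K'\subset C^c$ gets leveraged via an exhaustion $U_n\nearrow C^c$ of $C^c$ by opens as in the proof of Theorem~\ref{regularcontentalmostboolean}) is the technical heart of the proof. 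The ``in particular'' statement is then immediate: $\mu_*(C) = \mu_*(\tilde p^*(C)) = \sup\{\lambda(K)\ |\ K\in\tilde p^*(C)\} = \sup\{\lambda(K)\ |\ K\subset C\ \text{compact}\}$, using that $\mu_*$ on an $inn$-ideal is the supremum of $\lambda$ over its elements and that sheafification does not change this supremum.
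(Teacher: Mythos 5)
Your formal setup is exactly the paper's: identifying $\tilde p^*(C)$ with $\neg\, p_\lambda^*(C^c)$ and computing this via Lemma \ref{implicationformulas}(4) as the ideal $\mathcal{I}=\{K \mid \lambda(K\cap K')=0 \text{ for all compact } K'\subset C^c\}$ is correct, as is the easy inclusion $\{K\subset C\}^{sh}\subseteq\mathcal{I}$. But the hard inclusion $\mathcal{I}\subseteq\{K\subset C\}^{sh}$ is the entire mathematical content of the proposition, and you have not proved it: you describe it as ``the technical heart'' and offer a sketch that is both more complicated than necessary and contains steps that do not work as stated. There is no need for an exhaustion $U_n\nearrow C^c$, no need for the Separation or Decomposition Lemmas, and no need to ``shrink'' anything ``into $C$'' or ``pass to the closure inside $C$'': for $M\in\mathcal{I}$ the set $M\cap C = M\setminus C^c$ is \emph{already} compact (a closed subset of the compact $M$) and already contained in $C$. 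The only thing to prove is that $\lambda(M\cap C)=\lambda(M)$, so that $\{M\cap C\leq M\}$ is a $\lambda$-approximation.

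That single equality is where regularity enters, and the argument is short. Write $U=C^c$. Given $\epsilon>0$, use regularity of $\lambda$ to choose an open $V$ and a compact $K''$ with $M\cap C\subset V\subset K''$ and $\lambda(K'')<\lambda(M\cap C)+\epsilon$. Then $M\setminus V$ is compact and contained in $U$ (since $M\setminus V\subset M\setminus(M\cap C)=M\cap U$), so the hypothesis $M\in\mathcal{I}$ applied to $K'=M\setminus V$ gives $\lambda(M\setminus V)=\lambda(M\cap(M\setminus V))=0$. Subadditivity then yields
$$\lambda(M)\leq\lambda(K'')+\lambda(M\setminus V)<\lambda(M\cap C)+\epsilon,$$
and letting $\epsilon\to 0$ gives $\lambda(M)=\lambda(M\cap C)$, hence $M\in\{K\subset C\}^{sh}$. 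Your closing derivation of the ``in particular'' formula from the ideal identification is fine, but without the argument above the proof is incomplete at its decisive step.
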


\begin{proof}
Let $U = C^c$. Under the inverse image functor of the map $X^\lambda \rightarrow \mathfrak{Sl}(X)$, the closed set $C$ is sent to
$$\neg p_\lambda^*(U) = \{ K ~|~ K \text{ compact, s.t. } \mu(K \cap K') = 0 \text{ for all } K' \subset U \text{ compact} \}.$$
by Lemma \ref{implicationformulas}. We claim that this ideal arises as the closure under $\mu$-approximations of the set 
$$\{ K \subset C ~|~ K \text{ compact} \}.$$
Let $M$ be compact such that $\mu(M \cap K') = 0$ for all  $M \subset U \text{ compact}$. Then $M \setminus U$ is again compact. Let $\epsilon > 0$ and use regularity of $\lambda$ to find $M \setminus U \subset V \subset K''$, with $V$ open, $K''$ compact, such that $\mu(K'') - \mu(M \setminus U ) < \epsilon$.
Then $M \setminus V \subset M \cap C$ and therefore $\mu(M \setminus V) = 0$. Therefore
$$ \mu(M) \leq \mu(K'') + \mu( M \setminus V ) <  \mu(M \setminus U) + \mu( M \setminus V ) + \epsilon = \mu(M \setminus U) + \epsilon.$$
Since $\epsilon$ was arbitrary, we conclude $\mu( K \setminus U ) = \mu(K)$. But this means that $K$ is in the $\mu$-closure of $\{ K \subset C ~|~ K \text{ compact} \}$.
\end{proof}

\begin{corollary} \label{compactcomplement}
Let $\lambda$ be a regular content on a Hausdorff space $X$, and $K \subset X$ compact. Then $[K]$ and $p^*(K^c)$ are complementary in $\mathcal{O}(X^\lambda)$.
\end{corollary}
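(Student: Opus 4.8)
The plan is to show the two opens $[K]$ and $p^*(K^c)$ of $\mathcal{O}(X^\lambda)$ satisfy $[K] \wedge p^*(K^c) = 0$ and $[K] \vee p^*(K^c) = 1$. Throughout, I will work inside the frame $\mathrm{Sh}(\mathcal{K}(X),\lambda;\mathbf{2})$, identifying opens with $\lambda$-ideals of $\mathcal{K}(X)$, and I will use that $K^c$, being open, has $p^*(K^c) = p_\lambda^*(K^c)$ equal to the sheafification of $\{ L \subset K^c \mid L \text{ compact}\}$, since $p_\lambda = \theta \circ (\text{inclusion})$ and $\theta^*(K^c) = \{L \subset K^c \text{ compact}\}$. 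The key computational tools are Lemma \ref{implicationformulas}, which describes meets with $[K]$ and the zero ideal $N = \{L \mid \lambda(L) = 0\}$, and Proposition \ref{closedsublocalemeasure}, which I would actually like to invoke with the roles reversed — note $K$ here is compact hence closed (in a Hausdorff space), so viewing $K$ as a closed subset, $\tilde{p}^*(K) = \{L \subset K \text{ compact}\}^{sh} = [K]$, which already matches the notation.

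First I would verify disjointness. By Lemma \ref{implicationformulas}(1), $[K] = \{ q \in \mathcal{K}(X) \mid \lambda(q \wedge K) = \lambda(q)\}$, and $p^*(K^c)$ is generated by compact subsets of $K^c$. For $q \in p^*(K^c)$, after sheafification $q$ is $\lambda$-approximable by compacts disjoint from $K$, so $\lambda(q \wedge K) = 0$; combined with $q \in [K]$ this forces $\lambda(q) = 0$, i.e. $q \in N$. More precisely, I would argue that $[K] \wedge p^*(K^c) \leq N$ directly: for a generating compact $L \subset K^c$ we have $\lambda(L \wedge K) = \lambda(\emptyset) = 0$, so $[L] \leq [K] \rightarrow N$ by Lemma \ref{implicationformulas}(3); taking suprema over such $L$ and sheafifying gives $p^*(K^c) \leq [K] \rightarrow N$, hence $[K] \wedge p^*(K^c) \leq N = 0$ in $\mathrm{Sh}(\mathcal{K}(X),\lambda;\mathbf{2})$, where $N$ is the zero object by Lemma \ref{implicationformulas}(2).

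Next, covering: I must show $[K] \vee p^*(K^c) = 1 = \mathcal{K}(X)$, i.e. every compact $C \in \mathcal{K}(X)$ lies in this join, equivalently $C$ is $\lambda$-approximable from below by finite unions $L_1 \cup L_2$ with $L_1 \subset K$, $L_2 \subset K^c$ both compact. Fix $C$ and $\epsilon > 0$. Since $C \cap K$ is compact (Hausdorff) and $\lambda$ is regular, choose $C \cap K \subset V \subset K'$ with $V$ open, $K'$ compact, and $\lambda(K') - \lambda(C \cap K) < \epsilon$. Then $C \setminus V$ is a compact subset of $K^c$ (it misses $C \cap K$, and its intersection with $K$, being a compact subset of $K \setminus V \subset K \setminus (C \cap K)$... one must be slightly careful: $C \setminus V$ may still meet $K$ outside $C$; instead set $L_2 = C \setminus V$ which is compact and disjoint from $C \cap K$ — but I want $L_2 \subset K^c$). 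To get this cleanly I would instead, using the Separation Lemma \ref{compactseparationlemma} or just regularity, choose $V$ with $C \cap K \subset V$ and additionally arrange, by shrinking, that... Actually the correct move: take $L_1 = K'$ (or $K' \cap C'$ for a compact neighborhood, ensuring $L_1$ absorbs $C \cap K$ up to $\epsilon$) and $L_2 = \overline{C \setminus V}^{\,\text{as compact}} = C \setminus V$; then $L_1 \cup L_2 \supset (C \cap K) \cup (C \setminus V) = C$ since $C \cap K \subset V$ forces $C \setminus V \supset C \setminus K$... this gives $L_1 \cup L_2 \supset C$, hence $= C$ after intersecting with $C$, and $\lambda(C) - \lambda(L_1 \cup L_2) = 0 < \epsilon$; but I still need $L_2 \cap K$ to have measure zero or $L_2 \subset K^c$.

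The main obstacle, and the step I would spend the most care on, is exactly this last point: producing a compact $L_2$ that is genuinely contained in $K^c$ (not merely measure-disjoint from $K$) while still covering $C \setminus V$ up to $\epsilon$. The resolution is to use regularity a second time on the compact set $C \setminus V$ together with the fact that $C \setminus V$ and $K$ need not be disjoint only on a $\lambda$-negligible set — but for membership in the $\lambda$-ideal $p^*(K^c)$ it in fact suffices that $\lambda(L_2 \wedge K') = 0$ for all compact $K' \subset K^c$... no — I need $L_2 \in p^*(K^c)$, and by the sheafification formula an element of $p^*(K^c)$ is one $\lambda$-approximable by compacts $\subset K^c$. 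So I would take $L_2 := C \setminus V$, observe $L_2 \cap K \subset K \setminus V$ and $\lambda(K \setminus V) = \lambda(K) - \lambda(K \cap V) = \lambda(K) - \lambda(C \cap K) - (\text{small})$, which is not small. Hence the honest fix is: enlarge $V$ to an open $V'$ with $K \subset V'$ and $\lambda(\overline{V'}\text{-compact-hull}) - \lambda(K) < \epsilon$ (regularity of $\lambda$ at $K$), set $L_1 = $ that compact hull so $\lambda(L_1) - \lambda(K) < \epsilon$ and $V' \supset K$, and $L_2 = C \setminus V'$, now genuinely a compact subset of $(V')^c \subset K^c$. Then $L_1 \cup L_2 \supset C$ (as $C \subset V' \cup L_2$), $L_1 \in [K]$-closure is automatic since $L_1 \supset K$... wait $L_1 \supset K$ gives $\lambda(L_1 \wedge K) = \lambda(K) \neq \lambda(L_1)$ in general, so $L_1 \notin [K]$; but $[L_1 \cap C]$ with $\lambda(L_1 \cap C) - \lambda(C \cap K)$ small places $L_1 \cap C$ in the $\lambda$-closure of $[K] \cap [C]$. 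Assembling: $[C] = [C \cap L_1] \vee [L_2] \leq [K] \vee p^*(K^c)$ modulo an $\epsilon$-approximation, and letting $\epsilon \to 0$ and sheafifying finishes the argument. I would write this out carefully, as the bookkeeping between "contained in $K$/$K^c$" and "$\lambda$-approximately contained" is where all the subtlety lies.
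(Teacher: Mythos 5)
Your argument is correct in substance but takes a genuinely different route from the paper. The paper's proof is a two-line formal argument: Proposition \ref{closedsublocalemeasure} applied to the closed set $K=(K^c)^c$ gives $\tilde p^*(K)=\{L\subset K \text{ compact}\}^{sh}=[K]$, and since the open sublocale $K^c$ and the closed sublocale $K$ are complementary in $\mathrm{Sl}(X)^{op}$ and $\tilde p^*$ is a frame homomorphism (hence preserves complements), $[K]$ and $\tilde p^*(K^c)=p^*(K^c)$ are complementary. You actually quote Proposition \ref{closedsublocalemeasure} at the outset and observe $\tilde p^*(K)=[K]$, but then do not exploit complementation in $\mathfrak{Sl}(X)$; instead you verify $[K]\wedge p^*(K^c)=0$ and $[K]\vee p^*(K^c)=1$ by hand in the frame of $\lambda$-ideals. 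What your approach buys is self-containedness and transparency: it makes explicit that the disjointness half is essentially formal (Lemma \ref{implicationformulas}) while the covering half is exactly where regularity of $\lambda$ enters. What it costs is length, and a re-derivation of facts already packaged into Proposition \ref{closedsublocalemeasure}.

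On the details: your disjointness argument is fine as stated. Your covering argument does converge to a correct proof, but the write-up should be cleaned to the following. Given $C$ compact and $\epsilon>0$, use regularity of $\lambda$ \emph{at $K$ itself} (not at $C\cap K$ --- you correctly diagnose that the first attempt fails because $C\setminus V$ can still meet $K$) to choose $K\subset V'\subset K''$ with $V'$ open, $K''$ compact and $\lambda(K'')-\lambda(K)<\epsilon$. Then take $L_1=C\cap K$, which is genuinely a compact subset of $K$ and hence lies in $[K]$ (avoiding your worry about $L_1\supset K$ failing to lie in $[K]$), and $L_2=C\setminus V'$, a compact subset of $K^c$, hence a generator of $p^*(K^c)$. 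Since $C\subset (C\cap K'')\cup L_2$, Lemma \ref{valuationlemma1} gives $\lambda(C)-\lambda(L_1\vee L_2)\leq \lambda(C\cap K'')-\lambda(C\cap K)\leq\lambda(K'')-\lambda(K)<\epsilon$, so $C$ is $\lambda$-approximated by $L_1\vee L_2\in [K]\vee p^*(K^c)$, and sheafification finishes the proof. With that tightening your proof stands on its own.
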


\begin{proof} Proposition \ref{closedsublocalemeasure} applied to $K$ gives that $\tilde{p}^*(C) = [K]$, which is by construction of the map $\tilde{p}$ the complement of $p^*(K^c) = \tilde{p}^*(K^c)$.
\end{proof}

\begin{theorem} \label{measurablelocaleembedded}
Let $\lambda$ be a regular content on a Hausdorff space $X$. Then the induced map $\tilde{p} : X^\lambda \rightarrow \mathfrak{Sl}(X)$ is an embedding.
\end{theorem}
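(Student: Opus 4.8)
The plan is to show that $\tilde{p}^* : \mathrm{Sl}(X)^{op} \rightarrow \mathcal{O}(X^\lambda)$ is surjective, which by the characterization of embeddings recorded earlier (the lemma after Definition of embedding in Section \ref{sectionsublocales}) is equivalent to $\tilde{p}$ being an embedding. Since $\mathcal{O}(X^\lambda) = \mathrm{Sh}(\mathcal{K}(X),\lambda;\mathbf{2})$ is generated under suprema by the elementary propositions $[K]$ for $K \in \mathcal{K}(X)$ (Lemma \ref{coyonedalemma}), and $\tilde{p}^*$ preserves suprema, it suffices to exhibit each $[K]$ as $\tilde{p}^*(S)$ for some sublocale $S$ of $X$. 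But this is exactly the content of Corollary \ref{compactcomplement} (equivalently Proposition \ref{closedsublocalemeasure} applied to the closed set $K$): the closed set $K \subset X$, viewed as an element of $\mathrm{Sl}(X)^{op}$, satisfies $\tilde{p}^*(K) = \{K' \subset K ~|~ K' \text{ compact}\}^{sh} = [K]$, where the last equality holds because $[K]$ is already a $\lambda$-ideal containing $K$, hence equals the sheafification of the down-set of $K$ intersected with $\mathcal{K}(X)$.

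\textbf{Key steps in order.} First, recall that $\tilde{p}$ is defined as the unique lift of $p_\lambda : X^\lambda \rightarrow X$ through $\mathrm{can} : \mathfrak{Sl}(X) \rightarrow X$ provided by Theorem \ref{lifttosublocales2}, using that $X^\lambda$ is Boolean (Theorem \ref{almostbooleangivesboolean} together with Theorem \ref{regularcontentalmostboolean}). Second, observe that $\tilde{p}^*$ is a frame homomorphism $\mathrm{Sl}(X)^{op} \rightarrow \mathcal{O}(X^\lambda)$, in particular preserves arbitrary suprema. Third, invoke Lemma \ref{coyonedalemma} to reduce surjectivity of $\tilde{p}^*$ to showing that every $[K]$ lies in the image. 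Fourth, apply Proposition \ref{closedsublocalemeasure} with $C = K$ compact: since $K$ is itself compact it lies in the set $\{K' \subset K ~|~ K' \text{ compact}\}$, which is already downward closed and closed under finite unions and $\lambda$-approximations (the latter because $[K] = \{q \in \mathcal{K}(X) ~|~ \lambda(q \wedge K) = \lambda(q)\}$ by Lemma \ref{implicationformulas}(1), and this set contains every compact subset of $K$), so that $\{K' \subset K ~|~ K' \text{ compact}\}^{sh} = [K]$. Hence $\tilde{p}^*(K) = [K]$, and $K$ ranges over a generating set, so $\tilde{p}^*$ is surjective. Finally, conclude via the equivalence ``$i^*$ surjective $\iff$ $i$ an embedding''.

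\textbf{Main obstacle.} The genuinely substantive input is already packaged into Proposition \ref{closedsublocalemeasure} (and its predecessors Lemma \ref{implicationformulas} and Theorem \ref{regularcontentalmostboolean}), so the remaining argument is essentially formal: the only point requiring care is the bookkeeping that identifies the inverse image under $\tilde{p}$ of a \emph{closed} subset of $X$ — recall that $\mathrm{can}^*$ sends an open $U$ of $X$ to the closed sublocale $U^c$, so that a closed set $C = U^c$ of $X$ maps under $\tilde{p}^*$ to the \emph{open} complement of $p_\lambda^*(U)$, i.e. to $\neg p_\lambda^*(U)$, and one must verify via Lemma \ref{implicationformulas}(3)--(4) that this equals $[C]$ when $C$ is compact. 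This is precisely what Proposition \ref{closedsublocalemeasure} does. Thus the proof is short: it is a direct corollary of the material developed up to Corollary \ref{compactcomplement}, combined with the co-Yoneda lemma and the fact that a localic map with surjective inverse image is an embedding.
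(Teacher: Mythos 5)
Your proof is correct and follows the same route as the paper: reduce embedding to surjectivity of $\tilde{p}^*$, note that the $[K]$ generate $\mathcal{O}(X^\lambda)$ under suprema, and use Proposition \ref{closedsublocalemeasure} / Corollary \ref{compactcomplement} to see each $[K]$ is hit. (Your parenthetical claim that $\{K' \subset K\}$ is already closed under $\lambda$-approximations is only true when $\lambda$ is faithful, but it is also unnecessary, since $\{K' \subset K\}^{sh} = (y_K)^{sh} = [K]$ holds by the very definition of $[K]$.)
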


\begin{proof}
By definition of $\tilde{p} : X^\lambda \rightarrow \mathfrak{Sl}(X)$, it's inverse image functor $\tilde{p} : \mathrm{Sl}(X)^{op}$ sends an open sublocale $U \hookrightarrow X$ to the complement of $p^*(U)$ in $\mathcal{O}(X^\mu)$. Lemma \ref{compactcomplement} implies that all chunks $[K] \in \mathcal{O}(X^\mu)$ lie in the image of $\tilde{p}$, for $K \subset X$ compact. Since these generate all of $ \mathcal{O}(X^\mu)$ under suprema, we see that $\tilde{p}^*$ is surjective.
\end{proof}

\begin{remark}
The fact that the measure algebra can be obtained as a Boolean sublocale of $\mathfrak{Sl}(L)$ has been observed by Leroy in \cite[Théoremè 2]{leroy2013theorielamesuredans} in the context of a locally finite measure $\mu$ on a regular locale $L$. Leroy characterizes this sublocale of $\mathfrak{Sl}(L)$ as given by the set of $\mu$-reduced sublocales of $L$. Leroy's characterization provides an independent approach to the construction of the corresponding measurable locale.
\end{remark}

Let $X^{disc}$ denote the underlying set of $X$ equipped with the discrete topology. As remarked earlier in Remark \ref{lifttosubsets}, we have a continuous map $\mathrm{can} : X^{disc} \rightarrow X$. Since $X^{disc}$ is Boolean, there exists a lift to a map $\varphi$ into $\mathfrak{Sl}(X)$,
\[\begin{tikzcd}
	& {\mathfrak{Sl}(X)} \\
	{X^{disc}}  & X.
	\arrow["{\mathrm{can}}", two heads, from=1-2, to=2-2]
	\arrow["{\exists ! ~ \varphi}", dashed, from=2-1, to=1-2]
	\arrow[from=2-1, to=2-2]
\end{tikzcd}\]
Concretely, $\varphi$ gives an adjunction
\[\begin{tikzcd}
	{\mathcal{P}(X)} & {\mathrm{Sl}(X)^{op}}
	\arrow[""{name=0, anchor=center, inner sep=0}, "{\varphi_*}"', curve={height=12pt}, from=1-1, to=1-2]
	\arrow[""{name=1, anchor=center, inner sep=0}, "{\varphi^*}"', curve={height=12pt}, from=1-2, to=1-1]
	\arrow["\dashv"{anchor=center, rotate=-90}, draw=none, from=1, to=0]
\end{tikzcd}\]
where $\varphi^*$ sends a closed sublocale $U^c$ to the open subset $U \subset X$, and the open sublocale $U$ to the closed subset $U^c \subset X$. In particular, a sublocale of the form $U \vee V^c$ for opens $U,V$ is sent to the \emph{locally closed} subset $U^c \cap V \subset X$. The right adjoint $\varphi_*$ can also be described concretely. If $S \subset X$ is a subset, it is sent to
$$\varphi_*(S) = \bigvee_{ M ~|~ i^*(M) \subset S } M = \bigvee_{ U, V \text{ open}  ~|~ U^c \cap V \subset S } U \wedge V^c $$
with the suprema and meet taken in the frame $\mathrm{Sl}(X)^{op}$. (It corresponds to the infimum over $U \vee V^c$ in the co-frame $\mathrm{Sl}(X)$.) In other words, the sublocale associated to a subset can be understood by the locally closed sets contained in it.

\begin{corollary} \label{measureofsubset}
Let $\lambda$ be a regular content on a Hausdorff space $X$, and let $S \subset X$ be a subset. The induced measure of $S$ is given by
$$ \lambda_*(S) = \sup \{ \lambda(K) ~|~ K \subset S \text{ compact} \}. $$
\end{corollary}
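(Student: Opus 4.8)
The plan is to reduce the computation of $\lambda_*(S)$ to the two already-available ingredients: the explicit description of the right adjoint $\varphi_*$ in terms of locally closed sets sitting inside $S$, and the formula for $\lambda_*$ on open and closed sublocales provided by the definition of $\mu_*$ together with Proposition \ref{closedsublocalemeasure}. By definition, $\lambda_*(S) = \mu_*(\varphi_*(S))$, where $\varphi_*(S) = \bigvee_{U,V \text{ open},\, U^c \cap V \subset S} U \wedge V^c$, the supremum taken in the frame $\mathrm{Sl}(X)^{op}$. Since $\mu_*$ is continuous on this frame and the indexing family of locally closed sublocales is upward-directed (closing under finite joins of index pairs), we get $\lambda_*(S) = \sup\{\mu_*(U \wedge V^c) \mid U, V \text{ open},\ U^c \cap V \subset S\}$.

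Next I would compute $\mu_*(U \wedge V^c)$ for a single locally closed sublocale. Using Proposition \ref{closedsublocalemeasure} (applied to the closed set $U^c$, giving $\tilde p^*(U^c) = \{K \subset U^c \text{ compact}\}^{sh}$) together with the formula $\lambda_*(V) = \sup_{K \subset V \text{ compact}} \lambda(K)$ for opens, and the fact that in $\mathcal{O}(X^\lambda)$ meets of $[K]$'s are computed as $[K_1 \cap K_2]$, one sees that $\tilde p^*(U \wedge V^c) = \{K \subset U^c \cap V \mid K \text{ compact}\}^{sh}$, hence $\mu_*(U \wedge V^c) = \sup\{\lambda(K) \mid K \subset U^c \cap V \text{ compact}\}$. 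Combining with the previous paragraph, $\lambda_*(S)$ is the supremum of $\lambda(K)$ over all compact $K$ contained in some locally closed set $U^c \cap V$ with $U^c \cap V \subset S$; in particular every such $K$ is a compact subset of $S$, so $\lambda_*(S) \leq \sup\{\lambda(K) \mid K \subset S \text{ compact}\}$.

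For the reverse inequality, given any compact $K \subset S$, I would exhibit a single locally closed set $U^c \cap V$ with $K \subset U^c \cap V \subset S$ — indeed one may take $V = X$ and $U = \emptyset$ is too crude since we need $U^c \cap V \subset S$, so instead I must be more careful: a compact set $K$ is itself closed (as $X$ is Hausdorff), so $K = K^{cc}$ is a closed set, and the closed sublocale $K \hookrightarrow X$ is a legitimate element of $\mathrm{Sl}(X)^{op}$ with $\varphi^*$-image $K \subset S$, hence $K$ (as a sublocale) is $\leq \varphi_*(S)$; then monotonicity of $\mu_*$ gives $\lambda(K) = \mu_*(K) \leq \mu_*(\varphi_*(S)) = \lambda_*(S)$, using Proposition \ref{closedsublocalemeasure} once more to identify $\mu_*$ of the closed sublocale $K$ with $\sup\{\lambda(K') \mid K' \subset K \text{ compact}\} = \lambda(K)$. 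Taking the supremum over all compact $K \subset S$ finishes the argument.

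\textbf{Main obstacle.} The step I expect to require the most care is the identification $\tilde p^*(U \wedge V^c) = \{K \subset U^c \cap V \mid K \text{ compact}\}^{sh}$ and, relatedly, making sure the directedness/continuity manipulation with $\varphi_*(S)$ written as a supremum in $\mathrm{Sl}(X)^{op}$ is correctly aligned with the co-frame/frame bookkeeping (open sublocales vs.\ closed sublocales, and which direction $\nabla$, $\varphi^*$, $\varphi_*$ point). Everything else is a routine assembly of Proposition \ref{closedsublocalemeasure}, Lemma \ref{implicationformulas}, and the continuity and monotonicity of $\mu_*$; the only genuinely new observation is that since every compact subset of a Hausdorff space is closed, the compact subsets of $S$ are cofinal among the locally closed subsets of $S$ for the purpose of computing this supremum.
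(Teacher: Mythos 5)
Your proposal is correct and follows essentially the same route as the paper's own proof: reduce to locally closed sublocales via the explicit description of $\varphi_*$, compute $\mu_*$ on a locally closed sublocale by combining the open case (definitional) with the closed case (Proposition \ref{closedsublocalemeasure}) and the meet of the corresponding $\mu$-ideals, and then use that compact subsets of a Hausdorff space are closed, hence locally closed, to get the reverse inequality. The only difference is that you make explicit the final cofinality step (and the bookkeeping for $\tilde{p}^*(U \wedge V^c)$) that the paper leaves implicit, which is a welcome clarification rather than a divergence.
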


\begin{remark}
In particular, the induced measure agrees with the classical measure when $S$ is a measurable subset of $X$.
\end{remark}

\begin{proof}
By the above discussion we have
$$ \lambda_*(S) = \sup \{ \lambda_*(U^c \cap V) ~|~ U^c \cap V \subset S \text{ s.t. } U,V \text{ open} \}. $$
Therefore the formula reduces to the case of locally closed sets. The case for open sets holds by definition, the case for closed sets is given by Proposition \ref{closedsublocalemeasure}. The measure of the intersection of both cases is computed via the measure of the intersection of the corresponding $\mu$-ideals of compact sets, hence the formula is true for locally closed sets.
\end{proof}

The map $X^\lambda \rightarrow X$ is actually universal among maps from Boolean locales equipped with locally finite and faithful measures, a result that was anticipated by Simon Henry in the mathoverflow post \cite{486494}.

\begin{theorem} \label{universalpropertyregularcontent}
Let $\lambda$ be a regular content on a Hausdorff space $X$ and assume that the induced measure $\lambda_*$ on $X$ is locally finite.\footnote{This condition is automatically satisfied if $X$ is locally compact, see Lemma \ref{localfinitenesslocallycompact}.} Suppose $ f : (Y,\mu) \rightarrow (X, \lambda_*)$ is a measure-preserving continuous map, with $Y$ being Boolean, and $\mu$ a locally finite and faithful measure on $Y$. Then there exists a unique measure-preserving map $ \tilde{f} : (Y,\mu) \rightarrow (X^\lambda, \lambda_*)$ making the triangle
\[\begin{tikzcd}
	Y \\
	{X^\lambda} & X
	\arrow["{\tilde{f}}"', dashed, from=1-1, to=2-1]
	\arrow["f", from=1-1, to=2-2]
	\arrow["p"', from=2-1, to=2-2]
\end{tikzcd}\]
commute.
\end{theorem}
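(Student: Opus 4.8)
The plan is to exploit the universal property of $X^\lambda$ relative to $X^\mathcal{K}$ encoded in the functoriality results, combined with the fact that $X^\lambda$ sits inside $\mathfrak{Sl}(X)$ as a sublocale, and then to reduce the statement to a lifting problem for maps out of a Boolean locale. First I would observe that, since $Y$ is Boolean, Theorem \ref{lifttosublocales2} applies to the composite $f : Y \rightarrow X$: every $f^*(U)$ is automatically clopen in $\mathcal{O}(Y)$, so there is a unique lift $\hat{f} : Y \rightarrow \mathfrak{Sl}(X)$ with $\mathrm{can} \circ \hat{f} = f$. The real content is then to show that this lift factors through the sublocale embedding $\tilde{p} : X^\lambda \hookrightarrow \mathfrak{Sl}(X)$ of Theorem \ref{measurablelocaleembedded}, and that the resulting map $\tilde{f} : Y \rightarrow X^\lambda$ is measure-preserving; uniqueness will then be inherited both from the uniqueness in Theorem \ref{lifttosublocales2} and from injectivity of $\tilde{p}_*$.

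To carry out the factorization, recall from Proposition \ref{sublocaleuniversal} and the remark following it that a map $g : Y \rightarrow \mathfrak{Sl}(X)$ factors through the embedding $X^\lambda \hookrightarrow \mathfrak{Sl}(X)$ if and only if $g^* = g^* \, \tilde{p}_* \, \tilde{p}^*$, i.e.\ iff $g^*$ annihilates everything that $\tilde{p}^*$ does. Since $\mathcal{O}(X^\lambda) = \mathrm{Sh}(\mathcal{K}(X),\lambda;\mathbf{2})$ is generated under suprema by the chunks $[K]$, $K$ compact, and since by Corollary \ref{compactcomplement} each $[K]$ is the complement of $p^*(K^c)$, it suffices to check that $\hat{f}^*$ sends the relevant relations to relations in $\mathcal{O}(Y)$. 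Concretely, the map $\hat{f}^* : \mathrm{Sl}(X)^{op} \rightarrow \mathcal{O}(Y)$ sends an open sublocale $U$ to $f^*(U)$ and a closed sublocale $U^c$ to $\neg f^*(U)$ (using Booleanness of $Y$ and Lemma \ref{booleanimplication}). So I would precompose with $\nabla$ and with $\theta^* : \mathcal{O}(X) \rightarrow \mathcal{O}(X^\mathcal{K})$ to get a functor $\mathcal{K}(X) \rightarrow \mathcal{O}(Y)$, $K \mapsto \neg f^*(\bigcup\{ V \text{ open} \mid K \subset V\})$ — morally ``$\hat{f}^*$ evaluated on $[K]$'' — and check that this functor is $\lambda$-flat, i.e.\ that it sends finite covers and $\lambda$-approximation covers of $\mathcal{K}(X)$ to suprema in $\mathcal{O}(Y)$. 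Finite covers are formal; the key point is the $\lambda$-approximation covers, and this is exactly where the hypotheses ``$\mu$ faithful'' and ``$f$ measure-preserving'' must be used: if $\{K_i \leq K\}$ is a $\lambda$-approximation, then applying $f_*\mu = \lambda_*$ and faithfulness of $\mu$ on $Y$ (via Proposition \ref{valuationframedeterminedbyfinite} / the faithfulness argument in Theorem \ref{functorialityradonvaluationglobal}) forces $\bigvee_i \hat{f}^*([K_i]) = \hat{f}^*([K])$. Once flatness is established, Corollary \ref{flatfunctor} produces a map $\tilde{f} : Y \rightarrow X^\lambda$ with $p \circ \tilde{f} = f$, and measure-preservation of $\tilde{f}$ follows because $\lambda_*$ on $X^\lambda$ is computed as $\sup_{K \in U}\lambda(K)$ and $\tilde{f}^*([K]) = \hat{f}^*([K])$ has $\mu$-measure $\mu(\neg f^*(K^c))$; using that $f$ is measure-preserving, that $\lambda_*$ is locally finite on $X$, and modularity, one identifies this with $\lambda(K)$.

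The main obstacle, as I see it, is handling the $\lambda$-approximation coverings cleanly, because the naive formula for $\hat{f}^*([K])$ involves the negation $\neg f^*$ applied to unions of opens containing $K$, and one has to argue that these ``closed-sublocale images'' behave well under $\lambda$-approximation — essentially re-deriving inside $Y$ the computation that Proposition \ref{closedsublocalemeasure} does inside $X^\lambda$. The cleanest route is probably to first establish the special case $Y = Z^\nu$ for a Radon valuation using Theorem \ref{functorialityradonvaluationglobal} directly (where compact measure-preservation gives flatness almost for free), and then reduce the general Boolean case to this one via Theorem \ref{representationtheorem}, which realizes every measurable locale $Y$ as $Z^\nu$ for a regular content $\nu$ on a locally compact Hausdorff $Z$. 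In that case the composite $Z^\nu \to Z$ is (after passing to compact subsets) compatible with the $\nu$-inner topology, and compatibility of $f$ with $\lambda_*$ and $\nu$ upgrades to compact measure-preservation of an underlying map $Z \to X$, at which point Theorem \ref{functorialityradonvaluationproper} or \ref{functorialityradonvaluationglobal} delivers $\tilde{f}$ and its uniqueness. Uniqueness in general then follows since any two measure-preserving lifts agree after composing with the embedding $\tilde p$ into $\mathfrak{Sl}(X)$, where Theorem \ref{lifttosublocales2} already guarantees uniqueness of the lift of $f$.
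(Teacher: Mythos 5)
Your main line of attack coincides with the paper's: lift $f$ to $\bar f : Y \rightarrow \mathfrak{Sl}(X)$ via Theorem \ref{lifttosublocales2} (Booleanness of $Y$), show that $\bar f$ factors through the embedding $\tilde p : X^\lambda \hookrightarrow \mathfrak{Sl}(X)$ of Theorem \ref{measurablelocaleembedded} using faithfulness and local finiteness of $\mu$, and deduce uniqueness from uniqueness of the lift together with $\tilde p$ being monic. Where you differ is in how the factorization is checked. The paper stays abstract: by Proposition \ref{sublocaleuniversal} it suffices that $\bar f^*(S)=\bar f^*(S')$ whenever $\tilde p^*(S)=\tilde p^*(S')$ for $S\leq S'$, and since the measure $\tilde\lambda_*$ on $\mathfrak{Sl}(X)$ is by construction the pushforward along $\tilde p$, such a pair has equal measure, whence equal image under $\bar f^*$ by faithfulness (reducing to finite measure via local finiteness of $\lambda_*$ and Lemma \ref{pullbacklocallyfinite}). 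No generator-level computation occurs. Your route instead verifies $\lambda$-flatness of a functor on $\mathcal{K}(X)$, which is sound but forces you to prove $\mu\bigl(\neg f^*(K^c)\bigr)=\lambda(K)$ inside $Y$ — an avatar of Proposition \ref{closedsublocalemeasure} requiring regularity of $\lambda$ and local finiteness of $\lambda_*$ — precisely the obstacle you flag. Two small corrections there: your displayed generator $\neg f^*\bigl(\bigcup\{V \text{ open}\mid K\subset V\}\bigr)$ is wrong, since that union is all of $X$ (so the expression is $0$); the correct assignment is $K\mapsto \neg f^*(K^c)$ with $K^c = X\setminus K$, matching Corollary \ref{compactcomplement}. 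Also note the statement only assumes $Y$ Boolean with a locally finite faithful measure, which already makes $Y$ measurable, so nothing extra is gained by that observation.

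Your proposed ``cleanest route'' — reducing the general Boolean case to $Y=Z^\nu$ via Theorem \ref{representationtheorem} and then invoking Theorem \ref{functorialityradonvaluationglobal} or \ref{functorialityradonvaluationproper} — has a genuine gap. Those functoriality theorems take as input a continuous (compactly measure-preserving) map of Hausdorff \emph{spaces} $Z\rightarrow X$, whereas all you have is a map of \emph{locales} $f : Z^\nu\rightarrow X$. There is no reason for $f$ to arise from, or extend to, a map $Z\rightarrow X$: the inclusion $i_* : \mathcal{O}(Z^\nu)\hookrightarrow \mathcal{O}(Z)$ is only a right adjoint, not a frame homomorphism, so $i_*f^*$ is not an inverse image functor, and a map out of the sublocale $Z^\nu$ need not extend over $Z$. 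So the ``underlying map $Z\rightarrow X$'' you want to feed into the functoriality machinery need not exist, and this reduction should be discarded in favour of your first argument (or the paper's measure-theoretic shortcut).
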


\begin{proof} By virtue of $Y$ being Boolean, there exists a unique lift $\bar{f}$ of $f$ against $\mathrm{can} : \mathfrak{Sl}(X) \rightarrow X$ by Theorem \ref{lifttosublocales2},
\[\begin{tikzcd}
	Y & {\mathfrak{Sl}(X)} \\
	& X.
	\arrow["{\bar{f}}", dashed, from=1-1, to=1-2]
	\arrow["f"', from=1-1, to=2-2]
	\arrow["{\mathrm{can}}", from=1-2, to=2-2]
\end{tikzcd}\]
Since all measures involved are defined via pushforwards, this lift is automatically measure-preserving. Denote by $\tilde{\lambda}_*$ the induced measure on $\mathfrak{Sl}(X)$. Showing that $\bar{f}$ factors through the embedding $\tilde{p} : X^\lambda \rightarrow \mathfrak{Sl}(X)$ provided by Theorem \ref{measurablelocaleembedded} is equivalent to showing that for each pair $S \leq S'$ of sublocales in $\mathrm{Sl}(X)^{op}$ such that $\tilde{p}^*(S) = \tilde{p}^*(S')$ we have $\bar{f}^*(S) = \bar{f}^*(S')$. Note that
$$\mu( \bar{f}^*(S)) = \tilde{\lambda}_*( S ) = \lambda_*( \tilde{p}_*(S) ) = \lambda_*( \tilde{p}_*(S') ) = \tilde{\lambda}_*( S' ) =  \mu( \bar{f}^*(S')).$$
Using that $\mu$ is locally finite and faithful, this means that $\bar{f}^*(S) = \bar{f}^*(S')$ for each pair of sublocale $S \leq S'$ of $X$ of finite measure. To show the same even in the case of infinite measure, note that the condition that $\lambda_*$ is locally finite on $X$ implies that $\tilde{\lambda}_*$ is also locally finite on $\mathfrak{Sl}(X)$ by Lemma \ref{pullbacklocallyfinite}. But this means that any sublocale $S$ can be written as a supremum of sublocales of finite measure, and hence we can reduce to the case of finite measure sublocales. This shows that $\bar{f}^*$ factors through $\mathcal{O}(X^\mu)$.
\end{proof}

Finally, we give a criterion for identifying points of $X^\mu$.

\begin{proposition} \label{pointsregularcontent}
Let $X$ be a Hausdorff space, $\lambda$ a regular content and $X^\lambda$ the associated Boolean inner measure locale. Then the natural map $p_\lambda : X^\lambda \rightarrow X$ induces an injection
$$ \mathrm{pts}( X^\lambda )  \hookrightarrow \mathrm{pts}( X )$$
which identifies the left-hand side with the set of point $x$ of $X$ such that $\lambda(\{x\}) > 0$.
\end{proposition}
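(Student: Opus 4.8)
The plan is to analyze points of $X^\lambda$ via the description of points of a locale generated by a site, as in Example \ref{point}, combined with the fact that $X^\lambda$ is Boolean so that by Lemma \ref{pointsareminimalelements} points correspond to atoms of $\mathcal{O}(X^\lambda) = \mathrm{Sh}(\mathcal{K}(X),\lambda;\mathbf{2})$. First I would recall that the natural map $p_\lambda : X^\lambda \to X$ is the composite $X^\lambda \hookrightarrow X^\mathcal{K} \xrightarrow{\theta} X$, and that on points it sends a point of $X^\lambda$ to the completely prime filter it induces on $\mathcal{O}(X)$. Concretely, a point of $X^\lambda$ is a collection $x \subset \mathcal{K}(X)$ (of compact sets ``containing $x$'') satisfying the conditions of Example \ref{point} with respect to the $\lambda$-inner topology: upward closed, closed under binary meets, inhabited, and if $\{K_i \leq K\}$ is an $inn$-covering and $K \in x$ then some $K_i \in x$. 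The latter must hold for both finite covers and for $\mu$-approximations.

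The key step is to show that any such point must be concentrated at a single classical point $x_0 \in X$ with $\lambda(\{x_0\}) > 0$. Given a point $x$ of $X^\lambda$, pick any $K \in x$; by the Separation Lemma \ref{compactseparationlemma} and the filter/primeness properties applied to finite covers, the intersection $\bigcap_{K \in x} K$ must be a single point $x_0$ (a standard compactness-plus-Hausdorff argument: if two distinct points survived, separate them by opens, decompose $K$ via Lemma \ref{compactdecompositionlemma} into two compacts, and primeness forces one of them into $x$, contradicting that both points are in every member of $x$; if no point survived, finitely many members would have empty intersection, again contradicting the filter property). Then I would show $\{x_0\}$ itself lies in $x$: the sets $K \cap K'$ for $K, K' \in x$ form a downward directed system with intersection $\{x_0\}$, hence by regularity and Lemma \ref{infformulaboolean} (using that $X^\lambda$ is Boolean, so these chunks have a coinfimum) we get $\lambda_*(\bigwedge_{K \in x}[K]) = \inf_{K \in x}\lambda(K)$. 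The crucial point is that $\{x_0\} \ll \{x_0\}$ need not hold, but the directed family $\{K \cap K' \leq K'\}$ is a $\mu$-approximation of $\{x_0\}$ precisely when $\inf \lambda(K \cap K') = \lambda(\{x_0\})$ — and I expect this forces $\lambda(\{x_0\}) > 0$ and $\{x_0\} \in x$. Conversely, for any $x_0$ with $\lambda(\{x_0\}) > 0$, the set $x = \{K \in \mathcal{K}(X) : x_0 \in K\}$ is readily checked to be a point of $X^\lambda$: it is prime for finite covers by Lemma \ref{compactdecompositionlemma}, and prime for $\mu$-approximations because if $x_0 \in K = \bigvee K_i$ (directed) but $x_0 \notin K_i$ for all $i$, then $\lambda(K) - \lambda(K_i) \geq \lambda(\{x_0\}) > 0$ contradicting $\sup \lambda(K_i) = \lambda(K)$. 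Injectivity of $\mathrm{pts}(X^\lambda) \to \mathrm{pts}(X)$ is then immediate since $x$ is recovered from $x_0$.

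The main obstacle I anticipate is the direction showing that a point of $X^\lambda$ \emph{must} have $\lambda(\{x_0\}) > 0$ and contain $\{x_0\}$: one has to carefully verify that the downward directed family $\{K \cap K'\}_{K,K' \in x}$, or rather the associated chunks $[K]$ in the Boolean frame $\mathcal{O}(X^\lambda)$, has infimum equal to $[\{x_0\}]$ (or to $0$ if $\lambda(\{x_0\}) = 0$), and that an atom below all $[K]$ can only exist when this infimum is a nonzero chunk representable as $[\{x_0\}]$ with positive measure — this is where faithfulness of $\lambda_*$ (Theorem \ref{innermeasureproperties}) and the $\mu$-approximation structure genuinely interact, and where Hausdorffness via Lemma \ref{compactseparationlemma} is essential to pin down that the infimum is supported at one classical point rather than a larger compact or a pointless sublocale. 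A secondary subtlety is handling the case where $\bigcap_{K\in x} K = \{x_0\}$ set-theoretically but $\{x_0\}$ fails to be a $\mu$-approximation target — I would argue this case simply cannot produce an atom, so it contributes no points, consistent with the claimed characterization.
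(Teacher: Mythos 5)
Your overall strategy --- describing a point of $X^\lambda$ as a completely prime filter $x \subset \mathcal{K}(X)$ in the sense of Example \ref{point}, pinning down a unique support point $x_0 = \bigcap_{K \in x} K$ via Hausdorff separation, and checking the converse by verifying primeness of the neighbourhood filter of a point of positive mass --- is viable and genuinely different from the paper's route. The paper instead works directly with atoms of the Boolean frame (Lemma \ref{pointsareminimalelements}), observes that a minimal non-zero $\lambda$-ideal must be principal of the form $[K]$, and then runs a covering argument: if every point of $K$ were $\lambda$-null, regularity lets one cover $K$ by finitely many compacts of measure $<\epsilon$, and peeling one of positive measure off $K$ produces a non-zero proposition strictly below $[K]$, contradicting minimality. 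Your converse direction (primeness of $\{K : x_0 \in K\}$ for $\mu$-approximations via $\lambda(K) \geq \lambda(K_i) + \lambda(\{x_0\})$ when $x_0 \notin K_i$) is correct and essentially matches what the paper needs implicitly.

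The genuine gap is the step you yourself flag: you never prove that $\lambda(\{x_0\}) > 0$, nor that the atom $\bigwedge_{K \in x}[K]$ equals $[\{x_0\}]$. Writing ``I expect this forces $\lambda(\{x_0\}) > 0$'' and, in the fallback, ``I would argue this case simply cannot produce an atom'' is circular --- that the degenerate case contributes no points is precisely what has to be shown, and nothing in Lemma \ref{infformulaboolean} alone rules out $\inf_{K \in x}\lambda(K)$ being strictly larger than $\lambda(\{x_0\})$ (a priori the mass of the atom could be ``pointless''). The gap can be closed within your framework, and more cheaply than the paper does it, because you have already isolated the single point $x_0$: suppose $\lambda(\{x_0\}) = 0$. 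By regularity choose, for any $\epsilon > 0$, an open $U$ and a compact $K'$ with $x_0 \in U \subset K'$ and $\lambda(K') < \epsilon$. For any $K \in x$ the pair $\{K \setminus U,\ K \cap K'\}$ is a finite cover of $K$ in $\mathcal{K}(X)$; since $x_0 \notin K \setminus U$ and every member of $x$ contains $x_0$, primeness forces $K \cap K' \in x$, whence $\inf_{C \in x}\lambda(C) \leq \lambda(K') < \epsilon$. Thus the atom $\bigwedge_{C \in x}[C]$ has measure $0$ by Lemma \ref{infformulaboolean}, hence is $0$ by faithfulness of $\lambda_*$ (Theorem \ref{innermeasureproperties}), contradicting Lemma \ref{pointsareminimalelements}. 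Once $\lambda(\{x_0\}) > 0$ is known, $[\{x_0\}]$ is a non-zero element below the atom, so it equals the atom, and the identification with $\mathrm{pts}(X)$ follows. Without an argument of this kind the proposal does not establish the forward direction of the characterization.
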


\begin{proof}
Since $X^\lambda$ is Boolean, by Lemma \ref{pointsareminimalelements} points of $X^\lambda$ can be identified with minimal, non-zero $\lambda$-ideals of $\mathcal{K}(X)$. By minimality, such a $\lambda$-ideal needs to be of the form $[K]$ for some compact $K$ with $\lambda(K) > 0$. If $K$ contains a point $x$ such that $\lambda(\{x\}) > 0$, we again by minimality conclude that $[K] = [ \{x\} ]$. This point is sent under $p_\lambda$ to the open neighbourhood filter of $X$, or equivalently just $x$. Now we argue by contradiction. Assume that for all points $x$ of $K$ we have $\lambda(\{x\}) = 0$	. Let $\epsilon > 0$, such that $2 \epsilon < \mu(K)$. Using regularity twice we find for each $x \in X$ a sequence $x \in U_x \subset K_x \subset V_x \subset C_x$ with $U_x, V_x$ open and $K_x, C_x$ compact, and
$$\begin{array}{l}
\lambda(K_x) < \epsilon \text{ as well as} \\
\lambda(C_x) < 2 \epsilon.
\end{array}$$
The sets $\{U_x\}_{x \in K}$ form an open cover of $K$, therefore there exist finitely many $x_1,\hdots, x_n \in K$ such that the sets $U_{x_1}, \hdots,U_{x_n}$ cover $K$. We can write
$$ K = (K \cap K_{x_1}) \cup  \hdots \cup (K \cap K_{x_1})$$
as a finite union of compact sets of measure less then $\epsilon$. Since $\lambda(K) > 0$, we must have that $\lambda( K \cap K_{x_k} ) > 0$ for at least one $k$. Observe that:
\begin{itemize}
\item $\lambda( K \setminus V_{x_k} ) < \lambda( K )$, as
$$ \lambda( K \setminus V_{x_k} ) + \lambda( K \cap K_{x_k} ) \leq \lambda( K )$$
and $\lambda( K \cap K_{x_k} ) > 0$.
\item $\lambda( K \setminus V_{x_k} ) > 0$, as
$$2 \epsilon < \mu(K) \leq \mu( C_{x_k} ) + \lambda( K \setminus V_{x_k} ) < 2 \epsilon + \lambda( K \setminus V_{x_k} ).$$
\end{itemize}
This means that $[K \setminus V_{x_k}] \neq 0$, and also $[K \setminus V_{x_k}] \leq [K]$ but $[K \setminus V_{x_k}]$ cannot be equal to $[K]$, since its measure is strictly smaller. This is a contradiction to minimality of $[K]$.
\end{proof}

\subsection{Density of $p_\mu$}
	
We will describe a useful property of the map $p_\mu : X^\mu \rightarrow X$ in the case of many practical situations.

\begin{definition}
Let $f : L \rightarrow M$ be a map of locales. Then $f$ is called \emph{dense} if $f_*(0) = 0$.
\end{definition}

\begin{proposition} \label{injectivity}
Let $X$ be a Hausdorff space and $\mu$ a Radon valuation. Assume that $\mu_*(U) > 0$ for all non-empty open sets $U \subset X$, or equivalently that for all non-empty open sets $U$ there exists $K \subset U$ compact with $\mu(K) > 0$. Then the map
$$p_\mu : X^\mu \rightarrow X$$
is dense.
\end{proposition}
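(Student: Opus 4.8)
The plan is to unwind the definition of density through the chain of maps $p_\mu : X^\mu \hookrightarrow X^\mathcal{K} \xrightarrow{\theta} X$. By definition I must show $(p_\mu)_*(0) = 0$ in $\mathcal{O}(X)$, where $0$ is the bottom element of $\mathcal{O}(X^\mu) = \mathrm{Sh}(\mathcal{K}(X),\mu;\mathbf{2})$. Recall from Lemma \ref{implicationformulas}(2) that this bottom element is the null ideal $N = \{K \in \mathcal{K}(X) \mid \mu(K) = 0\}$. Using the formula $(p_\mu)_*(N) = \bigvee\{U \in \mathcal{O}(X) \mid (p_\mu)^*(U) \leq N\}$ for the right adjoint, it suffices to show that the only open $U$ with $(p_\mu)^*(U) \leq N$ is $U = \emptyset$; then the supremum is over $\{\emptyset\}$, giving $(p_\mu)_*(0) = \emptyset = 0$.

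The key computation is to identify $(p_\mu)^*(U)$ for an open $U \subset X$. Since $p_\mu$ factors as $\theta_X$ followed by the embedding $X^\mu \hookrightarrow X^\mathcal{K}$, and $\theta_X^*(U) = \{K \subset U \mid K \text{ compact}\}$ by the definition of $\theta_X$, the inverse image $(p_\mu)^*(U)$ is the sheafification (for the $\mu$-inner topology) of this ideal, i.e. $\{K \subset U \mid K \text{ compact}\}^{sh}$. Now suppose $(p_\mu)^*(U) \leq N$, which means $\{K \subset U \mid K \text{ compact}\}^{sh} \subseteq N$, and in particular every compact $K \subset U$ satisfies $\mu(K) = 0$ (since $N$ is already a $\mu$-ideal, so membership of the un-sheafified set already forces $\mu(K) = 0$). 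But the hypothesis says precisely that if $U \neq \emptyset$ there exists a compact $K \subset U$ with $\mu(K) > 0$, a contradiction. Hence $U = \emptyset$, which completes the argument.

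I should also justify the parenthetical equivalence in the statement, namely that ``$\mu_*(U) > 0$ for all non-empty open $U$'' is equivalent to ``for all non-empty open $U$ there exists $K \subset U$ compact with $\mu(K) > 0$''. This is immediate from the defining formula $\mu_*(U) = \sup_{K \subset U \text{ compact}} \mu(K)$ (Definition \ref{muinnertopology} together with the identification of $(p_\mu)_* \mu$ on opens): the supremum of nonnegative reals is positive if and only if at least one term is positive. I do not expect any serious obstacle here; the only point requiring a little care is making sure that when passing to the sheafification $\{K \subset U\}^{sh}$ one does not accidentally enlarge beyond $N$ — but since the condition $(p_\mu)^*(U) \leq N$ is what is assumed, and $N$ is itself a $\mu$-ideal (hence sheaf-closed), the sheafified set lies in $N$ iff the generating ideal does, and either way every compact $K \subset U$ lands in $N$. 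The argument is essentially a direct unwinding of definitions once the factorization of $p_\mu$ and the formula for $\theta_X^*$ are in hand.
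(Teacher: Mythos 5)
Your proof is correct and follows essentially the same route as the paper: both reduce density to computing $(p_\mu)_*(0)$ via the adjoint formula, identify the bottom element of $\mathcal{O}(X^\mu)$ with the null ideal $N$, and observe that $(p_\mu)^*(U) \leq N$ forces every compact $K \subset U$ to be null, which the hypothesis rules out for $U \neq \emptyset$. The only cosmetic difference is that the paper composes the two direct-image functors $i_*$ and $\theta_*$ explicitly while you work with $(p_\mu)^*$ and its right adjoint directly; your extra care about sheafification is sound and does not change the argument.
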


\begin{example} An important class of examples of Radon valuations $\mu$ satisfying the condition $\mu(U) > 0$ for all non-empty open sets $U \subset X$ arises when $X$ is a locally compact Hausdorff group and $\mu$ is the Haar measure on $X$, as described in Section \ref{haarmeasure}.
\end{example}

\begin{example}
As a trivial example where Proposition \ref{injectivity} fails, consider the Dirac measure $\delta_x$ at a point $x \in X$. For this example we have that $X^\mu = \mathrm{pt}$, and density of $p_\mu$ only if $X$ itself is a point.
\end{example}

\begin{proof}
Under the composition
$$ \mathcal{O}(X^\mu) \hookrightarrow \mathcal{O}(X^\mathcal{K}) \xrightarrow{\theta_*} \mathcal{O}(X) $$
the element $0$ is first sent to the ideal $N = \{ K \subset X ~|~ \mu(K) = 0 \}$, then further to
$$\theta_*(N) = \bigvee_{ U ~:~ \theta^*(U) \leq N } U = \bigvee \{ U ~|~ \mu(K) = 0 \text{ for all } K \subset U \text{ compact} \} = \bigvee_{ U ~:~ \mu_*(U) = 0}  U. $$
This open is the empty set iff $\mu_*(U) > 0$ for all non-empty opens.
\end{proof}

Recall that dense continuous maps between topological spaces behave like epimorphisms with respect to target spaces that are Hausdorff. A similar situation holds for dense maps between locales, although the situation is not quite equivalent.

\begin{definition}
A locale $L$ is called \emph{Hausdorff}, (also \emph{Isbell-Hausdorff}), if the diagonal
$$ L \rightarrow L \times L$$
is a closed map of locales.
\end{definition}

\begin{remark}
Hausdorff topological spaces need not be Isbell-Hausdorff if they are considered as locales. The problem lies in the fact that products of topological spaces are not necessarily preserved by the functor $\mathrm{Top} \rightarrow \mathrm{Loc}$. This subtlety disappears for locally compact spaces, where the two notions of Hausdorffness agree. It is also true that Isbell-Hausdorffness of the corresponding locale of a topological space $X$ implies that $X$ is Hausdorff.
\end{remark}

\begin{proposition}[\cite{picado_pultr}, V 2.5.3]
Let $f : L \rightarrow M$ be a dense map between locales, and $g_1, g_2 : M \rightarrow N$ be two maps with $N$ a Hausdorff locale. Suppose $g_1 f = g_2 f$. Then $g_1 = g_2$.
\end{proposition}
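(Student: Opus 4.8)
The plan is to exploit the standard fact that dense maps behave like epimorphisms against Hausdorff targets by analysing the \emph{equalizer} of $g_1$ and $g_2$. Since the category of locales is closed under all limits \cite[Chapter IV.4]{picado_pultr}, the equalizer $E \hookrightarrow M$ of the pair $g_1, g_2 : M \rightarrow N$ exists, and it is the universal sublocale of $M$ on which $g_1$ and $g_2$ agree. The three steps are: (1) realise $E$ as a \emph{closed} sublocale of $M$ using the Hausdorff hypothesis on $N$; (2) observe that $f$ factors through $E$ because $g_1 f = g_2 f$; and (3) combine the universal property of closed sublocales with the density of $f$ to force $E = M$, which is exactly the statement $g_1 = g_2$.

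\textbf{The equalizer is closed.} First I would recall that in any category with finite products and pullbacks, the equalizer of $g_1, g_2 : M \rightarrow N$ is computed as the pullback of the diagonal $\Delta_N : N \rightarrow N \times N$ along the map $(g_1, g_2) : M \rightarrow N \times N$:
\[\begin{tikzcd}
	E & M \\
	N & {N \times N.}
	\arrow[hook, from=1-1, to=1-2]
	\arrow[from=1-1, to=2-1]
	\arrow["\lrcorner"{anchor=center, pos=0.125}, draw=none, from=1-1, to=2-2]
	\arrow["{(g_1,g_2)}", from=1-2, to=2-2]
	\arrow["{\Delta_N}", from=2-1, to=2-2]
\end{tikzcd}\]
By the definition of a Hausdorff locale, $\Delta_N$ exhibits $N$ as a closed sublocale of $N \times N$; that is, $\Delta_N$ identifies $N$ with $W^c$ for some open $W$ of $N \times N$. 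The equalizer $E$ is then the pre-image sublocale $(g_1,g_2)_{-1}[W^c]$, and by Lemma \ref{preimageofopen} the pre-image of a closed sublocale is again closed. Writing $V = (g_1,g_2)^*(W) \in \mathcal{O}(M)$, we conclude that $E$ is precisely the closed sublocale $V^c \hookrightarrow M$.

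\textbf{Density forces $E = M$.} Since $g_1 f = g_2 f$, the composite $(g_1,g_2)\circ f$ factors through $\Delta_N$, so by the universal property of the pullback the map $f$ factors through $E = V^c$. By Corollary \ref{closedsublocaleuniversal} (taken with $U = 0$), a map factors through the closed embedding $V^c \hookrightarrow M$ precisely when $f^*(V) = 0$, equivalently $V \leq f_*(0)$. Density of $f$ gives $f_*(0) = 0$, whence $V \leq 0$ and therefore $V = 0$. Consequently $E = V^c = 0^c = M$, so the equalizer inclusion is the identity and $g_1 = g_2$ on all of $M$. The main obstacle in writing this cleanly is Step 1: one must carefully justify that ``closed diagonal'' yields a closed \emph{equalizer}, which rests on the identification of the equalizer with the pullback of the diagonal together with the stability of closed sublocales under pre-image (Lemma \ref{preimageofopen}). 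Once that stability is in hand, the remainder is an immediate application of the adjunction $f^* \dashv f_*$ and the definition of density.
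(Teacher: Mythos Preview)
Your proof is correct. The paper does not supply its own argument for this proposition but simply cites \cite[V~2.5.3]{picado_pultr}; your equalizer-via-diagonal approach is precisely the standard proof found there, and you have cleanly translated it into the paper's internal language (Lemma~\ref{preimageofopen} for stability of closed sublocales under pre-image, Corollary~\ref{closedsublocaleuniversal} for the factoring criterion).
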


We obtain as an immediate consequence.

\begin{corollary} \label{densityinjectivity}
Suppose the situation of Proposition \ref{injectivity} holds, and let $Y$ be a Hausdorff locale. Then the continuous map $ p_\mu : X^\mu \rightarrow X$ induces an injection
$$ \mathrm{Map}( X, Y ) \rightarrow \mathrm{Map}( X^\mu, Y )$$
of continuous maps into random variables.
\end{corollary}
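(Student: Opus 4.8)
The proof is an immediate application of the two preceding results, so the plan is to simply assemble them. The statement to prove is that, under the hypotheses of Proposition \ref{injectivity}, the map $p_\mu : X^\mu \to X$ induces an injection $\mathrm{Map}(X,Y) \to \mathrm{Map}(X^\mu, Y)$ for any Hausdorff locale $Y$, where the map is given by precomposition with $p_\mu$.

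First I would invoke Proposition \ref{injectivity} to conclude that, under the stated hypothesis that $\mu_*(U) > 0$ for all non-empty opens $U \subset X$, the map $p_\mu : X^\mu \to X$ is dense. Then I would take two continuous maps $g_1, g_2 : X \to Y$ with $Y$ Hausdorff, and suppose $g_1 \circ p_\mu = g_2 \circ p_\mu$; that is, the two images of $g_1, g_2$ under the precomposition map agree. Applying Proposition \ref{picado_pultr}, V 2.5.3 (the density cancellation property for Hausdorff targets) with $f = p_\mu$, $M = X$, $N = Y$, we conclude directly that $g_1 = g_2$. This shows the precomposition map is injective, which is exactly the claim.

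There is essentially no obstacle here — the corollary is labelled "an immediate consequence" in the text, and indeed it is: it is the composite of the density statement (Proposition \ref{injectivity}) with the abstract cancellation lemma for dense maps into Hausdorff locales. The only thing worth spelling out is that the map $\mathrm{Map}(X,Y) \to \mathrm{Map}(X^\mu, Y)$ under discussion is precisely $g \mapsto g \circ p_\mu$, so that the equality of two elements in the codomain unwinds to the hypothesis $g_1 p_\mu = g_2 p_\mu$ needed to apply the cancellation lemma.

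\begin{proof}
By Proposition \ref{injectivity}, the hypothesis that $\mu_*(U) > 0$ for all non-empty open sets $U \subset X$ guarantees that the continuous map $p_\mu : X^\mu \rightarrow X$ is dense. The map $\mathrm{Map}(X,Y) \rightarrow \mathrm{Map}(X^\mu, Y)$ in question is precomposition with $p_\mu$, sending $g : X \rightarrow Y$ to $g \circ p_\mu$. Suppose $g_1, g_2 : X \rightarrow Y$ are continuous maps with $g_1 \circ p_\mu = g_2 \circ p_\mu$. Since $Y$ is a Hausdorff locale and $p_\mu$ is dense, Proposition \ref{picado_pultr} (V 2.5.3) applied with $f = p_\mu$ yields $g_1 = g_2$. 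Hence precomposition with $p_\mu$ is injective, as claimed.
\end{proof}
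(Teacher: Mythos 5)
Your proof is correct and matches the paper's intended argument exactly: the paper states this corollary as an immediate consequence of Proposition \ref{injectivity} (density of $p_\mu$) combined with the cancellation property of dense maps against Hausdorff targets, which is precisely the composite you spell out. Nothing is missing.
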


\begin{remark} One common criticism towards a point-free approach to measure theory is the statement that modding out by null sets makes it awkward or even impossible to study the local behaviour of equivalence classes of measurable functions. Proposition \ref{injectivity} shows that this fear is mostly misplaced. Not only does the point-free approach avoid talking about equivalence classes altogether, but moreover in many practical applications it is simply a (local) property of a map $X^\mu \rightarrow Y$ to be induced by a continuous map $X \rightarrow Y$, assuming one asks for the existence of enough open neighbourhoods of non-zero measure.
\end{remark}

\section{Measures on locally compact Hausdorff spaces}

Quite a few things simplify when working with regular contents on \emph{locally compact} Hausdorff spaces. Recall that a space $X$ is called locally compact, if for each open $U$ and $x \in U$, there exist $V,K$ such that $x \in V \subset K \subset U$ with $V$ open and $K$ compact. It follows that the same holds when $x$ is replaced by a compact subset.

\begin{lemma} \label{compactopenseparation}
Let $X$ be locally compact Hausdorff and $K \subset U$ with $K$ compact and $U$ open. Then there exist $V$ open and $K'$ compact such that $K \subset V \subset K' \subset U$.
\end{lemma}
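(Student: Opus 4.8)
The statement is the standard ``sandwiching'' lemma for locally compact Hausdorff spaces: given a compact $K$ inside an open $U$, we want to interpolate an open $V$ and a compact $K'$ with $K \subset V \subset K' \subset U$. The plan is to first establish the pointwise version and then upgrade to the compact version via a finite subcover argument.

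\textbf{Step 1: Pointwise interpolation.} By the definition of local compactness recalled just before the lemma, for each $x \in K$ (hence $x \in U$) there exist an open set $V_x$ and a compact set $K_x$ with $x \in V_x \subset K_x \subset U$. This is immediate from the definition and requires no work.

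\textbf{Step 2: Finite subcover.} The family $\{V_x\}_{x \in K}$ is an open cover of the compact set $K$, so there exist finitely many points $x_1, \dots, x_n \in K$ with $K \subset V_{x_1} \cup \dots \cup V_{x_n}$. Set $V = V_{x_1} \cup \dots \cup V_{x_n}$, which is open and contains $K$, and set $K' = K_{x_1} \cup \dots \cup K_{x_n}$, which is a finite union of compact sets and hence compact. Then $V \subset K'$ since each $V_{x_i} \subset K_{x_i} \subset K'$, and $K' \subset U$ since each $K_{x_i} \subset U$. Thus $K \subset V \subset K' \subset U$, as desired.

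\textbf{Main obstacle.} There really is no serious obstacle here; the proof is a routine two-line argument combining the definition of local compactness with the finite-subcover property of compact sets. The only thing worth remarking (and which the sentence after the lemma statement in the excerpt already anticipates --- ``It follows that the same holds when $x$ is replaced by a compact subset'') is precisely this upgrade from points to compact sets, which the finite union handles cleanly because compactness and openness are both preserved by finite unions. One could phrase the proof even more tersely by just invoking that remark, but writing out the two steps explicitly is cleaner. I would present it in roughly the two-paragraph form above.

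\begin{proof}
By local compactness, for each $x \in K \subset U$ there exist an open set $V_x$ and a compact set $K_x$ with $x \in V_x \subset K_x \subset U$. The sets $\{V_x\}_{x \in K}$ form an open cover of $K$, so by compactness there exist $x_1, \dots, x_n \in K$ with $K \subset V_{x_1} \cup \cdots \cup V_{x_n}$. Set $V = V_{x_1} \cup \cdots \cup V_{x_n}$ and $K' = K_{x_1} \cup \cdots \cup K_{x_n}$. Then $V$ is open, $K'$ is compact as a finite union of compact sets, and we have $K \subset V \subset K' \subset U$, since $V_{x_i} \subset K_{x_i} \subset K'$ and $K_{x_i} \subset U$ for all $i$.
\end{proof}
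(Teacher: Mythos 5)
Your proof is correct and follows exactly the same route as the paper: apply local compactness pointwise to get $x \in V_x \subset K_x \subset U$, extract a finite subcover of $K$, and take finite unions. Nothing to add.
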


It follows that the relation $\ll$ between compact sets becomes \emph{interpolative}, which means that whenever $K \ll K'$ for two compact subsets, there exists $K''$ such that $K \ll K'' \ll K'$.

\begin{proof}
Choose for each $x \in K$ sets $V(x) \subset K(x) \subset U$ with $V(x)$ open and $K(x)$ compact. Since $K$ is compact, finitely many such $x$ suffice for the sets $V(x)$ to cover $K$. Define $V$ and $K'$ as these finite unions.
\end{proof}

\begin{remark}
Unlike the notion of Hausdorffness, the notion of locally compact Hausdorff spaces can be done in a completely point free fashion, as the category of locally compact Hausdorff spaces is equivalent to the category of completely regular and locally compact locales, where a locale $L$ is called locally compact if its frame $\mathcal{O}(L)$ is a \emph{continuous} frame. For more details, see \cite{picado_pultr}, Chapters V.5 and VII.6. 
\end{remark}

\begin{lemma} \label{localfinitenesslocallycompact}
Suppose $X$ is locally compact Hausdorff and $\mu$ a valuation on $\mathcal{K}(X)$. Then the associated measure $\mu_* : \mathcal{O}(X) \rightarrow [0,\infty]$ is locally finite. 
\end{lemma}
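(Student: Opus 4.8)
The plan is to show that $1 \in \mathcal{O}(X)$ can be written as a supremum of opens of finite $\mu_*$-measure, which is exactly the definition of local finiteness. Recall from Definition \ref{definitionradonvaluation} and Theorem \ref{valuationbasis} that $\mu_*(U) = \sup_{K \subset U \text{ compact}} \mu(K)$, and $\mu$ is finite-valued on $\mathcal{K}(X)$ by assumption.

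First I would use local compactness to produce, for each point $x \in X$, an open neighbourhood $V_x$ with compact closure, or more precisely (via Lemma \ref{compactopenseparation} applied to the compact set $\{x\}$, or directly from the definition of local compactness stated at the start of this section) an open $V_x$ and a compact $K_x$ with $x \in V_x \subset K_x$. Then $\mu_*(V_x) \leq \mu(K_x) < \infty$ since $\mu$ is a finite valuation. Since every $x \in X$ lies in such a $V_x$, we have $1 = \bigvee_{x \in X} V_x$ in $\mathcal{O}(X)$, a covering of $1$ by opens of finite $\mu_*$-measure. This is precisely the condition for $\mu_*$ to be locally finite according to Definition \ref{definitionmeasure}.

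I do not expect any serious obstacle here — the statement is essentially immediate once one unwinds the definition of $\mu_*$ on opens (finite because it is dominated by $\mu$ on a compact superset) and recalls that local compactness gives a covering of the whole space by relatively compact opens. The only thing to be slightly careful about is that the definition of local compactness in use is the one quoted at the start of the section (for each open $U$ and $x \in U$ there exist $V, K$ with $x \in V \subset K \subset U$); taking $U = X$ immediately gives the required $V_x, K_x$. One could also phrase it using Lemma \ref{compactopenseparation} with $K = \{x\}$ and $U = X$.

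So the proof is just: for each $x$, pick $x \in V_x \subset K_x$ with $V_x$ open and $K_x$ compact by local compactness; then $\mu_*(V_x) \le \mu(K_x) < \infty$; and $\bigvee_x V_x = 1$, exhibiting the required cover.

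\begin{proof}
By local compactness, for each $x \in X$ there exist an open set $V_x$ and a compact set $K_x$ with $x \in V_x \subset K_x$ (apply the defining property of local compactness to the open set $X$ and the point $x$, or Lemma \ref{compactopenseparation} with $K = \{x\}$ and $U = X$). Since $\mu$ is a finite valuation on $\mathcal{K}(X)$, we have
$$\mu_*(V_x) = \sup_{K \subset V_x \text{ compact}} \mu(K) \leq \mu(K_x) < \infty,$$
using monotonicity of $\mu$ and that $K \subset V_x \subset K_x$ for any compact $K \subset V_x$. As every point of $X$ lies in some $V_x$, we have $\bigvee_{x \in X} V_x = 1$ in $\mathcal{O}(X)$. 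This is a covering of $1$ by opens of finite $\mu_*$-measure, so $\mu_*$ is locally finite.
\end{proof}
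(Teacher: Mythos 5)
Your proof is correct and follows essentially the same route as the paper: both arguments cover $X$ by opens dominated by a compact set (the paper uses opens with compact closure, you use $V_x \subset K_x$, which is the same thing up to taking closures) and bound $\mu_*$ on each such open by the finite value of $\mu$ on the dominating compact. No issues.
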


\begin{proof}
Recall that $\mu_*(U) = \sup_{K \subset U \text{ compact}} \mu(K)$ for an open $U$ by definition. If $U$ has compact closure $\bar{U}$, then $\mu(K) \leq \mu( \bar{U} ) < \infty$ for all $K \subset U$ compact and therefore also $\mu_*(U) \leq \mu( \bar{U} ) < \infty$. Since $X$ is locally compact, we have that
$$X = \bigcup \{ U ~|~ U \text{ open with compact closure} \} $$
is a union of opens with finite measure.
\end{proof}

Regular contents are particularly natural in the context of locally compact Hausdorff spaces, in which case they correspond uniquely to locally finite measures.

\begin{theorem} \label{regularcontentslocallycompact}
Let $X$ be a locally compact Hausdorff space. The assignment $$\begin{array}{rcl}
 (-)_* : \mathrm{RegCont}(X) & \rightarrow & \mathrm{Meas}_{\mathrm{loc.~fin.}}(X) \\
 \lambda & \mapsto & \lambda_*
\end{array}$$
is a bijection between the set of regular contents on $X$ and the set of locally finite measures on $X$.
\end{theorem}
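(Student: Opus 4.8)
The plan is to construct an explicit inverse to $(-)_*$ and then verify both composites are the identity. Given a locally finite measure $\mu : \mathcal{O}(X) \rightarrow [0,\infty]$ on a locally compact Hausdorff space $X$, I would define $\lambda^\mu : \mathcal{K}(X) \rightarrow [0,\infty)$ by the standard ``inner regularization'' formula
\[
\lambda^\mu(K) = \inf \{ \mu(U) ~|~ K \subset U \text{ open} \}.
\]
The finiteness $\lambda^\mu(K) < \infty$ follows from local finiteness together with Lemma \ref{compactopenseparation}: cover $K$ by finitely many opens of finite measure with compact closure, take their union $U$, which then has $\mu(U) < \infty$. The first task is to check $\lambda^\mu$ is a content: monotonicity is immediate; subadditivity for unions follows by choosing $U_i \supset K_i$ with $\mu(U_i) \le \lambda^\mu(K_i)+\epsilon$ and using modularity/monotonicity of $\mu$ on $U_1 \cup U_2$; additivity for disjoint compacts uses the Separation Lemma \ref{compactseparationlemma} to separate $K_1, K_2$ by disjoint opens $V_1, V_2$, so that one can intersect an approximating open $U \supset K_1 \cup K_2$ with $V_1, V_2$ and use finite additivity of $\mu$ on disjoint opens (a consequence of modularity plus $\mu(V_1 \wedge V_2) = \mu(\emptyset) = 0$). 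Then I would check regularity of $\lambda^\mu$: given $K$ and $\epsilon$, pick $U \supset K$ open with $\mu(U) < \lambda^\mu(K) + \epsilon$, then by Lemma \ref{compactopenseparation} interpolate $K \subset V \subset K' \subset U$ with $V$ open, $K'$ compact; then $\lambda^\mu(K') \le \mu(U) < \lambda^\mu(K) + \epsilon$ and $K \ll K'$, giving the required infimum condition.

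Next I would verify the two round-trips. For $\lambda \mapsto \lambda_* \mapsto \lambda^{(\lambda_*)}$: recall $\lambda_*(U) = \sup_{L \subset U \text{ compact}} \lambda(L)$, so I must show $\inf_{U \supset K} \lambda_*(U) = \lambda(K)$ for a regular content $\lambda$. The inequality ``$\ge$'' is clear since $K$ is compact and $K \subset U$ forces $\lambda_*(U) \ge \lambda(K)$. For ``$\le$'', given $\epsilon$ use regularity to find $K \ll K'$, i.e.\ $K \subset U \subset K'$ with $U$ open, and $\lambda(K') < \lambda(K) + \epsilon$; then $\lambda_*(U) \le \lambda(K') < \lambda(K)+\epsilon$ since any compact $L \subset U$ satisfies $L \subset K'$, hence $\lambda(L) \le \lambda(K')$. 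For the other round-trip $\mu \mapsto \lambda^\mu \mapsto (\lambda^\mu)_*$: I must show $(\lambda^\mu)_*(U) = \sup_{K \subset U}\lambda^\mu(K) = \mu(U)$ for every open $U$. The inequality ``$\le$'' holds since $\lambda^\mu(K) \le \mu(U)$ whenever $K \subset U$ (taking $U$ itself as an approximating open). For ``$\ge$'': here I would use local finiteness of $\mu$ to write $U = \bigvee_j V_j$ as a directed supremum of opens $V_j$ of finite measure, each (shrinking if necessary via Lemma \ref{compactopenseparation}) admitting $\overline{V_j} \subset U$ compact with $\mu(\overline{V_j}) < \infty$; then $\lambda^\mu(\overline{V_j}) \ge \mu(V_j)$ by monotonicity (every open containing $\overline{V_j}$ contains $V_j$), so $\sup_{K \subset U} \lambda^\mu(K) \ge \sup_j \mu(V_j) = \mu(U)$ by continuity of the measure $\mu$.

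The main obstacle I anticipate is the ``$\ge$'' direction of the second round-trip, namely producing enough compact sets inside an open $U$ whose $\lambda^\mu$-values approximate $\mu(U)$ from below. This is exactly the point where local compactness is essential (without it $U$ need not be exhausted by compacts at all), and where one must carefully combine: (i) local finiteness to reduce to opens of finite measure, (ii) Lemma \ref{compactopenseparation} to sandwich a compact set between two opens, and (iii) continuity of $\mu$ along the resulting directed system. A secondary technical point requiring care is showing $\lambda^\mu$ is genuinely finite-valued and that the disjoint-additivity argument is clean — both rely on the interaction of the Separation Lemma with the modularity law for $\mu$, and on the fact that in a locally compact Hausdorff space a compact set has a neighbourhood with compact closure. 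Once these are in place, bijectivity is formal. I would also remark (as the surrounding text does, via Lemma \ref{localfinitenesslocallycompact}) that every regular content on a locally compact Hausdorff space automatically yields a locally finite $\lambda_*$, so the target category is the right one and no extra hypotheses are needed.
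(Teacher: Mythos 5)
Your proposal is correct and follows essentially the same route as the paper: the inverse is given by the identical formula $\inf\{\mu(U) \mid K \subset U \text{ open}\}$, the content axioms and regularity are verified via the Separation Lemma \ref{compactseparationlemma} and the interpolation Lemma \ref{compactopenseparation}, and both round-trips are checked exactly as in the paper's proof (the second using $U = \bigcup_{V \ll U} V$ together with continuity of $\mu$). The only cosmetic slip is writing $\mu(\overline{V_j})$ for a compact set when $\mu$ is only defined on opens, but this does not affect the argument.
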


\begin{proof}
Let us define an inverse to $(-)_*$. Given a locally finite measure $\mu$ on $X$, define
$$\begin{array}{rcl}
\mu^\mathcal{K} : \mathcal{K}(X) & \rightarrow & [0,\infty) \\
K & \mapsto & \mu^\mathcal{K}(K) = \inf \{ \mu(U) ~|~ K \subset U, U \text{ open} \}
\end{array}$$ 
First of all, this is well-defined. Since $\mu$ is locally finite, there exists a cover $U_i, i \in I,$ of $X$ such that $\mu(U_i) < \infty$. If $K$ is compact, it must be contained in finitely many of the $U_i$, therefore $K$ is contained in an open of finite measure, and hence $\mu^\mathcal{K}(K) < \infty$. Now let us show that $\mu^\mathcal{K}$ is a regular content.
\begin{itemize}
\item \emph{Monotone:} Let $K_1 \subset K_2$ be two compact sets. Then $ K_2 \subset U$ for $U$ open implies $K_1 \subset U$, therefore $\mu^\mathcal{K}(K_1) \leq \mu^\mathcal{K}(K_2)$.
\item \emph{Subadditive:} Let $K_1, K_2$ be compact. Then
$$\begin{array}{rcl}
\mu^\mathcal{K}( K_1 \cup K_2 ) &=& \inf \{ \mu(U) ~|~ K_1 \cup K_2 \subset U, U \text{ open} \} \\
 & \leq & \inf \{ \mu(U_1 \cup U_2) ~|~ K_1 \subset U_1,  K_2 \subset U_2, U_1, U_2 \text{ open} \} \\
 &\leq &  \inf \{ \mu(U_1) + \mu(U_2) ~|~ K_1 \subset U_1,  K_2 \subset U_2, U_1, U_2 \text{ open} \} \\
 &=& \mu^\mathcal{K}(K_1)  + \mu^\mathcal{K}(K_2) 
\end{array}$$
\item \emph{Additive on disjoint pairs:} Now let $K_1, K_2$ be compact such that $K_1 \cap K_2 = \emptyset$. Since $X$ is Hausdorff, by Lemma \ref{compactseparationlemma} there exists disjoint open sets $V_1, V_2$ such that $K_1 \subset U_1$ and $K_2 \subset U_2$. Therefore, if $K_1 \cup K_2 \subset U$ for $U$ open, we obtain $U_1 = U \cap V_1, U_2 = U \cap V_2$ such that $U_1$ and $U_2$ are disjoint and
$$  K_1 \subset U_1, K_2 \subset U_2 \text{ and } U_1 \cup U_2 \leq U. $$
Therefore
$$\begin{array}{rcl}
\mu^\mathcal{K}( K_1 \cup K_2 ) &=& \inf \{ \mu(U) ~|~ K_1 \cup K_2 \subset U, U \text{ open} \} \\
 & = & \inf \{ \mu(U_1 \cup U_2) ~|~ K_1 \subset U_1,  K_2 \subset U_2, U_1, U_2 \text{ open and disjoint} \} \\
 & = &  \inf \{ \mu(U_1) ~|~ K_1 \subset U_1,  U_1 \text{ open} \} + \inf \{ \mu(U_2) ~|~ K_2 \subset U_2,  U_2 \text{ open} \} \\
 &=& \mu^\mathcal{K}(K_1)  + \mu^\mathcal{K}(K_2).
\end{array}$$
\item \emph{Regular:} Let $K$ be compact. By definition if $K \ll K'$, there exists open $U$ such that $K \subset U \subset K'$, hence $$\mu^\mathcal{K}(K) \leq \mu(U) \leq \mu^\mathcal{K}(K').$$
Conversely, if $K \subset U$ for $U$ open, since $X$ is locally compact Hausdorff, by Lemma \ref{compactopenseparation} there exist $K \subset V \subset K' \subset U$ with $V$ open and $K'$ compact. Therefore
$$ \mu^\mathcal{K}(K) =  \inf \{ \mu(U) ~|~ K \subset U, U \text{ open} \} =  \inf \{ \mu^\mathcal{K}(K') ~|~ K \ll K' \}.$$
\end{itemize}
We now claim that $\mu$ will be recovered from $\mu^\mathcal{K}$. If $U$ is open, we have
$$U = \bigcup_{ V \ll U } V $$
where we write $ V \ll U$ whenever there exists a compact $K$ such that $V \subset K \subset U$. Using continuity of $\mu$ we see that
$$\mu(U) = \sup_{  V \ll U } \mu(V) = \sup_{  K \subset U, \text{ compact} } \mu^\mathcal{K}( K ) = (\mu^\mathcal{K})_*(U).$$
Conversely, if we start with a regular content $\lambda$, we know that 
$$\lambda(K) = \inf_{ K \ll K' } \lambda(K') = \inf_{ K \subset U, \text{ open} } \lambda_*(U) = (\lambda_*)^\mathcal{K}(K).$$
This shows that $(-)_*$ and $(-)^\mathcal{K}$ are inverse to each other.
\end{proof}

\begin{remark}
Since pushforward of a locally finite measure against a partially defined proper map $f : X \rightarrow Y$ with open support again produces a locally finite measure (Proposition \ref{pushforwardproperties}), this means that we obtain a functor
$$ \mathrm{RegCont} : \mathrm{LocCHaus}_{\mathrm{part}~\mathrm{prop}} \rightarrow \mathrm{Set}$$
with $\mathrm{LocCHaus}_{\mathrm{part}~\mathrm{prop}}$ being the category of locally compact Hausdorff spaces and partially defined proper maps with open support between them, and $\mathrm{RegCont}(X) \cong \mathrm{Meas}_{\mathrm{loc. fin}}(X)$ the set of regular contents, and can be identified classically with the set of positive Radon measures on $X$. This functor is a sheaf when restricted to open inclusions, as shown in Theorem \ref{measuresaresheaves}.
\end{remark}

\begin{example} As a slightly exotic, but interesting example of a regular content, consider Example \ref{scissorscongruence1}. Recall that we fix $X$ to be a geometry, such as Euclidean space $E^n$, the sphere $S^n$ or hyperbolic space $H^n$, of dimension $n$, and consider the associated lower bounded distributive lattice $D(X)$ given by $n$-dimensional polytopes. The locally coherent space $L(D(X), fin) = {{X_{\mathrm{poly}}}}$ is in fact Hausdorff, since $D(X)$ is a Boolean ring (see Definition \ref{Booleanring} and Lemma \ref{Booleanringhausdorff} below), and comes with a continuous, proper map ${X_{\mathrm{poly}}} \rightarrow X$, which is $G$-equivariant, where $G$ is the group of isometries of $X$, and for which $X_{\mathrm{poly}}$ is a polytopal, highly disconnected version of the classical cover $C \rightarrow [0,1]$ by the Cantor set.

Extending the volume of a polytope produces the locally finite measure $\mathrm{Vol}_n$ on $ {X_{\mathrm{poly}}}$, and therefore a regular content on the set of compact subsets of ${X_{\mathrm{poly}}}$ (generated via intersections of compact opens, which correspond formally to polytopes). This means there is an associated measurable locale $(X_{\mathrm{poly}})_{Leb} = (X_{\mathrm{poly}})^{\mathrm{Vol}_n}$ together with an induced faithful and locally finite measure $\mathrm{Vol}_n$ and a measure-preserving map
$$(X_{\mathrm{poly}})_{Leb} \rightarrow X_{\mathrm{poly}},$$
and with it an integration theory built purely formally from the combinatorial (!) notion of $n$-dimensional polytopes, together with their volume, which works formally the same as traditional Lebesgue measure. If $G$ is a group of isometries of $X$, it acts naturally on $(X_{\mathrm{poly}})_{Leb}$.

Furthermore, pushforward of the measure along $X_{\mathrm{poly}} \rightarrow X$ equips $X$ with the classical Lebesgue measure, as for any open set $U \subset X$, it is a standard fact that
$$\mu(U) = \sup \{ \mu(P) ~|~ P \subset U,~ P \text{ is } n\text{-dim. polytope} \}.$$
In this sense, Lebesgue measure is obtained from the combinatorial notion of measure of a polytope, a fact that should not be too surprising, but nonetheless pleasant to observe from purely formal constructions.
\end{example}

\begin{remark}
If $X$ is locally compact Hausdorff, then any valuation $\nu : \mathcal{K}(X) \rightarrow [0, \infty)$ can be used to construct a regular content. The measure $\nu_*$ on $X^\nu = L( \mathcal{K}(X), \nu)$ induces via pushforward along $p_\nu : X^\nu \rightarrow X$ a measure on $X$. This measure is locally finite, and hence induces the regular content
$$ \tilde{\nu}(K) = \inf_{ K \ll K' } \nu(K').$$ 
\end{remark}

We want to state a similar statement with regard to contents. Any content on a locally compact Hausdorff space can be used to construct a regular content.

\begin{lemma} \label{inducedregularcontent} Let $X$ be a locally compact Hausdorff space and let $\lambda : \mathcal{K}(X) \rightarrow [0,+\infty)$ be a content. Then the function
$$\begin{array}{rcl}
\tilde{\lambda} : \mathcal{K}(X) & \rightarrow & [0,+\infty) \\
K  & \mapsto & \inf \{ \lambda(K') ~|~ K  \ll K' \}
\end{array}$$
is a regular content on $X$. The associated inner measure $\tilde{\lambda}_*$ on $\mathcal{O}(X)$ is given by
$$ \tilde{\lambda}_*(U) = \sup_{ K \subset U \text{ compact}} \lambda(K). $$
\end{lemma}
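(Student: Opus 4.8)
The plan is to verify in turn that $\tilde\lambda$ is a content, that it is regular, and finally that the associated inner measure has the stated form. The first step is to show $\tilde\lambda$ is well-defined and finite: given $K$ compact, by local compactness (Lemma \ref{compactopenseparation}) there is at least one compact $K'$ with $K \ll K'$, so the infimum is over a non-empty set of finite values, hence $\tilde\lambda(K) < \infty$. Monotonicity is immediate, since $K_1 \subset K_2$ forces $\{K' \mid K_2 \ll K'\} \subset \{K' \mid K_1 \ll K'\}$, and it is clear that $\tilde\lambda \geq \lambda$ is not needed — what I will use repeatedly is the reverse inequality $\tilde\lambda(K') \geq \tilde\lambda(K)$ whenever $K \ll K'$, plus the sandwiching $\lambda(K) \leq \tilde\lambda(K') \leq \lambda(K'')$ for $K \ll K'' $ obtained from $K \subset U \subset K'' $.

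For subadditivity, given compact $K_1, K_2$, I would pick for each $i$ an open $U_i$ and compact $K_i'$ with $K_i \subset U_i \subset K_i'$ and $\lambda(K_i') < \tilde\lambda(K_i) + \epsilon$ (using Lemma \ref{compactopenseparation}); then $K_1 \cup K_2 \subset U_1 \cup U_2 \subset K_1' \cup K_2'$ gives $K_1 \cup K_2 \ll K_1' \cup K_2'$, whence $\tilde\lambda(K_1 \cup K_2) \leq \lambda(K_1' \cup K_2') \leq \lambda(K_1') + \lambda(K_2') < \tilde\lambda(K_1) + \tilde\lambda(K_2) + 2\epsilon$ using subadditivity of the content $\lambda$. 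For additivity on disjoint compact pairs $K_1 \cap K_2 = \emptyset$, I would first use the separation lemma (Lemma \ref{compactseparationlemma}) to get disjoint opens $W_1 \supset K_1$, $W_2 \supset K_2$; then in any approximation $K_1 \cup K_2 \subset U \subset K'$ one replaces $U$ by $U \cap W_1$ and $U \cap W_2$ and $K'$ by $K' \cap \overline{W_1}$-type compact pieces via Lemma \ref{compactdecompositionlemma}, reducing to disjoint approximations, so that $\tilde\lambda(K_1 \cup K_2) = \tilde\lambda(K_1) + \tilde\lambda(K_2)$ follows from additivity of $\lambda$ on disjoint compacts. Regularity of $\tilde\lambda$ is then the statement $\tilde\lambda(K) = \inf\{\tilde\lambda(K') \mid K \ll K'\}$; the inequality $\leq$ is monotonicity, and for $\geq$ I would use that $\ll$ is \emph{interpolative} on a locally compact Hausdorff space (noted after Lemma \ref{compactopenseparation}): given $K \ll K''$ with $\lambda(K'') < \tilde\lambda(K) + \epsilon$, interpolate $K \ll K' \ll K''$, so $\tilde\lambda(K') \leq \lambda(K'') < \tilde\lambda(K) + \epsilon$, giving a witness on the right-hand side.

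For the final formula, observe that by Theorem \ref{regularcontentalmostboolean} the regular content $\tilde\lambda$ is an almost Boolean valuation, so Theorem \ref{innermeasureproperties} applies and the inner measure is $\tilde\lambda_*(U) = \sup_{K \subset U \text{ compact}} \tilde\lambda(K)$ by the defining formula for $\mu_*$ on $\mathcal{O}(X)$ (equivalently, the formula for the pushforward measure along $\theta$, see the discussion preceding Definition \ref{definitionradonvaluation}). It remains to see this equals $\sup_{K \subset U \text{ compact}} \lambda(K)$. The inequality $\tilde\lambda_*(U) \leq \sup_{K \subset U} \lambda(K)$ follows because for $K \subset U$ one may, by Lemma \ref{compactopenseparation}, find compact $K'$ with $K \subset V \subset K' \subset U$ for some open $V$, so $K \ll K'$ and hence $\tilde\lambda(K) \leq \lambda(K')$ with $K' \subset U$; the reverse inequality is immediate since $\tilde\lambda(K) \geq$ (actually one checks $\tilde\lambda \geq$ is not automatic, but) $\sup_K \lambda(K) \leq \sup_K \tilde\lambda(K')$ with the same argument. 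The main obstacle I anticipate is the additivity-on-disjoint-pairs step: threading the separating opens $W_1, W_2$ through an arbitrary open–compact sandwich $K_1 \cup K_2 \subset U \subset K'$ while keeping all the intermediate sets compact requires a careful application of Lemma \ref{compactdecompositionlemma}, and one must be attentive that intersecting a compact $K'$ with the (not necessarily closed) set $W_i$ need not be compact — so the cut should be done at the level of the open set $U$ and then re-approximated. Everything else is a routine $\epsilon$-chase.
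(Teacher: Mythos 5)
Your overall route is the same as the paper's: verify monotonicity, subadditivity, disjoint additivity, and regularity of $\tilde\lambda$ one by one using Lemmas \ref{compactseparationlemma} and \ref{compactopenseparation} together with interpolativity of $\ll$, and then identify $\tilde\lambda_*$ on opens by cofinality of the compacts $K'$ with $K \ll K' \subset U$. Most steps are fine, but the one step you yourself flag as the main obstacle — additivity on disjoint pairs — is not correctly executed as written, and the difficulty you identify there is not the actual one.

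Concretely: after separating $K_1, K_2$ by disjoint opens $W_1, W_2$, you propose to cut a compact $K'$ with $K_1 \cup K_2 \ll K'$ into pieces of the form $K' \cap \overline{W_i}$ (or via Lemma \ref{compactdecompositionlemma}). The worry you raise — that $K' \cap W_i$ need not be compact — is easily avoided, but the real obstruction is that $\overline{W_1}$ and $\overline{W_2}$ need not be disjoint even when $W_1 \cap W_2 = \emptyset$, so additivity of $\lambda$ does not apply to the resulting pieces; and Lemma \ref{compactdecompositionlemma} does not apply to $K'$ either, since $K'$ itself need not be covered by $W_1 \cup W_2$. The paper's fix is to use local compactness \emph{before} touching $K'$: by Lemma \ref{compactopenseparation} choose compacts $K_i'$ with $K_i \ll K_i' \subset W_i$; these are automatically disjoint, and for any open $V$ with $K_1 \cup K_2 \subset V \subset K'$ the sets $K' \cap K_i'$ are disjoint compacts (intersections of compacts in a Hausdorff space) satisfying $K_i \ll K' \cap K_i'$, whence $\lambda(K') \geq \lambda(K' \cap K_1') + \lambda(K' \cap K_2') \geq \tilde\lambda(K_1) + \tilde\lambda(K_2)$. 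You have all the needed lemmas in hand; only this rearrangement is missing. A second, minor point: in your last paragraph you hesitate over whether $\lambda \leq \tilde\lambda$ holds; it does automatically, since any $K'$ with $K \ll K'$ contains $K$ and $\lambda$ is monotone, so the inequality $\sup_{K \subset U}\lambda(K) \leq \sup_{K \subset U}\tilde\lambda(K)$ needs no further argument.
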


\begin{remark}
Traditionally, Lemma \ref{inducedregularcontent} appears quite often implicitly when constructing a Radon measure. Typically, a content $\lambda$ is used to construct an induced Radon measure, see e.g.\ Halmos \cite[\S 53 and \S 54]{halmos2013measure}. Restricting this induced measure to the set of compact subsets gives $\tilde{\lambda}$. The proof of Lemma \ref{inducedregularcontent} follows closely with the proof of \S 54, Theorem C in Halmos.
\end{remark}

\begin{proof}
We verify the four conditions.
\begin{itemize}
\item \emph{Monotone:} Let $K_1 \subset K_2$ be two compact sets. Suppose $K_2 \ll K'$. Then clearly $K_1 \ll K'$ as well. Therefore
$$\tilde{\lambda}(K_1) = \inf \{ \lambda(K') ~|~ K_1  \ll K' \} \leq \inf \{ \lambda(K') ~|~ K_2  \ll K' \}  = \tilde{\lambda}(K_2).$$
\item \emph{Subadditive:} Let $K_1$ and $K_2$ be two compact sets. Suppose $K_1 \ll K_1'$ and $K_2 \ll K_2'$. Then clearly $K_1 \cup K_2 \ll K_1' \cup K_2'$. Therefore
$$\begin{array}{rl}
  & \tilde{\lambda}(K_1 \cup K_2) \leq \inf \{ \lambda(K_1' \cup K_2') ~|~ K_1  \ll K_1' \text{ and } K_2 \ll K_2' \} \\ \leq  & \inf \{ \lambda(K_1') + \lambda(K_2') ~|~ K_1  \ll K_1' \text{ and } K_2 \ll K_2' \} = \tilde{\lambda}(K_1) + \tilde{\lambda}(K_2). 
\end{array} $$
\item \emph{Additive on disjoint pairs:} Let $K_1$ and $K_2$ be two disjoint compact sets. Since $X$ is Hausdorff, the two compact sets can be separated, i.e.\ there exists disjoint opens $U_1, U_2$ such that $K_1 \subset U_1$ and $K_2 \subset U_2$ by Lemma \ref{compactseparationlemma}. Using that $X$ is locally compact as well, by Lemma \ref{compactopenseparation} we also find $K_1', K_2'$ compact such that $K_1 \ll K_1' \subset U_1$ and $K_2 \ll K_2' \subset U_2$. In particular, $K_1'$ and $K_2'$ are disjoint as well. Therefore, if $K_1 \cup K_2 \ll K'$ for some compact $K'$, we have $$K_1 \cup K_2 \ll (K' \cap K_1') \cup (K' \cap K_2') \subset K'$$ as well, with $K_1 \ll (K' \cap K_1')$ and $K_2 \ll (K' \cap K_2')$, and the sets $K' \cap K_1'$ and $K' \cap K_2'$ being disjoint. We conclude that
$$\begin{array}{c}
 \hspace{5ex} \tilde{\lambda}(K_1 \cup K_2) = \inf \{ \lambda(K_1' \cup K_2') ~|~ K_1  \ll K_1', K_2 \ll K_2'  \text{ and } K_1' \cap K_2' = \emptyset \} = \tilde{\lambda}(K_1) + \tilde{\lambda}(K_2). 
\end{array} $$
\item \emph{Regular:} Let $K$ be compact. Since $X$ is locally compact, whenever $K \ll K'$, there exists $K''$ compact such that $K \ll K'' \ll K'$. Therefore
$$\tilde{\lambda}(K) = \inf \{ \lambda(K') ~|~ K  \ll K' \} = \inf \{ \tilde{\lambda}(K'') ~|~ K \ll K'' \}.$$
\end{itemize}
For the claim about the induced measure on $\mathcal{O}(X)$, let $U \subset X$ be open. For any $K \subset U$ compact there exists $K'$ compact with $K \ll K'  \subset U$ by Lemma \ref{compactopenseparation} hence
$$\tilde{\lambda}_*(U) = \sup_{ K \subset U \text{ compact}} \tilde{\lambda}(K) = \sup_{ K \subset U \text{ compact}} \inf_{K \ll K'} \lambda(K') = \sup_{ K \subset U \text{ compact}} \lambda(K). $$
\end{proof}

\subsection{A representation theorem for measurable locales} \label{sectionrepresentationtheorem}

Next we want to address a type of representation theorem: Every measurable locale arises from a regular content on a locally compact Hausdorff space.

\begin{theorem} \label{representationtheorem}
Let $L$ be a measurable locale. Then there exists a locally compact Hausdorff space $X$ together with a regular content $\lambda$ such that $L \cong X^\lambda$.
\end{theorem}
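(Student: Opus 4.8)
The plan is to leverage the Gelfand-type duality of Theorem \ref{gelfanddualityvonneumann} together with the classical structure theory of commutative von Neumann algebras, and then pull the resulting content back through the machinery already developed in this section. Given a measurable locale $L$, pass to the commutative von Neumann algebra $\mathcal{A} = L^\infty(L)$. By the classical representation theorem for commutative von Neumann algebras (quoted in the remark after Theorem \ref{gelfanddualityvonneumann}, see \cite[V Theorem 1.18]{Takesaki1979}), there exists a locally compact Hausdorff space $\Gamma$ together with a Radon measure $\nu$ such that $\mathcal{A} \cong L^\infty(\Gamma,\nu)$ as von Neumann algebras. The Radon measure $\nu$ restricts to a regular content $\lambda$ on $\mathcal{K}(\Gamma)$ by Example \ref{examplesradon} (or directly by Theorem \ref{regularcontentslocallycompact}, since $\nu$ corresponds to a locally finite measure on the locally compact Hausdorff space $\Gamma$), and $\lambda$ is an almost Boolean valuation by Theorem \ref{regularcontentalmostboolean}, hence a Radon valuation in the sense of Definition \ref{definitionradonvaluation}.

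The next step is to identify $X^\lambda = \Gamma^\lambda$ with $L$. First, $\Gamma^\lambda$ is a measurable locale by Theorem \ref{almostbooleangivesboolean} (it is Boolean, and $\lambda_*$ is locally finite by Lemma \ref{localfinitenesslocallycompact} and faithful by Theorem \ref{innermeasureproperties}). By the anti-equivalence of Theorem \ref{gelfanddualityvonneumann}, it suffices to produce an isomorphism of commutative von Neumann algebras $L^\infty(\Gamma^\lambda) \cong L^\infty(\Gamma,\nu) \cong \mathcal{A} = L^\infty(L)$, since an isomorphism of von Neumann algebras corresponds under the duality to an isomorphism of measurable locales. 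The key claim is therefore: for a locally compact Hausdorff space $\Gamma$ with regular content $\lambda$ whose associated measure $\lambda_*$ recovers a Radon measure $\nu$, one has $L^\infty(\Gamma^\lambda) \cong L^\infty(\Gamma,\nu)$. This is precisely the statement of Theorem \ref{radonmeasures} (the measure-preserving isomorphism $L(\mathcal{K}(X),\mu) \cong L(\mathcal{B}/\mathcal{N})$), which identifies $\mathcal{O}(\Gamma^\lambda)$ with the measure algebra $\mathcal{B}/\mathcal{N}$ of $\nu$; and $L^\infty$ of the locale of projections of the measure algebra is the standard description of $L^\infty(\Gamma,\nu)$ as a von Neumann algebra.

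The main obstacle is ensuring the bookkeeping is consistent: one must check that the regular content $\lambda$ obtained by restricting $\nu$ to compacts is exactly the one whose inner locale $\Gamma^\lambda$ has frame $\mathcal{B}/\mathcal{N}$, i.e.\ that the composite ``von Neumann algebra $\leadsto$ Radon measure space $\leadsto$ regular content $\leadsto$ inner measure locale $\leadsto$ von Neumann algebra'' returns the algebra one started with. This is handled by Theorem \ref{radonmeasures} together with the faithfulness of the duality $L^\infty$, but it requires that the measure $\nu$ in the classical representation be genuinely Radon and localizable --- which is automatic here since commutative von Neumann algebras correspond to localizable measure algebras. One subtlety worth flagging explicitly: the space $\Gamma$ produced by \cite{Takesaki1979} is locally compact Hausdorff, so there is no issue with $\mathcal{K}(\Gamma)$ being a distributive lattice, and Lemma \ref{localfinitenesslocallycompact} guarantees local finiteness of the pushforward measure, so all hypotheses of the cited theorems are met. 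Assembling these pieces gives $L \cong \Gamma^\lambda$ with $X = \Gamma$, completing the proof.
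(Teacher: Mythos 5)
Your proposal is correct as a proof of the statement, but it takes a genuinely different route from the paper, and one that is at odds with the paper's stated purpose. The paper's own proof is a direct, self-contained construction: starting from a locally finite faithful measure $\mu$ on $L$, it forms the Boolean ring $D = \mathcal{O}(L)^{\mu\text{-}fin}$ of finite-measure chunks, takes $X$ to be the locally compact Hausdorff space with frame $\mathrm{Ind}(D) \cong \mathrm{Sh}(D,fin;\mathbf{2})$ (a Stone-type duality for Boolean rings, Lemma \ref{Booleanringhausdorff}), observes that $L \cong \mathrm{Sh}(D,\mu\text{-}inn;\mathbf{2})$ embeds into $X$ since the inner topology refines the finite one, pushes $\mu$ forward to a locally finite measure on $X$ and hence a regular content $\lambda$ via Theorem \ref{regularcontentslocallycompact}, and finally verifies $L \cong X^\lambda$ by the Basis Theorem: every compact $K \subset X$ is a directed intersection of compact opens (Lemma \ref{intersectioncompactopen}), its open interior is again compact open of the same measure (Lemma \ref{infformulaboolean}), so $D$ is a basis for $\mathrm{Sh}(\mathcal{K}(X),\lambda\text{-}inn;\mathbf{2})$ with the correct induced topology. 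Your route instead outsources the hard content to the classical representation theorem for commutative von Neumann algebras (\cite[V Theorem 1.18]{Takesaki1979}) and to Pavlov's Gelfand duality, then closes the loop with Theorem \ref{radonmeasures}. This is logically sound, but the paper explicitly remarks (both after Theorem \ref{gelfanddualityvonneumann} and after the statement of Theorem \ref{representationtheorem}) that the two theorems are \emph{equivalent} under Gelfand duality and that the point of Theorem \ref{representationtheorem} is to give an \emph{independent} localic reproof of the classical fact. Deriving the localic statement from the classical one therefore proves the theorem but forfeits its raison d'\^etre: what the direct construction buys is a new, measure-theoretically elementary proof of the von Neumann representation theorem itself, whereas your argument merely translates it. If you want to keep your strategy, you should at least flag that it cannot replace the paper's proof without making the claimed independence circular.
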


\begin{remark}
Theorem \ref{representationtheorem} has a classical sister theorem in the context of operator algebras, that states that every commutative von Neumann algebra is obtained as $L^\infty(X, \mu)$ for a Radon measure $\mu$ on a locally compact Hausdorff space $X$, see \cite[V Theorem 1.18.]{Takesaki1979}. In fact, under the duality provided by Theorem \ref{gelfanddualityvonneumann}, these two theorems are equivalent.
\end{remark}

Before we prove Theorem \ref{representationtheorem}, let us give a slight generalization of classical Stone-duality for profinite spaces.

\begin{definition} \label{Booleanring}
A \emph{Boolean ring} is a lower bounded distributive lattice $(D,\leq)$, such that for each $c, d \in D$ there exists an element $d \setminus c$ such that $c \wedge d \setminus c = 0$ and $ c \vee d \setminus c = c \vee d $.
\end{definition}

\begin{lemma} \label{Booleanringhausdorff}
Let $D$ be a Boolean ring and let $L$ be the locale associated to the frame $\mathrm{Ind}(D) \cong \mathrm{Sh}(D,fin;\mathbf{2})$. Then $L$ is spatial, corresponding to a locally compact Hausdorff space.
\end{lemma}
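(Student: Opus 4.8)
The statement asks us to show that for a Boolean ring $D$, the locale $L$ associated to the frame $\mathrm{Ind}(D) \cong \mathrm{Sh}(D, fin; \mathbf{2})$ is spatial and corresponds to a locally compact Hausdorff space. First I would recall from Example \ref{finitarytopology} that $L = L(D, fin)$ is automatically a locally coherent, hence spatial and locally compact, locale, with its poset of compact opens given by $D$ itself via the functor $[-] : D \hookrightarrow \mathcal{O}(L)$. So spatiality and local compactness come for free; the entire content of the lemma is the \emph{Hausdorff} property of the associated space. By the remark following Theorem \ref{Hausdorfffunctoriality} and the fact that for locally compact locales Isbell-Hausdorffness of the locale agrees with Hausdorffness of the underlying space, it suffices to verify that the points of $L$ form a Hausdorff topological space, or equivalently that the diagonal $L \to L \times L$ is closed.

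The cleanest route is to work with points directly. A point of $L = L(D, fin)$ is, by Example \ref{point} applied to the finite cover topology, a subset $x \subset D$ that is upward closed, closed under binary meets, nonempty, and such that if $\bigvee_{i \in F} p_i \in x$ for a finite cover then some $p_i \in x$; combined with downward-directedness of complements this is precisely a \emph{prime filter} of $D$ (in the lattice-ideal sense appropriate to a Boolean ring — note $D$ need not be bounded, so these are proper filters generating the point, and the base opens $[d]$ for $d \in D$ give a basis of compact opens of the point space). I would then show: given two distinct points $x \neq y$, there is $d \in D$ with (say) $d \in x$, $d \notin y$. Using primality of $y$ and the fact that $D$ is a Boolean ring, pick any $e \in y$; then $e = (e \wedge d) \vee (e \setminus d)$ is a finite cover of $e$, so since $e \in y$ and $e \wedge d \notin y$ (as $d \notin y$ and $y$ is upward closed would force $d \in y$... more carefully: $e \wedge d \le d$ so $e \wedge d \in y$ would give $d \in y$) we must have $e \setminus d \in y$. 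Now $d \wedge (e \setminus d) = 0$, so the basic open neighbourhoods $[d] \ni x$ and $[e \setminus d] \ni y$ are disjoint: if a point $z$ contained both $d$ and $e \setminus d$ it would contain $d \wedge (e\setminus d) = 0$, contradicting properness. This exhibits disjoint basic open neighbourhoods separating $x$ and $y$, so $\mathrm{pts}(L)$ is Hausdorff.

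Finally I would tie this back to the locale: since $L$ is locally compact and its point space $X = \mathrm{pts}(L)$ is Hausdorff (hence sober) and locally compact, the canonical map $L \to L(\mathcal{O}(X))$ is an isomorphism (every locally compact locale is spatial in the presence of choice, by \cite[Section 5.3 and 5.4]{picado_pultr}), so $L$ genuinely \emph{is} the locale of a locally compact Hausdorff space. The main obstacle is essentially bookkeeping: making sure that the separation argument above is carried out with the correct notion of "point" for a Boolean ring that is only lower bounded — one must be slightly careful that the relation $c \vee (d \setminus c) = c \vee d$ rather than $= 1$ is what is available, which is exactly why the separation is done relative to an element $e$ of the filter rather than globally. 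Once that relativization is in place the argument is routine; the interpolativity of $\ll$ and local compactness (already guaranteed abstractly) require no extra work here.
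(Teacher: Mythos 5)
Your proposal is correct and follows essentially the same route as the paper: spatiality and local compactness come for free from local coherence, points are identified with prime filters via Example \ref{point}, and two distinct points are separated by a basic open $[d]$ and a relative complement $[e \setminus d]$ (the paper phrases this with a common element $d = d_x \vee d_y$ and a separating $c$, but the primality-plus-relative-complement mechanism is identical). The care you take about relativizing the complement to an element of the filter, since $D$ is only lower bounded, is exactly the point the paper's proof also relies on.
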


\begin{remark}
For a Boolean ring $D$, the posets $D_{/d}$ always form Boolean algebras. Under Stone duality, this means that $X$ is in fact locally profinite, i.e.\ has a cover by profinite spaces.
\end{remark}

\begin{proof}
The locale $L$ is locally coherent, and thus spatial and locally compact, as described in \cite[Section 3.1]{lehner2025algebraicktheorycoherentspaces}. To see that it is Hausdorff, note that by Example \ref{point} points of $L$ are given by prime filters of $D$. Let $x \neq y \subset D$ be two such prime filters. Since both are non-empty filters, there must exist $d_x \in x$ and $d_y \in y$. Then $d = d_x \vee d_y$ is a common element in both $x$ and $y$. Furthermore, since $x$ and $y$ are distinct filters, there needs to exist $c \in x, c \not\in y$. We have $c \vee d \setminus c = d \in y$, and also $c \not\in y$, it follows that since $y$ is a prime filter, we have $d \setminus c \in y$. But viewed geometrically, this means that the representables $[c]$ and $[d \setminus c]$ in $\mathrm{Ind}(D)$ are two disjoint opens that separate $x$ and $y$.
\end{proof}

\begin{remark}
In fact, the above proof shows that $X$ is totally disconnected as well: Any two points can be separated via compact open subsets, which in the presence of Hausdorffness are closed and open.
\end{remark}

\begin{lemma} \label{intersectioncompactopen}
Let $D$ be a Boolean ring and let $X$ be the topological space associated to the frame $\mathrm{Ind}(D) \cong \mathrm{Sh}(D,fin;\mathbf{2})$. Assume $K \subset X$ is a compact subset. Then $K = \bigcap_{ i \in I } U_i$ for a collection of compact opens $U_i$.
\end{lemma}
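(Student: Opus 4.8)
The plan is to exploit the fact that in the locally compact Hausdorff space $X$ associated to a Boolean ring $D$, compact opens form a basis of clopen sets (as established in the proof of Lemma \ref{Booleanringhausdorff} and the remark following it), and that $X$ is totally disconnected. Given a compact subset $K \subset X$, I want to write $K$ as an intersection of compact opens. The natural candidate is of course
$$ K = \bigcap \{ U ~|~ U \text{ compact open}, K \subset U \}. $$
One inclusion is trivial: $K$ is contained in every compact open that contains it. For the reverse inclusion I need to show that if $x \notin K$, then there is a compact open $U$ with $K \subset U$ and $x \notin U$.

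First I would fix $x \notin K$. Since $X$ is Hausdorff and $\{x\}$, $K$ are disjoint with $\{x\}$ compact (a point) and $K$ compact, the Separation Lemma (Lemma \ref{compactseparationlemma}) gives disjoint opens separating them; but more usefully, for each $y \in K$, totally disconnectedness together with Hausdorffness gives a compact open $V_y$ with $y \in V_y$ and $x \notin V_y$ — indeed, $X$ has a basis of compact opens, and since $x \neq y$ and $X$ is Hausdorff, we can choose a basic (compact open) neighbourhood $V_y$ of $y$ that avoids $x$ (shrink a clopen neighbourhood of $y$ not containing $x$; such a clopen exists because compact opens form a basis and $X \setminus \{x\}$ is open, using that compact opens here are automatically closed). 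Then $\{ V_y \}_{y \in K}$ is an open cover of the compact set $K$, so finitely many $V_{y_1}, \dots, V_{y_n}$ cover $K$. Set $U = V_{y_1} \cup \dots \cup V_{y_n}$. This is a finite union of compact opens, hence compact open, contains $K$, and does not contain $x$ since none of the $V_{y_i}$ does. This establishes the reverse inclusion and hence the lemma.

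The main obstacle — though it is a mild one — is making precise that for a point $y \neq x$ in $X$ one can find a \emph{compact open} neighbourhood of $y$ avoiding $x$. This rests on two facts already available: compact opens form a basis for $X$ (locally coherent locale, \cite[Section 3.1]{lehner2025algebraicktheorycoherentspaces}), so some compact open $W$ satisfies $y \in W$; and $X$ is Hausdorff (Lemma \ref{Booleanringhausdorff}), so compact opens are in fact clopen (as in the remark after that lemma), whence $W \setminus \{x\}$ contains a compact open neighbourhood of $y$ — concretely, if $x \in W$, use that $x$ and $y$ are separated by compact opens (the two disjoint representables $[c]$, $[d \setminus c]$ from the proof of Lemma \ref{Booleanringhausdorff}) to replace $W$ by $W \cap [d \setminus c]$, still compact open, containing $y$, avoiding $x$. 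I would present this cleanly by first recording as a preliminary observation that any two distinct points of $X$ are separated by a compact open containing the first but not the second, then running the compactness argument above verbatim.
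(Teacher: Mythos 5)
Your proof is correct, but it takes a different route from the paper's. You argue pointwise: for each $x \notin K$ you separate $x$ from $K$ by a compact open containing $K$, building it as a finite union of compact opens $V_y$ ($y \in K$) via a finite subcover — the classical separation argument for compact subsets of totally disconnected Hausdorff spaces. The paper instead argues structurally: since compact opens form a basis and are clopen (Hausdorffness makes compact sets closed), every open is a union of clopens, hence by complementation every closed set — in particular $K$ — is an intersection of clopens; one then fixes a single compact open $U \supset K$ (which exists because $1 = \bigvee_{d \in D}[d]$ is a directed cover) and intersects each clopen with $U$ to make the whole family compact open. The paper's argument is shorter and avoids any finite-subcover step, at the price of the slightly slick "pass to complements" move; yours is more self-contained and makes explicit exactly which separation property of the points (prime filters) is being used, namely the two disjoint representables $[c]$, $[d\setminus c]$ from the proof of Lemma \ref{Booleanringhausdorff}. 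Both rest on the same two inputs — compact opens form a clopen basis, and $X$ is Hausdorff — so either could be substituted for the other without affecting anything downstream.
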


\begin{proof}
Let $K \subset X$ be a compact subset. Since $1 = \bigvee_{d \in D} [d]$ in $\mathcal{O}(X)$ is a directed union of compact opens, there exists a compact open $U$ such that $K \subset U$. Furthermore, every compact set is also closed, since $X$ is Hausdorff, therefore every open is obtained via unions from clopen sets. By passing to complements, we get that every closed set is obtained as an intersection of clopen sets, in particular $K$. By intersecting with $U$, we can assume this intersection consists of compact opens.
\end{proof}

\begin{proof}[Proof of Theorem \ref{representationtheorem}]
Choose a locally finite and faithful measure $\mu$ on $L$ and consider the Boolean ring $D = \mathcal{O}(L)^{\mu\text{-}fin}$ of $\mu$-finite chunks of $L$.  Define $X$ to be the locally compact Hausdorff space associated to the frame $\mathrm{Ind}(D) \cong \mathrm{Sh}(D,fin;\mathbf{2})$. Since $L$ is given by the frame $\mathrm{Sh}(D,\mu\text{-}inn;\mathbf{2})$ by Theorem \ref{valuationframedeterminedbyfinite}, and the $\mu$-inner topology contains all $fin$-coverings, we have an embedding
$$ i : L \hookrightarrow X.$$

Then the pushforward along $i$ equips $X$ with a measure $i_* \mu$. It is immediate that this measure is locally finite, as $X$ is generated by the set of compact opens (identified with) $D$, which have finite measure. By Theorem \ref{regularcontentslocallycompact} the measure $\mu$ induces a regular content $\lambda$ on $X$.

We claim that $L \cong X^\lambda$, or equivalently that 
$$ \mathrm{Sh}(D,\mu\text{-}inn;\mathbf{2}) \cong \mathrm{Sh}(\mathcal{K}(X),\lambda\text{-}inn;\mathbf{2}).$$
To see this, it suffices to show that the image of $D \cong \mathcal{K}^o(X)$ as the set of compact opens of $X$ is a basis for the right-hand side, and that the induced Grothendieck pretopology agrees with the $\mu$-inner topology.

Every compact set $K$ of $X$ is obtained as a directed intersection of compact opens $U_i, i \in I,$ by Lemma \ref{intersectioncompactopen}. But note that the right adjoint
$$ i_* : \mathrm{Sh}(D,\mu\text{-}inn;\mathbf{2}) \rightarrow \mathrm{Sh}(D,fin;\mathbf{2}) = \mathcal{O}(X)$$
preserves infima. Since elements of finite measure are closed under infima, and infima in $\mathcal{O}(X)$ are computed as interiors of intersections, we get that the open interior
$$ K^o = ( \bigcap_{ i \in I } U_i )^o = \bigwedge_{i \in I} U_i$$
is again compact open. We also see that the inclusion
$$ K^o \subset K $$
gives an equality in measure, i.e.\ $\lambda(K^o) = \lambda(K)$, as the right hand side is calculated as $\inf_{i \in I} \mu(U_i)$ by definition, and the left hand side is calculated by the same term by Lemma \ref{infformulaboolean}. Therefore $\{ K^o \subset K  \}$ is in fact a $\lambda$-approximation and we see that all representables $[K] \in 
\mathrm{Sh}(\mathcal{K}(X),\lambda\text{-}inn;\mathbf{2})$ can be covered by compact opens of $X$, in other words, the image of $D$ is a basis for $\mathrm{Sh}(\mathcal{K}(X),\lambda\text{-}inn;\mathbf{2})$. It is also clear that the restriction of the $\lambda$-inner pretopology on $\mathcal{K}(X)$ to $D$ agrees with the $\mu$-inner pretopology, as the functor $D \cong \mathcal{K}^o(X) \subset \mathcal{K}(X)$ is valuation-preserving.
\end{proof}

\section{Examples} \label{examples}

A theory is only as good as the examples it describes. As such, we will describe a few classical examples of measure spaces. The results here are well known---The main point we want to demonstrate is that the locale-theoretic picture is fairly easily constructed from well-known results in the literature.

\subsection{The Lebesgue reals} \label{lebesguereals}

The most fundamental example of a measure is that of the Lebesgue measure $\lambda_d$ on $\mathbb{R}^d$. Lebesgue measure is an example of a Radon measure, which means that restricting $\lambda_d$ to the set $\mathcal{K}(\mathbb{R}^d)$ of compact sets of $\mathbb{R}^d$ gives a regular content. Let us quickly sketch how one can construct this (regular) content without reference to the standard literature on measure theory and show some elementary properties.

Let $n \geq 0$. Define a \emph{dyadic cube of thinness $n$} to be a set of the form
$$\prod_{i = 1}^d \left[\frac{a_i}{2^n}, \frac{a_i+1}{2^n}\right] \subset \mathbb{R}^d$$
for values $a_1, a_2, \hdots, a_d \in \mathbb{Z}$. Call a set $D$ a \emph{standard set of thinness $n$} if it is given as a finite union of dyadic cubes of thinness $n$. Define Lebesgue measure $\lambda_d$ on the set of standard sets of thinness $n$ as
$$\lambda_d(D) = \frac{\#\{\text{distinct dyadic boxes of thinness } n \text{ contained in }D\}}{2^{nd}}.$$

\begin{figure}
    \centering
    \includegraphics[width=0.45\textwidth]{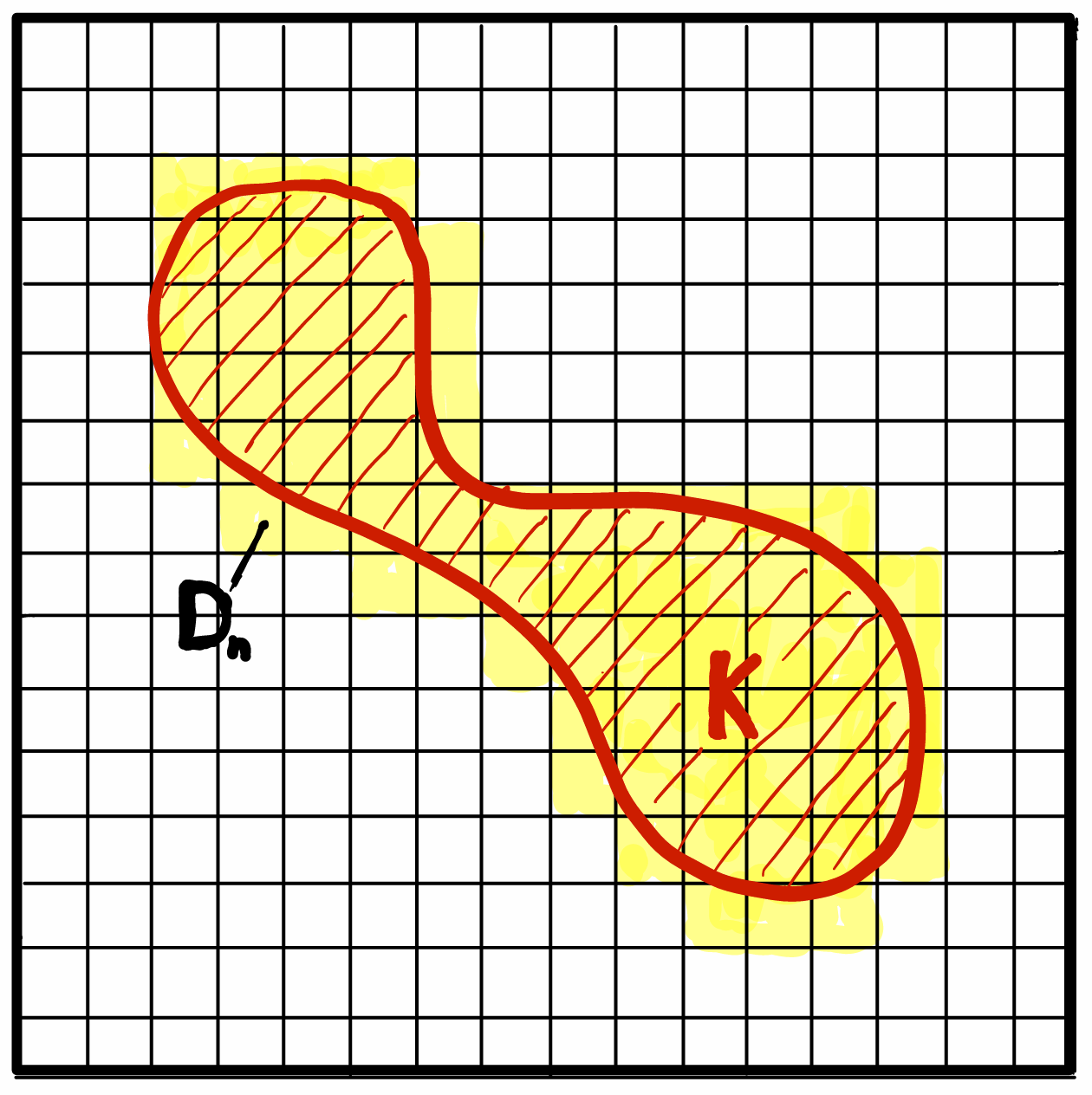}
    \caption*{Approximating a compact set $K$ from above via dyadic boxes}
\end{figure}

Any compact set $K \in \mathcal{K}(\mathbb{R}^d)$ is obtained as
$$K = \bigcap_{n \in \mathbb{N}} D_n$$
where $D_n$ is the smallest finite union of dyadic boxes of thinness $n$ containing $K$. Define the Lebesgue measure on $K$ as
$$\mu(K) = \inf_{n \in \mathbb{N}} \lambda_d(D_n).$$
It is elementary to check that $\lambda_d$ is a regular content on $\mathcal{K}(\mathbb{R}^d)$.

With this defined, we get a corresponding measurable locale $\mathbb{R}^d_{Leb} = \mathbb{R}^{\lambda_d}$ together with an induced measure $\lambda_d$ on the locale of sublocales $\mathfrak{Sl}(\mathbb{R}^d)$, and in particular on all open subsets of $\mathbb{R}^{\lambda_d}$. The locale $\mathbb{R}^d_{Leb}$ has no points for $d > 0$, as one sees by an application of Proposition \ref{pointsregularcontent}. One can also observe that any open $U \subset \mathbb{R}^d$ contains a dyadic cube and therefore satisfies $\lambda_d(U) > 0$, hence the canonical map $\mathbb{R}^d_{Leb} \rightarrow \mathbb{R}^d$ is dense by Proposition \ref{injectivity}. The relation to the classical Lebesgue measure is that the frame corresponding to $\mathbb{R}^d_{Leb}$ is in fact isomorphic to the (complete) Boolean algebra obtained by taking the quotient of the $\sigma$-algebra of Lebesgue measurable subsets of $\mathbb{R}^d$ modulo the ideal of $\lambda_d$-null sets, a fact that we will more generally prove with Theorem \ref{radonmeasures} in the last part of this article.

It is a geometric argument to see that $\lambda_d$ is translation invariant, i.e.\ if $v \in \mathbb{R}^d$, then $\lambda_d(S+v) = \lambda_d(S)$ for any sublocale $S$. Since the measurable locale $\mathbb{R}^d_{Leb}$ is obtained as the result of applying a functor to the valuation site $(\mathcal{K}(\mathbb{R}^d), \lambda_d)$ it suffices to show invariance of $\lambda_d$ when acting on compact subsets, which in turn reduces to the same statement for standard sets. If $v$ is a dyadic vector, i.e.\ of the form $\left(\frac{a_i}{2^n}\right)_{i = 1}^d$ for some $n \geq 0$ and $a_1, a_2, \hdots, a_d \in \mathbb{Z}$, the statement is clear. The case of a general translation follows from the density of dyadic numbers in $\mathbb{R}$.

One can also show that $\lambda_d$ is rotation invariant. Here one observes more generally that $\lambda_d$ is invariant under the action of the special linear group $\mathrm{SL}_d(\mathbb{R})$, which contains the group of rotations. The group $\mathrm{SL}_d(\mathbb{R})$ is generated by \emph{elementary matrices} $E_{ij}(a) = I + a e_{ij}$ with $a \in \mathbb{R}$ and $i \neq j$, where $I$ is the identity matrix, and $e_{ij}$ the matrix with single non-zero entry $1$ at position $(i,j)$, hence it suffices to verify that such elementary matrices act in a measure-preserving way. Again, it suffices to show this for dyadic boxes, for which it is an elementary geometric argument, as elementary matrices act via shearing transformations.

Let $m \geq n$. Denote by $p : \mathbb{R}^m \rightarrow \mathbb{R}^n$ the projection onto the first $n$ factors. Consider the restriction of $p|_{[0,1]^m}$ to the $m$-dimensional cube $[0,1]^m$ as a map $[0,1]^m \rightarrow [0,1]^n$. Since ${p|_{[0,1]^m}}^{-1}(K) = K \times [0,1]^{m-n}$, one observes that this map is measure-preserving. We obtain the classical corollary.

\begin{lemma} \label{projectioninverseimage}
Suppose $C \subset \mathbb{R}^n$ is a compact null set and $K \subset p^{-1}(C) \subset \mathbb{R}^m$ is compact. Then $K$ is again a compact null set.\footnote{This lemma of course also follows from the classical Fubini-Tonelli theorem.}
\end{lemma}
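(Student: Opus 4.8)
The statement to prove is Lemma~\ref{projectioninverseimage}: if $C \subset \mathbb{R}^n$ is a compact null set, $p : \mathbb{R}^m \to \mathbb{R}^n$ the projection onto the first $n$ coordinates (with $m \geq n$), and $K \subset p^{-1}(C)$ is compact, then $\lambda_m(K) = 0$. My plan is to reduce everything to the concrete dyadic description of Lebesgue measure used in this section, avoiding any appeal to Fubini. First I would observe that it suffices to show $\lambda_m(p^{-1}(C) \cap ([-R,R]^m)) = 0$ for every $R > 0$: since $K$ is compact it is contained in some such box, and $\lambda_m$ is monotone. Replacing $C$ by $C \cap [-R,R]^n$ (still compact, still null), I may therefore assume $C$ is contained in a fixed cube $Q = [-R,R]^n$ and prove that the ``cylinder'' $p^{-1}(C) \cap (Q \times [-R,R]^{m-n})$ has measure zero; a final monotonicity step then finishes the case of $K$.

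The key computation is the behaviour of standard sets under taking preimages along $p$. If $D \subset \mathbb{R}^n$ is a standard set of thinness $N$, i.e.\ a finite union of dyadic cubes of thinness $N$, then $p^{-1}(D) \cap [-R,R]^m$ is, up to the outermost $2R$-sized slab in the last $m-n$ coordinates, again a finite union of dyadic cubes of thinness $N$ (one simply takes all dyadic cubes of thinness $N$ in $[-R,R]^{m-n}$ and multiplies). A direct count of dyadic cubes gives
\[
\lambda_m\bigl(p^{-1}(D) \cap (\mathbb{R}^n \times [-R,R]^{m-n})\bigr) \leq (2R+1)^{m-n}\, \lambda_n(D)
\]
for every $R \geq 1$ and every standard set $D$ of any thinness (the constant just bounds the number of length-$2^{-N}$ dyadic intervals meeting $[-R,R]$, divided by $2^{-N(m-n)}$, times $2^{-N(m-n)}$ — it is $O(R^{m-n})$ uniformly in $N$). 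Now use the definition of $\mu = \lambda_n$ on compact sets: $C = \bigcap_{N} D_N$ with $D_N$ the minimal standard set of thinness $N$ containing $C$, and $\lambda_n(C) = \inf_N \lambda_n(D_N) = 0$. Hence $\lambda_n(D_N) \to 0$. Since $p^{-1}(C) \subset p^{-1}(D_N)$ for every $N$, and $\lambda_m$ restricted to the bounded slab $\mathbb{R}^n \times [-R,R]^{m-n}$ is monotone, we get
\[
\lambda_m\bigl(p^{-1}(C) \cap (\mathbb{R}^n \times [-R,R]^{m-n})\bigr) \leq (2R+1)^{m-n}\,\lambda_n(D_N) \xrightarrow[N\to\infty]{} 0,
\]
so the cylinder over $C$ is null. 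Intersecting with $[-R,R]^m$ and invoking $K \subset p^{-1}(C)$, $K \subset [-R,R]^m$ for $R$ large, gives $\lambda_m(K) = 0$.

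The one point requiring a little care — and the main (mild) obstacle — is the bookkeeping in the inequality relating $\lambda_m(p^{-1}(D) \cap \text{slab})$ to $\lambda_n(D)$: one has to be honest that $p^{-1}(D)$ is unbounded, so the preimage is not itself a standard set, and that dyadic cubes of thinness $N$ in the last $m-n$ coordinates tile $[-R,R]^{m-n}$ only approximately unless $R$ is itself dyadic; taking $R$ to be an integer (or a power of $2$) sidesteps this. Everything else is the routine definition-chasing with the dyadic-cube formula for $\lambda_d$ and monotonicity of $\lambda_d$ on compact sets, both of which are already established in this section. As the footnote notes, this is also an instance of Fubini--Tonelli, but the self-contained dyadic argument keeps the section independent of the standard measure-theoretic machinery, consistent with the stated aim of Section~\ref{lebesguereals}.
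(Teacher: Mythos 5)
Your proof is correct and follows essentially the same route as the paper: reduce by compactness to a bounded cylinder over $C$, then bound the measure of that cylinder by (cross-sectional volume) $\times\, \lambda_n(C) = 0$. The only difference is that the paper simply cites the measure-preservation of the restricted projection $[0,1]^m \to [0,1]^n$ established in the preceding paragraph, whereas you re-derive the underlying inequality $\lambda_m\bigl(p^{-1}(D)\cap \mathrm{slab}\bigr) \leq (2R)^{m-n}\lambda_n(D)$ by explicit dyadic-cube counting; both are fine.
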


\begin{proof}
By covering $K$ with a finite (!) set of cubes, we may as well assume $K \subset [0,1]^m$ and $C \subset [0,1]^n$. But then the standard projection $p : [0,1]^m \rightarrow [0,1]^n$ is measure-preserving, therefore $\lambda_m(K) \leq \lambda_m( p^{-1}(C) ) = \lambda_n(C) = 0.$
\end{proof}

\subsection{The Haar measure on a locally compact Hausdorff group} \label{haarmeasure}

In the following, let $G$ be a locally compact Hausdorff group. We sketch the classical construction of the Haar measure on $G$.

\begin{definition}
A (left) \emph{Haar measure} on $G$ is a non-trivial, left invariant and locally finite measure $\mu$ on $G$.
\end{definition}

Concretely, we say that a measure $\mu : \mathcal{O}(G) \rightarrow [0, \infty]$ is \emph{left invariant} if the homeomorphisms $g \cdot - : G \rightarrow G$ are measure-preserving for every $g \in G$. 

A Haar measure automatically satisfies that $\mu(U) > 0$ for all non-empty opens $U$. W.l.o.g.\ assume that the neutral element $e \in U$, and let $K \subset G$ be a compact set. Then
$$K \subset \bigcup_{k \in K} kU,$$ hence by compactness of $K$ a finite set of translations of $U$ covers $K$. Therefore, if there exists a single non-empty open set $U$ with $\mu(U) = 0$, the induced regular content on all compact sets must be trivial, and hence $\mu$ itself is trivial as well. Let us state the classical existence-and-uniqueness theorem of the Haar measure.

\begin{theorem}
Let $G$ be a locally compact Hausdorff group. Then there exists a Haar measure $\mu$.
\end{theorem}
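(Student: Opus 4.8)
The plan is to follow the classical construction of the Haar measure, but organized so that the output is manifestly a regular content on $\mathcal{K}(G)$ — equivalently, by Theorem \ref{regularcontentslocallycompact}, a locally finite measure on $G$ — which is what our framework wants. First I would fix a nonempty open set $U_0$ with compact closure (available by local compactness) to serve as a unit of scale. For any compact $K$ and any nonempty open $V$, define the \emph{covering number} $(K : V)$ to be the least $n$ such that $K$ is covered by $n$ left-translates of $V$; finiteness of $(K:V)$ is exactly the compactness argument sketched just before the theorem statement. Then set
\[
\lambda_V(K) = \frac{(K : V)}{(\bar{U_0} : V)},
\]
a nonnegative real number that is left-invariant, monotone, finitely subadditive, and additive on pairs $K_1, K_2$ that are \emph{far apart} (i.e.\ that have disjoint open neighbourhoods that cannot both meet a single small translate of $V$). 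The content $\lambda$ will be extracted by taking a limit of $\lambda_V$ as $V$ shrinks to $e$.

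The standard device for this limit is a compactness/Tychonoff argument: the net of functions $V \mapsto \lambda_V$, indexed by the neighbourhood filter of $e$ directed by reverse inclusion, lives in the product space $\prod_{K \in \mathcal{K}(G)} [0, (K : \bar{U_0})]$, which is compact by Tychonoff. Hence there is a cluster point $\lambda : \mathcal{K}(G) \to [0,\infty)$. One then checks that $\lambda$ inherits monotonicity, subadditivity, left-invariance and normalization $\lambda(\bar{U_0}) \leq 1$ (in particular $\lambda$ is nontrivial since $\lambda(\bar{U_0}) \geq (\bar{U_0}:U_0)^{-1} \cdot$ stuff $> 0$; more carefully $\lambda(K)>0$ whenever $K$ has nonempty interior), and — the one genuinely delicate point — that $\lambda$ is \emph{additive} on disjoint compact pairs, not merely on far-apart ones. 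This upgrade uses the Hausdorff separation lemma (Lemma \ref{compactseparationlemma}): disjoint compacts $K_1, K_2$ sit in disjoint opens, so once $V$ is small enough that no translate $gV$ meeting $K_1$ can also meet $K_2$, we get $(K_1 \cup K_2 : V) = (K_1 : V) + (K_2 : V)$ exactly, and this passes to the cluster point.

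Having produced a content $\lambda$ on $\mathcal{K}(G)$, I would invoke Lemma \ref{inducedregularcontent}: since $G$ is locally compact Hausdorff, the induced function $\tilde{\lambda}(K) = \inf\{\lambda(K') \mid K \ll K'\}$ is a \emph{regular} content, with $\tilde{\lambda}_*(U) = \sup_{K \subset U} \lambda(K)$. Left-invariance of $\tilde\lambda$ is immediate from left-invariance of $\lambda$ and of the relation $\ll$; nontriviality survives because any nonempty open still has positive $\tilde\lambda_*$-measure by the covering argument. Then Theorem \ref{regularcontentslocallycompact} turns $\tilde\lambda$ into a locally finite measure $\mu = \tilde\lambda_*$ on $\mathcal{O}(G)$, and the homeomorphisms $g\cdot - : G \to G$ are measure-preserving since $\tilde\lambda$ was left-invariant and pushforward of measures commutes with the $(-)_*$ construction. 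This $\mu$ is the desired Haar measure.

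The main obstacle, as in all treatments, is the additivity-on-disjoint-pairs step together with verifying that the construction genuinely lands in $[0,\infty)$ and is nontrivial after the limit: one must be careful that the normalization by $(\bar{U_0}:V)$ keeps every $\lambda_V(K)$ in the fixed compact interval $[0,(K:\bar{U_0})]$ (this is the submultiplicativity $(K:V) \leq (K:\bar{U_0})(\bar{U_0}:V)$), so that the Tychonoff argument applies, and that the cluster point is not identically zero (which follows from $(K : V) \geq (\bar{U_0} : V) / (\bar{U_0} : \text{int}\,K)$ whenever $\text{int}\,K \neq \emptyset$). Everything else — monotonicity, subadditivity, invariance, the passage through Lemma \ref{inducedregularcontent} and Theorem \ref{regularcontentslocallycompact} — is routine once the content is in hand. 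I would keep the covering-number estimates terse and cite a standard reference (e.g.\ Halmos \cite{halmos2013measure} or Cohn \cite{cohn1994measure}) for the parts that are purely classical, emphasizing only the translation into the language of regular contents.
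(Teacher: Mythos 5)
Your proposal is correct and follows essentially the same route as the paper: the Halmos-style covering ratio $(K:V)/(A:V)$, a Tychonoff/sub-net limit in $\prod_{K}[0,(K:A)]$, additivity on disjoint compacts via the Hausdorff separation lemma, and then the passage through Lemma \ref{inducedregularcontent} and Theorem \ref{regularcontentslocallycompact} to obtain the locally finite left-invariant measure. The paper's own argument is likewise a sketch deferring the classical estimates to Halmos, so there is nothing substantive to add.
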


\begin{proof}[Sketch of proof]
Since $G$ is assumed locally compact, the construction of a left-invariant locally finite measure on $G$ is equivalent to the the construction of a left-invariant regular content by Theorem \ref{regularcontentslocallycompact}. It furthermore suffices to give a left-invariant (not-necessarily regular) content $\lambda$ by Lemma \ref{inducedregularcontent}. We sketch the standard argument given in e.g.\ Halmos \cite[Chapter XI]{halmos2013measure} for the construction of such $\lambda$. 
Given a compact set $U$ with non-empty interior, and a compact set $K$, define the \emph{ratio}
 $$(K : U) = \min \left\lbrace  \# I \mid I \subset G, \text{ s.t. }  K \subset \bigcup_{g \in I} gU \right\rbrace.$$
This number is finite: By non-emptiness of $U$ there exists a cover of $K$ by sets of the form $gU$ for $g \in G$, and since $K$ is compact, a finite set of such elements $g$ suffices to cover $K$.

Now fix a compact set $A \subset G$ with non-empty interior (such a set exists since $G$ is assumed to be locally compact Hausdorff). We think of $A$ as being a \emph{reference scale}. Then define for each compact $U$ with non-empty interior the function
$$ \begin{array}{rcl}
\lambda_U : \mathcal{K}(G) & \rightarrow & [0,\infty) \\
                   K  & \mapsto & \frac{(K : U)}{(A : U)}. 
\end{array}$$
One verifies that this is monotone, subadditive and left invariant, and satisfies additivity for a restricted class of compacts, see \cite[\nopp \S 58 Theorem A]{halmos2013measure}. If $K_1$, $K_2$ are compact sets such that $K_1 U^{-1} \cap K_2 U^{-1} = \emptyset,$ then
$$  \lambda_U (K_1 \cup K_2) =  \lambda_U (K_1) +  \lambda_U (K_2). $$
Furthermore it is easy to verify that $0 \leq \lambda_U(K) \leq ( K : A )$.
In order to obtain an actual content  $\lambda$, one needs to take a suitable limit over the net indexed by the neighborhood filter $\mathcal{U}^c(e)$ of compact neighborhoods of the identity element $e \in G$, that sends $U$ to the function $\lambda_U$. This may not exist uniquely, but to show the existence of a candidate for $\lambda$ one argues that this net lives in the compact Hausdorff space $ \prod_{ K \in \mathcal{K}(G) } [ 0, (K : A)] $ (by Tychonoff's theorem), and then chooses a convergent sub-net. Finally, one verifies that the corresponding limit $\lambda$ actually fulfills the conditions for being a left-invariant content \cite[\nopp \S 58 Theorem B]{halmos2013measure}.
\end{proof}

\begin{remark}
The use of choice for the construction of $\lambda$, which then has to further be modified to define the actual left invariant regular content
$$ \tilde{\lambda}(K) = \inf_{ K \ll K' } \lambda(K')$$
is somewhat unsatisfactory, as the actual value for a given compact set $K$ is quite inexplicit. Another argument for the construction of $\tilde{\lambda}(K)$ via the construction of a left-invariant mean on the space $C_c(G)$ of compactly supported real-valued functions on $G$ is due to Alfsen \cite{Alfsen1963}. It would be preferable to have an elementary argument that gives the value of $\tilde{\lambda}(K)$ directly.
\end{remark}

\begin{example}
Suppose $G$ is a locally compact Hausdorff group and $K$ a compact subgroup. Then the quotient map $G \rightarrow G / K$ is a proper map, and therefore Proposition \ref{pushforwardproperties} equips $G / K$ with a locally finite measure $\mu$ via pushforward of a choice of Haar measure on $G$. This measure can be shown to be $G$-invariant \cite[Corollary 2.53]{folland2016course}.

We obtain many classical examples together with their measures as a corollary: Hyperbolic $n$-space $H^n = \mathrm{SO}^+(n,1) / \mathrm{SO}(n)$, and the $n$-sphere $S^n = \mathrm{O}(n) / \mathrm{O}(n-1)$ to mention just two.
\end{example}

%
%

\section{Measurable locales generated from Hausdorff spaces}

It can sometimes be useful to have a way to generate measurable locales without the need to specify one particular measure. We will discuss this in the following. Recall the notion of a compactly marked Hausdorff space $(X,N)$ (Definition \ref{compactlymarked}), given by a Hausdorff space $X$ and an ideal $N \subset \mathcal{K}(X)$ of compact subsets. If $U \subset X$ is an open subset, denote by $N|_U \subset \mathcal{K}(U)$ the ideal of compacts $K \in N$ such that $K \subset U$.

\begin{definition}
Let $X$ be a Hausdorff space. An ideal $N \subset \mathcal{K}(X)$ of compact sets is called \emph{measurable} if there exists a cover $U_i, i \in I$ of $X$ such that for each $i \in I$ there exists a Radon valuation $\mu_i$ on $U_i$ such that $N|_{U_i}$ agrees with the set of compact $\mu_i$-null sets. We will refer to compacts $K \in N$ as \emph{null sets}.

The category $\mathrm{MMHS}$ of measurably marked Hausdorff spaces is defined as the corresponding full subcategory of compactly marked Hausdorff spaces $\mathrm{CMHS}$.
\end{definition}

Observe that for any $K \in N$, since $K$ is compact, there exist finitely many $i_1, \hdots, i_n \in I$ such that $K \subset U_{i_1} \cup \cdots \cup U_{i_n}$. Using Lemma \ref{compactdecompositionlemma}, we see that $K$ decomposes as
$$ K = K_{i_1} \cup \cdots \cup K_{i_n} $$
with $K_{i_j} \in N|_{U_{i_j}}.$ In other words, $N$ is generated under finite unions from the ideals $N|_{U_{i}}, i \in I,$ or equivalently $N = \bigvee_{i \in I} N|_{U_{i}}$ in $\mathcal{O}(X^\mathcal{K})$. We think of this as saying that an abstract compact null set is equivalently given as a finite union of local pieces that are actual null sets.

\begin{remark} Let $(X, \mu), (Y, \nu)$ be two Hausdorff spaces equipped with Radon valuations, viewed as measurably marked in the canonical way. Denote by $N$ the set of compact $\mu$-null sets and by $M$ the set of compact $\nu$-null sets. Recall that a continuous map $f : X \rightarrow Y$ is called compatible if:
\begin{enumerate}
\item For all $K \in N$ we have $f(K) \in M$.
\item For all $C \in M$ we have $\{ K \subset f^{-1}(C) \text{ compact} \} \subset N$. 
\end{enumerate}
If $f$ is measure-preserving, then condition (2) is automatically satisfied. Hence a measure-preserving map is compatible if images of compact null sets remain compact null sets.

The requirement (1) is not automatically satisfied for a measure-preserving map. Let $(X,\delta_x)$ be an arbitrary space equipped with the Dirac measure at a point $x \in X$. Then the unique map $f : (X,\delta_x) \rightarrow (\mathrm{pt}, \delta_{\mathrm{pt}})$ is measure-preserving. However, if $X$ contains another point $y$, it does not satisfy the condition that null sets are preserved, since
$$0 = \delta_x(\{y\}) \neq \delta_{\mathrm{pt}}( f(\{y\}) ) = \delta_{\mathrm{pt}}( \mathrm{pt} ) = 1. $$
\end{remark}

Recall the functor
$$\begin{array}{rcl}
b : \mathrm{CMHS} & \rightarrow & \mathrm{BoolLoc} \\
     (X,N) &\mapsto & b(X,N)
\end{array}$$
provided by Corollary \ref{booleanoutofcompact}.

\begin{proposition} \label{measurableoutofmeasurablymarked}
Let $(X,N)$ be a measurably marked Hausdorff space. Then $b(X,N) = (N^c)_{\neg \neg} \hookrightarrow X^\mathcal{K}$ is a measurable locale.
\end{proposition}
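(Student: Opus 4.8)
The plan is to verify condition (2) of Definition \ref{measurablelocale} for $b(X,N)$: that the top chunk $1\in\mathcal{O}(b(X,N))$ is the supremum of those chunks which admit a finite faithful measure. Since $b(X,N)=(N^c)_{\neg\neg}$ is Boolean (Corollary \ref{booleanoutofcompact}), Theorem \ref{allequivalent} then lets us upgrade this to a genuine locally finite faithful measure and conclude that $b(X,N)$ is measurable. Note that I cannot instead try to glue the local measures directly via the sheaf property of $\mathrm{Meas}_{\mathrm{loc.fin.,faithf}}$ (Theorem \ref{measuresaresheaves}), because the local Radon valuations $\mu_i$ are only assumed to have the prescribed null sets, not to agree on overlaps; it is exactly the content of condition (2) that no such coherence is needed.

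First I would fix a cover $\{U_i\}_{i\in I}$ of $X$ witnessing that $N$ is a measurable ideal, together with Radon valuations $\mu_i$ on $U_i$ for which $N|_{U_i}$ is the ideal of compact $\mu_i$-null sets. Since $X$ is Hausdorff, $\theta_X^*\colon\mathcal{O}(X)\to\mathcal{O}(X^\mathcal{K})$ is a frame homomorphism (Proposition \ref{framehomforlocallycompacthausdorff}), so the open sublocales $\theta_X^*(U_i)=U_i^\mathcal{K}\hookrightarrow X^\mathcal{K}$ (Remark \ref{pullbackofopens}) cover $X^\mathcal{K}$, because $\bigvee_i\theta_X^*(U_i)=\theta_X^*(1)=1$. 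Writing $j\colon b(X,N)=b(N)\hookrightarrow X^\mathcal{K}$ for the defining embedding, I set $a_i=j_{-1}[U_i^\mathcal{K}]$, which by Lemma \ref{preimageofopen} is the open sublocale of $b(X,N)$ determined by the chunk $j^*\theta_X^*(U_i)$; as $j^*$ is a frame homomorphism and the $U_i^\mathcal{K}$ cover $X^\mathcal{K}$, we get $\bigvee_i a_i=1$ in $\mathcal{O}(b(X,N))$.

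The key step is to identify $a_i$ with $U_i^{\mu_i}$. By construction $a_i$ is the pullback of the embedding $b(N)\hookrightarrow X^\mathcal{K}$ along the open embedding $U_i^\mathcal{K}\hookrightarrow X^\mathcal{K}$, so Lemma \ref{openpullbackdoublenegation} gives $a_i=b(U_i^\mathcal{K}\wedge N)$ as a sublocale of $X^\mathcal{K}$, equivalently of $U_i^\mathcal{K}$. Under the identification $\mathcal{O}(X^\mathcal{K})_{/\theta_X^*(U_i)}\cong\mathrm{Idl}(\mathcal{K}(U_i))=\mathcal{O}(U_i^\mathcal{K})$ of Remark \ref{pullbackofopens}, the open $U_i^\mathcal{K}\wedge N$ corresponds to $\{K\in\mathcal{K}(U_i)\mid K\in N\}=N|_{U_i}$, which by hypothesis is precisely the null ideal of the valuation site $(\mathcal{K}(U_i),\mu_i)$. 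Since $\mu_i$ is almost Boolean, Theorem \ref{almostbooleangivesboolean}(4) yields $b(N|_{U_i})=\mathrm{Sh}(\mathcal{K}(U_i),\mu_i;\mathbf{2})=\mathcal{O}(U_i^{\mu_i})$, so $a_i\cong U_i^{\mu_i}$ as locales.

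Finally, $U_i^{\mu_i}$ is a measurable locale by Theorem \ref{almostbooleangivesboolean}(3), so condition (2) of Definition \ref{measurablelocale} applied to $U_i^{\mu_i}$ says $1_{a_i}$ is a supremum of chunks of $a_i$ admitting finite faithful measures; an open sublocale of an open sublocale is again an open sublocale, so these are chunks of $b(X,N)$ with the same property. Combining this with $\bigvee_i a_i=1$, the top chunk of $b(X,N)$ is a supremum of chunks admitting finite faithful measures, so condition (2) of Definition \ref{measurablelocale} holds and, by Theorem \ref{allequivalent}, $b(X,N)$ is measurable. The only real obstacle is the bookkeeping in the key step: keeping the three identifications straight (the $\theta_X^*$-pullback of opens, the $b(-)$-pullback of Lemma \ref{openpullbackdoublenegation}, and the almost-Boolean description of $\mathrm{Sh}$) so that the restriction of the abstract null ideal $N$ to $U_i$ genuinely matches the concrete $\mu_i$-null ideal; everything else is formal.
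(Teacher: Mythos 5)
Your proof is correct and follows essentially the same route as the paper's: pull back the cover $\{U_i\}$ to $b(X,N)$, identify the pieces as $b(N|_{U_i}) \cong U_i^{\mu_i}$ via Lemma \ref{openpullbackdoublenegation}, Remark \ref{pullbackofopens} and Theorem \ref{almostbooleangivesboolean}, and conclude that $b(X,N)$ is covered by chunks admitting finite faithful measures. Your explicit appeal to condition (2) of Definition \ref{measurablelocale} and Theorem \ref{allequivalent}, together with the remark on why the local measures cannot simply be glued, only makes precise what the paper's terser final sentence leaves implicit.
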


\begin{proof}
Choose a covering $U_i, i\in I,$ of $X$ together with Radon valuations $\mu_i$ on $U_i$ such that $N|_{U_i}$ agrees with the ideal of compact $\mu_i$-null sets. Pulling back this covering along the map $p : b(N) \hookrightarrow X^\mathcal{K} \rightarrow X$ gives a covering $p^*( U_i ), i \in I,$ of $b(N)$. We can identify the individual opens as $p^*(U_i) = b(N|_{U_i})$: For each $i \in I$, we have the composition of pullback squares
$$\begin{tikzcd}
b(N|_{U_i}) \ar[r, hook] \ar[d, hook] \arrow[dr, phantom, "\scalebox{1.0}{$\lrcorner$}", very near start, color=black] & b(N) \ar[d, hook] \\
{U_i}^\mathcal{K} \ar[r, hook] \arrow[dr, phantom, "\scalebox{1.0}{$\lrcorner$}", very near start, color=black] \ar[d] & X^\mathcal{K} \ar[d] \\
U_i \ar[r, hook] & X
\end{tikzcd}$$
with the top square given by Lemma \ref{openpullbackdoublenegation} and the bottom square given by Remark \ref{pullbackofopens}. The locale $b(N|_{U_i})$ agrees with the inner locale ${U_i}^{\mu_i}$ by Theorem \ref{almostbooleangivesboolean}, and is thus equipped with the faithful and locally finite measure ${\mu_i}_*$, hence $b(N)$ can be covered by opens equipped with faithful and finite measures.
\end{proof}

The following theorem is now immediate from Corollary \ref{booleanoutofcompact} and Proposition \ref{measurableoutofmeasurablymarked}.

\begin{theorem} \label{functormeasurablymarked}
The functor $b$ restricts to a well-defined functor
$$ b : \mathrm{MMHS} \rightarrow \mathrm{MblLoc}. $$
\end{theorem}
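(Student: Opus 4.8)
The statement to prove is that the functor $b : \mathrm{CMHS} \to \mathrm{BoolLoc}$ from Corollary \ref{booleanoutofcompact} restricts to a functor $b : \mathrm{MMHS} \to \mathrm{MblLoc}$. Since $\mathrm{MMHS}$ is by definition a full subcategory of $\mathrm{CMHS}$ and $\mathrm{MblLoc}$ is a full subcategory of $\mathrm{BoolLoc}$, the only thing that requires an argument is that $b$ sends objects of $\mathrm{MMHS}$ to objects of $\mathrm{MblLoc}$; functoriality on morphisms is then automatic because both inclusions are full. So the content is entirely the object-level statement: if $(X,N)$ is a measurably marked Hausdorff space, then $b(X,N)$ is a measurable locale.

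The first step is to observe that this object-level statement is precisely Proposition \ref{measurableoutofmeasurablymarked}, which has already been proven. So the plan is essentially to assemble the two cited inputs. I would state explicitly that $b(X,N) = (N^c)_{\neg\neg} \hookrightarrow X^\mathcal{K}$ is a Boolean locale by Corollary \ref{booleanoutofcompact} (it lands in $\mathrm{BoolLoc}$ by construction, being obtained via the functor $b : \mathrm{MarkLoc}_{\mathrm{open}} \to \mathrm{BoolLoc}$ of Theorem \ref{booleanadjunction}), and that it is moreover a \emph{measurable} locale by Proposition \ref{measurableoutofmeasurablymarked}, whose hypothesis is exactly that $N$ is a measurable ideal of compacts, i.e.\ that $(X,N)$ lies in $\mathrm{MMHS}$.

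The second (and only remaining) point is that restricting the domain and codomain of a functor to full subcategories along the image of objects produces a well-defined functor. Since for $(X,N)$ in $\mathrm{MMHS}$ the object $b(X,N)$ lies in the full subcategory $\mathrm{MblLoc} \subset \mathrm{BoolLoc}$, and since for any compatible map $f$ the arrow $b(f)$ is a morphism in $\mathrm{BoolLoc}$ between two objects of $\mathrm{MblLoc}$ — hence a morphism in $\mathrm{MblLoc}$ by fullness — the assignment $(X,N) \mapsto b(X,N)$, $f \mapsto b(f)$ is a functor $\mathrm{MMHS} \to \mathrm{MblLoc}$ whose composite with the inclusion $\mathrm{MblLoc} \hookrightarrow \mathrm{BoolLoc}$ agrees with $b$ restricted to $\mathrm{MMHS}$. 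There is no real obstacle here; the proof is a one-line corollary of the two cited results, and I would simply write: ``This is immediate from Corollary \ref{booleanoutofcompact} and Proposition \ref{measurableoutofmeasurablymarked}.'' If more detail is wanted, the only thing to spell out is the bookkeeping about full subcategories in the previous sentence.
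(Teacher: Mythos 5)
Your proof is correct and follows exactly the paper's route: the paper's own proof is the single sentence that the theorem is immediate from Corollary \ref{booleanoutofcompact} and Proposition \ref{measurableoutofmeasurablymarked}. Your additional bookkeeping about full subcategories is accurate but not something the paper bothers to spell out.
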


\subsection{The measurable locale associated to a smooth manifold}

Theorem \ref{functormeasurablymarked} has an immediate application in the construction of a functor on smooth manifolds, that associates to a smooth manifold $M$ the measurable \emph{Lebesgue} locale  $M_{Leb}$ together with a natural map
$$ M_{Leb} \rightarrow M.$$
Recall that an $n$-dimensional \emph{chart} $(U,\varphi)$ of a topological space $M$ consists of an open subset $U \subset M$, together with a homeomorphism $\varphi : U \rightarrow \varphi(U) \subset \mathbb{R}^n$, where $\varphi(U)$ is an open subset of $\mathbb{R}^n$. A Hausdorff space $M$ is called a \emph{large smooth n-manifold} if there exists a collection of charts $(U_i, \varphi_i), i \in I,$ such that $\{U_i \subset M ~|~ i \in I\}$ is a covering of $M$, and all transition functions $\varphi_j {\varphi_i}^{-1} : \varphi_i( U_i \cap U_j ) \rightarrow \varphi_j( U_i \cap U_j )$ are smooth maps. (Compare, e.g.,\ \cite{Lee2003} Chapter 1)

Note that any large smooth manifold is a locally compact Hausdorff space. We call a large smooth manifold $M$ of \emph{standard size} or simply a \emph{smooth manifold} if it is second-countable.

\begin{definition}
Let $M$ be a (large) smooth manifold of dimension $n$. A compact subset $K \subset M$ is called \emph{null} if there exists a finite collection of charts $(U_i, \varphi_i), i \in I,$ such that $K$ can be written as $K = \bigcup K_i$ with $K_i \subset U_i$ a compact subset, and $\varphi_i(K_i) \subset \varphi_i(U_i) \subset \mathbb{R}^n$ being a null set for the Lebesgue measure on $\mathbb{R}^n$.
\end{definition}

Note that any compact set $K \subset M$ is always contained in a finite collection of charts. For this reason, for many proofs we may assume w.l.o.g.\ that $K$ is actually a compact subset of a smooth manifold of standard size. The main reason to require smoothness for our manifolds is the following crucial lemma.\footnote{Strictly speaking, the use of $C^1$-manifolds would suffice.}

\begin{lemma}[\cite{Lee2003} Theorem 6.9] \label{imageofnullundersmooth}
Let $f : M \rightarrow N$ be a smooth map between two smooth manifolds $M,N$. Then $f$ maps compact null sets to null sets.
\end{lemma}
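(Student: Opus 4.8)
The statement is local and measure-theoretic, so the plan is to reduce it to a Euclidean assertion about $C^1$ maps and then globalise using the paper's own decomposition lemma. First I would reduce to charts. Let $K \subset M$ be a compact null set; by definition $K = K_1 \cup \dots \cup K_r$ with each $K_i$ a compact subset of a chart domain $(U_i, \varphi_i)$ and $\varphi_i(K_i) \subset \mathbb{R}^n$ a Lebesgue null set. Since $f(K) = f(K_1) \cup \dots \cup f(K_r)$, it suffices to show each $f(K_i)$ is a compact null subset of $N$. Now $f(K_i)$ is compact, hence covered by finitely many chart domains $(V_{ij}, \psi_{ij})$ of $N$; applying Lemma \ref{compactdecompositionlemma} to this open cover of $f(K_i)$ we may write $f(K_i) = L_{i1} \cup \dots \cup L_{is}$ with each $L_{ij}$ compact and contained in $V_{ij}$. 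Thus everything reduces to the following local statement: if $\Omega \subset \mathbb{R}^n$ is open, $g : \Omega \to \mathbb{R}^m$ is smooth, and $E \subset \Omega$ is a compact Lebesgue null set, then $g(E)$ is a Lebesgue null set in $\mathbb{R}^m$ (here $n = \dim M$ and $m = \dim N$; the relevant case is $\dim M = \dim N$, matching \cite{Lee2003}, Theorem 6.9, but the argument below also covers $n \le m$).

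The heart is the local statement, and it is elementary. Since $E$ is compact I would fix a compact neighbourhood $Q \subset \Omega$ of $E$; on $Q$ the derivative $Dg$ is bounded, so by the mean value inequality $g|_Q$ is Lipschitz with some constant $L$. In the equidimensional case $n = m$: given $\varepsilon > 0$, cover $E$ by finitely many cubes inside $Q$ of total volume $< \varepsilon$; a cube of side $s$ has diameter $\sqrt{n}\,s$, so its image has diameter $\le L\sqrt{n}\,s$ and hence lies in a cube of volume $(L\sqrt{n})^n s^n$. Therefore $g(E)$ is covered by cubes of total volume $\le (L\sqrt{n})^n \varepsilon$, and letting $\varepsilon \to 0$ gives $\lambda_n(g(E)) = 0$. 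If instead $n < m$, I would reduce to the equidimensional case by considering $G : \Omega \times [0,1]^{m-n} \to \mathbb{R}^m$, $G(x,y) = g(x)$, which is smooth with $G\bigl(E \times [0,1]^{m-n}\bigr) = g(E)$ while $E \times [0,1]^{m-n}$ has Lebesgue measure zero in $\mathbb{R}^m$; the equidimensional result applied to $G$ then finishes it.

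Finally I would reassemble: applying the local statement to $g = \psi_{ij} \circ f \circ \varphi_i^{-1}$ (a smooth map on an open subset of $\mathbb{R}^n$) and to $E = \varphi_i\bigl(K_i \cap f^{-1}(L_{ij})\bigr)$, which is a compact subset of the null set $\varphi_i(K_i)$ and hence null, shows $\psi_{ij}(L_{ij})$ is Lebesgue null in $\mathbb{R}^m$. Hence each $L_{ij}$ is a compact subset of a chart whose image is null, so $f(K) = \bigcup_{i,j} L_{ij}$ exhibits $f(K)$ as a compact null set of $N$ by definition. The only real obstacle is bookkeeping — keeping the pieces compact and inside single charts while passing back and forth between $M$, $N$ and Euclidean space — and this is precisely what Lemma \ref{compactdecompositionlemma} is for; the analytic input ($C^1 \Rightarrow$ locally Lipschitz $\Rightarrow$ null-set-preserving) is the standard covering estimate above.
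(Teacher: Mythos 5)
Your argument is correct, and it is worth noting at the outset that the paper supplies no proof of this lemma at all --- it is a bare citation of \cite{Lee2003}, Theorem 6.9 --- so there is no internal argument to compare against. What you have written is essentially Lee's own proof: reduce to charts, observe that a $C^1$ map is Lipschitz on a compact neighbourhood of the set in question, and run the standard cube-covering estimate $\sum (L\sqrt{n}\,s_i)^n \leq (L\sqrt{n})^n \varepsilon$. The chart bookkeeping via Lemma \ref{compactdecompositionlemma} is exactly what is needed to keep every piece compact and inside a single chart, and the reduction of the case $\dim M < \dim N$ to the equidimensional one by fattening the domain is standard (cosmetically, $\Omega \times [0,1]^{m-n}$ is not open; take $G$ defined on $\Omega \times (-1,2)^{m-n}$ and apply the equidimensional case to the compact null set $E \times [0,1]^{m-n}$).

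The one substantive point is the dimension hypothesis, which you flag only parenthetically but which deserves to be said out loud. Lee's Theorem 6.9 assumes $\dim M = \dim N$ (the case $\dim M < \dim N$ follows from the companion result that $f(M)$ itself is then null), and the statement as printed in the paper, with no hypothesis relating the dimensions, is false: the projection $\mathbb{R}^2 \rightarrow \mathbb{R}$ sends the compact null set $[0,1] \times \{0\}$ to $[0,1]$, which has measure one. Your local estimate cannot be pushed through in that regime --- a Lipschitz map $\mathbb{R}^n \rightarrow \mathbb{R}^m$ with $n > m$ does not send $n$-dimensional null sets to $m$-dimensional null sets --- and no argument can, since the statement fails. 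This is not a defect of your proof, but it does matter downstream: the proof of Theorem \ref{lebesguelocale} invokes this lemma to verify that an arbitrary smooth submersion $f : M \rightarrow N$ sends compact null sets to null sets, and submersions satisfy $\dim M \geq \dim N$, i.e.\ precisely the range the lemma does not cover (the projection above being a submersion for which the claim is false). The other application, to transition functions in the lemma immediately following, concerns diffeomorphisms between open subsets of $\mathbb{R}^n$ and is unaffected. So: your proof, read as a proof of the statement with the hypothesis $\dim M \leq \dim N$ made explicit, is complete and correct; the missing hypothesis is an issue with the paper's statement rather than with your argument.
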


\begin{lemma}
Let $K \subset M$ be a compact subset of a large smooth $n$-manifold $M$. The following are equivalent.
\begin{enumerate}
\item $K$ is a compact null set.
\item For any covering of $M$ by charts $(U_i, \varphi_i), i \in I,$ there exist a \emph{finite} subset $F \subset I$ and compact subsets $K_i \subset U_i$ for each $i \in F$, such that $K = \bigcup_{i \in F} K_i$ and $\varphi_i(K_i) \subset \varphi_i(U_i) \subset \mathbb{R}^n$ is a null set for the Lebesgue measure on $\mathbb{R}^n$ for each $i \in F$.
\end{enumerate}
\end{lemma}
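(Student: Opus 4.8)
The plan is to prove $(2)\Rightarrow(1)$ essentially for free and to put all the work into $(1)\Rightarrow(2)$, whose real content is that ``being a null set in a chart'' does not depend on the chosen chart. For $(2)\Rightarrow(1)$: since $M$ is a manifold it carries at least one atlas $(U_i,\varphi_i)_{i\in I}$; applying the hypothesis $(2)$ to this particular covering yields precisely the finite subcollection $F\subset I$ and the decomposition $K=\bigcup_{i\in F}K_i$ with $\varphi_i(K_i)$ Lebesgue-null that the definition of a compact null set demands, so $(1)$ holds.

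For $(1)\Rightarrow(2)$ I would start from a witnessing finite family of charts $(V_j,\psi_j)_{j\in J}$, $K=\bigcup_{j\in J}L_j$ with $L_j\subset V_j$ compact and $\psi_j(L_j)\subset\mathbb{R}^n$ Lebesgue-null, provided by $(1)$. Fix now an arbitrary covering of $M$ by charts $(U_i,\varphi_i)_{i\in I}$. First, by compactness of $K$ pick a finite $F\subset I$ with $K\subset\bigcup_{i\in F}U_i$, and then invoke Lemma \ref{compactdecompositionlemma} to write $K=\bigcup_{i\in F}K_i$ with each $K_i\subset U_i$ compact. It remains to verify that each $\varphi_i(K_i)$ is Lebesgue-null. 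Here is the key step: decompose $K_i=\bigcup_{j\in J}(K_i\cap L_j)$, a finite union of compact sets, whence $\varphi_i(K_i)=\bigcup_{j\in J}\varphi_i(K_i\cap L_j)$, so it suffices to treat one piece. Since $K_i\cap L_j\subset U_i\cap V_j$, we have $\varphi_i(K_i\cap L_j)=(\varphi_i\circ\psi_j^{-1})\bigl(\psi_j(K_i\cap L_j)\bigr)$, where $\varphi_i\circ\psi_j^{-1}\colon\psi_j(U_i\cap V_j)\to\varphi_i(U_i\cap V_j)$ is the transition function, which is smooth by definition of a large smooth manifold. The set $\psi_j(K_i\cap L_j)$ is compact, and it is Lebesgue-null because it sits inside the null set $\psi_j(L_j)$ and Lebesgue measure is complete. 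Viewing the relevant open subsets of $\mathbb{R}^n$ as smooth manifolds, Lemma \ref{imageofnullundersmooth} then shows that the image $\varphi_i(K_i\cap L_j)$ under the smooth transition map is again null. Therefore $\varphi_i(K_i)$ is a finite union of Lebesgue-null sets, hence Lebesgue-null, which establishes $(2)$.

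I do not expect a genuine obstacle here: the only point requiring any care is the chart-change argument in the key step, and that is entirely absorbed by Lemma \ref{imageofnullundersmooth} (equivalently, by the elementary fact that a $C^1$, hence locally Lipschitz, map sends Lebesgue-null sets to Lebesgue-null sets). Everything else is routine bookkeeping with compactness and the compact decomposition lemma, so the write-up should be short.
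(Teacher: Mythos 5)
Your proof is correct and follows essentially the same route as the paper: decompose $K$ over the given cover via Lemma \ref{compactdecompositionlemma}, intersect each piece with the witnessing compacts from the null-set hypothesis, and push through the smooth transition functions using Lemma \ref{imageofnullundersmooth}. The only cosmetic difference is that you invoke completeness of Lebesgue measure where measurability of compact sets already suffices; nothing of substance changes.
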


\begin{proof}
The implication from (2) to (1) is immediate. Now assume that $K$ is a compact null set, i.e.\ there exists a finite collection of charts $(V_j, \psi_j), j \in J,$ and compact sets $K_j \subset V_j$, such that $K = \bigcup_{j \in J} K_j$ and $\psi_j(K_j) \subset \mathbb{R}^n$ is a Lebesgue null set for each $j \in J$. Let $(U_i, \varphi_i), i \in I,$ be a covering of $M$ by charts. Since $K$ is compact and $M$ is Hausdorff, there exists a finite subset $F \subset I$ and  $K_i \subset U_i$ compact subsets for $i \in F$ such that $K = \bigcup_{i \in F} K_i$. 

We need to argue that $\varphi_i(K_i) \subset \varphi_i(U_i)$ is a Lebesgue null set. We can write $K_i = \bigcup_{j \in J} K_i \cap K_j$ with $K_i \cap K_j \subset U_i \cap V_j$ for each $j \in J$. Now since $\psi_j(K_j) \subset \mathbb{R}^n$ is a Lebesgue null set, the same holds for $\psi_j(K_i \cap K_j)$. Since the transition function $\varphi_i \psi_j^{-1}$ is a smooth function, by \ref{imageofnullundersmooth} the same holds for the compact set $\varphi_i(K_i \cap K_j)$. But this means that $\varphi_i(K_i) = \bigcup_{j \in J} \varphi_i(K_i \cap K_j)$ is a finite union of compact Lebesgue null sets, and hence itself a Lebesgue null set.
\end{proof}

Note that in particular, if a compact set $K$ is contained entirely in a chart $(U,\varphi)$, it is null iff its image in $\varphi(U) \subset \mathbb{R}^n$ is a Lebesgue null set. It is also straightforward to see that finite unions of compact null sets remain null sets. Hence the set of compact null sets forms an ideal $N$.

\begin{definition}
Let $M$ be a large smooth manifold. Let $N \in \mathcal{O}(M^\mathcal{K})$ be the ideal of compact null sets. Then
the \emph{Lebesgue locale} associated to $M$ is defined as
$$M_{Leb} = b(N) \hookrightarrow M^\mathcal{K}.$$
It comes with a canonical map $\mathrm{can} : M_{Leb} \hookrightarrow M^\mathcal{K} \xrightarrow{\theta} M.$
\end{definition}

\begin{proposition}
Let $M$ be a large smooth manifold of dimension $n$ and let $N$ be the ideal of compact null sets of $M$. Then $N$ is a measurable ideal. In particular, $M_{Leb}$ is a measurable locale.
\end{proposition}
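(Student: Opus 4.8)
The plan is to verify that the ideal $N$ of compact null sets of a large smooth manifold $M$ satisfies the definition of a \emph{measurable} ideal: namely, that $M$ admits an open cover $U_i, i \in I$, such that on each $U_i$ there is a Radon valuation $\mu_i$ whose compact null sets are precisely $N|_{U_i}$. Once this is established, the statement that $M_{Leb} = b(N)$ is a measurable locale follows immediately from Theorem \ref{functormeasurablymarked} (or directly from Proposition \ref{measurableoutofmeasurablymarked}).

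First I would pick a covering of $M$ by charts $(U_i, \varphi_i), i \in I$, with $\varphi_i : U_i \xrightarrow{\sim} \varphi_i(U_i) \subset \mathbb{R}^n$ an open subset. On each $U_i$, I define a valuation $\mu_i$ on $\mathcal{K}(U_i)$ by transporting the Lebesgue regular content along $\varphi_i$: for $K \subset U_i$ compact, set $\mu_i(K) = \lambda_n(\varphi_i(K))$, where $\lambda_n$ is the $d$-dimensional ($d=n$) Lebesgue regular content on $\mathcal{K}(\mathbb{R}^n)$ constructed in Section \ref{lebesguereals}. Since $\varphi_i$ is a homeomorphism onto its image, it induces an isomorphism of lower bounded distributive lattices $\mathcal{K}(U_i) \cong \mathcal{K}(\varphi_i(U_i))$ (Hausdorffness is needed here so that these posets are genuinely distributive lattices), and the restriction of $\lambda_n$ to $\mathcal{K}(\varphi_i(U_i))$ — the open sublocale — is again a regular content. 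Hence $\mu_i$ is a regular content on $\mathcal{K}(U_i)$, and therefore by Theorem \ref{regularcontentalmostboolean} an almost Boolean, hence Radon, valuation on $U_i$. It is finite because compact subsets of $\mathbb{R}^n$ have finite Lebesgue content.

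It remains to check that the compact $\mu_i$-null sets are exactly $N|_{U_i}$. By definition, $K \in N|_{U_i}$ means $K \subset U_i$ is a compact null set of $M$; by the lemma immediately preceding this proposition (the one characterizing compact null sets chartwise), a compact set contained in a single chart $(U_i, \varphi_i)$ is null iff $\varphi_i(K) \subset \mathbb{R}^n$ is a Lebesgue null set, i.e.\ iff $\lambda_n(\varphi_i(K)) = 0$, i.e.\ iff $\mu_i(K) = 0$. So $N|_{U_i}$ coincides with the ideal of compact $\mu_i$-null sets, which is precisely what the definition of a measurable ideal requires. This shows $N$ is measurable, and the final sentence ("In particular, $M_{Leb}$ is a measurable locale") is then a direct invocation of Proposition \ref{measurableoutofmeasurablymarked}.

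I do not expect any serious obstacle here; the work has essentially all been done in the preceding lemmas. The one point that deserves a sentence of care is the compatibility of "null" as defined via an \emph{arbitrary} chart atlas versus "$\mu_i$-null" for the specific valuation $\mu_i$ — but this is exactly the content of the chart-independence lemma just proven (using Lemma \ref{imageofnullundersmooth} on smoothness of transition maps), so I would simply cite it. A secondary minor point is confirming that $\mathcal{K}(U_i)$, for $U_i$ an open subset of the Hausdorff space $M$, is itself a lower bounded distributive lattice so that talking about a valuation on it makes sense — this holds since $U_i$ is Hausdorff and finite intersections of compacts in a Hausdorff space are compact.
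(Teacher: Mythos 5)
Your proof is correct and follows essentially the same route as the paper's: choose a chart atlas, transport the Lebesgue regular content along each $\varphi_i$ to get a Radon valuation on $U_i$, identify its compact null sets with $N|_{U_i}$ via the chart-independence lemma, and conclude by Proposition \ref{measurableoutofmeasurablymarked}. The extra care you take (restriction of a regular content to an open subset, finiteness, distributivity of $\mathcal{K}(U_i)$) is all sound and is simply left implicit in the paper's terser argument.
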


\begin{proof}
Choose a covering of $M$ by charts $(U_i, \varphi_i), i \in I$. Then $\varphi_i : U_i \rightarrow \varphi_i(U_i) \subset \mathbb{R}^n$ is a homeomorphism such that $N |_{U_i}$ is mapped to the set of compact Lebesgue null sets contained in $\varphi_i(U_i)$, which, as an open subset of $\mathbb{R}^n$, is a Hausdorff space equipped with the Radon valuation given by Lebesgue measure. The statement that $M_{Leb}$ is a measurable locale is given by Proposition \ref{measurableoutofmeasurablymarked}.
\end{proof}

\begin{lemma} \label{manifolddensity}
The canonical map $\mathrm{can} : M_{Leb} \rightarrow M$ is dense.
\end{lemma}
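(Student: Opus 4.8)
The statement to prove is that the canonical map $\mathrm{can} : M_{Leb} \to M$ is dense, i.e.\ that $\mathrm{can}_*(0) = 0$ in $\mathcal{O}(M)$. The plan is to reduce to the local situation, where density of the analogous map $\mathbb{R}^n_{Leb} \to \mathbb{R}^n$ (or rather its restriction to open subsets of $\mathbb{R}^n$) is already available via Proposition \ref{injectivity}, and then glue. Recall that $\mathrm{can}$ factors as $M_{Leb} = b(N) \hookrightarrow M^\mathcal{K} \xrightarrow{\theta} M$, and that by Proposition \ref{injectivity} density of the map $X^\mu \to X$ is equivalent to the condition that $\mu_*(U) > 0$ for every non-empty open $U$, equivalently that every non-empty open contains a compact set of positive measure. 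So the first step is to reformulate density of $\mathrm{can}$ in these terms: I would show that $\mathrm{can}_*(0) = \bigvee\{ U \text{ open in } M ~|~ \text{every compact } K \subset U \text{ lies in } N \}$, by the same computation as in the proof of Proposition \ref{injectivity}, using that $b(N) \hookrightarrow M^\mathcal{K}$ sends $0$ to $N$ and that $\theta_*$ sends $N$ to $\bigvee\{ U ~|~ \theta^*(U) \leq N \}$.

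With this reformulation, it suffices to prove: if $U \subset M$ is a non-empty open, then $U$ contains a compact set which is \emph{not} null. First I would shrink $U$: since $M$ is covered by charts, pick a chart $(V,\varphi)$ with $U \cap V \neq \emptyset$, and replace $U$ by $U \cap V$, so we may assume $U$ is contained in a single chart and $\varphi : U \xrightarrow{\sim} \varphi(U) \subset \mathbb{R}^n$ is a homeomorphism onto an open subset. A compact $K \subset U$ is null precisely when $\varphi(K) \subset \varphi(U)$ is a Lebesgue null set in $\mathbb{R}^n$. Since $\varphi(U)$ is a non-empty open subset of $\mathbb{R}^n$, it contains a dyadic cube $Q$ (as noted in Section \ref{lebesguereals}), and $\lambda_n(Q) > 0$, so $\varphi^{-1}(Q)$ is a compact subset of $U$ that is not null. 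This establishes the local claim and hence, via the reformulation, that $\mathrm{can}_*(0) = 0$.

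The argument is essentially routine; the only point requiring a little care is the first step, namely checking that the formula $\mathrm{can}_*(0) = \bigvee\{ U ~|~ \text{every compact } K \subset U \text{ is null}\}$ really does follow by tracing through the two maps in the factorization. This is the same bookkeeping as in the proof of Proposition \ref{injectivity}, now with the ideal $N$ of compact null sets in place of the ideal of compact $\mu$-null sets, so I would simply cite that proof and indicate the (trivial) modification. Alternatively --- and perhaps more cleanly --- one can argue directly that $\theta^*(U) \leq N$ forces $U = \emptyset$ whenever $M$ is a manifold, using the chart-and-dyadic-cube observation above, and conclude $\mathrm{can}_*(0) = \theta_*(N)$ restricted appropriately equals $0$; I expect to present it this way to keep the proof short. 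No genuine obstacle is anticipated.
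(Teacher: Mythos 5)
Your proposal is correct and follows essentially the same route as the paper: factor $\mathrm{can}$ through $b(N)\hookrightarrow M^{\mathcal{K}}\xrightarrow{\theta} M$, compute $\mathrm{can}_*(0)=\theta_*(N)=\bigvee\{U\mid \text{every compact }K\subset U\text{ is null}\}$ as in Proposition \ref{injectivity}, and then exhibit a non-null compact set inside any non-empty open by passing to a chart and using a compact set of positive Lebesgue measure. No gaps.
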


\begin{proof}
We need to show that $\mathrm{can}_*(0) = \emptyset$ for the composite 
$$\mathrm{can} : b(N) \xrightarrow{i} M^\mathcal{K} \xrightarrow{\theta} M.$$
By definition $i_*(0) = N$. Analogous to the proof given for Proposition \ref{injectivity} we have
$$\theta_*(N) = \bigvee_{ U ~:~ \theta^*(U) \leq N } U = \bigvee \{ U ~|~ \text{ for all } K \subset U \text{ compact } K \text{ is null} \}.$$
Now let $U$ be a non-empty open subset of $M$. By choosing a non-empty chart $(V, \varphi)$ contained in $U$, we can find a compact set $K \subset V \subset U$ such that the Lebesgue measure of $\varphi(K) \subset \varphi(V)$ is not zero. Therefore $U$ is not contained in $\theta_*(N)$, in other words, we must have $\mathrm{can}_*(0) = \theta_*(N) = \emptyset$.
\end{proof}

We now address the question of functoriality of the assignment $M \mapsto M_{Leb}$. Recall that a \emph{submersion} $f : M \rightarrow N$ is a smooth map between (potentially large) smooth manifolds such that the differential $df_p$ has full rank at each point $p \in M$. (Compare \cite[Chapter 4]{Lee2003}.) We require the following standard result about submersions, namely that they are locally projections.

\begin{theorem}[\cite{Lee2003} Theorem 4.12] Suppose $M$ and $N$ are two smooth manifolds of dimension $m$ and $n$, respectively, with $m \geq n$, and $f : M \rightarrow N$ a smooth submersion. Then for each $p \in M$ there exists smooth charts $(U,\varphi)$ for $M$ containing $p$ and $(V,\psi)$ for $N$ containing $f(p)$ such that $f(U) \subset V$ and $\psi \circ f \circ \varphi^{-1} : \varphi(U) \rightarrow \psi(V)$ agrees with the projection $\pi : \mathbb{R}^m \rightarrow \mathbb{R}^n$ onto the first $n$ factors.
\end{theorem}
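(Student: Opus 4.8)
The plan is to reduce the statement to a coordinate computation in Euclidean space and then apply the inverse function theorem to a cleverly chosen auxiliary map. First I would pick arbitrary smooth charts $(U_0, \varphi_0)$ around $p$ and $(V_0, \psi_0)$ around $f(p)$, shrinking $U_0$ so that $f(U_0) \subset V_0$, and arrange by composing with translations that $\varphi_0(p) = 0 \in \mathbb{R}^m$ and $\psi_0(f(p)) = 0 \in \mathbb{R}^n$. Setting $F = \psi_0 \circ f \circ \varphi_0^{-1}$, this is a smooth map from an open neighbourhood of $0$ in $\mathbb{R}^m$ to an open neighbourhood of $0$ in $\mathbb{R}^n$, and the submersion hypothesis says precisely that the total derivative $DF_0$ has rank $n$.

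Since $DF_0$ has full rank $n$, some $n$ of its columns are linearly independent; after permuting the coordinates of $\mathbb{R}^m$ — a linear automorphism that I can absorb into the source chart — I may assume that the $n \times n$ matrix $\left( \partial F^i / \partial x^j \right)_{1 \le i,j \le n}$ is invertible at $0$. The key step is then to define the auxiliary map
\[
\Phi(x^1, \dots, x^m) = \bigl( F^1(x), \dots, F^n(x), x^{n+1}, \dots, x^m \bigr),
\]
whose Jacobian at $0$ is block upper-triangular with the invertible $n \times n$ block in the top-left corner and an identity block of size $m-n$ in the bottom-right, hence invertible. By the inverse function theorem, $\Phi$ restricts to a diffeomorphism from some open neighbourhood of $0$ onto its image.

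I would then take the new source chart to be $\varphi = \Phi \circ \varphi_0$ on a suitably shrunk $U$, and keep $\psi = \psi_0$. The point of the construction is that $F \circ \Phi^{-1}$ is exactly the projection onto the first $n$ coordinates: if $y = \Phi(x)$ then $y^i = F^i(x)$ for $i \le n$, so $F(\Phi^{-1}(y)) = (y^1, \dots, y^n) = \pi(y)$. Consequently $\psi \circ f \circ \varphi^{-1} = \psi_0 \circ f \circ \varphi_0^{-1} \circ \Phi^{-1} = F \circ \Phi^{-1} = \pi$, as required. The only remaining bookkeeping is to shrink $U$ and $V$ so that all the charts are genuine diffeomorphisms onto open subsets of Euclidean space and $f(U) \subset V$ holds on the nose, which is routine.

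The genuine content of the argument is concentrated in the choice of $\Phi$: completing the $n$ coordinate functions $F^1, \dots, F^n$ coming from the submersion by the $m-n$ spectator coordinates $x^{n+1}, \dots, x^m$ so that the inverse function theorem becomes applicable. I expect this to be the only step requiring real thought; everything else is coordinate bookkeeping, and there is no essential obstacle beyond correctly tracking the coordinate permutation used to make the leading $n \times n$ block invertible.
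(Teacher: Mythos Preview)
Your argument is correct and is exactly the standard proof of the local form of a submersion via the inverse function theorem. Note, however, that the paper does not supply its own proof of this statement: it is quoted as a known result from Lee's textbook and used as a black box in the proof of Theorem~\ref{lebesguelocale}, so there is no in-paper proof to compare against. Your write-up is essentially the argument one finds in Lee.
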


The following theorem was sketched by Dmitri Pavlov in a MathOverflow answer \cite{20820}, but, to the best of the author’s knowledge, it has not yet appeared in the literature.

\begin{theorem} \label{lebesguelocale}
There exists a functor
$$\begin{array}{rcl}
(-)_{Leb} : \mathrm{Man}_{\mathrm{subm}} & \rightarrow &  \mathrm{MblLoc} \\
 M & \mapsto & M_{Leb}
\end{array}$$
where $\mathrm{Man}_{\mathrm{subm}}$ is the category of large smooth manifolds and smooth submersions as maps between them.
\end{theorem}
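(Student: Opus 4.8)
The plan is to assemble the functor $(-)_{Leb}$ from the pieces already developed, the central tool being Theorem \ref{functormeasurablymarked}, which gives a functor $b : \mathrm{MMHS} \rightarrow \mathrm{MblLoc}$. So it suffices to lift the assignment $M \mapsto (M, N_M)$, where $N_M$ is the ideal of compact null sets, to a functor $\mathrm{Man}_{\mathrm{subm}} \rightarrow \mathrm{MMHS}$. On objects this is already done: we have shown $N_M$ is a measurable ideal, so $(M, N_M)$ is a measurably marked Hausdorff space. The real content is to check that a smooth submersion $f : M \rightarrow N$ is a compatible map in the sense of Definition \ref{compactlymarked}, i.e.\ that (1) $f(K) \in N_N$ whenever $K \in N_M$, and (2) $\{ K \subset f^{-1}(C) \text{ compact} \} \subset N_M$ whenever $C \in N_N$. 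Functoriality itself is then automatic, since compatible maps compose and $b$ is already known to be a functor.

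Condition (1) is immediate from Lemma \ref{imageofnullundersmooth}: any smooth map sends compact null sets to null sets, and no submersion hypothesis is needed here. For condition (2), I would argue locally using the local normal form for submersions (the cited Theorem \ref{Lee2003} Theorem 4.12). Given $C \subset N$ a compact null set and a compact $K \subset f^{-1}(C)$, first cover $K$ by finitely many chart domains $U_\alpha$ on which $f$ looks like the standard projection $\pi : \mathbb{R}^m \rightarrow \mathbb{R}^n$ onto the first $n$ coordinates, with $f(U_\alpha) \subset V_\alpha$ for corresponding charts $V_\alpha$ on $N$. Using Lemma \ref{compactdecompositionlemma} write $K = \bigcup_\alpha K_\alpha$ with $K_\alpha \subset U_\alpha$ compact; it then suffices to show each $K_\alpha$ is null. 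Transporting to coordinates, $\varphi_\alpha(K_\alpha)$ is a compact subset of $\pi^{-1}(\psi_\alpha(f(K_\alpha)))$, and $\psi_\alpha(f(K_\alpha)) \subset \psi_\alpha(C \cap V_\alpha)$ is a compact Lebesgue null set in $\mathbb{R}^n$ (since $C$ is null and transition maps are smooth, again by Lemma \ref{imageofnullundersmooth}). Now Lemma \ref{projectioninverseimage} (the statement that a compact subset of $\pi^{-1}$ of a compact null set in $\mathbb{R}^n$ is a Lebesgue null set in $\mathbb{R}^m$) gives that $\varphi_\alpha(K_\alpha)$ is Lebesgue null, hence $K_\alpha$ is null. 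Taking the finite union shows $K \in N_M$, which is condition (2).

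I expect the main obstacle to be bookkeeping rather than any deep difficulty: one must be careful that the local normal form charts can be chosen so that $f$ is literally the coordinate projection (not merely a projection up to diffeomorphism), and that the finitely many such charts covering $K$ interact correctly with the finitely many charts of $N$ witnessing that $C$ is null. The decomposition arguments rest on the Hausdorff/compactness lemmas (Lemma \ref{compactdecompositionlemma}, Lemma \ref{compactseparationlemma}) which are already in hand, and the fact that $\ll$ and null-ness are local chart-wise conditions was established in the two lemmas immediately preceding the theorem. Once compatibility is verified, I would simply state that composing the resulting functor $\mathrm{Man}_{\mathrm{subm}} \rightarrow \mathrm{MMHS}$ with $b : \mathrm{MMHS} \rightarrow \mathrm{MblLoc}$ from Theorem \ref{functormeasurablymarked} yields $(-)_{Leb}$, and that the canonical maps $M_{Leb} \rightarrow M$ assemble into a natural transformation because the underlying natural transformation $p : b \Rightarrow L$ on $\mathrm{CMHS}$ already does. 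A closing remark could note that second-countability of the manifolds plays no essential role beyond matching the classical setup, so the statement in fact holds for large smooth manifolds as written.
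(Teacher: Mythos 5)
Your proposal is correct and follows essentially the same route as the paper: reduce to Theorem \ref{functormeasurablymarked}, dispose of condition (1) via Lemma \ref{imageofnullundersmooth}, and verify condition (2) by covering $K$ with finitely many normal-form charts for the submersion and invoking Lemma \ref{projectioninverseimage} in coordinates. The only additions beyond the paper's argument are cosmetic (the explicit appeal to Lemma \ref{compactdecompositionlemma} for the decomposition of $K$ and the closing remark on naturality of $M_{Leb} \rightarrow M$).
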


\begin{proof}
The proof of Theorem \ref{lebesguelocale} reduces entirely to Theorem \ref{functormeasurablymarked}. The only crucial point is to verify that a given smooth submersion $f : M \rightarrow N$ is compatible with the markings given by compact null sets. To do so, we need to verify two statements.
\begin{itemize}
\item The image of a compact null set under $f$ remains a compact null set. This only needs smoothness and is provided by Lemma \ref{imageofnullundersmooth}.
\item Let $C \subset N$ be a compact null set, and $K \subset f^{-1}(C) \subset N$ a compact set. We need to argue that $K$ is a compact null set. Using compactness of $K$, cover $K$ by finitely many charts $(U_i, \varphi_i), i \in I,$ together with charts $(V_i, \psi_i), i \in I,$ such that $f$ maps $U_i$ into $V_i$ and such that $\psi_i \circ f \circ \varphi_i^{-1}$ agrees with the standard projection $\pi : \mathbb{R}^m \rightarrow \mathbb{R}^n$ onto the first $n$ factors. Choose compact sets $K_i \subset U_i$ such that $K = \bigcup_{i \in I} K_i$. Then $f(K_i) \subset V_i$ is a compact null set, since it is contained in the null set $C$, and hence $\psi_i(f(K_i)) \subset \psi_i(V_i)$ is a Lebesgue null set. We can now use Lemma \ref{projectioninverseimage} to conclude that $\varphi_i(K_i) \subset \pi^{-1}(\psi_i(f(K_i)))$ is a Lebesgue null set, in other words $K_i$ are null sets for all $i \in I$ and so is $K$.
\end{itemize}
\end{proof}

\begin{remark}
The usual condition of second countability that is typically required in standard textbooks on differential geometry is not necessary for Theorem \ref{lebesguelocale}. This means any large smooth manifold $M$ has an associated measurable locale $M_{Leb}$ and under Gelfand duality \ref{gelfanddualityvonneumann} a corresponding commutative von Neumann algebra $L^\infty(M)$, and with it all the standard theorems one usually requires for integration. The set $C_b(M)$ of bounded continuous functions naturally embeds into $L^\infty(M)$ by Lemma \ref{manifolddensity}. This opens the potential for a working integration theory even on non-second countable manifolds. Of course, second countability will appear naturally, e.g.\ any choice of a finite measure on $M$ has second countable support.
\end{remark}

\begin{remark}
In the interpretation of Boolean locales as generalized discrete spaces, we can think of the natural map $M_{Leb} \rightarrow M$ as equipping $M$ with a \emph{different underlying space}. This underlying space necessarily has no points for $n$-dimensional manifolds with $n > 0$, but may be more fundamental for physical considerations than the ordinary set of points of $M$. This idea was already present in the writing of von Neumann \cite[254]{von1998continuous}, who suggests that $M_{Leb}$ ``may be considered as the basic system for the logical treatment of classical mechanics''.
\end{remark}

\begin{remark}
The statements made in this section could be made more generally for $C^1$-manifolds and continuously differentiable maps.
\end{remark}

\section{Comparison with classical measure spaces} \label{classicalcomparison}

In this section, we aim to connect our notion of valuation sites with the classical notion of a measure space. The central example is the following: If 
$(X, \mathcal{L}, \mu)$ is a measure space, define
$$ \mathcal{L}^{fin} = \{ E \in \mathcal{L} ~|~ \mu(E) < \infty \}.$$
It is clear that $(\mathcal{L}^{fin}, \mu)$ defines a valuation site. We claim that under suitable conditions, this site determines the quotient algebra $\mathcal{L} / \mathcal{N}$ where $\mathcal{N}$ is the ideal of null sets. This condition is \emph{localizability}, as introduced by Segal \cite{segal1951}.

\subsection{Localizable measure spaces} \label{localizablemeasurespace}

\begin{definition}
Let $(X, \mathcal{L}, \mu)$ be a measure space. Let $\mathcal{E} \subset \mathcal{L}$ be a collection of measurable sets. A measurable set $E$ is called \emph{essential supremum} of $\mathcal{E}$ if
\begin{itemize}
\item For all $E' \in \mathcal{E}$, we have $\mu( E' \setminus E ) = 0$.
\item Whenever $F$ is a measurable set such that $\mu( E' \setminus F ) = 0$ for all $E' \in \mathcal{E}$, then also $\mu( E \setminus F) = 0$. 
\end{itemize}
\end{definition}

\begin{remark}
We see more or less by definition that a set $E$ is an essential supremum of $\mathcal{E}$ iff the class $[E]$ is the supremum of the set of classes $[E'], E' \in \mathcal{E}$ in the quotient algebra $\mathcal{L} / \mathcal{N}$, where $\mathcal{N}$ is the ideal of null sets. In particular, essential suprema are unique up to null sets.
\end{remark}

\begin{lemma}[See also \cite{PAVLOV2022106884}, Proposition 4.41.] \label{essentialsupmeasure}
Let $(X, \mathcal{L}, \mu)$ be a measure space and suppose $E$ is the essential supremum of a directed set $\mathcal{E}$. Then
$$\mu(E) = \sup_{E' \in \mathcal{E}} \mu(E').$$
\end{lemma}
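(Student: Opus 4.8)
The plan is to establish the two inequalities $\sup_{E' \in \mathcal{E}} \mu(E') \le \mu(E)$ and $\mu(E) \le \sup_{E' \in \mathcal{E}} \mu(E')$ separately. The first is immediate from the first defining property of an essential supremum: since $\mu(E' \setminus E) = 0$ for every $E' \in \mathcal{E}$, we have $\mu(E') = \mu(E' \cap E) \le \mu(E)$, and taking the supremum over $E' \in \mathcal{E}$ gives the bound. Two degenerate cases I would dispose of first: if $\mathcal{E} = \emptyset$ one checks directly that $\mu(E) = 0$ by testing the second property of an essential supremum against $F = \emptyset$; and if $\sup_{E' \in \mathcal{E}} \mu(E') = +\infty$ the claimed equality follows from the first inequality alone. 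So from now on I would assume $\mathcal{E} \neq \emptyset$ and set $\alpha = \sup_{E' \in \mathcal{E}} \mu(E') < \infty$.

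For the reverse inequality the idea is to replace the directed family $\mathcal{E}$ by a countable increasing subfamily having the same essential supremum, so that ordinary continuity of $\mu$ from below applies. First I would pick $E_n \in \mathcal{E}$ with $\mu(E_n) \to \alpha$, and then use directedness to build inductively an increasing sequence $F_1 \subseteq F_2 \subseteq \cdots$ in $\mathcal{E}$ with $F_n \supseteq E_n$ for each $n$; then $\mu(E_n) \le \mu(F_n) \le \alpha$ forces $\mu(F_n) \to \alpha$. Setting $F = \bigcup_n F_n \in \mathcal{L}$, continuity from below gives $\mu(F) = \alpha$.

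The key step is to verify that $F$ satisfies $\mu(E' \setminus F) = 0$ for every $E' \in \mathcal{E}$ — this is where directedness is genuinely used. Given $E' \in \mathcal{E}$ and $n \in \mathbb{N}$, directedness of $\mathcal{E}$ yields $G_n \in \mathcal{E}$ with $G_n \supseteq E'$ and $G_n \supseteq F_n$; then $E' \setminus F \subseteq E' \setminus F_n \subseteq G_n \setminus F_n$, and since $\mu(G_n) \le \alpha < \infty$ we get $\mu(E' \setminus F) \le \mu(G_n \setminus F_n) = \mu(G_n) - \mu(F_n) \le \alpha - \mu(F_n) \to 0$, hence $\mu(E' \setminus F) = 0$. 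Now the second defining property of the essential supremum $E$, applied to the measurable set $F$, gives $\mu(E \setminus F) = 0$, and therefore $\mu(E) = \mu(E \cap F) \le \mu(F) = \alpha$. Combining with the first inequality yields $\mu(E) = \alpha$.

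I expect the only real obstacle to be the bookkeeping in the previous paragraph: one must keep track of which sets have finite measure so that subtraction $\mu(G_n) - \mu(F_n)$ is legitimate, and correctly exploit directedness both to build the increasing sequence $F_n$ and to pass, for a fixed $E'$, to the comparison sets $G_n$. Everything else — continuity of $\mu$ from below and the elementary manipulations with $\setminus$ and $\cap$ — is routine.
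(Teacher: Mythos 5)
Your proof is correct and follows essentially the same route as the paper's: reduce to the case $\sup_{E'}\mu(E')<\infty$, extract an increasing sequence in $\mathcal{E}$ whose union $F$ realizes the supremum, show $\mu(E'\setminus F)=0$ for all $E'\in\mathcal{E}$, and invoke the second defining property of the essential supremum. The only cosmetic difference is that the paper first closes $\mathcal{E}$ under finite unions and estimates $\mu(G\cup E_n)$, whereas you invoke directedness on the fly to produce the comparison sets $G_n$; both uses of directedness are equivalent, and your explicit handling of $\mathcal{E}=\emptyset$ is a harmless extra.
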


\begin{proof} We can assume w.l.o.g.\ that $\mathcal{E}$ is closed under finite unions. First note that we have for $E' \in \mathcal{E}$ by definition $\mu( E' \setminus E ) = 0$, therefore $\mu(E) \geq \mu(E')$, hence the lemma is trivially true if $\sup_{E' \in \mathcal{E}} \mu(E') = \infty.$ Therefore assume $\sup_{E' \in \mathcal{E}} \mu(E') < \infty$ for the rest of the proof.

Inductively find a sequence of elements $E_n \in \mathcal{E}$ such that $E_n \subset E_{n+1}$ and
$$\mu(E_n) + \frac{1}{n} \geq \sup_{E' \in \mathcal{E}} \mu(E').$$
Take the countable (!) union $F = \bigcup_{n \in \mathbb{N}} E_n$. It is clear that $\mu(F) = \lim_{n \rightarrow \infty} \mu(E_n) =  \sup_{E' \in \mathcal{E}} \mu(E') < \infty$.

Now take $G \in \mathcal{E}$.  We know that $\mu( G \cup E_n ) \leq \sup_{E' \in \mathcal{E}} \mu(E')$ for all $n \in \mathbb{N}$, since $G$ and $E_n$ are in $\mathcal{E}$ and we assumed $\mathcal{E}$ to be closed under finite unions. Therefore
$$\mu( G \cup E_n ) = \mu( G \setminus E_n) + \mu( E_n ) \xrightarrow{ n \rightarrow \infty } \mu(G \setminus F) + \sup_{E' \in \mathcal{E}} \mu(E') \leq \sup_{E' \in \mathcal{E}} \mu(E').$$
Hence we obtain that $\mu(G \setminus F) = 0$. Since $E$ is an essential supremum of $\mathcal{E}$, we conclude that $\mu( E \setminus F ) = 0$. Furthermore, we have
$$\mu( F \setminus E ) = \lim_{n \rightarrow \infty} \mu( E_n \setminus E ) = 0.$$ But this means
$$\mu(E) = \mu(E \cap F ) + \mu( E \setminus F ) =  \mu(E \cap F ) = \mu(E \cap F ) + \mu( F \setminus E  ) = \mu(F).$$ 
This concludes the proof.
\end{proof}

\begin{definition}[\cite{fremlin2000measure2}, Definition 211G] A measure space $(X, \mathcal{L}, \mu)$ is called  \emph{semi-finite} if for every $E \in \mathcal{L}$ such that $\mu(E) = \infty$, there exists $F \subset E$ such that $F \in \mathcal{L}$ and $0 < \mu(F) < \infty$. A semi-finite measure space $(X, \mathcal{L}, \mu)$ is called \emph{localizable} if arbitrary essential suprema exist.
\end{definition}

\begin{remark}
In particular, for a localizable measure space $(X, \mathcal{L}, \mu)$, the Boolean algebra $\mathcal{L}/\mathcal{N}$ is complete; in other words, a Boolean frame.
\end{remark}

\begin{lemma} \label{finiteapproximation}
Let $(X, \mathcal{L}, \mu)$ be a localizable measure space. Then every $E$ in $\mathcal{L}$ is the essential supremum of the set $\{ E' \in \mathcal{L} ~|~ E' \subset E, ~ \mu(E') < \infty \}$.
\end{lemma}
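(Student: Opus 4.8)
The statement to prove is that in a localizable measure space $(X,\mathcal{L},\mu)$, every $E \in \mathcal{L}$ equals the essential supremum of $\mathcal{E} = \{ E' \in \mathcal{L} ~|~ E' \subset E, ~\mu(E') < \infty \}$. I would verify the two defining conditions of an essential supremum directly. The first condition---that $\mu(E' \setminus E) = 0$ for all $E' \in \mathcal{E}$---is immediate, since each $E' \subset E$ by construction, so $E' \setminus E = \emptyset$. The real content is the second condition: if $F$ is measurable with $\mu(E' \setminus F) = 0$ for all $E' \in \mathcal{E}$, then $\mu(E \setminus F) = 0$.

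For the second condition I would argue by contradiction. Suppose $\mu(E \setminus F) > 0$. The key observation is that $E \setminus F \in \mathcal{L}$, and I want to extract from it a subset of finite positive measure that lies in $\mathcal{E}$ but is (essentially) disjoint from $F$, contradicting the hypothesis $\mu(E' \setminus F) = 0$. There are two cases. If $\mu(E \setminus F) < \infty$, then $G := E \setminus F$ itself lies in $\mathcal{E}$ (it is a subset of $E$ of finite measure), yet $\mu(G \setminus F) = \mu(E \setminus F) > 0$, a direct contradiction. If $\mu(E \setminus F) = \infty$, then I invoke semi-finiteness of the measure space: there exists $G \subset E \setminus F$ with $G \in \mathcal{L}$ and $0 < \mu(G) < \infty$. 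Since $G \subset E$ and $\mu(G) < \infty$, we have $G \in \mathcal{E}$, but $\mu(G \setminus F) = \mu(G) > 0$ since $G \cap F = \emptyset$, again contradicting the hypothesis on $F$. This completes the verification.

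The proof is essentially routine once the two cases are separated; there is no serious obstacle. The only point requiring a little care is that the argument genuinely needs semi-finiteness (not merely localizability) in the infinite-measure case---this is exactly why the definition of localizable in the excerpt bundles in semi-finiteness. I would also remark in passing that, combined with Lemma \ref{essentialsupmeasure} and the directedness of $\mathcal{E}$ (which is closed under finite unions since $\mathcal{L}$ is and measures of finite sets add), this shows $\mu(E) = \sup_{E' \in \mathcal{E}} \mu(E')$, so that $\mu$ restricted to $\mathcal{L}^{fin}$ is, in an appropriate sense, an inner-regular approximation of $\mu$ on all of $\mathcal{L}$---which is the bridge needed for comparing $(\mathcal{L}^{fin},\mu)$ with the measure algebra $\mathcal{L}/\mathcal{N}$ in the subsequent development.
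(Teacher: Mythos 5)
Your proof is correct and follows essentially the same route as the paper: both verify the first condition trivially from $E' \subset E$, and both handle the second condition by splitting into the cases $\mu(E\setminus F) < \infty$ (where $E\setminus F$ itself serves as the witness) and $\mu(E\setminus F) = \infty$ (where semi-finiteness supplies a finite-positive-measure subset of $E\setminus F$). The only cosmetic difference is that you phrase the finite case as a contradiction where the paper derives $\mu(E\setminus F)=0$ directly.
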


\begin{proof} Fix $E \in \mathcal{L}$ and suppose $F \in \mathcal{L}$ is such that $\mu(E' \setminus F ) = 0$ for all measurable $E' \subset E$ such that $\mu(E') < \infty$. We consider two cases:
\begin{itemize}
\item If $\mu(E \setminus F ) < \infty$, then $E' = E \setminus F \subset E$ satisfies $\mu(E') < \infty$, but by assumption $F$ is such that
$$ 0 = \mu( E' \setminus F ) = \mu( E' ) = \mu(E \setminus F ).$$
\item If $\mu(E \setminus F ) = \infty$, by semi-finiteness, we can choose $E' \subset E \setminus F \subset E$ such that $0 < \mu(E') < \infty$. But then by assumption on $F$ we have that
$$ 0 = \mu( E' \setminus F ) = \mu( E' ),
 $$
a contradiction.
\end{itemize}
Therefore we conclude $\mu(E \setminus F ) = 0$ as the only possibility.
\end{proof}

\begin{theorem}
Let $(X, \mathcal{L}, \mu)$ be a localizable measure space. Then we have a measure-preserving frame isomorphism
$$(\mathrm{Sh}(\mathcal{L}^{fin}, \mu;\mathbf{2}),\mu_*) \cong (\mathcal{L}/\mathcal{N}, \mu )$$
induced by the natural homomorphism $\mathcal{L}^{fin} \rightarrow \mathcal{L}/\mathcal{N}$.
\end{theorem}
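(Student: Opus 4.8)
The plan is to invoke the Basis Theorem \ref{basistheorem} together with the characterization of the $\mu$-inner topology. The key observation is that $\mathcal{L}^{fin}$, viewed inside $\mathcal{L}/\mathcal{N}$, is a basis for the frame $\mathcal{L}/\mathcal{N}$ closed under finite joins (including the bottom element): by Lemma \ref{finiteapproximation}, every $[E] \in \mathcal{L}/\mathcal{N}$ is the supremum of the classes $[E']$ with $E' \subset E$ of finite measure, and $\mathcal{L}/\mathcal{N}$ is a frame since $(X,\mathcal{L},\mu)$ is localizable. So $\mathcal{L}/\mathcal{N} \cong \mathrm{Sh}(\mathcal{L}^{fin}, \tau; \mathbf{2})$, where $\tau$ is the Grothendieck pretopology on $\mathcal{L}^{fin}$ induced from the canonical pretopology on $\mathcal{L}/\mathcal{N}$ by restriction. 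What remains is to identify $\tau$ with the $\mu$-inner topology on $(\mathcal{L}^{fin}, \mu)$, after first checking that the natural map $\mathcal{L}^{fin} \to \mathcal{L}/\mathcal{N}$ is injective — this is exactly semi-finiteness combined with the observation that $[E] = [F]$ for $E, F$ of finite measure forces $\mu(E \triangle F) = 0$.

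First I would pin down the coverings in $\tau$. A family $\{[E_i] \leq [E]\}_{i \in I}$ with $E_i, E \in \mathcal{L}^{fin}$ is a $\tau$-cover iff $\bigvee_{i} [E_i] = [E]$ in $\mathcal{L}/\mathcal{N}$. By the standard argument (and as in the proof of Proposition \ref{valuationframedeterminedbyfinite}), such a cover splits into the finite case and the directed case: for finite covers, $\bigvee_{i} [E_i] = [E]$ is the same as $[\bigcup_i E_i] = [E]$, which agrees with finite covers in the $\mu$-inner topology since there modularity and $\mu(E) = \mu(\bigcup_i E_i)$ characterize it for finite index sets. For directed covers, I would use Lemma \ref{essentialsupmeasure}: if $\{[E_i]\}$ is directed and $[E]$ is its supremum (equivalently, $E$ is an essential supremum of $\{E_i\}$), then $\mu(E) = \sup_i \mu(E_i)$, so the cover is a $\mu$-approximation; conversely if $\{E_i \subset E\}$ is directed with $\sup_i \mu(E_i) = \mu(E) < \infty$, I need $\bigvee_i [E_i] = [E]$, which follows because the essential supremum $F = \bigvee_i [E_i] \leq [E]$ satisfies $\mu(F) = \sup_i \mu(E_i) = \mu(E)$ by Lemma \ref{essentialsupmeasure}, and finiteness of $\mu(E)$ forces $\mu(E \setminus F) = 0$, i.e.\ $[F] = [E]$. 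This matches precisely the two cases in Definition \ref{muinnertopology}.

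Having identified the topologies, the Basis Theorem gives a frame isomorphism $\mathrm{Sh}(\mathcal{L}^{fin}, \mu; \mathbf{2}) \cong \mathcal{L}/\mathcal{N}$, and it is by construction induced by the map $\mathcal{L}^{fin} \to \mathcal{L}/\mathcal{N}$ on generators. Finally I would check it is measure-preserving: on $\mathrm{Sh}(\mathcal{L}^{fin}, \mu; \mathbf{2})$, the inner measure is $\mu_*(U) = \sup_{E \in U} \mu(E)$, while on $\mathcal{L}/\mathcal{N}$ the measure of $[E]$ is $\mu(E)$ and of a general element is the supremum of measures of finite-measure classes below it (again Lemma \ref{finiteapproximation} plus Lemma \ref{essentialsupmeasure}); these agree under the identification, so $\mu_* = \mu$ on the nose. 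The main obstacle I anticipate is the directed-cover direction of the topology comparison — specifically making sure that "essential supremum in $\mathcal{L}$" really corresponds to "supremum in the frame $\mathcal{L}/\mathcal{N}$" and handling the passage between $\mu$-approximations (a condition on sups of measures) and genuine suprema of equivalence classes; this is where localizability and semi-finiteness are both essential and where Lemma \ref{essentialsupmeasure} does the real work. The injectivity of $\mathcal{L}^{fin} \to \mathcal{L}/\mathcal{N}$ and the verification that $\mathcal{L}^{fin}$ is closed under the relevant operations are routine.
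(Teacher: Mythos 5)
Your proof is correct and follows essentially the same route as the paper: Lemma \ref{finiteapproximation} plus the Basis Theorem \ref{basistheorem}, then matching the induced topology on $\mathcal{L}^{fin}$ with the $\mu$-inner topology via Lemma \ref{essentialsupmeasure}; your reduction of general covers to the finite and directed cases is the same device used in Proposition \ref{valuationframedeterminedbyfinite}, where the paper's own proof of this theorem instead verifies the essential-supremum property directly with the $\epsilon$-characterization of $\mu$-covers. One small correction: the map $\mathcal{L}^{fin} \to \mathcal{L}/\mathcal{N}$ is \emph{not} injective in general (distinct null sets, or sets differing by a null set, are identified), but injectivity is never needed --- the Basis Theorem applies to the image $\mathcal{L}^{fin}/\mathcal{N}$, and the induced coverage pulls back to $\mathcal{L}^{fin}$ without changing the sheaf frame, exactly as the paper does.
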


\begin{proof}
It is clear from Lemma \ref{finiteapproximation} that the image of $\mathcal{L}^{fin}$ in $\mathcal{L}/\mathcal{N}$ gives a basis, hence by the Basis Theorem \ref{basistheorem}, we know that $\mathcal{L}/\mathcal{N}$ is obtained from the induced Grothendieck topology on $\mathcal{L}^{fin}/\mathcal{N}$, which furthermore induces a Grothendieck topology on $\mathcal{L}^{fin}$. We only need to check that this Grothendieck topology agrees with the $\mu$-inner topology on $\mathcal{L}^{fin}$. This is equivalent to saying that for $E \in \mathcal{L}$ with $\mu(E) < \infty$, we have that a set $\{ E_i \subset E ~|~ i \in I\}$ is a $\mu$-cover iff $E$ is the essential supremum of $\{ E_i ~|~ i \in I\}$.

First assume that $\{ E_i \subset E ~|~ i \in I\}$ is a $\mu$-cover. This means, for any $\epsilon > 0$ there exists $E_{i_1},\hdots E_{i_n}$ such that
$$\mu(E) - \mu( E_{i_1} \cup \hdots \cup E_{i_n} ) < \epsilon.$$
By definition $\mu( E_i \setminus E ) = \mu( \emptyset ) = 0$ for all $i \in I$. Now assume $F$ is any measurable set such that $\mu(E_i \setminus F) = 0$ for all $i \in I$. Take $\epsilon > 0$ and choose $E_{i_1},\hdots E_{i_n}$ as above. Then
$$ \mu( E \setminus F ) = \mu( E \setminus F ) - \mu( (E_{i_1} \cup \hdots \cup E_{i_n} ) \setminus F ) \leq \mu( E ) - \mu( E_{i_1} \cup \hdots \cup E_{i_n}  ) < \epsilon.$$
Since this holds for any $\epsilon > 0$, we conclude that $\mu( E \setminus F ) = 0$.

Conversely, assume that $E$ is the essential supremum of $\{ E_i ~|~ i \in I\}$. Since essential suprema of a finite collection of measurable sets is always realized by finite union, we are left to argue the case where the set $\{ E_i ~|~ i \in I\}$ is directed. In this case, Lemma \ref{essentialsupmeasure} shows that $\{ E_i \subset E ~|~ i \in I\}$ is in fact a $\mu$-approximation. That same lemma also shows that the inner valuation and $\mu$ agree.
\end{proof}

\subsection{Radon measures} \label{radonmeasurespaces}

While the notion of a localizable measure space connects classical measure theory with the site-theoretic approach, the previous section is still mostly interesting from a theoretical point of view. In this section we deal with a class of measures that covers most cases of applications of classical measure theory in analysis and other fields, namely so-called \emph{Radon measures}. In the case of a Radon measure $(X, \mathcal{B}, \mu)$, the underlying set $X$ is equipped with a Hausdorff topology, and the measure is asked to interact favourably with the topology. The standard reference for Radon measures is Schwartz \cite{schwartz1973radon}. The basic point of this section is that in the case of a Radon measure, for the construction of the measure algebra the information of the entire $\sigma$-algebra is somewhat redundant. We will show that the measure algebra can be constructed purely from the datum of the lower bounded lattice of compact sets $\mathcal{K}(X)$, together with its valuation $\mu$.

\begin{definition}[Compare \cite{fremlin2000measure4} 411H Definition, and \cite{BLECHER2022758} Definition 6.4.] Let $X$ be a Hausdorff topological space. A \emph{Radon measure} on $X$ is a $\sigma$-algebra $\mathcal{B}$ and a measure $\mu$ on $\mathcal{B}$ such that
\begin{itemize}
\item $(X, \mathcal{B}, \mu)$ is a complete measure space.
\item $\mathcal{B}$ contains all open sets.
\item If $E \subset X$ and $E \cap F \in \mathcal{B}$ for all $F \in \mathcal{B}$ with $\mu(F) < \infty$, then $E \in \mathcal{B}$.
\item For every $E \in \mathcal{B}$ we have
$$\mu(E) = \sup \{ \mu(C) ~|~ C \subset E, C \text{ compact} \}.$$
\item For every $x \in X$ there is an open neighborhood $U$ of $x$ with $\mu(U) < \infty$.
\end{itemize}
\end{definition}

Note that the last condition implies that $\mu(C) < \infty$ for any compact subset $C$, as $C$ can be covered by finitely many opens with finite measure. Observe that the restriction of $\mu$ to the set of compact subsets $\mathcal{K}(X)$ gives a regular content, and the measure on every element $E \in \mathcal{B}$ is determined by this regular content. In this sense, for the purposes of measure theory, the information contained in a regular content and that of a Radon measure can be considered equivalent.

Radon measures are abundant throughout measure and probability theory. The basic examples are the ones already given in Example \ref{examplesradon}: Euclidean space $\mathbb{R}^n$ equipped with its Lebesgue measure, more generally Haar measures on locally compact Hausdorff groups, and Riemannian manifolds together with their volume measures, to name a few.

\begin{theorem}[\cite{fremlin2000measure4} Theorem 415A] \label{radonlocalizable}
A measure space $(X, \mathcal{B}, \mu)$ with $\mu$ a Radon measure is localizable.
\end{theorem}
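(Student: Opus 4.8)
The statement to prove is that a Radon measure space $(X,\mathcal{B},\mu)$ is localizable, i.e.\ that it is semi-finite and that arbitrary essential suprema exist. Since this is cited as Fremlin, Theorem 415A, the plan is not to reinvent the wheel but to give the argument cleanly in the language set up here. The two clauses are handled separately.

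\textbf{Semi-finiteness.} First I would show semi-finiteness. Suppose $E \in \mathcal{B}$ with $\mu(E) = \infty$. By the inner regularity axiom in the definition of a Radon measure,
$$\mu(E) = \sup\{\mu(C) ~|~ C \subset E,~ C \text{ compact}\} = \infty,$$
so there is a compact $C \subset E$ with $\mu(C)$ arbitrarily large, in particular $\mu(C) > 0$. But $C$ is compact, hence (by the last axiom, covering $C$ by finitely many opens of finite measure) $\mu(C) < \infty$. Thus $C \in \mathcal{B}$ satisfies $0 < \mu(C) < \infty$ and $C \subset E$, which is exactly semi-finiteness.

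\textbf{Existence of essential suprema.} This is the substantive part. Let $\mathcal{E} \subset \mathcal{B}$ be any collection. The plan is: (1) reduce to the directed case by closing $\mathcal{E}$ under finite unions, which does not change the essential supremum; (2) among the finite-measure members $\{E' \in \mathcal{E} ~|~ \mu(E') < \infty\}$, let $\alpha = \sup \mu(E')$ over this set, and if $\alpha < \infty$, pick an increasing sequence $E_n$ in (the finite-union closure of) $\mathcal{E}$ with $\mu(E_n) \uparrow \alpha$ and set $E_0 = \bigcup_n E_n$, which lies in $\mathcal{B}$ since $\mathcal{B}$ is a $\sigma$-algebra; (3) verify $E_0$ is the essential supremum of the finite-measure part, using the same truncation argument as in Lemma \ref{essentialsupmeasure}: for any measurable $F$ with $\mu(E'\setminus F)=0$ for all finite-measure $E' \in \mathcal{E}$, one gets $\mu(E' \setminus E_0) = 0$ for each such $E'$, and $\mu(E_0 \setminus F) = \lim_n \mu(E_n \setminus F) = 0$; (4) to pass from the finite-measure part to all of $\mathcal{E}$, use the third axiom of a Radon measure (the "locally determined" axiom): the set $E = E_0 \cup \bigcup\{E' \in \mathcal{E} : \mu(E') = \infty\}$ need not a priori be in $\mathcal{B}$, so instead I would argue that the \emph{essential} supremum can be built by noting that each $E' \in \mathcal{E}$ with $\mu(E') = \infty$ is itself the essential supremum of its finite-measure subsets (this is exactly Lemma \ref{finiteapproximation} applied to $E'$, which holds here because $(X,\mathcal{B},\mu)$ is semi-finite), so the whole family $\mathcal{E}$ and the family of all finite-measure measurable subsets of members of $\mathcal{E}$ have the same essential supremum, reducing everything to the $\sigma$-additive construction already carried out.

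\textbf{Main obstacle.} The delicate point is step (4): when $\alpha = \infty$, or when $\mathcal{E}$ contains sets of infinite measure, one cannot simply take a countable union. The resolution is that inner regularity plus semi-finiteness force every measurable set to be the essential supremum of its finite-measure subsets, so it suffices to produce essential suprema for families of finite-measure sets. For such a family $\mathcal{F}$ of finite-measure sets with $\sup_{F \in \mathcal{F}} \mu(F) = \infty$, one builds the essential supremum as a transfinite/exhaustion union: repeatedly adjoin finite-measure sets that increase the measure, taking countable unions along the way, until no finite-measure measurable set can be adjoined without the new union failing to lie in $\mathcal{B}$ — at which stage the locally-determined axiom guarantees the resulting union is measurable. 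Concretely I would invoke the standard exhaustion argument (choose a maximal disjoint-ish family, or iterate Lemma \ref{essentialsupmeasure}-style constructions), and it is here that the third Radon axiom is essential and where the proof is least a "routine calculation." I expect this to be the step requiring the most care; everything else follows the template of Lemmas \ref{essentialsupmeasure} and \ref{finiteapproximation} already in the text.
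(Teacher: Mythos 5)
The paper does not prove this statement at all --- it is quoted verbatim from Fremlin (Theorem 415A) and used as a black box --- so there is no in-paper argument to compare against; the question is only whether your sketch would actually close. Your semi-finiteness argument is complete and correct, and your reduction in step (4) from $\mathcal{E}$ to the family of finite-measure measurable subsets of members of $\mathcal{E}$ is valid (both halves of the essential-supremum property transfer, using semi-finiteness for the first half). The gap is in the one place where the whole theorem actually lives: the existence of an essential supremum for a family $\mathcal{F}$ of finite-measure sets with $\sup_{F \in \mathcal{F}} \mu(F) = \infty$. As written, your exhaustion step cannot work: at countable stages the union never ``fails to lie in $\mathcal{B}$'' (it is a $\sigma$-algebra), and at transfinite stages the locally-determined axiom does not automatically certify measurability --- to apply it you must show that the candidate set meets every finite-measure $G \in \mathcal{B}$ in a measurable set, and that is exactly the nontrivial claim, not a consequence of the axiom. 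More tellingly, after your reduction the sketch uses only completeness, semi-finiteness, the locally-determined axiom and $\sigma$-additivity, and there exist complete, locally determined, semi-finite measure spaces that are \emph{not} localizable (Fremlin \S 216), so inner regularity with respect to compacts must enter the essential-supremum construction in an essential way, and your sketch never says where.

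The missing ingredient is, concretely: use Zorn's lemma to extract a maximal disjoint family $\mathcal{K}$ of compact sets of positive (hence finite) measure each essentially contained in some member of $\mathcal{F}$; inner regularity plus maximality show that every finite-measure $G \in \mathcal{B}$ meets all but countably many $K \in \mathcal{K}$ in a null set, and that the part of $\bigcup \mathcal{F}$ not covered by $\mathcal{K}$ is locally negligible. One then takes $E$ to be (a representative of) $\bigvee_{K \in \mathcal{K}} [K]$, checks $E \cap G \in \mathcal{B}$ for every finite-measure $G$ via the countability just established and the countable-union argument of your step (3), and finally invokes the locally-determined axiom to conclude $E \in \mathcal{B}$. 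Without this disjointification-by-compacts step your proposal is a correct frame around a hole exactly where Fremlin's proof does its work.
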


\begin{lemma} \label{radonessentialsup}
Let $(X, \mathcal{B}, \mu)$ be a Radon measure on a Hausdorff space $X$. Let $E \in \mathcal{B}$. Then $E$ is the essential supremum of the set
$$\{ C ~|~ C \subset E, C \text{ compact} \}.$$
\end{lemma}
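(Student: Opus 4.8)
The statement to prove is that for a Radon measure $(X, \mathcal{B}, \mu)$ and $E \in \mathcal{B}$, the set $\mathcal{C}_E = \{ C \mid C \subset E, C \text{ compact} \}$ has $E$ as its essential supremum. Recall that $E$ being the essential supremum of $\mathcal{C}_E$ means two things: first, that $\mu(C \setminus E) = 0$ for every $C \in \mathcal{C}_E$ (which is immediate, since $C \subset E$ forces $C \setminus E = \emptyset$), and second, that whenever $F \in \mathcal{B}$ satisfies $\mu(C \setminus F) = 0$ for all $C \in \mathcal{C}_E$, then $\mu(E \setminus F) = 0$. So the entire content is in verifying this second condition.

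The plan is as follows. Fix $F \in \mathcal{B}$ with $\mu(C \setminus F) = 0$ for every compact $C \subset E$. I want to show $\mu(E \setminus F) = 0$. The key idea is to exploit the inner regularity axiom of a Radon measure applied to the set $E \setminus F$, which is itself in $\mathcal{B}$ (being the intersection of $E$ with the complement of $F$; note that $\mathcal{B}$ is a $\sigma$-algebra containing all open sets, and is closed under complements). By inner regularity,
$$\mu(E \setminus F) = \sup \{ \mu(C) \mid C \subset E \setminus F, \ C \text{ compact} \}.$$
Now take any compact $C \subset E \setminus F$. Then in particular $C \subset E$, so $C \in \mathcal{C}_E$, and by hypothesis $\mu(C \setminus F) = 0$. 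But $C \subset E \setminus F$ means $C \cap F = \emptyset$, hence $C \setminus F = C$, so $\mu(C) = \mu(C \setminus F) = 0$. Therefore every compact subset of $E \setminus F$ has measure zero, and the supremum above is $0$, giving $\mu(E \setminus F) = 0$ as desired.

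This argument is essentially a direct unwinding of definitions, so I do not anticipate a serious obstacle; the only point requiring a little care is confirming that $E \setminus F \in \mathcal{B}$ so that the inner regularity axiom genuinely applies to it — this follows since $\mathcal{B}$ is a $\sigma$-algebra (closed under complementation and finite intersection) containing $F$ and $E$. One might also remark for context that combined with Lemma \ref{essentialsupmeasure} (applied after passing to a directed cofinal subfamily, e.g.\ finite unions of compacts) this recovers the defining inner-regularity identity $\mu(E) = \sup\{\mu(C) \mid C \subset E \text{ compact}\}$, so the lemma is the Boolean-algebra-level reformulation of inner regularity. The proof should be just a few lines.

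\begin{proof}
Let $E \in \mathcal{B}$ and write $\mathcal{C}_E = \{ C \mid C \subset E, C \text{ compact} \}$. For any $C \in \mathcal{C}_E$ we have $C \setminus E = \emptyset$, so $\mu(C \setminus E) = 0$, which verifies the first condition in the definition of essential supremum.

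For the second condition, suppose $F \in \mathcal{B}$ is such that $\mu(C \setminus F) = 0$ for all $C \in \mathcal{C}_E$. Since $\mathcal{B}$ is a $\sigma$-algebra containing $E$ and $F$, the set $E \setminus F$ lies in $\mathcal{B}$, and by the inner regularity property of a Radon measure,
$$\mu(E \setminus F) = \sup \{ \mu(C) \mid C \subset E \setminus F, \ C \text{ compact} \}.$$
Let $C$ be any compact subset of $E \setminus F$. Then $C \subset E$, so $C \in \mathcal{C}_E$, and moreover $C \cap F = \emptyset$, so $C \setminus F = C$. By hypothesis $\mu(C) = \mu(C \setminus F) = 0$. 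Hence every term in the supremum above vanishes, and we conclude $\mu(E \setminus F) = 0$. This shows that $E$ is the essential supremum of $\mathcal{C}_E$.
\end{proof}
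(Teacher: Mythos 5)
Your proof is correct and follows essentially the same route as the paper: both reduce the second condition of the essential supremum to inner regularity of the measurable set $E \setminus F$, observing that any compact $C \subset E \setminus F$ lies in $\mathcal{C}_E$ and is disjoint from $F$, hence is null. The only cosmetic difference is that the paper phrases this as a proof by contradiction (a compact $K \subset E \setminus F$ of positive measure would violate the hypothesis), whereas you argue directly that every term in the supremum vanishes.
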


\begin{proof}
Let $E \in \mathcal{B}$. It is clear that $\mu( C \setminus E ) = \mu( \emptyset ) = 0$ for all compact $C \subset E$. Now assume that $F \in \mathcal{B}$ is such that $\mu(C \setminus F) = 0$ for all compact $C \subset E$. Assume $\mu(E \setminus F) > 0$. Since $\mu$ is a Radon measure, there must exist a compact $K \subset E \setminus F \subset E$ with $\mu(K) > 0$. But then
$$0 < \mu(K) = \mu( K \setminus F) = 0,$$
a contradiction. Therefore, we must have $\mu(E \setminus F) = 0$ and thus $E$ satisfies the properties of the essential supremum of $\{ C ~|~ C \subset E, C \text{ compact} \}$.
\end{proof}

\begin{theorem} \label{radonmeasures}
Let $(X, \mathcal{B}, \mu)$ be a Radon measure on a Hausdorff space $X$. Then we have a measure-preserving frame isomorphism
$$(\mathrm{Sh}(\mathcal{K}(X), \mu; \mathbf{2}), \mu_*) \cong (\mathcal{B}/\mathcal{N}, \mu )$$
induced by the natural homomorphism $\mathcal{K}(X) \rightarrow \mathcal{B}/\mathcal{N}$, where $\mathcal{K}(X)$ is the lower bounded distributive lattice of compact subsets of $X$, and $\mathcal{N} \subset \mathcal{B}$ is the ideal of $\mu$-null sets.
\end{theorem}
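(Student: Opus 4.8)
The plan is to apply the Basis Theorem (Theorem \ref{basistheorem}) with the frame $F = \mathcal{B}/\mathcal{N}$ and the candidate basis given by the image of $\mathcal{K}(X)$. First I would record the preliminary facts: since $(X,\mathcal{B},\mu)$ is a Radon measure, Theorem \ref{radonlocalizable} tells us it is localizable, so $\mathcal{B}/\mathcal{N}$ is a complete Boolean algebra, hence a (Boolean) frame. The natural map $\mathcal{K}(X) \to \mathcal{B}/\mathcal{N}$, $K \mapsto [K]$, is well-defined because every compact set lies in $\mathcal{B}$ (open sets have finite measure near each point, so compact sets have finite measure), and it is a homomorphism of lower bounded distributive lattices because finite unions and intersections of compacts are compact and are sent to the corresponding joins and meets of classes. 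The valuation $\mu$ on $\mathcal{K}(X)$ is exactly the pullback of the measure on $\mathcal{B}/\mathcal{N}$ along this map.

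Next I would verify the two hypotheses of the Basis Theorem. That the image of $\mathcal{K}(X)$ generates $\mathcal{B}/\mathcal{N}$ under suprema is precisely Lemma \ref{radonessentialsup}: every $E \in \mathcal{B}$ is the essential supremum of the compact sets it contains, which translates to $[E] = \bigvee \{ [K] ~|~ K \subset E \text{ compact} \}$ in $\mathcal{B}/\mathcal{N}$. The image is closed under binary meets since $[K_1] \wedge [K_2] = [K_1 \cap K_2]$. So $\mathcal{K}(X)$ (more precisely its image) is a basis for $\mathcal{B}/\mathcal{N}$ in the sense of the Basis Theorem, and we obtain $\mathcal{B}/\mathcal{N} \cong \mathrm{Sh}(\mathcal{K}(X), \tau; \mathbf{2})$ where $\tau$ is the Grothendieck pretopology induced on $\mathcal{K}(X)$ by the canonical (suprema) pretopology on $\mathcal{B}/\mathcal{N}$.

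The heart of the argument — and the step I expect to be the main obstacle — is to show that this induced pretopology $\tau$ coincides with the $\mu$-inner pretopology on $\mathcal{K}(X)$. A family $\{K_i \subset K ~|~ i \in I\}$ is a $\tau$-cover iff $[K] = \bigvee_{i \in I} [K_i]$ in $\mathcal{B}/\mathcal{N}$, i.e.\ iff $K$ is the essential supremum of the $K_i$. Finite covers match on both sides trivially (finite essential suprema are realized by finite unions, which are compact, and $\mu(K) = \mu(\bigcup_{i \in F} K_i)$ is automatic once $\bigcup_{i\in F} K_i = K$ up to null sets; conversely a finite $\mu$-cover of a compact set by compact sets is a finite union covering it mod null). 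For directed families one must show: a directed family $\{K_i\}$ of compact subsets of $K$ has essential supremum $[K]$ iff $\mu(K) = \sup_i \mu(K_i)$. The forward direction is Lemma \ref{essentialsupmeasure} (applied to the Radon measure $\mu$). The reverse direction is where care is needed: given $\sup_i \mu(K_i) = \mu(K) < \infty$, I would let $F$ be the essential supremum of $\{K_i\}$ (which exists by localizability), note $F \le K$ up to null sets, and observe $\mu(F) = \sup_i \mu(K_i) = \mu(K)$ by Lemma \ref{essentialsupmeasure} again; since $F \subset K$ (mod null) and they have equal finite measure, $\mu(K \setminus F) = 0$, so $[F] = [K]$, proving $[K] = \bigvee_i [K_i]$. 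Having identified the two pretopologies, the Basis Theorem isomorphism becomes the claimed measure-preserving isomorphism $(\mathrm{Sh}(\mathcal{K}(X), \mu; \mathbf{2}), \mu_*) \cong (\mathcal{B}/\mathcal{N}, \mu)$, where measure-preservation holds because on the generating basis $\mathcal{K}(X)$ both measures restrict to $\mu$, and by continuity (every open/chunk is a directed supremum of basic ones) this pins down the measures globally.
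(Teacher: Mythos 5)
Your proof is correct and follows essentially the same route as the paper: apply the Basis Theorem to the image of $\mathcal{K}(X)$ in $\mathcal{B}/\mathcal{N}$ (using Theorem \ref{radonlocalizable} and Lemma \ref{radonessentialsup}) and then identify the induced pretopology with the $\mu$-inner one, with Lemma \ref{essentialsupmeasure} handling the direction from essential suprema to $\mu$-approximations. The only cosmetic difference is in the converse direction for directed families: the paper verifies the defining property of the essential supremum directly via an $\epsilon$-argument, whereas you invoke localizability to produce the essential supremum $F$ and then conclude $[F]=[K]$ from equality of finite measures --- both are valid.
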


\begin{proof}
Theorem \ref{radonlocalizable} guarantees that $\mathcal{B}/\mathcal{N}$ is a frame. Lemma \ref{radonessentialsup} furthermore shows that the image of $\mathcal{K}(X)$ is a basis. We are left to identify the induced Grothendieck topology on $\mathcal{K}(X)$ with the $\mu$-inner topology.

It is clear that coverings by finite unions agree with finite essential suprema. Now assume $\{K_i \subset K ~|~ i \in I \}$ is a directed $\mu$-approximation of compact sets. We want to show that $K$ is the essential supremum of the sets $K_i$. It is clear that $\mu( K_i \setminus K ) = \mu( \emptyset ) = 0.$  Now assume that $F \in \mathcal{B}$ is such that $\mu( K_i \setminus F) = 0$ for all $i \in I$. Let $\epsilon > 0$ and pick $i_0 \in I$ such that
$$\mu( K \setminus K_{i_0} ) = \mu(K) - \mu(K_{i_0}) < \epsilon.$$
Then
$$\mu( K \setminus F ) = \mu( K_i \setminus F ) + \mu( (K \setminus K_i ) \setminus F ) \leq \mu( K \setminus K_i ) < \epsilon.$$
Since this is true for any $\epsilon > 0$, we conclude that $\mu( K \setminus F ) = 0$. In other words, we checked that $K$ satisfies the condition to be an essential supremum.

Conversely, if $K$ is an essential supremum of a set of compact sets $\{K_i ~|~ i \in I \}$, then $\{K_i \subset K ~|~ i \in I \}$ is a $\mu$-approximation by Lemma \ref{essentialsupmeasure}. 

Finally, we also see that the inner measure on $\mathrm{Sh}(\mathcal{K}(X), \mu; \mathbf{2})$ agrees with $\mu$ by definition of a Radon measure.
\end{proof}


\begingroup
\setlength{\emergencystretch}{8em}
\printbibliography
\endgroup

\end{document}